\numberwithin{equation}{section}
\newcommand\footnoteref[1]{\protected@xdef\@thefnmark{\ref{#1}}\@footnotemark}
\newcommand{\vp}{\varphi}
\newcommand{\dr}{\partial}
\DeclareMathOperator{\divg}{div}
\DeclareMathOperator{\sgn}{sgn}
\DeclareMathOperator{\dist}{dist}
\DeclareMathOperator{\supp}{supp}
\DeclareMathOperator{\diam}{diam}
\DeclareMathOperator{\loc}{loc}
\newcommand{\1}{{\mathds 1}}
\newcommand{\ms}{\medskip}
\newcommand{\R}{\mathbb R}
\newcommand{\N}{\mathbb N}
\newcommand{\cF}{\mathcal F}
\renewcommand{\H}{\mathcal H}
\newcommand{\bp}{\noindent {\em Proof: }}
\newcommand{\ep}{\hfill $\square$ \medskip}
\newcommand{\wt}{\widetilde}
\newcommand{\wh}{\widehat}
\newcommand{\D}{\mathbb D}
\newcommand{\Base}{\mathcal{O}}
\newcommand{\LL}{\mathcal L}
\renewcommand{\L}{L}
\newcommand{\C}{\mathcal C}
\newcommand{\OO}{\mathcal O}
\newcommand{\cS}{\mathcal S}
\newcommand{\cSS}{\mathcal S (q,\tau,r,r_0,\frac16\hbar)}
\newcommand{\W}{\mathcal W}
\newcommand{\Rn}{\mathbb R^n}
\newcommand{\norm}[1]{\left\Vert#1\right\Vert}
\newcommand{\abs}[1]{\left\vert#1\right\vert}
\newcommand{\br}[1]{\left(#1\right)}
\newcommand{\set}[1]{\left\{#1\right\}}
\renewcommand{\div}{\mathrm{div}}
\renewcommand{\d}{\, \mathrm{d}} 
\newcommand{\om}{\Omega}
\newcommand{\pom}{\partial\Omega}
\newcommand{\dtau}{\ensuremath{\, \mathrm{d} \tau}}
\newcommand{\E}{\mathsf{E}} 
\newcommand{\Lloc}{\L_{\operatorname{loc}}} 
\newcommand{\HT}{H_t} 
\newcommand{\dhalf}{D_t^{1/2}} 
\newcommand{\Hdot}{\dot{H}\protect{\vphantom{H}}} 
\newcommand{\mS}{{\mathcal S}} 
\newcommand{\IC}{\mathbb{C}}
\newcommand{\pd}{\partial}
\newcommand{\cl}[1]{\overline{#1}} 
\newcommand{\ree}{{\mathbb{R}^{n}}}
\newcommand{\dint}{\int\!\!\!\!\!\int}
\def\Yint#1{\mathchoice
	{\YYint\displaystyle\textstyle{#1}}%
	{\YYint\textstyle\scriptstyle{#1}}%
	{\YYint\scriptstyle\scriptscriptstyle{#1}}%
	{\YYint\scriptscriptstyle\scriptscriptstyle{#1}}%
	\!\dint}
\def\YYint#1#2#3{{\setbox0=\hbox{$#1{#2#3}{\iint}$}
		\vcenter{\hbox{$#2#3$}}\kern-.51\wd0}}
\def\longdash{\mkern-1.5mu{-}\mkern-7.5mu{-}} 
\def\fiint{\Yint\longdash}
\newcommand{\Z}{{\mathbb Z}}
\theoremstyle{plain}
\newtheorem{theorem}[equation]{Theorem}
\newtheorem{lemma}[equation]{Lemma}
\newtheorem{corollary}[equation]{Corollary}
\newtheorem{proposition}[equation]{Proposition}
\newtheorem{definition}[equation]{Definition}
\theoremstyle{definition}
\theoremstyle{remark}
\newtheorem{remark}[equation]{Remark}
\begin{document}

\title[The $L^p$ Regularity problem for parabolic operators]{The $L^p$ regularity problem for parabolic operators}

\author[Dindo\v{s}]{Martin Dindo\v{s}}
\address{School of Mathematics, 
The University of Edinburgh and Maxwell Institute of Mathematical Sciences, Edinburgh, UK}
\email{M.Dindos@ed.ac.uk}

\author[Li]{Linhan Li}
\address{School of Mathematics, The University of Edinburgh and Maxwell Institute of Mathematical Sciences, Edinburgh, UK}
\email{linhan.li@ed.ac.uk}

\author[Pipher]{Jill Pipher}
\address{Department of Mathematics, 
 Brown University, RI, US}
\email{jill\_pipher@brown.edu}

\thanks{The first author has been supported in part by EPSRC grant EP/Y033078/1.}

\begin{abstract} 
In this paper, we fully resolve the question of whether the Regularity problem for the parabolic PDE
$-\partial_tu + \div(A\nabla u)=0$ on a Lipschitz cylinder $\mathcal O\times\R$ is solvable for some $p\in (1,\infty)$ under the assumption that the matrix $A$ is elliptic, has bounded and measurable coefficients and its coefficients satisfy a natural Carleson condition (a parabolic analog of the so-called DKP-condition). 

We prove that for some $p_0>1$ the Regularity problem is solvable in the range $(1,p_0)$. We note that answer to this question was not known even in the {\it small Carleson case}, that is, when the Carleson norm of coefficients is sufficiently small.  

In the elliptic case the analogous question was only fully resolved recently independently by two groups, with two very different methods:
one involving two of the authors and S. Hofmann, the second by M. Mourgoglou, B. Poggi and X. Tolsa. Our approach in the parabolic case is motivated by that of the first group, but in the parabolic setting there are significant new challenges.

\end{abstract}

\maketitle


\subjclass{2020 Mathematics Subject Classification: 35K20, 35K10}

\newpage

\tableofcontents

\section{Introduction}


Inspired by recent breakthroughs (\cite{DHP}, \cite{MPT}) that answered some longstanding questions concerning elliptic Regularity boundary value problems, 
this paper proves sharp solvability of the Regularity problem for {\it parabolic differential equations in divergence form} where the coefficients
satisfy a natural and well-studied minimal smoothness assumption.  

The minimal smoothness condition we impose on the coefficients of $A$ has a long history, first formulated in a conjecture
of Dahlberg concerning solvability of the Dirichlet problem for elliptic divergence form equations.  It arises from a 
particular change of 
variable that is relevant to the study of smooth equations, both elliptic and parabolic, defined in Lipschitz domains (Dahlberg, Kenig, Ne\v{c}as, Stein \cites{D,N}).
This smoothness assumption 
is expressed as a {\it Carleson measure} condition of arbitrary magnitude that is satisfied by the coefficients of the matrix 
associated to the operator. 
Up to now, only the Dirichlet problem has been studied for parabolic operators in this general regime.

The operators we consider
have the form 
\begin{equation}\label{E:pde}
			\LL u:= -\dr_t u +\divg (A \nabla u)=0   \quad\text{in } \Omega, 
\end{equation}
and will be defined in a domain $\Omega = \mathcal O \times \R$ where $\mathcal O = \R^n_+$ or $\mathcal O$ is a bounded or unbounded Lipschitz domain (Definition \ref{DefLipDomain}).

The matrix $A= [a_{ij}(X, t)]$ is a $n\times n$ matrix satisfying the uniform ellipticity condition with $X \in \mathcal O$, $t\in \R$.
That is, there exists positive constants $\lambda$ and $\Lambda$ such that
\begin{equation}
	\label{E:elliptic}
	\lambda |\xi|^2 \leq \sum_{i,j} a_{ij}(X,t) \xi_i \xi_j \leq \Lambda |\xi|^2
\end{equation}
for almost every $(X,t) \in \Omega$ and all $\xi \in \R^n$.

Specifically for parabolic equations, we assume that 
the measure defined by
\begin{equation}\label{E:1:carl}
d\mu = \left( \delta(X,t)|\nabla A|^2 + \delta(X,t)^3|\partial_t A|^2  \right) dX dt
\end{equation}
is the density of a Carleson measure on $\Omega$ with Carleson norm $\|\mu\|_C$ and, moreover,
\begin{equation}\label{E:1:bound}
\delta(X,t)|\nabla A| + \delta(X,t)^2|\partial_t A| \leq K<\infty.
\end{equation}
Here and in the sequel, $\delta(X,t)$  denotes the parabolic distance to the boundary of $\om$, which is defined as: 
\[
\delta(X,t)=\inf_{(Y,\tau)\in\pom}\br{\abs{X-Y}^2+\abs{t-\tau}}^{1/2}.
\]

The condition \eqref{E:1:carl} can further be viewed as imposing a constraint on the oscillation of the coefficients of $A$. This related oscillation condition was first formulated and investigated 
in \cite{DPP}, and is especially useful as it does not require differentiability of the coefficients of $A$. We shall also provide solvability results here under this weaker condition. \vglue1mm

The boundary value problems we consider are posed for boundary data and/or certain derivatives of data belonging to an $L^p$ space, for $1 < p < \infty$.
This inquiry has classical
roots in the study of harmonic functions or solutions to the heat equation in smooth domains, where, as in the classical theory, the objective is to prove nontangential convergence of solutions to
their boundary data. (See \ref{DefRpar} and \ref{DefDir} for definitions.)
The $L^p$ Dirichlet problem when $p=2$ can be thought of as being $1/2$ derivative below the natural class of {\it energy solutions}, the weak solutions that arise from applying the Lax-Milgram lemma. These Lax-Milgram solutions have finite energy, meaning that $\iint_{\Omega}|\nabla u|^2<\infty$, and also have traces in the Besov-Sobolev space $\dot {B}^{2,2}_{1/2}(\pom)$. This trace space is an $L^2$-based space of functions having $1/2$-derivative on $\pom$,
appropriately understood. Such a class of energy solutions can also be defined in the parabolic setting, and this will be done in subsection \ref{RwEs}.

We now provide some details which will set the context for this paper's study of the solvability of the parabolic Regularity problem, beginning with a brief review of what's known for elliptic equations. 
The study of elliptic divergence form operators under the analogous assumption - that the gradient of the coefficients of the matrix satisfies a Carleson condition - has
been well developed over more than two decades by numerous authors, and even enjoys fruitful connections 
to the study of geometric properties of the boundary of the domain such as uniform rectifiability (see \cite{Azz,AHMMT,HMT,HMMtrans,HMMTZ,FL}).
In general these smoothness assumptions fall into two different regimes: one where the norm bound $\|\mu\|_C$ in \eqref{E:1:carl} is of arbitrary magnitude and a second where it is assumed to be 
sufficiently small. In the elliptic setting, the ``large Carleson norm" condition implies that the elliptic measure belongs to the Muckenhoupt weight class $A^\infty$
with respect to surface measure, and this in turn implies solvability of the Dirichlet 
problem with data belonging to $L^p$ for some (possibly large) value of $p$. This was shown in \cite{KP01} in 2001. In the ``small Carleson norm" regime, the elliptic measure is provably  
much better behaved with respect to surface measure, 
yielding solvability of the $L^p$ Dirichlet problem for all $1 < p< \infty$. This was the main result of \cite{DPP} in 2007, completing the program of solving these particular 
elliptic Dirichlet boundary value problems.

The Regularity problem imposes an additional smoothness assumption on the Dirichlet data, namely, existence of ``tangential derivatives" in $L^p$, while solvability requires a nontangential estimate on the gradient of solutions. When the elliptic Regularity problem is solvable with data in some $L^p$ space, there is a remarkable duality between the range of solvability of the Dirichlet and Regularity problems; namely, the Dirichlet problem is solvable in the range $(p_0,\infty)$ if and only if the Regularity problem for its adjoint is solvable in the range $(1,p_0')$, where $1/p_0 + 1/p_0' =1 $ (\cites{KP93,S2}). It is an open question whether this duality of ranges holds without the assumption that Regularity is solvable at some point in the $L^p$ scale. This highlights one of the sources of interest in the Regularity problem. Another source is interest in the companion boundary value problem, namely, the Neumann problem, which prescribes normal (or ``co-normal") derivatives of the solution on the boundary, as opposed to tangential derivatives. While apparently closely connected, 
solvability of the Neumann problem is much less well understood. Solvability of both the elliptic $L^p$ Regularity and Neumann
problem in the small Carleson norm regime (on coefficients) was first established for domains $\Omega\subset\R^2$ in \cite{DR}, and then in all dimensions in \cite{DPR}.
Finally, the Regularity problem in $L^p$ for some (possibly small) value of $p$ was established in the large Carleson norm regime recently and independently in \cite{DHP} and \cite{MPT}, by very different methods. We note that the approach of \cite{MPT} solves this Regularity problem in a larger class of domains, beyond Lipschitz. However, the Neumann problem is still open in the large Carleson norm regime, except for dimension two (\cite{DHP}). A survey of current state of the art for elliptic PDE can be found in \cite{DPsurv}. Even in the elliptic setting, there are questions that have yet to be answered; for example, solvability of Regularity in the presence of drift terms. 

Moving to the parabolic setting, the first results for solvability of the Dirichlet problem for the small Carleson norm regime were obtained in \cite{DH18}, for 
$p \geq 2$. This was later improved to $p > 1$ in \cite{DDH}.  The solvability of the Dirichlet problem for the large Carleson norm regime was 
established in \cite{DPP2}. A broader history of solvability of parabolic boundary value problems with coefficients satisfying various mild regularity conditions can be
 found in, for example, the introduction of \cite{Din23}.

Similar to the elliptic setting, the $L^p$ Regularity problem in the parabolic setting for $p=2$ can be thought of as being $1/2$ derivative above the energy class of solutions, where the homogeneity of the parabolic setting entails that solutions restricted to boundary should have one full spatial gradient in $L^p(\pom)$. The time variable 
of a parabolic PDE enjoys a different scaling from that of the spatial variables since the equation tells us that, roughly speaking,  $\partial_t u\sim \nabla^2 u$. That is, one spatial gradient of $u$ corresponds to $1/2$ derivative in time.
Hence the class of energy solutions (c.f. \cite{AEN}) for parabolic PDEs is the space with {\it finite energy}:
$$\iint_{\Omega}|\nabla u|^2+\iint_{\Omega}|D_t^{1/2}u|^2<\infty.$$
Finite energy solutions will have traces on the boundary with $1/2$-spatial and $1/4$-time derivatives. All of these considerations will
be explained with care in 
subsection \ref{RwEs}. 

Therefore, in order to define the parabolic $L^p$-based Regularity problem, one looks for solutions of the parabolic equation with prescribed data having one full spatial derivative and a half-time derivative on $\pom$.
The initial formulation of the Regularity problem for the heat equation is from \cite{FR}, where it was formulated for $C^1$ cylinders. This was then reformulated and solved in
bounded Lipschitz cylinders in \cite{Bro87, Bro89}.  Constant coefficient systems were studied in \cite{Nys06}.
The parabolic Regularity problem with time-independent variable coefficients has been formulated and solved in bounded Lipschitz cylinders in \cite{M} and
\cite{CRS}, but only for data in $L^p$ with $p$ close to 2. 

\subsection{Main Results and Methodology}

There are several optimal results in this paper for parabolic equations whose 
coefficients satisfy a Carleson measure condition. In 
the large Carleson norm regime, we show that the Regularity problem for an operator is solvable in the range dual to solvability of the Dirichlet problem for the
adjoint operator. This is the main goal of the paper, and will allow us to show solvability of Regularity in $L^p$ for some interval $(1, p_0)$.
Moreover, when the Carleson norm is assumed sufficiently small, this result will also give the first and best possible results for solvability of parabolic Regularity in this setting.  

The approach taken in this paper essentially follows the elliptic roadmap of \cite{DHP}; however, nearly every single step of that roadmap was missing from the 
 body of known results in the parabolic setting. Some of these steps represent significant advances in their own right, with
 multiple further applications, and have appeared only recently as separate papers. Specifically, this program requires: (1) the parabolic perturbation results of \cite{Up}, (2) the parabolic Dirichlet-Regularity duality results of \cite{DinSa}, and (3) the
nontangential estimates for the half derivative in time in \cite{Din23}. 

 Here, we establish
two key steps to obtain sharp solvability of the parabolic Regularity problem: first, reducing the general problem to a subclass of operators with a special block-form structure on the upper half-space; and second, leveraging the unique structure of such block-form operators—particularly the presence of a special transversal direction in which the operator exhibits well-behaved properties—to establish the solvability of the $L^2$ Regularity problem for these block-form operators.

We encountered significant new challenges in both steps of the proof. For the reduction to the block-form case, the parabolic strategy required 
an interesting technical refinement on known methodology of estimating the $L^p$ norm of the nontangential maximal function by duality. 
To solve the Regularity problem for the block-form parabolic operator, we had to overcome major additional challenges that arose 
in handling the new appearance of a non-local operator, the half-derivative in time $D_t^{1/2}u$. We introduce and study the PDE satisfied by $D_t^{1/2}u$, which involves a non-local term. The results of this study constitute a novel contribution of this paper. 
It turns out that the complexity of the mechanisms and the intricacy of the computations required in both of these steps necessitate the length of this paper.

We briefly address our assumption on the domain of the form $\Omega = \mathcal O \times \R$, i.e., the domain does not vary in time. This means that we do not include (unlike the elliptic case \cite{DHP}),
the class of operators arising from the pull-backs of the heat equation from time varying domains. The reason for this omission is the current lack of theory of solvability of the Regularity problem for operators that  include a singular drift (as in the work of Hofmann-Lewis \cite{HL01}). This theory does not exist even in the elliptic case (as noted in \cite{DPR}) and it currently prevents  making any inroads in the time-varying case. We expect that when/if any advance is made on the solvability of the elliptic Regularity problem
with singular drift terms, a similar advance will be possible on the parabolic Regularity problem on time varying domains satisfying the usual assumptions as in \cite{HL01}. This distinguishes the Regularity problem from the Dirichlet one, where the presence of drift terms can be usually handled. \vglue1mm

\medskip

One important aspect of the methodology here is the existence of a preferred direction - an essential feature of the upper half 
space, or the domain above a Lipschitz graph. 
This enables the use of a variant of the measure \eqref{E:1:carl} in directions parallel to the boundary, namely,
\begin{equation}\label{def.mu11}
    d\mu_{||}(X,t) = \left( \delta(X,t)|\nabla_x A|^2 + \delta(X,t)^3|\partial_t A|^2  \right) dX dt.
\end{equation}
The only difference between $\mu$ and $\mu_{||}$ is that $\mu_{||}$ does not contain $\dr_{x_n}A$, which is important because the Carleson norm of the measure $\mu_{||}$ can be 
made small, unlike that of $\mu$.

\medskip

We state our main results below.

\begin{theorem}\label{MainT} Consider the parabolic PDE $\LL u= -\dr_t u +\divg (A \nabla u)=0$ on the domain  $\Omega=\mathcal O\times\R$, where $\mathcal O\subset\R^n$ is a bounded or unbounded Lipschitz domain, the matrix $A$ is uniformly elliptic \eqref{E:elliptic} with bounded and measurable coefficients. Assume that in addition one of the the following two conditions holds:
\begin{enumerate}
\item matrix A satisfies \eqref{E:1:carl} and \eqref{E:1:bound}, or
\item the measure
$d\nu=\delta(X,t)^{-1}\left(\sup_{B((X,t),\delta(X,t)/2)}\mbox{\rm osc } A\right)^2\,dX\,dt$
satisfies the Carleson measure condition.
\end{enumerate}
Then there exists $p_0>1$ such that for all $1<p<p_0$ the $L^p$ Regularity problem for the equation $\LL u=0$ in $\Omega$ is solvable (c.f. Definition \ref{DefRpar}). Moreover, this interval of solvability $(1,p_0)$ is dual to the interval of solvability of the $L^q$ Dirichlet problem (c.f. Definition \ref{DefDir}) for the adjoint equation 
\begin{equation}\label{E:adjpde}
     \LL^*u:=\dr_tu+\divg(A^T\nabla u)=0
\end{equation} in $\Omega$ which is solvable for all $q>p_0'$.
\end{theorem}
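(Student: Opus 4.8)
The plan is to reduce the general case to the case of \emph{block-form} matrices, for which the $L^p$ Regularity problem can be attacked directly via a Rellich-type identity and the energy solution, and then to transfer solvability back by a combination of the parabolic perturbation theory of \cite{Up}, the Dirichlet-Regularity duality of \cite{DinSa}, and the non-tangential estimates for $D_t^{1/2}$ of \cite{Din23}. First I would observe that case (2) implies case (1) up to the usual mollification argument, so it suffices to treat a matrix satisfying \eqref{E:1:carl}--\eqref{E:1:bound}; this is the oscillation-to-Carleson reduction already carried out in the elliptic setting in \cite{DPP}, and I expect it to go through in the parabolic scaling with $\delta^3|\partial_t A|^2$ replacing the extra term. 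Next, using the preferred direction $x_n$ afforded by the Lipschitz-graph structure, I would perform the change of variables that pulls the Lipschitz cylinder to the upper half-space $\mathbb R^n_+\times\mathbb R$; this is where the distinction between $\mu$ and $\mu_{||}$ in \eqref{def.mu11} enters, since the transversal derivative $\partial_{x_n}A$ must be allowed to have large Carleson norm while the tangential measure $\mu_{||}$ can be taken small after a further pull-back/flattening.

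The core step is the reduction to block form. Following \cite{DHP}, I would write $A$, after the flattening, so that its $n$-th row and column can be modified: the idea is that modifying the last row/column of $A$ changes the operator by a term that is either a perturbation controlled by $\mu_{||}$ (hence small, hence absorbable by the perturbation result of \cite{Up}) or a term that is a total derivative and can be integrated by parts against the solution without cost. What survives is an operator $-\partial_t u + \operatorname{div}(A_\flat \nabla u)$ where $A_\flat$ is block-diagonal, $A_\flat = \begin{bmatrix} B & 0 \\ 0 & d \end{bmatrix}$ with $B$ an $(n-1)\times(n-1)$ elliptic block and $d$ a scalar, both still satisfying the Carleson condition. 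For such block operators the Regularity problem is accessible because the energy (Lax--Milgram) solution has a conormal derivative that can be compared, via a parabolic Rellich identity integrated over $\mathbb R^n_+\times\mathbb R$, to tangential derivatives and $D_t^{1/2}$ of the boundary data; the time term $\iint |D_t^{1/2}u|^2$ is exactly the one handled by \cite{Din23}, and this is what lets the estimate close at $p=2$ first, then propagate to a neighborhood $(1,p_0)$ by the self-improvement / good-$\lambda$ machinery. Once block-form Regularity is in hand at some $p$, the perturbation theorem of \cite{Up} carries it back across the small-Carleson-norm reduction to the original $A$, and the identification of the exact interval $(1,p_0)$ as the dual of the Dirichlet interval $(p_0',\infty)$ for $\mathcal L^*$ comes from invoking \cite{DinSa} together with the known solvability of the Dirichlet problem in the large Carleson regime from \cite{DPP2}.

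The main obstacle, as the authors themselves flag, will be controlling the non-local half-derivative-in-time operator $D_t^{1/2}$ throughout these manipulations. Every step that in the elliptic case involves only spatial integration by parts now produces, through the relation $\partial_t u \sim \nabla^2 u$, a half-time-derivative that does not localize: cutting off in space or time does not commute with $D_t^{1/2}$, and the Rellich identity must be run carefully so that the $D_t^{1/2}$ terms appear with a favorable sign or are bounded by the Carleson data rather than by uncontrolled quantities. I expect the genuinely new work to be (i) proving that the block-form reduction does not generate $D_t^{1/2}$ errors of the wrong order — this requires a parabolic version of the ``total derivative'' observation that is sensitive to the time-scaling weight $\delta^3$ in \eqref{E:1:carl} — and (ii) showing the non-tangential maximal estimates of \cite{Din23} are compatible with the perturbation scheme of \cite{Up}, i.e. that the square-function / non-tangential-maximal-function equivalence for $D_t^{1/2}u$ survives an $L^\infty$ perturbation of the coefficients with small Carleson norm. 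Everything else — the $A_\infty$/Dirichlet input, the duality, the extrapolation in $p$ — is expected to be an adaptation of the now-standard elliptic template.
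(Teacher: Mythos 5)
Your high-level ingredient list (reduce condition (2) to (1), flatten the Lipschitz cylinder, reduce to block form, use the perturbation theory of \cite{Up}, the duality of \cite{DinSa}, and the $D_t^{1/2}$ estimates of \cite{Din23}) matches the architecture of the paper, but the mechanism you propose for the central step --- the reduction to block form --- is not viable, and it is precisely the step where the real work lies. You claim that modifying the last row and column of $A$ produces either ``a perturbation controlled by $\mu_{||}$'' or ``a total derivative that can be integrated by parts without cost.'' Neither is true: the off-block entries $a_{nj},a_{jn}$ and the entry $a_{nn}$ differ from the block-form matrix by quantities of size $O(1)$ that do not vanish at the boundary, so $|A-A_0|^2\,\delta^{-1}\,dX\,dt$ is not a Carleson measure at all (let alone a small one), and Lemma \ref{lem.pert} simply does not apply; nor do these entries contribute a pure divergence. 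The paper's Theorem \ref{t1} instead runs a duality argument in the spirit of \cite{DHP}: one bounds $\|\tilde N_{1,\varepsilon}(\nabla u)\|_{L^q}$ by pairing $\nabla u$ with a tent-space test function $\vec h$ (Proposition \ref{betterh}), solves the \emph{adjoint inhomogeneous} problem $\LL^*v=\div\vec h$ with zero boundary data, integrates by parts, and then swaps $u$ for the block-form solution $\tilde u$ with the same boundary datum; all error terms are controlled by $S(v)$, $\tilde A(v)$, $N(x_n\nabla v)$ (bounded via solvability of $(D)^{\LL^*}_{q'}$ and \cite{Up}) and by $N(\nabla\tilde u)$, $S(\nabla\tilde u)$, $A(\nabla\tilde u)$ together with the Carleson hypotheses. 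This is exactly why the final range of solvability is dual to the Dirichlet range for $\LL^*$ --- a feature your scheme cannot produce, since a perturbation argument would not see the adjoint Dirichlet problem at all.

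Two further inaccuracies: the smallness of $\mu_{||}$ is not obtained ``after a further pull-back/flattening'' (the flattening map preserves, but does not shrink, the Carleson norm); in the paper it is achieved only in the block-form step by the vertical dilation $B^k(x,x_n,t)=B(x,kx_n,t)$, whose parallel Carleson norm is $O((M+\log k)k^{-2})$, with the large constants it generates absorbed by choosing $K=k^{1/5}$ and $k$ large in the combined $S<N$ (Theorem \ref{thm.S<NL2}) and $N<S+A$ (Theorem \ref{thm.NlessS}) estimates. Also, the block-form Regularity problem is solved for \emph{all} $1<p<\infty$ (Theorem \ref{thm.BL}), not just on a neighborhood of $2$; the restriction to $(1,p_0)$ for the general matrix comes solely from the adjoint Dirichlet range. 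Finally, your sketch omits the a priori finiteness of $\|\tilde N(\nabla u)\|_{L^2}$ needed before any absorption (Steps 5--6 of the paper's proof of Theorem \ref{thm.BL}) and the localization argument of Section 10 needed for bounded Lipschitz cylinders; these are secondary to the main gap but would also have to be supplied.
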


A special case of Theorem \ref{MainT} is the following result for block-form operators.

\begin{theorem}\label{thm.BL}
    Assume that $\Omega=\R^{n}_+\times\R$ and  the matrix $A$ is of the block-form, that is $
A=\left[ \begin{array}{c|c}
   A_\parallel & 0 \\
   \midrule
   0 & 1 \\
\end{array}\right] 
$, for some $(n-1)\times(n-1)$ matrix $A_\parallel$ and that the matrix $A$ is uniformly elliptic with bounded measurable coefficients. 
Then the $L^p$ Dirichlet problem $\mathcal Lu=0$ for the operator $\LL=-\dr_t+\divg A\nabla$ is solvable for all $1<p<\infty$.\vglue1mm

\noindent Furthermore, assume that the matrix $A_\parallel$
satisfies the condition (1) or (2) as in Theorem \ref{MainT}. Then the $L^p$ Regularity problem $\mathcal Lu=0$
is solvable for all $1<p<\infty$.
\end{theorem}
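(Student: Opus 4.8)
The plan is to prove the Dirichlet assertion first and then derive the Regularity assertion from it by duality, relying on two structural features of the block case. First, the only nonconstant entries of $A$ are those of $A_\parallel$, so $\nabla A$ and $\dr_t A$ see only $A_\parallel$; consequently conditions (1) and (2) for $A_\parallel$ are literally conditions (1) and (2) for $A$, the measure $\mu$ of \eqref{E:1:carl} coincides with the parallel measure $\mu_{\parallel}$ of \eqref{def.mu11}, and Theorem \ref{MainT} is directly applicable to $\LL$ on $\Omega=\R^n_+\times\R$ (a flat, hence Lipschitz, cylinder). Second, the adjoint $\LL^*=\dr_t+\divg(A^T\nabla)$ is again of block form, with parallel block $A_\parallel^T$, and $A_\parallel^T$ satisfies condition (1) (resp.\ condition (2)) precisely when $A_\parallel$ does; after the harmless time reversal $t\mapsto -t$, the operator $\LL^*$ is of the form $-\dr_t+\divg(\,\cdot\,\nabla)$ with the block form and the Carleson hypotheses intact, so every statement quoted below for $\LL$ applies equally to $\LL^*$.

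For the Dirichlet assertion I would begin from the $L^2$ theory for block-form parabolic operators with merely bounded measurable coefficients, available through the first-order functional-analytic approach (the relevant Kato-type square-root bounds require no regularity of $A_\parallel$): on the flat half-space this yields, for the energy solution $u$ with datum $f$, both the estimate $\|N(u)\|_{L^2(\pom)}\lesssim\|f\|_{L^2(\pom)}$ and the square-function bound $\iint_{\Omega}\delta(X,t)|\nabla u|^2\,dX\,dt\lesssim\|f\|_{L^2(\pom)}^2$. The passage to the full range $1<p<\infty$ would then be effected with the parabolic non-tangential-maximal-function/square-function machinery --- the parabolic analogue of the Dahlberg--Jerison--Kenig good-$\lambda$ inequality relating $N(u)$ and $S(u)$, $\varepsilon$-approximability, and self-improvement of the area bound (cf.\ \cites{AEN,DDH}) --- the key being that for block form these estimates are scale-invariant and local, so that the parabolic measure of $\LL$ satisfies a reverse H\"older inequality $B_q(d\sigma)$ for every finite $q$, which is exactly solvability of the $L^p$ Dirichlet problem for all $p\in(1,\infty)$. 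By the second structural remark the same holds for $\LL^*$.

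For the Regularity assertion, assume in addition that $A_\parallel$ satisfies condition (1) or (2). Then Theorem \ref{MainT} applies to $\LL$ on $\Omega$ and produces some $p_1>1$ for which the $L^{p_1}$ Regularity problem for $\LL$ is solvable; in particular Regularity for $\LL$ is solvable at one point of the $L^p$ scale. This is precisely the hypothesis under which the parabolic Dirichlet--Regularity duality of \cite{DinSa} holds in both directions: the $L^p$ Regularity problem for $\LL$ is solvable in $(1,q')$ if and only if the $L^q$ Dirichlet problem for $\LL^*$ is solvable in $(q,\infty)$. By the Dirichlet assertion applied to the block-form operator $\LL^*$ --- for which no Carleson hypothesis is needed --- the latter holds for every $q\in(1,\infty)$, hence for $q$ arbitrarily close to $1$, and therefore the $L^p$ Regularity problem for $\LL$ is solvable in $(1,q')$ with $q'$ arbitrarily large, i.e.\ for all $1<p<\infty$.

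The main obstacle is the Dirichlet assertion, and within it the upgrade from $p=2$ to the whole range $(1,\infty)$ with only bounded measurable $A_\parallel$: the $L^2$ input is essentially standard in the first-order framework, but forcing the reverse-H\"older exponents of the parabolic measure to be arbitrarily large requires the full strength of the parabolic square-function technology together with a careful use of the block structure. A secondary, technical, point is to verify that Theorem \ref{MainT}, the duality of \cite{DinSa}, and the non-tangential half-derivative-in-time estimates of \cite{Din23} underlying them all apply on the \emph{unbounded} flat cylinder $\R^n_+\times\R$, with the regularity of the data understood in the correct parabolic sense (one full spatial gradient and a half-derivative in time on $\pom$), and that the time reversal relating $\LL$ and $\LL^*$ preserves the block form and the Carleson conditions.
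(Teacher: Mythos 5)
Your Regularity argument is circular within this paper's logical structure. Theorem \ref{MainT} is proved (Section \ref{sec.pf}, Step 2) by invoking Theorem \ref{t1}, whose hypothesis (i) is precisely that $(R)_q^{\LL_0}$ holds for all $q\in(1,\infty)$ for the block-form operator; when $A$ is already block form, $\LL_0=\LL$, so that hypothesis is exactly the second assertion of Theorem \ref{thm.BL}. Hence you cannot use Theorem \ref{MainT} to produce the starting exponent $p_1$ — it presupposes what you are trying to prove. The paper instead works directly with the block operator: it regularizes $A$ to a smooth $B$ (Lemma \ref{lem.CarImprov}), dilates $B^k(x,x_n,t)=B(x,kx_n,t)$ so that the parallel Carleson norm $\|\mu_{||}^k\|_C$ becomes small, proves an $L^2$ ``$S<N$'' bound (Theorem \ref{thm.S<NL2}) and an ``$N<S+A$'' bound (Theorem \ref{thm.NlessS}), absorbs to get $(R)_2^{\LL_k}$, then transfers back to $\LL_0$ and to $\LL$ by the Carleson perturbation result (Lemma \ref{lem.pert}), and only at that point uses the Dirichlet solvability of Appendix \ref{APA} together with Lemma \ref{lem.DtoR} to extend to all $1<p<\infty$; Steps 5--6 supply the a priori finiteness of $\|\tilde N(\nabla u)\|_{L^2}$ that the absorption requires. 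None of this machinery appears in your proposal, and your duality step, while correct in itself, cannot be launched without a non-circular source of one solvability exponent.

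The Dirichlet assertion is also not actually proved in your proposal. The claim that good-$\lambda$ inequalities, $\varepsilon$-approximability and ``self-improvement'' force the parabolic measure into $B_q(d\sigma)$ for \emph{every} finite $q$ with merely bounded measurable $A_\parallel$ is precisely the content of the theorem, not a consequence of generic machinery; without a specific mechanism exploiting the block structure you only assert the conclusion. Moreover the proposed $L^2$ input is shaky: the first-order (Kato square root) framework of \cite{AEN} requires coefficients independent of the transverse variable, whereas here $A_\parallel$ depends on $x_n$ with no regularity whatsoever. The paper's Appendix \ref{APA} supplies the missing mechanism: because $a_{nn}=1$ and $a_{in}=a_{ni}=0$, an integration by parts in $x_n$ applied to the $p$-adapted square function gives $\iint_\Omega |\nabla u|^2 u^{p-2}x_n\,dX\,dt\lesssim \|f\|_{L^p}^p$ for $1<p\le 2$ with no Carleson hypothesis (Lemma \ref{lem.Sdkp} and Corollary \ref{cor.Sdkp}, proved via the truncated solutions $u_k$), and a stopping-time good-$\lambda$ argument yields $\|N(u_k)\|_{L^p}\lesssim\|S_2(u_k)\|_{L^p}$, which combined with \eqref{S2Sp} closes the estimate for $p$ near $1$ and hence for all $p$. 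Your observations that $\mu=\mu_{||}$ in the block case and that time reversal carries the block structure and the Carleson conditions to $\LL^*$ are correct and are also used by the paper, but they do not substitute for the two missing arguments above.
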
 

The elliptic analog of the first part statement in Theorem \ref{thm.BL} concerning solvability of the Dirichlet problem is well known (and was 
observed by S. Mayboroda). However we are not aware that such a result is known for the parabolic case. Because we need this result as a component 
in solving the Regularity problems addressed in this paper, the proof of this fact is given in Appendix \ref{APA}. The second part of Theorem \ref{thm.BL}, the solvability of the Regularity problem,
is the aforementioned key step in the proof of Theorem \ref{MainT}. 

The proof of Theorems \ref{MainT} and \ref{thm.BL} is  presented in Section~\ref{sec.pf}, building on results established throughout the rest of the paper.\medskip

\noindent {\bf Acknowledgment:} We would like to thank Joseph~Feneuil for some helpful discussions, especially for explaining their argument regarding the `$N<S$' estimate in \cite{DFM}. 

\section{Formulation of the Regularity problem}
\setcounter{equation}{0}

\subsection{Parabolic Sobolev Spaces on {$\partial\Omega$}}%
\label{S:paraSobolev}

When considering the appropriate function space for our boundary data we want it to have the same homogeneity as the PDE\@.
As a rule of thumb, one derivative in time behaves like two derivatives in space and so the correct order of our time derivative should be $1/2$ if we impose data with one derivative in spatial variables.
This has been studied previously in~\cites{HL96,HL99,Nys06}, who have followed~\cite{FJ68} in defining the homogeneous parabolic Sobolev space $\dot{L}^p_{1,1/2}$ in the following way. 

\begin{definition}%
	\label{D:paraSob}
	The \textit{homogeneous parabolic Sobolev space} $\dot{L}^p_{1,1/2}(\R^n)$, for $1 < p < \infty$, is defined to consist of equivalence classes of functions $f$ with distributional derivatives satisfying $\|f\|_{\dot{L}^p_{1,1/2}(\R^n)} < \infty$, where
	\begin{equation}
		\|f\|_{\dot{L}^p_{1,1/2}(\R^n)} = \|\D f\|_{L^p(\R^n)}
	\end{equation}
	and
	\begin{equation}
		(\D f)\,\widehat{\,}\,(\xi,\tau) := \|(\xi,\tau)\| \widehat{f}(\xi,\tau).
	\end{equation}
Here $\|(\xi,\tau)\|$ on $\R^{n-1} \times \R$ is defined as the unique positive solution $\rho$ to the following equation
\begin{equation}
	\label{E:par-norm}
	\frac{|\xi|^2}{\rho^2} + \frac{\tau^2}{\rho^4} = 1.
\end{equation}
One can easily show that $\|(\xi,\tau)\| \sim (|\xi|^2 + |\tau|)^{1/2}$ and that this norm scales correctly according to the parabolic nature of the PDE\@.
\end{definition}

\noindent{\it Remark.}
In the definition above we consider $(x,t)\in{\R^n}=\R^{n-1}\times \R$. This is due to the fact that the boundary $\partial\Omega$ is a lower dimensional set and our aim is to define the parabolic Sobolev space on $\partial\Omega$.
\vglue2mm

In addition, following~\cite{FR67}, we define a parabolic half-order time derivative by
\begin{equation}
	(\D_n f)\,\widehat{\,}\,(\xi,\tau) := \frac{\tau}{\|(\xi,\tau) \|} \widehat{f}(\xi,\tau).
\end{equation}

If $0 < \alpha \leq 2$, then for $g \in C^\infty_c(\R)$ the \textit{one-dimensional fractional differentiation operators} $D_\alpha$ are defined by
\begin{equation}
	(D^\alpha g)\,\widehat{\,}\,(\tau) := |\tau|^\alpha \widehat{g}(\tau).
\end{equation}
It is also well known that if $0 < \alpha < 1$ then
\begin{equation}
	D^\alpha g(s) = c\int_{\R} \frac{g(s) - g(\tau)}{|s-\tau|^{1 + \alpha}} \dtau
\end{equation}
whenever $s \in \R$.
If $h(x,t) \in C^\infty_c(\R^n)$ then by $D^\alpha_t h: \R^n \rightarrow \R$ we mean the function $D^\alpha h(x, \cdot)$ defined a.e.\ for each fixed $x \in \R^{n-1}$.
We now recall the established connections between $\D$, $\D_n$ and $D_t^{1/2}$. From ~\cites{FR66,FR67, DinD} we have that
\begin{align}\label{eqrn}
	\|\D f\|_{L^p(\R^n)} \sim  \|\D_n f\|_{L^p(\R^n)}+ \|\nabla f\|_{L^p(\R^n)}\sim  \|D_t^{1/2} f\|_{L^p(\R^n)}+\|\nabla f\|_{L^p(\R^n)},
\end{align}
for all $1<p<\infty$. Here $\nabla$ denotes the usual gradient in the variables $x\in\mathbb R^{n-1}$.\vglue1mm

Recall the definition of Lipschitz domain.

\begin{definition}
$\Z \subset \R^n$ is an $\ell$-cylinder of diameter $d$ if there
exists an orthogonal coordinate system $(x',x_n)$  with $x'\in\mathbb R^{n-1}$ and $x_n\in\mathbb R$ such that
\[
\Z = \{ (x',x_n)\; : \; |x'|\leq d, \; -(\ell+1) d \leq x_n \leq (\ell+1) d \}
\]
and for $s>0$,
\[
s\Z:=\{(x',x_n)\;:\; |x'|\le sd, -(\ell +1)s d \leq x_n \leq (\ell +1)s d \}.
\]
\end{definition}

\begin{definition}\label{DefLipDomain}
$\mathcal O\subset \R^n$ is a Lipschitz domain with Lipschitz
`character' $(\ell,N,C_0)$ if there exists a positive scale $r_0\in (0,
\infty]$ and
at most $N$ $\ell$-cylinders $\{{\Z}_j\}_{j=1}^N$ of diameter $d$, with
$\frac{r_0}{C_0}\leq d \leq C_0 r_0$ such that 
\vglue2mm

\noindent (i) $8{\Z}_j \cap {\partial\mathcal O}$ is the graph of a Lipschitz
function $\phi_j$, $\|\nabla\phi_j \|_\infty \leq \ell \, ;
\phi_j(0)=0$,\vglue2mm

\noindent (ii) $\displaystyle {\partial\mathcal O}=\bigcup_j ({\Z}_j \cap {\partial\mathcal O}
)$,

\noindent (iii) $\displaystyle{\Z}_j \cap \mathcal O \supset \left\{
(x',x_n)\in\mathcal O \; : \; |x'|<d, \; \mathrm{dist}\left( (x',x_n),{\partial\mathcal O}
\right) \leq \frac{d}{2}\right\}$.

\noindent (iv) Each cylinder $\displaystyle{\Z}_j$ contains points from $\mathcal O^c={\mathbb R^n}\setminus\mathcal O$.

\noindent (v) If $r_0<\infty$ the domain $\mathcal O$ is a bounded set.
\vglue1mm
\end{definition}

\noindent{\it Remark.} If the scale $r_0$ is finite, then the domain $\mathcal O$ from the definition above is a bounded Lipschitz domain. However, we shall also allow both $r_0,\, d$ to be infinite, and in this case, since $\displaystyle{\Z}=\mathbb R^n$,
we have that  $\mathcal O$ can be written in some coordinate system as
$$\mathcal O = \{(x',x_n): x_n > \phi(x')\}\quad\mbox{ where $ \phi:\mathbb R^n \rightarrow \mathbb R$ is a Lipschitz function.}$$

The set $\mathcal O\times\R$ will be called a parabolic Lipschitz cylinder with Lipschitz base $\mathcal O$.
We will need an extension of the definition of spaces $\dot L^p_{1,1/2}$ to Lipschitz cylinders.

\begin{definition}%
	\label{D:paraSob2}
	The \textit{homogeneous parabolic Sobolev space.} 
Let $1<p<\infty$ and $\Omega=\mathcal O\times\R$	where $\mathcal O$ is a Lipschitz domain as in Definition \ref{DefLipDomain}. The space $\dot{L}^p_{1,1/2}(\pom)$  consists of an equivalence class of functions $f$ with distributional derivatives satisfying $\|\nabla _T f\|_{L^p(\pom)}+\|D_t^{1/2} f\|_{L^p(\pom)}< \infty$, where
$\nabla _T f$ denotes the tangential spatial gradient of $f$ (defined for a.e. point of $(q,\tau)\in \pom\times\R$) by $\nabla f(q,\tau)\Big|_{\pom}-\nu(\nabla f(q,\tau)\cdot \nu(q))$, where $\nu$ is the outer normal vector to $\partial\mathcal O$.
The $\dot{L}^p_{1,1/2}(\pom)$ norm of $f$ is defined as 
 $$\|f\|_{\dot{L}^p_{1,1/2}(\pom)}=\|\nabla _T f\|_{L^p(\pom)}+\|D_t^{1/2} f\|_{L^p(\pom)}.$$
\end{definition}

In light of the equivalences of norms in \eqref{eqrn} for $\Omega=\R^{n}_+\times\R$, this coincides with our previous definition for $\pom=\R^{n-1}\times\R$.

\subsection{Reinforced weak and Energy solutions}\label{RwEs}

We recall the paper \cite{AEN} that neatly presents the concept of reinforced weak solutions for the parabolic problem of interest here. In \cite{Din23}, it was shown that the definition given in \cite{AEN} can be weakened a bit further by only asking for the \lq\lq local $1/2$ derivative" in the time variable. We explain below.
\vglue1mm

If $\mathcal O$ is an open subset of $\mathbb R^{n} $, we let $\H^1(\mathcal O)=\W^{1,2}(\mathcal O)$ be the standard Sobolev space of real valued functions $v$ defined on $\mathcal O$, such that $v$ and $\nabla v$ are in $\L^{2}(\mathcal O;\R)$ and $\L^{2}(\mathcal O;\R^n)$, respectively. A subscripted `$\loc$' will indicate that these conditions hold locally.

We shall say that $u$ is a \emph{reinforced weak solution} of $-\partial_t u + \divg(A\nabla u)=0$ on $\Omega=\mathcal O\times \R$ if
\begin{align*}
 u\in \dot {\E}_{\loc}(\Omega):= \H_{\loc}^{1/2}(\R; \L^2_{\loc}(\mathcal O)) \cap \Lloc^2(\R; \W^{1,2}_{\loc}(\mathcal O))
\end{align*}
and if for all $\phi,\psi \in \C_0^\infty(\Omega)$,
\begin{equation}\label{2.10}
\iint_{\Omega}\left[
 A\nabla u\cdot{\nabla (\phi\psi)}+ \HT\dhalf (u\psi)\cdot {\dhalf \phi}+ \HT\dhalf (u\phi)\cdot {\dhalf \psi}\right]\, \d X \d t=0.
 \end{equation}
Here, $\dhalf$ is the half-order derivative and $\HT$ is the Hilbert transform with respect to the $t$ variable, normalized so that $\partial_{t}= \dhalf \HT \dhalf$. The space $\Hdot^{1/2}(\R)$ is the homogeneous Sobolev space of order 1/2 - the completion of $\C_0^\infty(\R)$ in the norm $\|\dhalf (\cdot)\|_{2}$ -  and it embeds into the space $\mS'(\R)/\IC$ of tempered distributions, modulo constants).
The local space  $\H_{\loc}^{1/2}(\R)$ consists of functions $u$ such that $u\phi\in \Hdot^{1/2}(\R)$ for all $\phi\in \C_0^\infty(\Omega)$. As shown in \cite{Din23} the space $\H_{\loc}^{1/2}(\R)$ is larger than $\Hdot^{1/2}(\R)$.
For  $u\in\Hdot^{1/2}(\R; \L^2_{\loc}(\mathcal O)), $ \eqref{2.10} simplifies to 
\begin{equation}\nonumber
\iint_{\Omega}\left[
 A\nabla u\cdot{\nabla \phi}+ \HT\dhalf u\cdot {\dhalf \phi}\right]\, \d X \d t=0.
 \end{equation}
  
 Our definition has the advantage that, in taking a cut-off of the function $u$,  we might potentially improve the decay of $D^{1/2}_tu$ at infinity. In particular, this matters when considering $u\big|_{\partial\Omega}\in L^p_{1,1/2}(\partial\Omega)$ for $p>2$ as such $u$ might not decay fast enough to belong to
$\Hdot^{1/2}(\R; \L^2_{\loc}(\mathcal O))$.

At this point we remark that for any $u\in \Hdot^{1/2}(\R)$ and $\phi,\psi\in \C_0^\infty(\R)$ the formula
\begin{align*}
 \int_{\R} \left[\HT\dhalf (u\psi)\cdot {\dhalf\phi}+\HT\dhalf (u\phi)\cdot {\dhalf\psi}\right]\d t = - \int_{\R} u \cdot {\partial_{t}(\phi\psi)} \d t
\end{align*}
holds, where on the right-hand side we use the duality form extension of the complex inner product of $\L^2(\R)$, between $\Hdot^{1/2}(\R)$ and its dual $\Hdot^{-1/2}(\R)$. By taking $\psi=1$ on the set where $\phi$ is supported, it follows that a reinforced weak solution is a weak solution in the usual sense on $\Omega$ since it satisfies
$u\in \Lloc^2(\R; \W^{1,2}_{\loc}(\mathcal O))$ and for all $\phi\in \C_0^\infty(\Omega)$,
\begin{align*}
 \iint_{\Omega} A\nabla u\cdot{\nabla \phi} \d X \d t - \iint_{\Omega} u \cdot {\partial_{t}\phi} \d X  \d t=0.
\end{align*}
 This implies $\pd_{t}u\in \Lloc^2(\R; \W^{-1,2}_{\loc}(\mathcal O))$. Conversely, any  weak solution $u$ in    $ \H_{\loc}^{1/2}(\R; \L^2_{\loc}(\mathcal O))$ is a reinforced weak solution.\vglue2mm

Specializing to the case $\Omega=\mathcal O\times\mathbb R$, where $\mathcal O$ is either bounded or unbounded Lipschitz domain,
 we say that a 
reinforced weak solution $v\in \dot{\E}_{\loc}(\mathcal O\times\mathbb R)$ belongs to the \emph{energy class} $\dot \E(\mathcal O\times\mathbb R)$ if
\begin{align*}
 \|v\|_{\dot \E} := \bigg(\|\nabla v\|_{\L^2(\mathcal O\times\mathbb R)}^2 + \|\HT \dhalf v\|_{\L^2(\mathcal O\times\mathbb R)}^2 \bigg)^{1/2} < \infty.
\end{align*}
Consequently, these are called \emph{energy solutions}. When considered modulo constants, $\dot \E $ is a Hilbert space and it is in fact the closure of $\C_0^\infty\!\big(\,\cl{\mathcal O\times\mathbb R}\,\big)$ for the homogeneous norm $\|\cdot\|_{\dot \E}$.  As shown in \cite{AEN} (with a small generalization), functions from $\dot \E $ have well defined traces with values in
 the \emph{homogeneous parabolic Sobolev space} $\Hdot^{1/4}_{\pd_{t} - \Delta_x}(\partial\mathcal O\times\mathbb R)$. Here, $\Hdot^{s}_{\pm \pd_{t} - \Delta_x}(\R^n)$ is defined as the closure of Schwartz functions $v \in \mS(\ree)$ with Fourier support away from the origin in the norm $\|\cF^{-1}((|\xi|^2 \pm i \tau)^s \cF v)\|_2$. This yields a space of tempered  distributions modulo constants in $\Lloc^2(\ree)$ if $0 < s \leq 1/2$. 
 Conversely, any $g \in \Hdot^{1/4}_{\pd_{t} - \Delta_x}$ can be extended to a function $v \in  \dot \E$ with trace $v\big|_{\partial\mathbb R^{n+1}_+} = g$.  For this via partition of unity we can define 
 $\Hdot^{1/4}_{\pd_{t} - \Delta_x}(\partial\mathcal O\times\mathbb R)$ for $\mathcal O$ Lipschitz.
  
Hence, by the energy solution to $-\partial_tu + \divg(A\nabla u)=0$ with Dirichlet boundary datum $u\big|_{\partial\mathcal O\times\R} = f \in \Hdot^{1/4}_{\pd_{t} - \Delta_x}$ (understood in the trace sense) we mean $u \in \dot\E$ such that
\begin{align*}
 \iint_{\mathcal O\times\R} \left[A \nabla u \cdot{\nabla v} + \HT \dhalf u \cdot {\dhalf v}\right] \d X \d t = 0,
\end{align*}
holds for all $v \in \dot \E_0$, the subspace of $\dot \E$ with zero boundary trace.

By \cite{AEN} the key to solving these problems is the introduction of the modified sesquilinear form
\begin{equation}\label{eq-sesq}
 a_\delta(u,v) := \iint_{\mathcal O\times\R} \left[A \nabla u \cdot {\nabla (1-\delta \HT) v} + \HT \dhalf u \cdot {\dhalf (1-\delta \HT) v}\right] \d X \d t,
\end{equation}
where $\delta$ is a  real number yet to be chosen. The Hilbert transform $\HT$ is a skew-symmetric isometric operator with inverse $-\HT$ on both $\dot \E$ and $\Hdot^{1/4}_{\pd_{t} - \Delta_x}$.  Hence, $1-\delta \HT$ is invertible on these spaces for any $\delta \in \R$. Hence for a fixed $\delta>0$  small enough, $a_\delta$ is coercive on $\dot \E$ since
\begin{equation}\label{eq:coer}
 a_\delta(u,u) \ge (\lambda-\Lambda\delta )\|\nabla u\|_2^2 + \delta \|\HT \dhalf u \|_2^2.
\end{equation} 
In particular $\delta=\lambda/(\Lambda+1)$ would work.
To solve the Dirichlet problem we take an extension $w \in \dot \E$ of the data $f$ and apply the Lax-Milgram lemma to $a_\delta$ on $\dot \E_0$ to obtain some $u \in \dot \E_0$ such that 
\begin{align*}
 a_\delta(u,v) = - a_{\delta}(w,v) \qquad (v \in \dot \E_0).          \end{align*}
Hence, $u + w$ is an energy solution with data $f$. Should there exist another solution $v$, then $a_\delta(u+w-v,u+w-v) = 0$ and hence by coercivity $\|u +w - v \|_{\dot \E} = 0$. Thus the two solutions only differ by a constant. It means that  Dirichlet problem associated with our parabolic PDE is  \emph{well-posed} in the energy class.

\subsection{$(R)_p$ boundary value problem}

 Let $\Omega=\OO\times \mathbb{R}$, where $\OO\subset\Rn$ is a Lipschitz domain. Assume that $A:\Omega\to M_{n\times n}(\mathbb R)$ is a bounded uniformly elliptic matrix-valued function. Let $\LL=-\dr_t+\divg A\nabla$. 
 \vglue1mm

 To give the definition of the parabolic Regularity problem, we introduce notation for parabolic balls and cubes, and then define nontangential parabolic cones and parabolic nontangential maximal functions.  
 
 \begin{definition}
A parabolic cube on $\R^n\times\R$  centered at $(X,t)$ with sidelength $r$ as 
$$    Q_r(X,t):=\{ (Y, s) \in \R^{n}\times\R : |x_i - y_i| < r \ \text{ for } 1 \leq i \leq n, \ | t - s |^{1/2} < r \}.
$$
When writing a lower case point $(x,t)$ we shall mean a boundary parabolic cube on $\R^{n-1}\times\R$
which has an analogous definition but in one less spatial dimension:
\begin{equation}\label{eqdef.bdypcube}
    Q_r(x,t):=\{ (y, s) \in \R^{n-1}\times\R : |x_i - y_i| < r \ \text{ for } 1 \leq i \leq n-1, \ | t - s |^{1/2} < r \}.
\end{equation}
A parabolic ball on $\R^n\times\R$  centered at $(X,t)$ with radius $r$ is the ball
\begin{equation}\label{eqdef.ball}
    B_r(X,t):=\{ (Y, s) \in \R^{n}\times\R : \|(X-Y,t-s)\|<r \},
\end{equation}
where $\|\cdot\|$ has been defined earlier by \eqref{E:par-norm}. Recall that $\|\cdot\|$ scales as the parabolic distance function
\[
d_p((X,t),(Y,s)) := \br{\abs{X-Y}^2+\abs{t-s}}^{1/2}\sim \|(X-Y,t-s)\|.\]

For parabolic balls at the boundary we use notation $\Delta_r(X,t)=B_r(X,t)\cap \pom$. In the special case $\Omega=\R^n_+\times\R$ we drop the last coordinate and also write $\Delta_r(x,t)$ with understanding that the ball is centered at $(X,t)=(x,0,t)$.
\end{definition}

\begin{definition}
For $a>0$ and $(q,\tau)\in\pom$, unless otherwise defined, we denote the nontangential parabolic cones by
\begin{equation}\label{Gamma2.11}
    \Gamma_a(q,\tau):=\set{(X,t)\in\om: d_p((X,t),(q,\tau))<(1+a)\delta(X,t)},
\end{equation}

and $\delta(\cdot)$ is the parabolic distance to the boundary: 
\[
\delta(X, t) = \inf_{(q,\tau)\in\pom}
d_p((X, t),(q, \tau)).\]
We also use the truncated parabolic cones: for $r>0$,
\[
\Gamma_a^r(q,\tau):=\set{(X,t)\in\om: d((X,t),(q,\tau))<(1+a)\delta(X,t),\, \delta(X,t)<r}
\]
is the parabolic cone with vertex $(q,\tau)$ truncated at height $r$.
\end{definition}

\begin{definition}
For $w\in L^\infty_{\loc}(\om)$, we define the nontangential maximal function of $w$ as
\[
 N_a(w)(q,\tau):=\sup_{(X,t)\in\Gamma_a(q,\tau)}\abs{w(X,t)} \quad\text{for }(q,\tau)\in\pom.
\]
If $w\in L^p_{\loc}(\om)$, $p\in(0,\infty)$, we need the modified nontangential maximal function
\begin{equation}\label{def.Nap}
    \wt N_{a,p}(w)(q,\tau):=\sup_{(X,t)\in\Gamma_a(q,\tau)}\br{\fiint_{B_{\delta(X,t)/2}(X,t)}\abs{w(Y,s)}^pdYds}^{1/p} \quad\text{for }(q,\tau)\in\pom,
\end{equation}
where $B_r(X,t)$ is a parabolic ball centered at $(X,t)$ with ``radius" $r$, that is, 
\[B_r(X,t):= \set{(Y,s)\in\Rn\times\R: d((Y,s),(X,t))<r}.\]
We simply denote 
\begin{equation}\label{def.N2}
    \wt N_{a}(w):=\wt N_{a,2}(w)
\end{equation}
when $p=2$ in \eqref{def.Nap}.
\end{definition}

\noindent We now introduce the parabolic square and area functions. 

\begin{definition}
For a function $w$ with $\nabla w\in L^2_{\loc}(\om)$, define the square function of $w$ as 
\begin{equation}\label{DefSquare}
S_a(w)(q,\tau):=\br{\iint_{\Gamma_a(q,\tau)}\abs{\nabla w(X,t)}^2\delta(X,t)^{-n}dXdt}^{1/2} \quad\text{for }(q,\tau)\in\pom.
\end{equation}
We also need the following two versions of the area function defined as in \cite{DH18} for a function $w: \om\to\R$ defined by
\begin{eqnarray}\label{DefArea}
    A_a(w)(q,\tau)&=&\left(\iint_{\Gamma_a(q,\tau)}|\partial_t w(X,t)|^2\delta(X,t)^{-n+2}\,dX\,dt\right)^{1/2}\\\nonumber
    \tilde{A}_a(w)(q,\tau)&=&\left(\iint_{\Gamma_a(q,\tau)}|\nabla^2 w(X,t)|^2\delta(X,t)^{-n+2}\,dX\,dt\right)^{1/2}.
\end{eqnarray}

\end{definition}

In virtue of the parabolic PDE heuristic that $\partial_t w\sim \nabla^2w$,  the operators $A_a$ and $\tilde{A}_a$ are closely related, but for different purposes there is an advantage of using one over the other.
\medskip

It is well-known (using level-sets argument) that for $p\in(0,\infty)$, the $L^p$ norms of $N_a(w)$, $\wt{N}_a(w)$, $S_a(w)$, $A_a(w)$ and $\tilde A_a(w)$ are invariant under changes of $a$ up to a constant multiple. For this reason, we  omit the dependence on the aperture $a$ of the cones when there is no need for the specificity. 
\medskip

 We are now ready to formulate our definition of the parabolic Regularity problem $(R)_p$. 
\begin{definition}\label{DefRpar}
Let $1<p<\infty$.
The Regularity problem for the parabolic operator $\LL$ with boundary data in
$\dot{L}^{p}_{1,1/2}(\partial\Omega)$ is solvable (abbreviated $(R)_{p}$), if
for every $f\in \dot{L}^{p}_{1,1/2}(\partial\Omega)\cap \Hdot^{1/4}_{\partial_t-\Delta_x}(\partial\Omega)$ 
the {\rm energy solution} $u\in  \dot \E(\Omega)$ (as defined above) to the problem
\begin{align}\label{eq-pp}
\begin{cases}
\LL u&
=0 \quad\text{ in } \Omega\\
u|_{\partial \Omega} &= f \quad\text{ on } \partial \Omega
\end{cases}
\end{align}
satisfies
\begin{align}
\label{RPpar} \quad\|\widetilde{N}(\nabla u)\|_{L^p(\partial
\Omega)}\lesssim \|\nabla_T f\|_{L^{p}(\partial\Omega)}+\|D^{1/2}_t f\|_{L^{p}(\partial\Omega)}\approx\| f\|_{\dot{L}_{1,1/2}^{p}(\partial\Omega)}.
\end{align}
The implied constant depends only the matrix $A$, $p$ and $n$.
\end{definition}

When we need to specify the operator, we will use the abbreviation $(R)_p^{\LL}$.

Observe that this definition does not require control of  $\|\widetilde{N}(D^{1/2}_tu)\|_{L^p}$ or of $\|\widetilde{N}(D^{1/2}_tH_tu)\|_{L^p}$ in 
 \eqref{RPpar} because these bounds will follow from \eqref{RPpar} and do not need to be established separately.

\begin{theorem}(c.f. \cite{Din23})\label{timp} Let $\Omega$ be an infinite Lipschitz cylinder of the form $\mathcal O\times\mathbb R$, $\LL$ be as above and assume that for some $1<p<\infty$ the Regularity problem $(R)_p$ for $\LL$ on the domain $\Omega$ is solvable. Then in addition to \eqref{RPpar} we also have the bounds
\begin{align}
\label{RPpar2} \quad\|\widetilde{N}(D^{1/2}_t u)\|_{L^p(\partial
\Omega)}+\|\widetilde{N}(D^{1/2}_tH_t u)\|_{L^p(\partial
\Omega)}\lesssim \| f\|_{\dot{L}_{1,1/2}^{p}(\partial\Omega)}.
\end{align}
\end{theorem}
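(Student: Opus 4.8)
The plan is to derive \eqref{RPpar2} from \eqref{RPpar} by an argument that works entirely at the level of the solution $u$, exploiting the parabolic equation and the non-local identity $\partial_t = D_t^{1/2} H_t D_t^{1/2}$ to trade the time half-derivative against the spatial gradient, for which the estimate is already in hand. First I would fix the data $f\in \dot{L}^p_{1,1/2}(\partial\Omega)\cap \Hdot^{1/4}_{\partial_t-\Delta_x}(\partial\Omega)$ and the energy solution $u$; by \eqref{RPpar} we have $\|\widetilde N(\nabla u)\|_{L^p(\pom)}\lesssim \|f\|_{\dot L^p_{1,1/2}}$. The goal is the interior-to-boundary statement that the non-tangential averages of $D_t^{1/2}u$ and $D_t^{1/2}H_t u$ (suitably defined via the parabolic Hilbert transform and half-derivative in the $t$ variable, localized and then measured through $\widetilde N$) are controlled by the same right-hand side.

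The key steps, in order, are as follows. \textbf{(1)} Reduce to the flat case $\Omega = \R^n_+\times\R$ by a Lipschitz change of variables, as is standard; the half-derivative in time commutes appropriately with this flattening up to controllable error terms (this is where one uses that the paper works with the \emph{local} $\Hdot^{1/2}$ class and with $\mu_\parallel$). \textbf{(2)} On a parabolic Whitney ball $B_{\delta/2}(X,t)$ interior to $\Omega$, bound the average of $|D_t^{1/2} u|^2$ by a slightly enlarged average of $|\nabla u|^2$ plus a term involving $\|D_t^{1/2} u\|$ on a larger ball; the mechanism is Caccioppoli-type reverse estimates for the half-derivative adapted to parabolic PDEs — one writes $D_t^{1/2}u$ against a test function, uses the weak formulation \eqref{2.10}, the identity $\partial_t = D_t^{1/2}H_tD_t^{1/2}$, and the interior regularity of solutions. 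This is essentially the content of the ``non-tangential estimates for the half derivative in time'' of \cite{Din23}, invoked as a black box as permitted. \textbf{(3)} Pass from the Whitney-ball bound to the cone/maximal-function bound: integrating the pointwise inequality over cones and using that $\widetilde N$ of the right-hand side is controlled, together with the already-established $\|\widetilde N(\nabla u)\|_{L^p}$ bound, gives $\|\widetilde N(D_t^{1/2}u)\|_{L^p}\lesssim \|f\|_{\dot L^p_{1,1/2}}$. \textbf{(4)} Repeat for $D_t^{1/2}H_t u$: since $H_t$ is bounded on the relevant spaces and $\partial_t = D_t^{1/2} H_t D_t^{1/2}$ lets us identify $D_t^{1/2}H_t u$ with ``$\partial_t u$ integrated one half-derivative'', the same machinery applies, now using that $\partial_t u = \divg(A\nabla u)$ so that $\partial_t u$ is controlled in an averaged sense by $\nabla u$ and $\nabla^2 u$ — and $\nabla^2 u$ on Whitney balls is again controlled by $\nabla u$ on larger balls via Caccioppoli, bringing everything back to $\widetilde N(\nabla u)$.

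The main obstacle I expect is Step (2)–(4): making rigorous sense of $D_t^{1/2}u$ and $D_t^{1/2}H_t u$ as functions on $\Omega$ for which a \emph{pointwise} Whitney-ball reverse estimate holds, given that $D_t^{1/2}$ is non-local in time and the solution need not decay at $t=\pm\infty$ (precisely the difficulty the paper flags as motivating the local $\Hdot^{1/2}_{\loc}$ class). One must show that localizing $u$ in time before applying $D_t^{1/2}$ produces only errors that are themselves controlled by $\widetilde N(\nabla u)$ in $L^p$ — i.e. that the non-local tails are harmless — and simultaneously that the reverse-type inequality survives the presence of the Carleson-type coefficient perturbation (terms involving $\delta|\nabla A|$ and $\delta^2|\partial_t A|$ from \eqref{E:1:bound}). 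Since the statement is quoted from \cite{Din23}, the honest version of this proof is to verify that the hypotheses of that reference are met in the present setting — uniform ellipticity, the bound \eqref{E:1:bound}, and solvability of $(R)_p$ — and then cite it; the structural heart, namely controlling the non-local half-derivative tails, is exactly what \cite{Din23} supplies.
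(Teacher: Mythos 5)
The paper itself offers no proof of Theorem \ref{timp}: it is stated with the citation (c.f.\ \cite{Din23}) and used as a black box, so your honest fallback — verify that the hypotheses of \cite{Din23} (ellipticity, \eqref{E:1:bound}, solvability of $(R)_p$ on the infinite cylinder) are met here and cite it — is exactly the paper's approach. Just be aware that your sketched standalone route would not stand on its own as written: a pointwise Whitney-ball reverse estimate bounding averages of $|D_t^{1/2}u|^2$ by enlarged averages of $|\nabla u|^2$ fails because of the non-local time tails of $D_t^{1/2}$, which is precisely the difficulty \cite{Din23} is built to handle, as you yourself flag in your final paragraph.
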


Thus by Theorem \ref{timp} our definition coincides with the previous notions of solvability of the Regularity problem as in \cite{Bro87, Bro89, Nys06, M, CRS} which did include one or both of the terms on the left-hand side of \eqref{RPpar2}. This theorem significantly simplifies our task as we need 
only to focus on bounds for the spatial gradient of a solution $u$.\vglue1mm

For completeness we also state the definition of the $L^p$ Dirichlet problem $(D)_p$:

\begin{definition}\label{DefDir} Let $p\in (1,\infty)$. 
We say that the Dirichlet problem for the operator $\LL$ with boundary data in
${L}^{p}(\partial\Omega)$ is solvable (abbreviated $(D)_{p}$), if
for every $f\in {L}^{p}(\partial\Omega)\cap \Hdot^{1/4}_{\partial_t-\Delta_x}(\partial\Omega)$ 
the {\rm energy solution} $u\in  \dot \E(\Omega)$ (as defined above) to the problem \eqref{eq-pp} satisfies the estimate
\begin{align}
\label{RPdir} \quad\|\widetilde{N}( u)\|_{L^p(\partial
\Omega)}\lesssim \| f\|_{L^{p}(\partial\Omega)}.
\end{align}
The implied constant again depends only the matrix $A$, $p$ and $n$.
\end{definition}
We will also denote it by $(D)_p^{\LL}$ when we need to specify the operator.

\section{Preliminaries}

\subsection{Overview of basic parabolic results}

We recall some foundational estimates that will be used. 

\begin{lemma}[Caccioppoli's inequality, see~\cite{Aro68}]%
	\label{L:Caccio}
	Let $A$ satisfy \eqref{E:elliptic} and  suppose that $u$ is a weak solution of \eqref{E:pde} in $Q_{4r}(X,t)\subset\Omega$.
	Then there exists a constant $C=C(\lambda,\Lambda, n)$ such that
	\begin{multline*}
	    r^{n} \left(\sup_{Q_{r/2}(X,t)} u \right)^{2}
			\leq C 
   \sup_{t-r^2 \leq s \leq t+r^2} \int_{Q_r(X,t) \cap \{t = s\}} u^{2}(Y,s) dY\\
			+ C\iint_{Q_{r}(X, t)} |\nabla u|^{2} dYds 
			\leq \frac{C^2}{r^2} \iint_{Q_{2r}(X, t)} u^{2}(Y, s) dYds.
	\end{multline*}
\end{lemma}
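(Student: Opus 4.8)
The statement is Lemma~\ref{L:Caccio}, the parabolic Caccioppoli inequality, and the strategy is the classical energy-estimate argument adapted to the parabolic scaling. The plan is to test the weak formulation of the equation $-\partial_t u + \divg(A\nabla u)=0$ against $\varphi = \eta^2 u$, where $\eta$ is a smooth cutoff adapted to the parabolic cubes: $\eta \equiv 1$ on $Q_r(X,t)$, $\supp\eta \subset Q_{2r}(X,t)$, with $|\nabla\eta| \lesssim 1/r$ and $|\partial_t\eta| \lesssim 1/r^2$ (the latter bound being the parabolic-scaling feature that forces the $r^{-2}$ on the right-hand side). First I would record that the time term produces $\iint \partial_t u \,\eta^2 u = \tfrac12\iint \partial_t(u^2)\eta^2 = -\iint u^2 \eta\,\partial_t\eta$, after integrating by parts in $t$; combined with the elliptic term this yields, after using \eqref{E:elliptic} to bound $\lambda\iint\eta^2|\nabla u|^2 \le \iint \eta^2 A\nabla u\cdot\nabla u$ and Cauchy--Schwarz with Young's inequality to absorb the cross term $\iint \eta u\, A\nabla u\cdot\nabla\eta$, the bound
\[
\sup_{s}\int_{Q_r\cap\{t=s\}} u^2(Y,s)\,dY + \iint_{Q_r} |\nabla u|^2 \,dYds \;\lesssim\; \frac{1}{r^2}\iint_{Q_{2r}} u^2\,dYds.
\]
One has to be a little careful to get the supremum over time slices: one tests on the truncated cylinder $Q_{2r}\cap\{t<s_0\}$, keeping the boundary term $\int_{\{t=s_0\}}\eta^2 u^2$, and then takes the supremum over $s_0$; this is standard and I would cite \cite{Aro68} rather than belabour it.

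For the first (sup-bound) inequality, the point is the parabolic local boundedness estimate: a nonnegative subsolution's $L^\infty$ norm on $Q_{r/2}$ is controlled by a mixed $L^\infty_t L^2_Y$ plus $L^2$ gradient norm on $Q_r$. The cleanest route is to invoke the parabolic De Giorgi--Nash--Moser machinery (again \cite{Aro68}): iterate the energy inequality just proved over a dyadic sequence of shrinking parabolic cubes $Q_{r_k}$ with $r_k \downarrow r/2$, using the Sobolev embedding in the spatial variables together with the $L^\infty_t L^2$ control to run a Moser iteration and extract the pointwise bound on $Q_{r/2}$. Since $u$ is a solution (so both $u$ and $-u$ are subsolutions), applying this to $u^2$ — or to $u$ and using the inequality for $|u|$ — gives
\[
r^n\Big(\sup_{Q_{r/2}} u\Big)^2 \;\lesssim\; \sup_{t-r^2\le s\le t+r^2}\int_{Q_r\cap\{t=s\}} u^2(Y,s)\,dY + \iint_{Q_r}|\nabla u|^2\,dYds.
\]
Chaining this with the energy estimate from the previous paragraph (applied on the pair $Q_r \subset Q_{2r}$) yields the final bound $\le \tfrac{C^2}{r^2}\iint_{Q_{2r}} u^2$, completing the proof.

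The main obstacle — or rather the only non-bookkeeping point — is the first inequality, i.e.\ the local $L^\infty$ bound, since the energy inequality itself is a direct integration by parts. This is genuinely a theorem (parabolic Moser iteration / local boundedness of solutions) rather than a one-line computation, and the honest thing is to attribute it to \cite{Aro68} and the classical parabolic regularity theory; reproducing the iteration would add little. A secondary technical nuisance is the handling of the time-slice supremum term, which requires testing against $\eta^2 u \cdot \1_{\{t<s_0\}}$ (equivalently a Steklov-average argument in $t$ to justify the computation for weak solutions whose $\partial_t u$ lives only in a negative Sobolev space), but this too is routine and I would treat it as such.
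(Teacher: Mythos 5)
Your outline is correct and matches the paper's treatment: the paper does not prove this lemma at all but simply cites \cite{Aro68}, and your plan — the standard energy estimate via testing with $\eta^{2}u$ (with Steklov averaging and the truncated-cylinder trick for the time-slice supremum) followed by parabolic Moser iteration / local boundedness for the first inequality, with the latter attributed to \cite{Aro68} — is the classical argument behind that citation. Nothing further is needed.
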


\begin{lemma}[Reverse H\"older inequality for gradient]\label{lem.gradL2-L1}
    Let $u$ and $Q_{4r}(X,t)\subset\Omega$ be as in Lemma \ref{L:Caccio}. Then there exists a constant $C=C(\lambda,\Lambda, n)$ such that
    \[
    \br{\fiint_{Q_r(X,t)}\abs{\nabla u(Y,s)}^2dYds}^{1/2}\le C\fiint_{Q_{2r}}\abs{\nabla u(Y,s)}dYds.
    \]
\end{lemma}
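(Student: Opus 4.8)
The plan is to deduce the reverse Hölder inequality for $\nabla u$ from Caccioppoli's inequality (Lemma \ref{L:Caccio}) combined with a standard self-improvement argument of Gehring type, adapted to the parabolic setting. First I would observe that since $u$ is a weak solution, so is $u - c$ for any constant $c$; applying Lemma \ref{L:Caccio} on a parabolic subcube $Q_{\rho}(Z,s) \subset Q_{2r}(X,t)$ to the function $u - \bar u_{Q_{2\rho}(Z,s)}$ (where $\bar u$ denotes the parabolic average) yields the Caccioppoli estimate
\[
\fiint_{Q_{\rho}(Z,s)} |\nabla u|^2 \, dY\, ds \;\le\; \frac{C}{\rho^2} \fiint_{Q_{2\rho}(Z,s)} |u - \bar u_{Q_{2\rho}(Z,s)}|^2 \, dY \, ds.
\]
Next I would invoke the parabolic Poincaré--Sobolev inequality: for a suitable exponent $q < 2$ (specifically $q = \frac{2(n+2)}{n+4}$, the parabolic Sobolev conjugate below $2$, accounting for the fact that the parabolic cube in $\R^n \times \R$ has homogeneous dimension $n+2$), one has
\[
\left( \fiint_{Q_{2\rho}(Z,s)} |u - \bar u_{Q_{2\rho}(Z,s)}|^2 \, dY\, ds \right)^{1/2} \le C \rho \left( \fiint_{Q_{4\rho}(Z,s)} |\nabla u|^{q}\, dY\, ds \right)^{1/q}.
\]
Combining the two displays gives a reverse Hölder inequality with a ``gain'' on the left (exponent $2$) and a smaller exponent $q<2$ on the right, over slightly enlarged cubes — precisely the hypothesis of Gehring's lemma (in its parabolic/homogeneous-space form, see e.g.\ Giaquinta--Struwe or the parabolic version used in \cite{Aro68}). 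Gehring's lemma then upgrades this to: there is $p_0 = p_0(\lambda,\Lambda,n) > 2$ such that $\nabla u \in L^{p_0}_{\loc}$ with
\[
\left( \fiint_{Q_r(X,t)} |\nabla u|^{p_0} \right)^{1/p_0} \le C \left( \fiint_{Q_{2r}(X,t)} |\nabla u|^{q} \right)^{1/q} \le C \fiint_{Q_{2r}(X,t)} |\nabla u|,
\]
where the last step uses Hölder's inequality since $q < 2$ can be taken with $q \le$ anything, but more to the point we want the right-hand exponent to be $1$; if the Poincaré step only gives exponent $q \in (1,2)$ one interpolates or simply notes $q$ can be pushed down to $1$ by iterating, or directly applies the $L^1$-version of the Poincaré inequality. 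Finally, Hölder's inequality on the left (since $p_0 > 2$) yields the claimed bound with exponent $2$ on the left-hand side.

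The main obstacle — though it is a technical rather than conceptual one — is correctly tracking the parabolic scaling throughout: the Poincaré inequality on a parabolic cube is not the classical one, because $u$ is only $L^2$ in time with no time derivative control pointwise, so one must use the parabolic Poincaré inequality that exploits the equation itself (the ``intrinsic'' parabolic Poincaré, where $\partial_t u \in L^2(\R; W^{-1,2})$ is used to control the time oscillation of averages). This is where the structure of the PDE enters, and it is the reason the statement requires $u$ to be a solution rather than merely a Sobolev function. Once the correct parabolic Poincaré--Sobolev inequality is in hand, the Gehring iteration is routine and the constant depends only on $\lambda, \Lambda, n$ as asserted.
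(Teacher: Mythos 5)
The paper does not prove this lemma in-house: it cites \cite[Lemma 3.6]{DinSa} and describes the proof there as higher ($p>2$) integrability of the gradient combined with a real-variable argument of Shen \cite{S2}, which is essentially the route you propose (Caccioppoli with subtracted means, a parabolic Sobolev--Poincar\'e inequality that uses the equation to control the time oscillation of averages, Gehring, then lowering the exponent on the right-hand side). The one step you state incorrectly is the final exponent-lowering: H\"older's inequality goes the wrong way (it bounds the $L^1$ average by the $L^q$ average, not conversely), and an ``$L^1$-version of the Poincar\'e inequality'' cannot help either, since in the parabolic scaling $W^{1,1}$-type control only yields an $L^{(n+2)/(n+1)}$ average of $u-\bar u$, not the $L^2$ average that Caccioppoli requires, so $q=2(n+2)/(n+4)>1$ is the smallest exponent that step can produce. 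What actually closes the argument is the self-improvement property of weak reverse H\"older inequalities: having $\br{\fiint_{Q_\rho}|\nabla u|^2}^{1/2}\le C\br{\fiint_{Q_{2\rho}}|\nabla u|^q}^{1/q}$ on all parabolic subcubes, one interpolates $\|\nabla u\|_{L^q}\le\|\nabla u\|_{L^1}^{\theta}\|\nabla u\|_{L^2}^{1-\theta}$ on the larger cube, applies Young's inequality, and absorbs the resulting $L^2$ term by an iteration lemma over nested cubes (or runs a Calder\'on--Zygmund stopping-time argument, which is the ``real-variable argument of Shen'' the paper refers to). Your parenthetical ``one interpolates / iterates'' is the correct mechanism, but it is the essential step rather than an afterthought and should be carried out explicitly; with that done, and with the intrinsic parabolic Poincar\'e for solutions justified as you indicate, the proof is sound.
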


The theorem as stated is proven in {\cite[Lemma 3.6]{DinSa}} and is a consequence of higher $p>2$ regularity of gradients together with a real variable argument of Shen \cite{S2}.\medskip

Lemmas 3.4 and 3.5 in~\cite{HL01} give the following estimates for weak solutions of~\eqref{E:pde}.

\begin{lemma}[Interior H\"{o}lder continuity]%
	\label{L:int_Holder}
	Let $A$ satisfy~\eqref{E:elliptic} and suppose that $u$ is a weak solution of~\eqref{E:pde} in $Q_{4r}(X,t)\subset\Omega$ with $0 < r < \delta(X,t)/8$.
	Then for any $(Y,s), (Z,\tau) \in Q_{2r}(X,t)$
	\[
		\left|u(Y, s) - u(Z, \tau)\right| \leq C \left( \frac{|Y-Z| + |s - \tau|^{1/2}}{r}\right)^{\alpha} \fiint_{Q_{4r}(X,t)} |u|,
	\]
	where $C=C(\lambda, \Lambda, n)$, $\alpha=\alpha(\lambda,\Lambda,n)$, and $0 < \alpha < 1$.
\end{lemma}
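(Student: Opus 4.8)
\textbf{Proof plan for Lemma~\ref{L:int_Holder} (Interior Hölder continuity).}
The statement is a standard local regularity result for weak solutions of uniformly parabolic equations in divergence form, and the plan is to deduce it from the De~Giorgi--Nash--Moser machinery, which applies because the coefficients $A$ enter only through the ellipticity bounds \eqref{E:elliptic} and no smoothness is used. First I would reduce to a normalized situation: since $0<r<\delta(X,t)/8$, the concentric parabolic cubes $Q_{r}(X,t)\subset Q_{2r}(X,t)\subset Q_{4r}(X,t)$ all lie in $\Omega$, so $u$ is a genuine weak solution on $Q_{4r}(X,t)$; by a parabolic rescaling $(Y,s)\mapsto((Y-X)/r,(s-t)/r^{2})$ one may assume $(X,t)=(0,0)$ and $r=1$, noting that both sides of the asserted inequality are invariant under this scaling (the left-hand increment scales like $r^{\alpha}$ times the rescaled increment, and the average of $|u|$ on $Q_{4r}$ is scale-invariant). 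Subtracting a constant from $u$ is also harmless since the equation and the left side are unchanged.

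Next I would invoke the interior Hölder estimate of Moser / Aronson for solutions of parabolic equations with bounded measurable coefficients — precisely the content of \cite[Lemmas 3.4, 3.5]{HL01}, as the excerpt indicates — which gives, on the unit-scale cubes, a bound of the form
\[
\osc_{Q_{1/2}(0,0)} u \;\le\; C\,\Big(\fiint_{Q_{1}(0,0)}|u|^{2}\Big)^{1/2},
\]
together with the quantitative decay of oscillation on dyadically shrinking cubes that yields the exponent $\alpha=\alpha(\lambda,\Lambda,n)\in(0,1)$. Combining the oscillation-decay iteration with a standard covering/chaining argument across the cube $Q_{2r}(0,0)$ (now $Q_{2}$ after rescaling) produces, for any two points $(Y,s),(Z,\tau)\in Q_{2}$,
\[
|u(Y,s)-u(Z,\tau)|\;\le\;C\,\big(|Y-Z|+|s-\tau|^{1/2}\big)^{\alpha}\,\Big(\fiint_{Q_{4}(0,0)}|u|^{2}\Big)^{1/2}.
\]
Finally I would upgrade the $L^{2}$ average on the right to an $L^{1}$ average: by Caccioppoli (Lemma~\ref{L:Caccio}) and the reverse-Hölder inequality for the solution itself — or simply by the self-improving form of the interior estimate applied on a slightly larger cube — one has $\big(\fiint_{Q_{4}}|u|^{2}\big)^{1/2}\lesssim \fiint_{Q_{5}}|u|$, and after rescaling back and absorbing the comparison of cube sizes into the constant $C$ one obtains the stated inequality with the average taken over $Q_{4r}(X,t)$.

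The only genuinely nontrivial input is the De~Giorgi--Nash--Moser oscillation estimate itself, which is exactly what is being cited from \cite{HL01} (ultimately going back to Nash, Moser and Aronson for parabolic equations); everything else — rescaling, subtracting constants, the chaining argument to pass from oscillation decay on concentric cubes to a Hölder modulus on pairs of points, and the $L^{2}\to L^{1}$ improvement of the right-hand side via Caccioppoli — is routine. The main point to be careful about is bookkeeping the parabolic scaling, since time scales like length squared, so the correct pseudo-distance is $|Y-Z|+|s-\tau|^{1/2}$ and the averaging cubes $Q_{\rho}$ have volume $\sim\rho^{n+2}$; tracking these homogeneities is what makes the exponent $\alpha$ and the scale-invariance of the final estimate come out right.
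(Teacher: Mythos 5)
Your outline is essentially the proof the paper relies on: the paper does not prove Lemma~\ref{L:int_Holder} itself but quotes it from \cite{HL01} (Lemmas 3.4, 3.5 there), and what you describe — parabolic rescaling, the De~Giorgi--Nash--Moser oscillation-decay estimate, chaining, and the $L^2\to L^1$ self-improvement of the right-hand side — is exactly the standard route behind that citation, with the correct parabolic homogeneities.

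One small bookkeeping caveat: in your last step you enlarge from $Q_4$ to $Q_5$ after rescaling, but the hypotheses only guarantee $Q_{4r}(X,t)\subset\Omega$ (the condition $r<\delta(X,t)/8$ does not give $Q_{5r}(X,t)\subset\Omega$, since $\delta$ is a Euclidean-type distance while the cubes have spatial diameter $\sim r\sqrt{n}$). This is not a genuine gap, because no enlargement is needed: run the chaining with $L^2$ averages over an intermediate cube such as $Q_{3r}$, and then use the standard absorption/interpolation form of the local boundedness estimate (iterating Lemma~\ref{L:Caccio} over a sequence of cubes between $Q_{3r}$ and $Q_{4r}$) to bound $\bigl(\fiint_{Q_{3r}}|u|^2\bigr)^{1/2}\lesssim \fiint_{Q_{4r}}|u|$ entirely inside $Q_{4r}(X,t)$.
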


\begin{lemma}[Maximum Principle, see {\cite[Theorem 1.4]{Ny97}, \cite[Lemma 3.38]{HL01}}]
	\label{L:MP}
	Let $A$ satisfy \eqref{E:elliptic}, and let $u$, $v$ be bounded continuous weak solutions to \eqref{E:pde} in $\Omega$.
	If $|u|,|v|\to 0$ uniformly as $t \to -\infty$ and
	\[
		\limsup_{(Y,s)\to (X,t)} (u-v)(Y,s) \leq 0
	\]
	for all  $(X,t)\in\partial\Omega$, then $u\leq v$ in $\Omega$. 
 
 If $\om'$ is a bounded subdomain of $\om$, and if $u-v\le 0$ on $\dr_p\om'$, then $u\le v$ in $\om'$.
\end{lemma}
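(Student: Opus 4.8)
The plan is to set $w:=u-v$. Since $u,v$ are weak solutions of $\LL\cdot=0$, so is $w$, and since constants solve $\LL c=-\dr_t c+\divg(A\nabla c)=0$, so is $w-c$ for every $c\in\R$. I would first prove the bounded-subdomain assertion by an energy argument, and then deduce the global assertion from it by exhausting $\Omega$ by bounded subcylinders, exploiting the two decay hypotheses.

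For the subdomain assertion, let $\Omega'=\mathcal U\times(a,b)$ be a bounded subcylinder of $\Omega$ with $w\le 0$ (in the $\limsup$ sense) on $\partial_p\Omega'=\big(\cl{\mathcal U}\times\{a\}\big)\cup\big(\partial\mathcal U\times(a,b]\big)$; a general bounded subdomain is the same in spirit, slicing in $t$ (see \cite{Ny97,HL01}). For $\eta>0$ put $w_\eta:=(w-\eta)^+$. Because $w$ is continuous with $\limsup w\le 0$ on $\partial_p\Omega'$, the set $\{w_\eta(\cdot,t)>0\}$ is compactly contained in $\mathcal U$ for each $t$, $w_\eta(\cdot,a)\equiv 0$, and (using interior $W^{1,2}$-regularity of $w$ on $\{w>\eta/2\}$) $w_\eta\in L^2\big((a,b);W^{1,2}_0(\mathcal U)\big)$ is an admissible test function. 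The only delicate point is that $\dr_t w$ lies merely in $L^2_{\loc}\big(W^{-1,2}_{\loc}\big)$; here I would invoke the chain rule for the time derivative (Lions--Magenes), so that $t\mapsto\tfrac12\|w_\eta(\cdot,t)\|_{L^2(\mathcal U)}^2$ is absolutely continuous with derivative $\langle\dr_t w,w_\eta\rangle$ (equivalently, run the computation first with a Steklov average in $t$ and pass to the limit). Testing with $w_\eta\1_{(a,s)}$ then gives, for a.e.\ $s\in(a,b)$,
\[
\tfrac12\int_{\mathcal U}w_\eta(X,s)^2\,dX+\int_a^s\!\!\int_{\mathcal U}A\nabla w\cdot\nabla w_\eta\,dX\,dt=\tfrac12\int_{\mathcal U}w_\eta(X,a)^2\,dX=0,
\]
the sign of the boundary term being forced by $\LL$ being the forward parabolic operator. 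Since $\nabla w_\eta=\1_{\{w>\eta\}}\nabla w$, ellipticity \eqref{E:elliptic} gives $A\nabla w\cdot\nabla w_\eta=\1_{\{w>\eta\}}A\nabla w\cdot\nabla w\ge\lambda\1_{\{w>\eta\}}|\nabla w|^2\ge 0$, so $\int_{\mathcal U}w_\eta(\cdot,s)^2\le 0$; hence $w_\eta\equiv 0$, and letting $\eta\downarrow 0$ yields $w\le 0$ in $\Omega'$ (the everywhere statement by the assumed continuity of $w$).

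To deduce the global assertion, fix $(X_\ast,t_\ast)\in\Omega$ and $\epsilon>0$. By $|u|,|v|\to 0$ uniformly as $t\to-\infty$, choose $T>-t_\ast$ with $|w|\le\epsilon$ on $\mathcal O\times(-\infty,-T]$. If $\mathcal O$ is bounded then $\partial\mathcal O\times[-T,t_\ast+1]$ is compact, so $\limsup_{(Y,s)\to(X,t)}w(Y,s)\le 0$ on $\partial\Omega$ yields $w-\epsilon\le 0$ on the parabolic boundary of the bounded subcylinder $\Omega':=\mathcal O\times(-T,t_\ast+1)$ (on the bottom since $|w|\le\epsilon$, on the lateral part since $\limsup w\le 0<\epsilon$). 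Applying the subdomain assertion to $w-\epsilon$ (legitimate since $\LL(w-\epsilon)=0$) gives $w(X_\ast,t_\ast)\le\epsilon$; letting $\epsilon\downarrow 0$ and varying the point yields $u\le v$ on $\Omega$. If $\mathcal O$ is unbounded I would also truncate in the spatial variables: since $w$ is bounded, subtract $\epsilon$ times a nonnegative supersolution $\Phi$ of $\LL$ that grows at spatial infinity (for the heat operator $\Phi(X,t)=|X|^2+Ct$ works; for bounded measurable $A$ an analogous barrier is available, cf.\ \cite{Ny97}), so that $(w-\epsilon\Phi-\eta)^+$ has compact support, and argue as above.

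I expect the main obstacles to be: (i) rigorously justifying that $(w-\eta)^+$ is an admissible test function and that the displayed energy identity holds despite $\dr_t w$ living only in a negative Sobolev space, which is handled by the Lions--Magenes/Steklov machinery; and, when $\mathcal O$ is unbounded, (ii) producing the Phragm\'en--Lindel\"of barrier $\Phi$ controlling $w$ at spatial infinity for merely bounded measurable coefficients. Everything else is classical, and full details in this generality can be found in \cite{Ny97,HL01}.
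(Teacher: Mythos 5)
The paper does not prove this lemma at all: it is quoted verbatim from the literature (\cite{Ny97}, Theorem 1.4, and \cite{HL01}, Lemma 3.38), so there is no in-paper argument to compare yours against; the assessment below is of your proof on its own terms.

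Your bounded-subdomain argument (test the equation for $w=u-v$ with $(w-\eta)^+$, handle $\partial_t w\in W^{-1,2}$ by Steklov averaging/Lions--Magenes, use ellipticity and the vanishing initial term, then let $\eta\downarrow 0$) is the standard comparison proof and is correct, as is the deduction of the global statement for bounded $\mathcal O$ by exhausting in time and using the uniform decay as $t\to-\infty$ to control the bottom of the cylinder by $\epsilon$. The genuine soft spot is the unbounded-$\mathcal O$ case. The barrier $\Phi(X,t)=|X|^2+Ct$ is a supersolution for the heat operator, but for a divergence-form operator with merely bounded measurable $A$ the inequality $\LL\Phi\le 0$ cannot even be checked in the weak sense: the term $\iint A\nabla\Phi\cdot\nabla\varphi=2\iint AX\cdot\nabla\varphi$ has no sign and cannot be integrated by parts without differentiating $A$. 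Nor can you expect an ``analogous barrier'' off the shelf: a positive supersolution growing at spatial infinity is exactly the kind of object whose construction for measurable coefficients is nontrivial (indeed the scale-invariant Harnack inequality rules out growing positive global solutions, and classical uniqueness results in this generality -- Aronson, Ladyzhenskaya--Solonnikov--Ural'tseva -- deliberately avoid barriers). The standard repair is to replace the Phragm\'en--Lindel\"of barrier by a Gaussian-weighted energy estimate: test with $(w-\epsilon)^+e^{-\delta|X|^2}$ on the finite time slab, absorb the cross term $\delta\iint A\nabla w\cdot X\,(w-\epsilon)^+e^{-\delta|X|^2}$ by Cauchy--Schwarz, and close with Gronwall, using only the boundedness of $w$; alternatively, one simply invokes \cite{Ny97} or \cite{HL01}, which is precisely what the paper does. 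With that substitution your proof is complete; as written, the spatial-infinity step would not go through for the class of coefficients in the lemma.
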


\begin{lemma}[Harnack inequality]%
	\label{L:Harnack}
	Let $A$ satisfy~\eqref{E:elliptic} and suppose that $u$ is a weak non-negative solution of~\eqref{E:pde} in $Q_{4r}(X,t)\subset\Omega$, with $0 < r < \delta(X,t)/8$.
	Suppose that $(Y,s), (Z,\tau) \in Q_{2r}(X,t)$ then there exists $C=C(\lambda, \Lambda, n)$ such that, for $\tau < s$,
	\[
		u(Z, \tau) \leq u(Y, s) \exp \left[ C\left( \frac{|Y-Z|^2}{|s-\tau|} + 1 \right) \right].
	\]
 If $u\ge0$ is a weak solution to \eqref{E:adjpde}, then the inequality is valid when $\tau>s$.
\end{lemma}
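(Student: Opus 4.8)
\emph{Approach.} I would prove this by Moser's iteration technique, the classical route to parabolic Harnack inequalities (Moser, Aronson--Serrin), carried out with the parabolic cubes $Q_r$ of the present paper; since a nonnegative weak solution of \eqref{E:pde} is simultaneously a nonnegative subsolution and a nonnegative supersolution, the energy (Caccioppoli-type) inequality of Lemma~\ref{L:Caccio} applies to its powers and truncations. The first ingredient is \emph{local boundedness}: testing \eqref{E:pde} against $\zeta^2 u^{\beta}$ with a parabolic cutoff $\zeta$ and $\beta\ge 1$, combining the resulting Caccioppoli inequality with the parabolic Sobolev embedding $L^2_t W^{1,2}_x\cap L^\infty_t L^2_x\hookrightarrow L^{2(n+2)/n}$, and iterating over cubes shrinking down to $Q_{2r}(X,t)$, yields $\sup_{Q_{2r}(X,t)}u\lesssim\bigl(\fiint_{Q_{3r}(X,t)}u^{p}\bigr)^{1/p}$ for every fixed $p>0$ (the constant depending on $\lambda,\Lambda,n,p$ and blowing up as $p\to 0$). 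The symmetric iteration against $\zeta^2 u^{-\beta}$, $\beta>0$, gives the matching \emph{lower bound for supersolutions}, $\inf_{Q^+}u\gtrsim\bigl(\fiint_{\widehat Q^+}u^{-p}\bigr)^{-1/p}$ for small $p>0$, where $Q^+\subset\widehat Q^+$ are concentric cylinders sitting in the later-time part of $Q_{2r}(X,t)$.

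\emph{The crossover estimate.} The step that genuinely distinguishes the parabolic case is the bridge between these two one-sided bounds. Testing \eqref{E:pde} against $\zeta^2/u$ produces an $L^2$ bound for $\nabla\log u$ on subcubes; a Poincar\'e inequality then controls the parabolic oscillation of $v:=\log u$ on time slabs, and a parabolic John--Nirenberg / Bombieri--Giusti argument upgrades this to the existence of $a\in\R$ and $c>0$ with $\fiint_{Q^-}e^{\,c(v-a)}\lesssim 1$ and $\fiint_{Q^+}e^{-c(v-a)}\lesssim 1$, where $Q^-$ is a subcylinder at earlier times and $Q^+$ one at strictly later times. Equivalently $\fiint_{Q^-}u^{c}\lesssim e^{ca}$ and $\fiint_{Q^+}u^{-c}\lesssim e^{-ca}$, and the time separation between $Q^-$ and $Q^+$ is exactly the feature unavailable in the elliptic theory and the reason the hypothesis $\tau<s$ is needed. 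Feeding the first estimate into the local boundedness bound (with $p=c$) and the second into the supersolution lower bound (with $p=c$), and cancelling $e^{a}$, gives the \emph{one-step Harnack inequality}: there are fixed subcylinders $Q^-\ni(Z,\tau)$ and $Q^+\ni(Y,s)$, say with $|Y-Z|\lesssim r$ and $s-\tau\sim r^2$, and a constant $C_0=C_0(\lambda,\Lambda,n)$ with $u(Z,\tau)\le C_0\,u(Y,s)$.

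\emph{Chaining and the adjoint.} To reach arbitrary $(Y,s),(Z,\tau)\in Q_{2r}(X,t)$ with $\tau<s$, I would chain the one-step inequality: set $N:=\lfloor |Y-Z|^2/(s-\tau)\rfloor+1$, pick times $\tau=\sigma_0<\dots<\sigma_N=s$ with $\sigma_{k+1}-\sigma_k\sim(s-\tau)/N$ and points $W_k$ on the segment from $Z$ to $Y$ with $|W_{k+1}-W_k|\sim|Y-Z|/N$; the choice of $N$ makes $|W_{k+1}-W_k|^2\lesssim\sigma_{k+1}-\sigma_k$, so each consecutive pair $(W_k,\sigma_k),(W_{k+1},\sigma_{k+1})$ sits in admissible backward/forward subcylinders (all contained in $Q_{4r}(X,t)\subset\Omega$, since $r<\delta(X,t)/8$) and the one-step bound applies to it. Multiplying the $N$ inequalities, $u(Z,\tau)\le C_0^{\,N}u(Y,s)=\exp(N\log C_0)\,u(Y,s)\le\exp\!\bigl[C(|Y-Z|^2/(s-\tau)+1)\bigr]u(Y,s)$, which is the assertion. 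For the adjoint statement, if $u\ge 0$ solves $\LL^*u=\partial_t u+\div(A^T\nabla u)=0$ as in \eqref{E:adjpde}, then $\tilde u(X,t):=u(X,-t)$ solves an equation of the form \eqref{E:pde} with the still-elliptic matrix $\tilde A(X,t)=A^T(X,-t)$, so the result for $\LL$ transfers to $u$ with time reversed, i.e.\ it holds for $\tau>s$.

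\emph{Main obstacle.} The crux is the crossover estimate and the parabolic John--Nirenberg lemma behind it: one cannot compare $\sup u$ with $\inf u$ on the same region, so the argument must be organized so that the supremum lives on an earlier-time cylinder and the infimum on a later-time one, and it is precisely the $e^{c\log u}$-versus-$e^{-c\log u}$ integrability dichotomy on these two separated sets that makes the comparison go through. The chaining in the last step is routine but must keep the parabolic scaling $|x|^2\sim t$ consistent throughout in order to land on the sharp exponent $|Y-Z|^2/(s-\tau)$.
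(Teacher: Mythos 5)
Your proposal is correct and is the classical Moser-iteration proof (local boundedness for subsolutions, the infimum bound for supersolutions, the $\log u$/parabolic John--Nirenberg crossover, then parabolic chaining to produce the factor $\exp[C(|Y-Z|^2/|s-\tau|+1)]$, with time reversal $t\mapsto -t$ handling \eqref{E:adjpde}). The paper itself offers no proof of this lemma -- it is quoted from Hofmann--Lewis \cite{HL01} (Lemmas 3.4--3.5), whose argument rests on exactly this standard Moser/Aronson--Serrin machinery, so your route coincides with the one underlying the citation.
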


The continuous Dirichlet problem for \eqref{E:pde} is solvable for any compactly supported, continuous function $f\in C_0^\infty(\pom)$ (\cite{HL01})
and that solution is given by 
\begin{equation}\label{eq.parmeasure}
    u(X,t)=\int_{\pom} f(y,s)d\omega(X,t,y,s),
\end{equation}
where $d\omega(X,t,y,s)$ denotes the boundary parabolic measure.  \medskip

Under the assumption \eqref{E:elliptic}, it is known (see e.g. \cite{HL01}) that there exists a Green’s function $G$ for \eqref{E:pde} in $\om$ with corresponding
parabolic, adjoint parabolic measures $\omega(X, t, \cdot)$, $\omega^T(X, t, \cdot)$, satisfying for
each $(X, t)\in \om$ the condition:
\smallskip
\begin{equation}\label{eq.Riesz}
    \phi(X,t)
=\iint_\om \br{A\nabla\phi\cdot\nabla_Y G(X,t, Y,s)+ G(X,t,Y,s)\dr_s\phi}dYds
+\int_{\pom}\phi(y,s)d\omega(X,t,y,s),
\end{equation}
\[
\phi(X,t)
=\iint_\om\br{ A^T\nabla\phi\cdot\nabla_Y G(Y,s, X,t)-G(Y,s,X,t)\dr_s\phi} dYds
+\int_{\pom}\phi(y,s)d\omega^T(X,t,y,s),
\]
for any $\phi\in C_0^\infty(\R^{n+1})$. 
The Green's function for \eqref{E:pde} has the following properties.
	\begin{enumerate}
		\item $G(X,t,Y,s) = 0$ for $s > t$, $(X,t)$, $(Y,s) \in \Omega$.

		\item For fixed $(Y,s) \in \Omega$, $G(\cdot, Y,s)$ is a solution to \eqref{E:pde} in $\om \setminus \{(Y,s)\}$.

		\item For fixed $(X,t) \in \Omega$, $G(X,t, \cdot)$ is a solution to \eqref{E:adjpde},
  the adjoint equation in $\Omega \setminus \{(X,t)\}$.

		\item If $(X,t)$, $(Y,s) \in \Omega$ then $G(X,t, \cdot)$ and $G(\cdot, Y,s)$ extend continuously to $\overline{\Omega}$ provided both functions are defined to be zero on $\partial \Omega$.
	\end{enumerate}

 It was proven in \cite{Aro68} that Green's function on $\R^n\times\R$ satisfies the bounds
 \begin{equation}\label{eq.Grnf_upbd}
     G(X,t,Y,s)\le C(t-s)^{-n/2}\exp\br{-\frac{\abs{X-Y}^2}{C(t-s)}} \quad\text{ for all } t>s,
 \end{equation}
 for some $C=C(\lambda,\Lambda,n)\ge 1$. \medskip

\begin{lemma}[Boundary Caccioppoli's inequality]\label{lem.bdyCaccio} 
 Let $A$ satisfy~\eqref{E:elliptic} and suppose that $u$ is a weak solution of~\eqref{E:pde} or \eqref{E:adjpde} in $(0,2r)\times Q_{2r}$, where $Q_{2r}=Q_{2r}(y,s)$ is a parabolic cube on $\pom$ defined as in \eqref{eqdef.bdypcube}. If $u$ vanishes continuously on $Q_{2r}$, then there exist constant $C=C(\lambda,\Lambda,n)$ such that 
 \[
 \int_{x_n=0}^{r}\int_{Q_r} \abs{\nabla u}^2dtdX\le Cr^{-2}\int_{x_n=0}^{2r}\int_{Q_{2r}}u^2dtdX.
 \]
\end{lemma}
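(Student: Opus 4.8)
The plan is to prove the boundary Caccioppoli inequality by the standard energy-method cutoff argument, adapted to the parabolic scaling. Let $u$ be a weak solution of \eqref{E:pde} (the argument for \eqref{E:adjpde} is identical after reversing the time direction) in the box $(0,2r)\times Q_{2r}$ that vanishes continuously on $Q_{2r}=Q_{2r}(y,s)\subset\pom$. Choose a cutoff $\eta\in C_c^\infty(\R^{n}\times\R)$ with $\eta\equiv 1$ on $(0,r)\times Q_r$, $\supp\eta\subset (-2r,2r)\times Q_{2r}$ (in the $x_n$-direction we do not cut off near $x_n=0$ since $u$ already vanishes there), $0\le\eta\le1$, $|\nabla\eta|\le C/r$ and $|\partial_t\eta|\le C/r^2$ — the parabolic balance between one time derivative and two spatial derivatives. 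Since $u$ vanishes continuously on the flat piece $Q_{2r}$ and $u\in W^{1,2}_{\mathrm{loc}}$, the function $\eta^2 u$ is an admissible test function whose trace vanishes on all of the parabolic boundary of the half-box.

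The key step is to test the weak formulation $\iint A\nabla u\cdot\nabla\phi\,dXdt-\iint u\,\partial_t\phi\,dXdt=0$ with $\phi=\eta^2 u$. First I would expand $\nabla(\eta^2u)=\eta^2\nabla u+2\eta u\nabla\eta$, so the elliptic term becomes $\iint \eta^2 A\nabla u\cdot\nabla u + 2\iint \eta u\, A\nabla u\cdot\nabla\eta$. By ellipticity \eqref{E:elliptic} the first integral is bounded below by $\lambda\iint\eta^2|\nabla u|^2$, and the cross term is absorbed via Cauchy–Schwarz with a small parameter: $|2\eta u A\nabla u\cdot\nabla\eta|\le \tfrac{\lambda}{2}\eta^2|\nabla u|^2 + C_\lambda u^2|\nabla\eta|^2$. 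For the time term, I would use $\eta^2 u\,\partial_t u=\tfrac12\eta^2\partial_t(u^2)$ and integrate by parts in $t$; since $\eta$ has compact support in $t$ this gives $\iint u\,\partial_t(\eta^2 u)\,dXdt=\iint \eta u^2\partial_t\eta\,dXdt$ (the $\partial_t(u^2)$ boundary terms vanish), which is controlled by $C\iint u^2|\partial_t\eta|\le Cr^{-2}\iint_{Q_{2r}} u^2$. Collecting terms and using $|\nabla\eta|^2\le Cr^{-2}$, $|\partial_t\eta|\le Cr^{-2}$, and that $\eta\equiv1$ on $(0,r)\times Q_r$ yields
\[
\int_0^r\!\!\int_{Q_r}|\nabla u|^2\,dt\,dX \le C r^{-2}\int_0^{2r}\!\!\int_{Q_{2r}} u^2\,dt\,dX.
\]

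One technical point to handle carefully is the justification that $\eta^2 u$ is genuinely an admissible test function, given that a weak solution only has $\partial_t u\in L^2_{\mathrm{loc}}(\R;W^{-1,2}_{\mathrm{loc}})$ rather than $L^2$; this is the standard Steklov-average (or mollification in $t$) device — one proves the identity for the time-mollified $u_h$, where $\partial_t u_h$ is a genuine $L^2$ function, carries out the integration by parts in $t$ rigorously there, and then passes to the limit $h\to0$, using the continuous vanishing of $u$ on $Q_{2r}$ to ensure no boundary contribution appears on the flat face. I expect this regularization/limiting step, rather than the algebra, to be the only real obstacle; everything else is the classical Caccioppoli bookkeeping with parabolic homogeneity. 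Since this is a routine foundational lemma I would keep the write-up brief, citing the Steklov averaging as standard and emphasizing only the choice of cutoff and the sign/absorption structure.
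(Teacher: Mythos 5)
Your argument is correct and is precisely the standard cutoff/energy proof that the paper itself does not reproduce but simply cites as standard (Remark 2.3 of \cite{Ny97}): test with $\eta^2u$ for a cutoff with parabolic scaling $|\nabla\eta|\lesssim r^{-1}$, $|\partial_t\eta|\lesssim r^{-2}$ vanishing near the top face and both time endpoints but not at $x_n=0$, absorb the cross term by ellipticity, and handle the time term by integration by parts (rigorously via Steklov averages), using the continuous vanishing of $u$ on $Q_{2r}$ to make $\eta^2u$ admissible. No gaps; the computation $\iint u\,\partial_t(\eta^2u)=\iint\eta u^2\partial_t\eta$ and the absorption structure are exactly right, and the remark that the adjoint (backward) case is identical is also correct.
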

The proof is standard; see for example \cite{Ny97} Remark 2.3.

A consequence of Caccioppoli's inequality on the boundary is the following:

\begin{lemma}\label{lemma:Gradient L2toL1}
    Let $u$, $Q_{2r}$ be as in Lemma \ref{lem.bdyCaccio}.
    Then
    \begin{align}\label{eq:Gradient_L2toL1}
        \br{\int_0^r\int_{Q_r} |\nabla u|^2}^{1/2} 
        \le C r^{-\frac{n+2}{2}} \int_0^{2r}\int_{Q_{2r}} |\nabla u|  .
    \end{align}
\end{lemma}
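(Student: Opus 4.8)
The plan is to derive \eqref{eq:Gradient_L2toL1} from the boundary Caccioppoli inequality (Lemma~\ref{lem.bdyCaccio}) combined with the interior reverse H\"older inequality for the gradient (Lemma~\ref{lem.gradL2-L1}), via a Whitney-type decomposition and a real-variable interpolation argument of Shen type. The key observation is that $\nabla u$ is itself, roughly speaking, a subsolution-like quantity: on interior parabolic cubes far from the flat boundary piece $Q_{2r}$ we may apply Lemma~\ref{lem.gradL2-L1} to pass from an $L^2$ average to an $L^1$ average of $\nabla u$, while on cubes touching the boundary we first apply Lemma~\ref{lem.bdyCaccio} to trade $\nabla u$ in $L^2$ for $u$ in $L^2$ at a comparable scale, and then — since $u$ vanishes on $Q_{2r}$ — use a Poincar\'e-type inequality to bring this back to $\nabla u$ in $L^1$.

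More concretely, I would proceed as follows. First, fix the half-cylinder $T_r := (0,r)\times Q_r$ and cover it by a Whitney decomposition $\{P_j\}$ into parabolic cubes with $\diam P_j \approx \dist(P_j, \partial T_{2r}\cap\{x_n=0\})$, keeping the dilates $2P_j$ inside $(0,2r)\times Q_{2r}$. For the \emph{interior} cubes (those with $\dist(P_j,\{x_n=0\})\gtrsim \diam P_j$), Lemma~\ref{lem.gradL2-L1} gives $\big(\fiint_{P_j}|\nabla u|^2\big)^{1/2}\lesssim \fiint_{2P_j}|\nabla u|$. For the \emph{boundary} cubes, the boundary Caccioppoli inequality applied at the scale of $P_j$ yields $\big(\fiint_{P_j}|\nabla u|^2\big)^{1/2}\lesssim (\diam P_j)^{-1}\big(\fiint_{2P_j} u^2\big)^{1/2}$, and then, since $u=0$ on $Q_{2r}$, a parabolic Poincar\'e inequality on $2P_j$ converts $(\diam P_j)^{-1}\|u\|$ into an $L^2$-average of $\nabla u$, after which one more application of the interior reverse H\"older (or a direct Sobolev embedding argument) on a slightly larger cube passes to the $L^1$-average $\fiint_{3P_j}|\nabla u|$. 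Summing these local estimates over $j$ with the correct powers of $\diam P_j$ and using finite overlap of the dilated Whitney cubes produces the claimed global bound; the scaling factor $r^{-(n+2)/2}$ is exactly the parabolic measure normalization ($|T_r|\approx r^{n+2}$) needed to match $\big(\int_{T_r}|\nabla u|^2\big)^{1/2}$ on the left with $\int_{T_{2r}}|\nabla u|$ on the right.

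Alternatively — and this may be cleaner to write up — one can invoke the real-variable machinery of Shen (\cite{S2}) directly, as was done for the interior statement Lemma~\ref{lem.gradL2-L1} in \cite[Lemma 3.6]{DinSa}: one already has the self-improving higher integrability $\big(\fiint_{P}|\nabla u|^q\big)^{1/q}\lesssim \big(\fiint_{2P}|\nabla u|^2\big)^{1/2}$ for some $q>2$ on cubes $P$ compatible with the boundary (combining interior $W^{1,q}$ estimates with boundary reverse H\"older from Lemma~\ref{lem.bdyCaccio}), and Shen's lemma then upgrades the weak-type information to the strong $L^2\!\to\!L^1$ bound on the boundary ball, with the stated normalization. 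Either route reduces \eqref{eq:Gradient_L2toL1} to estimates already in the excerpt.

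The main obstacle I anticipate is the boundary cubes: passing from $\big(\fiint_{P_j}|\nabla u|^2\big)^{1/2}$ all the way down to an $L^1$ average of $|\nabla u|$ requires chaining Caccioppoli $\to$ Poincar\'e $\to$ reverse H\"older across slightly enlarged cubes while keeping every dilate inside the admissible region $(0,2r)\times Q_{2r}$ and not losing in the scaling exponents; one must be careful that the parabolic Poincar\'e inequality is applied in the form that respects the anisotropic scaling (time scales like length squared) and that the vanishing trace of $u$ on the \emph{spatial} boundary piece $\{x_n=0\}$ is genuinely what makes the Poincar\'e step valid. Away from this, the interior part is routine given Lemmas~\ref{L:Caccio} and~\ref{lem.gradL2-L1}.
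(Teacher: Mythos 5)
Your primary route (Whitney decomposition plus summation) has two concrete gaps. First, the summation step does not close: even granting the local estimates $\big(\fint_{P_j}|\nabla u|^2\big)^{1/2}\lesssim \fint_{2P_j}|\nabla u|$, squaring and summing gives $\int_0^r\int_{Q_r}|\nabla u|^2\lesssim \sum_j |P_j|^{-1}\big(\int_{2P_j}|\nabla u|\big)^2$, and an $\ell^2$-type sum of local $L^1$ averages is not controlled by $r^{-(n+2)}\big(\int_0^{2r}\int_{Q_{2r}}|\nabla u|\big)^2$: a local average can be far larger than the global one, and for genuine Whitney cubes $|P_j|^{-1}\to\infty$ near $\{x_n=0\}$, so no uniform bound results. (This is precisely why $L^2$--$L^1$ weak reverse H\"older inequalities are never obtained by summing local ones.) Second, the chain you propose on cubes near the boundary is circular as stated: boundary Caccioppoli (Lemma \ref{lem.bdyCaccio}) gives $(\diam P_j)^{-1}\|u\|_{L^2(2P_j)}$, and the plain parabolic Poincar\'e inequality (valid because $u$ vanishes on $Q_{2r}$) converts this back into an $L^2$ average of $\nabla u$ --- no gain; to gain one must use Sobolev--Poincar\'e, which controls $\|u\|_{L^2}$ by $\|\nabla u\|_{L^{p_0}}$ with $p_0=\tfrac{2(n+2)}{n+4}<2$, and your final step of ``one more application of the interior reverse H\"older on a slightly larger cube'' is unavailable for cubes abutting $\{x_n=0\}$, since Lemma \ref{lem.gradL2-L1} requires the quadruple cube to lie inside $\Omega$. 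Also note that in a Whitney decomposition relative to $\{x_n=0\}$ there are no ``boundary cubes'' at all, so the case split as written is not coherent.

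Your alternative route is the correct one, and it is in effect the intended argument: the paper states the lemma without a written proof, as a consequence of the boundary Caccioppoli inequality, with the interior analogue (Lemma \ref{lem.gradL2-L1}) quoted from \cite{DinSa} where it is obtained from higher integrability together with the real-variable argument of \cite{S2}. Concretely: Caccioppoli (interior version for parabolic cubes away from $\{x_n=0\}$, boundary version for those meeting it, using $u=0$ on $Q_{2r}$) combined with (Sobolev--)Poincar\'e yields a weak reverse H\"older inequality with exponents $(2,p_0)$, $p_0<2$, on all parabolic cubes and half-cubes whose fixed dilates remain in $(0,2r)\times Q_{2r}$; the self-improving property of weak reverse H\"older inequalities in the lower exponent (a covering/absorption or iteration argument, as in \cite{S2}) then lowers $p_0$ to $1$, and unwinding the averages gives \eqref{eq:Gradient_L2toL1} with the stated factor $r^{-\frac{n+2}{2}}$. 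Write up that version and drop the Whitney summation.
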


\begin{lemma}[Boundary H\"older continuity, see \cite{HL01} Lemma 3.9]\label{lem.bdyHolder}
    Let $A$, $u$, $Q_{2r}$ be as in Lemma \ref{lem.bdyCaccio}. If $u$ vanishes continuously on $Q_{2r}$, then there exist constants $C=C(\lambda,\Lambda,n)$ and $\alpha=\alpha(\lambda,\Lambda,n)\in(0,1)$, such that for any $(X,t)\in (0,r/2)\times Q_{r/2}$,
    \[
    u(X,t)\le C\br{\frac{\delta(X,t)}{r}}^\alpha\sup_{(0,r)\times Q_r}u.
    \]

    If in addition, $u\ge0$ in $(0,2r)\times Q_{2r}$, then there exist constants $C=C(\lambda,\Lambda,n)$ and $\alpha=\alpha(\lambda,\Lambda,n)\in(0,1)$, such that for any $(X,t)\in (0,r/2)\times Q_{r/2}$,
    \begin{equation}\label{eq.bdyHar}
        u(X,t)\le C\br{\frac{\delta(X,t)}{r}}^\alpha u(A_r^\pm(y,s)),
    \end{equation}
    where the plus sign is taken when $u$ is a weak solution to \eqref{E:pde} and the minus sign
for $u$ satisfying \eqref{E:adjpde}.
\end{lemma}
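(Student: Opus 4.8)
\emph{Proof sketch (plan).}
Both inequalities are instances of the classical boundary De Giorgi--Nash--Moser theory, and the argument follows the template of \cite[Lemma 3.9]{HL01}; the plan is to assemble it from the interior estimates already recorded. Throughout fix the boundary point $(y,s)$ and for $0<\rho\le 2r$ abbreviate $\Psi_\rho:=(0,\rho)\times Q_\rho(y,s)$ and $m(\rho):=\sup_{\Psi_\rho}u$. I will (a) prove the oscillation decay of the first estimate; (b) upgrade it, for nonnegative $u$, to the comparison $m(\rho)\lesssim u(A_\rho^{\pm})$ and thence to \eqref{eq.bdyHar}.

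For (a), I would first reduce to the one-step estimate that there is $\theta=\theta(\lambda,\Lambda,n)\in(0,1)$ with $\sup_{\Psi_{\rho/2}}|u|\le\theta\,\sup_{\Psi_\rho}|u|$ for every $\rho\le r$; iterating this over the dyadic cubes shrinking toward the boundary point nearest $(X,t)$, down to scale $\rho\simeq\delta(X,t)$, and then applying interior H\"older continuity (Lemma \ref{L:int_Holder}), or simply the interior maximum principle, to pass from the sup over the smallest cylinder to the value $u(X,t)$, yields $u(X,t)\le C\,(\delta(X,t)/r)^{\alpha}\,m(r)$ with $\alpha=\log_2(1/\theta)$. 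The one-step estimate is the standard De Giorgi iteration on the parabolic cube $\Psi_\rho$ applied to the subsolutions $(u-k)^+$ and $(-u-k)^+$, $k\ge0$: since $u$ vanishes continuously on the flat face $Q_\rho\subset\{x_n=0\}$, each truncation also vanishes there, so one may invoke the parabolic Sobolev inequality in the form that does not subtract an average (the ``free'' Poincar\'e supplied by the positive-measure flat portion of $\partial\Psi_\rho$); combined with the boundary Caccioppoli inequality (Lemma \ref{lem.bdyCaccio}) and the anisotropic parabolic scaling, the usual iteration over shrinking sub-cubes closes the bound, using local boundedness up to the flat boundary together with the De Giorgi oscillation lemma, which exploits that the supersolution $\sup_{\Psi_\rho}u-u$ equals $\sup_{\Psi_\rho}u$ on $Q_\rho$.

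For (b), assume $u\ge0$ in $\Psi_{2r}$ and let $A_\rho^{\pm}(y,s)$ be a corkscrew point of $\Psi_{2r}$ at parabolic distance $\simeq\rho$ from $\pom$, placed at a time slightly later (resp.\ earlier) than $s$ for \eqref{E:pde} (resp.\ \eqref{E:adjpde}). I claim $m(\rho)\le C\,u(A_\rho^{\pm}(y,s))$ for all $\rho\le r$, with $C=C(\lambda,\Lambda,n)$, and I would prove it by contradiction. If $m(\rho_0)\ge N\,u(A_{\rho_0}^{\pm})$ for some $\rho_0\le r$ and some large $N$, pick $(X^*,t^*)\in\Psi_{\rho_0}$ with $u(X^*,t^*)\ge\tfrac12 m(\rho_0)$. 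If $\delta(X^*,t^*)\ge\epsilon\rho_0$, then $(X^*,t^*)$ is joined to $A_{\rho_0}^{\pm}$ by a Harnack chain --- oriented forward in time for \eqref{E:pde} and backward for \eqref{E:adjpde}, which is exactly why the sign in $A_\rho^{\pm}$ matters --- whose length depends only on $\epsilon$ and $n$, so Lemma \ref{L:Harnack} gives $u(X^*,t^*)\le C(\epsilon,n,\lambda,\Lambda)\,u(A_{\rho_0}^{\pm})$, contradicting $u(X^*,t^*)\ge\tfrac{N}{2}u(A_{\rho_0}^{\pm})$ once $N$ is large; if instead $\delta(X^*,t^*)<\epsilon\rho_0$, the first estimate applied on $\Psi_{\rho_0}$ gives $u(X^*,t^*)\le C\epsilon^{\alpha}m(\rho_0)$, contradicting $u(X^*,t^*)\ge\tfrac12 m(\rho_0)$ once $\epsilon$ is small. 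Fixing first $\epsilon$ and then $N$ proves the claim; combining it at scale $r/2$ with the first estimate at scale $r/2$ (and one interior Harnack step relating $u(A_{r/2}^{\pm})$ to $u(A_r^{\pm})$) gives, for $(X,t)\in(0,r/2)\times Q_{r/2}$,
\[
u(X,t)\le C\Big(\frac{\delta(X,t)}{r}\Big)^{\alpha}m(r/2)\le C\Big(\frac{\delta(X,t)}{r}\Big)^{\alpha}u(A_r^{\pm}(y,s)),
\]
which is \eqref{eq.bdyHar}.

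The main obstacle is step (a): the boundary De Giorgi iteration behind the one-step decay --- in particular the use of the flat face $Q_\rho$ to dispense with Poincar\'e averages, and the bookkeeping of the anisotropic parabolic scaling in the Caccioppoli and Sobolev inequalities. Once (a) is available, (b) is a soft consequence of it and the interior Harnack inequality, the only delicate point there being the orientation of the Harnack chains relative to the slice $\{t=s\}$, which forces the corkscrew point to lie above $\{t=s\}$ for \eqref{E:pde} and below it for the adjoint equation \eqref{E:adjpde}.
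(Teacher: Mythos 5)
The paper itself offers no proof of this lemma: it is imported verbatim from \cite{HL01} (Lemma 3.9 there), so the only comparison available is with the standard argument in the literature, which your sketch follows. Your part (a) — the one-step decay $\sup_{\Psi_{\rho/2}}|u|\le\theta\sup_{\Psi_\rho}|u|$ via De Giorgi iteration on the truncations, using that they vanish on the flat face so no Poincar\'e average is needed, followed by dyadic iteration down to scale $\delta(X,t)$ — is the standard boundary H\"older proof and is sound in outline (you do not even need Lemma \ref{L:int_Holder} at the last step; $u(X,t)\le\sup_{\Psi_{c\delta(X,t)}}u$ trivially).

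Part (b), however, has a genuine gap. In the case $\delta(X^*,t^*)<\epsilon\rho_0$ you claim the first estimate ``applied on $\Psi_{\rho_0}$'' gives $u(X^*,t^*)\le C\epsilon^\alpha m(\rho_0)$. But the boundary H\"older estimate at scale $\rho$ only controls $u$ on the \emph{half-sized} cylinder by the supremum over the full one, whereas your near-maximum point $(X^*,t^*)$ was chosen anywhere in $\Psi_{\rho_0}$ (it may sit near the lateral edge); re-centering the estimate at the boundary projection of $(X^*,t^*)$ produces a bound of the form $C\epsilon^\alpha m(2\rho_0)$ (or $C\epsilon^\alpha\sup$ over some cylinder not contained in $\Psi_{\rho_0}$). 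Since comparability of $m(2\rho_0)$ with $m(\rho_0)$ — equivalently with $u(A^\pm)$ — is exactly what you are trying to prove, the ``fix $\epsilon$, then $N$'' dichotomy does not close; it is circular. The standard repair (and what \cite{HL01} in effect does, following the Caffarelli--Fabes--Mortola--Salsa scheme in its parabolic form) is an iterative growth argument: if $u$ exceeds $Nu(A_r^\pm)$ at some point of $\Psi_{r/2}$, then by the Harnack-chain alternative that point must lie within distance $\sim\theta r$ of the boundary, and the oscillation estimate then forces the existence of a nearby point, at distance $\sim\theta^2 r$, where $u$ exceeds $KNu(A_r^\pm)$ for a fixed $K>1$; iterating produces a sequence converging to a point of $Q_{2r}$ along which $u\to\infty$, contradicting the continuous vanishing of $u$ there. (Equivalently one runs an induction on scales coupling $m(\rho)$ with $u(A_\rho^\pm)$ and uses the Harnack doubling $u(A_\rho^\pm)\le Cu(A_{2\rho}^\pm)$ with the correct time lag.) Your remark about the orientation of the Harnack chains relative to $\{t=s\}$ is correct but is not the delicate point; the scale-coupling just described is.
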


We now introduce the definition of Carleson measures.

\begin{definition}
A measure $\mu$ is a Carleson measure on $\Omega = \mathcal O \times \R$  if there exists a constant $M$ such that for $(X,t) \in \partial \Omega$ and
$0<r<\diam(\Omega)$,
\begin{equation}\label{Cmeasure}
\mu(B_r(X,t) \cap \Omega) \leq  M r^{n+1}.
\end{equation}
The infimum over all $M$ such that \eqref{Cmeasure} holds is the Carleson norm, $\norm{\mu}_C$, of $\mu$.
\end{definition}

\begin{lemma}{\cite[Theorem 1.1]{Up}}\label{lem.pert}
       Let $\OO\subset\Rn$ be a Lipschitz domain, and let $\LL_0=\dr_t- \divg(A_0\nabla)$ and $\LL_1=\dr_t-\divg(A_1\nabla)$ be two parabolic operators such that $A_i$ satisfies \eqref{E:elliptic} on $\om=\OO\times\R$, $i=1,2$. Set
        \[
d\nu =
\sup_{Q(X,t,\delta(X,t)/2)} |A_0 - A_1|^2\frac{dXdt}{\delta}. \]
Assume that the Regularity problem $(R)_q$ for $\LL_0$ is solvable for $q>1$, and that $\norm{\nu}_C<\infty$. Then the Regularity problem $(R)_{p}$ for $\LL_1$ is solvable for some $1 < p < q$.
\end{lemma}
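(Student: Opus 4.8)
The plan is to adapt the elliptic perturbation machinery of \cite{S2,DHP} to the parabolic setting, where the key point is that the estimate \eqref{RPpar} only involves the (modified) nontangential maximal function of the \emph{spatial gradient}, so the self-improving real-variable argument of Shen applies once we set things up correctly. First I would fix the solvability of $(R)_q^{\LL_0}$ and start from the energy solution $u_1$ to $\LL_1 u_1 = 0$ with data $f \in \dot L^p_{1,1/2}(\pom) \cap \Hdot^{1/4}_{\pd_t-\Delta_x}(\pom)$; by Theorem \ref{timp} it suffices to control $\|\wt N(\nabla u_1)\|_{L^p}$. Writing $w = u_1 - u_0$ where $u_0$ is the energy solution to $\LL_0 u_0 = 0$ with the \emph{same} boundary data $f$, one has $\LL_0 w = \divg\big((A_0 - A_1)\nabla u_1\big)$ in $\Omega$ with zero boundary data, so the natural route is to represent $w$ via the Green's function for $\LL_0$ and estimate the resulting "error term" using the Carleson condition on $\nu$. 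Since $(R)_q^{\LL_0}$ is assumed solvable, $u_0$ already obeys the good bound $\|\wt N(\nabla u_0)\|_{L^q} \lesssim \|f\|_{\dot L^q_{1,1/2}}$, and the whole problem reduces to bounding $\wt N(\nabla w)$ in terms of the Carleson norm of $\nu$ and (a fractional power of) $\wt N(\nabla u_1)$ itself.

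The mechanism I would use is a localized good-$\lambda$ / stopping-time argument à la Shen: on a boundary cube $Q = Q_r(x,t)$, decompose $u_1 = v + w_{\mathrm{loc}}$ on a Carleson box $T(2Q)$, where $v$ solves $\LL_0 v = 0$ in $T(2Q)$ with $v = u_1$ on the parabolic boundary of the box, and $w_{\mathrm{loc}}$ carries the inhomogeneity. For $v$, one uses interior estimates (Caccioppoli, Lemma \ref{L:Caccio}; reverse Hölder for the gradient, Lemma \ref{lem.gradL2-L1}; boundary Caccioppoli, Lemma \ref{lem.bdyCaccio}; and the known $(R)_q^{\LL_0}$ bound applied on the box) to get a $q$-th power gain on averages of $\nabla v$. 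For $w_{\mathrm{loc}}$, one writes it through the $\LL_0$-Green's function and estimates
\[
|\nabla w_{\mathrm{loc}}(X,t)| \lesssim \iint_{T(2Q)} |\nabla_X G_0(X,t,Y,s)|\,|A_0-A_1|(Y,s)\,|\nabla u_1(Y,s)|\,dY\,ds,
\]
then squares, uses the pointwise bound $|A_0-A_1|^2 \lesssim \delta\, d\nu/(dY\,ds)$ on Whitney regions, absorbs $|\nabla_X G_0|^2 \delta$-type kernels against the Carleson measure $\nu$, and controls the remaining $|\nabla u_1|^2$-average by $\wt N(\nabla u_1)^2$ times a maximal-function factor. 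Integrating in $(x,t) \in Q$ and invoking the Carleson condition $\|\nu\|_C < \infty$ yields, after the standard iteration, an a priori inequality of the form $\|\wt N(\nabla w)\|_{L^p(Q)}^p \lesssim \|\nu\|_C\, \|\wt N(\nabla u_1)\|_{L^p}^{p} + (\text{lower order})$, with a small constant on a fine enough scale — this is what powers Shen's self-improvement from the a priori bound at exponent $q$ down to some $p \in (1,q)$.

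The hard part, and the place where the parabolic setting genuinely departs from the elliptic one, is handling the half-order time derivative and the nonlocality it introduces. Even though the conclusion \eqref{RPpar} is phrased only in terms of $\nabla u$, the energy-solution framework and the sesquilinear form \eqref{eq-sesq} intrinsically involve $\HT \dhalf u$, and the comparison $w = u_1 - u_0$ must be made inside $\dot\E$; controlling the $\dhalf$-part of the error term, and ensuring the Green's representation for $w$ is justified with the correct decay at $t \to \pm\infty$ (recall the discussion around $\H^{1/2}_{\loc}$ versus $\Hdot^{1/2}$), requires care. I expect one must invoke Theorem \ref{timp} again for $\LL_0$ to trade the $\dhalf$ nontangential bounds for $\nabla$ bounds, and use the nontangential estimates for the half-time derivative from \cite{Din23} to close the loop. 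A second technical obstacle is that the Carleson measure here is $d\nu = \sup_{Q(X,t,\delta/2)}|A_0-A_1|^2\, \delta^{-1}\,dX\,dt$ with no time-derivative weight (unlike \eqref{E:1:carl}), so the oscillation—rather than gradient—of the coefficients is what is controlled; one must run the argument purely with $L^\infty$ oscillation bounds, as in \cite{DPP,DinSa}, rather than with integration by parts against $\nabla A_0$. Assembling these pieces—localization, Green's representation with the parabolic kernel bound \eqref{eq.Grnf_upbd}, the Carleson absorption, and Shen's real-variable iteration—gives $(R)_p^{\LL_1}$ for some $1 < p < q$.
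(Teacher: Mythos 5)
You should first note that the paper does not prove Lemma \ref{lem.pert} at all: it is imported verbatim from \cite[Theorem 1.1]{Up}, so there is no internal argument to compare yours against; what follows is an assessment of your sketch on its own terms.

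There is a genuine gap, and it is the central one: your scheme proves (at best) the \emph{small} Carleson perturbation result, whereas the lemma assumes only $\norm{\nu}_C<\infty$. Your closing step hides $\|\wt N(\nabla u_1)\|_{L^p}$ on the left of an inequality whose constant is a multiple of $\norm{\nu}_C$, and you justify the absorption by claiming the constant becomes ``small on a fine enough scale.'' It does not: the Carleson condition is scale invariant, so localizing to small boxes gains nothing, and with $\norm{\nu}_C$ merely finite the error term produced by the Green-function representation of $w=u_1-u_0$ is of the same size as the quantity you are trying to bound. This is exactly why the large-perturbation statement is substantially harder than the small one (the paper itself records the small-constant version separately in Remark \ref{re.pert}(1)). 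The known routes do not absorb: in the elliptic model case \cite{KP2} one first transfers the problem to the adjoint Dirichlet problem, whose $A_\infty$/$L^{p'}$ solvability \emph{is} stable under large Carleson perturbations, and then recovers the Regularity problem through an endpoint (atomic/Hardy-space) estimate and interpolation or through a duality statement of the type of Lemma \ref{lem.DtoR} (\cite[Theorem 1.5]{DinSa}); the parabolic proof in \cite{Up} follows this kind of architecture rather than a direct good-$\lambda$/absorption argument. Your identification of the nonlocal $D^{1/2}_t$ term as the main new difficulty is also misplaced: because of Theorem \ref{timp} the half-time derivative is a secondary technical matter, while the genuine obstruction is the lack of smallness of $\norm{\nu}_C$. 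As written, the proposal does not prove the stated lemma.
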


\begin{remark}\label{re.pert}
    ~\\ \vspace{-0.5cm}
    \begin{enumerate}
        \item \cite[Theorem 1.1]{Up} also includes a stronger result in the setting of small Carleson constants.
        \item The theorem still holds if one replaces $d\nu$ with 
        \begin{equation}\label{def.mu'}
            d\nu':=|A_0(X,t) - A_1(X,t)|^2\frac{dXdt}{\delta(X,t)}
        \end{equation}
            and assumes in addition that $A_0$ satisfies $x_n\abs{\nabla A_0(X,t)}
            \le C$ for $(X,t)\in \Rn_+\times\R$. This is justified in \cite{Up}; the reasoning is similar to that of the elliptic case (see \cite{F24}).
  
    \end{enumerate}
\end{remark}

\begin{lemma}[\cite{DinD}]\label{lem.RtoD} Assume that $\OO$ is a bounded Lipschitz domain and $\LL$ is a
parabolic operator of the form \eqref{E:pde} on the domain $\om  =\OO \times\R$\footnote{\cite{DinD} treats $Lip(1,1/2)$ domains, which is more general.} with $A$ satisfying the ellipticity condition \eqref{E:elliptic}. Suppose $(R)^{\LL}_p$ is solvable for some $p\in(1,\infty)$, then $(D)^{\LL^*}_{p'}$ is solvable for $p'=\frac{p}{p-1}$.
\end{lemma}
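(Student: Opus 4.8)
\emph{Proof proposal.} This is the parabolic counterpart of the classical Kenig--Pipher Regularity--Dirichlet duality, and the plan has two halves: (i) reduce solvability of $(D)^{\LL^*}_{p'}$ to a scale-invariant reverse H\"older estimate for the adjoint parabolic Poisson kernel, and (ii) prove that estimate by comparing a Green's function of $\LL$ with an explicit solution of the Regularity problem for $\LL$ supplied by the hypothesis $(R)^{\LL}_p$. For step (i): since $\om=\OO\times\R$ is a bounded Lipschitz cylinder, the parabolic version of Dahlberg's criterion (developed in \cite{DinD}) applies, so $(D)^{\LL^*}_{p'}$ is solvable if and only if the adjoint parabolic measure $\omega^{*}$ is absolutely continuous with respect to surface measure $\sigma$ on $\pom$, with density $k^{*}=d\omega^{*}/d\sigma$ obeying a reverse H\"older inequality of exponent $p$ on every surface ball: $\big(\fiint_{\Delta}(k^{*}_{A^{+}_\Delta})^{p}\,d\sigma\big)^{1/p}\lesssim\fiint_{\Delta}k^{*}_{A^{+}_\Delta}\,d\sigma\approx\sigma(\Delta)^{-1}$, where $A^{+}_\Delta$ is a corkscrew point for $\Delta=\Delta_r(P_0)$ placed at parabolic distance $\approx r$ and shifted by $\approx r^{2}$ in time in the direction adapted to the backward flow of $\omega^{*}$. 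Hence it suffices to prove $\int_{\Delta}(k^{*}_{A^{+}_\Delta})^{p}\,d\sigma\lesssim\sigma(\Delta)^{1-p}$ for every surface ball $\Delta$.

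Fix $\Delta=\Delta_r(P_0)$ with $P_0\in\pom$, and set $v:=G(\cdot\,,A^{+}_\Delta)$. By property~(2) of $G$ this is an $\LL$-solution in $\om\setminus\{A^{+}_\Delta\}$ vanishing on $\pom$, and reading off the Riesz identity \eqref{eq.Riesz} gives $k^{*}_{A^{+}_\Delta}(Q)=-\,\nu(Q)\cdot A(Q)\nabla_Y G(Y,A^{+}_\Delta)\big|_{Y=Q}$, so $k^{*}_{A^{+}_\Delta}\lesssim|\nabla v|$ on $\pom$ in the trace sense. Since the pole lies at parabolic distance $\gtrsim r$ from $P_0$, $v$ is a genuine solution vanishing on $\pom\cap B_{r/2}(P_0)$, and by Harnack (Lemma~\ref{L:Harnack}, with the correct time lag) it is enough to bound $\int_{\Delta_{r/4}}|\nabla v|^{p}\,d\sigma$ over the concentric ball $\Delta_{r/4}$. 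The Gaussian bound \eqref{eq.Grnf_upbd} together with the boundary Harnack/H\"older estimate \eqref{eq.bdyHar} yields $v\lesssim\Lambda_r:=r\,\sigma(\Delta)^{-1}$ on $\om\cap\partial B_{r/2}(P_0)$ and $v(X)\lesssim(\delta(X)/r)^{\alpha}\Lambda_r$ near $\Delta$. Now choose a Lipschitz bump $g_0\in\dot{L}^{p}_{1,1/2}(\pom)$ with $0\le g_0\le\Lambda_r$, $g_0\equiv0$ on $\Delta_{r/2}$ and $g_0\equiv\Lambda_r$ off $\Delta_{r}$; a direct computation gives $\|g_0\|_{\dot{L}^{p}_{1,1/2}(\pom)}\approx\sigma(\Delta)^{1/p-1}$. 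Let $F$ be the energy solution of $\LL F=0$, $F|_{\pom}=g_0$.

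Comparing boundary values on $\partial\big(\om\cap B_{r/2}(P_0)\big)$ — where $v=0\le F$ on the boundary portion, and $v\lesssim\Lambda_r\lesssim F$ on the interior part of the sphere, the piece sitting over $\Delta_{r/2}$ being handled by the common $\delta^{\alpha}$-decay of $v$ and $F$ — the parabolic maximum principle (Lemma~\ref{L:MP}) gives $v\le C\,F$ in $\om\cap B_{r/2}(P_0)$. Since $v$ and $F$ both vanish on $\pom\cap B_{r/2}(P_0)$, the boundary Caccioppoli inequality (Lemmas~\ref{lem.bdyCaccio}, \ref{lemma:Gradient L2toL1}) and the elementary comparisons $|\nabla w|\lesssim\wt N_{\mathrm{wide}}(w/\delta)$ and $w/\delta\lesssim\wt N_{\mathrm{wide}}(\nabla w)$ (valid near a boundary portion on which the solution $w$ vanishes) give, for $Q\in\Delta_{r/4}$,
\begin{equation*}
\wt N^{\,r/4}(\nabla v)(Q)\;\lesssim\;\wt N^{\,r/2}_{\mathrm{wide}}(v/\delta)(Q)\;\le\;\wt N^{\,r/2}_{\mathrm{wide}}(F/\delta)(Q)\;\lesssim\;\wt N^{\,r}_{\mathrm{wide}}(\nabla F)(Q).
\end{equation*}
Integrating over $\Delta_{r/4}$ and invoking $(R)^{\LL}_p$ for $F$,
\begin{equation*}
\int_{\Delta_{r/4}}|\nabla v|^{p}\,d\sigma\;\lesssim\;\|\wt N(\nabla F)\|_{L^{p}(\pom)}^{p}\;\lesssim\;\|g_0\|_{\dot{L}^{p}_{1,1/2}(\pom)}^{p}\;\lesssim\;\sigma(\Delta)^{1-p},
\end{equation*}
which combined with the reduction above proves $(D)^{\LL^*}_{p'}$.

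\emph{The main obstacle.} All the difficulty is in the parabolic potential theory supporting the scheme: the reverse-H\"older $\Leftrightarrow$ $L^{p'}$-Dirichlet equivalence and the CFMS-type comparisons for $G$, $\omega$ and $\omega^{*}$ must be carried out in parabolic scaling, where the lag in Harnack (Lemma~\ref{L:Harnack}) forces every corkscrew point and comparison ball to be shifted by $\approx r^{2}$ in time and demands careful bookkeeping of the distinction between the forward ($\LL$) and backward ($\LL^{*}$) parabolic measures. A second technical point is the localization: because $F$ vanishes on $\pom$ only on $\Delta_{r/2}$, the maximum-principle comparison and the interchange of $\wt N(\nabla\,\cdot\,)$ with $\wt N(\,\cdot\,/\delta)$ must be confined to $B_{r/2}(P_0)$, and the portion of the cones above height $\approx r$ (where $F$ no longer vanishes) must be estimated separately using the Gaussian decay \eqref{eq.Grnf_upbd} of $v$. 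Finally, although Definition~\ref{DefRpar} records only $\wt N(\nabla u)$, the companion bounds for $\wt N(D^{1/2}_t u)$ from Theorem~\ref{timp} are available and may be used freely when reconciling $(R)_p$-solvability with the trace estimates above.
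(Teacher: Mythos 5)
The paper itself does not prove this lemma; it is imported verbatim from \cite{DinD}, so there is no internal proof to compare against. Your scheme is the classical Kenig--Pipher duality route (adjoint Poisson kernel $=$ conormal derivative of the Green's function, reverse H\"older of exponent $p$ via a comparison solution and the $(R)_p$ estimate), which is the same family of argument as the cited source, and steps (i), the Caccioppoli/$\wt N$ chain, and the normalization $\|g_0\|_{\dot L^p_{1,1/2}}\approx\sigma(\Delta)^{1/p-1}$ are all in order. However, there is a genuine gap at the maximum-principle comparison $v\le CF$ on $\om\cap B_{r/2}(P_0)$. Near the corner where $\partial B_{r/2}(P_0)$ meets $\pom$, what you need is a quantitative \emph{lower} bound on $F$, of the form $F(X,t)\gtrsim(\delta(X,t)/r)^{\alpha}\Lambda_r$, matching the \emph{upper} bound on $v$ coming from boundary H\"older continuity; invoking "the common $\delta^{\alpha}$-decay of $v$ and $F$" supplies only upper bounds for both and proves nothing. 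With your choice of data ($g_0\equiv0$ on $\Delta_{r/2}$, the same radius as the comparison ball), the only generic lower bound on $F$ near $\partial\Delta_{r/2}$ is $F\ge\Lambda_r\,\omega^{(X,t)}(\pom\setminus\Delta_r)$, and the decay exponent of that hitting estimate need not be $\le\alpha$, so $v\le CF$ is not justified at those points. The standard repairs are either (a) a parabolic comparison/boundary Harnack principle for two nonnegative $\LL$-solutions vanishing on a common surface ball --- a substantial ingredient not among the lemmas you quote --- or (b) decoupling the scales: take $g_0\equiv\Lambda_r$ off $\Delta_{r/4}$ and $g_0\equiv0$ only on $\Delta_{r/8}$, compare on $B_{r}(P_0)$ (keeping the pole at parabolic distance $\gg r$ and returning to the corkscrew afterwards by time-lagged Harnack), so that every point of $\partial B_r\cap\om$ close to $\pom$ lies over a boundary region, extending backward in time, where $g_0\equiv\Lambda_r$; then the non-degeneracy estimate $\omega^{(X,t)}(\Delta)\ge c$ gives $F\gtrsim\Lambda_r$ there and no exponent matching is needed.

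A secondary point: for merely bounded measurable coefficients the conormal derivative $-\nu\cdot A\nabla_YG(Y,A^+_\Delta)$ need not exist pointwise, so the identification $k^*\lesssim|\nabla v|$ "in the trace sense" should be run through the Riesz formula \eqref{eq.Riesz} with test functions adapted to small surface balls $\Delta'\subset\Delta$ (bounding $\omega^{*,A^+_\Delta}(\Delta')$ by Whitney averages of $|\nabla v|$ and passing to the Radon--Nikodym derivative), which is in fact exactly what your Caccioppoli/$\wt N$ estimates control; as written the pointwise trace statement is not available. With these two repairs, together with the routine parabolic bookkeeping you flag (backward corkscrew for $\omega^*$, time lag in Harnack, and the $B_p\Leftrightarrow(D^*)_{p'}$ equivalence on Lipschitz cylinders), the argument closes.
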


\begin{lemma}[{\cite[Theorem 1.5]{DinSa}}]\label{lem.DtoR}
    Assume that $\OO$ is Lipschitz (either bounded or unbounded) and $\LL$ is a
parabolic operator of the form \eqref{E:pde} on the domain $\om  =\OO \times\R$, with $A$ satisfying the ellipticity condition \eqref{E:elliptic}.
Suppose that $(D)^{\LL^*}_{p'}$ is solvable for some $p\in(1,\infty)$. Then either $(R)^{\LL}_p$ is solvable
or $(R)^{\LL}_q$ is not solvable for all $1 < q < \infty$.
\end{lemma}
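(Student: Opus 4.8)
\noindent\emph{Proof sketch.}
The statement is a dichotomy, so the plan is to prove the equivalent assertion: \emph{if $(D)^{\LL^{*}}_{p'}$ is solvable and $(R)^{\LL}_{q}$ is solvable for some $q\in(1,\infty)$, then $(R)^{\LL}_{p}$ is solvable}. Fix $f$ in a dense subclass of $\dot L^{p}_{1,1/2}(\pom)\cap\Hdot^{1/4}_{\partial_t-\Delta_x}(\pom)$ (e.g.\ compactly supported and smooth, so every quantity below is a priori finite), let $u\in\dot\E(\Omega)$ be the energy solution with $u|_{\pom}=f$, and aim to prove $\|\wt N(\nabla u)\|_{L^{p}(\pom)}\lesssim\|f\|_{\dot L^{p}_{1,1/2}(\pom)}$. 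By Theorem~\ref{timp} the bounds on $\wt N(D^{1/2}_tu)$ and $\wt N(D^{1/2}_tH_tu)$ are then automatic, so only the spatial gradient needs to be controlled; the hypothesis $(R)_{q}$ supplies the quantitative bound that makes the final absorption legitimate, and the general case follows by density together with scale-invariant estimates (or an exhaustion of $\Omega$ by bounded cylinders).

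\emph{Step 1: a localized weak reverse H\"older inequality from $(R)_{q}$.} Given a surface ball $\Delta=\Delta_r(q_0,\tau_0)\subset\pom$, split $f=f_{\mathrm{loc}}+f_{\mathrm{glo}}$, where $f_{\mathrm{loc}}=f$ on a fixed dilate $C\Delta$ and $f_{\mathrm{loc}}$ is constant far away, and let $u_{\mathrm{loc}},u_{\mathrm{glo}}$ be the corresponding energy solutions. Since $u_{\mathrm{glo}}$ is $\LL$-harmonic across $C\Delta$, interior H\"older continuity and Caccioppoli's inequality (Lemmas~\ref{L:int_Holder} and~\ref{L:Caccio}) bound $|\nabla u_{\mathrm{glo}}|$ on the cone over $\Delta$ by an $L^{1}$ average of $\wt N(\nabla u)$ over a slightly larger surface ball, plus acceptable tails. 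For $u_{\mathrm{loc}}$, the solvability of $(R)_{q}$ together with the boundary Caccioppoli and reverse-H\"older gradient estimates (Lemmas~\ref{lem.bdyCaccio}, \ref{lem.gradL2-L1} and~\ref{lemma:Gradient L2toL1}) gives
\[
\Big(\fiint_{\Delta}\wt N(\nabla u_{\mathrm{loc}})^{q}\Big)^{1/q}\ \lesssim\ \Big(\fiint_{C\Delta}\big(|\nabla_T f|^{q}+|D^{1/2}_t f|^{q}\big)\Big)^{1/q}.
\]
This is the parabolic counterpart of Shen's localization step.

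\emph{Step 2: duality transfer through the adjoint Dirichlet problem.} By the standard duality between the non-tangential maximal function and Carleson-type functionals, $\|\wt N(\nabla u)\|_{L^{p}(\pom)}$ is comparable to the supremum of $|\iint_{\Omega}\nabla u\cdot F|$ over vector fields $F$ with $\|\mathcal{C}(F)\|_{L^{p'}(\pom)}\le 1$, for a suitable Carleson functional $\mathcal{C}$. For each such $F$ one solves the adjoint problem $\LL^{*}v=\divg F$ with zero boundary data and uses the parabolic Green identity~\eqref{eq.Riesz} to transfer the pairing to $\pom$, producing a term of the form $\int_{\pom}f\cdot(\text{co-normal trace of }v)$; the solvability of $(D)^{\LL^{*}}_{p'}$ — equivalently, the reverse-H\"older/$A_\infty$ property of adjoint parabolic measure — then bounds the relevant $\LL^{*}$ solution operator in the dual norm, giving control by $\|f\|_{\dot L^{p}_{1,1/2}}\,\|\mathcal{C}(F)\|_{L^{p'}}$. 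Feeding the estimate of Step~1 into a real-variable / good-$\lambda$ argument in the spirit of~\cite{S2} — with the $(R)_{q}$ bound used both to initialize the iteration and to absorb the $u_{\mathrm{glo}}$-tail — then produces $\|\wt N(\nabla u)\|_{L^{p}(\pom)}\lesssim\|f\|_{\dot L^{p}_{1,1/2}(\pom)}$, i.e.\ $(R)^{\LL}_{p}$.

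\emph{Main obstacle.} The hard part is the non-locality of the half-time derivative. In the elliptic setting the only object to localize is $\nabla u$, which is purely local, so the reverse-H\"older-plus-extrapolation machinery applies almost directly; here both the $\dot L^{p}_{1,1/2}$ norm and the sesquilinear form~\eqref{eq-sesq} involve $D^{1/2}_t$ and $H_t$, whose interaction with spatial and temporal cut-offs generates long-range tails in the time variable, and keeping these under control — both in the splitting $f=f_{\mathrm{loc}}+f_{\mathrm{glo}}$ of Step~1 and in the Green-identity bookkeeping of Step~2 — is exactly where Theorem~\ref{timp} and the non-tangential half-derivative estimates of~\cite{Din23} enter. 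A secondary issue is that unbounded Lipschitz cylinders have no compact boundary to fall back on; this is handled by using scale-invariant normalizations throughout, or equivalently by exhausting $\Omega$ with bounded cylinders and passing to the limit.
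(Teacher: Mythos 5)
There is a genuine gap, and it sits exactly where the hypothesis $(D)^{\LL^*}_{p'}$ has to do its work. (Note also that the paper itself does not prove this lemma; it quotes it from \cite{DinSa}, and imports its key ingredients as Theorem \ref{thm:Reverse Holder}, Proposition \ref{prop:Product_Rule} and Lemma \ref{lem.Rinterp}.) In your Step~1 you dispose of the global piece $u_{\mathrm{glo}}$ by saying that, since its datum is constant on $C\Delta$, interior H\"older continuity and Caccioppoli (Lemmas \ref{L:int_Holder}, \ref{L:Caccio}) bound $\widetilde N(\nabla u_{\mathrm{glo}})$ on the cone over $\Delta$ by an $L^1$ average ``plus acceptable tails''. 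This cannot work: $u_{\mathrm{glo}}$ is not a solution across $C\Delta$, only a solution whose boundary trace is locally constant there, and interior estimates say nothing about the portion of the cone at vanishing distance from $\partial\Omega$. What is needed is precisely the boundary weak reverse H\"older inequality at the target exponent $p$ for $\widetilde N(\nabla u)$ of solutions vanishing on a surface ball — i.e. Theorem \ref{thm:Reverse Holder} — and that is the one place where solvability of $(D)^{\LL^*}_{p'}$ enters; it is a substantial theorem of \cite{DinSa} (proved via boundary Caccioppoli, comparison with Green's function/parabolic measure), not a corollary of interior regularity. Without it, the good-$\lambda$/Shen iteration you invoke has no exponent-$p$ self-improvement for the ``good'' part and does not close.

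Your Step~2 then tries to bring in $(D)^{\LL^*}_{p'}$ through tent-space duality and the inhomogeneous adjoint problem $\LL^*v=\div F$. That is the method of this paper's Theorem \ref{t1}, and it needs far more than solvability of $(D)^{\LL^*}_{p'}$ as a black box: to close it one must prove the bounds \eqref{e4} for the inhomogeneous adjoint solution ($\|S(v)\|_{L^{p'}}$, $\|\widetilde A(v)\|_{L^{p'}}$, $\|N(x_n\nabla v)\|_{L^{p'}}$), which rely on the Carleson/DKP structure of the coefficients via \cite{Up} — structure the present lemma does not assume, since it is stated for arbitrary bounded measurable elliptic $A$. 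The correct route is purely real-variable: reduce to $q<p$ by Lemma \ref{lem.Rinterp}; for the local piece use $(R)_q$ together with the localization of the nonlocal $\dot L^p_{1,1/2}$ norm (this is Proposition \ref{prop:Product_Rule}, not Theorem \ref{timp}, which only removes the half-derivative terms from the nontangential estimate); for the piece with vanishing local data use Theorem \ref{thm:Reverse Holder}; and then apply Shen's extrapolation as in \cite{S2}. Your decomposition $f=f_{\mathrm{loc}}+f_{\mathrm{glo}}$ and the role you assign to $(R)_q$ are right, but as written the two places where the argument must be hard — the boundary reverse H\"older estimate and the use of the adjoint Dirichlet hypothesis — are respectively asserted from tools that cannot deliver them and routed through machinery unavailable at this level of generality.
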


\begin{lemma}[{\cite[Theorem 1.4]{DinSa}}]\label{lem.Rinterp}
Let $\om$ and $\LL$ be as in Lemma \ref{lem.DtoR}. Suppose that $(R)_p^{\LL}$ is solvable for some $1 < p < \infty$. Then $(R)_q^{\LL}$ is solvable for $1 < q < p$.
\end{lemma}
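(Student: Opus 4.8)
The plan is to route the statement through the Dirichlet problem for the adjoint operator $\LL^{*}$, combining the two duality results already recorded (Lemmas \ref{lem.RtoD} and \ref{lem.DtoR}) with the standard fact that solvability of the Dirichlet problem self-improves to all larger exponents; schematically,
\[
(R)_p^{\LL}\ \Longrightarrow\ (D)_{p'}^{\LL^{*}}\ \Longrightarrow\ (D)_{q'}^{\LL^{*}}\ \text{for all }q'>p'\ \Longrightarrow\ (R)_q^{\LL}\ \text{for all }1<q<p,
\]
where $p'=p/(p-1)$. For the first implication one uses Lemma \ref{lem.RtoD}: solvability of $(R)_p^{\LL}$ gives solvability of $(D)_{p'}^{\LL^{*}}$. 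That lemma is quoted only for bounded $\mathcal O$; for an unbounded Lipschitz cylinder one invokes the analogous duality between $(R)_p^{\LL}$ and $(D)_{p'}^{\LL^{*}}$, valid for graph domains as well, or localizes and passes to a limit over an exhaustion of $\Omega$ by bounded Lipschitz subcylinders, all the relevant estimates being scale invariant.

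For the second implication, solvability of $(D)_{p'}^{\LL^{*}}$ implies solvability of $(D)_s^{\LL^{*}}$ for every $s\in(p',\infty)$. Indeed, the energy solution operator $g\mapsto v$ for $\LL^{*}$ is linear, satisfies $\|v\|_{L^{\infty}(\Omega)}\le\|g\|_{L^{\infty}(\partial\Omega)}$ by the maximum principle (Lemma \ref{L:MP}, via the representation \eqref{eq.parmeasure} and the decay of such solutions as $t\to-\infty$), and satisfies $\|\widetilde{N}(v)\|_{L^{p'}}\lesssim\|g\|_{L^{p'}}$ by solvability of $(D)_{p'}^{\LL^{*}}$. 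Since $\widetilde{N}$ is sublinear, Marcinkiewicz interpolation upgrades these bounds to $\|\widetilde{N}(v)\|_{L^{s}}\lesssim\|g\|_{L^{s}}$ for all $p'<s<\infty$, i.e.\ $(D)_s^{\LL^{*}}$ holds. (This is the usual reverse-H\"older / $A_{\infty}$ self-improvement of the adjoint parabolic measure; cf.\ \cite{DPP2}.)

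For the third implication, fix $q\in(1,p)$; then $q'>p'$, so $(D)_{q'}^{\LL^{*}}$ is solvable by the previous step. Applying Lemma \ref{lem.DtoR} with $q$ in the role of $p$, we conclude that either $(R)_q^{\LL}$ is solvable, or $(R)_s^{\LL}$ fails for \emph{every} $1<s<\infty$. The second alternative is impossible, since $(R)_p^{\LL}$ is solvable by hypothesis. Hence $(R)_q^{\LL}$ is solvable, which is the assertion.

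The only step calling for genuine care is the unbounded case in the first implication, where Lemma \ref{lem.RtoD} as quoted applies only to bounded domains; this is handled by the unbounded analogue of the $(R)$–$(D^{*})$ duality or by a localization argument. Alternatively, bypassing duality entirely — presumably the route of \cite{DinSa} — one can argue directly by a Shen-type real-variable self-improvement: decompose the boundary datum $f$ over a Calder\'on--Zygmund family of parabolic boundary cubes, estimate the solution attached to each piece using the interior and boundary reverse-H\"older inequalities for $\nabla u$ (Lemmas \ref{lem.gradL2-L1} and \ref{lem.bdyCaccio}) together with the hypothesis $(R)_p$, and sum; this produces $(R)_q$ for $1<q<p$ uniformly in bounded and unbounded cylinders. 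Given the machinery already in place, the duality route above is the more economical.
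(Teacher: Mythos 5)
First, note that the paper itself does not prove this lemma: it is imported verbatim from \cite{DinSa} (Theorem~1.4 there), so there is no internal argument to compare with, and the assessment below concerns your chain of reductions on its own terms. For a \emph{bounded} Lipschitz cylinder your duality route is correct and economical: Lemma \ref{lem.RtoD} gives $(D)^{\LL^*}_{p'}$ from $(R)^{\LL}_p$; the upward self-improvement $(D)^{\LL^*}_{p'}\Rightarrow(D)^{\LL^*}_{s}$ for $s>p'$ is standard, although your Marcinkiewicz step needs the usual care (truncations of $f$ need not remain in $\Hdot^{1/4}_{\partial_t-\Delta_x}$, so one should interpolate on a dense class of nice data and extend by continuity, or invoke the nesting of reverse H\"older classes for the adjoint parabolic measure); and the either/or dichotomy of Lemma \ref{lem.DtoR}, combined with the hypothesis that $(R)^{\LL}_p$ is solvable, rules out the bad alternative and yields $(R)^{\LL}_q$ for $1<q<p$.

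The genuine gap is the unbounded case, which the statement covers, since the setting of Lemma \ref{lem.DtoR} allows $\OO$ unbounded. There your first arrow has no support in the paper: Lemma \ref{lem.RtoD} is quoted only for bounded $\OO$, and neither of your proposed repairs is substantiated. The ``analogous duality for graph domains'' is precisely the kind of assertion this circle of papers treats as nontrivial --- the asymmetric, either/or formulation of Lemma \ref{lem.DtoR} exists because the full equivalence $(R)^{\LL}_p\Leftrightarrow(D)^{\LL^*}_{p'}$ is not freely available in this generality --- and ``localize and pass to a limit over an exhaustion'' is delicate because solvability of $(R)_p$ does not pass to subdomains in any obvious way (the paper devotes all of Section~10 to the opposite passage, from the graph case to bounded cylinders, and it is genuinely laborious). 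Your closing remark that one can bypass duality with a Shen-type real-variable extrapolation of the datum is very likely the actual route of \cite{DinSa}, but as written it is a one-sentence gesture rather than an argument. In sum: your proof is complete for bounded cylinders, but the unbounded half of the stated lemma is not established by what you have written.
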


In order to use the averaged version $\tilde{N}$ of the nontangential maximal function instead of ${N}$ in \eqref{eq.Carl1}-\eqref{eq.Carl2}, we will use the following estimate for nontangential maximal functions of solutions $u$ as well as gradients $\nabla u$ of solutions. 

\begin{lemma}\label{spCarl} Let $u$ be a solution to $\partial_t-\mbox{\rm div}(A\nabla u)=0$ in $\Omega\subset\R^n_+\times\R$. Then for any $a>0$ there exists $\tilde{a}>0$ such that we have pointwise for all $(q,\tau)\in\partial\Omega$:
$$N_a(u)(q,\tau)\le C\tilde N_{\tilde{a}}(u)(q,\tau).$$
 Additionally, if $\delta(X,t)|\nabla A(X,t)|\le K$ then also
\begin{eqnarray}\label{NtildeN}
N_a(\nabla u)(q,\tau)&\le& C(1+K)\tilde N_{\tilde{a}}(\nabla u)(q,\tau),
\end{eqnarray}
for some $C=C(n,\lambda,\Lambda)$.
\end{lemma}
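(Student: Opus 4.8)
The plan is to derive both inequalities from \emph{interior pointwise estimates} applied at every point of the cone and then take a supremum; in fact one can take $\tilde a=a$, since in the definition of $\tilde N_{a,p}$ the averaging ball $B_{\delta(X,t)/2}(X,t)$ is centred at the cone point and is \emph{not} required to lie inside $\Gamma_a(q,\tau)$. For the first inequality I would note that for every $(X_0,t_0)\in\Omega$ the ball $B_{\delta_0/2}(X_0,t_0)$, with $\delta_0:=\delta(X_0,t_0)$, sits well inside $\Omega$, so the classical interior $L^\infty$ bound for weak solutions of a uniformly parabolic equation (Caccioppoli, Lemma~\ref{L:Caccio}, followed by Moser iteration) gives $|u(X_0,t_0)|\le C\big(\fiint_{B_{\delta_0/16}(X_0,t_0)}|u|^2\big)^{1/2}\le C\big(\fiint_{B_{\delta_0/2}(X_0,t_0)}|u|^2\big)^{1/2}$, the last step costing only a dimensional constant. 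Taking the supremum over $(X_0,t_0)=(X,t)\in\Gamma_a(q,\tau)$ yields $N_a(u)\le C\,\tilde N_a(u)$.

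For \eqref{NtildeN} the target is the interior estimate $|\nabla u(X_0,t_0)|\le C(1+K)\big(\fiint_{B_{\delta_0/2}(X_0,t_0)}|\nabla u|^2\big)^{1/2}$ for every $(X_0,t_0)\in\Omega$; on $B_{\delta_0/2}(X_0,t_0)$ one has $\delta\gtrsim\delta_0$, hence $|\nabla A|\le CK/\delta_0$, so $A$ is Lipschitz in the spatial variables there. I would run a Campanato/frozen-coefficient iteration. At scales $\rho$ a suitable dimensional fraction of $\delta_0$, compare $u$ on a parabolic cube $Q_\rho(X_0,t_0)$ with the solution $w$ of the \emph{spatially frozen} equation $-\partial_t w+\div\big(A(X_0,\cdot)\nabla w\big)=0$ carrying the same parabolic boundary data as $u$; it is essential to freeze only the $x$-dependence and keep $t$, since the hypothesis bounds $\nabla_x A$ and says nothing about $\partial_t A$. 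Because $A(X_0,\cdot)$ is $x$-independent, difference quotients in $x$ show that $\nabla w$ again solves a uniformly parabolic equation with bounded measurable coefficients, hence is Hölder continuous (Lemma~\ref{L:int_Holder}), which gives the Campanato decay $\fiint_{Q_{\theta\rho}}|\nabla w-(\nabla w)_{Q_{\theta\rho}}|^2\lesssim\theta^2\fiint_{Q_\rho}|\nabla w-(\nabla w)_{Q_\rho}|^2$ for a fixed small $\theta=\theta(n,\lambda,\Lambda)$. The difference $u-w$ solves a parabolic equation with right-hand side $\div\big((A(X_0,\cdot)-A)\nabla u\big)$ and zero parabolic boundary data, so an energy estimate gives $\fiint_{Q_\rho}|\nabla(u-w)|^2\lesssim(\osc_{x,Q_\rho}A)^2\fiint_{Q_\rho}|\nabla u|^2\lesssim(K\rho/\delta_0)^2\fiint_{Q_\rho}|\nabla u|^2$. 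With $\Phi(\rho):=\fiint_{Q_\rho(X_0,t_0)}|\nabla u-(\nabla u)_{Q_\rho}|^2$ this yields the iteration $\Phi(\theta\rho)\le C\theta^2\Phi(\rho)+C\theta^{-(n+2)}(K\rho/\delta_0)^2\fiint_{Q_\rho(X_0,t_0)}|\nabla u|^2$. To close it with only \emph{linear} dependence on $K$, I would bound the last average by a $K$-independent interior oscillation estimate for $u$: Caccioppoli together with the interior Hölder/oscillation-decay bound $\osc_{Q_\rho(X_0,t_0)}u\lesssim(\rho/\delta_0)^\alpha\osc_{Q_{\delta_0/8}(X_0,t_0)}u\lesssim(\rho/\delta_0)^\alpha\,\delta_0\big(\fiint_{B_{\delta_0/2}}|\nabla u|^2\big)^{1/2}$ (the last step a parabolic Poincaré inequality for solutions) give $\fiint_{Q_\rho}|\nabla u|^2\lesssim(\rho/\delta_0)^{2\alpha-2}\fiint_{B_{\delta_0/2}}|\nabla u|^2$, with $\alpha=\alpha(n,\lambda,\Lambda)$ and no $K$. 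Plugging this in, the error at $\rho=\theta^j\rho_0$ is $\lesssim K^2\theta^{2\alpha j}\fiint_{B_{\delta_0/2}}|\nabla u|^2$, a geometric sequence; iterating and summing the recursion makes $(X_0,t_0)$ a Lebesgue point of $\nabla u$ and gives $\sum_j\Phi(\theta^j\rho_0)^{1/2}\lesssim\Phi(\rho_0)^{1/2}+K\big(\fiint_{B_{\delta_0/2}}|\nabla u|^2\big)^{1/2}$, hence $|\nabla u(X_0,t_0)|\le|(\nabla u)_{Q_{\rho_0}}|+C\sum_j\Phi(\theta^j\rho_0)^{1/2}\le C(1+K)\big(\fiint_{B_{\delta_0/2}(X_0,t_0)}|\nabla u|^2\big)^{1/2}$. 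Taking the supremum over $(X,t)\in\Gamma_a(q,\tau)$ gives \eqref{NtildeN}.

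The step I expect to be the main obstacle is obtaining the dependence on $K$ \emph{linear}, as the statement demands. A crude iteration of a pointwise gradient bound, or a bootstrap of the self-improved $L^{2+\varepsilon}$ integrability of $\nabla u$ (differentiating $\LL u=0$ turns $\partial_{x_k}u$ into a solution of an equation with source $\div((\partial_{x_k}A)\nabla u)$), would only yield a fixed power $(1+K)^{N(n,\lambda,\Lambda)}$: the interior gradient estimate is genuinely good only on balls of radius $\sim\delta_0/(1+K)$, so reaching scale $\delta_0$ costs $\sim\log(1+K)$ doublings, each with its own factor. The Campanato scheme avoids this because each doubling contributes only an \emph{additive} error $\propto(\osc_x A)^2\sim(K\rho/\delta_0)^2$, and the weight $\rho^2$ makes these errors geometrically summable over all scales down to zero — but only once the naive volume bound $\fiint_{Q_\rho}|\nabla u|^2\lesssim\rho^{-(n+2)}(\cdots)$ is replaced by the $K$-independent sharp oscillation bound $\fiint_{Q_\rho}|\nabla u|^2\lesssim(\rho/\delta_0)^{2\alpha-2}(\cdots)$. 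Setting up and combining these two ingredients — interior Hölder and Poincaré for $u$, and the spatially-frozen comparison for $\nabla u$ — is the heart of the argument; the remaining steps are routine interior parabolic theory.
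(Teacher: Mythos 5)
Your proof is correct, and for the first inequality it is the same argument as the paper's: an interior local boundedness estimate (the sup bound contained in Lemma \ref{L:Caccio}, or Lemma \ref{L:int_Holder} plus Caccioppoli) applied on a Whitney cube centred at the cone point, followed by a supremum over the cone; your remark that one may even take $\tilde a=a$, because the averaging ball in $\tilde N$ is centred at the cone point itself, is consistent with the statement (the paper enlarges the aperture only as a safety margin). For \eqref{NtildeN} you take a genuinely different route: the paper disposes of it in one line by citing Lieberman's interior H\"older estimate for gradients \cite{L87} under the scale-invariant hypothesis $\delta|\nabla A|\le K$, whereas you reprove that interior estimate from scratch by freezing the coefficients in $x$ only (correctly, since nothing is assumed on $\partial_tA$), using that for $x$-independent coefficients every spatial derivative of the comparison solution solves the same equation, an energy estimate with error $(K\rho/\delta_0)^2$, and, crucially, the $K$-independent bound $\fiint_{Q_\rho}|\nabla u|^2\lesssim(\rho/\delta_0)^{2\alpha-2}\fiint_{B_{\delta_0/2}}|\nabla u|^2$ coming from De Giorgi--Nash oscillation decay, Caccioppoli, and the parabolic Poincar\'e inequality for solutions. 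What your route buys is a self-contained proof that makes the linear dependence $C(1+K)$ transparent; what the paper's route buys is brevity, by outsourcing exactly this interior estimate to \cite{L87}. Two details to tighten, neither fatal: first, as you justify it (H\"older continuity of $\nabla w$ because it solves an equation with bounded measurable coefficients), the Campanato decay is $\theta^{2\alpha}$ rather than $\theta^{2}$; this still closes the iteration (choose $\theta$ so that $C\theta^{2\alpha}\le\theta^{2\alpha'}$ for some $0<\alpha'<\alpha$, and the resulting sums remain linear in $K$), or one can upgrade to true Lipschitz decay by observing that all higher-order spatial derivatives of $w$ also solve the frozen equation and that the $|t-s|^{1/2}$-regularity of $\nabla w$ follows from the standard flux argument. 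Second, one should note that the uniform Campanato estimate at every cone point shows $\nabla u$ is continuous in $\Omega$, so every point is a Lebesgue point of $\nabla u$ and the pointwise supremum defining $N_a(\nabla u)$ is taken over genuine values, as your telescoping argument requires.
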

\begin{proof}
    The first inequality is a consequence of the interior H\"older inequality (Lemma \ref{L:int_Holder}): notice that the  aperture of the cone $\tilde N_{\tilde{a}}$)
    has been enlarged. The second inequality follows from the interior H\"older inequality for gradients \cite{L87}, which requires the extra assumption that $\delta(X,t)|\nabla A(X,t)|\le K$.
\end{proof}

\subsection{Some real analysis results}

$\mbox{ }$
\medskip

We introduce some special notation for a class of Carleson measures whose density is given by a function. 

\begin{definition} For a function $f$ defined on $\Omega$, we say that $f\in CM_\Omega(M)$ when 
$|f|^2 \delta^{-1}$ is the density of a Carleson measure, that is,  if there exists $M>0$ such that for any $(X,t)\in \dr \Omega$ and $0<r<\diam(\Omega)$,
\begin{equation} \label{defCM}
\iint_{B_r(X,t) \cap \Omega} |f(Y,\tau)|^2 \delta(Y,\tau)^{-1} dYd\tau \leq M r^{n+1}.
\end{equation}
\end{definition}

We now recall some well known properties of these measures.

\begin{lemma}\label{lem.Carl}
    Let $\mathfrak{c}\in CM(M)$ and let $f$ and $g$ be functions on $\om\subset\Rn_+\times\R$. Then 
    \begin{equation}\label{eq.Carl1}
        \iint_\om f(X,t)\mathfrak{c}^2(X,t)\frac{dXdt}{\delta(X,t)}\le C_n M\int_{\pom}N(f)\, dq\,dt,
    \end{equation}
    and 
    \begin{equation}\label{eq.Carl2}
        \iint_\om \mathfrak c(X,t)f(X,t)g(X,t)\frac{dXdt}{\delta(X,t)}\le C_n M\int_{\pom}N(f)\mathcal{A}(g)\, dq\,dt,
    \end{equation}
    where 
    \[
    \mathcal{A}(g)(q,t):=\br{\iint_{\Gamma(q,t)}\abs{g(Y,s)}^2\delta(Y,s)^{-n-2}dYds}^{1/2}\quad \text{for }(q,t)\in\pom.
    \]
\end{lemma}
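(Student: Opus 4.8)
\textbf{Proof plan for Lemma \ref{lem.Carl}.}

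The plan is to prove \eqref{eq.Carl2} first, since \eqref{eq.Carl1} follows from it by a simpler variant of the same argument (or directly, by Cauchy--Schwarz in a slightly different arrangement). The strategy is the classical ``Carleson measure / tent space'' duality argument: we pull the integral over $\Omega$ down to the boundary $\partial\Omega$ by writing each point $(X,t)\in\Omega$ as belonging to cones $\Gamma(q,t')$ over a boundary set of measure comparable to $\delta(X,t)^{n+1}$ (in parabolic scaling). Concretely, for a fixed aperture $a$ one has the Fubini-type identity
\[
\iint_{\Omega} F(X,t)\,\frac{dX\,dt}{\delta(X,t)^{n+1}} \;\approx\; \int_{\partial\Omega}\!\!\iint_{\Gamma(q,t')} F(X,t)\,\frac{dX\,dt}{\delta(X,t)^{n+2}}\,dq\,dt',
\]
because $(X,t)\in\Gamma_a(q,t')$ holds for $(q,t')$ ranging over a surface ball of radius $\sim\delta(X,t)$, whose measure is $\sim\delta(X,t)^{n+1}$ (here we use that $\partial\Omega$ is an $n$-dimensional parabolic space: $n-1$ spatial plus one half-dimensional time, totalling $n+1$). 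Applying this with $F=\mathfrak c\,f\,g$ is not directly useful; instead the standard trick is to localize $\mathfrak c$: for each $(q,t')$ and each $(X,t)\in\Gamma(q,t')$ we estimate $|f(X,t)|\le N(f)(q,t')$, pull this pointwise bound out, and are left with $\int_{\partial\Omega} N(f)(q,t')\big(\iint_{\Gamma(q,t')}\mathfrak c\,|g|\,\delta^{-n-2}\,dX\,dt\big)dq\,dt'$.

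The inner integral is then handled by Cauchy--Schwarz in the cone:
\[
\iint_{\Gamma(q,t')}\mathfrak c\,|g|\,\frac{dX\,dt}{\delta^{n+2}} \le \Big(\iint_{\Gamma(q,t')}\mathfrak c^2\,\frac{dX\,dt}{\delta^{n+1}}\Big)^{1/2}\Big(\iint_{\Gamma(q,t')}|g|^2\,\frac{dX\,dt}{\delta^{n+3}}\Big)^{1/2}.
\]
Wait --- the powers of $\delta$ must be booked carefully so that the first factor is controlled by the Carleson norm $M$ and the second is $\mathcal A(g)(q,t')$. The correct splitting is: write the weight $\delta^{-n-2} = \delta^{-1}\cdot\delta^{-n-2+1}$ is not balanced; rather one splits $\mathfrak c\,|g|\,\delta^{-n-2}= \big(\mathfrak c\,\delta^{-1/2}\big)\big(|g|\,\delta^{-n-3/2}\big)$ and applies Cauchy--Schwarz to get $\big(\iint_{\Gamma}\mathfrak c^2\delta^{-1}\big)^{1/2}\big(\iint_{\Gamma}|g|^2\delta^{-n-3}\big)^{1/2}$; but $\mathcal A(g)$ uses weight $\delta^{-n-2}$. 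The resolution is that inside the cone $\Gamma(q,t')$ one has $\delta(X,t)\approx d_p((X,t),(q,t'))$, so an extra factor of $\delta^{-1}$ versus $d_p^{-1}$ is harmless only after the final boundary integration; more cleanly, one uses that the ``local Carleson box'' above a surface ball $\Delta_r$ has the Carleson bound and sums over dyadic annuli in $\Gamma(q,t')$. I will therefore carry out the dyadic decomposition $\Gamma(q,t')=\bigcup_k \{(X,t)\in\Gamma: 2^{-k-1}<\delta(X,t)\le 2^{-k}\}$ explicitly, on each piece replace $\delta^{-n-2}$ by $2^{k(n+2)}$ up to constants, bound $\iint_{\{\delta\sim 2^{-k}\}\cap\Gamma}\mathfrak c^2\,\delta^{-1}\lesssim M\,2^{-k(n+1)}$ using \eqref{defCM} on a box of size $\sim 2^{-k}$, and recognize the remaining geometric sum against $|g|^2\delta^{-n-2}$ as controlled by $M^{1/2}\mathcal A(g)(q,t')$ after another Cauchy--Schwarz in $k$ with a summable geometric weight. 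A final application of Cauchy--Schwarz (in fact just the pointwise product) in the $(q,t')$ integral gives $C_n M\int_{\partial\Omega}N(f)\,\mathcal A(g)$, as claimed. For \eqref{eq.Carl1} one takes $g\equiv$ (a constant) or rather applies the same scheme with $g$ absent: $\iint_\Omega f\,\mathfrak c^2\delta^{-1}$ is dominated by $\int_{\partial\Omega}N(f)(q,t')\big(\iint_{\Gamma(q,t')}\mathfrak c^2\delta^{-n-2}\,dX\,dt\big)dq\,dt'$ after the same Fubini step, and the inner integral is $\lesssim M$ by the dyadic-annulus Carleson estimate, yielding $C_n M\int_{\partial\Omega}N(f)$.

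The only genuinely delicate point is the bookkeeping of the $\delta$-powers and the verification that the ``Fubini over cones'' step produces exactly the weight $\delta^{-(n+1)}$ that matches \eqref{defCM}, since the boundary $\partial\Omega$ carries parabolic Hausdorff dimension $n+1$ (not $n$), a feature special to the parabolic setting; everything else is the standard tent-space estimate of Coifman--Meyer--Stein adapted to parabolic Lipschitz cylinders, and the aperture changes between the $N$ and $\mathcal A$ cones are absorbed by the usual change-of-aperture lemma noted after the definition of $A_a$ above. I expect no surprises beyond getting these exponents right, and I would present the dyadic decomposition in full and leave the elementary geometric estimates to the reader.
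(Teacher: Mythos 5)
Your plan has a genuine gap: the ``Fubini over cones plus a pointwise bound on the inner cone integral'' strategy does not work, for either inequality. The quantity $\iint_{\Gamma(q,\tau)}\mathfrak c^2\,\delta^{-n-2}\,dX\,dt$ is \emph{not} pointwise bounded by $C_nM$: your dyadic computation gives at most $CM$ per annulus $\{\delta\sim 2^k\}$, but there are infinitely many scales and no decay to sum against. Concretely, let $\mathfrak c^2\delta^{-1}$ have density $\sim 2^{-k}$ on a Whitney ball at height $2^k$ above the origin, for every $k\in\Z$; this satisfies \eqref{defCM} with $M\sim 1$, yet each annulus contributes $\sim 1$ to the cone integral, which is therefore $+\infty$ at (essentially) every boundary point. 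So your proof of \eqref{eq.Carl1} multiplies $N(f)$ by a possibly infinite factor. The same obstruction kills your proof of \eqref{eq.Carl2}: after Cauchy--Schwarz on annuli you need $\sum_k M^{1/2}a_k^{1/2}\lesssim M^{1/2}\bigl(\sum_k a_k\bigr)^{1/2}$ with $a_k=\iint_{\Gamma\cap\{\delta\sim 2^k\}}|g|^2\delta^{-n-2}$, which is the wrong direction between $\ell^1$ and $\ell^2$, and the ``summable geometric weight'' you invoke does not exist because the per-annulus Carleson bound is saturated at every scale and the $a_k$ carry no decay. Indeed the pointwise inequality $\iint_{\Gamma(q,\tau)}\mathfrak c\,|g|\,\delta^{-n-2}\lesssim M^{1/2}\mathcal A(g)(q,\tau)$ is simply false: with the $\mathfrak c$ above and $|g|=1$ on $K$ of those Whitney balls, the left side is $\sim K$ while the right side is $\sim\sqrt{K}$. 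So the delicate point is not the bookkeeping of $\delta$-powers, as you suggest, but the fact that no pointwise cone estimate can hold.

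What is actually required (and what the paper does, by citation rather than by a written proof) is a level-set/stopping-time argument. For \eqref{eq.Carl1} one uses that $\{|f|>\lambda\}$ is contained in the tent over $\{N(f)>\lambda\}$, extends the Carleson condition \eqref{defCM} from balls to tents over arbitrary open sets by a Vitali/Whitney covering, and integrates in $\lambda$; this is Carleson's lemma, cited as \cite[Corollary 3.3.6]{Graf}. For \eqref{eq.Carl2} one runs the tent-space duality of \cite[Theorem 1]{CMS85} (see also \cite[Proposition 2.10]{Fen}), or the stopping-time proof of (3.33) in \cite{DHP}: the Carleson condition and Cauchy--Schwarz are applied on tents over level sets of $N(f)$, not on individual cones, and that is where the factor $\mathcal A(g)$ is produced with a finite constant. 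A minor additional slip: your displayed Fubini identity should have $\delta^{-1}$, not $\delta^{-(n+1)}$, on the left-hand side, since $\sigma\bigl(\{(q,t'):(X,t)\in\Gamma(q,t')\}\bigr)\sim\delta(X,t)^{n+1}$; you use the correct version afterwards, but as written it is off.
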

\begin{proof}
    The first inequality \eqref{eq.Carl1} is classical, which is originally due to Carleson; see for example \cite[Corollary 3.3.6]{Graf}. The second inequality can be obtained by a stopping time argument; see for instance the proof of (3.33) in \cite{DHP}. Note that $\mathcal{A}(\delta\nabla g)=S(g)$. See also \cite[Theorem 1]{CMS85} or \cite[Proposition 2.10]{Fen}.
\end{proof}

\begin{lemma}\label{lem.NM<N}
Let $f$ be a continuous function defined on $\om=\Rn_+\times\R$. For any $(X,t)\in \om$, let $F(X,t):=M(f(X,\cdot))(t)$ be the maximal function of $f$ in $t$. Then for any $p\in (1,\infty)$,
    \[
\int_{\pom}N_a(F)(x,t)^pdtdx\le C\int_{\pom}N_a(f)(x,t)^pdtdx,
\]
where the constant $C$ depends only on $n$ and $p$.
\end{lemma}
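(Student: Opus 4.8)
The plan is to reduce the statement about non-tangential maximal functions back to the classical Hardy–Littlewood maximal theorem in the time variable, by bounding $N_a(F)$ pointwise by a single one-dimensional maximal operator applied to $N_{a'}(f)$ for a slightly larger aperture $a'$. First I would fix $(x,t)\in\pom=\R^{n-1}\times\R$ and a point $(X,s)\in\Gamma_a(x,t)$ at which we want to estimate $F(X,s)=M(f(X,\cdot))(s)$. Writing $X=(x',x_n)$ with $x_n>0$, the cone condition $d_p((X,s),(x,t))<(1+a)\delta(X,t)$ forces $x_n\sim\delta(X,s)$ and $|x'-x|\lesssim x_n$, $|s-t|^{1/2}\lesssim x_n$. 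The key geometric observation is that for \emph{every} $\sigma\in\R$ the point $(X,\sigma)$ lies in the cone $\Gamma_{a'}(x,\tau)$ over the boundary point $(x,\tau)$ whenever $|\tau-\sigma|^{1/2}\lesssim x_n$; that is, vertical translates of $(X,s)$ in time are captured by cones over boundary points that are themselves within parabolic distance $\lesssim x_n$ of $(x,t)$ in the time direction.

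The heart of the argument is then the pointwise bound
\[
N_a(F)(x,t)\le C\,\bigl(M_t\,[\,N_{a'}(f)(x,\cdot)\,]\bigr)(t),
\]
where $M_t$ is the one-dimensional Hardy–Littlewood maximal operator in the $t$-variable and $a'>a$ depends only on the geometry (and $n$). To prove this, take $(X,s)\in\Gamma_a(x,t)$ and write, for the maximal function $F(X,s)=M(f(X,\cdot))(s)$, that it is a supremum of averages $\fint_{I}|f(X,\sigma)|\,d\sigma$ over intervals $I\ni s$. For intervals of length $\lesssim x_n^2$ one uses that each $(X,\sigma)$ with $\sigma\in I$ lies in $\Gamma_{a'}(x,\sigma)$, so $|f(X,\sigma)|\le N_{a'}(f)(x,\sigma)$, and then the average over $I$ is dominated by $M_t[N_{a'}(f)(x,\cdot)](t)$ since $I$ is comparable to an interval centered at $t$. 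For intervals $I$ of length $\gtrsim x_n^2$, one instead partitions $I$ into subintervals of length $\sim x_n^2$ and controls the average on each piece as before, or more simply, for the large-scale part one notes $I$ is comparable (in length and location, up to the fixed constant coming from the cone aperture) to an interval centered at $t$, so again the average is $\le M_t[N_{a'}(f)(x,\cdot)](t)$. Taking the supremum over $(X,s)\in\Gamma_a(x,t)$ and over the defining intervals $I$ yields the claimed pointwise inequality.

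Granting this, the lemma follows immediately: raise to the $p$-th power, integrate in $(x,t)$ over $\pom$, use Fubini to integrate first in $t$, and apply the $L^p(\R)$-boundedness of $M_t$ for $p\in(1,\infty)$ in the $t$-variable with $x$ frozen:
\[
\int_{\R}\bigl(M_t[N_{a'}(f)(x,\cdot)](t)\bigr)^p\,dt\le C_p\int_{\R} N_{a'}(f)(x,t)^p\,dt,
\]
then integrate in $x\in\R^{n-1}$. Finally, since $L^p$ norms of the non-tangential maximal function are comparable for different apertures (as recorded in the remark following Definition \ref{DefArea}), one replaces $N_{a'}(f)$ by $N_a(f)$ at the cost of an aperture-dependent constant, absorbing it into $C$.

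The main obstacle is the geometric bookkeeping in the pointwise bound: one must check carefully that translating the interior point $(X,s)$ in the time variable keeps it inside a cone over a boundary point whose time-coordinate stays within $O(x_n^2)$ (equivalently within parabolic distance $O(x_n)$) of $t$, so that the resulting time-averages are genuinely controlled by a maximal function \emph{centered at} $t$ and not merely by a translate. The parabolic scaling — time scales like length squared — is exactly what makes this work, since an interval of length $\sim x_n^2$ in time sits inside the "shadow" of the cone $\Gamma_{a'}(x,t)$; a non-parabolic argument would fail. Once this scaling is pinned down the rest is the standard maximal-function-dominates-cone argument plus Fubini, which is routine.
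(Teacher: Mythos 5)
Your overall strategy is the same as the paper's: prove a pointwise bound for $N_a(F)(x,t)$ by a one-dimensional maximal function in $t$ acting on a nontangential maximal function of $f$ with enlarged aperture, then conclude by Fubini, the Hardy--Littlewood theorem in the $t$-variable, and comparability of $L^p$ norms of $N$ under change of aperture. The large-interval case ($|I|\gtrsim x_n^2$) is handled exactly as in the paper, via the observation that $(X,\sigma)\in\Gamma_{a'}(x,\sigma)$ so $|f(X,\sigma)|\le N_{a'}(f)(x,\sigma)$, and the hull of $I\cup\{t\}$ has length comparable to $|I|$.

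However, your justification of the small-interval case has a genuine gap. If $I\ni s$ has $|I|\ll x_n^2$ while $|s-t|\sim x_n^2$ (which the cone condition allows), then $I$ is \emph{not} measure-comparable to any interval containing $t$: the smallest such interval has length $\approx x_n^2$, so the enlargement factor $|J|/|I|$ is unbounded and the inequality $\fint_I N_{a'}(f)(x,\sigma)\,d\sigma\le C\,M_t\bigl[N_{a'}(f)(x,\cdot)\bigr](t)$ does not follow from "comparability of $I$ to an interval centered at $t$"; averages are not monotone under such enlargements. The paper avoids this by treating short time-intervals differently: for $r<(ax_n)^2$, every point $(X,\sigma)$ with $\sigma\in[s-r,s+r]$ stays in the cone $\Gamma_{2a}(x,t)$ with the \emph{same vertex} $(x,t)$ and doubled aperture, so the average is bounded by $N_{2a}(f)(x,t)$, yielding the two-term pointwise estimate
\begin{equation*}
N_a(F)(x,t)\le N_{2a}(f)(x,t)+2\,M\bigl(N_a(f)(x,\cdot)\bigr)(t),
\end{equation*}
after which the $L^p$ conclusion follows as you describe. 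Alternatively, if you insist on a one-term bound $N_a(F)(x,t)\le C\,M_t[N_{a'}(f)(x,\cdot)](t)$, you must supplement the short-interval case with the bound by the supremum, i.e.\ by $N_{a'}(f)(x,t)$, together with the additional fact $N_{a'}(f)(x,t)\le M_t[N_{a'}(f)(x,\cdot)](t)$ (valid a.e., or everywhere by lower semicontinuity of $t\mapsto N_{a'}(f)(x,t)$ for continuous $f$) --- a step that is not in your write-up. With that repair the argument is correct and essentially coincides with the paper's.
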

\bp
We claim that for $(x,t)\in\R^{n-1}\times\R$,
\begin{equation}\label{eqN(Mgradu)}
    N_a(F)(x,t)\le N_{2a}(f)(x,t)+2M(N_a(f)(x,\cdot))(t).
\end{equation}
Then the lemma would follow from the $L^p$ boundedness of the maximal function. 
To show \eqref{eqN(Mgradu)}, we fix $(x,t)$ and write
\begin{multline}\label{NFsplit}
    N_a(F)(x,t)=\sup_{(Y,s)\in\Gamma_a(x,t)}\sup_{r>0}\fint_{s-r}^{s+r}\abs{f(Y,\tau)}d\tau\\
=\sup_{(Y,s)\in\Gamma_a(x,t)}\br{\sup_{0<r<(ay_n)^2}\fint_{s-r}^{s+r}\abs{f(Y,\tau)}d\tau+
\sup_{r\ge(ay_n)^2}\fint_{s-r}^{s+r}\abs{f(Y,\tau)}d\tau}.
\end{multline}
Fix any $(Y,s)\in\Gamma_a(x,t)$; that is, $\abs{y-x}+\abs{s-t}^{1/2}\le ay_n$.
In the case when $r\ge(ay_n)^2$, $s+r\le t+\abs{t-s}+r\le t+(ay_n)^2+r\le t+2r$, and similarly, $s-r\ge t-2r$. So 
\[
    \int_{s-r}^{s+r}\abs{f(Y,\tau)}d\tau\le \int_{t-2r}^{t+2r}\abs{f(Y,\tau)}d\tau
    \le  \int_{t-2r}^{t+2r}N(f)(x,\tau)d\tau,
\]
as $(Y,\tau)\in\Gamma(x,\tau)$. Therefore, for $r\ge(ay_n)^2$, 
\[
\fint_{s-r}^{s+r}\abs{f(Y,\tau)}d\tau\le2\fint_{t-2r}^{t+2r}N(f)(x,\tau)d\tau\le2M(N(f)(x,\cdot))(t).
\]
When $0<r<(ay_n)^2$, we observe that $(Y,\tau)\in\Gamma_{2a}(x,t)$ for any $\tau\in[s-r,s+r]$. This entails that 
\[
\fint_{s-r}^{s+r}\abs{f(Y,\tau)}d\tau\le N_{2a}(f)(x,t),
\]
which together with the first case proves \eqref{eqN(Mgradu)}.
\ep

\begin{lemma}\label{lem.CarImprov}
    If the matrix $A$ satisfies \eqref{E:elliptic} and \begin{equation}\label{gradACarl}
        \abs{\nabla A}\delta+\abs{\dr_t A}\delta^2\in CM_{\R_+^{n+1}}(M),
    \end{equation} then there exist matrices $B$ and $D$ such that $A= B+D$, where $B$ satisfies \eqref{E:elliptic} with the same constants $\lambda$ and $\Lambda$. In addition, the following estimates hold:
    \begin{equation}\label{B+CCM}
        \abs{\nabla B}\delta+\abs{\dr_t B}\delta^2+\abs{ D}\in CM_{\R_+^{n+1}}(CM),
    \end{equation}
    \begin{equation}\label{D2B_CM}
        \abs{\dr_t\nabla B}\delta^3\in CM_{\R_+^{n+1}}(CM),
    \end{equation}
    and 
    \begin{equation}\label{D2Bbdd}
        \abs{x_n\nabla B(X,t)}+\abs{x_n^2\dr_t B(X,t)}+\abs{x_n^3\dr_t\nabla B(X,t)}\le C,
    \end{equation}
    where the constant $C$ depends only on $n$ and $\Lambda$ (and not on the Carleson constant $M$).
\end{lemma}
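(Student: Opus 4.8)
The plan is to obtain $B$ by mollifying $A$ at the scale of the distance to the boundary, and to set $C = A - B$. Concretely, fix a smooth bump $\Phi \in C_0^\infty(\R^{n+1})$ with $\iint \Phi = 1$ supported in the unit parabolic ball, and for $(X,t) \in \R^{n+1}_+$ with $x_n > 0$ define
\[
B(X,t) := \iint_{\R^{n+1}} \Phi\!\left(\frac{X-Y}{c\,x_n}, \frac{t-s}{(c\,x_n)^2}\right) A(Y,s)\, \frac{dY\,ds}{(c\,x_n)^{n+2}},
\]
for a small dimensional constant $c$ chosen so that the support of the mollifier at $(X,t)$ stays inside the ball $B((X,t), x_n/2)$ (equivalently $B((X,t),\delta(X,t)/2)$, since $\delta \sim x_n$ on $\R^{n+1}_+$). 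Since $B(X,t)$ is an average of values of $A$, it inherits the ellipticity bounds \eqref{E:elliptic} with the same $\lambda, \Lambda$. The first task is to record the pointwise derivative estimates for $B$. Differentiating under the integral, each spatial derivative landing on $\Phi$ (or on the scaling factor $c x_n$) produces a factor $x_n^{-1}$, and each time derivative produces $x_n^{-2}$; hence
\[
|\nabla B| \lesssim x_n^{-1}\fiint_{B((X,t),x_n/2)}|A| \le \|A\|_\infty\, x_n^{-1}, \quad |\partial_t B| \lesssim x_n^{-2}\|A\|_\infty, \quad |\partial_t \nabla B|\lesssim x_n^{-3}\|A\|_\infty,
\]
which is exactly \eqref{D2Bbdd}. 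These use only boundedness of $A$, so the constant depends only on $n$ and $\Lambda$, as required.

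Next I would upgrade these to the Carleson bounds \eqref{B+CCM}--\eqref{D2B_CM}. The point is that when a derivative falls on $\Phi$ we can instead integrate by parts to move it onto $A$, at the cost of only harmless lower-order terms from differentiating the scaling factor $c x_n$. Thus, schematically, $\nabla B(X,t)$ equals an average of $\nabla A$ over $B((X,t), x_n/2)$ plus a term of size $x_n^{-1}\fiint |A - (A)_{B}|$, the oscillation of $A$; the latter is controlled by $\fiint |\nabla A|$ (in $x$) plus $x_n \fiint|\partial_t A|$ via a parabolic Poincaré inequality. Consequently
\[
\delta |\nabla B(X,t)| \lesssim \delta \fiint_{B((X,t),\delta/2)} |\nabla A| + \delta^2 \fiint_{B((X,t),\delta/2)} |\partial_t A|,
\]
and since by hypothesis \eqref{gradACarl} the quantities $\delta|\nabla A|$ and $\delta^2|\partial_t A|$ have the $CM(M)$ property, and the $CM$ property is stable under averaging over Whitney balls (standard: $\widetilde N_{2}$ of a $CM$ function is again a $CM$ function, or more simply one just integrates the local $L^2$ averages against the Carleson packing), we conclude $\delta|\nabla B| \in CM(CM)$. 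The same integration-by-parts bookkeeping applied twice gives $\delta^2 |\partial_t B| \in CM(CM)$ and, applying it to $\partial_t \nabla B$ — now two derivatives moved onto $A$, producing averages of $\partial_t \nabla A$, $\partial_t A$ (times $\delta^{-1}$), $\nabla A$ (times $\delta^{-2}$) and the oscillation (times $\delta^{-3}$) — gives $\delta^3 |\partial_t \nabla B| \in CM(CM)$ after again using \eqref{gradACarl} together with the bound $\delta^3|\partial_t \nabla A|$, which is not assumed but is dominated pointwise via the same Poincaré argument by $\delta^2|\partial_t A| + \delta^3 |\nabla \partial_t A|$... more carefully, one only ever needs the two quantities in \eqref{gradACarl}, because $\partial_t \nabla B$ is an average of $\nabla A$ and $\partial_t A$ against a kernel of the right homogeneity. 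Finally, for $C = A - B$ we have $|C(X,t)| = |A(X,t) - B(X,t)| \le \fiint_{B((X,t),\delta/2)} |A - A(X,t)| \lesssim \delta \fiint |\nabla A| + \delta^2 \fiint |\partial_t A|$ by the interior Hölder continuity / Poincaré inequality, so $|C| \in CM(CM)$ by the same averaging-stability of Carleson measures. This establishes \eqref{B+CCM}.

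The main obstacle is purely the bookkeeping of the integration-by-parts identities: one must track exactly which derivatives fall on $\Phi$ versus on the dilation factor $c x_n$, check that every "lower-order" term is genuinely of lower order in $x_n$ and controlled by oscillation (hence by the two Carleson quantities in \eqref{gradACarl} via parabolic Poincaré, recalling the parabolic scaling in which $t$ counts as two $x$-derivatives), and verify the stability statement that averaging a $CM(M)$ density over Whitney balls yields a $CM(CM)$ density — this last is where the averaged nontangential maximal function / Fubini-type argument enters, and it is standard (cf. the use of Lemma \ref{lem.Carl} and Lemma \ref{spCarl}), but it needs to be invoked at the correct homogeneity for each of the three quantities $\delta|\nabla B|$, $\delta^2|\partial_t B|$, $\delta^3|\partial_t \nabla B|$. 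No single step is deep; the care is in not losing a power of $\delta$.
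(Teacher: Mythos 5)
Your proposal is correct and follows essentially the same route as the paper: $B$ is the parabolic mollification of $A$ at the Whitney scale $\sim x_n$ with $C=A-B$, the bound \eqref{D2Bbdd} follows from boundedness of $A$ alone, the Carleson estimates \eqref{B+CCM}--\eqref{D2B_CM} come from transferring derivatives onto $A$ so that $\delta|\nabla B|$, $\delta^2|\partial_t B|$, $\delta^3|\partial_t\nabla B|$ are controlled by Whitney averages of $\delta|\nabla A|$ and $\delta^2|\partial_t A|$ (never needing $\partial_t\nabla A$, as you correctly note after your initial hesitation), and one concludes by Fubini. The only cosmetic difference is in the estimate for $C$: the paper first takes $L^2$ averages of $|C|^2$ over Whitney balls and then applies the two-point Poincar\'e inequality to $\fiint\fiint|A(X,t)-A(Y,s)|^2$, rather than asserting a pointwise centered-oscillation bound, which is exactly the averaged form your argument needs anyway since only the integrated Carleson quantity matters.
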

\begin{remark}\label{re.A=B+C} ~ \newline \vspace{-0.5cm}
    \begin{enumerate}
        \item An analog of this lemma in the elliptic case was given in \cite[Lemma 2.1]{FLM}. We note that as in \cite{FLM}, the condition \eqref{gradACarl} can be weakened and the (analogous) result holds for a general domain $\om$. The idea of the lemma in the elliptic case appeared in \cite{DPP} and various weaker versions of the result have been used in the literature (e.g. \cite{DH18}). 
        \item One can derive estimates for higher derivatives on $B$, but  \eqref{D2B_CM} and \eqref{D2Bbdd} are all that is needed in the rest of the paper.
        \item It is easy to see from the proof that if $A$ is in block form \eqref{block}, then $B$ is in block form too. 
    \end{enumerate}
\end{remark}

\smallskip

\bp
Let $\theta \in C^\infty_0(\R^n\times\R)$ be a nonnegative function such that $\supp \theta \subset B(0,\frac1{10})$, $\iint_{\R^{n+1}} \theta(X,t) dXdt = 1$. Define
\[\theta_{X,t}(Y,s) := \frac{1}{x_n^{n+2}}\theta\br{\frac{Y-X}{x_n},\frac{s-t}{x_n^2}},\] 
\begin{equation} \label{defB}
B(X,t) := \iint_{\R^{n+1}}A(Y,s)  \, \theta_{X,t}(Y,s)  \, dYds, \quad \text{ and }  \quad C := A - B.
\end{equation}
We compute the derivatives of $\theta_{X,t}$ and get the following.
\begin{equation}\label{eq.dktheta}
    \dr_{x_k}\theta_{X,t}(Y,s)=\frac{1}{x_n}(\dr_k\theta)_{X,t}(Y,s)=-\dr_{y_k}\theta_{X,t}(Y,s)\quad \text{for }k=1,\dots,n-1,
\end{equation}
\begin{multline}\label{eq.dntheta}
    \dr_{x_n}\theta_{X,t}(Y,s)=-\frac{n+2}{x_n}\theta_{X,t}(Y,s)+\frac{1}{x_n}\br{\dr_n\theta}_{X,t}(Y,s)-\frac{1}{x_n^2}(X-Y)\cdot(\nabla\theta)_{X,t}(Y,s)\\
    -\frac{2(t-s)}{x_n^3}(\dr_t\theta)_{X,t}(Y,s)\\
    =\frac{1}{x_n}\left[\divg_Y\br{(X-Y)\cdot\theta_{X,t}(Y,s)}+\dr_s\br{2(t-s)\theta_{X,t}(Y,s)}\right]
    -\dr_{y_n}\theta_{X,t}(Y,s),
\end{multline}
and
\begin{equation}\label{eq.dttheta}
    \dr_t\theta_{X,t}(Y,s)=\frac{1}{x_n^2}(\dr_t\theta)_{X,t}(Y,s)=-\dr_s\theta_{X,t}(Y,s).
\end{equation}
Then it is easy to see that 
\[x_n^{n+3}\abs{\nabla_X\theta_{X,t}} +
x_n^{n+4}\abs{\dr_t\theta_{X,t}}+x_n^{n+5}\abs{\dr_t\nabla_X\theta_{X,t}}\lesssim \1_{B_{x_n/4}(X,t)},\]
and thus,
\[
\abs{x_n\nabla B(X,t)}+
+\abs{x_n^2\dr_tB(X,t)}+
\abs{x_n^3\dr_t\nabla B(X,t)}
\le \frac{C}{x_n^{n+2}}\iint_{B_{x_n/4}(X,t)}\abs{A}dYds\le C\Lambda,
\]
which is \eqref{D2Bbdd}. Here and in the sequel, $B_r(X,t)=\set{(Y,s)\in\Rn\times\R: \abs{Y-X}^2+\abs{t-s}<r^2}$ is the parabolic ball.

By \eqref{eq.dktheta}, we can move $\dr_{x_k}$  from $\theta_{X,t}$ in \eqref{defB} for $k=1,\dots,n-1$ to get that
\begin{equation}\label{eq.dkB}
    \dr_k B(X,t)=\iint\dr_{y_k}A(Y,s)\theta_{X,t}(Y,s)dYds.
\end{equation}
By \eqref{eq.dntheta}, 
\begin{multline}\label{eq.dnB}
    \dr_nB(X,t)=\iint\dr_{y_n}A(Y,s)\theta_{X,t}(Y,s)dYds
    -\frac{1}{x_n}\iint\nabla_YA(Y,s)(X-Y)\theta_{X,t}(Y,s)dYds\\
    -\frac{2}{x_n}\iint\dr_sA(Y,s)(t-s)\theta_{X,t}(Y,s)dYds.
\end{multline}
So by the support property of $\theta_{X,t}$, 
\[\abs{x_n\nabla B(X,t)}\lesssim x_n\fiint_{B_{x_n/4}(X,t)}\abs{\nabla A(Y,s)}dYds+x_n^2\fiint_{B_{x_n/4}}\abs{\dr_sA(Y,s)}dYds.\]
By Fubini's theorem and our assumption \eqref{gradACarl}, we get 
\[\iint_{B\cap\om}\abs{x_n\nabla B(X,t)}^2\frac{dXdt}{x_n}\le CM\sigma(2B\cap\pom)\]
for any parabolic ball $B$ centered at $\pom$. Using \eqref{eq.dttheta} and a similar argument as above one can get that 
\[\iint_{B\cap\om}\abs{x_n^2\dr_t B(X,t)}^2\frac{dXdt}{x_n}\le C\iint_{2B\cap\om}x_n^3\abs{\dr_t A}^2dXdt\le CM\sigma(2B\cap\pom).\]
This proves the desired estimate for the first two terms in \eqref{B+CCM}.

To show \eqref{D2B_CM}, we take $\dr_t$ to  \eqref{eq.dkB} and \eqref{eq.dnB}. We obtain
\[
\dr_t\dr_k B(X,t)=\frac{1}{x_n^2}\iint \dr_{y_k}A(Y,s)\br{\dr_t\theta}_{X,t}(Y,s)dYds \quad\text{for }k=1,\dots,n-1,
\]
and so
\[
    \abs{x_n^3\dr_t\nabla_xB(X,t)}\le C\fiint_{B_{x_n/4}(X,t)}y_n\abs{\nabla_yA(Y,s)}dYds.
\]
A similar computation shows that 
\[
 \abs{x_n^3\dr_t\nabla_nB(X,t)}\le C\fiint_{B_{x_n/4}(X,t)}y_n\abs{\nabla A(Y,s)}dYds+C\fiint_{B_{x_n/4}(X,t)}y_n^2\abs{\nabla_sA(Y,s)}dYds.
\]
Then \eqref{D2B_CM} follows from Fubini's theorem and \eqref{gradACarl}.

We turn to the estimate for the matrix $D=A-B$. We can write 
\begin{multline*}
    \abs{D(X,t)}=\abs{\iint(A(X,t)-A(Y,s))\theta_{X,t}(Y,s)dYds}\\
    \le C\fiint_{B_{x_n/4}(X,t)}\abs{A(X,t)-A(Y,s)}dYds \quad \text{ for any }(X,t)\in\om.
\end{multline*}
Fix $(Z,\tau)\in B\cap\om$, we have 
\begin{multline*}
    \fiint_{B_{z_n/10}(Z,\tau)}\abs{D(X,t)}^2dXdt\le C\fiint_{B_{z_n/10}(Z,\tau)}\fiint_{B_{x_n/4}(X,t)}\abs{A(X,t)-A(Y,s)}^2dYds\\
    \le C\fiint_{B_{z_n/10}(Z,\tau)}\fiint_{B_{z_n/2}(Z,\tau)}\abs{A(X,t)-A(Y,s)}^2dYds
\end{multline*}
as $B_{x_n/4}(X,t)\subset B_{z_n/2}(Z,\tau)$ for $(X,t)\in B_{z_n/10}(Z,\tau)$. 
By the Poincar\'e inequality, one can see that the above is bounded by
\[
 C\fiint_{B_{z_n/2}(Z,\tau)}x_n^2\abs{\nabla A(X,t)}^2dXdt+C\fiint_{B_{z_n/2}(Z,\tau)}x_n^4\abs{\dr_t A(X,t)}^2dXdt,
\]
and hence, by Fubini's theorem,
\begin{multline*}
    \iint_{B\cap\om}\abs{D}^2dXdt\le C\iint_{B\cap\om}\fiint_{B_{z_n/10}(Z,\tau)}\abs{D}^2dXdt\\
    \le C\iint_{2B\cap\om}\br{\abs{\nabla A}^2x_n+\abs{\dr_t A}^2x_n^3}dXdt\le CM\sigma(B\cap\pom),
\end{multline*}
which completes the proof of \eqref{B+CCM}, and the lemma.
\ep

\begin{lemma}\label{lem.locMaxCarl}
    Let $p>1$, $f$ be a non-negative function on $\Rn_+\times\R$, and $g$ be a $t$-independent non-negative function on $\Rn_+$. Assume that the measure $\nu$ given by $d\nu=f(X,t)^pg(X)dXdt$ is a Carleson measure on $\Rn_+\times\R$. Then for any $\kappa>0$, the measure $\wt\nu$ given by $d\wt\nu=M_{\kappa x_n^2}(f(X,\cdot))(t)^pg(X)dXdt$ is a Carleson measure, where \begin{equation}\label{def.trcmaxfunc}
    M_R(f)(t):=\sup\limits_{t\in I, \ell(I)<R}\fint_{I}\abs{f(s)}ds
\end{equation}
is the localized uncentered Hardy-Littlewood maximal function at scale at most $R$. Moreover, there exists $C>0$ such that
    \(\norm{\wt\nu}_C\le C\max\set{1,\kappa}\norm{\nu}_C\).
\end{lemma}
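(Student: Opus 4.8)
The plan is to verify the Carleson condition \eqref{Cmeasure} for $\wt\nu$ directly on an arbitrary boundary parabolic ball. Fix $P_0=(x_0',0,t_0)\in\pom$ (here $\om=\R^n_+\times\R$, so a point of $\pom=\R^{n-1}\times\R$ is written $(x_0',0,t_0)$) and $\rho>0$, and set $B=B_\rho(P_0)$. Since $\|(Y,s)\|\sim(|Y|^2+|s|)^{1/2}$, there is a dimensional $c_0$ with $B\cap\om\subset\set{(X,t):\ |x'-x_0'|<c_0\rho,\ 0<x_n<c_0\rho,\ |t-t_0|<c_0\rho^2}$, where $X=(x',x_n)$. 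Because $g$ does not depend on $t$, Fubini's theorem reduces the estimate of $\wt\nu(B\cap\om)$ to controlling, for each fixed $X$ with $|x'-x_0'|<c_0\rho$, $0<x_n<c_0\rho$, the purely one–dimensional quantity $\int_{|t-t_0|<c_0\rho^2}M_{\kappa x_n^2}(f(X,\cdot))(t)^p\,dt$, and then integrating $g(X)$ against it in $X$.

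The crucial point — and the place where the factor $\max\set{1,\kappa}$ enters — is a localization of the maximal operator. If $|t-t_0|<c_0\rho^2$ and $0<x_n<c_0\rho$, then every interval $I\ni t$ with $\ell(I)<\kappa x_n^2$ is contained in the \emph{fixed} window $J:=\big(t_0-C_1\max\set{1,\kappa}\rho^2,\ t_0+C_1\max\set{1,\kappa}\rho^2\big)$, with $C_1$ depending only on $c_0$; hence on this range of $t$ one has $M_{\kappa x_n^2}(f(X,\cdot))(t)\le M\big(f(X,\cdot)\1_J\big)(t)$, where $M$ is the uncentered one–dimensional Hardy–Littlewood maximal operator. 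Using that $M$ is bounded on $L^p(\R)$, $p>1$, this gives for each such $X$
\[
\int_{|t-t_0|<c_0\rho^2}M_{\kappa x_n^2}(f(X,\cdot))(t)^p\,dt\le\int_\R M\big(f(X,\cdot)\1_J\big)(t)^p\,dt\le C_p\int_J f(X,s)^p\,ds.
\]

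Feeding this back and applying Fubini once more yields
\[
\wt\nu(B\cap\om)\le C_p\iint_{\Pi\times J}f(X,s)^pg(X)\,dX\,ds,\qquad\Pi:=\set{(x',x_n):\ |x'-x_0'|<c_0\rho,\ 0<x_n<c_0\rho}.
\]
Now $\Pi\times J$ has spatial diameter $\lesssim\rho$ and temporal length $\lesssim\max\set{1,\kappa}\rho^2$, so it is covered by $N\le C\max\set{1,\kappa}$ parabolic balls of radius $C_2\rho$ centered on $\pom$ (take centers $(x_0',0,t_j)$ with $\set{t_j}$ a $\rho^2$–net of $J$, and $C_2$ dimensional). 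Since $\nu$ is Carleson,
\[
\iint_{\Pi\times J}f^pg\,dX\,ds\le\sum_{j=1}^N\nu\big(B_{C_2\rho}(x_0',0,t_j)\cap\om\big)\le N\norm{\nu}_C(C_2\rho)^{n+1}\le C\max\set{1,\kappa}\norm{\nu}_C\rho^{n+1}.
\]
As $P_0,\rho$ were arbitrary, $\norm{\wt\nu}_C\le C_p\max\set{1,\kappa}\norm{\nu}_C$, as claimed. I expect no serious obstacle: the only step needing genuine thought is the localization of $M_{\kappa x_n^2}$ to the fixed window $J$ — this is exactly what makes the enlarged region $\Pi\times J$ cost only $\max\set{1,\kappa}$ balls, forcing the stated $\kappa$–dependence — while the rest is the routine interplay of Fubini (available precisely because $g$ is $t$–independent), the one–dimensional maximal theorem, and a covering argument.
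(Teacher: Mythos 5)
Your proof is correct and follows essentially the same route as the paper: localize the truncated maximal function in $t$ to a fixed window of length $\sim\max\{1,\kappa\}\rho^2$, apply the $L^p$ boundedness of the one-dimensional Hardy–Littlewood maximal operator (which is exactly the content of the paper's estimate \eqref{eq.MaxR}), use Fubini via the $t$-independence of $g$, and cover the enlarged region by $O(\max\{1,\kappa\})$ boundary parabolic balls. The only differences are cosmetic (balls versus cubes, and a single $x_n$-uniform window $J$ instead of the paper's per-$x_n$ enlargement followed by the same uniform bound).
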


\bp Let $p>1$, $R>0$, and $(a,b)$ be any interval in $\R$. One can show that for any $L^p$ integrable function $h$ on $(a-R,b+R)$, 
\begin{equation}\label{eq.MaxR}
    \norm{M_R(h)}_{L^p(a,b)}\le \norm{M_R(h\1_{(a-R,b+R)})}_{L^p(\R)}\le C\norm{h}_{L^p(a-R,b+R)}.
\end{equation}
Let $Q_r=Q_r(x_0,t_0)$ be a parabolic cube on $\pom=\R^{n-1}\times\R$. Then by \eqref{eq.MaxR}, 
\begin{multline*}
    \int_{x_n=0}^r\int_{(x,t)\in Q_r}M_{\kappa x_n^2}(f(X,\cdot))(t)^pg(X)dxdtdx_n\\
    \le C\int_{x_n=0}^r\int_{x\in Q_r(x_0)}\int_{t_0-(\kappa +1)x_n^2}^{t_0+(\kappa+1)x_n^2}f(X,t)^pg(X)dtdxdx_n\\
    \le C\int_{x_n=0}^r\int_{x\in Q_r(x_0)}\int_{t_0-(\kappa+1)r^2}^{t_0+(\kappa+1)r^2}f(X,t)^pg(X)dtdxdx_n=:T.
\end{multline*}
If $\kappa<1$, then 
\[
T\le \int_{x_n=0}^{2r}\int_{(x,t)\in Q_{2r}}f(X,t)^pg(X)dtdxx_n\le C_n\norm{\nu}_Cr^{n+1}.
\]
If $\kappa\ge 1$, then we can cover $Q_r(x_0)\times(t_0-(\kappa+1)r^2,t_0+(\kappa+1)r^2)$ by $[\kappa]+1$ parabolic cubes $Q_i=Q_r(x_0,t_i)$ of equal size, and therefore,
\[
T\le \sum_{i=1}^{[\kappa]+1}\int_{x_n=0}^{r}\int_{(x,t)\in Q_i}f(X,t)^pg(X)dtdxx_n \le C\kappa\norm{\nu}_C r^{n+1}.
\]
This proves that   \(\norm{\wt\nu}_C\le C\max\set{1,\kappa}\norm{\nu}_C\).
\ep

With Lemma \ref{lem.locMaxCarl} in hand, we can construct many Carleson measures on $\om$ if we assume that our coefficients satisfy the Carleson condition \eqref{E:1:carl}. In particular, we have the following. 
\begin{corollary}\label{cor.locMaxCarl}
    Let $K\ge 1$ and $\nu$ given by 
    $d\nu=x_n^3M_{K^2x_n^2}(\abs{\dr_tA(X,\cdot)})(t)^2dXdt$. Then $\nu$ 
    is a Carleson measure on $\om$ if $x_n^3\abs{\dr_tA(X,t)}^2dXdt$ is so. Moreover, there exists $C>0$ such that
    \[\norm{\nu}_C\le CK^2\norm{x_n^3\abs{\dr_tA(X,t)}^2dXdt}_C\le CK^2 \norm{\mu_{||}}_C.\]
\end{corollary}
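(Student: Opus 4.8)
The plan is to obtain Corollary \ref{cor.locMaxCarl} as a direct specialization of Lemma \ref{lem.locMaxCarl}, followed by a trivial comparison with $\mu_{||}$. Concretely, I would apply Lemma \ref{lem.locMaxCarl} with $p=2$, with $f(X,t):=\abs{\dr_t A(X,t)}$, with $g(X):=x_n^3$ — which is indeed a $t$-independent non-negative function on $\Rn_+$ — and with $\kappa:=K^2$. With these choices, the hypothesis that $x_n^3\abs{\dr_t A(X,t)}^2\,dXdt$ is a Carleson measure on $\Rn_+\times\R$ is exactly the assumption that $d\nu_0:=f(X,t)^p g(X)\,dXdt$ is Carleson required by the lemma.

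Lemma \ref{lem.locMaxCarl} then yields that
\[
d\wt\nu=M_{\kappa x_n^2}\br{f(X,\cdot)}(t)^p g(X)\,dXdt=M_{K^2 x_n^2}\br{\abs{\dr_t A(X,\cdot)}}(t)^2\,x_n^3\,dXdt
\]
is a Carleson measure on $\om$, which is precisely the measure $\nu$ in the statement, and that $\norm{\nu}_C\le C\max\set{1,\kappa}\norm{\nu_0}_C=CK^2\norm{x_n^3\abs{\dr_t A(X,t)}^2\,dXdt}_C$, where the last equality uses $K\ge 1$.

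It then remains to bound $\norm{x_n^3\abs{\dr_t A(X,t)}^2\,dXdt}_C$ by $\norm{\mu_{||}}_C$, and this is immediate from the definition \eqref{def.mu11}: on $\om=\Rn_+\times\R$ the parabolic distance to the boundary is $\delta(X,t)=x_n$, so the density $x_n^3\abs{\dr_t A}^2=\delta^3\abs{\dr_t A}^2$ is pointwise dominated by $\delta\abs{\nabla_x A}^2+\delta^3\abs{\dr_t A}^2$, i.e.\ by the density of $\mu_{||}$. Hence the Carleson bound for $\mu_{||}$ gives one for $x_n^3\abs{\dr_t A}^2\,dXdt$ with the same constant, and combining with the previous display we get $\norm{\nu}_C\le CK^2\norm{\mu_{||}}_C$.

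Since the argument is a direct substitution into Lemma \ref{lem.locMaxCarl}, I do not anticipate any genuine obstacle; the only (routine) points to check are that $g(X)=x_n^3$ is $t$-independent, that the scale in the localized maximal function matches ($\kappa x_n^2$ with $\kappa=K^2$), and that $\delta=x_n$ on the half-space so that the comparison with $\mu_{||}$ is a literal pointwise domination rather than merely an equivalence up to constants.
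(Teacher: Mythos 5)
Your proof is correct and is exactly the paper's argument: the paper also proves this corollary by taking $p=2$, $\kappa=K^2$, $f=\abs{\dr_tA}$, $g(X)=x_n^3$ in Lemma \ref{lem.locMaxCarl}, and your closing observation that $\delta=x_n$ on the half-space so the density is pointwise dominated by that of $\mu_{||}$ correctly supplies the (unstated but routine) second inequality.
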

\bp
Simply take $p=2$, $\kappa=K^2$, $f(X,t)=\abs{\dr_tA(X,t)}$, and $g(X)=x_n^3$ in Lemma \ref{lem.locMaxCarl}.
\ep

\section{Proof of the main results}\label{sec.pf}

In this section, we outline how it all fits together and then prove our main theorems, relying on the results of Sections 5 - 11.
\subsection{Proof of Theorem \ref{MainT} assuming Theorem \ref{thm.BL}}
$\mbox{ }$\newline

We first prove Theorem \ref{MainT} for $\om=\Rn_+\times \R$.\medskip

{\bf Step 1}. It suffices to prove Theorem \ref{MainT} under assumption (1), that is that \eqref{E:1:carl} and \eqref{E:1:bound} hold. If, instead, we assume condition (2) of Theorem \ref{MainT}, then, as in 
\cite{DPP}, we construct a new matrix $B$ such that $A=B+D$ where $B$ and $D$ are as in Lemma \ref{lem.CarImprov}. In fact the proof is analogous to the one given in Lemma \ref{lem.CarImprov}. The matrix $B$ will satisfy \eqref{E:1:carl} and \eqref{E:1:bound} (and also ellipticity when $A$ is elliptic). Thus, if we can prove Theorem \ref{MainT} for $B$, then by Lemma \ref{lem.pert} we also obtain solvability of the 
Regularity problem $(R)^{\LL}_q$ for the original matrix $A$ for some $q>1$. This is sufficient to conclude that 
$(R)_p^{\LL}$ holds if and only if $(D)_{p'}^{\LL^*}$ by Lemma \ref{lem.DtoR}.
\medskip

{\bf Step 2}. Hence for the remainder of this section we assume that $A$ is elliptic and \eqref{E:1:carl} and \eqref{E:1:bound} hold.
By Theorem \ref{t1}, $(R)_p^{\LL}$ will hold if and only if $(D)_{p'}^{\LL^*}$ holds, assuming that 
 \begin{itemize}
 \item[(i)] $(R)_q^{\LL_0}$ holds for all $q\in(1,\infty)$, where $\LL_0$ is the parabolic operator in block form given by \eqref{e0a}.
 \item[(ii)] For any $q\in(1,\infty)$ and any $f\in \dot L^q_{1,1/2}(\pom)$, the energy solution $\wt u$ to $\LL_0 \wt u=0$ in $\om$ with $\wt u|_{\pom}=f$ satisfies \[\norm{S(\nabla \wt u)}_{L^q}\le C\norm{f}_{\dot L^q_{1,1/2}}.\]
\end{itemize}
Theorem \ref{thm.BL} (proven below) ensures that (i) holds. For (ii), the estimate $\norm{S(\nabla \wt u)}_{L^q}\le C\norm{f}_{\dot L^q_{1,1/2}(\pom)}$ follows from a combination of Theorem \ref{thm.S<Np}, which shows that $\norm{S(\nabla \wt u)}_{L^q}\le C\norm{N(\nabla \wt u)}_{L^q}$, and solvability of $(R)^{\LL_0}_q$, which implies that $\norm{N(\nabla \wt u)}_{L^q}\le C\norm{f}_{\dot L^q_{1,1/2}(\pom)}$. 
\medskip

\noindent{\bf Step 3}. Assume now $\om=\OO\times\R$ for a Lipschitz domain $\OO$ as defined in Definition \ref{DefLipDomain}. We distinguish two cases: 
\begin{itemize}
 \item[(i)] $\OO$ is unbounded, 
 \item[(ii)] $\OO$ is bounded. 
 \end{itemize}
 In both cases, it suffices to show that $(R)_p^{\LL}$ holds at least one $p\in(1,\infty)$, as the rest of our claim then follows from Lemma \ref{lem.DtoR}. \medskip

In case (i), $\OO$ can be written as $\set{(x',x_n)\in\Rn: x_n>\vp(x')}$ where $\vp$ is a Lipschitz function. Define the map  $\rho(x',x_n,t)=(\Psi(x',x_n),t)$ as in \eqref{map_rho}, where $\Psi(x',x_n)=(x',c_0x_n+(\theta_{x_n}*\phi)(x'))$,
where $(\theta_s)_{s>0}$ is smooth compactly supported approximate identity and $c_0$ can be chosen large enough (depending only on
$\|\nabla\phi\|_{L^\infty({\mathbb R}^{n-1})}$) so that $\Psi$ is one to one. One can show that $\rho$ is bi-Lipschitz and that the pull-back of the operator $\LL$ under the map $\rho$ (denoted by $\LL_1$) is an elliptic operator on $\Rn_+\times\R$ that satisfies the Carleson condition (1) or (2) with respect to the domain $\Rn_+\times\R$ with a different constant (see the discussion below \eqref{map_rho}). Therefore, we can apply Step 1 to conclude that the $L^p$ Regulairty problem is solvable for some $p>1$ for $\LL_1$ on $\Rn_+\times\R$, which yields that $(R)^{\LL}_p$ is solvable on $\om$ as $\rho$ is bi-Lipschitz.\medskip

The case (ii)  is carried out in detail in Section 10. More precisely, it is shown there that  matters can be reduced to to proving Theorem \ref{tlocal}. The proof of Theorem \ref{tlocal} consists of a similar argument as in case (i), as well as a comparison between solutions with boundary data with compact support. 

We remark that in both cases, it is possible to track the exponent $p$ in the argument to show directly using Step 1 that the range of $p$ for which $(R)^{\LL}_p$ is solvable is the same as that of $(D)^{\LL^*}_{p'}$, but thanks to Lemma \ref{lem.DtoR}, this is not necessary. In particular, we only need to establish solvability of the Regularity problem $(R)^{\LL}_q$ for one value of $q>1$.

\subsection{Proof of Theorem \ref{thm.BL}} 
Let $\om$, $A$, and $\LL$ be as in Theorem \ref{thm.BL}. Let us denote by $\mu^A$ the measure given by the density \eqref{E:1:carl}- we emphasize the matrix $A$ in the notation because we will need to introduce more matrices. Let  $M:=\norm{\mu^A}_{C}$.  Since $(D)_{p'}^{\LL^*}$ holds for all $p\in(1,\infty)$ (c.f. Appendix 
\ref{APA}), it suffices to show that $(R)_{q_0}^{\LL}$ is solvable for some $q_0\in (1,\infty)$ and then by  Lemma \ref{lem.DtoR} we can deduce solvability of $(R)_{p}^{\LL}$ for all $p\in(1,\infty)$.\vglue1mm

Let $B$ be the matrix found in Lemma \ref{lem.CarImprov}, and so the measure $\mu^B$ given by 
\[
d\mu^B:=\br{x_n\abs{\nabla B}^2+x_n^3\abs{\dr_t B}^2+x_n^5\abs{\dr_t\nabla B}^2}dxdx_ndt
\]
is a Carleson measure on $\om$, with $\norm{\mu^B}_C\le C\norm{\mu^A}_C=CM$. Also, $B\in C^\infty(\om)$ satisfies the bound \eqref{D2Bbdd} where the constant $C$ depends only on $n$ and $\Lambda$.
 Moreover, by Remark \ref{re.A=B+C} (3), $B$ is in block form, and we write $B=\left[ \begin{array}{c|c}
   B_\parallel & 0 \\
   \midrule
   0 & 1 \\
\end{array}\right] 
$.

For $k>1$, we define a new matrix, $B^k(x,x_n,t):=B(x,kx_n,t)$ for $(x,x_n,t)\in\Rn_+\times\R$. This dilation in $x_n$ is a key step in the proof, and 
appeared at a similar point in the proof of the elliptic Regularity problem in \cite{DHP}. As we shall see in a moment, the important observation is that this matrix gives rise
to a new Carleson measure that has small norm for sufficiently large $k$.  Let us denote
\[
\LL_0:=\divg(B\nabla\cdot)-\dr_t= \divg_x(B_\parallel \nabla_x \cdot)+\partial^2_{nn}-\partial_t,
\]
and 
\begin{equation}\label{ellk}
\LL_k:=\divg(B^k\nabla\cdot)-\dr_t=\divg_x(B^k_\parallel \nabla_x \cdot)+\partial^2_{nn}-\partial_t \qquad\text{for }k>0.
\end{equation}
We observe that $B^k$ enjoys the following properties: for $k>1$, $B^k\in C^\infty(\Rn_+\times\R)$, and
\begin{enumerate}[(a)]
    \item for all $(x,x_n,t)\in\Rn_+\times\R$, \[x_n\abs{\nabla_x B^k(x,x_n,t)}+x_n^2\abs{\dr_t B^k(x,x_n,t)}\le C\,k^{-1}, \quad x_n\abs{\dr_n B^k(x,x_n,t)}\le C \] 
    for some $C$ depending only on $n$ and $\Lambda$;
    \item the measure $\mu_{||}^k$ given by
    \[
    d\mu_{||}^k:=\br{x_n\abs{\nabla_x B^k}^2+x_n^3\abs{\dr_t B^k}^2}dxdx_ndt
    \]
    is a Carleson measure on $\om$, with $\norm{\mu_{||}^k}_C\le \br{\norm{\mu^B}_C+C\log k}k^{-2}\le C(M+\log k)k^{-2}$;
    \item the measure $\mu^k$ given by 
    \[
    d\mu^k:=\br{x_n\abs{\nabla B^k}^2+x_n^3\abs{\dr_t B^k}^2+x_n^5\abs{\dr_t\nabla B^k}^2}dxdx_ndt
    \]
    is a Carleson measure on $\om$, with $\norm{\mu^k}_C\le \norm{\mu^B}_C+C\log k\le C(M+\log k)$;
    \item $\LL_k$ is a Carleson perturbation of $\LL_0$, and more precisely, the measure $\nu_k$ given by 
    \[
    d\nu_k:=\abs{B^k(x,x_n,t)-B(x,x_n,t)}^2\frac{dxdx_ndt}{x_n}
    \]
    is a Carleson measure on $\om$, with $\norm{\nu_k}_C\le Ck(M+\log k)$.
\end{enumerate}
Property (a) is a direct consequence of the bound \eqref{D2Bbdd}. For (b) and (c), we only show the computation of the Carleson norm of $\delta\abs{\dr_n B^k}^2$ to illustrate the idea. Fix any $R>0$ and any parabolic cube $Q_R$ with sidelength $R$ on $\partial\br{\Rn_+\times\R}$. We write
\begin{multline*}
    \int_{x_n=0}^R\int_{Q_R}\abs{\dr_nB^k(x,x_n,t)}^2x_ndxdtdx_n
    =k^2\int_{x_n=0}^R\int_{Q_R}\abs{(\dr_nB)(x,kx_n,t)}^2x_ndxdtdx_n\\
    =\int_0^{kR}\int_{Q_R}\abs{\dr_{y_n}B(x,y_n,t)}^2y_ndxdy_ndt
    =\int_0^{R}\int_{Q_R}
    +\int_R^{kR}\int_{Q_R}.
\end{multline*}
Using the Carleson condition on $\mu^B$, the first integral on the right-hand side is bounded by $\norm{\mu^B}_C\sigma(Q_R)$. For the second integral, we use the bound $\abs{\dr_{y_n}B(x,y_n,t)}\le Cy_n^{-1}$ to get that 
\[
\int_R^{kR}\int_{Q_R}\abs{\dr_{y_n}B(x,y_n,t)}^2y_ndxdy_ndt\le C\sigma(Q_R)\log k.
\]
From this it follows that $\norm{\delta\abs{\dr_nB^k}^2}_C\le C(M+\log k)$.

The smallness of the Carleson norm for the expression $x_n\abs{\nabla_x B^k(x,x_n,t)}$ in (b) comes from the fact that
the $k^2$ factor that multiplies $\abs{\dr_nB^k(x,x_n,t)}^2x_ndxdtdx_n$ above will not appear in that computation.

Property (d) can be verified using the Fundamental Theorem of Calculus and Fubini's theorem: fix $R$ and $Q_R$ as before, we write
\begin{multline*}
    \int_{x_n=0}^R\int_{Q_R}\abs{B^k-B}^2\frac{dxdtdx_n}{x_n}
    = \int_{x_n=0}^R\int_{Q_R}\abs{\int_{x_n}^{kx_n}\dr_sB(x,s,t)ds}^2\frac{dxdtdx_n}{x_n}\\
    \le k\int_{x_n=0}^R\int_{Q_R}\int_{x_n}^{kx_n}\abs{\dr_sB(x,s,t)}^2ds\, dxdtdx_n\le k\int_{s=0}^{kR}\int_{Q_R}\abs{\dr_sB(x,s,t)}^2s\,dxdtds.
\end{multline*}
Then we can obtain the estimate for $\norm{\nu_k}_C$ by splitting the integral in $s$ into $\int_0^R$ and $\int_R^{kR}$.

\medskip
We are now done with all the preparations and ready to prove Theorem \ref{thm.BL}.\medskip

{\bf Step 1.} We check that the matrix $B^k$ meets the conditions of Theorem \ref{thm.S<NL2}, and so there exist constants $C>0$ and $K_0\ge 1$ depending only on the dimension and the ellipticity constants, such that for any $f\in \dot L^2_{1,1/2}(\pom)$ and  any $K\ge K_0$,
    the energy solution $u$ to $\LL_k u=0$ in $\om$, $u|_{\pom}=f$ satisfies  
\begin{multline*}
    \norm{S(\nabla u)}_{L^2(\pom)}^2
    \le
       C\br{K^5\norm{\mu^k_{||}}_C+K^{-1}\norm{\mu^k}_C}\norm{N(\nabla u)}_{L^2(\pom)}^2 +C\norm{f}_{\dot L_{1,1/2}^2(\pom)}^2\\
       \le C(M+\log k)\br{K^5k^{-2}+K^{-1}}\norm{N(\nabla u)}_{L^2(\pom)}^2 +C\norm{f}_{\dot L_{1,1/2}^2(\pom)}^2
\end{multline*}
by using properties (b) and (c) in the second inequality.\medskip

{\bf Step 2.} Let $\delta>0$ be as in Theorem \ref{thm.NlessS}, and let $k>1$ to be large enough so that 
\begin{equation}\label{eq.kc1}
\br{\norm{\mu_{||}^k}_C+\norm{\delta\nabla_xB^k}_{L^\infty(\om)}^2}\br{1+\norm{\delta\nabla B^k}_{L^\infty(\om)}^2}\le \frac{C(1+M+\log k)}{k^2} <\delta.
\end{equation}
With this requirement on $k$, $B^k$ satisfies the conditions of Theorem \ref{thm.NlessS}, and so if we a priori know that $\norm{\tilde{N}(\nabla u)}_{L^2(\pom)}<\infty$ then
\begin{multline}\label{eq.N<S+A0}
    \norm{\tilde{N}(\nabla u)}_{L^2(\pom)}\le C\br{1+\|\mu^k\|_C+\norm{\delta|\nabla B^k|}_{L^\infty}^2}\left[\norm{S(\nabla u)}_{L^2(\pom)} +\norm{\nabla_x f}_{L^2(\pom)}\right]\\+C\norm{A(\nabla u)}_{L^2(\pom)}.
 \end{multline}   
where $u$ is as in Step 1, and $C$ depends only on $n$, $\delta$, and the ellipticity constants.
By \eqref{eq.sqrAAL2}, we have 
\[
\|A(\nabla u)\|_{L^2}\le C\|S(\nabla u)\|_{L^2}+C\norm{\mu_{||}^k}_C^{1/2}\|N(\nabla u)\|_{L^2}.\]
Plugging this into \eqref{eq.N<S+A0}, and then using properties (a) - (c) of $B^k$, one obtains that
\begin{multline*}
    \norm{\tilde{N}(\nabla u)}_{L^2(\pom)}\le C(1+M+\log k)\br{\norm{S(\nabla u)}_{L^2(\pom)} +\norm{\nabla_x f}_{L^2(\pom)}}\\
    +\frac{C(M+\log k)^{1/2}}{k}\norm{N(\nabla u)}_{L^2(\pom)}.
\end{multline*}
{\bf Step 3.} We combine the estimates in Steps 1 and 2. Note that $\norm{N(\nabla u)}_{L^2}\lesssim\norm{\wt N(\nabla u)}_{L^2}$ with an implicit constant depending only on $n$ and $\Lambda$ thanks to Lemma \ref{spCarl} and property (a) of the matrix $B^k$. We get that for $k$ satisfying \eqref{eq.kc1} and for $K\ge K_0$,
\begin{equation}\label{Stp3.1}
    \norm{\wt N(\nabla u)}_{L^2}^2\le C_1(K^5k^{-2}+K^{-1})\norm{\wt N(\nabla u)}_{L^2}^2+C_2\norm{f}_{\dot L^2_{1,1/2}}^2,
\end{equation}
where $C_1=O((\log k)^3)$ and $C_2=O((\log k)^2)$.

We take $K=k^{1/5}$ and let $k$ be sufficiently large so that all the following estimates are satisfied: \eqref{eq.kc1}, $k^{1/5}\ge K_0$, and $C_1(k^{-1}+k^{-1/5})\le 1/2$. With this choice of $K$ and $k$, we can hide the first term on the right-hand side of \eqref{Stp3.1} to obtain that 
$ \norm{\wt N(\nabla u)}_{L^2}^2\le C\norm{f}_{\dot L^2_{1,1/2}}^2$, which implies that $(R)_2^{\LL_k}$ is solvable.\medskip

{\bf Step 4.} By property (d) of $\LL_k$ and Lemma \ref{lem.pert}, we deduce that $(R)_q^{\LL_0}$ is solvable for some $1<q<2$. By our definition of $B$ and Lemma \ref{lem.CarImprov}, $\abs{A-B}^2\frac{dxdx_ndt}{x_n}$ is a Carleson measure on $\om$. Thus, by Lemma \ref{lem.pert} again, we conclude that  $(R)_{q_0}^{\LL}$ is solvable for some $1<q_0<2$. Appendix \ref{APA} shows that the Dirichlet problem for operators with a block form matrix is solvable for all $1<p<\infty$, so from
this and Lemma \ref{lem.DtoR} and we obtain
$(R)_{p}^{\LL}$ is also solvable for all $1<p<\infty$.

This completes the proof, modulo an argument that $\|\tilde{N}(\nabla u)\|_{L^2(\pom)}<\infty$ in the first place, since we used finiteness of this term in the above inequalities.
\medskip

{\bf Step 5. Further approximation.} Consider a matrix $B^k$ as in step 3. Let us drop dependence of this matrix on $k$, as we have already fixed $k$ and call this matrix just $B$. We approximate $B$ by a sequence of matrices $B^i$, $i=1,2,3,\dots$ as follows. 

Let $\eta^i$ be a smooth non-negative cutoff function in $\R^n\times\R$ that is equal to $1$ on a parabolic ball $B(0,i)$
and vanishing outside the ball $B(0,2i)$. Consider
\begin{equation}\label{Bapprox}
B^i(x',x_n,t)=\eta^i(x,x_n,t)B(x',x_n+1/i,t)+(1-\eta^{i})I,
\end{equation}
where $I$ is the $n\times n$ identity matrix. It follows that all matrices $B^i$ on the domain $\Omega=\R^n_+\times\R$ are uniformly elliptic with the same ellipticity constants, each  $B^i$ is the identity matrix outside a large ball centered at zero and that $\|\nabla B^i\|_{L^\infty(\Omega)}+\|\partial_t B^i\|_{L^\infty(\Omega)}<\infty$. This latter
bound is a consequence of the fact that $|\nabla B|\le Cx_n^{-1}$ and thus $|\nabla B^i|\le C(x_n+1/i)^{-1}\le Ci$, with similar argument for $\partial_t B$.
Furthermore each $B^i$ satisfies the same Carleson measure conditions as $B$ does and thus we have properties (a)-(c) as stated just below \eqref{ellk} 
and with same constants as $B$. Finally, we observe that on all compact subsets $K\subset \Omega$ we have that $B^i\to B$ uniformly in the $L^\infty$ norm as $i\to\infty$.

Consider the approximate energy solutions $u^i$ to the PDE problems
$$\mathcal L^iu^i=\div(B^i\nabla u^i)-\partial_tu^i=0\quad\mbox{in }\Omega\quad\mbox{with}\quad u^i\Big|_{\partial\Omega}=f,$$
where $f$ is in the trace space 
 $\Hdot^{1/4}_{\pd_{t} - \Delta_x}(\partial\Omega)$. As for all $i$ the ellipticity constants have the same bounds we can conclude that the energy norm $\dot\E(\Omega)$ of all $u^i$ is uniformly bounded by the same constant which only depends on the $\Hdot^{1/4}_{\pd_{t} - \Delta_x}(\partial\Omega)$ norm of $f$.
 
As the solutions $u^i$ are determined up to a constant, and enjoy interior H\"older continuity, we may without loss of generality assume that 
$u^i(0,1,0)=0$ for all $i$. With this in hand, and the fact that $\sup_i \|u^i\|_{\dot\E(\Omega)}<\infty$
we conclude that there exists a subsequence of the sequence $(u^i)$ such that this subsequence
$u^{i_n}$ converges weakly in $\dot\E(\Omega)$ to a function $u\in\dot\E(\Omega)$ with
$u(0,1,0)=0$. With the normalization $u^i(0,1,0)=0$ in place, 
for any $R>0$
the space $L^2(\Omega\cap B(0,R))$ is compactly embedded in $\dot\E$ with uniformly bounded 
$\sup_{i}\|u^i\|_{L^2(B(0,R))}$ (by the Sobolev embedding theorem), allowing us to conclude that we have the convergence
$$u^{i_n}\to u\quad\mbox{as }n\to \infty\quad \mbox{in the $L^2$ norm on all balls $B(0,R)$, $R>0$}.$$

Fix now a compact subset $K\subset\Omega$ and consider a test function $\varphi\in C_0^\infty(K)$. Since each $u^{i_n}$ is a weak solution of its corresponding PDE we get that:
$$\iint_\Omega u^{i_n}\partial_t\varphi-\iint_{\Omega}B\nabla u^{i_n}\cdot\nabla\varphi
=\iint_\Omega(B^{i_n}-B)\nabla u^{i_n}\cdot\nabla\varphi.$$
As $n\to \infty$ we obtain that the RHS converges to zero as $B^i\to B$ uniformly on $K$ and hence 
the weak limit $u$ solves the PDE 
$$\mathcal Lu=\div(B\nabla u)-\partial_tu=0\quad\mbox{in }\Omega.$$
Because the  $\dot\E(\Omega)$ norm of $u$ is bounded by $\sup_i \|u^i\|_{\dot\E(\Omega)}<\infty$, we conclude that $u$ is also an energy solution in $\Omega$.
The PDE for the difference $u^{i_n}-u$, which is
$$\mathcal L(u^{i_n}-u)=\div(B\nabla (u^{i_n}-u))-\partial_t(u^{i_n}-u)=\div f^i:=\div((B-B^{i_n})\nabla u^{i_n})$$
yields a Caccioppoli inequality of the form:
$$\iint_{B((X,t),r)}|\nabla (u^{i_n}-u)|^2\le \frac{C}{r^2}\iint_{B((X,t),2r)}|u^{i_n}-u|^2+C\iint_{B((X,t),2r)}\left(f^i\right)^2.
$$
Since $f^i\to 0$ in $L^2$ norm on compact subsets of $\Omega$, from weak convergence $\nabla u^{i_n}\to \nabla u$ in $L^2$, we actually have strong convergence in $L^2$ on compact subsets of $\Omega$.

We will prove below in step 6 that for the approximate solutions $u^i$ we have the a priori bound $\|\tilde{N}(\nabla u^i)\|_{L^2(\pom)}<\infty$, as required above for $f$ with finite $\dot{L}^2_{1,1/2}(\partial\Omega)$ norm. 
Assuming that for the moment, by step 3 we have that for each $i$, the estimate 
$$\|\tilde{N}(\nabla u^i)\|_{L^2(\pom)}\le C\|f\|_{\dot{L}^2_{1,1/2}(\partial\Omega)},$$
holds with a constant $C$ which is independent of $i$, depending only on ellipticity of the matrix and the Carleson norm of its coefficients. 
Fix now a compact subset $K\subset \Omega$ and let $\tilde N_K$ be a variant of the nontangential maximal 
function $\tilde N$ defined with respect to the modified nontangential cones $\Gamma_K(q,\tau)=\Gamma(q,\tau)\cap K$. Because of the strong convergence $\nabla u^{i_n}\to \nabla u$ in $L^2$ on compact sets we have
that $\tilde{N}_K(\nabla u^i)\to \tilde{N}_K(\nabla u)$ uniformly on $\partial\Omega$ and furthermore the functions $\tilde{N}_K(\nabla u^i)$ and $\tilde{N}_K(\nabla u)$ vanish outside of a compact subset of the boundary. Hence $\tilde{N}_K(\nabla (u^{i_n}-u))\to 0$ in $L^2(\partial\Omega)$. Finally, this implies that
$$\norm{\tilde{N}_K(\nabla u)}_{L^2(\pom)}=\lim_{n\to\infty}\norm{\tilde{N}_K(\nabla u^{i_n})}_{L^2(\pom)}
\le\limsup_{n\to\infty}\norm{\tilde{N}(\nabla u^{i_n})}_{L^2(\pom)}\le C\|f\|_{\dot{L}^2_{1,1/2}(\partial\Omega)}.
$$
Taking the supremum over all compact subsets $K\subset\Omega$ then implies that
\begin{equation}\label{R_2solest}
\|\tilde{N}(\nabla u)\|_{L^2(\pom)}\le C\|f\|_{\dot{L}^2_{1,1/2}(\partial\Omega)}.
\end{equation}
To see that $u$ attains the boundary value $f$ we reason as follows. Consider an extension $\tilde{f}$ of $f$ into $\Omega$ (for example, the 
solution of the heat equation in $\Omega$ with datum $f$). Then each $u^i$ can be written also as $\tilde f+v^i$ where $v^i\in\dot\E_0(\Omega)$ solves the inhomogeneous PDE:
$$\mathcal L^iv^i=\div(B^i\nabla v^i)-\partial_tv^i=-\div(B^i\nabla\tilde f).$$
As before, the $\dot\E_0(\Omega)$ norms are uniformly bounded (by $C\|\nabla \tilde f\|_{L^2(\Omega)}$) and hence there is a subsequence (which we may again take to be $(i_n)$) such that $v^i\to v$ weakly in $\dot\E_0(\Omega)$ and $v\in \dot\E_0(\Omega)$. Given that $u^{i_n}\to u$ strongly on compact sets we must have that $u=\tilde f+v$ and therefore Tr$\, u=f$. Thus $u$ is the energy solution of our PDE with boundary datum $f$ as desired. It follows that such $u$ solves the $L^2$ Regularity problem for the operator with matrix $B$ and boundary data $f$ thanks to \eqref{R_2solest}.
\medskip

{\bf Step 6. An a priori estimate for $\|\tilde{N}(\nabla u^i)\|_{L^2(\pom)}$.} It follows that we only need to prove that  $\|\tilde{N}(\nabla u^i)\|_{L^2(\pom)}<\infty$ 
for solutions $u^i$, as defined above associated to matrices $B^i$
having the properties listed in step 5. We fix $i\in\N$ and consider a matrix $B^i$ as above. Dropping the index $i$ we may assume that $B$ is uniformly elliptic, satisfies the Carleson measure properties listed as (a)-(c)  under \eqref{ellk}, equals the identity matrix outside a large ball centered at zero, and also has the property
that $\|\nabla B\|_{L^\infty(\Omega)}+\|\partial_t B\|_{L^\infty(\Omega)}<\infty$.
Let $u$ be an energy solution to $\LL u=0$ where $\LL=\div(B\nabla\cdot)-\partial_t$. Without loss of generality assume also that $u\Big|_{\pom}=f$ for some compactly supported Lipschitz function $f$ (in all variables). 
It suffices to establish solvability for a class of function that is dense in our underlying space since the solvability operator then extends continuously to the whole space thanks to the estimate \eqref{R_2solest}.

As explained in section \ref{SS:43} we have for such $u$: $\nabla_x u=\vec V+\vec\eta$, where components of $\vec V$ solve the Dirichlet problem (for which nontangential estimates are proven in Appendix \ref{APA}) and $\vec\eta=(\eta_1,\eta_2,\dots,\eta_{n-1})$ solves the PDE system \eqref{system2}.
Our first objective is to prove that $\eta$ is bounded in $\Omega$. This can be proven as follows: Initially we prove that $\|\vec\eta\|_{\dot\E(\Omega)}<\infty$. This follows from \eqref{system2}, provided we show that we can apply Lax-Milgram to this system in the spirit of subsection \ref{RwEs}. This requires establishing coercivity of the system \eqref{system2}, as well as proving bounds for the right-hand side terms of the form  
$$\partial_i((\partial_mb_{ij})v_j),$$
where for $v_j$ we have an $L^\infty$ bound by the maximum principle. Also, since $B=I$ outside a bounded set, we have $\nabla B=0$ outside a bounded set and hence $(\partial_mb_{ij})v_j$ belongs to $L^p(\Omega)$ for all $1\le p\le\infty$. It follows this term is not problematic. The second term on the righthand side of 
\eqref{system2} includes components of $\vec\eta$ and when paired with $\eta_m$ enjoys the estimate:
$$\left|\iint_\Omega\partial_i((\partial_mb_{ij})\eta_j)\eta_m\right|=\left|\iint_\Omega((\partial_mb_{ij})\eta_j)\partial_i\eta_m\right|\le C\left(\iint_\Omega|\nabla\vec\eta|^2\right)^{1/2}\left(\iint_\Omega\frac{|\vec\eta|^2}{x_n^2}\right)^{1/2},
$$
by Cauchy-Schwarz for some small $C$ (c.f. assumption (a) stated under  \eqref{ellk}). Finally, by the Hardy-Sobolev inequality, which applies here as $\vec\eta$ vanishes on the boundary, the right hand side of the above inequality is bounded by $C\iint_\Omega|\nabla\vec\eta|^2$.
Thus this term can be absorbed (thanks to smallness of $C$) by the coercivity the lefthand side enjoys (c.f. subsection \ref{RwEs}).
It follows that $\vec\eta\in \dot\E(\Omega)$. Given that $\vec\eta|_{\pom}=0$ we get by the embedding theorem
and the fact that $\nabla B$ vanishes outside bounded sets
 that  $(\partial_mb_{ij})\eta_j\in L^{2n/(n-2)}(\Omega)$. By a bootstrap argument we may then further improve regularity of $\vec\eta$, via a Moser type iteration argument, eventually obtaining that $\vec\eta\in L^\infty(\Omega)$.
Hence we may conclude that $\nabla_x u=\vec V+\vec\eta\in L^\infty(\Omega)$. In particular this implies that
$\tilde{N}(\nabla_x u)$ is $L^p$ integrable on bounded subsets of $\pom$ for all $1\le p\le\infty$.

Consider a ball $B(0,R)$ large enough such that the operator $B$ equals $I$ outside this set. It follows that on the set $\Omega\setminus B(0,R)$ the PDE for $\vec\eta$ is just the heat equation
$$\Delta \vec\eta-\partial_t\vec\eta=0\quad\mbox{and}\quad \vec\eta\Big|_{\pom\cap B(0,R)^c}=0.$$
Hence on this set components of $\vec\eta$ are comparable with the Green's function for the heat equation in the upper half-space.  We pick the pole  $(Y,s)$ with time $s$ 
in the past of the ball $B(0,R)$. We can find a large constant $M>0$ for which $0\le |\vec\eta|\le MG$
on $\Omega\cap\partial B(0,R)$ and hence by the comparison principle 

$$0<|\vec\eta|(x,x_n,t)\le M G((x,x_n,t),(Y,s)),\qquad\mbox{for all } (x,x_n,t)\notin B(0,R).$$
with $G$ being the Green's function of the whole space given by \eqref{eq.Riesz}. By \eqref{eq.Grnf_upbd}, 
$G$ has sufficient decay - exponential in spatial variables, and polynomial in $t$ of order at least $C(2R+t)^{-1}$ - so
which is $L^p$ integrable, for $p>1$ on an interval $(-R,\infty)$,
 we conclude that $\tilde N(\vec\eta)$ will be integrable in $L^p(\pom)$ for all $p>1$.
 Thus we have established the required a priori bound for $\tilde{N}(\nabla_x u)$. 
 
 It remains to establish a bound for $\tilde{N}(\partial_n u)$.
The PDE satisfied by $w_n$ is (using that $B$ is in block form) 
 \begin{equation}\label{eqwn}
 -\dr_tw_n+\divg(B\nabla w_n)=-\divg_x\br{(\dr_n B_{||})\nabla_x u}:=\div_x\vec F. 
  \end{equation}
  Given what we know about $\nabla_x u$, the righthand side of the PDE has the form $\div_x \vec F$ for a bounded function $\vec F$ with bounded support, and hence such $\vec F\in L^q(\Omega)$ for all $q>n$. It follows that 
we can write $w_n$ as $w^0+\widetilde{w_n}$, where $w^0$ solves the above PDE with zero boundary data
and (by results of, say, \cite[Theorem 8.17]{GT}) is therefore bounded on $\Omega$. The other term, $\widetilde{w_n}$,
solves the homogeneous PDE $\mathcal L\widetilde{w_n}=0$ with boundary datum $w_n\big|_{\pom}$.
If   $w_n\big|_{\pom}\in L^2(\pom)$, then by results of Appendix \ref{APA} we have the required nontangential bounds for it. For $w^0$ the rest of the argument continues as above for $\vec\eta$, since once again outside a bounded set this function is comparable to the Green's function of the heat equation.\medskip

Hence it remains to establish that $w_n\big|_{\pom}\in L^2(\pom)$. The argument is similar to that given in \eqref{TTBBMMx},
but is a bit easier since  \eqref{TTBBMMx} is a local estimate and in this calculation we need to take $\theta\hbar=0$ and $c=0$. It follows that for any $h>0$ we have:

\begin{eqnarray}\nonumber
\mathcal A(h):&=&\int_{\R^n}w_n^2(x,0,t)\,dx\,dt-\int_{\R^n}w_n^2(x,h,t)\,dx\,dt
+h\int_{\R^n}(\partial_{x_n}w_n^2)(x,h,t)\,dx\,dt\\\nonumber
&=&2\iint_{\R^{n-1}\times(0,h)\times\R}|\partial_{x_n}w_n|^2x_n\,dX\,dt+
2\iint_{\R^{n-1}\times(0,h)\times\R}w_n\partial^2_{x_nx_n}w_n\,x_n\,dX\,dt\\
&=& 2\iint_{\R^{n-1}\times(0,h)\times\R}|\partial_{x_n}w_n|^2x_n\,dX\,dt-2\sum_{i,j<n}\iint_{\R^{n-1}\times(0,h)\times\R}
w_n \partial_i(b_{ij}\partial_jw_n)\,x_n\,dX\,dt\\\nonumber
&-&  2\iint_{\R^{n-1}\times(0,h)\times\R} w_n\,\div_x\vec F\,x_n\,dX\,dt-\iint_{\R^{n-1}\times(0,h)\times\R}\partial_t(w_n^2)\,dX\,dt.
\end{eqnarray}
Here we have used \eqref{eqwn}. Integration by parts,
together with the properties of $\vec F$, yields
\begin{eqnarray}\label{diffest}
&&|\mathcal A(h)|\lesssim 
\|S(\nabla_x u)\|^2_{L^2(\Omega)}+\iint_{\R^{n-1}\times(0,h)\times\R}|\partial^2_{x_nx_n}u|^2x_n\,dX\,dt.
\end{eqnarray}
Hence by \eqref{eq.BlckSdk<N} the first term of the righthand side of \eqref{diffest} is finite because we know that $\|N(\nabla_x u)\|^2_{L^2(\Omega)}<\infty$.
For the second term, we make use of the PDE that $u$ satisfies. This yields another term, $\|S(\nabla_x u)\|^2_{L^2(\Omega)}$, that we have already handled, as well as the term $\iint_{\R^{n-1}\times(0,h)\times\R}|\nabla_x u|^2x_n$
which is bounded by $h^2\|N(\nabla_x u)\|^2_{L^2(\Omega)}$
and lastly the term
$\iint_{\R^{n-1}\times(0,h)\times\R}|\partial_tu|^2x_n$, which we consider now. Setting $v=\partial_t u$,
differentiating the PDE for $u$ gives 
\begin{equation}\label{eqnv}
 -\dr_tv+\divg(B\nabla v)=-\divg_x\br{(\partial_t B_{||})\nabla_x u}:=\div_x\vec F^1,
\end{equation}
with $v\big|_{\pom}=\partial_t f$, where $f$ has bounded derivatives (including the $t$-variable) and $\partial_t f$  
 is compactly supported. As with $w_n$ the solution $v$ can be written as $v^0+\tilde{v}$ with $v^0$ solving the inhomogeneous problem with zero boundary data and $\tilde{v}$ solving $\mathcal L\tilde v=0$ with boundary datum $\partial_t f$. Again, Appendix \ref{APA} applies to $\tilde v$ since, by our assumptions on $f$, $\partial_t f\in L^2(\pom)$), and so $\|N(\tilde v)\|_{L^2(\pom)}$ is finite.
The same argument for $w^0$ applies to $v^0$ since the functions $\vec F$ and $\vec F^1$ have similar properites. Thus,
$$\iint_{\R^{n-1}\times(0,h)\times\R}|\partial_tu|^2x_n\,dX\,dt\le h^2\left[\|N(v^0)\|_{L^2(\pom)}^2+\|N(\tilde v)\|_{L^2(\pom)}^2\right]<\infty.$$
Hence, by combining all terms of \eqref{diffest} we have proven that $|\mathcal A(h)|\le C(1+h^2)$. Fix now $H>0$. We average $\mathcal A(h)$ over the interval $(O,H)$ and see that:

$$
H^{-1}\int_0^H\mathcal A(h)dh=\int_{\R^n}w_n^2(x,0,t)\,dx\,dt+\int_{\R^n}w_n^2(x,H,t)\,dx\,dt
-H^{-1}\iint_{\R^{n-1}\times(0,H)\times \R}w_n^2\,dX\,dt.$$
Combining this with our earlier estimate we see that
$$\int_{\R^n}w_n^2(x,0,t)\,dx\,dt\le C+CH^2+H^{-1}\iint_{\R^{n-1}\times(0,H)\times \R}w_n^2\,dX\,dt
\le C+\iint_{\Omega}|\nabla u|^2\,dX\,dt,$$
when $H=1$. Finally, recall that $u$ is the energy solution, and therefore $\iint_{\Omega}|\nabla u|^2\,dX\,dt<\infty$, so that
$w_n\in L^2(\pom)$. This completes the proof.\qed
\medskip

In the remainder of the paper we establish the results we have used in this section, starting with Theorem \ref{t1}.

\section{Solvability of the Regularity problem - reduction to the block form case}
\setcounter{equation}{0}

The following section establishes a result that reduces the question of solvability of the Regularity problem for parabolic PDEs with coefficients satisfying a large Carleson condition to solvability of the same problem for elliptic 
matrices of special block form (see below).

\begin{theorem}\label{t1} Let $\LL=\div(A\nabla\cdot)-\partial_t$ be an operator in ${\mathbb R}^n_+\times\mathbb R$ where matrix $A$ is uniformly elliptic, with bounded real coefficients such both \eqref{E:1:carl} and \eqref{E:1:bound} hold.
Suppose that for all $1<p<\infty$ the $L^p$, the Regularity problem for the block form operator
\begin{equation}\label{e0a}
{\LL}_0 u= -\partial_tu+\mbox{\rm div}_\parallel(A_\parallel \nabla_\parallel u)+\partial^2_{nn}u,
\end{equation}
is solvable in ${\mathbb R}^n_+\times\mathbb R$. Here  $A_{\parallel}$ is the matrix $(a_{ij})_{1\le i,j\le n-1}$.
In addition, assume that for any $p\in(1,\infty)$, any $f\in \dot L^p_{1,1/2}(\R^{n-1}\times \R)$, the energy solution $\wt u$ to $\LL_0 \wt u=0$ in $\Rn_+\times\R$ with $\wt u=f$ on $\partial(\Rn_+\times\R)$ satisfies
\begin{equation}\label{eq.addEst}
    \norm{S(\nabla \wt u)}_{L^p}\le C\norm{f}_{\dot L^p_{1,1/2}}.
\end{equation}
\vglue1mm

Then we have the following: For any $1<q<\infty$ the $L^q$ Regularity problem for the operator $\LL$ is solvable in ${\mathbb R}^n_+\times\mathbb R$ if and only if the $L^{q'}$ Dirichlet problem for the adjoint operator $\LL^*$ is solvable in ${\mathbb R}^n_+\times\mathbb R$.
\end{theorem}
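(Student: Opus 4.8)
The plan is to prove Theorem~\ref{t1} by establishing the two directions of the equivalence separately, with the substantive content lying in the forward direction ($L^q$ Regularity for $\LL$ $\Rightarrow$ $L^{q'}$ Dirichlet for $\LL^*$) being nearly immediate from Lemma~\ref{lem.RtoD}, and the converse direction ($L^{q'}$ Dirichlet for $\LL^*$ $\Rightarrow$ $L^q$ Regularity for $\LL$) being the heart of the matter. For the easy direction, I would simply invoke Lemma~\ref{lem.RtoD} (modulo extending it from the bounded-domain statement to $\Rn_+\times\R$, which follows by the localization/bi-Lipschitz change-of-variables arguments described in Section~\ref{sec.pf}). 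So the real work is the converse: assuming $(D)^{\LL^*}_{q'}$ is solvable, I want to deduce $(R)^{\LL}_q$. By Lemma~\ref{lem.DtoR}, $(D)^{\LL^*}_{q'}$ implies that \emph{either} $(R)^{\LL}_q$ is solvable \emph{or} $(R)^{\LL}_s$ fails for every $1<s<\infty$; hence it suffices to prove that $(R)^{\LL}_s$ holds for \emph{some} $s\in(1,\infty)$, and then the full equivalence of ranges follows.

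To produce solvability of $(R)^{\LL}_s$ at one exponent, the strategy is the decomposition-and-perturbation scheme already previewed in Section~\ref{sec.pf}. First, using Lemma~\ref{lem.CarImprov}, write $A = B + C$ where $B$ is smooth, elliptic with the same constants, satisfies \eqref{E:1:carl}, \eqref{E:1:bound}, the higher estimate \eqref{D2B_CM} and the pointwise bounds \eqref{D2Bbdd}, and $|C|^2\delta^{-1}\,dXdt$ is a Carleson measure. Since $\LL$ is a Carleson perturbation of $\div(B\nabla\cdot)-\partial_t$ in the sense of Lemma~\ref{lem.pert} (with the correction term $d\nu'$ as in Remark~\ref{re.pert}(2), whose hypothesis $x_n|\nabla B|\le C$ is exactly \eqref{D2Bbdd}), it is enough to prove $(R)^{\LL_B}_{s_0}$ for the operator $\LL_B=\div(B\nabla\cdot)-\partial_t$ for some $s_0>1$. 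Second, I would compare the energy solution $u$ of $\LL_B u = 0$ with data $f$ to the energy solution $\wt u$ of the \emph{block-form} operator $\LL_0$ in \eqref{e0a} (built from $B_\parallel$) with the same data: write $u = \wt u + w$. The function $w$ vanishes on $\partial\Omega$ and solves an inhomogeneous equation whose right-hand side is $\div$ of terms involving $\partial_n B$ and $\nabla_x B$ acting on $\nabla u$ and $\nabla\wt u$; these are governed by the Carleson measure $\mu_{||}$ (and $\mu$) built from the derivatives of $B$. Using the square-function/area-function machinery (the ``$S<N$'' estimate, the ``$N<S$'' estimate, and the relation between $A(\nabla u)$ and $S(\nabla u)$ referenced as Theorems~\ref{thm.S<NL2}, \ref{thm.NlessS}, etc., together with the Carleson inequalities of Lemma~\ref{lem.Carl} and the local maximal function bounds of Lemmas~\ref{lem.NM<N}, \ref{lem.locMaxCarl}), one controls $\|\wt N(\nabla w)\|_{L^s}$ by the $\dot L^s_{1,1/2}$ norm of $f$ plus a small multiple of $\|\wt N(\nabla u)\|_{L^s}$, provided the relevant Carleson norms are small. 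The dilation trick $B\mapsto B^k(x,x_n,t)=B(x,kx_n,t)$ is what makes $\|\mu^k_{||}\|_C$ small (of size $(M+\log k)k^{-2}$) at the cost of a controlled blow-up in the perturbation term, exactly as in the proof of Theorem~\ref{thm.BL}; and since $(R)^{\LL_0}_q$ holds for all $q$ by hypothesis (i) of the theorem, together with the a~priori square-function bound \eqref{eq.addEst} for $\wt u$, the block-form piece $\wt u$ is fully under control. Finally, a hide-and-absorb argument over the small parameter yields $\|\wt N(\nabla u)\|_{L^2}\lesssim \|f\|_{\dot L^2_{1,1/2}}$, hence $(R)^{\LL_B}_2$, hence (by Lemma~\ref{lem.pert}) $(R)^{\LL}_{s_0}$ for some $s_0\in(1,2)$; Lemma~\ref{lem.DtoR} then upgrades this to the claimed two-way equivalence.

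The main obstacle, as the paper itself flags, will be the non-local half-derivative-in-time operator $D_t^{1/2}$ and its interaction with cutoffs and with the correction matrix: the energy identity \eqref{2.10} couples $\nabla u$ with $H_t D_t^{1/2}u$, so every integration-by-parts or Caccioppoli-type manipulation needed to pass from the PDE for $w$ to a usable estimate on $\nabla w$ generates time-fractional terms that must be reabsorbed. Controlling $\|\wt N(D_t^{1/2}u)\|_{L^p}$ and $\|\wt N(D_t^{1/2}H_t u)\|_{L^p}$ is handled abstractly by Theorem~\ref{timp} once $(R)_p$ is known, but \emph{en route} one needs the non-tangential estimates for the half-time-derivative from \cite{Din23} to keep these terms from obstructing the absorption; that, plus ensuring the a~priori finiteness $\|\wt N(\nabla u)\|_{L^2}<\infty$ (so that the absorbed term is legitimately finite), via the approximation scheme of Steps~5--6 in Section~\ref{sec.pf}, is where the parabolic argument genuinely diverges from its elliptic model. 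A secondary technical point is bookkeeping the constants so that the $\log k$ losses in the Carleson norms of $B^k$ and in $\nu_k$ are dominated by the gains $k^{-2}$, $K^{-1}$ from the dilation and aperture parameters; this is routine but must be done carefully, as in Step~3 of the proof of Theorem~\ref{thm.BL}.
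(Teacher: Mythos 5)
Your proposal does not follow the paper's argument, and the route you substitute has a genuine gap at its core. The heart of your plan for the hard direction is to write $u=\wt u+w$, where $\wt u$ solves the block-form problem $\LL_0\wt u=0$ with the same data, and to treat $w$ as a solution of an inhomogeneous problem whose right-hand side is ``$\div$ of terms involving $\partial_n B$ and $\nabla_x B$.'' That is not what the difference satisfies: since $u$ solves $\div(B\nabla u)-\partial_t u=0$ and $\wt u$ solves the equation for the block truncation $B_0$ of $B$, the difference solves $\LL_0 w=-\div\big((B-B_0)\nabla u\big)$, and $B-B_0$ consists of the \emph{undifferentiated} entries $b_{in},b_{ni},b_{nn}-1$. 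These are of size $O(1)$; they are not controlled by $\mu$, $\mu_{||}$, or any Carleson measure built from $\nabla A$, $\partial_t A$, so neither Lemma \ref{lem.pert} nor a smallness/absorption scheme of the type used in the proof of Theorem \ref{thm.BL} can close the estimate. Relatedly, the machinery you invoke — the dilation $B\mapsto B^k$, Theorem \ref{thm.S<NL2} ($S<N$ in $L^2$), Theorem \ref{thm.NlessS} ($N<S+A$), Theorem \ref{thm.S<Np} — is proved in the paper \emph{only for block-form matrices}, and its proofs use the block structure essentially (e.g.\ $A\nabla w_k\cdot\nabla x_n=\partial_n w_k$, the vanishing boundary term in $I_2$ of Lemma \ref{lem.Sdk}, the weakly coupled closed system \eqref{system} for $\nabla_x u$). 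You cannot apply it to the general smooth matrix $B$ obtained from Lemma \ref{lem.CarImprov}. A symptom of the problem is that in your hard direction the hypothesis $(D)^{\LL^*}_{q'}$ is never used substantively (only through the dichotomy of Lemma \ref{lem.DtoR} at the very end); but producing $(R)^{\LL}_s$ for a general matrix with large Carleson norm at \emph{any} exponent is exactly the main theorem of the paper, and there is no known way to do it without feeding in the Dirichlet solvability of $\LL^*$.

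The paper's proof of Theorem \ref{t1} is quite different: it is a direct duality argument at the fixed exponent $q$. One builds the tent-space test function $\vec h$ adapted to $\nabla u$ (Lemma \ref{l1bb}, Proposition \ref{betterh}), solves the \emph{inhomogeneous adjoint} problem $\LL^*v=\div\vec h$ with zero boundary data, and integrates by parts to convert $\|\wt N_{1,\varepsilon}(\nabla u)\|_{L^q}$ into boundary pairings of $\nabla_\parallel f$ and $D^{1/2}_t f$ against vertical integrals of $A^*\nabla v$ and $H_tD_t^{1/2}v$. These pairings are then ``un-integrated'' using the block-form solution $\tilde u$ (whose $(R)_q$ solvability and square-function bound \eqref{eq.addEst} are the hypotheses of the theorem), so that the off-diagonal coefficients only ever appear either differentiated (hence Carleson-controlled) or in terms estimated by products such as $\|S(\partial_n\tilde u)\|_{L^q}\|S(v)\|_{L^{q'}}$, $\|A(\nabla\tilde u)\|_{L^q}\|S(v)\|_{L^{q'}}$. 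The assumed $(D)^{\LL^*}_{q'}$ solvability enters precisely here, through the estimates \eqref{e4} for $N(v)$, $S(v)$, $\tilde A(v)$, $T_2(H)$ for the inhomogeneous adjoint problem (via \cite{Up} and the area-function bound of subsection \ref{S:Area-adj}). If you want to repair your write-up, the duality step with $\vec h$ and the adjoint inhomogeneous problem is the missing idea; the comparison-by-subtraction with the block-form solution cannot replace it.
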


\noindent {\it Proof of Theorem \ref{t1}.}
We start by adapting useful results from \cite{KP2} to parabolic settings. Let us denote by $\tilde{N}_{1,\varepsilon}$ the $L^1$-averaged version of the nontangential maximal function for {\it doubly truncated} parabolic cones. That is, for $\vec{u}:{\mathbb R}^n_+\times\mathbb R\to\mathbb R^m$, we set
$\Gamma_\varepsilon(Q):=\Gamma(Q)\cap
\{(X,t): \varepsilon < \delta(X,t)< 1/\varepsilon\}$, 
and
$$\tilde{N}_{1,\varepsilon}(\vec{u})(Q)=\sup_{(X,t)\in\Gamma_\varepsilon(Q)}\fiint_{(Z,\tau)\in B_{\delta(X,t)/4}(X,t)}|\vec{u}(Z,\tau)|dZd\tau.$$

Lemma 2.8 of \cite{KP2}, stated below, provides a way to estimate the $L^q$ norms of $\tilde{N}_{1,\varepsilon}(\nabla F)(Q)$ via duality (based on tent-spaces). We state these results in the following subsection.

\subsection{Test function $\vec h$.}

\begin{lemma}\label{l1bb} There exist $\vec{\alpha}(X,Z)$ with $\vec{\alpha}(X,\cdot):B(X,\delta(X)/2)\to{\mathbb R}^{n}$ and \newline $\|\vec\alpha(X,\cdot)\|_{L^\infty(B(X,\delta(X)/2))}=1$, a nonnegative scalar function $\beta(X,Q)\in L^1(\Gamma_\varepsilon(Q))$ with \newline $\int_{\Gamma_\varepsilon(Q)}\beta(X,Q)\,dX=1$ and a nonnegative $g\in L^{q'}(\partial{(\mathbb R^n_+\times\mathbb R)},d\sigma)$ with $\|g\|_{L^{q'}}=1$ such that
\begin{equation}
\left\|\tilde{N}_{1,\varepsilon}(\nabla F)\right\|_{L^q(\partial{(\mathbb R^n_+\times \mathbb R)},d\sigma)}\lesssim \iint_{{\mathbb R^n_+\times \mathbb R}}\nabla F(Z)\cdot \vec{h}(Z)\, dZ,
\label{e1a}
\end{equation}
where 
$$\vec{h}(Z)=\int_{\partial{(\mathbb R^n_+\times \mathbb R)}}\iint_{\Gamma(Q)}g(Q)\vec{\alpha}(X,Z)\beta(X,Q)\frac{\chi_{B(X,\delta(X)/4)}(Z)}{\delta(X)^{n+2}}\,dX\,dQ,$$
and $\chi_A$ is the characteristic function of the set $A$.

Moreover, for any $G:{\mathbb R^n_+\times\mathbb R}\to\mathbb R$ with $\tilde{N}_{1}(\nabla G)$ we also have an upper bound
\begin{equation}
\left|\iint_{{\mathbb R^n_+\times\mathbb R}}\nabla G(Z)\cdot \vec{h}(Z)\, dZ\right|\lesssim \left\|\tilde{N}_{1}(\nabla G)\right\|_{L^q(\partial({\mathbb R^n_+}\times\mathbb R),d\sigma)},
\label{e2}
\end{equation}
where $\wt N_1$ is the modified nontangential maximal function with $p=1$ in \eqref{def.Nap}.
The implied constants in \eqref{e1a}-\eqref{e2} do not depend on $\varepsilon$, only on the dimension $n$.
\end{lemma}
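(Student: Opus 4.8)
\emph{Proof strategy.} This is the parabolic counterpart of \cite[Lemma 2.8]{KP2}, and the plan is to follow that argument, tracking only the parabolic scaling of balls and cones. First I would dualize the $L^q$ norm on the boundary: since $1<q<\infty$, pick a nonnegative $g\in L^{q'}(\pom,d\sigma)$ with $\|g\|_{L^{q'}}=1$ such that
\[
\|\tilde{N}_{1,\varepsilon}(\nabla F)\|_{L^q(\pom)}=\int_{\pom}\tilde{N}_{1,\varepsilon}(\nabla F)(Q)\,g(Q)\,d\sigma(Q).
\]
The one delicate point is then to realise, measurably in $Q$, the supremum defining $\tilde{N}_{1,\varepsilon}(\nabla F)(Q)$ by an \emph{average} over a positive-measure subset of the truncated cone $\Gamma_\varepsilon(Q)$ (which is a bounded set, the cones being doubly truncated). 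To this end I would use that $X\mapsto\fiint_{B_{\delta(X)/4}(X)}|\nabla F|$ is continuous on $\Gamma_\varepsilon(Q)$ (on that truncated cone the balls $B_{\delta(X)/4}(X)$ stay in a fixed compact set where $|\nabla F|$ is integrable, and $\delta(\cdot)$ is continuous and bounded below, so dominated convergence applies). Hence the set
\[
E(Q):=\Big\{X\in\Gamma_\varepsilon(Q):\ \fiint_{B_{\delta(X)/4}(X)}|\nabla F|>\tfrac{1}{2}\,\tilde{N}_{1,\varepsilon}(\nabla F)(Q)\Big\}
\]
is relatively open and nonempty whenever $\tilde{N}_{1,\varepsilon}(\nabla F)(Q)>0$, and, since all the cones $\Gamma_\varepsilon(Q)$, $Q\in\pom$, are translates of $\Gamma_\varepsilon(0)$ in $\reu\times\R$, the map $(X,Q)\mapsto\chi_{E(Q)}(X)$ is jointly measurable. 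I would set $\beta(X,Q):=|E(Q)|^{-1}\chi_{E(Q)}(X)$ (and $|\Gamma_\varepsilon(Q)|^{-1}\chi_{\Gamma_\varepsilon(Q)}(X)$ in the trivial case $\tilde{N}_{1,\varepsilon}(\nabla F)(Q)=0$), so that $\int_{\Gamma_\varepsilon(Q)}\beta(X,Q)\,dX=1$ and
\[
\tilde{N}_{1,\varepsilon}(\nabla F)(Q)\le 2\int_{\Gamma_\varepsilon(Q)}\Big(\fiint_{B_{\delta(X)/4}(X)}|\nabla F(Z)|\,dZ\Big)\beta(X,Q)\,dX.
\]

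Next I would take $\vec\alpha(X,Z):=\nabla F(Z)/|\nabla F(Z)|$ for $Z\in B(X,\delta(X)/2)$, extended arbitrarily to a unit vector where $\nabla F$ vanishes, so that $\|\vec\alpha(X,\cdot)\|_{L^\infty(B(X,\delta(X)/2))}=1$ and $\int_{B_{\delta(X)/4}(X)}|\nabla F(Z)|\,dZ=\int_{B_{\delta(X)/4}(X)}\nabla F(Z)\cdot\vec\alpha(X,Z)\,dZ$; also $|B_{\delta(X)/4}(X)|\sim\delta(X)^{n+2}$ by parabolic scaling. Substituting these into the two displays above, rewriting each ball integral as an integral over $\reu\times\R$ against $\chi_{B_{\delta(X)/4}(X)}$, and using Tonelli's theorem on the resulting nonnegative integrand to move the $Z$-integration to the outside, produces exactly \eqref{e1a} with the stated $\vec h$; one may write $\Gamma(Q)$ in place of $\Gamma_\varepsilon(Q)$ in the formula for $\vec h$ since $\beta(\cdot,Q)$ is supported in $\Gamma_\varepsilon(Q)\subset\Gamma(Q)$. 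No constant in this chain depends on $\varepsilon$.

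For \eqref{e2} I would reverse the interchange. For $G$ with $\tilde{N}_1(\nabla G)\in L^q(\pom)$ the function $\nabla G\cdot\vec h$ is absolutely integrable (by the estimate below), so Fubini's theorem gives
\[
\iint_{\reu\times\R}\nabla G(Z)\cdot\vec h(Z)\,dZ=\int_{\pom}g(Q)\int_{\Gamma(Q)}\beta(X,Q)\,\delta(X)^{-(n+2)}\Big(\int_{B_{\delta(X)/4}(X)}\nabla G(Z)\cdot\vec\alpha(X,Z)\,dZ\Big)dX\,d\sigma(Q).
\]
Since $\|\vec\alpha(X,\cdot)\|_{L^\infty}=1$, the innermost integral is at most $\int_{B_{\delta(X)/4}(X)}|\nabla G|$ in absolute value; dividing by $\delta(X)^{n+2}\sim|B_{\delta(X)/4}(X)|$ and then enlarging the ball to $B_{\delta(X)/2}(X)$ at the cost of the dimensional constant $2^{n+2}$ bounds it by $C\,\tilde{N}_1(\nabla G)(Q)$ for every $X\in\Gamma(Q)$. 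Using $\int_{\Gamma(Q)}\beta(X,Q)\,dX=1$ and then Hölder's inequality with $\|g\|_{L^{q'}}=1$ gives $\big|\iint_{\reu\times\R}\nabla G\cdot\vec h\big|\lesssim\|\tilde{N}_1(\nabla G)\|_{L^q(\pom)}$.

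The step I expect to be the main obstacle is the measurable realisation of the averaging weight $\beta$: one cannot in general choose a pointwise (near-)maximiser of $\tilde{N}_{1,\varepsilon}$ measurably in $Q$, which is exactly why the statement is phrased in terms of a density $\beta$ and why I would capture the maximising direction through the open near-maximising set $E(Q)$ instead. Once that is arranged, all remaining steps are elementary manipulations of parabolic balls together with Tonelli's and Hölder's inequalities, uniform in $\varepsilon$.
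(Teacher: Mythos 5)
Your proof is correct, and it is worth noting that the paper does not actually prove Lemma \ref{l1bb} at all: it is quoted as the parabolic adaptation of Lemma 2.8 of \cite{KP2}, so your argument is being compared against that source and against the paper's refined in-house analogue, Proposition \ref{betterh}. Your route is the natural duality one and matches the KP2 scheme: realize the $L^q$ norm by a nonnegative $g\in L^{q'}$, replace the supremum over the doubly truncated cone by an average against the normalized indicator $\beta(\cdot,Q)$ of the open near-maximizing set $E(Q)$ (a clean way to sidestep measurable selection of a maximizer, and arguably tidier than what is glossed in the literature), and absorb the absolute value via $\vec\alpha(X,Z)=\nabla F(Z)/|\nabla F(Z)|$ so that Tonelli produces exactly the pairing with $\vec h$; the reverse bound \eqref{e2} is then H\"older plus the ball enlargement from radius $\delta(X)/4$ to $\delta(X)/2$, all with purely dimensional constants independent of $\varepsilon$. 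The one point you exploit decisively is the permitted $Z$-dependence of $\vec\alpha$, which makes the lower bound \eqref{e1a} essentially free with no regularity hypothesis on $\nabla F$; this is precisely the ``flaw'' the paper flags afterwards, and Proposition \ref{betterh} must run a genuinely different and harder argument (good/bad level sets for the two averages $w,W$, a choice of $g$ supported on the good set, and interior parabolic H\"older continuity to find a ball on which $u$ keeps a fixed sign) exactly because there $\vec\alpha$ is required to be independent of $Z$ and $\vec h$ to be differentiable. Two minor caveats you may wish to make explicit: the exact realization of the norm by $g$ presupposes $\|\tilde N_{1,\varepsilon}(\nabla F)\|_{L^q}<\infty$ (otherwise take $g$ capturing an arbitrarily large portion, or note the a priori finiteness available in the intended applications), and in \eqref{e2} the Fubini step should be phrased as Tonelli applied first to $|\nabla G|\,|\vec h|$, which is exactly the estimate you then run; neither affects the validity of the argument.
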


The function $\vec{h}$ defined above works perfectly for the purposes outlined in the Lemma, but unfortunately has one flaw which will become important later. Namely, at one point we will need to differentiate $\vec{h}$ (since the derivatives on $\vec{h}$ will not be removable by the usual methods such as integration by parts). Thus, we claim that under certain mild assumptions we can improve this function and make it differentiable with $|\nabla\vec{h}(Z)|\lesssim \delta(Z)^{-1}{h_1}(Z)$, where ${h_1}$ is a function of same type as $\vec{h}$. 

\begin{proposition}\label{betterh} Fix $\varepsilon>0$.
Assume that $\vec{u}$ is a real vector (or scalar) valued function defined on $\mathbb R^n_+\times\mathbb R$ and that the interior H\"older continuity condition holds. That is for some $C>0$ and $\alpha\in (0,1)$ we have for any interior ball $B_{4r}\subset \mathbb R^n_+\times\mathbb R$ the estimate:
\begin{equation}\label{hoa}
\sup_{Z,Z'\in B_r}|\vec{u}(Z)-\vec{u}(Z')|\le C\left(\frac{\|Z-Z'\|}{r}\right)^\alpha\fiint_{B_{2r}}|\vec{u}|.
\end{equation}
Here $\|\cdot\|$ is as in the Definition \ref{D:paraSob}.
Fix a non-negative function $\eta\in C_0^\infty(\R^n)$ supported in $B(0,1)$ with $\iint\eta=1$.
Then there exists $\rho=\rho(n,\alpha,C)\in (0,1/4)$ (independent of $\vec{u}$), 
$\vec{\alpha}(\cdot)$ with  $|\vec\alpha(X)|\le1$, a nonnegative scalar function $\beta(X,Q)\in L^1(\Gamma_\varepsilon(Q))$ with $\int_{\Gamma_\varepsilon(Q)}\beta(X,Q)\,dX=1$, 
 a mapping $Y:\mathbb R^n_+\times\mathbb R\to \R^n_+\times\mathbb R$, $X\mapsto Y(X)\in B(X,\delta(X)/4)$ where $B(X,R)$ is a parabolic ball centered at $X$ with ``radius'' $R$,
 and a nonnegative $g\in L^{q'}(\partial{(\mathbb R^n_+\times\mathbb R)})$ with $\|g\|_{L^{q'}}=1$ such that
for
\begin{equation}\label{newh}
\vec{h}(Z)=\int_{\partial{(\mathbb R^n_+\times \mathbb R)}}\iint_{\Gamma(Q)}g(Q)\vec{\alpha}(X)\beta(X,Q)
\eta_{\rho\delta(X)}(Z-Y(X))\,dX\,dQ,
\end{equation}
we have that
\begin{equation}
\left\|\tilde{N}_{1,\varepsilon}(\vec{u})\right\|_{L^q(\partial{(\mathbb R^n_+\times \mathbb R)},d\sigma)}\approx \iint_{{\mathbb R^n_+\times \mathbb R}}\vec{u}(Z)\cdot \vec{h}(Z)\, dZ,
\label{e1aa}
\end{equation}
where \eqref{e1aa} holds with the implied constant independent of $\varepsilon>0$ and $\vec u$. Here $\eta_\delta$ denotes the parabolic rescaling of the function $\eta$ defined by
$$\eta_\delta(Z)=\eta_{\delta}(z,\tau)=\delta^{-n-2}\eta(z/\delta,t/\delta^2).$$

Moreover, for any $\vec{G}:{\mathbb R^n_+\times\mathbb R}\to\mathbb R^n$ with $\tilde{N}_{1}(\vec G)$ we also have that the upper bound
\begin{equation}
\left|\iint_{{\mathbb R^n_+\times\mathbb R}}\vec G(Z)\cdot \vec{h}(Z)\, dZ\right|\lesssim \left\|\tilde{N}_{1}(\vec G)\right\|_{L^q(\partial({\mathbb R^n_+}\times\mathbb R),d\sigma)}.
\label{e2aa}
\end{equation}

 This estimate holds even if we drop the assumption that $\eta\in C_0^\infty(\R^n)$ is non-negative, as the assumed non-negativity plays no role in the upper bound.
\end{proposition}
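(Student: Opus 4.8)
The statement is the smooth counterpart of Lemma \ref{l1bb}: I must produce a test function $\vec h$ of the exact form \eqref{newh}, built from a \emph{fixed} parabolic bump $\eta$, a $Z$–independent direction $\vec\alpha(X)$ and a shifted centre $Y(X)\in B(X,\delta(X)/4)$, for which $\|\tilde N_{1,\varepsilon}(\vec u)\|_{L^q}$ is realised (up to a fixed multiple, uniformly in $\varepsilon$) by the pairing $\iint\vec u\cdot\vec h$, while pairing $\vec h$ against an \emph{arbitrary} $\vec G$ is controlled by $\|\tilde N_1(\vec G)\|_{L^q}$. The construction of Lemma \ref{l1bb} already yields such an $\vec h$, but with the (non-smooth) kernel $\chi_{B(X,\delta(X)/4)}/\delta(X)^{n+2}$ and a $Z$–dependent unit vector $\vec\alpha(X,Z)$; the only genuinely new point here is that, at the cost of the interior H\"older hypothesis \eqref{hoa}, one may replace the indicator by the smooth mollifier $\eta_{\rho\delta(X)}(\,\cdot-Y(X))$ and the direction by a $Z$–independent $\vec\alpha(X)$. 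As in \cite{KP2}, the ``$\approx$'' in \eqref{e1aa} is understood in the same sense as \eqref{e1a}--\eqref{e2} of Lemma \ref{l1bb}: the truncation parameters entering the two sides may differ by a fixed multiplicative factor, which is harmless since $\varepsilon\to0$ in the applications. I would prove the two inequalities separately.

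\textbf{The upper bound \eqref{e2aa} (and the easy half of \eqref{e1aa}).} This uses nothing about $\vec u$ and is where the non-negativity of $\eta$ is irrelevant. For $\vec h$ of the form \eqref{newh}, the inner integral $\iint\vec G(Z)\cdot\vec\alpha(X)\,\eta_{\rho\delta(X)}(Z-Y(X))\,dZ$ is an average of $\vec G\cdot\vec\alpha(X)$ against a kernel of unit mass supported in $B_{\rho\delta(X)}(Y(X))$; since $|\vec\alpha(X)|\le1$, $Y(X)\in B(X,\delta(X)/4)$, and $\|\eta\|_\infty<\infty$, it is bounded by $C_{n,\eta,\rho}\fiint_{B_{\rho\delta(X)}(Y(X))}|\vec G|\le C_{n,\eta,\rho}\,\tilde N_1(\vec G)(Q)$, using that for $\rho\le\tfrac14$ one has $B_{\rho\delta(X)}(Y(X))\subset B_{\delta(X)/2}(X)$ with $X\in\Gamma(Q)$. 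Integrating in $X$ against $\beta(\,\cdot,Q)$ (a probability density on $\Gamma_\varepsilon(Q)$) and then in $Q$ against $g$ with $\|g\|_{L^{q'}}=1$ gives $|\iint\vec G\cdot\vec h|\le C\int_{\pom}g\,\tilde N_1(\vec G)\,d\sigma\le C\|\tilde N_1(\vec G)\|_{L^q}$. Taking $\vec G=\vec u$, and noting that the supports of $\vec h$ lie in a fixed parabolic dilate of $\{\varepsilon<\delta<1/\varepsilon\}$, yields the ``$\lesssim$'' part of \eqref{e1aa}.

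\textbf{The lower bound in \eqref{e1aa}.} This is the heart of the argument. The one new input from \eqref{hoa} that I would record first is a ``no spikes'' consequence: applying \eqref{hoa} on a ball $B_r(w)$ with $4r\le\tfrac12\delta(w)$ and writing $|\vec u(w)|\le\inf_{B_r(w)}|\vec u|+\operatorname{osc}_{B_r(w)}\vec u$ gives $|\vec u(w)|\le C'\fiint_{B_{\delta(w)/4}(w)}|\vec u|$ for all $w\in\Omega$, with $C'=C'(n,\alpha,C)$; consequently the \emph{pointwise} non-tangential maximal function $N_{a'}(\vec u)$ is dominated, pointwise, by $\tilde N_{1,\cdot}(\vec u)$ at a comparable aperture and truncation. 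Now: by $L^q$--$L^{q'}$ duality pick $g\ge0$, $\|g\|_{L^{q'}}=1$, realising $\|\tilde N_{1,\varepsilon}(\vec u)\|_{L^q}$; for each $Q$ select (measurably) $X(Q)\in\Gamma_\varepsilon(Q)$ with $\fiint_{B_{\delta(X(Q))/4}(X(Q))}|\vec u|\ge\tfrac12\tilde N_{1,\varepsilon}(\vec u)(Q)$, then a point $Y(X(Q))\in B_{\delta(X(Q))/4}(X(Q))$ with $|\vec u(Y(X(Q)))|\ge\fiint_{B_{\delta(X(Q))/4}(X(Q))}|\vec u|$ (which exists by continuity), and set $\vec\alpha(X(Q))=\vec u(Y(X(Q)))/|\vec u(Y(X(Q)))|$ (extend $Y,\vec\alpha$ off the selected set arbitrarily, e.g.\ $Y(X)=X$). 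For $\rho$ small, \eqref{hoa} on a ball of radius $\tfrac1{16}\delta(X(Q))$ centred at $Y(X(Q))$ bounds $\sup_{Z\in B_{\rho\delta(X(Q))}(Y(X(Q)))}|\vec u(Z)-\vec u(Y(X(Q)))|$ by $C\rho^{\alpha}\fiint_{B_{\delta(X(Q))/8}(Y(X(Q)))}|\vec u|\le C\rho^{\alpha}\,N_{a'}(\vec u)(Q)$. Hence, writing $F(Q):=\iint\vec u(Z)\cdot\vec\alpha(X(Q))\,\eta_{\rho\delta(X(Q))}(Z-Y(X(Q)))\,dZ$,
\[
F(Q)\ \ge\ \tfrac12\tilde N_{1,\varepsilon}(\vec u)(Q)\ -\ C\rho^{\alpha}N_{a'}(\vec u)(Q),
\]
so $\tilde N_{1,\varepsilon}(\vec u)(Q)\le 2F(Q)+2C\rho^{\alpha}N_{a'}(\vec u)(Q)$. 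Taking $L^q$ norms, using aperture–invariance of $L^q$ norms of non-tangential maximal functions (the level-set argument, uniform in $\varepsilon$) together with the ``no spikes'' domination of the first bullet to get $\|N_{a'}(\vec u)\|_{L^q}\le C_0\|\tilde N_{1,\varepsilon}(\vec u)\|_{L^q}$ up to the fixed adjustment of truncation, and choosing $\rho=\rho(n,\alpha,C)$ with $2CC_0\rho^{\alpha}\le\tfrac12$, the last term is absorbed and $\|\tilde N_{1,\varepsilon}(\vec u)\|_{L^q}\le4\|F\|_{L^q}$. Finally $\|F\|_{L^q}=\int_{\pom}Fg^\ast\,d\sigma$ for a suitable $g^\ast\ge0$, $\|g^\ast\|_{L^{q'}}=1$, and $\int Fg^\ast=\iint_\Omega\vec u\cdot\vec h$ for $\vec h(Z)=\int_{\pom}g^\ast(Q)\vec\alpha(X(Q))\eta_{\rho\delta(X(Q))}(Z-Y(X(Q)))\,d\sigma(Q)$; this is brought to the exact form \eqref{newh} by replacing the single selected point $X(Q)$ with a weight $\beta(\,\cdot,Q)\in L^1(\Gamma_\varepsilon(Q))$ — the normalised indicator of a small parabolic ball of ``good'' points around $X(Q)$ — which is legitimate because, by \eqref{hoa}, $\vec u$, and hence $\fiint_{B_{\delta(X)/4}(X)}|\vec u|$, $Y(X)$ and $\vec\alpha(X)$, vary slowly on that ball, so the pairing changes only by a fixed factor.

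\textbf{Main obstacle.} The delicate point is the oscillation error incurred when the indicator average $\fiint_{B_{\delta(X(Q))/4}(X(Q))}|\vec u|$ is replaced by a pairing against a narrow bump with a single direction: by \eqref{hoa} this error is controlled only by an average of $|\vec u|$ over a ball of radius of order $\delta(X(Q))$ — a scale at which no interior estimate is available — and \emph{not} directly by the quantity $\tilde N_{1,\varepsilon}(\vec u)$ one is estimating. The remedy is to view that error as a fixed power of $\rho$ times a \emph{pointwise} non-tangential maximal function of $\vec u$, and then use the consequence $|\vec u(w)|\le C'\fiint_{B_{\delta(w)/4}(w)}|\vec u|$ of \eqref{hoa} to turn it back into $\tilde N_{1,\cdot}(\vec u)$, after which aperture invariance permits absorption once $\rho$ is small. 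Keeping all implied constants uniform in $\varepsilon$ and the supports of $\vec h$ in the right slab, so that the ``$\approx$'' of \eqref{e1aa} holds in the sense of Lemma \ref{l1bb}, is the routine-but-fiddly bookkeeping, carried out exactly as in \cite{KP2}; everything beyond it is the standard tent-space duality, and the genuinely new ingredient is the use of \eqref{hoa}.
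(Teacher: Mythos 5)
Your upper bound \eqref{e2aa} and the overall architecture (duality, a near-maximizing point $X(Q)$ in the truncated cone, a smooth bump at a near-supremum point $Y$, direction taken from $\vec u(Y)$, oscillation controlled by \eqref{hoa}) are close in spirit to the paper, but your treatment of the lower bound in \eqref{e1aa} takes a genuinely different route: you bound the oscillation error by $\rho^\alpha$ times a nontangential maximal function and then absorb that error in $L^q$, whereas the paper never absorbs anything. Instead it runs a good-set/bad-set selection: with $W$ and $w$ the half-ball and eighth-ball averages, it chooses $g$ supported where $N(W)\le\gamma N(w)$ (a set carrying a fixed fraction of the norm), so that at every selected point the pointwise inequality \eqref{goodp} holds; this makes the H\"older oscillation a small multiple of $\sup_{B(X,\delta(X)/4)}|\vec u|$ at those points and yields the capture \eqref{zas} directly, with no $L^q$ comparison between different maximal functions needed afterwards. (Your choice $\vec\alpha(X)=\vec u(Y)/|\vec u(Y)|$ in place of the paper's coordinate-sign choice is an inessential variant.)

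Two steps of your argument need repair. First, the absorption step as written does not prove the statement: your error term is $\rho^\alpha N_{a'}(\vec u)$ at an enlarged aperture and, after the ``no spikes'' conversion, an enlarged truncation, and you concede the mismatch is ``harmless since $\varepsilon\to0$ in the applications.'' But \eqref{e1aa} asserts a two-sided bound with the same $\varepsilon$ on both sides and constants independent of $\varepsilon$, and the application uses it in exactly that form; an estimate of $\tilde N_{1,\varepsilon}$ by $\tilde N_{1,\varepsilon/2}$ cannot be absorbed. The repair is available inside your own scheme: the ball $B_{\delta(X)/8}(Y)$ is contained in $B_{\delta(X)/2}(X(Q))$, so the error is $\rho^\alpha W(X(Q))\le\rho^\alpha N_\varepsilon(W)(Q)$ with the identical truncated cone, after which you need the comparability $\|N_\varepsilon(W)\|_{L^q}\lesssim\|N_\varepsilon(w)\|_{L^q}$ uniformly in $\varepsilon$ (the same level-set fact the paper invokes), together with some qualitative finiteness to subtract; the paper's good-set device is engineered precisely to avoid both the subtraction and the norm comparison. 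Second, your passage from the single selected point $X(Q)$ to a density $\beta(\cdot,Q)$ is justified by the claim that $Y(X)$ and $\vec\alpha(X)$ ``vary slowly'' on a small ball; that is not true — nearby near-maximizers can produce unrelated directions. What is true, and what you should use, is that the capture-with-error estimate holds at every $X$ individually (with $Y(X),\vec\alpha(X)$ defined pointwise, independently of $Q$, as the paper does) and that the near-maximality of the quarter-ball average persists on an open subset of $\Gamma_\varepsilon(Q)$ around $X(Q)$, on which $\beta(\cdot,Q)$ can be taken to be the normalized indicator. With these two corrections your argument goes through, but as submitted these are genuine gaps rather than bookkeeping.
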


\begin{proof} Clearly, \eqref{e2aa} must hold as the comparison between $\vec{h}$ in Lemma \ref{l1bb} and \eqref{newh} reveals that the only difference between these functions is that $\vec{\alpha}$ not does not depend on $Z$ and we have replaced the function $\frac{\chi_{B(X,\delta(X)/4)}(Z)}{\delta(X)^{n+2}}$ by its smoothened-out version $\eta_{\rho\delta(X)}$ shifted to the point $Y(X)$ (and thus supported in $B(Y(X),\rho\delta(X))$). Hence we have not changed the nature of our function in any way. For the upper bound we also can take absolute values inside the integral. Therefore, 

$$\iint_{\R^n_+\times\R} |\vec G(Z)||\vec{h}(Z)|\, dZ\le $$
$$\int_{\partial{(\mathbb R^n_+\times \mathbb R)}}g(Q)\iint_{\Gamma(Q)}|\beta(X,Q)|
\iint_{\R^n_+\times\R}|\vec{\alpha}(X)||\eta_{\rho\delta(X)}(Z-Y(X))||\vec{G}(Z)|\,dZ\,dX\,dQ.
$$
As $\eta_{\rho\delta(X)}$ is supported in $B(Y(X),\rho\delta(X))\subset B(X,\delta(X)/4)$ and 
$|\eta_{\rho\delta(X)}|\le\sup|\eta|(\rho\delta(X))^{-n-2}$ the innermost integral has an upper bound:
$$\iint_{\R^n_+\times\R}|\vec{\alpha}(X)||\eta_{\rho\delta(X)}(Z-Y(X))||\vec{G}(Z)|\,dZ\le \sup|\eta|\omega_n^{-1}\fiint_{B(Y(X),\rho\delta(X))}|\vec G|$$
$$\le \sup|\eta|\omega_n^{-1}C_n^{-\log_2(\rho/4)}\fiint_{B(X,\delta(X)/4)}|\vec G|\le \sup|\eta|\omega_n^{-1}C_n^{-\log_2(\rho/4)}\tilde {N}_1(\vec G)(Q).
$$
Here $\omega_n$ is the volume of the $n$-dimensional unit ball, $C_n$ is the doubling constant and $X$ is an arbitrary point in $\Gamma(Q)$. As a result, the second innermost integral has an upper bound of:
$$ \sup|\eta|\omega_n^{-1}C_n^{-\log_2(\rho/4)}\tilde {N}_1(\vec G)(Q)\iint_{\Gamma_\varepsilon(Q)}|\beta(X,Q)|\,dX=
\sup|\eta|\omega_n^{-1}C_n^{-\log_2(\rho/4)}\tilde {N}_1(\vec G)(Q).$$
Finally, it follows that the outside integral enjoys, by H\"older's inequality, an upper bound of
$$\sup|\eta|\omega_n^{-1}C_n^{-\log_2(\rho/4)}\|\tilde {N}_1(\vec G)\|_{L^q}\|g\|_{L^{q'}}.$$
From this the conclusion follows since $\|g\|_{L^{q'}}=1$. Observe that the constant in our estimate only depends on the dimension $n$, $\rho>0$ and $\sup|\eta|$ which is finite as $\eta$ is a compact $C^\infty$ function. We have not used any other property of $\eta$ apart from its boundedness and the fact that it is supported in the unit ball.\medskip

We now focus on \eqref{e1aa}.
Assume for now that $u$ is a scalar function in $L^1_{loc}$. We will explain the modification needed for our argument to apply in the vector valued case later. Let us define the two versions of average functions
\begin{equation}\label{wW}
W(X)=\fiint_{Z\in B(X,\delta(X)/2)}|{u}|dZ, \quad  w(X)=\fiint_{Z\in B(X,\delta(X)/8)}|{u}|dZ.
\end{equation}
Let $C_n$ be a doubling constant on the space $\mathbb R^n_+\times \mathbb R$, that is we have for any ball
$$|B_{2r}|\le C_n|B_r|.$$
Clearly, 
$$w(X)=|B_{\delta(X)/8}|^{-1}\iint_{B(X,\delta(X)/8)} |u|\le (C_n)^2 |B_{\delta(X)/2}|^{-1}\iint_{B(X,\delta(X)/2)}|u|=(C_n)^2W(X).$$
What about the reverse inequality? With $N$ defined by
$$N(v)(Q)=\sup_{X\in\Gamma_{\varepsilon} (Q)}|v(X)|,$$
it follows by the level-set argument that for all $1\le q<\infty$ we have
$$\|N(w)\|_{L^q}\approx \|N(W)\|_{L^q},$$
where the implied constants are independent of the function $u$ and of $\epsilon>0$. Hence, let $K$ be a universal constant
such that 
$$\|N(W)\|_{L^q}\le K \|N(w)\|_{L^q}.$$

Fix $\gamma>0$ (to be determined later) and define the sets
$$B_{good}=\{Q\in \partial(\R^n_+\times\mathbb R):\, N(W)(Q)\le\gamma N(w)(Q)\},\qquad B_{bad}=\partial(\R^n_+\times\mathbb R)\setminus B_{good}.$$
We have that
$$\int_{B_{bad}}N(w)^q\le \gamma^{-q} \int_{B_{bad}}N(W)^q.$$
Therefore if 
$$\int_{B_{bad}}N(w)^q>\frac12 \int_{\partial(\R^n_+\times\mathbb R)}N(w)^q,$$ 
it would follow that
$$\int_{\partial(\R^n_+\times\mathbb R)}N(w)^q< 2\int_{B_{bad}}N(w)^q\le
2\gamma^{-q} \int_{B_{bad}}N(W)^q \le 2\gamma^{-q}K^q\int_{\partial(\R^n_+\times\mathbb R)}N(w)^q,$$
which is a contradiction if we choose $\gamma>0$ such that $2\gamma^{-q}K^q=1$. Let us make such  choice of $\gamma$. Notice that this choice is independent of $u$ and $\varepsilon$.

With $\gamma$ chosen as above, we must therefore have 
$$\|N(w)\|_{L^q(\partial(\R^n_+\times\mathbb R))}\le 2^{-q}\|N(w)\|_{L^q(B_{good})}.$$
We make our choice of the function $g$. We pick $g$ supported on the set $B_{good}$ such that $\|g\|_{L^{q'}}=1$ and 
$$\|N(w)\|_{L^q(B_{good})}=\int_{\partial(\R^n_+\times\mathbb R)}N(w)g.$$
Clearly we can pick $g$ nonnegative (since $N(w)\ge 0$) and we also have
$$\|N(W)\|_{L^q(\partial(\R^n_+\times\mathbb R))}\le K\|N(w)\|_{L^q(\partial(\R^n_+\times\mathbb R))}\le 2^{-q}K\int_{\partial(\R^n_+\times\mathbb R)}N(w)g.$$

Fix now $Q\in\mbox{supp } g$. We have for such $Q$ the two inequalities:
$$\gamma^{-1} N(W)(Q)\le N(w)(Q)\le (C_n)^2N(W)(Q)$$
and hence the qualities $N(w)(Q)$ and $N(W)(Q)$ are comparable. By the definition of $N$ there exists a point
$X\in\Gamma_\varepsilon(Q)$ such that
$$w(X)\ge\frac12 N(w)(Q)\ge (2\gamma)^{-1}N(W)(Q).$$

For such point $X$ consider the ball used to define $w(X)$, i.e. $B(X,\delta(X)/8)$. Simple geometric
considerations imply that for all points
$$Z\in B(X,\delta(X)/16)\cap\Gamma_\varepsilon(Q)\quad\mbox{ we have that }\quad B(X,\delta(X)/8)\subset B(Z,\delta(Z)/4)$$
and therefore
$$\fiint_{ B(Z,\delta(Z)/4)}|{u}|\ge C_n^{-1}w(X)\ge (2C_n\gamma)^{-1}N(W)(Q),$$
and trivially $\fiint_{ B(Z,\delta(Z)/4)}|{u}|\le C_nN(W)(Q)$. It follows that $\fiint_{ B(Z,\delta(Z)/4)}|{u}|\approx N(W)(Q)$.
Given the geometry of the set $\Gamma_\varepsilon(Q)$, clearly $B(X,\delta(X)/16)\cap\Gamma_\varepsilon(Q)$
is a set of positive measure. We now choose the function $\beta(\cdot,Q)$ to be
$$\beta(\cdot,Q)=\frac{1}{|B(X,\delta(X)/16)\cap\Gamma_\varepsilon(Q)|}\chi_{B(X,\delta(X)/16)\cap\Gamma_\varepsilon(Q)}.$$
Observe that it follows that such $\beta$ satisfies all properties stated in our Proposition. To make our choice complete for $Q\in B_{bad}$ (and thus not in the support of $g$), we may pick $\beta$ completely arbitrarily.

In summary, we have so far defined $g$ and $\beta$ such that whenever $Q\in\mbox{supp }g$ and $X\in\mbox{supp }\beta(\cdot,Q)$ we must have that 
$$(2C_n\gamma)^{-1}N(W)(Q)\le \fiint_{ B(X,\delta(X)/4)}|{u}|,$$
and therefore also 
\begin{equation}\label{goodp}
(2C_n\gamma)^{-1}\fiint_{ B(X,\delta(X)/2)}|{u}|\le \fiint_{ B(X,\delta(X)/4)}|{u}|.
\end{equation}
Up to this point there is no difference in the argument for $u$ that is vector valued.\medskip

Because of the choices we have made above, we only care about the points $X\in \R^n_+\times\mathbb R$ for which \eqref{goodp} holds. Thus for points $X$ where \eqref{goodp} does not hold we simply choose 
 $Y(X)=X$ and $\vec{\alpha}(X)=0$. Observe that this means that 
$$\vec{\alpha}(X)\eta_{\rho\delta(X)}(\cdot-Y(X))=0,$$
and thus there is no contribution from this term in the definition of $\vec{h}$. \medskip

It remains to define $Y$ and $\alpha$ when \eqref{goodp} holds for a point $X$. We start with scalar $u$.
We apply our assumption of H\"older regularity of $u$. As follows from \eqref{hoa} and \eqref{goodp} that on the ball $B(X,\delta(X)/4)$ we have
\begin{equation}\label{hoa1}
\sup_{Z,Z'\in B(X,\delta(X)/4)}|{u}(Z)-{u}(Z')|\le C'\left(\frac{\|Z-Z'\|}{\delta(X)}\right)^\alpha\fiint_{B(X,\delta(X)/2)}|{u}|
\end{equation}
$$\le 2C'C_n\gamma\left(\frac{\|Z-Z'\|}{\delta(X)}\right)^\alpha\fiint_{ B(X,\delta(X)/4)}|{u}|\le 2C'C_n\gamma\left(\frac{\|Z-Z'\|}{\delta(X)}\right)^\alpha\sup_{B(X,\delta(X)/4)}|u|.$$
Consider a point $Y'\in \overline{B(X,\delta(X)/4)}$ where $\sup_{B(X,\delta(X)/4)}|u|$ is attained (possibly at the boundary of the ball). Applying \eqref{hoa1} to this point we see that for all $Z\in B(Y',\rho'\delta(X))\cap B(X,\delta(X)/4)$ we have that
$$|u(Z)-u(Y')|\le 2C'C_n\gamma (\rho')^\alpha|u(Y')|.$$
We choose $\rho'$ so that $2C'C_n\gamma (\rho')^\alpha=1/2$ which ensures that 
$$|u(Z)-u(Y')|\le \frac12|u(Y')|,\qquad\mbox{for all }Z\in B(Y',\rho'\delta(X))\cap B(X,\delta(X)/4).$$
As $u$ is scalar, this inequality ensures that $u(Z)$ has the same sign as $u(Y')$.
Let $\alpha(X)=1$ when $u(Y')>0$ and $\alpha(X)=-1$ otherwise, that is, $\alpha(X)=\sgn u(Y')$. 
The geometry of the situation implies that there exits some $\rho=\rho(\rho',n)>0$ (the worst-case is when $Y'$ is on the boundary of $B(X,\delta(X)/4)$) such that for some point 
$Y\in B(Y',\rho'\delta(X))\cap B(X,\rho(X)/4)$ we have that 
$$B(Y,\rho\delta(X))\subset B(Y',\rho'\delta(X))\cap B(X,\delta(X)/4).$$
This point $Y$ is the point we set to be the outcome of the mapping $Y(X)$ and the $\rho$ we have found above is the one  used to define $\eta_{\rho\delta(X)}$.

We note that for all $Z\in B(Y,\rho\delta(X))$ the scalar function $u(Z)$ has the same sign and thus
$u(Z)\alpha(X)>0$. Furthermore since $|u(Z)|\ge \frac12|u(Y')|$ and the function 
$\eta_{\rho\delta(X)}(\cdot-Y(X)))$ is supported in the ball $B(Y,\rho\delta(X))$ and has integral equal to $1$, we have that
$$|u(Y')|\ge\iint_{\R^n_+\times\mathbb R}u(Z)\alpha(X)\eta_{\rho\delta(X)}(Z-Y(X)))\,dZ\ge \frac12|u(Y')|=\frac12\sup_{B(X,\delta(X)/4)}|u|.
$$
Since also
$$\sup_{B(X,\delta(X)/4)}|u|\le 2\fiint_{ B(Y,\rho\delta(X))}|{u}|\le 2C_n^{-\log_2(\rho/4)}
 \fiint_{ B(X,\delta(X)/4)}|{u}|,$$
this is only possible if
$$\iint_{\R^n_+\times\mathbb R}u(Z)\alpha(X)\eta_{\rho\delta(X)}(Z-Y(X)))\,dZ\approx \fiint_{ B(X,\delta(X)/4)}|{u}|.$$
Thus integrating this portion of the function $h$ against $u$ captures a good portion of the average of $|u|$ on $B(X,\delta(X)/4)$.

There are minor differences in the definition if $\vec{u}$ is a vector. 
Suppose that $\vec{u}$ has $k$ components. We choose $Y'$ in the way same as above. By \eqref{hoa1} there will be $j\in\{1,2,\dots,k\}$ such that
$$|{u_j}(Z)-{u_j}(Y')|\le |\vec{u}(Z)-\vec{u}(Y')|\le 2C'C_nk^{-1/2}\gamma (\rho')^\alpha|u_j(Y')|,$$
since, for at least one $j$,  $|u_j(Y')|\ge \sqrt{\frac1k}|u(Y')|$. We choose $\alpha_j(X)=\mbox{sgn }u_j(Y')$ and
$\alpha_i(X)=0$ for $i\ne j$. The rest of the argument follows as above replacing $u$ by $u_j$ which is a scalar function. We choose $\rho'>0$ such that $2C'C_nk^{-1/2}\gamma (\rho')^\alpha=1/2$, i.e., now $\rho'$  also depends on $k$ - the dimension of our vector.
We again obtain the desired conclusion that 
\begin{equation}\label{zas}
\iint_{\R^n_+\times\mathbb R}\vec{u}(Z)\vec{\alpha}(X)\eta_{\rho\delta(X)}(\cdot-Y(X)))\approx \fiint_{ B(X,\delta(X)/4)}|{\vec{u}}|.
\end{equation}
\medskip

In summary, we have chosen $g$ such that 
$\|N_{1,\varepsilon}(\vec{u})\|_{L^q(\partial(\R^n_+\times\mathbb R))}\approx\int_{\partial(\R^n_+\times\mathbb R)}N(w)g$
and for any $Q\in\mbox{supp }g$ we have that for all $X\in\mbox{supp }\beta(\cdot, Q)$ \eqref{zas} holds.
Thus the function $\vec{h}$ integrated against $\vec{u}$ picks up a sufficiently large portion of the $L^q$ norm of 
$\|N_{1,\varepsilon}(\vec{u})\|_{L^q}$.
\end{proof}

\noindent{\bf Remark on differentiability of $\vec h$:}  Given the definition of $\vec h$, if we take a partial (spatial derivative $\partial_i$) of this function we see that
$$\partial_i[\vec{\alpha}(X)\beta(X,Q) \eta_{\rho\delta(X)}(\cdot-Y(X))]=\vec{\alpha}(X)\beta(X,Q) \partial_i[ \eta_{\rho\delta(X)}(\cdot-Y(X))],$$
since the derivative only acts on the smooth function. Given the scaling we have applied to $\eta$,
differentiating in Z gives
$$\partial_i[ \eta_{\rho\delta(X)}(Z-Y(X))]=(\rho\delta(X))^{-n-3}\partial_i\eta((z-y(X))/(\rho\delta(X)),(t-\tau(X))/(\rho\delta(X))^2).$$
Recall that the point $Z\in B(X,\delta(X)/4)$ and hence $\delta(Z)\approx\delta(X)$, implying that
$$\left|\partial_i[\vec{\alpha}(X)\beta(X,Q) \eta_{\rho\delta(X)}(\cdot-Y(X))]\right|\lesssim \delta(Z)^{-1}|\vec\alpha(X)|\beta(X,Q)(|\partial_i\eta|)_{\rho\delta(X)}(\cdot-Y(X)),$$
which is a function of the same type as $\vec h$ which we shall call ${h_1}$.
After integrating, we conclude that
\begin{equation}\label{diffh}
|\partial_i\vec h(Z)|\lesssim \delta^{-1}(Z){h_1}(Z),\qquad\forall Z,
\end{equation}
and similarly for the time derivative
\begin{equation}\label{diffht}
|\partial_t\vec h(Z)|\lesssim \delta^{-2}(Z){h_2}(Z),\qquad\forall Z.
\end{equation}
Here both ${h_1}$ and ${h_2}$ enjoy the estimates \eqref{e2aa}, as they are of the same type as $\vec{h}$. Similar estimates hold for higher order derivatives with each spatial derivative bringing an extra $\delta^{-1}(Z)$ and the time derivative 
$\delta^{-2}(Z)$.
\vglue4mm

 Let $u$ be the solution of the following boundary value problem
\begin{equation}
\mathcal{L}u=-\partial_t u+\div(A\nabla u)=0\mbox{ in }{\mathbb R}^n_+,\qquad u\Big|_{\partial{\mathbb R}^n_+}=f,\label{e5xb}
\end{equation}

We find $\vec{h}$ as in Proposition \ref{betterh} for the vector $\nabla u$. The reason we can apply this result to $\nabla u$ is that under our assumption  $x_n|\nabla A|+x_n^2\abs{\dr_t A}\lesssim 1$ the gradient of $u$ does enjoy interior H\"older estimates (\cite{L87}), and given the scaling $|\nabla A|\lesssim x_n^{-1}$  the constant in the condition \eqref{hoa} does not depend on the size of the ball $B_r$ - the condition $B_{4r}\subset\R^n_+\times \R$ ensures we stay in the interior of the domain of at least distance $r$ away from the boundary.

\subsection{The adjoint inhomogenous Dirichlet problem}

We let $v:{\mathbb R}^n_+\times \R\to\mathbb R$ be the solution of the inhomogenous Dirichlet problem for the operator ${\LL}^*$ (adjoint to $\LL$) which is a backward in time parabolic operator:
\begin{equation}
{ \LL}^*v=\partial_tv+\div(A^*\nabla v)=\div(\vec{h})\mbox{ in }{\mathbb R}^n_+\times\mathbb R,\qquad v\Big|_{\partial({\mathbb R}^n_+\times\mathbb R)}=0.\label{e3}
\end{equation}

Fix $\varepsilon>0$. Our aim is to estimate $N_{1,\varepsilon}(\nabla u)$  in $L^q$ using Proposition \ref{newh}. Then since $\vec{h}\big|_{\partial({{\mathbb R}^n_+\times\mathbb R})}=0$ and $\vec{h}$ vanishes as $x_n\to\infty$, we have by integration by parts
\begin{equation}\label{e6}
\|\wt N_{1,\varepsilon}(\nabla u)\|_{L^q}\lesssim \iint_{{{\mathbb R}^n_+\times\mathbb R}}\nabla u\cdot\vec{h}\,dZ
=-\iint_{{\mathbb R}^n_+\times\mathbb R} u\,\div\vec{h}\,dZ=-
\iint_{{{\mathbb R}^n_+\times\mathbb R}}u\,{\LL}^*v\,dZ
\end{equation}
We now move $u$ inside the divergence operator and apply the divergence theorem.
\begin{equation}\nonumber
\mbox{RHS of \eqref{e6}}=-\iint_{{{\mathbb R}^n_+\times\mathbb R}}\div(uA^*\nabla v)\,dZ+\iint_{{\mathbb R}^n_+\times\mathbb R} A\nabla u\cdot\nabla v\,dZ+\iint_{{\mathbb R}^n_+\times\mathbb R} (\partial_tu)v\,dZ
\end{equation}
$$=
\int_{\partial{{(\mathbb R}^n_+\times\mathbb R)}}u(\cdot,0,\cdot)a^*_{nj}\partial_jv\,d\sigma.$$
since
$$\iint_{{\mathbb R}^n_+\times\mathbb R} A\nabla u\cdot\nabla v\,dZ+\iint_{{\mathbb R}^n_+\times\mathbb R} (\partial_tu)v\,dZ=-\iint_{{\mathbb R}^n_+\times\mathbb R} \mathcal Lu\,v\,dZ=0.$$
Here there is no boundary integral since $v$ vanishes on the boundary of ${{\mathbb R}^n_+\times\mathbb R}$. It follows that
\begin{equation}\label{e8c}
\|N_{1,\varepsilon}(\nabla u)\|_{L^q}\lesssim
\int_{\partial{{(\mathbb R}^n_+\times\mathbb R)}}u(\cdot,0,\cdot)a^*_{nj}\partial_jv\,d\sigma,
\end{equation}
where the implied constant in \eqref{e8c} is independent of $\varepsilon>0$. Now, we use the fundamental theorem of calculus and the decay of $\nabla v$ at infinity to write \eqref{e8c} as
\begin{equation}\label{e9}
\|N_{1,\varepsilon}(\nabla u)\|_{L^q}\lesssim-\int_{\partial({{\mathbb R}^n_+}\times\mathbb R)}u(x,0,t)\left(\int_0^\infty \frac{d}{ds}\left(a^*_{nj}(x,s,t)\partial_jv(x,s,t)\right)ds\right)dx\,dt.
\end{equation}
Recall that $\div(A^*\nabla v)=\div(\vec{h})-\partial_tv$ and hence the righthand side of \eqref{e9} equals 
\begin{equation}\label{e10}
\int_{\partial{{(\mathbb R}^n_+}\times\mathbb R)}u(x,0,t)\Bigg(\int_0^\infty \Big[\sum_{i<n}\partial_i(a^*_{ij}(x,s,t)\partial_jv(x,s,t))-\div\vec{h}(x,s,t)
\end{equation}
\begin{equation}\nonumber
+\partial_tv(x,s,t)\Big]ds\Bigg)dx\,dt.
\end{equation}
We integrate by parts moving $\partial_i$ for $i<n$ onto $u(\cdot,0,\cdot)$. We also peel off half-derivative from $\partial_t v$ onto $u$.

The integral term containing $\partial_nh_n(x,s,t)$ does not need to be considered as it equals to zero by the fundamental theorem of calculus since $\vec{h}(\cdot,0,\cdot)=\vec{0}$ and
$\vec{h}(\cdot,s,\cdot)\to\vec{0}$ as $s\to\infty$. 
It follows that
\begin{eqnarray}\nonumber
&&\|N_{1,\varepsilon}(\nabla u)\|_{L^q}\\\nonumber&\lesssim& \int_{\partial({{\mathbb R}^n_+}\times\mathbb R)} \nabla_\parallel f(x,t)\cdot\left(\int_0^\infty \left[\vec{h}_\parallel(x,s,t)-(A^*\nabla v)_\parallel(x,s,t)\right]ds\right)dx\,dt
\\\nonumber&-& \int_{\partial({{\mathbb R}^n_+}\times\mathbb R)} D^{1/2}_t f(x,t)\left(\int_0^\infty H_tD^{1/2}_tv(x,s,t)\,ds\right)dx\,dt
\\
&=&I+II+III.\label{e11}
\end{eqnarray}
Here $I$ is the term containing $\vec{h}_\parallel$ and $II$ contains $(A^*\nabla v)_\parallel$ and $III$ term $H_tD^{1/2}_tv$. Here $H_t$ is the Hilbert transform in the $t$-variable.
The notation we are using here  is that, for a vector $\vec{w}=(w_1,w_2,\dots,w_n)$, the vector $\vec{w}_\parallel$ denotes the first $n-1$ components of $\vec{w}$, that is $(w_1,w_2,\dots,w_{n-1})$, and $\nabla_{||}=\nabla_x$.

As shall see below we do not need worry about term $I$. This is because what we are going to do next is essentially undo the integration by parts we have done above but we swap the function $u$ with another (better behaving) function $\tilde{u}$ with the same boundary data. Doing this we eventually arrive at $\|\tilde N(\nabla\tilde{u})\|_{L^q}$ plus some error terms (solid integrals) that arise from the fact that $u$ and $\tilde{u}$ disagree inside the domain. This explains why we get the same boundary integral as $I$  but with the opposite sign, as this \lq\lq reverse process" will undo and eliminate this boundary term. 

We solve a new auxiliary PDE problem to define $\tilde{u}$. Let $\tilde{u}$ be the solution
of the following boundary value problem for the operator $\LL_0$ whose matrix $A_0$ has the block-form
$
A_0=\left[ \begin{array}{c|c}
   A_\parallel & 0 \\
   \midrule
   0 & 1 \\
\end{array}\right] 
$ and

\begin{equation}
{ \LL}_0 \tilde{u}=-\partial_t\tilde{u}+\div(A_0\nabla\tilde{u})=0\mbox{ in }\Omega,\qquad \tilde{u}\Big|_{\partial\Omega}=f.\label{e5a}
\end{equation}
Recall that we have assumed that the $L^q$ Regularity problem for the operator 
${L_0}$ is solvable. We look the terms $II$ and $III$. 
Let
\begin{equation}
\vec{V}(x,x_n,t)=-\int_{x_n}^\infty (A^*\nabla v)_\parallel(x,s,t)ds,\quad {W}(x,x_n,t)=-\int_{x_n}^\infty H_tD^{1/2}_tv(x,s,t)ds.
\end{equation}
It follows that by the fundamental theorem of calculus and the observation that $\wt u=u$ on $\pom$, where $\wt u $ is same as in \eqref{e5a},  
\begin{multline}
    II=\int_{\partial{{(\mathbb R}^n_+\times\mathbb R)}}\nabla_\parallel \wt u(x,0,t)\cdot \vec{V}(x,0,t)dx\,dt\\
    =-\iint_{\Rn_+\times\R}\dr_n\br{\nabla_{||}\wt u(x,x_n,t)\cdot\vec V(x,x_n,t)}\dr_n(x_n)dxdx_ndt\\
    =\iint_{{{\mathbb R}^n_+\times\mathbb R}}\partial^2_{nn}\left[\nabla_\parallel \tilde{u}(x,x_n,t)\cdot \vec{V}(x,x_n,t)\right]x_n\,dx\,dx_n\,dt,
\end{multline}
and therefore,
\begin{eqnarray}
II&=&\iint_{{{\mathbb R}^n_+\times\mathbb R}}\partial^2_{nn}(\nabla_\parallel \tilde{u})\cdot \vec{V}x_n\,dx\,dx_n\,dt+2\iint_{{{\mathbb R}^n_+\times\mathbb R}}\partial_{n}(\nabla_\parallel \tilde{u})\cdot \partial_n(\vec{V})x_n\,dx\,dx_n\,dt\nonumber+\\&&+\iint_{{{\mathbb R}^n_+}\times\mathbb R}\nabla_\parallel \tilde{u}\cdot \partial^2_{nn}(\vec{V})x_n\,dx\,dx_n\,dt
=II_1+II_2+II_3.
\end{eqnarray}
Since $ \partial_n\vec{V}(x,x_n,t)=(A^*\nabla v)_\parallel$ the term $II_2$ is easiest to handle and can be estimated as a product of two square functions
\begin{equation}\label{125}
|II_2|
\le C\int_{\partial(\Rn_+\times\R)}\iint_{\Gamma(y,s)}\abs{\partial_{n}(\nabla_\parallel \tilde{u})}\abs{\partial_n(\vec{V})}x_n^{-n}dxdx_ndtdyds
\le C\|S(\partial_n\tilde{u})\|_{L^q}\|S(v)\|_{L^{q'}}.
\end{equation}

Next we look at $II_1$. We integrate by parts moving $\nabla_\parallel$ from $\tilde{u}$. This gives us
\begin{equation}\label{eq335}
II_1 =\iint_{{{\mathbb R}^n_+}\times\mathbb R}\partial^2_{nn}\tilde{u}\cdot\left(\int_{x_n}^\infty\div_{\parallel}(A^*\nabla v)_{\parallel}ds\right)x_n\,dx\,dx_n\,dt.
\end{equation}
Using the PDE $v$ satisfies we get that
$$\int_{x_n}^\infty\div_{\parallel}(A^*\nabla v)_{\parallel}ds=(a_{nj}\partial_jv)(x,x_n,t)+\int_{x_n}^\infty[\div\vec{h}-\partial_tv]\,ds.$$
Using this in \eqref{eq335} we see that
\begin{equation}\label{eq335a}
II_1 =\iint_{{{\mathbb R}^n_+\times\mathbb R}}(\partial^2_{nn}\tilde{u})(a_{nj}\partial_jv)x_n\,dx\,dx_n\,dt
+\iint_{{{\mathbb R}^n_+}\times\mathbb R}\partial^2_{nn}\tilde{u}
\cdot\left(\int_{x_n}^\infty[\div\vec{h}-\partial_tv]\,ds\right)x_n\,dx\,dx_n\,dt.
\end{equation}
Here the first term enjoys the same estimate as $II_2$, namely \eqref{125}. We work more with the second term which we call $II_{12}$. We integrate by parts in $\partial_n$. 
\begin{eqnarray}\nonumber
II_{12} &=&\iint_{{{\mathbb R}^n_+}\times\mathbb R}(\partial_{n}\tilde{u})[
\div\vec{h}-\partial_tv]x_n\,dx\,dx_n\,dt\\\nonumber
&\qquad -&\iint_{{{\mathbb R}^n_+}\times\mathbb R}\partial_{n}\tilde{u}
\cdot\left(\int_{x_n}^\infty[\div\vec{h}-\partial_tv]\,ds\right)\,dx\,dx_n\,dt\\\nonumber
&=&\iint_{{{\mathbb R}^n_+}\times\mathbb R}(\partial_{n}\tilde{u})[
\div\vec{h}]x_n\,dx\,dx_n\,dt
-\iint_{{{\mathbb R}^n_+}\times\mathbb R}(\partial_{n}\tilde{u})\partial_tvx_n\,dx\,dx_n\,dt\\\nonumber
&\qquad-&\iint_{{{\mathbb R}^n_+}\times\mathbb R}\partial_{n}\tilde{u}
\cdot\left(\int_{x_n}^\infty[\div\vec{h}-\partial_tv]\,ds\right)\,dx\,dx_n\,dt\\\nonumber
&=:&II_{121}+II_{122}-\iint_{{{\mathbb R}^n_+}\times\mathbb R}\partial_{n}\tilde{u}
\cdot\left(\int_{x_n}^\infty[\div\vec{h}-\partial_tv]\,ds\right)\,dx\,dx_n\,dt\nonumber.
\end{eqnarray}
For the last term, we separate it into two integrals and apply integration by parts to the first integral to get that
\begin{multline*}
    -\iint_{{{\mathbb R}^n_+}\times\mathbb R}\partial_{n}\tilde{u}
\cdot\left(\int_{x_n}^\infty[\div\vec{h}-\partial_tv]\,ds\right)\,dx\,dx_n\,dt
=\int_{\partial({\mathbb R^n_+}\times\mathbb R)}\tilde{u}(x,0,t)\left(\int_0^\infty \div\vec{h}\right)dx\,dt\\
-\iint_{{{\mathbb R}^n_+}\times\mathbb R}\nabla\tilde{u}\cdot\vec{h}\,dx\,dx_n\,dt+
\iint_{{{\mathbb R}^n_+\times\mathbb R}}\partial_{n}\tilde{u}\left(\int_{x_n}^\infty \partial_tv\,ds\right)\,\,dx\,dx_n\,dt\\
=:\int_{\partial({\mathbb R^n_+}\times\mathbb R)}\tilde{u}(x,0,t)\left(\int_0^\infty \div\vec{h}\right)dx\,dt+II_{123}+II_{124}.
\end{multline*}
Note that 
\[
\int_0^\infty \divg\vec h(x,s,t)ds=\int_0^\infty\br{\divg_{||}\vec h_{||}(x,s,t)+\dr_s\vec h(x,s,t)}ds=\int_0^\infty\divg_{||}\vec h_{||}(x,s,t)ds.
\]
We integrate by parts again and get that
\begin{equation*}
    \int_{\partial({\mathbb R^n_+}\times\mathbb R)}\tilde{u}(x,0,t)\left(\int_0^\infty \div\vec{h}\right)dx\,dt=
    -\int_{\partial(\mathbb R^n_+\times\mathbb R)}\nabla_\parallel \tilde{u}(x,0,t)\left(\int_0^\infty \vec{h}_\parallel\, ds\right)dx\,dt,
\end{equation*}
which is precisely the term $I$ defined by \eqref{e11} with an opposite sign. To summarize, we have obtained that
\[
II_{12}=II_{121}+II_{122}-I+II_{123}+II_{124}.
\]
We return to the term $II_{121}$ later. As we will see below the term $II_{124}$ appears again but with opposite sign and hence will cancel out. The term $II_{123}$ thanks to \eqref{e2} of Lemma \ref{l1bb} have the estimate:
$$|II_{123}|=\left|\iint_{{{\mathbb R}^n_+}\times\mathbb R}\nabla\tilde{u}\cdot\vec{h}\,dx\,dx_n\,dt\right|\le \left\|\tilde{N}_{1}(\nabla \tilde{u})\right\|_{L^q(\partial({\mathbb R^n_+}\times\mathbb R),d\sigma)}.$$

We repeat this approach with the term $III$. By the same logic we have
\begin{eqnarray}\nonumber
III &=& \iint_{{{\mathbb R}^n_+\times\mathbb R}}\partial^2_{nn}\left[D^{1/2}_t \tilde{u}(x,x_n,t) W(x,x_n,t)\right]x_n\,dx\,dx_n\,dt\\\label{w1}
&=&\iint_{{{\mathbb R}^n_+\times\mathbb R}}\partial^2_{nn}(D^{1/2}_t \tilde{u})W\,x_n\,dx\,dx_n\,dt+2\iint_{{{\mathbb R}^n_+\times\mathbb R}}\partial_{n}(D^{1/2}_t \tilde{u})\partial_n(W)x_n\,dx\,dx_n\,dt\nonumber+\\\nonumber&&+\iint_{{{\mathbb R}^n_+}\times\mathbb R}D^{1/2}_t \tilde{u}\, \partial^2_{nn}(W)x_n\,dx\,dx_n\,dt=II_{122}\\\nonumber
&+&\iint_{{{\mathbb R}^n_+\times\mathbb R}}\partial_{n}(D^{1/2}_t \tilde{u})\left(\int_{x_n}^\infty H_tD^{1/2}_tv\,ds\right)\,\,dx\,dx_n\,dt+\iint_{{{\mathbb R}^n_+}\times\mathbb R}D^{1/2}_t \tilde{u}\, \partial^2_{nn}(W)x_n\,dx\,dx_n\,dt\\\nonumber&=&II_{122}-\iint_{{{\mathbb R}^n_+\times\mathbb R}}\partial_{n}\tilde{u}\left(\int_{x_n}^\infty \partial_tv\,ds\right)\,\,dx\,dx_n\,dt+\iint_{{{\mathbb R}^n_+}\times\mathbb R}D^{1/2}_t \tilde{u}\, \partial^2_{nn}(W)x_n\,dx\,dx_n\,dt\\\nonumber&=&
II_{122}-II_{124}+III_1.
\end{eqnarray}

Here $II_{122}$ arises from the second term on the second line by shifting half-derivative from $\tilde{u}$.
 Next we look at $II_3$ and $III_1$ together. We see that
\begin{eqnarray}\nonumber
II_3&+&III_1=
 \iint_{{{\mathbb R}^n_+}\times\mathbb R}\nabla_\parallel \tilde{u}\cdot\, \partial_n(A^*\nabla v)_\parallel x_n\,dx\,dx_n\,dt +
 \iint_{{{\mathbb R}^n_+}\times\mathbb R}D_t^{1/2}\tilde{u}\, \partial_n(H_tD^{1/2}_t v) x_n\,dx\,dx_n\,dt \\\nonumber
&=&
\iint_{{{\mathbb R}^n_+}\times\mathbb R}\nabla_\parallel \tilde{u}\cdot ((\partial_n A^*)\nabla v)_\parallel x_n\,dx\,dx_n\,dt
+\iint_{{{\mathbb R}^n_+}\times\mathbb R}\nabla_\parallel \tilde{u}\cdot (A^*\nabla (\partial_n v))_\parallel x_n\,dx\,dx_n\,dt
\end{eqnarray}
\begin{equation}\nonumber
+\iint_{{{\mathbb R}^n_+}\times\mathbb R}D_t^{1/2}\tilde{u}\, \partial_n(H_tD^{1/2}_t v) x_n\,dx\,dx_n\,dt=IV_{1}+IV_{2}+IV_{3}.
\end{equation}

In order to handle the term $IV_{1}$ we will use the fact that the matrix $A$ satisfies the Carleson measure condition, \eqref{eq.Carl2}, and H\"older's inequality to obtain
\begin{equation}
|IV_{1}|\lesssim \|S(v)\|_{L^{q'}}\|N(\nabla\tilde{u})\|_{L^q}.
\end{equation}
For the term $ IV_{2}$ we separate the parallel and tangential parts of the gradient, to get 
\begin{eqnarray}\nonumber
IV_{2}&+&IV_{3}=\iint_{{{\mathbb R}^n_+}\times\mathbb R}\nabla_\parallel \tilde{u}\cdot (A_\parallel^*\nabla_\parallel (\partial_n v))x_n\,dx\,dx_n\,dt+
\iint_{{{\mathbb R}^n_+}\times\mathbb R}\nabla_\parallel \tilde{u}\cdot (a_{in}^*\partial^2_{nn} v)_{i<n}x_n\,dx\,dx_n\,dt\\\nonumber
&+&\iint_{{{\mathbb R}^n_+}\times\mathbb R}D_t^{1/2}\tilde{u}\, \partial_n(H_tD^{1/2}_t v) x_n\,dx\,dx_n\,dt=
-\iint_{{{\mathbb R}^n_+}\times\mathbb R}\div_\parallel(A_\parallel\nabla \tilde{u})(\partial_nv)x_ndx\,dx_n\,dt
\\\nonumber
&+&\iint_{{{\mathbb R}^n_+}\times\mathbb R}\partial_t \tilde{u}(\partial_nv)x_ndx\,dx_n\,dt
+\iint_{{{\mathbb R}^n_+}\times\mathbb R}\nabla_\parallel \tilde{u}\cdot (a_{in}^*\partial^2_{nn} v)_{i<n}x_n\,dx\,dx_n\,dt\\\nonumber
&=& \iint_{{{\mathbb R}^n_+}\times\mathbb R}(\partial^2_{nn}\tilde{u})(\partial_nv)x_ndx\,dx_n\,dt+IV_{4}.\nonumber
\end{eqnarray}
Here we have integrated the first and third term by parts and then used the equation that $\tilde{u}$ satisfies.
The first term on the last line enjoys a square function estimate by $\|S(v)\|_{L^{q'}}\|S(\partial_n\tilde{u})\|_{L^q}$.
 For $IV_{4}$
we write $\partial^2_{nn}v$ as
$$\partial^2_{nn}v=\partial_n\left(\frac{a^*_{nn}}{a^*_{nn}}\partial_n v\right)=\frac1{a^*_{nn}}\partial_n(a^*_{nn}\partial_nv)-\frac{\partial_n a^*_{nn}}{a^*_{nn}}\partial_n v$$
$$=-\frac1{a^*_{nn}}\left[\div_\parallel(A^*_\parallel\nabla_\parallel v)+\sum_{i<n}\left[\partial_i(a^*_{in}\partial_n v)+\partial_n(a^*_{ni}\partial_i v)\right]+\partial_n(a^*_{nn})\partial_n v+\partial_tv -\div\vec{h}\right],$$
where the final line follows from the equation that $v$ satisfies. It therefore follows that the term $IV_{4}$ can be written as a sum of six terms (which we shall call $IV_{41},\dots,IV_{46}$).

Terms $IV_{41}$ and $IV_{42}$ are similar and we deal with then via integration by parts (in $\partial_i$, $i<n$):
\begin{equation}\label{eq133}
|IV_{41}|+|IV_{42}|\le C\iint_{{\mathbb R}^n_+\times\mathbb R}|\nabla^2 \tilde{u}||\nabla v|x_n+C\iint_{{\mathbb R}^n_+\times\mathbb R}|\nabla A||\nabla\tilde{u}||\nabla v|x_n.
\end{equation}
The first term of \eqref{eq133}  can be seen to be a product of two square functions and hence by H\"older it has an estimate by $\|S(\nabla\tilde{u})\|_{L^q}\|S(v)\|_{L^{q'}}$. The second term of \eqref{eq133} is similar to the term $IV_1$ with analogous estimate. For the third term $IV_{43}$ we observe that $\partial_n(a^*_{ni}\partial_i v)=\partial_i(a^*_{ni}\partial_n v)+(\partial_na^*_{ni})\partial_iv-(\partial_ia^*_{ni})\partial_nv$ which implies that it again can be estimated by the right-hand side of \eqref{eq133}. 
The same is true for the term $IV_{44}$ which has a bound by the second term on the right-hand side of \eqref{eq133}.  
It remains to consider the term  $IV_{45}-IV_{46}$ which are
\begin{equation}\label{eq134}
IV_{45}+IV_{46}=\sum_{i<n}\iint_{{{\mathbb R}^n_+\times\mathbb R}}\frac{a_{ni}}{a_{nn}}(\partial_i\tilde{u})\,[-v_t+\div\vec{h}]\,x_n\,dx\,dx_n\,dt.
\end{equation}
Notice the similarity of this term with $II_{121}$, hence the calculation below also applies to it. For the terms
 containing $\div\vec{h}$, since $|\div\vec{h}|x_n\lesssim H$ with $H$ of the same type as $\vec{h}$ we have that

\begin{equation}\nonumber
\left|\sum_{i}\iint_{{{\mathbb R}^n_+\times\mathbb R}}\frac{a_{ni}}{a_{nn}}(\partial_i\tilde{u})\div\vec{h}\,x_n\,dx\,dx_n\,dt\right|\le C\iint_{{\mathbb R}^n_+\times\mathbb R}|\nabla \tilde{u}|H
\lesssim \|N_1(\nabla\tilde u)\|_{L^q},
\end{equation}
by \eqref{e2aa}.

The remaining two terms to consider are the term $II_{122}$ (twice) and the terms containing $\partial_t$ in \eqref{eq134}. Taken together these terms are equal to
\begin{equation}\label{zmos}
-\sum_{i<n}\iint_{{{\mathbb R}^n_+\times\mathbb R}}\frac{a_{ni}}{a_{nn}}(\partial_i\tilde{u})v_t\,x_n\,dx\,dx_n\,dt-2\iint_{{{\mathbb R}^n_+\times\mathbb R}}(\partial_i\tilde{u})v_t\,x_n\,dx\,dx_n\,dt:=V_{i<n}+V_{i=n}.
\end{equation}
Starting with $i=n$ we have
\begin{eqnarray}\nonumber
V_{i=n}&=&\iint_{{{\mathbb R}^n_+\times\mathbb R}}\partial_n\tilde{u}\,\partial_tv\,\partial_n(x_n^2)\,dx\,dx_n\,dt\\\nonumber
&=&-\iint_{{{\mathbb R}^n_+\times\mathbb R}}\partial^2_{nn}\tilde{u}\,\partial_tv\,x_n^2\,dx\,dx_n\,dt
-\iint_{{{\mathbb R}^n_+\times\mathbb R}}\partial_n\tilde{u}\,(\partial_n\partial_tv)\,x_n^2\,dx\,dx_n\,dt\\\nonumber
&=&-\iint_{{{\mathbb R}^n_+\times\mathbb R}}\partial^2_{nn}\tilde{u}\,\partial_tv\,x_n^2\,dx\,dx_n\,dt
+\iint_{{{\mathbb R}^n_+\times\mathbb R}}(\partial_t\partial_n\tilde{u})\,\partial_nv\,x_n^2\,dx\,dx_n\,dt.
\end{eqnarray}

Recalling the two versions of the Area function introduced in \eqref{DefArea}, 
we see that for the second term of $V_{i=n}$ we have a bound by
$$\left|\iint_{{{\mathbb R}^n_+\times\mathbb R}}(\partial_t\partial_n\tilde{u})\,\partial_nv\,x_n^2\,dx\,dx_n\,dt\right|\lesssim \|A(\nabla\tilde{u})\|_{L^q}\|S(v)\|_{L^{q'}}.$$
We work a bit more on the first term. Using the PDE for $v$:
\begin{eqnarray}\label{zmet1}
&&-\iint_{{{\mathbb R}^n_+\times\mathbb R}}\partial^2_{nn}\tilde{u}\,\partial_tv\,x_n^2\,dx\,dx_n\,dt\\\nonumber
&=&\iint_{{{\mathbb R}^n_+\times\mathbb R}}\partial^2_{nn}\tilde u\mbox{ div}(A^*\nabla v)x_n^2\,dx\,dx_n\,dt
-\iint_{{{\mathbb R}^n_+\times\mathbb R}}(\partial^2_{nn}\tilde u)\mbox{div}\vec h\,x_n^2\,dx\,dx_n\,dt.
\end{eqnarray}
For the second term we use the remark on differentiability of the function $\vec{h}$ (namely \eqref{diffh}) which implies that
$|\mbox{div} \vec h|x_n\lesssim {H}$, for some function ${H}$ with the same properties as $\vec{h}$. Hence
$$\left|\iint_{{{\mathbb R}^n_+\times\mathbb R}}(\partial^2_{nn}\tilde u)(\mbox{div }\vec h)\,x_n^2\,dx\,dx_n\,dt\right|\lesssim \iint_{{{\mathbb R}^n_+\times\mathbb R}}|\nabla ^2\tilde{u}||H|x_n\,dx\,dx_n\,dt,$$
where this term enjoys the  bound  by $\|\tilde N(x_n\nabla^2\tilde u)\|_{L^q}\lesssim \|\tilde N(\nabla\tilde u)\|_{L^q}$ which follows from the Cacciopoli's inequality for second gradient (Lemma 4.4 of \cite{DH18}).
.
The first term on the last line of \eqref{zmet1} can be further estimated by
\begin{eqnarray}\nonumber
&&\left|\iint_{{{\mathbb R}^n_+\times\mathbb R}}\partial^2_{nn}\tilde u\mbox{ div}(A^*\nabla v)x_n^2\,dx\,dx_n\,dt\right|\\\nonumber
&\lesssim& \iint_{{{\mathbb R}^n_+\times\mathbb R}}|\partial^2_{nn}\tilde u||\nabla A||\nabla v|x_n^2\,dx\,dx_n\,dt
+ \iint_{{{\mathbb R}^n_+\times\mathbb R}}|\partial^2_{nn}\tilde u||\nabla^2 v|x_n^2\,dx\,dx_n\,dt\\\nonumber
&\lesssim& \|S(\nabla\tilde u)\|_{L^q}\|S(v)\|_{L^q}+ \|S(\nabla\tilde u)\|_{L^q}\|\tilde{A}(v)\|_{L^{q'}},
\end{eqnarray}
where for the penultimate term we have used boundedness of $|\nabla A|x_n$.

We proceed similarly for the terms $V_{i<n}$:
\begin{eqnarray}\nonumber
V_{i<n}&=&\frac12\iint_{{{\mathbb R}^n_+\times\mathbb R}}\frac{a_{ni}}{a_{nn}}\partial_n\tilde{u}\,\partial_tv\,\partial_n(x_n^2)\,dx\,dx_n\,dt\\\nonumber
&=&-\frac12\iint_{{{\mathbb R}^n_+\times\mathbb R}}\frac{a_{ni}}{a_{nn}}\partial^2_{nn}\tilde{u}\,\partial_tv\,x_n^2\,dx\,dx_n\,dt
-\frac12\iint_{{{\mathbb R}^n_+\times\mathbb R}}\frac{a_{ni}}{a_{nn}}\partial_n\tilde{u}\,(\partial_n\partial_tv)\,x_n^2\,dx\,dx_n\,dt\\\nonumber
&&-\frac12\iint_{{{\mathbb R}^n_+\times\mathbb R}}\partial_n\left(\frac{a_{ni}}{a_{nn}}\right)\partial_n\tilde{u}\,\partial_tv\,x_n^2\,dx\,dx_n\,dt.
\end{eqnarray}
The first term of the second row enjoys bounds identical to the term in \eqref{zmet1}, i.e., 
 by $\|S(\nabla\tilde{u})\|_{L^q}\|\tilde A(v)\|_{L^{q'}}+\|\tilde N(\nabla\tilde u)\|_{L^q}$ as above, and the last term is bounded by $\|\tilde N(\nabla\tilde{u})\|_{L^q}(1+\|\tilde{A}(v)\|_{L^{q'}}+\| S(v)\|_{L^{q'}})$ using the Carleson condition on $\nabla A$, the PDE for $v_t$, and $\abs{\divg \vec h}x_n\lesssim H$.
Meanwhile for the second term of the second row we have
\begin{eqnarray}\nonumber
&&-\frac12\iint_{{{\mathbb R}^n_+\times\mathbb R}}\frac{a_{ni}}{a_{nn}}\partial_n\tilde{u}\,(\partial_n\partial_tv)\,x_n^2\,dx\,dx_n\,dt\\\nonumber&=&\frac12\iint_{{{\mathbb R}^n_+\times\mathbb R}}\partial_t\left(\frac{a_{ni}}{a_{nn}}\right)\partial_n\tilde{u}\,\partial_nv\,x_n^2\,dx\,dx_n\,dt+\frac12\iint_{{{\mathbb R}^n_+\times\mathbb R}}\frac{a_{ni}}{a_{nn}}(\partial_t\partial_n\tilde{u})\,\partial_nv\,x_n^2\,dx\,dx_n\,dt.
\end{eqnarray}
Here the last term clearly is bounded by $\|A(\nabla\tilde{u})\|_{L^q}\|S(v)\|_{L^{q'}}$, while the penultimate term
(using the Carleson condition \eqref{E:1:carl} for $|\partial_tA|^2x_n^3$) is bounded by $\|\tilde N(\nabla\tilde{u})\|_{L^q}\|S(v)\|_{L^{q'}}$.\medskip

We make one more observation before summing up the argument. 
In section \ref{S:Area-par} we will establish \eqref{sqrAfinal} which provides bounds on $\|A(\nabla\tilde{u})\|_{L^q}$.

Therefore, under the assumptions we have made (along with \eqref{sqrAfinal}), namely the solvability of Regularity problem $(R)_q^{\LL_0}$ for the block form-parabolic operator accompanied by the estimates:

\begin{equation}\label{NSA}
\|\tilde{N}(\nabla \tilde{u})\|_{L^q}+\|S(\nabla\tilde{u})\|_{L^q}
\le C(\|\nabla_\parallel f\|_{L^q}+\|D^{1/2}_t f\|_{L^q}),
\end{equation}
as well as the solvability of the inhomogeneous Dirichlet problem \eqref{e3} with the estimates:
\begin{equation}
\|N(v)\|_{L^{q'}}+\|S(v)\|_{L^{q'}}+\|\tilde{A}(v)\|_{L^{q'}}\le C.
\label{e4}
\end{equation}
then:
$$II+III=\int_{\partial({{\mathbb R}^n_+}\times\mathbb R)}\nabla_\parallel u(x,0)\cdot \vec{V}(x,0)dx\,dt+
 \int_{\partial({{\mathbb R}^n_+}\times\mathbb R)} D^{1/2}_t f(x)W(x,0)dx\,dt$$
$$\le C(\|\nabla_\parallel f\|_{L^q}+\|D^{1/2}_t f\|_{L^q})-I.$$

Pulling all the estimates together (since term $I$ cancels out), we have established the following:
$$ \|\tilde{N}_{1,\varepsilon}(\nabla u)\|_{L^q}\le C(\|\nabla_\parallel f\|_{L^q}+\|D^{1/2}_t f\|_{L^q}).$$
\medskip

An argument is required to demonstrate that the control of $\tilde{N}_{1,\varepsilon}(\nabla u)$ of a solution $\LL u=0$ implies the control of $\tilde{N}(\nabla u)$ (the $L^2$ averaged version of the nontangential maximal function as defined in \eqref{def.N2}). Firstly, as the established estimates are independent of $\varepsilon>0$ we obtain
$$\|\tilde{N}_{1}(\nabla u)\|_{L^q}=\lim_{\varepsilon\to 0+}\|\tilde{N}_{1,\varepsilon}(\nabla u)\|_{L^q}\le C(\|\nabla_\parallel f\|_{L^q}+\|D^{1/2}_t f\|_{L^q}).$$
Secondly, 
by Lemma \ref{lem.gradL2-L1}, 
$$\left(\fiint_B|\nabla u|^2\right)^{1/2}\lesssim \left(\fiint_{2B}|\nabla u|\right),$$
which implies a bound of $\tilde{N}(\nabla u)(\cdot)$ defined using cones $\Gamma_a(\cdot)$ of some aperture $a>0$ by $\tilde{N}_1(\nabla u)(\cdot)$ defined using cones $\Gamma_b(\cdot)$ of some slightly larger aperture $b>a$.

\subsection{The area function of the adjoint Dirichlet problem}
\label{S:Area-adj}

As our first task we find an estimate of the $L^{q'}$ norm of $\tilde A(v)$ which is needed for \eqref{e4}. Fix a boundary point $P$ and consider
the nontangential cone $\Gamma(P)$ which can be covered by a collection of parabolic Whitney cubes $Q_i$ with their enlargements $2Q_i$ having finite overlap and $3Q_i\subset \R^n_+\times\R$.

Consider now one such cube $Q_r=Q_i$ (with $r$ being the sidelength of this cube). 
We take the spatial gradient of this PDE. For simplicity, let $u_i = \partial_{i} v$ and $w_i=u_i\zeta^2$, $1\le i\le n$ where $0\leq \zeta \leq 1$ is a smooth cutoff function equal to $1$ on $Q_{r}$ and supported in $Q_{2r}$ satisfying $r|\nabla \zeta| + r^2 |\zeta_{t}| \leq c$ for some $c>0$. It follows that (summing over repeating indices)
\[
\iint_{Q_{2r}} (u_{i})_t w_i \,dX\,dt = \iint_{Q_{2r}} \left( A^T \nabla u_i + (\partial_{i} A^T) \nabla v \right)\cdot\nabla w_i +\iint_{Q_{2r}}\partial_i (\mbox{div }\vec h) w_i\,dX\,dt.\]
which implies that
\begin{multline*}
    \iint A^T\nabla u_i\cdot\nabla u_i\zeta^2dXdt=
    \iint\dr_t\br{\frac{u_i^2\zeta^2}{2}}dXdt-\iint\dr_t\br{\zeta^2}u_i^2dXdt\\
    -2\iint A^T\nabla u_i\cdot\nabla\zeta\, u_i\zeta dXdt
    -\iint \br{\dr_i A^T}\nabla v\cdot\nabla u_i\,\zeta^2dXdt\\
    -2\iint \br{\dr_i A^T}\nabla v\cdot\nabla \zeta\, u_i\,\zeta\,dXdt
    -\iint\dr_i(\divg \vec h) u_i\zeta^2dXdt.
\end{multline*}


Using the ellipticity and boundedness of the coefficients and the Cauchy-Schwarz inequality, it follows that
\[\begin{split}
& \lambda \iint_{Q_{r}} \left|\nabla^2 v\right|^{2}\,dX\,dt \\
&\leq \frac{C}{r^2} \iint_{Q_{2r}} |\nabla v|^{2} \,dX\,dt
 + {C} \iint_{Q_{2r}} |\nabla A|^{2} |\nabla v|^{2}\,dX\,dt
+ Cr^n\fiint_{Q_{2r}} H|\nabla v|\,dX\,dt\\
&\leq \frac{C}{r^2} \iint_{Q_{2r}} |\nabla v|^2 \,dX\,dt+ C\left(\fiint_{Q_{2r}}|\nabla v|^2r^2\,dX\,dt\right)^{1/2}r^{n-1}\left(\fiint_{Q_{2r}}|H|^2\,dX\,dt\right)^{1/2},
\end{split}\]
for some constant $C= C(\lambda, \Lambda)>0$ and $|\nabla^2\vec h|x_n^2\le H$, where $H$ is a function of the same type as $\vec{h}$.

We now multiply above inequality by $r^{-n+2}$, and sum over all indices $i$ for $Q=Q_i$ to obtain:

$$\tilde{A}^2(v)(P)\lesssim S^2(v)(P)
+{\tilde N}(x_n\nabla v)(P)\iint_{\Gamma_{a'}(P)}W(H)x_n^{-n-1}\,dX\,dt.$$
Here $\Gamma_{a'}(P)$ is a cone of wider aperture than the original cone so that $\bigcup_i 2Q_i\subset\Gamma_{a'}(P)$ and the square and nontangential maximal functions $S(v),\, {\tilde N}(x_n\nabla v)$ are also defined using this wider cone.  $W(H)$ denotes the $L^2$ average of $H$ over Whitney balls (c.f. \eqref{wW}). 
And, we have used the estimate of $\left(\fiint_{Q_{2r}}|\nabla v|^2r^2\,dX\,dt\right)^{1/2}$ by ${\tilde N}(x_n\nabla v)(P)$.
Recall the setting of the paper \cite{KP2} which introduced the function $T_p(H)(P)$ as follows:
$$T_p(H)(P)=\left(\iint_{\Gamma(P)}\left(\fiint_{B((X,t),\delta((X,t))/2)}|H|^p\right)^{1/p} x_n^{-n-1}\,dX\,dt. \right)^{1/2}. $$
In particular in $L^{q'}$ we have
\begin{equation}\label{A-v}
\|\tilde{A}(v)\|_{L^{q'}}\lesssim \|{S}(v)\|_{L^{q'}}+\|N(x_n\nabla v)\|_{L^{q'}}+\|T_2(H)\|_{L^{q'}}.
\end{equation}

By the results of Ulmer \cite{Up} (see also \cite{U}), Lemma 3.7 provides bounds $L^{q'}$ norms of $N(v)$ and $N(x_n \nabla v)$, while  Corollary 3.9 bounds $S(v)$ and finally Lemma 3.10 established bounds for $T_p(H)$. In summary, we have that there exists $C>0$ independent of $v$ such that
$$\|N(v)\|_{L^{q'}}+\|N(x_n\nabla v)\|_{L^{q'}}+\|{S}(v)\|_{L^{q'}}+\|T_2(H)\|_{L^{q'}}\le C,$$
for all $q'$ for which the parabolic Dirichlet problem for the operator $\mathcal L^*$ is solvable. Thanks to \eqref{A-v} we therefore can conclude that $\|\tilde{A}(v)\|_{L^{q'}}\le C'$ as well. This establishes all required bounds we have asked for  in \eqref{e4} for the inhomogeneous parabolic Dirichlet problem.\medskip

It follows we have reduced the problem of solvability of the Regularity problem for the parabolic PDE
$u_t-\mbox{div}(A\nabla u)=0$ with elliptic matrix $A$ to a related problem of 
solvability of the Regularity problem for the parabolic PDE
$\tilde u_t-\mbox{div}(\tilde{A}\nabla \tilde u)=0$ where the matrix $\tilde{A}$ has an additional feature, namely it is of the block form. It remain to establish \eqref{NSA} for such solutions $\tilde u$.
This concludes the proof of Theorem \ref{t1}.\qed

\bigskip

\section{The area function of $\nabla u$ for the block-form parabolic PDEs.}
\label{S:Area-par}

Let us recall the subsection \ref{S:Area-adj} where we have established estimates for the area function of the adjoint inhomogenuous solution $v$. We now establish similar estimates for the area function of $u$; the solution of the block-form parabolic PDEs. Let us denote $w=\partial_t u$, then $w$ will solve the PDE
\begin{equation}
-\partial_tw+\mbox{div}_x(A_\parallel (\nabla w)_\parallel)+\partial^2_{x_nx_n}w=\mbox{div}_x((\partial_tA_\parallel)(\nabla u)_\parallel),
\end{equation}
where, consistent with our previous notation, $(\nabla w)_\parallel$ and $A_\parallel$ denotes the first $n-1$ components of $\nabla w$ and the $n-1\times n-1$ subblock of $A$, respectively.
Mimicking the calculation for $\tilde{A}(v)$ in subsection \ref{S:Area-adj} we multiply both sides by $w\zeta^2$
where $\zeta$ is a cutoff function equal to one on $Q$ for an interior Witney cube and zero outside $2Q$. After integration by parts we obtain:

\[\begin{split}
&\frac{1}{2}\iint_{2Q} \left[(w\zeta)^2\right]_{t} \,dX\,dt
+ \iint_{2Q} A \nabla(w\zeta)\cdot \nabla(w\zeta)\,dX\,dt 
\\
&=  \iint_{2Q} w^{2}\zeta\zeta_{t} \,dX\,dt  + \iint_{2Q} w^2 A\nabla\zeta\cdot\nabla \zeta \,dX\,dt + \iint_{2Q} A^T \nabla (w\zeta)\cdot w\nabla\zeta\,dX\,dt\\
&\quad- \iint_{2Q} A \nabla (w\zeta)\cdot w\nabla\zeta\,dX\,dt
 - \iint_{2Q}  (\partial_{t} A)\nabla u\zeta\cdot (\nabla w\zeta)\,dX\,dt \\ &\quad- \iint_{2Q} (\partial_{t} A)\nabla u( w\cdot( \zeta\nabla\zeta)) \,dX\,dt.
\end{split}\]

Using the ellipticity and boundedness of the coefficients and the Cauchy-Schwarz inequality, it follows that
\[\begin{split}
& \lambda \iint_{Q} \left|\nabla w \right|^{2}\,dX\,dt \leq \frac{C}{r^2}  \iint_{2Q} |w|^{2} \,dX\,dt
 + C \iint_{2Q} |\partial_t A|^{2} |\nabla u|^{2}\,dX\,dt.
\end{split}\]
for some constant $C= C(\Lambda)$.

As before in \ref{S:Area-adj} , we cover $\Gamma_a(P)$ by a collection of Whitney cubes $(Q_i)_{i}$, 
multiply above inequality by $r_i^{-n+2}$, and sum over all indices $i$ for $Q=Q_i$ to obtain:

\begin{equation}\label{sqrA}
{A}^2(\nabla u)(P)\lesssim \iint_{\Gamma_{a'}(P)}|u_t|^2x_n^{-n}\,dX\,dt
+\iint_{\Gamma_{a'}(P)} |\partial_t A|^{2} |\nabla u|^{2}x_n^{-n+2}\,dX\,dt.
\end{equation}
Here $\Gamma_{a'}(P)$ is a cone of wider aperture (smaller slope) than the original cone so that $\bigcup_i 2Q_i\subset\Gamma_{a'}(P)$. We note that \eqref{sqrA} also holds if the cones are truncated.

We note that the first term can be further estimated using the equation for $u$ and hence
$$\iint_{\Gamma_{a'}(P)}|u_t|^2x_n^{-n}\,dX\,dt\le S^2(\nabla u)(P)+
\iint_{\Gamma_{a'}(P)} |\nabla_\parallel A|^{2} |\nabla u|^{2}x_n^{-n}\,dX\,dt,$$
and therefore
\begin{equation}\label{sqrAA}
{A}^2(\nabla u)(P)\lesssim S^2(\nabla u)(P)
+\iint_{\Gamma_{a'}(P)} (|\partial_t A|^{2}x_n^2+|\nabla_\parallel A|^{2}) |\nabla u|^{2}x_n^{-n}\,dX\,dt.
\end{equation}
Again, a truncated version of this inequality also holds. Using truncated cones at the height $r$ and integrating over a boundary ball $\Delta_r$ we obtain that:
$$\|A(\nabla u)\|^2_{L^2(\Delta_r)}\lesssim \|S(\nabla u)\|^2_{L^2(2\Delta_r)}
+\iint_{T(2\Delta_r)} (|\partial_t A|^{2}x_n^2+|\nabla_\parallel A|^{2}) |\nabla u|^{2}x_n\,dX\,dt
$$
$$\le \|S(\nabla u)\|^2_{L^2(2\Delta_r)}
+\|\mu_\parallel\|_C\|N(\nabla u)\|^2_{L^2(2\Delta_r)},$$
using the Carleson condition for $\mu_\parallel$.
In particular after taking $r\to\infty$ we obtain
\begin{equation}\label{eq.sqrAAL2}
    \|A(\nabla u)\|_{L^2(\R^{n-1}\times\R)}\le C \|S(\nabla u)\|_{L^2(\R^{n-1}\times\R)}+C\|\mu_\parallel\|_C^{1/2}\|N(\nabla u)\|_{L^2(\R^{n-1}\times\R)}.
\end{equation}
We are also interested in an $L^q$ version of this inequality, which will be obtained via a good-$\lambda$ argument.

Denote by $S_A(u)(P)$ the square root of the second term of \eqref{sqrAA}; that is,
$$S_A(u)(P)=\left(\iint_{\Gamma_{a'}(P)} (|\partial_t A|^{2}x_n^2+|\nabla_\parallel A|^{2}) |\nabla u|^{2}x_n^{-n}\,dX\,dt\right)^{1/2}.$$
It follows that if we establish $\|S_A(u)\|_{L^q}\le C\|N(\nabla u)\|_{L^q}$ for all $q>1$ it would follow from 
 \eqref{sqrAA} that
\begin{equation}\label{sqrAfinal}
\|A(\nabla u)\|_{L^q(\R^{n-1}\times\R)}\lesssim \|S(\nabla u)\|_{L^q(\R^{n-1}\times\R)}+\|\mu_\parallel\|_C^{1/2}\|N(\nabla u)\|_{L^q(\R^{n-1}\times\R)},\quad\mbox{for }q>1.
\end{equation}
Denote by $E_\lambda$ the set $\{P:\,S_A(u)(P)>2\lambda,\,N(\nabla u)(P)\le\gamma\lambda\}$ for some $\gamma>0$ to be determined later. We decompose the open set $\{P:\,S_A(u)(P)>\lambda\}$ and write it as a union of countable collection of open balls $\Delta_i$ of finite overlap such that each $\Delta_i$ has the property that $2\Delta_i\cap E_\lambda^c\ne\emptyset$.

If we prove that for each $i$ we have $|\Delta_i\cap E_\lambda|\le C\gamma^2|\Delta_i|$ it will follow that 
$$|E_\lambda|\le C\gamma^2|\{P:\,S_A(u)(P)>\lambda\}|.$$
From this the claim $\|S_A(u)\|_{L^q}\lesssim C\|N(\nabla u)\|_{L^q}$ for all $q>1$ follows.

We further decompose $S_A(u)(P)=S_{A,<r}(u)(P)+S_{A,\ge r}(u)(P)$, where  $S_{A,<r}$ is the truncated version of $S_A$ using cones of height $r=\mbox{diam}(\Delta_i)$, with $S_{A,\ge r}$ being the complementary term. We aim to prove that $S_{A,<r}(u)(P)>\lambda$ for all $P\in\Delta_i\cap E_\lambda$, provided $\gamma$ is chosen small enough.
Assuming this for the moment we have that
\begin{equation}\label{GLas}
|\Delta_i\cap E_\lambda|
\le\frac1{\lambda^2}\int_{\Delta_i\cap E_\lambda}S_{A,<r}^2(u).
\end{equation}
By Fubini, the first term enjoys the estimate
$$\frac1{\lambda^2}\int_{\Delta_i\cap E_\lambda}S_{A,<r}^2(u)dx\,dt\lesssim \frac1{\lambda^2}\iint_{\cup \{\Gamma^r(P):\, P\in \Delta_i\bigcap E_\lambda\}}(|\partial_t A|^{2}x_n^2+|\nabla_\parallel A|^{2}) |\nabla u|^{2}x_n\,dX\,dt$$
$$\le \gamma^2\iint_{T(2\Delta_i)}(|\partial_t A|^{2}x_n^2+|\nabla_\parallel A|^{2})x_n\,dX\,dt\le C\gamma^2\|\mu_\parallel\|_C|\Delta_i|,$$
using the Carleson condition and the fact that on the domain of integration $N(\nabla u)\le\gamma\lambda$.

It remains to show that $S_{A,<r}(u)(P)>\lambda$ for all $P\in\Delta_i\cap E_\lambda$.
We use the fact that $2\Delta_i$ is not contained in $E_\lambda$ and hence for some $Q\in 2\Delta_i\setminus\Delta_i$ we have that $S_{A}(u)(Q)\le\lambda$.
It follows that for all $P\in\Delta_i\cap E_\lambda$
\begin{equation}\label{diffG}
\mathcal I:=\iint_{\Gamma_{a'}(P)\setminus \Gamma_{a'}(Q)} (|\partial_t A|^{2}x_n^2+|\nabla_\parallel A|^{2}) |\nabla u|^{2}x_n^{-n}\,dX\,dt\ge 3\lambda^2.
\end{equation}
We consider the above integral of $\mathcal I$ over the set $(\Gamma_{a'}(P)\setminus \Gamma_{a'}(Q))\cap\{\delta(X,t)\ge r\}$, which
we denote $\mathcal I^{top}$, and $\mathcal I^{bottom}$ will be the difference. We further split $\mathcal I^{top}$ dyadically into regions where $x_n\approx 2^kr$, $k\ge 1$ to get
$$\mathcal I^{top}\le \sum_{k\ge 1}\iint_{(\Gamma_{a'}(P)\setminus \Gamma_{a'}(Q))\cap\{x_n\approx 2^kr\}} (|\partial_t A|^{2}x_n^2+|\nabla_\parallel A|^{2}) |\nabla u|^{2}x_n^{-n}\,dX\,dt$$
$$\le \sum_{k\ge 1}|(\Gamma_{a'}(P)\setminus \Gamma_{a'}(Q))\cap\{x_n\approx 2^kr\}|(2^kr)^{-n-2}\gamma^2\lambda^2,
$$
where we have used the trivial pointwise bounds $|\nabla_\parallel A|\lesssim x_n^{-1}$ and $|\partial_t A|\lesssim x_n^{-2}$ leaving us with $N(\nabla u)(P)$, which 
is bounded by $\gamma\lambda$ for $P\in E_\lambda$.

It remains to estimate the volume of the set $(\Gamma_{a'}(P)\setminus \Gamma_{a'}(Q))\cap\{x_n\approx 2^kr\}$. Being a difference of two cones with vertices of distance $\le r$, this set has at least one \lq\lq short side"
which does not grow as $2^k r$ and instead has size $r$ for each $k$. It follows that
$$|(\Gamma_{a'}(P)\setminus \Gamma_{a'}(Q))\cap\{x_n\approx 2^kr\}|\lesssim (2^kr)^{n+1}r=(2^k)^{n+1}r^{n+2}.$$
Using this and the estimate above we have that
$$\mathcal I^{top}\le C\gamma^2\lambda^2\sum_{k\ge 1}(2^k)^{n+1}r^{n+2}(2^kr)^{-n-2}=C\gamma^2\lambda^2\sum_{k\ge 1}2^{-k}=C\gamma^2\lambda^2.$$
Hence we conclude that for $P\in\Delta_i\cap E_\lambda$ we have that

$$\mathcal I^{bottom}=\mathcal I-\mathcal I^{top}\ge 3\lambda^2-C\gamma^2\lambda^2=(3-C\gamma^2)\lambda^2.$$
We chose $\gamma>0$ such that $(3-C\gamma^2)> 1$ and then we get that
$$S_{A,<r}(u)(P)^2\ge \mathcal I^{bottom}>\lambda^2,$$
for all $P\in\Delta_i\cap E_\lambda$ as desired. Hence we have established \eqref{sqrAfinal}.\qed

\section{Estimates for $D^{1/2}_{t}u$}

\subsection{The PDE for $D^{1/2}_{t}u$.}
In this section, we find the partial differential equation that $D^{1/2}_{t}u$ satisfies, where $u$ is a weak solution of \eqref{E:pde}. Define $w:=D^{1/2}_{t}u$. 
\begin{lemma}\label{lem.D1/2eq}
The function $w$ satisfies 
    \begin{equation}\label{eq.D1/2eq}
    \dr_tw(X,t)=\divg(A\nabla w)(X,t)
    +c_0\divg\br{\int_{s\in\R}\frac{A(X,t)-A(X,s)}{\abs{t-s}^{3/2}}\nabla u(X,s)ds}.
\end{equation}
\end{lemma}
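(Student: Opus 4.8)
The plan is to derive the equation for $w := D_t^{1/2} u$ directly by applying the nonlocal operator $D_t^{1/2}$ to the equation $\partial_t u = \divg(A\nabla u)$ and carefully commuting it past the variable coefficients. First I would recall the singular-integral representation of $D_t^\alpha$ for $0 < \alpha < 1$, namely $D_t^\alpha g(t) = c\int_{\R}\frac{g(t)-g(s)}{|t-s|^{1+\alpha}}\,ds$, which is available to us from the excerpt, and note that on the level of distributions $\partial_t = D_t^{1/2}\,(\text{something})$; more precisely, since the Fourier multiplier of $\partial_t$ is $i\tau = (\text{sgn}\,\tau)\,|\tau|^{1/2}\cdot|\tau|^{1/2}$, we have $\partial_t = H_t D_t^{1/2} D_t^{1/2}$ with $H_t$ the Hilbert transform in $t$. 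The key point is that $D_t^{1/2}$ commutes with the spatial operators $\nabla$ and $\divg$ (they act in different variables), so applying $D_t^{1/2}$ to both sides gives $\partial_t w = D_t^{1/2}\partial_t u = D_t^{1/2}\divg(A\nabla u) = \divg\big(D_t^{1/2}(A\nabla u)\big)$.

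The heart of the matter is then the commutator: $D_t^{1/2}(A\nabla u)$ is not $A\,D_t^{1/2}\nabla u = A\nabla w$ because $A = A(X,t)$ depends on $t$. Using the integral formula at a fixed spatial point $X$,
\[
D_t^{1/2}\big(A(X,\cdot)\nabla u(X,\cdot)\big)(t) = c_0\int_{\R}\frac{A(X,t)\nabla u(X,t) - A(X,s)\nabla u(X,s)}{|t-s|^{3/2}}\,ds,
\]
and I would split the numerator as $A(X,t)\big(\nabla u(X,t)-\nabla u(X,s)\big) + \big(A(X,t)-A(X,s)\big)\nabla u(X,s)$. The first piece reassembles to $A(X,t)\,D_t^{1/2}\nabla u(X,t) = A\nabla w$ (using that $D_t^{1/2}$ commutes with $\nabla$), while the second piece is exactly the nonlocal commutator term
\[
c_0\int_{\R}\frac{A(X,t)-A(X,s)}{|t-s|^{3/2}}\,\nabla u(X,s)\,ds
\]
appearing in \eqref{eq.D1/2eq}. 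Substituting back into $\partial_t w = \divg\big(D_t^{1/2}(A\nabla u)\big)$ yields the claimed identity.

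The main obstacle is not the formal algebra but justifying the manipulations rigorously: the interchange of $D_t^{1/2}$ with $\divg$ (which requires an integration-by-parts or duality argument valid for weak solutions, since $u$ has limited regularity — it lies only in $\H^{1/2}_{\loc}$ in time), and the convergence of the singular integrals defining the commutator term, which is genuinely nonlocal in $t$. I would handle this by first establishing the identity in the weak (distributional) sense: pair both sides against a test function $\phi \in C_0^\infty(\Omega)$, move $D_t^{1/2}$ onto $\phi$ using the self-adjointness of $D_t^{1/2}$ (up to the sign conventions fixed by $\partial_t = \dhalf H_t \dhalf$), invoke the weak formulation \eqref{2.10} of the reinforced weak solution, and then re-expand. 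The splitting of the numerator is legitimate because the bound \eqref{E:1:bound}, $\delta|\nabla A| + \delta^2|\partial_t A| \le K$, gives $|A(X,t)-A(X,s)| \lesssim K\,\delta(X)^{-2}|t-s|$ locally, so $\frac{|A(X,t)-A(X,s)|}{|t-s|^{3/2}} \lesssim |t-s|^{-1/2}$ is integrable near the diagonal; combined with Caccioppoli and interior estimates this controls the commutator term in $L^2_{\loc}$. I would also remark that the constant $c_0$ is the same normalization constant from the formula $(D^\alpha g)\,\widehat{\,}\,(\tau) = |\tau|^\alpha\widehat g(\tau)$ with $\alpha = 1/2$, specialized as in the excerpt's discussion of one-dimensional fractional derivatives.
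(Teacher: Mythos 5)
Your proposal is correct and follows essentially the same route as the paper: pass $D_t^{1/2}$ through the equation to get $\partial_t w=\divg\big(D_t^{1/2}(A\nabla u)\big)$ (the paper does this by taking the Fourier transform in $t$ after reducing to smooth $A$ and Schwartz $u$), and then split the singular-integral representation of $D_t^{1/2}(A\nabla u)$ via the numerator decomposition $A(X,t)\big(\nabla u(X,t)-\nabla u(X,s)\big)+\big(A(X,t)-A(X,s)\big)\nabla u(X,s)$, yielding $A\nabla w$ plus the nonlocal commutator term. Your additional remarks on distributional justification and integrability of the commutator near the diagonal are consistent with (indeed slightly more careful than) the paper's formal computation, but they do not constitute a different approach.
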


\bp  We can assume that $A$ is smooth, which is valid by the reasoning in Remark \ref{re.pert} (3). We can also assume that $u$ is a Schwartz function, as these are dense in the solution space.

Taking the Fourier transform in $t$, we have 
\[
\dr_t w(X,\cdot)^{\wedge}(\tau)=i\tau D^{1/2}_{t}u(X,\cdot)^\wedge(\tau)=\abs{\tau}^{1/2}\dr_tu(X,\cdot)^\wedge(\tau).\]
Taking the Fourier transform in $t$ on both sides of the PDE for $u$ gives that
\[
\dr_tu(X,\cdot)^\wedge(\tau)=\divg\br{(A(X,\cdot)\nabla u(X,\cdot))^\wedge(\tau)}.
\]
Plugging this into the previous equation yields that  
\[
     \wh{\dr_tw(\tau)}=\abs{\tau}^{1/2}\divg\br{(A\nabla u)^\wedge(\tau)}
    =\divg\br{(D^{1/2}_{t}(A\nabla u))^\wedge(\tau)},
\]
where we have omitted $X$ for simplicity, 
and hence,
\begin{equation}\label{DtD1/2u.1}
    \dr_t w(X,t)=\divg\br{D^{1/2}_{t}(A\nabla u)(X,t)}.
\end{equation}
On the other hand,
\begin{multline*}
   c_0^{-1}D^{1/2}_{t}(A(X,t)\nabla u(X,t))
    = \int_{s\in\R}\frac{A(X,t)\nabla u(X,t)-A(X,s)\nabla u(X,s)}{\abs{t-s}^{3/2}}ds\\
    =\int_{\R}\frac{A(X,t)(\nabla u(X,t)-\nabla u(X,s))}{\abs{t-s}^{3/2}}ds
    +\int_{\R}\frac{(A(X,t)-A(X,s))\nabla u(X,s)}{\abs{t-s}^{3/2}}ds\\
    =c_0^{-1}A(X,t)D^{1/2}_{t}(\nabla u)(X,t)+\int_{\R}\frac{(A(X,t)-A(X,s))\nabla u(X,s)}{\abs{t-s}^{3/2}}ds\\
    =c_0^{-1}A(X,t)\nabla w(X,t)+\int_{\R}\frac{(A(X,t)-A(X,s))\nabla u(X,s)}{\abs{t-s}^{3/2}}ds.
\end{multline*}
Plugging this back into \eqref{DtD1/2u.1} gives \eqref{eq.D1/2eq}.

\ep

\subsection{Caccioppoli-type estimates for $\nabla^2D^{1/2}_{t}u$.}
We will need a Caccioppoli-type estimate for the second derivatives of $D^{1/2}_{t}u$, where $u$ is a weak solution of \eqref{E:pde}. Recall that $w=D^{1/2}_{t}u$ satisfies the equation \eqref{eq.D1/2eq}.

Since the equation contains a non-local term with respect to the time variable, we cannot hope for local estimates in time. Fortunately, we only need to localize the spatial variables in the Caccioppoli-type estimate in application. Let us denote the spatial cube centered at $Y\in\Rn_+$ with sidelength $r$ by $Q_r(Y)$; that is,
\[Q_r(Y)=\set{X\in\Rn_+:\, \abs{x_i-y_i}<r \text{ for all }1\le i\le n }.\]

\begin{lemma}[A Caccioppoli inequality for $\nabla^2D^{1/2}_{t}u$]\label{lem.Caccio}
     Suppose that $A$ satisfies \eqref{E:elliptic} and \eqref{E:1:bound}. Let $w$ be a solution of \eqref{eq.D1/2eq}. Let $Y=(y,y_n)\in\Rn_+$ and $r>0$ be such that $Q_{2r}=Q_{2r}(Y)\subset\Rn_+$. 
    Then 
    \begin{multline}\label{eq.Caccio}
        \int_{Q_r}\int_{\R}\abs{\nabla^2w}^2dtdX\lesssim \frac{1}{r^2}\int_{Q_{2r}}\int_{\R}\abs{\nabla w}^2dtdX+\int_{Q_{2r}}\int_{\R}\abs{\nabla A}^2\abs{\nabla u}^2\frac{dtdX}{x_n^2}\\
        +\int_{Q_{2r}}\int_{\R}\abs{\nabla A}^2M(\nabla u(X,\cdot))(t)^2\frac{dtdX}{x_n^2}
        +\int_{Q_{2r}}\int_{\R}M_{x_n^2}(\dr_\tau\nabla A(X,\cdot))(t)^2\abs{\nabla u}^2x_n^2dtdX\\
    +\int_{Q_{2r}}\int_{\R}\abs{\nabla^2u(X,t)}^2\frac{dtdX}{x_n^2},
    \end{multline}
    where $M_{x_n^2}$ is the localized maximal function at scale $x_n^2$ as defined in \eqref{def.trcmaxfunc}.    
\end{lemma}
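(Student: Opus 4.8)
The plan is to prove \eqref{eq.Caccio} by the standard Caccioppoli device applied to the PDE \eqref{eq.D1/2eq} for $w = D^{1/2}_t u$, but carrying along the non-local error term carefully. First I would fix a spatial cutoff $\zeta\in C_0^\infty(Q_{2r})$ with $\zeta\equiv 1$ on $Q_r$ and $r|\nabla\zeta|\lesssim 1$ (the cutoff is in $X$ only, since no localization in $t$ is possible), and test the equation \eqref{eq.D1/2eq} against $\divg(A\nabla(w\zeta^2))$ — or, more in the spirit of the earlier area-function computations, differentiate the equation in a spatial direction, set $w_i=\partial_i w$, test against $w_i\zeta^2$, and integrate over $Q_{2r}\times\R$. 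Integrating by parts in $X$, the good term $\iint A\nabla w_i\cdot\nabla w_i\,\zeta^2$ is bounded below by $\lambda\iint|\nabla^2 w|^2\zeta^2$ using ellipticity; the terms where the derivative lands on $\zeta$ produce $\frac{C}{r^2}\iint_{Q_{2r}}|\nabla w|^2$. The time integral $\iint \partial_t w_i\, w_i\zeta^2 = \frac12\iint \partial_t(w_i^2)\zeta^2$ vanishes since $\zeta$ is $t$-independent and $w_i^2\zeta^2$ has compact support in $X$ and suitable decay in $t$ (justified by density of Schwartz data, as in Lemma \ref{lem.D1/2eq}).

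Next I would handle the contributions coming from $\nabla A\cdot\nabla w$ inside $\divg(A\nabla w)$ after the spatial differentiation — i.e. terms of the form $\iint (\partial_i A)\nabla w\cdot\nabla(w_i\zeta^2)$ — by Cauchy–Schwarz, absorbing the $|\nabla^2 w|$ factor into the left side at the cost of $\iint |\nabla A|^2|\nabla w|^2\zeta^2$; using the pointwise bound \eqref{E:1:bound}, $|\nabla A|\lesssim x_n^{-1}$, this is controlled by $\frac{1}{?}$ — actually I should be more careful: on $Q_{2r}\subset\Rn_+$ one has $x_n\approx$ const only if $Q_{2r}$ is a Whitney cube, so in general I keep the weight and bound it by $\iint_{Q_{2r}}|\nabla w|^2|\nabla A|^2$; but since the statement's right side has $\frac1{r^2}\iint|\nabla w|^2$ as the leading term, I will use \eqref{E:1:bound} together with the geometry of $Q_{2r}\subset\Rn_+$ to see that on $Q_{2r}$, $x_n\gtrsim$ (distance to $\partial\Rn_+$), and for a cube of sidelength $2r$ abutting or near the boundary this still gives $|\nabla A|^2\lesssim x_n^{-2}$, which I fold into the fifth term $\iint |\nabla^2 u|^2 x_n^{-2}$ via the identity $\nabla w\approx D^{1/2}_t\nabla u$ and boundedness of $D^{1/2}_t$-type operators, or simply keep it as part of $\frac1{r^2}\iint|\nabla w|^2$ when $x_n\gtrsim r$. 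The cleanest route is probably to not differentiate the equation but test directly with $(w-\bar w)\zeta^2$-type quantities; I will choose whichever makes the bookkeeping of the weights match the five terms on the right of \eqref{eq.Caccio}.

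The main work — and the main obstacle — is estimating the contribution of the non-local term
\[
\mathcal N(X,t):=\divg\Bigl(\int_\R \frac{A(X,t)-A(X,s)}{|t-s|^{3/2}}\nabla u(X,s)\,ds\Bigr),
\]
which after spatial differentiation and pairing with $w_i\zeta^2$ and one integration by parts in $X$ becomes $\iint \bigl(\int_\R \frac{A(X,t)-A(X,s)}{|t-s|^{3/2}}\nabla u(X,s)\,ds\bigr)\cdot\nabla(w_i\zeta^2)$. I would split the inner $s$-integral at $|t-s|\lessgtr x_n^2$. For $|t-s|<x_n^2$ I write $A(X,t)-A(X,s)=\int_s^t \partial_\tau A(X,\tau)\,d\tau$, bound $|A(X,t)-A(X,s)|\le |t-s|\cdot M_{x_n^2}(\partial_\tau A(X,\cdot))(t)$ up to constants, so that $\int_{|t-s|<x_n^2}\frac{|A(X,t)-A(X,s)|}{|t-s|^{3/2}}|\nabla u(X,s)|\,ds \lesssim M_{x_n^2}(\partial_\tau\! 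A(X,\cdot))(t)\int_{|t-s|<x_n^2}|t-s|^{-1/2}|\nabla u(X,s)|\,ds\lesssim x_n\, M_{x_n^2}(\partial_\tau\! A(X,\cdot))(t)\, M(\nabla u(X,\cdot))(t)$; pairing with $|\nabla w_i|\lesssim |\nabla^2 w|$ or with the derivative hitting $\zeta$ and using Cauchy–Schwarz (and then, to remove one more derivative, the relation between $\nabla w = D^{1/2}_t\nabla u$ and a maximal function of $\nabla u$) yields exactly the third and fourth terms on the right of \eqref{eq.Caccio}, modulo absorbing a small multiple of $\iint|\nabla^2 w|^2\zeta^2$. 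For $|t-s|\ge x_n^2$ I bound $|A(X,t)-A(X,s)|\le 2\|A\|_\infty$ trivially, so $\int_{|t-s|\ge x_n^2}\frac{|\nabla u(X,s)|}{|t-s|^{3/2}}\,ds\lesssim x_n^{-1} M(\nabla u(X,\cdot))(t)$ (again a maximal function, after summing the dyadic pieces $|t-s|\approx 2^k x_n^2$, $k\ge 0$, each contributing $2^{-k/2}x_n^{-1}$ times an average of $|\nabla u|$), producing the term $\iint |\nabla A|^2 M(\nabla u(X,\cdot))(t)^2 x_n^{-2}$ — here I must be slightly more careful to reinstate a $|\nabla A|$ factor, which comes from keeping $\partial_i$ on $A(X,t)-A(X,s)$ rather than on $\nabla u$; doing that differentiation gives $(\partial_i A)(X,t)-(\partial_i A)(X,s)$, and the far piece is then dominated by $x_n^{-1}\sup|\nabla A| \cdot$(average of $|\nabla u|$) $\lesssim x_n^{-2} M(\nabla u(X,\cdot))(t)$ after using \eqref{E:1:bound} again. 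Collecting the good term on the left and all error terms on the right, invoking the $L^2(\R)$-boundedness of the Hardy–Littlewood maximal operator in $t$ where square functions of maximal functions appear, and finally passing from smooth $A$ and Schwartz $u$ to the general case by the density/approximation remark (Remark \ref{re.pert}(3)), completes the proof. The delicate point throughout is matching the powers of $x_n$ and $r$ in the error terms to the precise right-hand side of \eqref{eq.Caccio}, and making sure the non-local tail in $t$ is genuinely controlled by the two maximal-function terms rather than something worse.
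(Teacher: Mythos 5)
Your overall skeleton matches the paper's proof: differentiate \eqref{eq.D1/2eq} in a spatial direction, test against $(\partial_k w)\zeta^2$ with a $t$-independent spatial cutoff (so the $\partial_t$ term vanishes), use ellipticity and Cauchy--Schwarz, split the nonlocal $s$-integral at $|t-s|=x_n^2$, treat the near range by the fundamental theorem of calculus and the localized maximal function $M_{x_n^2}$ from \eqref{def.trcmaxfunc}, the far range by dyadic decomposition and the Hardy--Littlewood maximal function in $t$ together with a Schur-type bound, and finally absorb a small multiple of $\iint|\nabla \partial_k w|^2\zeta^2$. Your hesitation about the local term $(\partial_kA)\nabla w\cdot\nabla(\partial_kw\,\zeta^2)$ is resolved essentially as you half-guess: since $Q_{2r}(Y)\subset\R^n_+$ forces $y_n>2r$, the paper uses $|\nabla A|\lesssim r^{-1}$ on $Q_{2r}$ and folds this contribution into $r^{-2}\iint_{Q_{2r}}|\nabla w|^2$.

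The genuine defect is in your far-range estimate of the piece carrying $(\partial_kA)(X,t)-(\partial_kA)(X,s)$: you replace both coefficient gradients by the uniform bound $K/x_n$ from \eqref{E:1:bound}, which after Cauchy--Schwarz leaves a term of size $\iint x_n^{-4}M(\nabla u(X,\cdot))(t)^2$. That is strictly weaker than the claimed second and third terms $\iint|\nabla A|^2\bigl(|\nabla u|^2+M(\nabla u)^2\bigr)x_n^{-2}$, and the loss is fatal for the intended use: in Lemma \ref{lem.glbCaccio} one multiplies by $x_n^3$ and needs the factor $|\nabla A|^2$ to invoke the Carleson condition, whereas the density $x_n^{-1}M(\nabla u)^2$ your bound leaves behind is not Carleson-controllable. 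The paper keeps the triangle-inequality split $|\partial_kA(X,t)|+|\partial_kA(X,s)|$: the $t$-evaluated piece gives $|\nabla A(X,t)|\,M(\nabla u(X,\cdot))(t)\,x_n^{-1}$ (the third term), while the $s$-evaluated piece is handled via the maximal function of the product, $M(\partial_kA\,\nabla u(X,\cdot))(t)$, and its $L^2(\R)$-boundedness, yielding $|\nabla A|^2|\nabla u|^2x_n^{-2}$ (the second term). The same maximal-function-of-the-product device in the near range is what produces the fourth term with $|\nabla u|^2$ rather than your $M(\nabla u)^2$, so your version would again prove a weaker inequality than stated. Finally, the fifth term $\iint|\nabla^2u|^2x_n^{-2}$ must come from the piece $A(X,t)-A(X,s)$ paired with $\nabla\partial_ku$: near range via $|A(X,t)-A(X,s)|\le Kx_n^{-2}|t-s|$ (again \eqref{E:1:bound}) and far range via Schur's test using $\int_{|s-t|>x_n^2}|t-s|^{-3/2}\,ds\approx x_n^{-1}$ applied to $|\nabla\partial_ku|$; your alternative of folding this in via $\nabla w\approx D^{1/2}_t\nabla u$ and "boundedness of $D^{1/2}_t$-type operators" is not a valid step, since $D^{1/2}_t$ is neither local nor bounded in the sense that argument would require.
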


\bp
Let $1\le k\le n$ be fixed. We differentiate the PDE \eqref{eq.D1/2eq} with respect to $x_k$ and take $\vp_k=(\dr_kw)\zeta^2$ as a test function, where $\zeta=\zeta(X)\in[0,1]$ is a smooth cutoff function equal to 1 on $Q_r(Y)$ and supported in $Q_{2r}(Y)$ satisfying $r\abs{\nabla\zeta}\le c$ for some $c>0$. It follows that
\begin{multline*}
   \iint\dr_t\dr_kw(\dr_kw) \zeta^2dtdX=-\iint (\dr_kA)\nabla w\cdot\nabla\br{\zeta^2\dr_kw}-\iint A\nabla\dr_kw\cdot\nabla(\zeta^2\dr_kw) \\
   - \iint \int_{s\in\R}\frac{\dr_k A(X,t)-\dr_k A(X,s)}{\abs{t-s}^{3/2}}\nabla u(X,s)ds\cdot\nabla\br{\zeta^2\dr_k w}\\
    -
    \iint \int_{s\in\R}\frac{A(X,t)-A(X,s)}{\abs{t-s}^{3/2}}\nabla(\dr_ku)(X,s)ds\cdot\nabla\br{\zeta^2\dr_k w}.
\end{multline*}
Since  $\zeta$ is $t$-independent,  the left-hand side above is equal to
\[\frac12\iint\dr_t\br{(\dr_kw)^2\zeta^2}dtdX=0.\]
This implies that 
\begin{multline*}
    \iint A\nabla(\dr_kw)\cdot\nabla(\dr_kw)\zeta^2
    =-\iint (\dr_kA)\nabla w\cdot\nabla(\dr_kw)\zeta^2-2\iint(\dr_kA)\nabla w\cdot\nabla\zeta(\dr_kw)\zeta\\
    -2\iint A\nabla(\dr_kw)\cdot\nabla\zeta(\dr_kw)\zeta
     - \iint \int_{s\in\R}\frac{\dr_k A(X,t)-\dr_k A(X,s)}{\abs{t-s}^{3/2}}\nabla u(X,s)ds\cdot\nabla\br{\zeta^2\dr_k w}\\
    -
    \iint \int_{s\in\R}\frac{A(X,t)-A(X,s)}{\abs{t-s}^{3/2}}\nabla(\dr_ku)(X,s)ds\cdot\nabla\br{\zeta^2\dr_k w}.
\end{multline*}
Note that the assumption that $Q_{2r}(Y)\subset\Rn_+$ entails that $y_n>2r$, and thus $\abs{\nabla A}\le Cr^{-1}$ in $Q_{2r}$ (see \eqref{D2Bbdd}). By ellipticity and applying the Cauchy-Schwarz inequality to the first three terms on the right-hand side, we get that 
\begin{equation}\label{Caccio1}
    \lambda\iint\abs{\nabla\dr_kw}^2\zeta^2\le \frac{\lambda}{4}\iint\abs{\nabla\dr_kw}^2\zeta^2
    +\frac{C}{r^2}\int_{Q_{2r}}\int_{\R}\abs{\nabla w}^2dtdX
    +I+J,
\end{equation}
where
\[
I:=\abs{\iint \int_{s\in\R}\frac{\dr_k A(X,t)-\dr_k A(X,s)}{\abs{t-s}^{3/2}}\nabla u(X,s)ds\cdot\nabla\br{\zeta^2\dr_k w}},
\]
and \[
J:=\abs{\iint \int_{s\in\R}\frac{A(X,t)-A(X,s)}{\abs{t-s}^{3/2}}\nabla(\dr_ku)(X,s)ds\cdot\nabla\br{\zeta^2\dr_k w}}.
\]
We only need to focus on $I$ and $J$. For both of them, we split the integral in $s$ into regions $\set{s:\, \abs{t-s}\le x_n^2}$ and $\set{s:\, \abs{t-s}> x_n^2}$,
obtaining four terms in total. 

We start with the terms from $I$.
We claim that 
\begin{multline}\label{CaccioI11}
   \abs{I_{11}}:=\abs{\iint\int_{\abs{s-t}>x_n^2}\frac{\dr_k A(X,t)-\dr_k A(X,s)}{\abs{t-s}^{3/2}}\nabla u(X,s)ds\cdot\nabla(\dr_k w)\zeta^2dtdX}\\
    \le
    \br{\iint\abs{\dr_kA}^2M(\nabla u(X,\cdot))(t)^2\frac{\zeta(X)^2}{x_n^2}dtdX}^{1/2}\br{\iint\abs{\nabla\dr_kw}^2\zeta^2dtdX}^{1/2}\\
    +\br{\iint\abs{\dr_kA}^2\abs{\nabla u}^2\frac{\zeta^2}{x_n^2}dtdX}^{1/2}\br{\iint\abs{\nabla\dr_kw}^2\zeta^2dtdX}^{1/2}
\end{multline}
To see this, use the triangle inequality and bound the left-hand side of \eqref{CaccioI11} by
\begin{multline*}
    \iint \int_{\abs{s-t}>x_n^2}\frac{\abs{\dr_k A(X,t)}}{\abs{t-s}^{3/2}}\abs{\nabla u(X,s)}ds\abs{\nabla\dr_k w}\zeta^2dtdX\\
    +\iint \int_{\abs{s-t}>x_n^2}\frac{\abs{\dr_k A(X,s)}}{\abs{t-s}^{3/2}}\abs{\nabla u(X,s)}ds\abs{\nabla\dr_k w}\zeta^2dtdX
    =:I_{111}+I_{112}
\end{multline*}
For $I_{112}$, use Cauchy-Schwarz inequality and the $L^2$ boundedness of the Hardy-Littlewood maximal function:
\begin{multline*}
    I_{112}\le\sum_{j\ge0}\iint\int_{2^jx_n^2\le\abs{s-t}\le2^{j+1}x_n^2}\frac{\abs{\dr_kA(X,s)}}{\br{2^jx_n^2}^{2/3}}\abs{\nabla u(X,s)}ds\abs{\nabla\dr_k w}\zeta^2dtdX\\
\le \iint\sum_{j\ge0}\br{2^jx_n^2}^{-1/2}M\br{\dr_kA(X,\cdot)\nabla u(X,\cdot)}(t)\abs{\nabla(\dr_k w)}\zeta^2dtdX\\
\lesssim\int_{\Rn_+}\frac{\zeta(X)}{x_n}\br{\int_{\R}M(\dr_kA\nabla u(X,\cdot))^2dt}^{1/2}\br{\int_\R\abs{\nabla\dr_kw}^2\zeta^2dt}^{1/2}dX\\
\lesssim \br{\iint\abs{\dr_kA}^2\abs{\nabla u}^2\frac{\zeta^2}{x_n^2}dtdX}^{1/2}\br{\iint\abs{\nabla\dr_k w}^2\zeta^2\,dtdX}^{1/2}.
\end{multline*}
 The term $I_{111}$ can be treated similarly:
 \begin{multline*}
     I_{111}\le\sum_{j\ge0}\iint\int_{2^jx_n^2\le\abs{s-t}\le2^{j+1}x_n^2}\frac{\abs{\dr_kA(X,t)}}{\br{2^jx_n^2}^{2/3}}\abs{\nabla u(X,s)}ds\abs{\nabla\dr_k w}\zeta^2dtdX\\
     \lesssim\iint\abs{\dr_kA(X,t)}M(\nabla u(X,\cdot))(t)\abs{\nabla\dr_kw}\zeta(X)^2x_n^{-1}dtdX\\
     \le\br{\iint\abs{\dr_kA}^2M(\nabla u(X,\cdot))(t)^2\frac{\zeta^2}{x_n^2}dtdX}^{1/2}\br{\iint\abs{\nabla\dr_k w}^2\zeta^2\,dtdX}^{1/2},
 \end{multline*}
 which gives \eqref{CaccioI11}.
 The other term from $I$, when $\abs{t-s}\ge x_n^2$, is 
 \[
 I_{12}:=\iint\int_{\abs{s-t}>x_n^2}\frac{\dr_k A(X,t)-\dr_k A(X,s)}{\abs{t-s}^{3/2}}\nabla u(X,s)ds\cdot\nabla\zeta(\dr_k w)\zeta\,dtdX.
 \]
 It is straightforward to check that, using a similar argument,  
 \begin{multline}\label{CaccioI12}
   \abs{I_{12}}
    \le
    \br{\iint\abs{\dr_kA}^2M(\nabla u(X,\cdot))(t)^2\frac{\zeta(X)^2}{x_n^2}dtdX}^{1/2}\br{\iint\abs{\dr_kw}^2\abs{\nabla\zeta}^2dtdX}^{1/2}\\
    +\br{\iint\abs{\dr_kA}^2\abs{\nabla u}^2\frac{\zeta^2}{x_n^2}dtdX}^{1/2}\br{\iint\abs{\dr_kw}^2\abs{\nabla\zeta}^2dtdX}^{1/2}.
\end{multline}
Now we turn to the terms from $I$ when $\abs{s-t}\le x_n^2$. We write 
   \begin{multline*}
       I_{21}:=\iint\int_{\abs{s-t}\le x_n^2}\frac{\dr_k A(X,t)-\dr_k A(X,s)}{\abs{t-s}^{3/2}}\nabla u(X,s)ds\cdot\nabla(\dr_k w)\zeta^2dtdX\\
       =\iint\int_{\abs{s-t}\le x_n^2}\frac{\int_s^t\dr_\tau\dr_k A(X,\tau)d\tau}{\abs{t-s}^{3/2}}\nabla u(X,s)ds\cdot\nabla(\dr_kw)\zeta^2dtdX.
   \end{multline*}
Using the localized maximal function defined as in \eqref{def.trcmaxfunc} gives
\begin{multline*}
    \abs{I_{21}}\le2\iint\int_{\abs{s-t}\le x_n^2}\frac{1}{\abs{t-s}^{1/2}}\fint_{s-\abs{s-t}}^{s+\abs{s-t}}\abs{\dr_t\dr_kA(X,\tau)}d\tau\abs{\nabla u(X,s)}ds\abs{\nabla\dr_kw}\zeta^2dtdX\\
    \le 2\iint\int_{\abs{s-t}\le x_n^2}\abs{t-s}^{-1/2}M_{x_n^2}\br{\dr_t\dr_kA(X,\cdot)}(s)\abs{\nabla u(X,s)}ds\abs{\nabla\dr_kw}\zeta^2dtdX
\end{multline*}
Decompose the interval $\set{s:\abs{s-t}\le x_n^2}\subset\bigcup_{j\ge 0}\set{s: 2^{-j-1}x_n^2\le \abs{s-t}\le 2^{-j}x_n^2}$ to get 
an upper bound for $\abs{I_{21}}$ 
\begin{multline*}
C\sum_{j\ge0}2^{-j/2}\int_{\Rn_+}\zeta^2x_n\int_{t\in\R}\fint_{2^{-j-1}x_n^2\le\abs{s-t}\le2^{-j}x_n^2}M_{x_n^2}\br{\dr_t\dr_kA(X,\cdot)}(s)\abs{\nabla u}ds\abs{\nabla\dr_kw}dtdX\\
    \le C\int_{\Rn_+}\zeta^2(X)x_n\int_{t\in\R}M\br{M_{x_n^2}\br{\dr_t\dr_kA(X,\cdot)}\abs{\nabla u(X,\cdot)}}(t)\abs{\nabla\dr_kw}dtdX.
\end{multline*}
By Cauchy-Schwarz and the $L^2$ boundedness of the maximal function (in $t$), we find that
\begin{multline}\label{eq.CaccioI21}
    \abs{I_{21}}\lesssim\int_{\Rn_+}\zeta^2x_n\br{\int_{\R}{M_{x_n^2}\br{\dr_t\dr_kA(X,\cdot)}(t)^2\abs{\nabla u(X,t)}^2}dt}^{1/2}\br{\int_{\R}\abs{\nabla\dr_kw(X,t)}^2dt}^{1/2}dX\\
    \lesssim \br{\iint M_{x_n^2}\br{\dr_t\dr_kA(X,\cdot)}(t)^2\abs{\nabla u(X,t)}^2x_n^2\zeta(X)^2dXdt}^{1/2}\br{\iint\abs{\nabla\dr_kw}^2\zeta(X)^2dXdt}^{1/2}.
\end{multline}
The only remaining term from $I$ is 
\[
 I_{22}:=\iint\int_{\abs{s-t}\le x_n^2}\frac{\dr_k A(X,t)-\dr_k A(X,s)}{\abs{t-s}^{3/2}}\nabla u(X,s)ds\cdot\nabla\zeta(\dr_k w)\zeta\,dtdX,
\]
which can be estimated almost identically, and we see that $I_{21}$ is bounded by
\begin{equation}\label{eq.CaccioI22}
\br{\iint M_{x_n^2}\br{\dr_t\dr_kA(X,\cdot)}(t)^2\abs{\nabla u(X,t)}^2x_n^2\zeta(X)^2dXdt}^{1/2}\br{\iint\abs{\dr_kw}^2\abs{\nabla\zeta}^2dXdt}^{1/2}.
\end{equation}
It remains to estimate 
\begin{multline*}
    J:=\abs{\iint \int_{s\in\R}\frac{A(X,t)-A(X,s)}{\abs{t-s}^{3/2}}\nabla(\dr_ku)(X,s)ds\cdot\nabla\br{\zeta^2\dr_k w}dtdX}\\
    \le \abs{\int_{\Rn_+}\int_{t\in\R} \int_{\abs{s-t}>x_n^2}\dots}+\abs{\int_{\Rn_+}\int_{t\in\R} \int_{\abs{s-t}\le x_n^2}\dots}=:J_1+J_2.
\end{multline*}
For $J_2$, since
$A(X,t)-A(X,s)=\int_s^t\dr_\tau A(X,\tau)d\tau$, the estimate $\abs{\dr_t A(X,t)}\le Cx_n^{-2}$ implies
\[
\abs{A(X,t)-A(X,s)}\le Cx_n^{-2}\abs{t-s}.
\] 
Therefore,
\[
J_2\le C\iint x_n^{-2}\int_{\abs{s-t}\le x_n^2}\frac{\abs{\nabla\dr_ku(X,s)}}{\abs{t-s}^{1/2}}ds\br{\abs{\nabla(\dr_k w)}\zeta^2+\abs{\nabla\zeta}\abs{\dr_k w}\zeta}dtdX.
\]
Break the integral in $s$ dyadically, and use the maximal function, to obtain:
\begin{multline*}
    J_2\le C\sum_{j\ge 0}2^{-j}\iint x_n^{-1}\fint_{\abs{s-t}\sim 2^{-j}x_n^2}\abs{\nabla \dr_k u(X,s)}ds\br{\abs{\nabla(\dr_k w)}\zeta^2+\abs{\nabla\zeta}\abs{\dr_k w}\zeta}dtdX\\
    \le C\int_{\R^n_+} \zeta(X)x_n^{-1}\int_{t\in R}M(\nabla\dr_k u(X,\cdot))(t)\br{\abs{\nabla(\dr_k w)}\zeta+\abs{\nabla\zeta}\abs{\dr_k w}}dtdX.
\end{multline*}
By Cauchy-Schwarz and the $L^2$ boundedness of the maximal function, we obtain that
\begin{multline*}
    J_2\le C\br{\iint\abs{\nabla\dr_k u(X,t)}^2\frac{\zeta^2}{x_n^2}dXdt}^{1/2}\\
    \cdot\bigg\{\br{\iint\abs{\nabla\dr_k w}^2\zeta^2\, dXdt}^{1/2}
+\br{\iint\abs{\dr_k w}^2\abs{\nabla\zeta}^2\, dXdt}^{1/2}\bigg\}.
\end{multline*}
For $J_1$, Schur's test shows that 
\[\int_{t\in\R} \br{\int_{\abs{s-t
     }>x_n^2}\abs{t-s}^{-\frac32}\abs{\nabla\dr_k u(X,s)}\,ds}^2dt
     \le x_n^{-2}\int_{\R}\abs{\nabla\dr_k u(X,t)}^2dt\]
as $\int_{\abs{s-t}>x_n^2}\abs{s-t}^{-3/2}ds=4x_n^{-1}$. Hence, apply Cauchy-Schwarz,  we get that
\[
    J_1\le C\int_{\R^n_+}\zeta(X)x_n^{-1}\br{\int_{\R}\abs{\nabla\dr_k u(X,t)}^2dt}^{1/2}\br{\int_{\R}\zeta^2\abs{\nabla\dr_k w}^2+\abs{\nabla\zeta}^2\abs{\dr_kw}^2dt}^{1/2}dX,
\]
which has the same upper bound as$J_2$ by Cauchy-Schwarz once again.

In conclusion, \eqref{eq.Caccio} follows from combining all the previous estimates, applying Young's inequality, and choosing a sufficient small coefficient of $\iint\abs{\nabla \dr_k w}^2\zeta^2$.

\section{$S<N$ estimates in $L^2$ for block form matrix $A$}
In this section, we focus on the block form case when the matrix $A$ is 
\begin{equation}\label{block}
    A=\begin{bmatrix}
\begin{BMAT}{c.c}{c.c}
A_\parallel  & \mathbf{0}  \\
\mathbf{0} & 1
\end{BMAT}
\end{bmatrix}
\end{equation}
and $A_\parallel=(a_{ij})_{1\le i,j\le n-1}$. Then $L=\divg A\nabla =\divg_xA_\parallel\nabla_x+\dr^2_{nn}$. The goal of this section is to prove the following result.
\begin{theorem}\label{thm.S<NL2}
    Let $\LL=-\dr_t+L$ be a parabolic operator with matrix $A$ in the block form \eqref{block} in $\om=\Rn_+\times\R$. Assume that $A$ satisfies the ellipticity condition \eqref{E:elliptic}, the Carleson condition \eqref{E:1:carl} and the bound \eqref{E:1:bound}. Assume in addition that $\abs{\dr_t\nabla A}\delta^3\in CM(\norm{\mu}_C)$. 
    Then there exist constants $C>0$ and $K_0\ge 1$ depending only on the dimension and the ellipticity constants, such that for any $f\in \dot L^2_{1,1/2}(\pom)$ and  any $K\ge K_0$,
    the energy solution $u$ to \eqref{eq-pp} satsifies  
\begin{equation}\label{eq.S<NL2}
    \iint_\om \abs{\nabla^2u}^2x_n\, dXdt
    \le
       C\br{K^5\norm{\mu_{||}}_C+K^{-1}\norm{\mu}_C}\norm{N(\nabla u)}_{L^2(\pom)}^2 +C\norm{f}_{\dot L_{1,1/2}^2(\pom)}^2,
\end{equation}
where $\mu_{||}$ and $\mu$ are defined in \eqref{def.mu11} and \eqref{E:1:carl}, respectively.
\end{theorem}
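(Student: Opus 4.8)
The plan is to prove the $S < N$ estimate $\iint_\om |\nabla^2 u|^2 x_n \, dXdt \lesssim (\text{Carleson factors})\,\|N(\nabla u)\|_{L^2}^2 + \|f\|_{\dot L^2_{1,1/2}}^2$ by the classical Rellich/integration-by-parts scheme, exploiting the block structure to split $\nabla^2 u$ into the tangential Hessian $\nabla_x \nabla u$, the mixed second derivatives, and $\partial^2_{nn} u$, and treating the dangerous $x_n$-derivative via the PDE. First I would set up the main identity: multiply a component of $\partial_j u$ (or rather work with $|\nabla_x u|^2$ and $(\partial_n u)^2$ separately) against a test function of the form $x_n \zeta^2$, where $\zeta$ is a cutoff adapted to a large parabolic ball so that everything is justified, and then let the ball exhaust $\om$ using that $u$ is an energy solution (so $\|\nabla u\|_{L^2(\om)}, \|H_t D_t^{1/2} u\|_{L^2(\om)} < \infty$). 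The algebra is: $\iint \partial^2_{nn}(\tfrac12|\nabla u|^2) x_n = -\iint \partial_n(\tfrac12 |\nabla u|^2) = \tfrac12 \int_{\pom} |\nabla u|^2$ (a boundary term controlled by $\|\nabla_T f\|_{L^2}^2 + \|D_t^{1/2} f\|_{L^2}^2$ using Theorem \ref{timp}-type bounds and the energy estimate, plus possibly $N(\nabla u)$ terms), while $\partial^2_{nn}(\tfrac12|\nabla u|^2) = |\partial_n \nabla u|^2 + \nabla u \cdot \partial^2_{nn}\nabla u$, and the PDE in block form, $\partial^2_{nn} u = \partial_t u - \divg_x(A_\parallel \nabla_x u)$, lets one rewrite $\partial^2_{nn}\nabla_x u$ and $\partial^3_{nnn} u$ in terms of tangential derivatives, $\partial_t \nabla u$, and derivatives of $A_\parallel$ falling on $\nabla u$.

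Second, I would organize the error terms that arise from the commutators $[\partial_i, \divg_x(A_\parallel \nabla_x)]$. These produce integrals of the schematic form $\iint |\nabla A| |\nabla u| |\nabla^2 u| x_n$ and $\iint |\nabla A|^2 |\nabla u|^2 x_n$ (and, after using $\partial_t u$, also $\iint |\partial_t A| |\nabla u| |\nabla^2 u| x_n^2$-type terms once one pays the price $\partial_t u \sim \nabla_x^2 u$). The first type I would handle by Cauchy--Schwarz with a parameter: $\iint |\nabla A||\nabla u||\nabla^2 u| x_n \le \epsilon \iint |\nabla^2 u|^2 x_n + \epsilon^{-1} \iint |\nabla A|^2 |\nabla u|^2 x_n$, hiding the first term on the left for $\epsilon$ small, and the second is exactly $\mu_{||}$-Carleson tested against $N(\nabla u)^2$ via Lemma \ref{lem.Carl} (equation \eqref{eq.Carl1}), giving $\lesssim \|\mu_{||}\|_C \|N(\nabla u)\|_{L^2}^2$. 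The terms involving $\partial_t A$ are genuinely more delicate because $\partial_t u$ itself is of size $\nabla^2 u$: here is where the dilation parameter $K$ enters. The strategy — mirroring the elliptic argument in \cite{DHP} and the use of the dilated matrix $B^k$ in the proof of Theorem \ref{thm.BL} — is to split every bad region/estimate at height $\sim K x_n$ or to insert $K$-dependent weights so that the $\mu_{||}$ part (the tangential Carleson data, which has \emph{small} norm after dilation) gets multiplied by a large power $K^5$, while the remaining contribution, which must use the full $\mu$ (including $\partial_{x_n} A$ and the non-tangential data), gets the gain $K^{-1}$. Concretely, one uses the extra hypothesis $|\partial_t \nabla A| \delta^3 \in CM(\|\mu\|_C)$ together with the localized-maximal-function Carleson bounds from Lemma \ref{lem.locMaxCarl} and Corollary \ref{cor.locMaxCarl} to absorb the non-local-in-time pieces that appear when one differentiates and integrates by parts in $t$ (these are exactly the terms the Caccioppoli estimate for $\nabla^2 D_t^{1/2} u$, Lemma \ref{lem.Caccio}, was built to control).

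Third, I would assemble: after hiding all $\epsilon \iint |\nabla^2 u|^2 x_n$ terms on the left (legitimate once we know a priori this quantity is finite — which is guaranteed in the applications of this theorem, e.g.\ via the approximation scheme of Step 5--6 in the proof of Theorem \ref{thm.BL}), the surviving right-hand side is $C(K^5 \|\mu_{||}\|_C + K^{-1}\|\mu\|_C)\|N(\nabla u)\|_{L^2}^2 + C\|f\|_{\dot L^2_{1,1/2}}^2$, with the $\|f\|_{\dot L^2_{1,1/2}}^2$ collecting the boundary terms $\int_{\pom}|\nabla u|^2$ (bounded by $\|\nabla_T f\|_{L^2}^2 + \|D_t^{1/2} f\|_{L^2}^2$) and the energy term $\iint_\om |\nabla u|^2 \lesssim \|f\|_{\dot L^2_{1,1/2}}^2$ coming from the energy/Lax--Milgram estimate in subsection \ref{RwEs}. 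I expect the main obstacle to be the bookkeeping of the time-nonlocal terms and tracking the exact powers of $K$ through them: the spatial commutator terms are routine Cauchy--Schwarz, but the $\partial_t A$ and $\partial_t\nabla A$ contributions require rewriting $\partial_t u$ via the PDE (costing another factor comparable to $N(\nabla u)$ or another $\nabla^2 u$ to hide), and one must be careful that every such exchange either lands on $\mu_{||}$ (to be multiplied by a positive power of $K$, which is harmless since $\|\mu_{||}\|_C$ is $K^{-2}$-small) or, if it must touch $\partial_{x_n} A$, is accompanied by the compensating $K^{-1}$ gain. Getting the precise exponent $K^5$ on $\mu_{||}$ and $K^{-1}$ on $\mu$ is the crux, and it comes from counting how many times one invokes Cauchy--Schwarz with parameter $\sim K$ versus $\sim K^{-1}$ in the dyadic splitting of the time-nonlocal kernels.
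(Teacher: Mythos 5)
There is a genuine gap, and it sits exactly at the heart of the theorem. Your Rellich-type scheme on $|\nabla u|^2$ with the weight $x_n\zeta^2$ runs into two obstructions that the paper's proof is specifically structured to avoid. First, integrating $\partial^2_{nn}\bigl(\tfrac12|\nabla u|^2\bigr)x_n$ by parts produces the boundary term $\int_{\pom}|\nabla u|^2\,dx\,dt$, which contains $\int_{\pom}|\partial_n u|^2$; this conormal trace is \emph{not} controlled by $\|f\|_{\dot L^2_{1,1/2}}^2$ (that control is essentially the Regularity estimate one is trying to prove), only by $\|N(\nabla u)\|_{L^2}^2$ with a constant of order one. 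Any such term is fatal for \eqref{eq.S<NL2}, because the whole point of the statement is that the coefficient of $\|N(\nabla u)\|_{L^2}^2$ must carry the small factor $K^5\|\mu_{||}\|_C+K^{-1}\|\mu\|_C$ so that it can later be hidden in the proof of Theorem \ref{thm.BL}. The paper therefore integrates by parts only the tangential derivatives $\partial_k u$, $k<n$ (Lemma \ref{lem.Sdk}), whose boundary terms are $\|\partial_k f\|_{L^2}^2$, and reaches $\partial^2_{nn}u$ exclusively through the equation.

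Second, and more seriously, your plan does not break the circularity between $\partial_t u$ and $\partial^2_{nn}u$. From the block structure, $\partial^2_{nn}u=\partial_t u-\divg_x(A_\parallel\nabla_x u)$, so bounding $\iint|\partial^2_{nn}u|^2x_n$ reduces to bounding $\iint|\partial_t u|^2x_n$; your proposal to ``rewrite $\partial_t u$ via the PDE, costing another $\nabla^2u$ to hide'' reintroduces $\iint|\partial^2_{nn}u|^2x_n$ with a constant of order one, not $\epsilon$, and absorption fails — there is no small parameter at that point. The paper's resolution is the detour through $w=D_t^{1/2}u$, which your outline mentions only parenthetically and never sets up: the identity $\iint|\partial_t u|^2x_n=\iint H_t(\partial_n w)\,\partial_t w\,x_n^2$ (Lemma \ref{lem.Dtu}), the nonlocal PDE satisfied by $w$ (Lemma \ref{lem.D1/2eq}), the square-function bound for $\nabla w$ obtained by splitting the kernel $|t-s|^{-3/2}$ at $|t-s|\sim K^2x_n^2$ (Lemma \ref{lem.gradD1/2}) — this is where $K$ actually enters; it is a kernel-splitting scale, not the matrix dilation $k$ of Theorem \ref{thm.BL}, and it yields the exponents $K^4\|\mu_{||}\|_C+K^{-2}\|\mu\|_C$ — and the $x_n^3$-weighted Caccioppoli inequality for $\nabla^2 w$ (Lemmas \ref{lem.Caccio}, \ref{lem.glbCaccio}, where the hypothesis $|\partial_t\nabla A|\delta^3\in CM(\|\mu\|_C)$ is used), which re-enters $\iint|\nabla^2u|^2x_n$ multiplied by the free parameter $\epsilon$, finally chosen $\epsilon=K^{-1}$ to permit absorption. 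Without introducing $D_t^{1/2}u$ and its equation, the ``time-nonlocal kernels'' you propose to split never actually arise in your argument, and the announced powers $K^5$ and $K^{-1}$ have nothing to be counted from.
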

\subsection{Outline of the proof} We give an outline of our proof of Theorem \ref{thm.S<NL2}.

\underline{Step 1.} We obtain the $L^2$ estimate for the square function of $\nabla_xu$: for $i\in\N$ such that $1\le i\le n-1$,
\[
\iint_\om\abs{\nabla(\dr_i u)}^2x_ndXdt\le C\norm{\mu_{||}}_C\norm{N(\nabla_x u)}_{L^2(\pom)}^2+C\norm{\nabla_xf}_{L^2(\pom)}^2,
\]
which is a consequence of Lemma \ref{lem.Sdk}.

\underline {Step 2.} We use the PDE of $u$ and the block-form structure to obtain an $L^2$ estimate for $\iint_{\om}\abs{\dr_{nn}u}^2x_ndXdt$, which is left out in Step 1.
\[
\iint_{\om}\abs{\dr_{nn}u}^2x_ndXdt\le C\norm{\mu_{||}}_C\norm{N(\nabla_x u)}_{L^2(\pom)}^2+C\norm{\nabla_xf}_{L^2(\pom)}^2 +C\iint_{\om}\abs{\dr_t u}^2x_ndXdt.
\]
This is proven in \eqref{eq.dnn}. So it remains to control $\iint_{\om}\abs{\dr_t u}^2x_ndXdt$.

\underline{Step 3.} We derive an $L^2$ estimate for the square function of $D_t^{1/2}u$: for any $K\ge 1$,
\[
\iint_{\om}\abs{\nabla D_t^{1/2}u}^2x_ndXdt
\le C\br{K^4\norm{\mu_{||}}_C+K^{-2}\norm{\mu}_{C}}\norm{N(\nabla_xu)}_{L^2(\pom)}^2 +C\norm{f}_{\dot L^2_{1,1/2}(\pom)}^2.
\]
This is shown in Lemma \ref{lem.gradD1/2}.

\underline{Step 4.} We relate $\iint_{\om}\abs{\dr_t u}^2x_ndXdt$ to the square function of $D_t^{1/2}u$: for any $\epsilon>0$ small, 
\begin{multline*}
    \iint_{\om}\abs{\dr_t u}^2x_ndXdt\le \epsilon\iint\abs{\nabla^2(D_t^{1/2}u)}^2x_n^3dXdt+C\br{1+ \epsilon^{-1}}\iint\abs{\nabla(D^{1/2}_tu)}^2x_n\\
    +C\iint_{\om}\abs{\nabla_x^2u}^2x_ndXdt
    +C(1+\epsilon^{-1})\norm{\mu_{||}}_C\norm{N(\nabla_xu)}_{L^2(\pom)}^2.
\end{multline*}
This is Lemma \ref{lem.Dtu}. 

\underline{Step 5.} We use a Caccioppoli type inequality (Lemma \ref{lem.Caccio}) to control the first term on the right-hand side of the equation in Step 4: 
\begin{multline*}
    \iint\abs{\nabla^2(D_t^{1/2}u)}^2x_n^3dXdt
\le C\iint\abs{\nabla(D^{1/2}_tu)}^2x_ndXdt+ C\iint\abs{\nabla^2u}^2x_ndXdt\\
+C\norm{\mu}_C\norm{N(\nabla u)}_{L^2(\pom)}^2.
\end{multline*}
This is shown in Lemma \ref{lem.glbCaccio}.

\underline{Step 6.} We plug the estimate from Step 5 into the estimate from Step 4, and we separate $\abs{\dr_{nn}u}$ from $\abs{\nabla^2u}$ because we know how to treat $\sum_{i=1}^{n-1}\iint\abs{\nabla(\dr_i u)}^2x_ndXdt$ from Step 1. We obtain  that for any $\epsilon\in(0,1)$,
\begin{multline*}
    \iint\abs{\dr_tu}^2x_ndXdt\le\epsilon\,C\iint\abs{\dr_{nn}u}^2x_ndXdt+C\br{1+\epsilon^{-1}}\iint\abs{\nabla(D^{1/2}_t u)}^2x_ndXdt\\
    +C\epsilon^{-1}\norm{\mu_{||}}_C\norm{N(\nabla_x u)}_{L^2(\pom)}^2 +\epsilon\, C\norm{\mu}_C\norm{N(\nabla u)}_{L^2(\pom)}^2
    +C\norm{\nabla_xf}_{L^2(\pom)}^2.
\end{multline*}
Plugging this estimate into Step 2, using the estimate from Step 3, and then rearranging the coefficients yields that there is $C>0$ such that for any $\epsilon\in(0,1)$ and any $K\ge1$,
\begin{multline}\label{eq.S2final}
     \iint\abs{\dr_{nn}u}^2x_ndXdt\le \epsilon\, C\iint\abs{\dr_{nn}u}^2x_ndXdt+C\norm{f}_{\dot L_{1,1/2}^2(\pom)}^2\\
    +C\br{\epsilon^{-1}K^4\norm{\mu_{||}}_C+\epsilon^{-1}K^{-2}\norm{\mu}_C}\norm{N(\nabla u)}_{L^2(\pom)}^2.
\end{multline}

\underline{Step 7.} We let $\epsilon=K^{-1}$ and take $K_0$ large enough so that $CK_0^{-1}<1/2$, and so for $K\ge K_0$, the first term on the right-hand side of \eqref{eq.S2final} can be absorbed into the left-hand side. Hence, for any $K\ge K_0$, we have that 
\[
     \iint\abs{\dr_{nn}u}^2x_ndXdt \le C\norm{f}_{\dot L_{1,1/2}^2(\pom)}^2
+C\br{K^5\norm{\mu_{||}}_C+K^{-1}\norm{\mu}_C}\norm{N(\nabla u)}_{L^2(\pom)}^2.
\]
Combining with Step 1, we conclude that for $K\ge K_0$, 
\[
    \iint\abs{\nabla^2u}^2x_ndXdt\le C\norm{f}_{\dot L_{1,1/2}^2(\pom)}^2
+C\br{K^5\norm{\mu_{||}}_C+K^{-1}\norm{\mu}_C}\norm{N(\nabla u)}_{L^2(\pom)}^2.
\]
as desired.

\subsection{Proof of estimates in Step 1 - 5}
We start with $L^2$ square function estimates for $\nabla_xu$.

\begin{lemma}\label{lem.Sdk}
Let $\LL=-\dr_t+L$ be a parabolic operator with matrix $A$ in the block form \eqref{block}, and let $u$ be a bounded energy solution to $\LL u=0$ in $\om$. Then for $k\in\N$ such that $1\le k\le n-1$, and $r>0$,
\begin{multline}\label{eq.BlckSdk<Nr}
    \lambda\int_0^{\frac{r}2
    }\int_{\pom} \abs{\nabla(\dr_k u)}^2x_n\,dxdtdx_n+\frac{2}{r}\int_0^r\int_{\pom}(\dr_ku)^2dxdtdx_n\\
    \le\int_{\pom}\dr_ku(x,r,t)^2dxdt
    +\int_{\pom}\dr_ku(x,0,t)^2dxdt
    +C\norm{\abs{\dr_k A_{\parallel}}^2x_n}_{C}\int_{\pom}N^r(\nabla_xu)^2dxdt,
\end{multline}
where $C$ depends only on the ellipticity constant $\lambda$.

In particular, since $\int_{\pom}\dr_ku(x,r,t)^2dxdt\to 0$ as $r\to\infty$, taking $r\to\infty$ in \eqref{eq.BlckSdk<Nr} gives that 
\begin{equation}\label{eq.BlckSdk<N}
    \lambda\iint_\om \abs{\nabla(\dr_ku)}^2x_n\, dXdt
    \le \int_{\pom}\dr_ku(x,0,t)^2dxdt +C \norm{\abs{\dr_k A_{\parallel}}^2x_n}_{C}\int_{\pom}\abs{N(\nabla_x u)}^2dxdt.
\end{equation}
\end{lemma}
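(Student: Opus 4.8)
\textbf{Proof proposal for Lemma \ref{lem.Sdk}.}

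The plan is to run a standard Rellich--Nečas (integration by parts) argument in the $x_n$ direction, exploiting the block structure of $A$ which decouples the $x_n$-derivatives from the tangential operator $\divg_x A_\parallel \nabla_x$. Fix $k$ with $1\le k\le n-1$ and set $w = \partial_k u$. Differentiating the equation $\LL u = 0$ in $x_k$ and using that the bottom-right block of $A$ is $1$, we get
\begin{equation}\nonumber
-\partial_t w + \divg_x\!\big(A_\parallel \nabla_x w\big) + \partial_{nn}^2 w = -\divg_x\!\big((\partial_k A_\parallel)\nabla_x u\big)\quad\text{in }\om.
\end{equation}
The key observation is that $w$ satisfies a block-form parabolic equation with the same principal part as $u$, plus a divergence-form right-hand side whose coefficients involve $\partial_k A_\parallel$, which is exactly what produces the Carleson term $\|\,|\partial_k A_\parallel|^2 x_n\,\|_C$ on the right.

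First I would multiply this equation by $w\,\partial_n(x_n^2)/2 = w\,x_n$ (more precisely, integrate against $w x_n$ over the slab $\R^{n-1}\times(0,r)\times\R$ using a smooth cutoff to justify, then remove it) and integrate by parts. The term $\int w\,\partial_{nn}^2 w\, x_n$ yields, after two integrations by parts in $x_n$, the ``good'' term $\int |\partial_n w|^2 x_n$ together with the boundary terms $\int w^2(x,r,t) - \int w^2(x,0,t)$ and the averaged term $\tfrac1r\int_0^r\!\int w^2$ that appear on the left-hand side of \eqref{eq.BlckSdk<Nr} (this is the usual ``$\tfrac12\partial_n(w^2)\partial_n(x_n^2)$'' computation, averaged in $x_n$). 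The term $\int \divg_x(A_\parallel\nabla_x w)\,w\,x_n$, after integrating by parts in the tangential variables, gives $-\int A_\parallel\nabla_x w\cdot\nabla_x w\, x_n \le -\lambda\int|\nabla_x w|^2 x_n$, which combines with $\int|\partial_n w|^2 x_n$ to give the full $\lambda\int|\nabla w|^2 x_n$ on the left (up to adjusting constants; one keeps a fixed fraction of the gradient). The term $-\int\partial_t w\cdot w\, x_n = -\tfrac12\int \partial_t(w^2)\, x_n$ vanishes since $x_n$ is $t$-independent and we integrate over all of $\R$ in $t$ (with the decay of $w$ at $t=\pm\infty$ coming from $u$ being an energy solution).

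The only remaining term is the right-hand side contribution $\int \divg_x((\partial_k A_\parallel)\nabla_x u)\, w\, x_n$; integrating by parts in $x_j$ ($j<n$) moves the derivative onto $w\,x_n$, producing $\int (\partial_k A_\parallel)\nabla_x u \cdot \nabla_x w\, x_n$. Here I would apply Cauchy--Schwarz, absorb the $\nabla_x w$ factor (with $\varepsilon$) into the good term $\lambda\int|\nabla_x w|^2 x_n$ on the left, and be left with $C_\varepsilon\int |\partial_k A_\parallel|^2 |\nabla_x u|^2 x_n$. This last integral is then controlled by the Carleson-measure estimate \eqref{eq.Carl1} of Lemma \ref{lem.Carl}: since $|\partial_k A_\parallel|^2 x_n^2$ is the density of a Carleson measure with norm $\|\,|\partial_k A_\parallel|^2 x_n\,\|_C$ (rewriting $|\partial_k A_\parallel|^2 |\nabla_x u|^2 x_n = (|\partial_k A_\parallel|^2 x_n^2)\cdot |\nabla_x u|^2 \cdot x_n^{-1}$), we get the bound $C\|\,|\partial_k A_\parallel|^2 x_n\,\|_C\int_{\pom} N^r(\nabla_x u)^2$. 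The main technical point to be careful about is the truncation: since we integrate only up to $x_n = r$, one should use the truncated nontangential maximal function $N^r$ and a truncated version of the Carleson bound, which is standard. Passing $r\to\infty$ and using $\int w^2(x,r,t)\,dx\,dt\to 0$ (again from the energy bound on $u$, via Caccioppoli) yields \eqref{eq.BlckSdk<N}. I do not expect a serious obstacle here; the only care needed is the bookkeeping of boundary terms at $x_n=r$ and the justification of the integrations by parts via cutoffs, both of which are routine in this setting.
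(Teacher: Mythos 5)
Your proposal follows essentially the same route as the paper: differentiate the equation in $x_k$, test the resulting block-form equation for $w=\partial_k u$ against $w$ times an $x_n$-weight over the slab $\{0<x_n<r\}$, use the block structure so that $A\nabla w\cdot e_n=\partial_n w$ produces the pure boundary terms, and handle the commutator term $\int(\partial_kA_\parallel)\nabla_x u\cdot\nabla_x w\,x_n$ by Cauchy--Schwarz, absorption, and the (truncated) Carleson estimate giving $\|\,|\partial_kA_\parallel|^2x_n\|_C\int N^r(\nabla_x u)^2$. That is exactly the paper's scheme, and your treatment of the right-hand side, the $t$-integration, and the limit $r\to\infty$ is fine.

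The one step that is not correct as written is the claim that testing with the weight $x_n$ alone produces the boundary terms $\int w^2(x,r,t)$, $\int w^2(x,0,t)$ \emph{and} the term $\tfrac1r\int_0^r\!\int w^2$. With the weight $x_n$, the integration by parts of $\int_0^r w\,\partial^2_{nn}w\,x_n\,dx_n$ leaves the uncontrolled top-boundary term $r\int_{\pom} w\,\partial_n w(x,r,t)\,dx\,dt=\tfrac r2\int_{\pom}\partial_n(w^2)(x,r,t)\,dx\,dt$, which cannot be bounded by the right-hand side of \eqref{eq.BlckSdk<Nr}. The paper removes it by first proving the identity for each height $r$ and then averaging the resulting inequality in $r$ over $(0,r_0)$, using $(\partial_n w^2)x_n=\partial_n(w^2x_n)-w^2$; this averaging is simultaneously what generates the term $\tfrac2r\int_0^r\!\int w^2$ on the left (equivalently, one can test directly with the concave weight $x_n-x_n^2/r$, whose derivative kills the $w\,\partial_n w$ term at $x_n=r$). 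Your parenthetical ``averaged in $x_n$'' gestures at this, but the mechanism must be carried out, since it is precisely where both the $\partial_n(w^2)$ boundary term disappears and the $\tfrac2r$ term in the statement originates. A smaller remark: rather than ``cutoff and remove it'', the paper globalizes via a partition of unity $\{\zeta_\ell^2\}$ in $(x,t)$ with $\sum_\ell\partial\zeta_\ell^2=0$, so that all cutoff-gradient and $\partial_t\zeta$ error terms cancel exactly upon summation; if you insist on a single expanding cutoff you need to justify that those error terms vanish, which requires some decay of $w$ in $(x,t)$ that is not immediate from the energy bound alone.
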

\smallskip

\bp 
We start with a local estimate on $(0,r)\times Q_r$, where $Q_r=Q_r(y,s)$ is a parabolic cube on the boundary as defined in \eqref{eqdef.bdypcube}. Let $\zeta=\zeta(x,t)$ be a smooth cutoff function that satisfies
\[
\zeta=\begin{cases}
    1\quad \text{in }Q_r,\\
    0\quad \text{outside }Q_{2r},
\end{cases}
\]
and that for some constant $0<c<\infty$
\[r\abs{\dr_{x_i}\zeta}+r^2\abs{\dr_t\zeta}\le c\quad \text{where }1\le i\le n-1.\]

To lighten notation, we denote $w_k:=\dr_k u$ for $1\le k\le n$. Fix a $k\in\set{1,2,\dots, n-1}$,
we write 
\begin{multline}\label{eq.pfSdk1}
    \int_0^r\int_{Q_{2r}}A\nabla w_k\cdot \nabla w_k(\zeta^2x_n)dxdtdx_n
    =\int_0^r\int_{Q_{2r}}\divg\br{w_k\zeta^2x_n A\nabla w_k}\\
    -\int_0^r\int_{Q_{2r}}w_kA\nabla w_k\cdot\nabla\br{\zeta^2x_n}
    -\int_0^r\int_{Q_{2r}}\divg(A\nabla w_k)(w_k\zeta^2x_n).
\end{multline}
We then use the divergence theorem for the first term on the right-hand side, which gives us a boundary term. For the second term, we write 
$\nabla\br{\zeta^2x_n}=\nabla(\zeta^2)x_n+\zeta^2\nabla x_n$, which gives two terms correspondingly. Notice that $A\nabla w_k\cdot\nabla x_n= \dr_nw_k$ thanks to the block-form structure \eqref{block}, and so the corresponding term in the integral is
\begin{multline*}
    -\int_0^r\int_{Q_{2r}}w_k\dr_nw_k\zeta^2
    =-\frac12\int_0^r\int_{Q_{2r}}\dr_n\br{w_k^2\zeta^2}dxdtdx_n\\
    =\frac12\int_{Q_{2r}}w_k(x,0,t)^2\zeta(x,t)^2dxdt-\frac12\int_{Q_{2r}}w_k(x,r,t)^2\zeta(x,t)^2dxdt.
\end{multline*}
To summarize, the left-hand side of \eqref{eq.pfSdk1} is equal to
\begin{multline}\label{eq.pfSdk2}
    \frac{r}{2}\int_{Q_{2r}}\dr_n\br{w_k^2}(x,r,t)\zeta(x,t)^2dxdt
    +\frac12\int_{Q_{2r}}w_k(x,0,t)^2\zeta(x,t)^2dxdt\\
    -\frac12\int_{Q_{2r}}w_k(x,r,t)^2\zeta(x,t)^2dxdt
    -\int_0^r\int_{Q_{2r}}w_kx_nA_\parallel\nabla_xw_k\cdot\nabla_x(\zeta^2)
    -\int_0^r\int_{Q_{2r}}\divg(A\nabla w_k)(w_k\zeta^2x_n).
\end{multline}
Now we use the PDE of $w_k$ to treat the last term. Since $\divg A\nabla u=\dr_tu$, a direct computation shows that 
\[-\divg A\nabla w_k=-\dr_tw_k+\divg\br{(\dr_kA)\vec{w}},\]
where we have used the notation $\vec{w}:=(w_1,w_2,\cdots,w_n)^T=\nabla u$. Therefore,
\begin{multline*}
     -\int_0^r\int_{Q_{2r}}\divg(A\nabla w_k)(w_k\zeta^2x_n)
     =-\int_0^r\int_{Q_{2r}}\dr_tw_k(w_k\zeta^2x_n)
     +\int_0^r\int_{Q_{2r}}\divg\br{(\dr_kA)\vec{w}}w_k\zeta^2x_n\\
     =:I_1+I_2.
\end{multline*}
We continue to compute
\[
    I_1=-\frac12\int_0^r\int_{Q_{2r}}\dr_t\br{w_k^2\zeta^2x_n}+\frac12\int_0^r\int_{Q_{2r}}w_k^2\dr_t(\zeta^2)x_n=\frac12\int_0^r\int_{Q_{2r}}w_k^2\dr_t(\zeta^2)x_n
\]
because the first integral is 0. We apply the divergence theorem to $I_2$ and note that the boundary term is 
\[\int_{Q_{2r}}\left[(\dr_kA)\vec{w}\cdot\vec{e_n} (w_k\zeta^2x_n)\right]_{x_n=r}dxdt=0\]
again thanks to the special structure of the matrix. So we get that
\begin{multline*}
    I_2=-\int_0^r\int_{Q_{2r}}(\dr_kA)\vec{w}\cdot\nabla\br{w_k\zeta^2x_n}dxdtdx_n\\
    =-\int_0^r\int_{Q_{2r}}(\dr_kA)\vec{w}\cdot\nabla w_k\zeta^2x_n
    -\int_0^r\int_{Q_{2r}}(\dr_kA)\vec{w}\cdot\nabla\br{\zeta^2} w_kx_n
\end{multline*}
We now use \eqref{eq.pfSdk2} and our computation of $I_1$ and $I_2$ to obtain a global estimate on $(0,r)\times\pom$.

Let $\set{Q_{2r}^\ell}_{\ell=1}^\infty$ be a collection of disjoint parabolic cubes of sidelength $2r$ that covers $\pom$. Let $\set{\zeta_\ell^2}_\ell$ be a partition of unity subordinate to this collection, that is, $\supp\zeta_\ell\subset Q_{2r}^\ell$ and $\sum_{\ell=1}^\infty\zeta_\ell^2\equiv 1$ on $\pom$. Moreover, for any $\ell$,
$\zeta_\ell=1$ in $Q^\ell_r$,
and for some constant $0<c<\infty$
\[r\abs{\dr_{x_i}\zeta_\ell}+r^2\abs{\dr_t\zeta_\ell}\le c\quad \text{where }1\le i\le n-1.\]
Note that 
\begin{equation}\label{eq.pou0}
    \sum_\ell\dr_{x_i}\zeta_\ell^2=0 \quad\text{for }1\le i\le n, \quad \text{and }\sum_\ell\dr_{t}\zeta_\ell^2=0.
\end{equation}
By \eqref{eq.pfSdk2}, the computation of $I_1$ and $I_2$, and \eqref{eq.pou0}, we can write 
\begin{multline}\label{eq.pfSdk3}
    \int_0^r\int_{\pom}A\nabla w_k\cdot\nabla w_k\,x_n
    =\sum_{\ell=1}^\infty\int_0^r\int_{\pom}A\nabla w_k\cdot\nabla w_k(\zeta_\ell^2x_n)\\
    = \frac{r}{2}\int_{\pom}\dr_n\br{w_k^2}(x,r,t)dxdt
    +\frac12\int_{\pom}w_k(x,0,t)^2dxdt
    -\frac12\int_{\pom}w_k(x,r,t)^2dxdt\\
    \underbrace{-\int_0^r\int_{\pom}(\dr_kA)\vec{w}\cdot\nabla w_k\,x_n\,dxdtdx_n}_J.
\end{multline}
We estimate the last term $J$ using the Cauchy-Schwarz inequality (recalling $\vec w=\nabla u$):
\begin{multline*}
    \abs{J}=\abs{\int_0^r\int_{\pom}(\dr_kA_\parallel)\nabla_xu\cdot\nabla_x w_k\,x_n\,dxdtdx_n}\\
    \le \br{\int_0^r\int_{\pom}\abs{\dr_kA_\parallel}^2\abs{\nabla_xu}^2x_n}^{1/2}
    \br{\int_0^r\int_{\pom}\abs{\nabla_xw_k}^2x_n}^{1/2}\\
    \le \frac{\lambda}{2}\int_0^r\int_{\pom}\abs{\nabla_xw_k}^2x_n
    +C_\lambda\norm{\abs{\dr_kA_\parallel}^2x_n}_C\int_{\pom}N^r(\nabla_xu)^2dxdt.
\end{multline*}
By ellipticity, \eqref{eq.pfSdk3}, and absorbing the term  $\frac{\lambda}2\int_0^r\int_{\pom}\abs{\nabla w_k}^2x_n$ to the left-hand side, one obtains that
\begin{multline}\label{eq.pfSdk4}
    \lambda\int_0^r\int_{\pom}\abs{\nabla w_k}^2x_n
    \le r\int_{\pom}\dr_n\br{w_k^2}(x,r,t)dxdt
    +\int_{\pom}w_k(x,0,t)^2dxdt
    -\int_{\pom}w_k(x,r,t)^2dxdt\\
    +C\norm{\abs{\dr_kA_\parallel}^2x_n}_C\int_{\pom}N^r(\nabla_xu)^2dxdt.
\end{multline}
Fix any $r_0>0$. We integrate \eqref{eq.pfSdk4} in $r$ variable over $[0,r_0]$ and divide both sides by $r_0$. Since $(\dr_n w^2)x_n=\dr_n(w^2x_n)-w^2$, 
\[\int_0^r\int_{\pom}r\dr_n(w_k^2)(x,r,t)dxdtdr=r_0\int_{\pom}w_k(x,r_0,t)^2dxdt-\int_0^{r_0}\int_{\pom}w_k^2(x,x_n,t)dxdtdx_n,\]
and hence
\begin{multline*}
    \lambda\int_0^{r_0}\int_{\pom}\br{x_n-\frac{x_n^2}{r_0}}\abs{\nabla w_k}^2x_n\,dxdtdx_n
    \le \int_{\pom}w_k(x,r_0,t)^2dxdt
    +\int_{\pom}w_k(x,0,t)^2dxdt\\
    -\frac{2}{r_0}\int_0^{r_0}\int_{\pom}w_k(x,x_n,t)^2dxdtdx_n
    +C\norm{\abs{\dr_kA_\parallel}^2x_n}_C\int_{\pom}N^{r_0}(\nabla_xu)^2dxdt.
\end{multline*}
Truncating the integral on the left-hand side to $[0,\frac{r_0}{2}]$ (and replacing $r_0$ by $r$) we obtain \eqref{eq.BlckSdk<Nr} as desired.
\ep

\ms 

Next, we want to get a bound for the square function of $\dr_nu$. 

Let us denote $w_n:=\dr_nu$. We separate the derivatives as follows.
\begin{multline}\label{eq.gradwn}
    \iint_\om \abs{\nabla w_n}^2x_n\,dXdt=\iint_\om \abs{\nabla_x w_n}^2x_n\,dXdt +\iint_\om \abs{\dr_n w_n}^2x_n\,dXdt\\
    =\iint_\om \abs{\dr_n(\nabla_x u)}^2x_n\,dXdt+\iint_\om \abs{\dr_n w_n}^2x_n\,dXdt.
\end{multline}
The first term on the right-hand side of \eqref{eq.gradwn} is fine by Lemma \ref{lem.Sdk}. So we only need to estimate the second term on the right-hand side. By the equation,
\[\dr_nw_n=\dr^2_{nn}u=-\divg_x(A_\parallel\nabla_x u)+\dr_tu,\]
and hence, 
\begin{multline*}
    \iint_\om \abs{\dr_n w_n}^2x_n\,dXdt\lesssim \sum_{1\le i,j\le n-1}\iint_\om\abs{\dr_i(a_{ij}\dr_ju)}^2x_n\, dXdt+\iint_\om \abs{\dr_t u}^2x_n\,dXdt\\
    \lesssim \iint_\om \abs{\nabla_xA_\parallel}^2\abs{\nabla_x u}^2x_n\,dXdt+\sum_{1\le i,j\le n-1}\iint_{\om}\abs{a_{ij}}\abs{\dr^2_{ij}u}^2x_n\,dXdt+\iint_\om \abs{\dr_t u}^2x_n\,dXdt\\
    \le \norm{\abs{\nabla_x A_\parallel}^2\delta}_{C}\int_{\pom}\abs{N(\nabla_x u)}^2dxdt+C\iint_\om \abs{\nabla^2_xu}^2x_n\,dXdt+\iint_\om \abs{\dr_t u}^2x_n\,dXdt.
\end{multline*}
Therefore, by \eqref{eq.gradwn} and Lemma \ref{lem.Sdk}, we obtain that 
\begin{multline}\label{eq.dnn}
    \iint_\om\abs{\nabla(\dr_nu)}^2 x_ndXdt\le C\int_{\pom}\abs{\nabla_xu}^2dxdt+C\norm{\abs{\nabla_x A_\parallel}^2\delta}_{C}\int_{\pom}\abs{N(\nabla_x u)}^2dxdt\\
    +C\iint_\om \abs{\dr_t u}^2x_n\,dXdt.
\end{multline}

\ms 

In the rest of this subsection, we take care of the term $\iint_\om \abs{\dr_t u}^2x_n\,dXdt$. We begin with a square function estimate for $D^{1/2}_{t}u$.

\begin{lemma}\label{lem.gradD1/2}
    Let $u$ be a bounded solution to $\mathcal{L}u=0$ in $\om$, $u=f$ on $\pom$, and $A$ be in block form \eqref{block}. Then there is $C>0$ depending only on the dimension and ellipticity, such that for any $K\ge 1$,
    \begin{multline}\label{eq.gradD1/2.1}
        \iint_\om \abs{\nabla D^{1/2}_{t}u}^2x_ndXdt\le \frac12 \iint_\om \abs{\nabla D^{1/2}_{t}u}^2x_ndXdt
        +C K^{-2}\sum_{i=1}^{n-1}\iint_{\om}\abs{\nabla(\dr_iu)}^2x_ndXdt\\
+C\br{K^4\norm{\mu_{||}}_C+K^{-2}\norm{\abs{\dr_nA_{||}}^2\delta}_C}\int_{\pom}N(\nabla_xu)^2dxdt
+ C\norm{D^{1/2}_{t}f}_{L^2(\pom)}^2.
    \end{multline}
    where $\mu_{||}$ is defined in \eqref{def.mu11}. 
    In particular,  if $\iint_\om |\nabla D^{1/2}_{t}u|^2x_ndXdt<\infty$, then by Lemma \ref{lem.Sdk},
    \begin{equation}\label{gradD12u.bd}
         \iint_\om \abs{\nabla D^{1/2}_{t}u}^2x_ndXdt\le 
   C\br{K^4\norm{\mu_{||}}_C+K^{-2}\norm{\abs{\dr_nA_{||}}^2\delta}_C}\norm{N(\nabla_xu)}_{L^2}^2 + C\norm{f}_{\dot L^2_{1,1/2}}^2.
    \end{equation}
\end{lemma}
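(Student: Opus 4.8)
The plan is to set $w := D_t^{1/2}u$ and work with the equation it satisfies. Specialising Lemma \ref{lem.D1/2eq} to the block form \eqref{block} and using that the last row and column of $A$ are constant, one obtains
\[
\dr_t w = \divg_x(A_{||}\nabla_x w) + \dr^2_{nn}w + \divg_x\vec E,\qquad \vec E(X,t) := c_0\int_\R\frac{A_{||}(X,t)-A_{||}(X,s)}{|t-s|^{3/2}}\,\nabla_x u(X,s)\,ds,
\]
so only $A_{||}$ and $\nabla_x u$ enter the non-local error $\vec E$. I would test this equation against $w\zeta^2 x_n$ with $\zeta=\zeta(X)$ a spatial cutoff (as in Lemmas \ref{lem.Sdk} and \ref{lem.Caccio}), sum over a partition of unity in $X$, and integrate by parts; the block structure makes the only surviving boundary term $\frac12\int_{\pom}(D_t^{1/2}f)^2$ (it comes from $-\iint_\om w\,\dr_n w$), while $\iint_\om w\,\dr_t w\, x_n=0$ because $\zeta$ is $t$-independent. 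This yields the Rellich-type identity
\[
\iint_\om x_n\,A_{||}\nabla_x w\cdot\nabla_x w + \iint_\om x_n|\dr_n w|^2 = \frac12\int_{\pom}(D_t^{1/2}f)^2 - \iint_\om x_n\,\vec E\cdot\nabla_x w,
\]
whose left-hand side dominates $\iint_\om|\nabla w|^2 x_n$ by ellipticity. Everything then reduces to estimating $\iint_\om x_n\,\vec E\cdot\nabla_x w$, and the parameter $K$ enters through a near/far splitting of $\vec E$ at $|t-s|=K^2x_n^2$.

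For the near part ($|t-s|\le K^2x_n^2$) I would write $A_{||}(X,t)-A_{||}(X,s)=\int_s^t\dr_\tau A_{||}(X,\tau)\,d\tau$, use $|\dr_\tau A_{||}(X,\tau)|\le C x_n^{-2}$ from \eqref{E:1:bound}, and decompose the $s$-integral dyadically to get
\[
|\vec E^{\rm near}(X,t)|\lesssim K x_n\, M_{CK^2x_n^2}\!\big(\dr_t A_{||}(X,\cdot)\big)(t)\; M\big(\nabla_x u(X,\cdot)\big)(t),
\]
with $M_R$ the localised maximal function \eqref{def.trcmaxfunc}. After Cauchy–Schwarz (absorbing a small multiple of $\iint_\om|\nabla_x w|^2 x_n$), the near contribution is controlled by $\iint_\om x_n|\vec E^{\rm near}|^2$, which by Corollary \ref{cor.locMaxCarl} is the integral of $M(\nabla_x u(X,\cdot))(t)^2$ against a Carleson measure of norm $\lesssim K^2\norm{\mu_{||}}_C$; the classical Carleson embedding together with Lemma \ref{lem.NM<N} (to trade $N$ of the $t$-maximal function for $N(\nabla_x u)$) produces the term $C K^4\norm{\mu_{||}}_C\int_{\pom}N(\nabla_x u)^2$.

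The far part is the real obstacle. There $|A_{||}(X,t)-A_{||}(X,s)|\le 2\|A\|_\infty$ and $\int_{|t-s|>K^2x_n^2}|t-s|^{-3/2}\,ds=4(Kx_n)^{-1}$, which is the source of the $K^{-2}$ gain. The plan is to integrate by parts in the tangential directions, moving $\nabla_x$ off $w$ in $\iint_\om x_n\,\vec E^{\rm far}\cdot\nabla_x w$; differentiating $\vec E^{\rm far}$ produces one family of terms carrying a derivative of $A_{||}$ — estimated by a Carleson argument and, after using the equation for $u$ to express tangential derivatives of $A_{||}$ through $\dr_n A_{||}$, contributing $C K^{-2}\norm{\abs{\dr_nA_{||}}^2\delta}_C\int_{\pom}N(\nabla_x u)^2$ — and a second family carrying $\nabla_x^2 u$, which after a Schur-test bound of the truncated time kernel and Cauchy–Schwarz in the spatial variables contributes $CK^{-2}\sum_{i<n}\iint_\om|\nabla\dr_i u|^2 x_n$. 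The delicate point, with no exact elliptic counterpart, is to organise this so that the quantities one pairs against carry the $x_n$-weight in which they actually appear on the right of \eqref{eq.gradD1/2.1} — hence are finite — rather than boundary-divergent weight-$x_n^{-1}$ averages of $D_t^{1/2}u$ or $\nabla_x u$; this is where the non-locality in time genuinely complicates the argument and forces careful truncation bookkeeping.

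Assembling the near and far bounds in the Rellich identity gives \eqref{eq.gradD1/2.1}; the small absorbable multiple of $\iint_\om|\nabla_x w|^2 x_n$ is exactly the $\frac12\iint_\om|\nabla D_t^{1/2}u|^2 x_n$ written on the right (moving it to the left is legitimate once $\iint_\om|\nabla D_t^{1/2}u|^2 x_n<\infty$ is known, which is the hypothesis of the ``in particular'' clause). Then \eqref{gradD12u.bd} follows by bounding $\sum_{i<n}\iint_\om|\nabla\dr_i u|^2 x_n\lesssim \norm{\mu_{||}}_C\norm{N(\nabla_x u)}_{L^2}^2+\norm{\nabla_x f}_{L^2}^2$ via Lemma \ref{lem.Sdk} and using $\norm{\abs{\dr_nA_{||}}^2\delta}_C\le\norm{\mu}_C$.
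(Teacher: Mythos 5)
Your overall framework is the same as the paper's: the energy/Rellich identity producing the boundary term $\tfrac12\|D_t^{1/2}f\|_{L^2(\pom)}^2$, the vanishing of $\iint w\,\dr_t w\,x_n$, the splitting of the nonlocal term at $|t-s|=K^2x_n^2$, and your treatment of the near part (FTC, localized maximal function, Corollary \ref{cor.locMaxCarl}, Lemma \ref{lem.NM<N}, giving $K^4\norm{\mu_{||}}_C$) all match the paper's proof and are correct. The deduction of \eqref{gradD12u.bd} from \eqref{eq.gradD1/2.1} via Lemma \ref{lem.Sdk} is also as in the paper.

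The gap is in the far part, which is the crux of the lemma. Your plan is to integrate by parts only in the tangential variables, moving $\nabla_x$ off $w$ in $\iint_\om x_n\,\vec E^{\rm far}\cdot\nabla_x w$. But this leaves you pairing $x_n\,\divg_x\vec E^{\rm far}$ against $w=D_t^{1/2}u$ itself, not against a derivative of $w$. After the Schur bound $\int_{|t-s|>K^2x_n^2}|t-s|^{-3/2}ds\sim K^{-1}x_n^{-1}$, Cauchy--Schwarz forces a factor of the form $\br{\iint_\om |w|^2x_n^{-1}\,dXdt}^{1/2}$ (or the analogous weight-$x_n^{-1}$ quantity for $\nabla_xu$ if you skip the integration by parts), and this Hardy-type quantity is not controlled by the right-hand side of \eqref{eq.gradD1/2.1}: $w$ does not vanish on $\pom$ (its trace is $D_t^{1/2}f$), so the integral is in general divergent. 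This is exactly the obstruction you flag at the end of your far-part paragraph, but you do not supply the device that removes it. The paper's resolution is to first write $x_n=\tfrac12\dr_n(x_n^2)$ and integrate by parts in $x_n$ \emph{before} touching the tangential derivatives. This upgrades the weight to $x_n^2$, so that when $\nabla_x$ is subsequently transferred off the $w$-factor one pairs against $\dr_n w$ (absorbed by $\epsilon\iint|\nabla w|^2x_n$), never against $w$ itself; it also makes $\dr_n$ fall on the kernel $A_{||}(X,t)-A_{||}(X,s)$, which is the true source of the $K^{-2}\norm{\abs{\dr_nA_{||}}^2\delta}_C$ term, and it produces Leibniz boundary terms from the $x_n$-dependent truncation $|t-s|=K^2x_n^2$, which must be (and in the paper are) handled by the localized maximal function and Corollary \ref{cor.locMaxCarl}. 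Your proposed mechanism for the $\dr_nA_{||}$ term --- ``using the equation for $u$ to express tangential derivatives of $A_{||}$ through $\dr_nA_{||}$'' --- is not viable: the equation for $u$ constrains $u$, not the coefficients, and cannot convert $\nabla_xA_{||}$ into $\dr_nA_{||}$; the $\nabla_xA_{||}$ terms are instead absorbed into the $\norm{\mu_{||}}_C$ contribution, while $\dr_nA_{||}$ enters only through the $x_n$-integration by parts. As written, the far-field estimate therefore does not close.
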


\bp
Let us denote $w:=D^{1/2}_{t}u$.
By ellipticity, 
\begin{multline}\label{eq.pfD1/21}
     \iint_\om\abs{\nabla w}^2x_n\,dXdt\lesssim \iint A\nabla w\cdot\nabla w\, x_n\, dXdt\\
     =-\iint \divg (A\nabla w)wx_n\, dXdt -\iint A\nabla w\cdot\nabla x_n\, w\,dXdt.
\end{multline}
  For the second term on the right-hand side, we integrate in $x_n$ and get that
\begin{multline*}
    -\iint A\nabla w\cdot\nabla x_n\, w\,dXdt=-\frac12\iint\dr_n(w^2)dXdt=\frac12\int_{\R^{n-1}}\int_Rw^2(x,0,t)dxdt\\
    =\frac12\norm{D^{1/2}_{t}f}_{L^2(\pom)}^2.
\end{multline*}
  For the first term on the right-hand side of \eqref{eq.pfD1/21}, we use the PDE for $w$. Notice that since $A$ is in block form, $A(X,t)-A(X,s)$ is non-zero only in the first $n-1$ rows (and columns), and so 
\[
\divg\br{\int_{s\in\R}\frac{A(X,t)-A(X,s)}{\abs{t-s}^{3/2}}\nabla u(X,s)ds}=\divg_x\br{\int_{s\in\R}\frac{A_{||}(X,t)-A_{||}(X,s)}{\abs{t-s}^{3/2}}\nabla_x u(X,s)ds}.
\] 
Then by Lemma \ref{lem.D1/2eq}, $w$ satisfies the equation
\begin{equation}\label{eq.BlckD1/2eq}
    \dr_tw(X,t)=\divg(A\nabla w)(X,t)
    +c_0\divg_x\br{\int_{s\in\R}\frac{A_\parallel(X,t)-A_\parallel(X,s)}{\abs{t-s}^{3/2}}\nabla_xu(X,s)ds}.
\end{equation}
Hence,
  \begin{multline}
      -\iint_\om \divg (A\nabla w)wx_n\, dXdt
      =-\iint \dr_tw\,w\,x_ndXdt \\+c_0\iint\divg_x\br{\int_{s\in\R}\frac{A_\parallel(X,t)-A_\parallel(X,s)}{\abs{t-s}^{3/2}}\nabla_xu(X,s)ds}w\,x_n\,dXdt=: I_1+I_2.
  \end{multline}
  For $I_1$, we integrate in $t$ and use  the decay of $D^{1/2}_{t}u$ as $\abs{t}\to\infty$ to get that
\[
    I_1=\frac12\iint_\om\dr_t(w^2x_n)dXdt=0.
\]
For $I_2$, we use the divergence theorem for $x$ and then break the integral into two parts:
\begin{multline*}
     -\frac{1}{c_0}I_2=\iint\int_{s\in\R}\frac{A_\parallel(X,t)-A_\parallel(X,s)}{\abs{t-s}^{3/2}}\nabla_xu(X,s)ds\cdot\nabla_xw(X,t)\,x_n\,dXdt\\
     =\int_{\Rn_+}\int_{t\in\R}\int_{s:\abs{s-t
     }\le K^2x_n^2}\dots 
     +
     \int_{\Rn_+}\int_{t\in\R}\int_{s:\abs{s-t
     }>K^2x_n^2}\dots=: I_{21}+I_{22}.
\end{multline*}
The term $I_{21}$ is easier so we treat it first. We write 
\begin{multline*}
    \abs{I_{21}}
    =\abs{\int_{\Rn_+}x_n\int_{t\in\R}\int_{\abs{s-t}\le K^2x_n^2}\frac{\int_s^t\dr_\tau A_\parallel(X,\tau)d\tau}{\abs{t-s}^{3/2}}\nabla_xu(X,s)ds\cdot\nabla_xw(X,t)dtdX}\\
    \le
    \int_{\Rn_+}x_n\int_{t\in\R}\int_{\abs{s-t}\le K^2x_n^2}\frac{\int_{s-\abs{t-s}}^{s+\abs{t-s}}\abs{\dr_\tau A_\parallel(X,\tau)}d\tau}{\abs{t-s}^{3/2}}\abs{\nabla_xu(X,s)}ds\abs{\nabla_xw(X,t)}dtdX\\
    \le
    \int_{\Rn_+}x_n\int_{t\in\R}\int_{\abs{s-t}\le K^2x_n^2}\frac{M_{K^2x_n^2}(\abs{\dr_\tau A_\parallel(X,\cdot)})(s)}{\abs{t-s}^{1/2}}\abs{\nabla_xu(X,s)}ds\abs{\nabla_xw(X,t)}dtdX,
\end{multline*}
where $M_{K^2x_n^2}(\abs{\dr_\tau A_\parallel(X,\cdot)})$ is the localized maximal function defined as in \eqref{def.trcmaxfunc}.
We further decompose $\abs{s-t}\le K^2x_n^2$ into $\bigcup_{j\ge0}\set{s: 2^{-j-1}K^2x_n^2\le\abs{s-t}\le 2^{-j}K^2x_n^2}$ and denote it by $\bigcup_{j\ge0}\set{s: \abs{s-t}\sim 2^{-j}K^2x_n^2}$. Then
\begin{multline*}
    \abs{I_{21}}\le 
     \int_{\Rn_+}x_n\int_{t\in\R}\sum_{j\ge 0}\int\limits_{\abs{s-t}\sim 2^{-j}K^2x_n^2}\frac{M_{K^2x_n^2}(\abs{\dr_\tau A_\parallel(X,\cdot)})(s)}{\abs{t-s}^{1/2}}\abs{\nabla_xu(X,s)}ds\abs{\nabla_xw(X,t)}dtdX\\
     \lesssim
     K\int_{\Rn_+}x_n^2\int_{t\in\R}\sum_{j\ge 0}2^{-j/2}\fint\limits_{\abs{s-t}\sim 2^{-j}K^2x_n^2}M_{K^2x_n^2}(\abs{\dr_\tau A_\parallel(X,\cdot)})(s)\abs{\nabla_xu(X,s)}ds\abs{\nabla_xw(X,t)}dtdX\\
     \lesssim
     K\int_{\Rn_+}x_n^2\int_{t\in\R} M\br{M_{K^2x_n^2}(\abs{\dr_\tau A_\parallel(X,\cdot)})(\cdot)\abs{\nabla_xu(X,\cdot)}}(t)\abs{\nabla_xw(X,t)}dtdX.
\end{multline*}
By Cauchy-Schwarz and the $L^2$ boundedness of the Hardy-Littlewood maximal function, we get that 
\begin{multline*}
\abs{I_{21}}\le CK\int_{\Rn_+}x_n^2\br{\int_{t\in\R} M\br{M_{K^2x_n^2}(\abs{\dr_\tau A_\parallel(X,\cdot)})(\cdot)\abs{\nabla_xu(X,\cdot)}}(t)^2dt}^{1/2}\\
\cdot\br{\int_{t\in\R}\abs{\nabla_xw(X,t)}^2dt}^{1/2}dX\\
     \le CK
     \int_{\Rn_+}x_n^2\br{\int_{t\in\R}M_{K^2x_n^2}(\abs{\dr_\tau A_\parallel(X,\cdot)})(t)^2\abs{\nabla_xu(X,t)}^2dt}^{1/2}\\
     \cdot\br{\int_{t\in\R}\abs{\nabla_xw(X,t)}^2dt}^{1/2}dX.
\end{multline*}
Applying Cauchy-Schwarz again, we get that 
\begin{multline}\label{eq.SI21}
  \abs{I_{21}}\le CK
    \br{\iint_{\Rn_+\times\R} x_n^3M_{K^2x_n^2}(\abs{\dr_\tau A_\parallel(X,\cdot)})(t)^2\abs{\nabla_xu(X,t)}^2dtdX}^{1/2}\\
    \cdot\br{\iint_{\Rn_+\times\R}\abs{\nabla_xw(X,t)}^2x_ndtdX}^{1/2}\\
    \le 
    \epsilon \iint_{\Rn_+\times\R}\abs{\nabla_xw(X,t)}^2x_ndtdX
+C_\epsilon K^4\norm{\abs{\dr_tA_\parallel}^2\delta^3}_{C}\int_{\pom}N(\nabla_xu)(x,t)^2dxdt,
\end{multline}
where we have used Young's inequality and Corollary \ref{cor.locMaxCarl}.
\smallskip

Now we turn to the term $I_{22}$. Writing $x_n=\frac12\dr_n(x_n^2)$ and integration by parts give that
 \begin{multline}\label{eq.BlockI22}
     2I_{22}=-\int_{\Rn_+}\int_{t}\int_{\abs{s-t
     }>K^2x_n^2}\frac{A_\parallel(X,t)-A_\parallel(X,s)}{\abs{t-s}^{3/2}}\nabla_xu(X,s)ds\cdot (\dr_n\nabla_xw(X,t))\,x_n^2dt\,dX\\
     -\int_{\Rn_+}\int_{t}\dr_n\br{\int_{\abs{s-t
     }>K^2x_n^2}\frac{A_\parallel(X,t)-A_\parallel(X,s)}{\abs{t-s}^{3/2}}\nabla_xu(X,s)ds}\cdot \nabla_xw(X,t)\,x_n^2dt\,dX\\
     =:I_{221}+I_{222}
 \end{multline}
 For $I_{221}$, we move $\nabla_x$ from $w$ to get that it is equal to
 \[
 \int_{\Rn_+}\int_{t}\int_{\abs{s-t
     }>K^2x_n^2}\abs{t-s}^{-3/2}\divg_x\Big\{\br{A_\parallel(X,t)-A_\parallel(X,s)}\nabla_xu(X,s)\Big\}\,ds\, (\dr_nw(X,t))\,x_n^2dt\,dX.
 \]
We further split it as $\abs{I_{221}}\le C(I_{2211}+I_{2212}+I_{2213})$, where
\[
I_{2211}:=K^{-1}\int_{\Rn_+}\int_t\abs{\dr_nw(X,t)}x_n\sum_{j=0}^{\infty}2^{-\frac{j}{2}}\fint\displaylimits_{\abs{s-t}\sim 2^{j}K^2x_n^2}\abs{\nabla_xA_\parallel(X,t)}\abs{\nabla_xu(X,s)}ds\,dtdX,
\]
\[
I_{2212}:=K^{-1}\int_{\Rn_+}\int_t\abs{\dr_nw(X,t)}x_n\sum_{j=0}^{\infty}2^{-\frac{j}{2}}\fint\displaylimits_{\abs{s-t}\sim 2^{j}K^2x_n^2}\abs{\nabla_xA_\parallel(X,s)}\abs{\nabla_xu(X,s)}ds\,dtdX, 
\]
and 
\[
I_{2213}:=\int_{\Rn_+}\int_{t}\abs{\dr_nw(X,t)}x_n^2\int_{\abs{s-t
     }>K^2x_n^2}\abs{t-s}^{-3/2}\abs{\nabla^2_xu(X,s)}\,ds\,dtdX
\]
The terms $I_{2211}$ and $I_{2212}$ can be estimated in a similar spirit using the maximal function. We start with $I_{2212}$:  
\begin{multline*}
I_{2212}\le CK^{-1}
\int_{\Rn_+}x_n\int_{t\in\R}\abs{\dr_nw(X,t)}M\br{\abs{\nabla_xA_\parallel(X,\cdot)}\abs{\nabla_xu(X,\cdot)}}(t)\,dtdX\\
\le CK^{-1}\int_{\Rn_+}x_n\br{\int_{\R}\abs{\dr_nw(X,t)}^2dt}^{1/2}\br{\int_{\R}M\br{\abs{\nabla_xA_\parallel(X,\cdot)}\abs{\nabla_xu(X,\cdot)}}(t)^2dt}^{1/2}dX\\
\le CK^{-1}\int_{\Rn_+}x_n\br{\int_{\R}\abs{\dr_nw(X,t)}^2dt}^{1/2}\br{\int_{\R}\abs{\nabla_xA_\parallel(X,t)}^2\abs{\nabla_xu(X,t)}^2dt}^{1/2}dX
\end{multline*}
by the $L^2$ boundedness of the maximal function.
Using the Cauchy-Schwarz inequality again and then the Carleson condition on the coefficients, we get
\begin{multline*}
I_{2212}\le CK^{-1}
\br{\iint_{\R^{n+1}_+}\abs{\dr_nw(X,t)}^2x_ndtdX}^{1/2}\br{\iint_{\R^{n+1}_+}\abs{\nabla_xA_\parallel(X,t)}^2\abs{\nabla_xu(X,t)}^2x_ndtdX}^{1/2}\\
\le \epsilon \int_{\Rn_+}\int_{\R}\abs{\dr_nw(X,t)}^2x_ndtdX+ C_\epsilon K^{-2}\norm{\abs{\nabla_xA_\parallel}^2\delta}_C\int_{\pom}N(\nabla_xu)^2dtdx. 
\end{multline*}
The term $I_{2211}$ needs a bit more care. Observe first that for almost every $(X,t)$, there holds 
\[\fint\displaylimits_{\abs{s-t}\sim 2^{j}K^2x_n^2}\abs{\nabla_xA_\parallel(X,t)}\abs{\nabla_xu(X,s)}ds\le \abs{\nabla_xA_\parallel(X,t)}M(\nabla_xu(X,\cdot))(t).\]
So we get 
\begin{multline*}
I_{2211}\le CK^{-1}\br{\iint\abs{\dr_nw(X,t)}^2x_ndXdt}^{1/2}\br{\iint\abs{\nabla_xA_\parallel(X,t)}^2M(\nabla_xu(X,\cdot))(t)^2x_ndXdt}^{1/2}\\
\le \epsilon\iint\abs{\dr_nw(X,t)}^2x_ndXdt 
+C_\epsilon K^{-2}\norm{\abs{\nabla_xA_\parallel}^2\delta}_C\int_{\pom}N(F)^2dtdx,
\end{multline*}
where $F(X,t)=M(\nabla_xu(X,\cdot))(t)$. Thanks to Lemma \ref{lem.NM<N}, we can control $I_{2211}$ by the same upper bound for $I_{2212}$.

To estimate $I_{2213}$, we apply the Cauchy-Schwarz inequality and get that
\begin{multline}
I_{2213}\le C\int_{\Rn_+}x_n^2
\br{\int_{t\in\R}\abs{\dr_nw(X,t)}^2dt}^{1/2}\\
\cdot
\bigg\{\int_{t\in\R} \br{\int_{\abs{s-t
     }>K^2x_n^2}\abs{t-s}^{-\frac32}\abs{\nabla^2_xu(X,s)}\,ds}^2dt\bigg\}^{1/2}dX. 
\end{multline}
Since $\int_{\abs{s-t}>K^2x_n^2}\abs{s-t}^{-3/2}ds=4K^{-1}x_n^{-1}$, 
Schur's test gives that 
\[\bigg\{\int_{t\in\R} \br{\int_{\abs{s-t
     }>K^2x_n^2}\abs{t-s}^{-\frac32}\abs{\nabla^2_xu(X,s)}\,ds}^2dt\bigg\}^{1/2}
     \le 4K^{-1}x_n^{-1}\br{\int_{\R}\abs{\nabla^2_xu(X,t)}^2dt}^{1/2}.\]
Then by the Cauchy-Schwarz inequality and Young's inequality, one obtains that
\[
   I_{2213}\le \epsilon\int_{\Rn_+}\int_{\R} \abs{\dr_nw(X,t)}^2x_n\,dt\,dX 
   +C_\epsilon K^{-2}\int_{\Rn_+}\int_{\R} \abs{\nabla^2_xu(X,t)}^2x_n\,dt\,dX. 
\]
This completes the estimate for $I_{221}$. 
\ms

We now turn to $I_{222}$. By the Leibniz integral rule, 
     \begin{multline*}
    \dr_n\br{\int_{\abs{s-t
     }>K^2x_n^2}\frac{A_\parallel(X,t)-A_\parallel(X,s)}{\abs{t-s}^{3/2}}\nabla_xu(X,s)ds}\\
     =\int_{\abs{s-t
     }>K^2x_n^2}\abs{t-s}^{-3/2}\dr_n\Big\{\br{A_\parallel(X,t)-A_\parallel(X,s)}\nabla_xu(X,s)\Big\}ds\\
     -\frac{2K^{-1}}{x_n^2}\br{A_\parallel(X,t)-A_\parallel(X,t+K^2x_n^2)}\nabla_xu(X,t+K^2x_n^2)\\
      -\frac{2K^{-1}}{x_n^2}\br{A_\parallel(X,t)-A_\parallel(X,t-K^2x_n^2)}\nabla_xu(X,t-K^2x_n^2),
     \end{multline*}
     and accordingly $I_{222}$ are split into the sum of three terms, which we call $I_{2221}$, $I_{2222}$ and $I_{2223}$. Observe that the term
     \begin{multline*}
       -I_{2221}=\\
       \int_{\Rn_+}\int_{t\in\R}\int_{\abs{s-t
     }>K^2x_n^2}\abs{t-s}^{-3/2}\dr_n\Big\{\br{A_\parallel(X,t)-A_\parallel(X,s)}\nabla_xu(X,s)\Big\}ds\cdot\nabla_xw(X,t)x_n^2dXdt
     \end{multline*}   
     can be estimated similarly as $I_{221}$, which is further split into 3 terms. One can show  that 
     \begin{multline*}
\abs{I_{2221}}
\le \epsilon\iint_{\Rn_+\times\R}\abs{\nabla_xw(X,t)}^2x_ndXdt
+C_\epsilon K^{-2}\norm{\abs{\dr_nA_\parallel}^2\delta}_{C}\int_{\pom}N(\nabla_xu)(x,t)^2dxdt\\
+C_\epsilon K^{-2}\iint_{\Rn_+\times\R}\abs{\dr_n\nabla_xu(X,t)}^2x_ndXdt.
     \end{multline*}
The terms $I_{2222}$ and $I_{2223}$ are similar. We write
\begin{multline*}
\frac12\,I_{2222}=
K^{-1}\int_{\Rn_+}\int_{t\in\R}\br{\int_{t}^{t+K^2x_n^2}\dr_sA_\parallel(X,s)ds}\nabla_xu(X,t+K^2x_n^2)\cdot\nabla_xw(X,t)dtdX\\
=\int_{\Rn_+}\int_{t\in\R}\br{\int_{t-K^2x_n^2}^{t}\dr_sA_\parallel(X,s)ds}\nabla_xu(X,t)\cdot\nabla_xw(X,t-K^2x_n^2)dtdX
\end{multline*}
by a change of variable. Then we use the localized maximal function introduced in \eqref{def.trcmaxfunc} and use Cauchy-Schwarz inequality to get that
\begin{multline*}
    \frac{1}{2}\abs{I_{2222}}\le K^{-1}\int_{\Rn_+}\int_{\R}M_{K^2x_n^2}\br{\dr_tA_{\parallel}(X,\cdot)}(t)\abs{\nabla_xu(X,t)}\abs{\nabla_xw(X,t-K^2x_n^2)}x_n^2dtdX\\
    \le K^{-1}\br{\iint M_{K^2x_n^2}\br{\dr_tA_{\parallel}(X,\cdot)}(t)^2\abs{\nabla_xu(X,t)}^2x_n^3dtdX}^{1/2}\\
    \cdot
    \br{\iint\abs{\nabla_xw(X,t-K^2x_n^2)}^2x_ndtdX}^{1/2}\\
    \le \epsilon \iint_{\Rn_+\times\R}\abs{\nabla_xw(X,t)}^2x_ndtdX
+C_\epsilon\norm{\abs{\dr_tA_\parallel}^2\delta^3}_{C}\int_{\pom}N(\nabla_xu)(x,t)^2dxdt,
\end{multline*}
where we have used again Corollary \ref{cor.locMaxCarl}.
The term $I_{2223}$ can be estimated almost exactly the same and one gets that 
\[\abs{I_{2223}}\le  \epsilon \iint_{\Rn_+\times\R}\abs{\nabla_xw(X,t)}^2x_ndtdX
+C_\epsilon\norm{\abs{\dr_tA_\parallel}^2\delta^3}_{C}\int_{\pom}N(\nabla_xu)(x,t)^2dxdt.\]

\smallskip
Altogether, we have proved that there exists $C>0$ depending only on ellipticity and the dimension, such that for any $\epsilon\in(0,1)$ and any $K\ge 1$,
\begin{multline*}
    \iint\abs{\nabla w}^2x_ndXdt\le \epsilon\, C\iint\abs{\nabla w}^2x_ndXdt 
    +C_\epsilon K^{-2}\sum_{i=1}^{n-1}\iint_{\om}\abs{\nabla(\dr_iu)}^2x_ndXdt\\
+\br{C_{\epsilon}K^4\norm{\mu_{||}}_C+C_{\epsilon}K^{-2}\norm{\abs{\dr_nA_{||}}^2\delta}_C}\int_{\pom}N(\nabla_xu)^2dxdt
+ C\norm{D^{1/2}_{t}f}_{L^2(\pom)}^2.
\end{multline*}
The estimate \eqref{eq.gradD1/2.1} follows from taking $\epsilon$ small enough so that $\epsilon C<1/2$.

\ep

\ms

Now we show that the integral $\iint_\om\abs{\dr_t u(X,t)}^2x_ndXdt$ can be controlled  as follows.
\begin{lemma}\label{lem.Dtu}
 Let $u$ be a bounded solution to $\mathcal{L}u=0$ in $\om$, and let $w=D^{1/2}_{t}u$. Then there is a constant $C>0$, and for any $\epsilon>0$, there exists $C_\epsilon>0$ such that 
    \begin{multline}\label{Est.Dtusq}
        \iint_\om\abs{\dr_t u(X,t)}^2x_ndXdt\le\epsilon\iint\abs{\nabla^2w}^2x_n^3dtdX+C\br{1+\epsilon^{-1}}\iint\abs{\nabla w}^2x_n\,dtdX\\
+C\iint\abs{\nabla_x^2u}^2x_n\,dtdX
+C(1+\epsilon^{-1})\norm{\mu_{||}}_C\int_{\pom}N(\nabla_xu)^2dxdt.
    \end{multline}
\end{lemma}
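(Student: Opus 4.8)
\emph{Proposed proof.} The plan is to convert $\iint_{\om}|\dr_tu|^2x_n\,dXdt$ into square‑function quantities for $w=D^{1/2}_tu$ by combining an integration by parts in the normal variable, the operator algebra of $H_t$ and $D^{1/2}_t$, and the equation \eqref{eq.BlckD1/2eq} for $w$. First, writing $x_n=\frac12\dr_n(x_n^2)$ and integrating by parts in $x_n$ — the boundary term $\int_{\pom}|\dr_tu(x,x_n,t)|^2x_n^2\,dxdt$ tends to $0$ as $x_n\to0$ for smooth data $f$ (then $\dr_tu$ extends continuously to $\pom$), so after extending the estimate by density this is legitimate — one gets $\iint_{\om}|\dr_tu|^2x_n = -\iint_{\om}(\dr_tu)(\dr_t\dr_nu)x_n^2$. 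Since $\dr_t=D^{1/2}_tH_tD^{1/2}_t$ and $w=D^{1/2}_tu$, and $H_t$ is a skew‑adjoint isometry on $L^2(dt)$ commuting with $\nabla$ and with the $t$‑independent weight $x_n^2$ while $D^{1/2}_t$ is self‑adjoint on $L^2(dt)$, this equals $-\iint_{\om}(\dr_tw)(\dr_nH_tw)x_n^2$. Substituting \eqref{eq.BlckD1/2eq} writes the right side as $\mathrm I+\mathrm{II}$, with $\mathrm I=-\iint\divg(A\nabla w)(\dr_nH_tw)x_n^2$ and $\mathrm{II}=-c_0\iint\divg_x(\mathcal N)(\dr_nH_tw)x_n^2$, where $\mathcal N(X,t)=\int_\R\frac{A_\parallel(X,t)-A_\parallel(X,s)}{|t-s|^{3/2}}\nabla_xu(X,s)\,ds$.

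For $\mathrm I$, I would integrate by parts in all spatial variables; the boundary term vanishes because the test function $(\dr_nH_tw)x_n^2$ vanishes on $\pom$. This produces $\iint(A\nabla w\cdot H_t\nabla\dr_nw)x_n^2$ together with a cross term $2\iint(A\nabla w\cdot e_n)(\dr_nH_tw)x_n$, and here the block form gives $(A\nabla w)\cdot e_n=\dr_nw$, so the cross term is $2\iint(\dr_nw)(H_t\dr_nw)x_n$, which is exactly $0$: for each fixed $(x,x_n)$, $\int_\R(\dr_nw)(H_t\dr_nw)\,dt=\langle\dr_nw,H_t\dr_nw\rangle_{L^2(dt)}=0$ by skew‑adjointness of $H_t$. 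The surviving term, crudely using $\|A\|_\infty\le\Lambda$, the $L^2(dt)$‑isometry of $H_t$, Cauchy--Schwarz with the weight split $x_n^2=x_n^{1/2}\cdot x_n^{3/2}$, and Young's inequality, is at most $\epsilon\iint|\nabla^2w|^2x_n^3+C_\epsilon\iint|\nabla w|^2x_n$.

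The term $\mathrm{II}$ is the crux. I would split $\mathcal N=\mathcal N_{\rm near}+\mathcal N_{\rm far}$ at $|t-s|=x_n^2$ (a pointwise‑in‑$X$ split, compatible with $\divg_x$). For $\mathcal N_{\rm near}$, move $\divg_x$ by parts onto the test function (no boundary term, $\nabla_x(x_n^2)=0$), obtaining $c_0\iint(\mathcal N_{\rm near}\cdot H_t\nabla_x\dr_nw)x_n^2$, which by Cauchy--Schwarz and Young is $\le\epsilon\iint|\nabla^2w|^2x_n^3+C_\epsilon\iint|\mathcal N_{\rm near}|^2x_n$; then $|A_\parallel(X,t)-A_\parallel(X,s)|\le|t-s|\,M_{Cx_n^2}(|\dr_tA_\parallel(X,\cdot)|)(t)$ on $|t-s|\le x_n^2$, a dyadic decomposition, and $L^2(dt)$‑boundedness of $M$ give $|\mathcal N_{\rm near}(X,t)|\lesssim x_n\,M\!\big(M_{Cx_n^2}(|\dr_tA_\parallel(X,\cdot)|)|\nabla_xu(X,\cdot)|\big)(t)$, whence $\iint|\mathcal N_{\rm near}|^2x_n\lesssim\iint x_n^3M_{Cx_n^2}(|\dr_tA_\parallel|)^2|\nabla_xu|^2$; since $x_n^3|\dr_tA_\parallel|^2\,dXdt$ is one of the two pieces of $\mu_{||}$, Lemma~\ref{lem.locMaxCarl} (as in Corollary~\ref{cor.locMaxCarl}) makes $x_n^3M_{Cx_n^2}(|\dr_tA_\parallel|)^2\,dXdt$ Carleson with norm $\lesssim\|\mu_{||}\|_C$, so Lemma~\ref{lem.Carl} and Lemma~\ref{lem.NM<N} bound it by $\lesssim\|\mu_{||}\|_C\int_{\pom}N(\nabla_xu)^2$. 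For $\mathcal N_{\rm far}$ I would instead \emph{not} move $\divg_x$ but expand it as $\mathcal N^{(1)}_{\rm far}+\mathcal N^{(2)}_{\rm far}$, the first pairing $\nabla_xA_\parallel(X,t)-\nabla_xA_\parallel(X,s)$ against $\nabla_xu$, the second pairing $A_\parallel(X,t)-A_\parallel(X,s)$ against $\nabla_x^2u$, and estimate each paired against $|\dr_nw|x_n^2$. Since for $|t-s|>x_n^2$ the kernel $|t-s|^{-3/2}$ is integrable, in $\mathcal N^{(1)}_{\rm far}$ one may separate the two halves of the difference; each half, by Cauchy--Schwarz, $L^2(dt)$‑boundedness of $M$, and the Carleson property of $x_n|\nabla_xA_\parallel|^2\,dXdt$ (the other piece of $\mu_{||}$), is $\lesssim\|\mu_{||}\|_C\int_{\pom}N(\nabla_xu)^2+\iint|\nabla w|^2x_n$; and $\mathcal N^{(2)}_{\rm far}$, using only $\|A_\parallel\|_\infty\le\Lambda$, is $\lesssim\iint|\nabla^2_xu|^2x_n+\iint|\nabla w|^2x_n$. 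Adding $\mathrm I$ and $\mathrm{II}$ and relabelling $\epsilon$ yields \eqref{Est.Dtusq}.

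The main obstacle is precisely the bookkeeping in $\mathrm{II}$: each sub‑piece must be arranged — by either moving or expanding $\divg_x$ — so that every factor $\int_{\pom}N(\nabla_xu)^2$ is multiplied by a Carleson measure controlled by $\|\mu_{||}\|_C$ and not by the larger $\|\mu\|_C$. This forces the near part to be treated by moving $\divg_x$ (so only the $\delta^3|\dr_tA_\parallel|^2$ piece of $\mu_{||}$ enters) and the far part by expanding it (so only the $\delta|\nabla_xA_\parallel|^2$ piece enters, at the admissible price of an extra $\iint|\nabla^2_xu|^2x_n$ on the right). A minor technical point, available in the context where this lemma is used, is the a priori finiteness of $\iint|\nabla^2w|^2x_n^3$, which is needed to license the integrations by parts and to make the $\epsilon$‑term meaningful.
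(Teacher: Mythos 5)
Your proposal is correct and follows essentially the same route as the paper's proof: the same identity reducing $\iint_{\om}|\partial_t u|^2x_n\,dXdt$ to $\iint H_t(\partial_n w)\,\partial_t w\,x_n^2\,dXdt$ (up to the sign convention for $H_t$), the same substitution of the block-form equation \eqref{eq.BlckD1/2eq} for $w$, and the same near/far split at $|t-s|=x_n^2$, with the tangential divergence kept on the test function in the near regime (so only the $\delta^3|\partial_tA_\parallel|^2$ piece of $\mu_{||}$ enters, via Lemma \ref{lem.locMaxCarl}/Corollary \ref{cor.locMaxCarl}) and expanded in the far regime (so only $\delta|\nabla_xA_\parallel|^2$ and the admissible $\iint|\nabla_x^2u|^2x_n$ term enter, exactly as in the paper's $T_{21}$, $T_{221}$, $T_{222}$). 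The only deviation is the local term: the paper bounds $\iint H_t(\partial_nw)\,\divg(A\nabla w)\,x_n^2$ directly by Cauchy--Schwarz using the pointwise bound $x_n|\nabla A|\le C$, whereas you integrate by parts and kill the cross term via the block structure and skew-symmetry of $H_t$, needing only $\|A\|_{\infty}$ --- a harmless, slightly cleaner variant that yields the same contribution $\epsilon\iint|\nabla^2w|^2x_n^3+C_\epsilon\iint|\nabla w|^2x_n$.
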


\bp
We claim that 
\begin{multline}\label{eq.Dtusqr}
    \iint\abs{\dr_tu}^2x_ndtdX
    =\iint H_t(\dr_nw)\divg(A\nabla w)x_n^2dtdX\\
    -c_0\iint\int_{s\in\R}\frac{A_\parallel(X,t)-A_\parallel(X,s)}{\abs{t-s}^{3/2}}\nabla_xu(X,s)ds\cdot\nabla_x(H_t\dr_nw)x_n^2dtdX=:T_1+T_2,
\end{multline}
where $w=D^{1/2}_{t}u$. Assuming \eqref{eq.Dtusqr} for now, we control $T_1$ and $T_2$ using similar techniques as the proof of Lemma \ref{lem.gradD1/2}, and we omit the details from time to time to avoid redundancy. 

We start with $T_1$. 
By Cauchy-Schwarz,
\begin{multline*}
    \abs{T_1}\le \br{\iint \abs{H_t(\dr_nw)}^2x_ndtdX}^{1/2}\br{\iint\abs{\divg A\nabla w}^2x_n^3dtdX}^{1/2}\\
     \le\br{\iint \abs{H_t(\dr_nw)}^2x_ndtdX}^{1/2}
     \Big\{\br{\iint\abs{\nabla A}^2\abs{\nabla w}^2x_n^3dtdX}^{1/2}+\br{\iint\abs{A}^2\abs{\nabla^2w}^2x_n^3}^{1/2}\Big\}\\
     \le C\br{1+\epsilon^{-1}}\iint\abs{\nabla w}^2x_ndtdX
     +\epsilon\iint\abs{\nabla^2w}^2x_n^3,
\end{multline*}
where in the last inequality we have used Young's inequality and $\abs{\nabla A}x_n\le C$, which is valid thanks to \eqref{D2Bbdd} in Lemma \ref{lem.CarImprov} and Remark \ref{re.pert} (3). 

For $T_2$, we split the integral in $s\in\R$ into $\abs{s-t}<x_n^2$ and $\abs{s-t}\ge x_n^2$ and call the two terms $T_{21}$ and $T_{22}$, respectively. The term $T_{21}$ will be treated similarly as $I_{21}$ in the proof of Lemma \ref{lem.gradD1/2}: 
 \begin{multline*}
   c_0^{-1}\abs{T_{21}}= 
   \abs{\iint\int_{\abs{s-t}<x_n^2}\frac{\int_s^t\dr_\tau A_\parallel(X,\tau)d\tau}{\abs{t-s}^{3/2}}\nabla_xu(X,s)ds\cdot\nabla_x(H_t\dr_nw)x_n^2\,dtdX}\\
   \le2\iint\int_{\abs{s-t}<x_n^2}\frac{M_{x_n^2}\br{\dr_\tau A_{\parallel}(X,\cdot)}(s)}{\abs{t-s}^{1/2}}\abs{\nabla_xu(X,s)}ds\abs{\nabla_xH_t\dr_nw}x_n^2dtdX,
 \end{multline*}
where $M_{x_n^2}\br{\dr_\tau A_{\parallel}(X,\cdot)}(s)$ is the local maximal function defined as in \eqref{def.trcmaxfunc}. We proceed as in term $I_{21}$, that is, we decompose $\set{s\in\R:\abs{s-t}<x_n^2}$ into annuli $2^{-j-1}x_n^2\le\abs{s-t}\le 2^{-j}x_n^2$ for $j\in\N$, and we control each of them by the maximal function at $t$:
\begin{multline*}
    c_0^{-1}\abs{T_{21}}
\lesssim\int_{\Rn_+}x_n^3\int_{\R}M(M_{x_n^2}\br{\dr_\tau A_{\parallel}(X,\cdot)}\abs{\nabla_xu(X,\cdot)})(t)\abs{\nabla_xH_t\,\dr_nw}dtdX\\
\lesssim\br{\iint M_{x_n^2}\br{\dr_\tau A_{\parallel}(X,\cdot)}(t)^2\abs{\nabla_xu(X,t)}^2x_n^3dtdX}^{1/2}\br{\iint\abs{\nabla_x\dr_nw}^2x_n^3}^{1/2}\\
\lesssim\norm{\abs{\dr_tA}^2\delta^3}_{C}^{1/2}\br{\int_{\pom}N(\nabla_xu)^2dxdt}^{1/2}\br{\iint\abs{\nabla_x\dr_nw}^2x_n^3}^{1/2}\\
\le C\epsilon^{-1}\norm{\mu_{||}}_C\int_{\pom}N(\nabla_xu)^2dxdt+\epsilon\iint\abs{\nabla^2w}^2x_n^3dXdt
\end{multline*}
by Corollary \ref{cor.locMaxCarl} and Young's inequality. For $T_{22}$, we move $\nabla_x$ from $H_t\dr_nw$ to get
\begin{multline*}
    c_0^{-1}T_{22}=-\iint\divg_x\br{\int_{\abs{t-s}\ge x_n^2}\frac{A_\parallel(X,t)-A_\parallel(X,s)}{\abs{t-s}^{3/2}}\nabla_xu(X,s)ds}H_t(\dr_nw) x_n^2 dtdX\\
    =-\iint\int_{\abs{t-s}\ge x_n^2}\frac{\nabla_xA_\parallel(X,t)-\nabla_xA_\parallel(X,s)}{\abs{t-s}^{3/2}}\nabla_xu(X,s)ds\,H_t(\dr_nw) x_n^2 dtdX\\
    -\iint\int_{\abs{t-s}\ge x_n^2}\frac{A_\parallel(X,t)-A_\parallel(X,s)}{\abs{t-s}^{3/2}}\nabla^2_xu(X,s)ds\,H_t(\dr_nw) x_n^2 dtdX=:T_{221}+T_{222}.
\end{multline*}
We have 
\begin{multline*}
    \abs{T_{221}}\le\iint\int_{\abs{t-s}\ge x_n^2}\frac{\abs{\nabla_xA_\parallel(X,t)}}{\abs{t-s}^{3/2}}\abs{\nabla_xu(X,s)}ds\abs{H_t(\dr_nw)} x_n^2 dtdX\\
    +\iint\int_{\abs{t-s}\ge x_n^2}\frac{\abs{\nabla_xA_\parallel(X,s)}}{\abs{t-s}^{3/2}}\abs{\nabla_xu(X,s)}ds\abs{H_t(\dr_nw)} x_n^2 dtdX.
\end{multline*}
The first term on the right-hand side can be treated similarly as $I_{2211}$ in the proof of Lemma \ref{lem.gradD1/2} and be controlled by
\begin{multline*}
    C\br{\iint\abs{\nabla_xA(X,t)}^2M(\nabla_xu(X,\cdot))(t)^2x_ndtdX}^{1/2}\br{\iint\abs{\dr_nw}^2x_n}^{1/2}\\
\lesssim\norm{\abs{\nabla_xA}^2\delta}_C^{1/2}\br{\int_{\pom}N(\nabla_xu)^2dxdt}^{1/2}\br{\iint\abs{\dr_nw}^2x_n\,dtdX}^{1/2}\\
\le C\norm{\mu_{||}}\int_{\pom}N(\nabla_xu)^2dxdt + C\iint\abs{\nabla w}^2x_ndXdt
\end{multline*}
using Lemma \ref{lem.NM<N}. The second term on the right-hand side can be treated similarly as $I_{2212}$ and can be controlled by the same bounds as above. 
We are left with term $T_{222}$, which can be treated similarly as $I_{2213}$ and controlled by 
\[
\abs{T_{222}}\lesssim\br{\iint\abs{\nabla_x^2u}^2x_n}^{1/2}\br{\iint\abs{\dr_nw}^2x_n}^{1/2}\le C\iint\abs{\nabla_x^2u}^2x_ndXdt+C\iint\abs{\dr_nw}^2x_n.
\]
Collecting all the estimates, we have \eqref{Est.Dtusq}.\ms

It remains to justify \eqref{eq.Dtusqr}. We write $x_n=\frac12\dr_n(x_n^2)$ and $\dr_t=-D_{t}^{1/2}H_tD_t^{1/2}$ to get that 
\[
    \iint\abs{\dr_t u}^2x_ndtdX=-\frac12\iint D_{t}^{1/2}H_tD_t^{1/2}u\,\dr_tu\,\dr_n(x_n^2)dtdX=-\frac12\iint H_tw\,\dr_tw\,\dr_n(x_n^2)dtdX
\]
by moving one $D_{t}^{1/2}$ to $\dr_t u$. By divergence theorem, we can move $\dr_n$ to get that
\[
\iint\abs{\dr_t u}^2x_ndtdX=\frac12\iint H_t(\dr_nw)\dr_tw\,x_n^2dtdX+\frac12\iint H_tw\,\dr_n\dr_t w\, x_n^2dtdX.
\]
Moving $\dr_t$ and $H_t$, we see that 
\[
\iint H_tw\,\dr_n\dr_t w\, x_n^2dtdX=-\iint H_t\dr_tw\,\dr_nw\,x_n^2dtdX=\iint \dr_tw\,H_t\dr_nw\, x_n^2dtdX,
\]
which shows that 
\[
\iint\abs{\dr_t u}^2x_ndtdX=\iint H_t(\dr_nw)\dr_tw\,x_n^2dtdX.
\]
We then use the PDE for $\dr_t w$, that is, \eqref{eq.BlckD1/2eq}.
Then \eqref{eq.Dtusqr} follows from moving $\divg_x$ to $H_t(\dr_nw)x_n^2$.

\ep

We now apply the Caccioppoli inequality for $\nabla^2(D^{1/2}_tu)$ (Lemma \ref{lem.Caccio}) to the first term on the right-hand side of \eqref{Est.Dtusq} to obtain the following estimate.

\begin{lemma}\label{lem.glbCaccio}
There is $C>0$ such that 
    \begin{multline}\label{eq.glbCaccio}
    \iint\abs{\nabla^2(D_t^{1/2}u)}^2x_n^3dXdt
\le C\iint\abs{\nabla(D^{1/2}_tu)}^2x_ndXdt+ C\iint\abs{\nabla^2u}^2x_ndXdt\\
+C\norm{\mu}_C\norm{N(\nabla u)}_{L^2(\pom)}^2.
\end{multline}
\end{lemma}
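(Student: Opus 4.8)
The plan is to localize the pointwise-in-$t$ Caccioppoli inequality of Lemma~\ref{lem.Caccio} to a Whitney decomposition of $\R^n_+$ in the spatial variables alone, weight by the size of each cube, and sum. The non-locality in $t$ of Lemma~\ref{lem.Caccio} is harmless here, since every term appearing in \eqref{eq.glbCaccio} is already a full integral in $t$. As a preliminary reduction I would assume $A$ smooth, so that $\nabla^2 u$ and $\nabla^2 D_t^{1/2}u$ exist classically on compact subsets of $\Omega$ (cf.\ Remark~\ref{re.pert}(3); in the application of this Lemma inside the proof of Theorem~\ref{thm.BL} one is anyway working with the smooth approximants $B^i$), and recover the general case by the usual limiting argument.

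Fix a family $\{Q^j\}$ of pairwise disjoint dyadic spatial cubes $Q^j = Q_{r_j}(Y^j)$ with $r_j\approx\dist(Q^j,\partial\R^n_+)$, tiling $\R^n_+$, whose dilates $Q^j_{2r_j}\subset\R^n_+$ have bounded overlap; on $Q^j_{2r_j}$ one has $x_n\approx r_j$. Writing $w = D_t^{1/2}u$, I would apply Lemma~\ref{lem.Caccio} on each $Q^j$, multiply by $r_j^3\approx x_n^3$, and sum over $j$. Using $x_n\approx r_j$ and the bounded overlap, the five terms on the right of \eqref{eq.Caccio} become, up to constants, $\iint_\Omega|\nabla w|^2 x_n$, $\iint_\Omega|\nabla A|^2|\nabla u|^2 x_n$, $\iint_\Omega|\nabla A|^2 M(\nabla u(X,\cdot))(t)^2 x_n$, $\iint_\Omega M_{x_n^2}(\partial_\tau\nabla A(X,\cdot))(t)^2|\nabla u|^2 x_n^5$, and $\iint_\Omega|\nabla^2 u|^2 x_n$. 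The first and last are exactly the first two terms on the right of \eqref{eq.glbCaccio}, so it remains to bound the three middle ``coefficient'' terms by $C\|\mu\|_C\|N(\nabla u)\|_{L^2(\pom)}^2$.

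Each of these is an instance of \eqref{eq.Carl1}. For the first I would take $f=|\nabla u|^2$ and $\mathfrak c^2 = x_n^2|\nabla A|^2$, so that $\mathfrak c^2\delta^{-1} = x_n|\nabla A|^2$ is a Carleson density of norm $\le\|\mu\|_C$ (it is one of the two pieces of $d\mu$); since $N(|\nabla u|^2)=N(\nabla u)^2$ this gives the desired bound. For the second, I would use the same $\mathfrak c$ with $f = M(\nabla u(X,\cdot))(t)^2$, and then Lemma~\ref{lem.NM<N} to replace $\|N(M(\nabla u(X,\cdot)))\|_{L^2(\pom)}$ by $\|N(\nabla u)\|_{L^2(\pom)}$. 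For the third, the standing hypothesis $|\partial_t\nabla A|\delta^3\in CM(\|\mu\|_C)$ of Theorem~\ref{thm.S<NL2} says that $x_n^5|\partial_t\nabla A|^2\,dXdt$ is Carleson with norm $\le\|\mu\|_C$; Lemma~\ref{lem.locMaxCarl} with $p=2$, $\kappa=1$, $g(X)=x_n^5$ then upgrades this to: $x_n^5 M_{x_n^2}(|\partial_t\nabla A(X,\cdot)|)(t)^2\,dXdt$ is Carleson with norm $\lesssim\|\mu\|_C$, so \eqref{eq.Carl1} with $f=|\nabla u|^2$ and $\mathfrak c^2 = x_n^6 M_{x_n^2}(|\partial_t\nabla A|)^2$ closes the estimate. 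Summing the five contributions yields \eqref{eq.glbCaccio}.

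I do not anticipate a genuine obstacle here beyond careful bookkeeping of the weights; the main thing to watch is that the localized-maximal-function term really does fit the format of Lemma~\ref{lem.locMaxCarl} together with \eqref{eq.Carl1} with the $x_n^5$ weight, and that the a~priori finiteness needed for the limiting reduction is in force — which it is, because in the application $u$ solves the PDE for a smooth matrix $B^i$ equal to the identity outside a ball, so all quantities above are smooth and finite on compacta.
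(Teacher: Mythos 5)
Your proposal is correct and follows essentially the same route as the paper: a spatial Whitney decomposition of $\R^n_+$, application of Lemma \ref{lem.Caccio} on each cube with the weight $x_n^3\approx r_j^3$, summation via bounded overlap, and then Carleson's inequality \eqref{eq.Carl1} together with Lemma \ref{lem.NM<N} and the localized-maximal-function Carleson bound (Lemma \ref{lem.locMaxCarl}/Corollary \ref{cor.locMaxCarl}) for the three coefficient terms. The only cosmetic difference is that you invoke Lemma \ref{lem.locMaxCarl} directly with $g(X)=x_n^5$ where the paper cites its Corollary, and you justify $|\partial_t\nabla A|^2x_n^5$ being Carleson via the standing hypothesis of Theorem \ref{thm.S<NL2} while the paper also recalls the reduction through Lemma \ref{lem.CarImprov} and Remark \ref{re.pert}(3); both are consistent with the paper's argument.
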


\bp
Let $w=D^{1/2}_tu$. 
We control the integral $\iint \abs{\nabla^2w}^2x_n^3$ using \eqref{eq.Caccio}. Let $\W$ be a Whitney decomposition of the upper half-space $\Rn_+$, that is, the cubes $Q\in \W$ satisfy the properties that they are non-overlapping, 
\(\Rn_+ = \bigcup_{Q\in \W} Q\),
and
\(2\ell(Q) \leq \dist(Q,\partial \Rn_+) < 4\ell(Q)\). By \eqref{eq.Caccio}, 
\begin{multline*}
    \iint_{\Rn_+\times\R} \abs{\nabla^2w}^2x_n^3dXdt
    =\sum_{Q\in\W}\int_{(x,x_n)\in Q}\int_{t\in \R} \abs{\nabla^2w}^2x_n^3dXdt
    \le\sum_{Q\in\W} \int_{2Q}\int_{\R}\abs{\nabla w}^2x_ndtdX\\
    +\sum_{Q\in\W}\int_{2Q}\int_{\R}\abs{\nabla A}^2\abs{\nabla u}^2x_n dtdX
        +\sum_{Q\in\W}\int_{2Q}\int_{\R}\abs{\nabla A}^2M(\nabla u(X,\cdot))(t)^2x_ndtdX\\
        +\sum_{Q\in\W}\int_{2Q}\int_{\R}M_{x_n^2}(\dr_\tau\nabla A(X,\cdot))(t)^2\abs{\nabla u}^2x_n^5dtdX
    +\sum_{Q\in\W}\int_{2Q}\int_{\R}\abs{\nabla^2u(X,t)}^2x_ndtdX\\
   \le C\iint_{\R^{n+1}_+} \abs{\nabla w}^2x_ndtdX
    +C\iint_{\R^{n+1}_+}\abs{\nabla A}^2\abs{\nabla u}^2x_n 
        +C\iint_{\R^{n+1}_+}\abs{\nabla A}^2M(\nabla u(X,\cdot))(t)^2x_n\\
        +C\iint_{\R^{n+1}_+}M_{x_n^2}(\dr_\tau\nabla A(X,\cdot))(t)^2\abs{\nabla u}^2x_n^5
    +C\iint_{\R^{n+1}_+}\abs{\nabla^2u(X,t)}^2x_ndtdX
\end{multline*}
as the family $\set{2Q}_{Q\in\W}$ has finite overlaps. By Carleson's inequality and Lemma \ref{lem.NM<N},
\begin{multline*}
      \iint_{\Rn_+\times\R} \abs{\nabla^2w}^2x_n^3dXdt\le 
    C\br{\norm{\abs{\nabla A}^2x_n}_C+\norm{M_{x_n^2}\br{\dr_t\nabla A(X,\cdot)}^2x_n^5}_C}\int N(\nabla u)^2dxdt\\
    +C\iint\abs{\nabla w}^2x_ndtdX+ C\iint\abs{\nabla^2u}^2x_ndtdX.
\end{multline*}
By perturbation theory (Lemma \ref{lem.pert}) and Lemma \ref{lem.CarImprov}, we can assume that $\abs{\dr_t\nabla A(X,t)}^2x_n^5dXdt$ is a Carleson measure with a Carleson norm bounded by $C\norm{\mu}_C$ (see Remark \ref{re.pert} (3)). Then by Corollary \ref{cor.locMaxCarl}, we have that $\norm{M_{x_n^2}\br{\dr_t\nabla A(X,\cdot)}^2x_n^5}_C\le C\norm{\mu}_C$. 
From this \eqref{eq.glbCaccio} follows.
\ep


\section{$S<N$ estimates in $L^p$ for block form matrix $A$}
The goal of this section is to prove the following theorem.
\begin{theorem}\label{thm.S<Np}
Let $\LL=-\dr_t+L$ be a parabolic operator with matrix $A$ in the block form \eqref{block} that satisfies \eqref{E:elliptic}, \eqref{E:1:carl}, and \eqref{E:1:bound}. Let $u$ be an energy solution to $\LL u=0$ in $\om=\Rn_+\times\R$.
For any $p\in(0,\infty)$, there is $C>0$ such that
    \[
    \norm{S(\nabla u)}_{L^p}\le C\norm{N(\nabla u)}_{L^p}. 
    \]
\end{theorem}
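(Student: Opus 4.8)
The plan is to prove this $L^p$ bound by the standard good-$\lambda$ inequality relating the square function $S(\nabla u)$ and the non-tangential maximal function $N(\nabla u)$, bootstrapping from the $L^2$ estimate already available. More precisely, for block form matrices we have established in Theorem~\ref{thm.S<NL2} (after the approximation scheme and after choosing the dilation parameter $k$ appropriately so that the relevant Carleson norms are small) that $\|S(\nabla u)\|_{L^2(\pom)}\lesssim \|N(\nabla u)\|_{L^2(\pom)}$, and this $L^2$ estimate localizes: for any boundary ball $\Delta=\Delta_r(x_0,t_0)$ and a solution $u$ in $T(2\Delta)$ with suitable cutoffs one has a truncated version
\[
\iint_{T(\Delta)}|\nabla^2 u|^2\,x_n\,dX\,dt\lesssim \|N^{2r}(\nabla u)\|^2_{L^2(2\Delta)}+\text{(boundary terms on }2\Delta).
\]
First I would record such a truncated/localized form of the $L^2$ square-function estimate, taking care that the Carleson terms $\mu_{||}$, $\mu$ remain controlled on Whitney-type scales; this is exactly the content of the computations in Section~\ref{S:Area-par} and Theorem~\ref{thm.S<NL2}, run on truncated cones.

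Next I would set up the good-$\lambda$ argument. Fix $\gamma>0$ small, to be chosen. For $\lambda>0$ let
\[
E_\lambda=\{P\in\pom:\ S(\nabla u)(P)>2\lambda,\ N(\nabla u)(P)\le\gamma\lambda\}.
\]
Decompose the open set $\{S(\nabla u)>\lambda\}$ into a Whitney-type family of boundary balls $\{\Delta_i\}$ with bounded overlap such that each $2\Delta_i$ meets $\{S(\nabla u)\le\lambda\}$. On each $\Delta_i$ I would split $S(\nabla u)=S_{<r_i}(\nabla u)+S_{\ge r_i}(\nabla u)$ at the scale $r_i=\diam(\Delta_i)$. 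For the ``far'' part $S_{\ge r_i}$, a standard geometric argument (comparing cones with nearby vertices, as in the estimate of $\mathcal I^{top}$ in Section~\ref{S:Area-par}) shows $S_{\ge r_i}(\nabla u)(P)\lesssim S(\nabla u)(Q)+ C\gamma\lambda$ for $P\in\Delta_i$ and $Q\in 2\Delta_i$ a point with $S(\nabla u)(Q)\le\lambda$, using the trivial pointwise bounds $|\nabla_\parallel A|\lesssim x_n^{-1}$, $|\partial_t A|\lesssim x_n^{-2}$ from \eqref{E:1:bound} to absorb the Carleson contributions at large heights. Hence on $\Delta_i\cap E_\lambda$ the ``near'' part satisfies $S_{<r_i}(\nabla u)(P)>\lambda/2$ once $\gamma$ is small. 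Then
\[
|\Delta_i\cap E_\lambda|\le \frac{C}{\lambda^2}\iint_{T(2\Delta_i)}|\nabla^2 u|^2 x_n\,dX\,dt\lesssim \frac{1}{\lambda^2}\Big(\|N^{2r_i}(\nabla u)\|^2_{L^2(2\Delta_i)}+\text{bdry terms}\Big),
\]
and on the relevant region $N(\nabla u)\le\gamma\lambda$, so the right side is $\lesssim \gamma^2|\Delta_i|$; the boundary terms (which involve $\nabla u$ at heights $\sim r_i$) are likewise controlled by $\gamma^2\lambda^2|\Delta_i|$ using $N(\nabla u)\le\gamma\lambda$ on $2\Delta_i\setminus E_\lambda\ne\emptyset$ together with interior estimates. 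Summing over $i$ with bounded overlap gives $|E_\lambda|\le C\gamma^2|\{S(\nabla u)>\lambda\}|$, the desired good-$\lambda$ inequality, from which $\|S(\nabla u)\|_{L^p}\le C_p\|N(\nabla u)\|_{L^p}$ follows for all $0<p<\infty$ by the usual integration in $\lambda$ (for $p\le 2$ one truncates $S$ and lets the truncation go to infinity, using the $L^2$ bound to guarantee the distributional estimate is non-vacuous).

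The main obstacle I anticipate is the handling of the Carleson perturbation terms (the $\mu_{||}$- and $\mu$-type integrals $\iint (|\partial_t A|^2 x_n^2+|\nabla_\parallel A|^2)|\nabla u|^2 x_n^{-n}$ appearing in \eqref{sqrAA} and inside the $S<N$ mechanism) at the \emph{truncated, localized} level: one must verify that restricting to $\Gamma^{r}(P)$ with $P\in\Delta_i\cap E_\lambda$ genuinely produces the factor $\gamma^2$ and not merely $\gamma^2\lambda^2$ times an uncontrolled Carleson mass. This is exactly where the bounded-overlap property of the Whitney balls, the condition $2\Delta_i\not\subset E_\lambda$, and the pointwise smallness of the dilated coefficients (property (a) below \eqref{ellk}, which makes $\|\mu_{||}\|_C$ small and hence harmless after absorption) all have to be used in concert; a secondary technical point is that $S(\nabla u)$ here really means the square function of the full Hessian $\nabla^2 u$ via the PDE heuristic $\partial_t u\sim\nabla^2 u$, so one invokes \eqref{eq.sqrAAL2}--\eqref{sqrAfinal} to trade $A(\nabla u)$ for $S(\nabla u)$ plus a small multiple of $N(\nabla u)$, and these must also be stated in truncated form. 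Once those bookkeeping points are in place, the argument is the routine Dahlberg--type good-$\lambda$ scheme and the proof concludes.
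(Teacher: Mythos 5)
Your overall strategy (good-$\lambda$ between $S(\nabla u)$ and $N(\nabla u)$, Whitney decomposition of $\{S>\lambda\}$, near/far splitting of the truncated square function) is the same skeleton the paper uses, but the central step of your argument has a genuine gap. After Fubini, $\int_{\Delta_i\cap E_\lambda}S^2_{<r_i}(\nabla u)$ is an integral of $|\nabla^2u|^2x_n$ over the \emph{sawtooth region} $\bigcup\{\Gamma^{r_i}(P):P\in\Delta_i\cap E_\lambda\}$, i.e.\ over the union of cones based at points where $N(\nabla u)\le\gamma\lambda$. You enlarge this region to the full tent $T(2\Delta_i)$ and then invoke a localized form of the $L^2$ estimate to get $\|N^{2r_i}(\nabla u)\|^2_{L^2(2\Delta_i)}$, claiming that ``on the relevant region $N(\nabla u)\le\gamma\lambda$.'' That claim is unjustified: the balls $\Delta_i$ come from the Whitney decomposition of $\{S(\nabla u)>\lambda\}$, and nothing prevents $N(\nabla u)$ from being enormous on most of $2\Delta_i$; the smallness of $N$ is only known on $E_\lambda\cap\Delta_i$. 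Passing from the sawtooth over the good set to the whole tent throws away exactly the information that makes the good-$\lambda$ estimate close, and the resulting bound $\lesssim\gamma^2|\Delta_i|$ does not follow. This is precisely the difficulty the paper's proof is built to handle: it first proves a localized $S<N$ inequality on sawtooth domains (Lemma \ref{lem.STSdk}) by integrating by parts against a cutoff $\Psi_i^m x_n$ adapted to $\om_{a,F_i}$ with $F_i=E_\lambda\cap\Delta_i$ (Carleson conditions on $\nabla\Psi_i,\partial_t\Psi_i$), so that the nontangential term produced, $N_a(\Psi_i^{m/2-1}|\nabla u|)$, is supported in $15\Delta_i$ and is pointwise dominated, via the sawtooth geometry, by $N_{2a}(\nabla u)$ evaluated at points of $F_i$, hence by $\gamma\nu$; no restriction to tents over the full ball is ever made.

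A secondary but related misstep is your choice of the $L^2$ input. Theorem \ref{thm.S<NL2} is a \emph{global} estimate for the energy solution: it carries the boundary-data term $\|f\|_{\dot L^2_{1,1/2}}$ and constants of the form $K^5\|\mu_{\parallel}\|_C+K^{-1}\|\mu\|_C$, and the smallness there is achieved only after the dilation $B^k$ in the proof of Theorem \ref{thm.BL}. Theorem \ref{thm.S<Np} is stated for the original block-form matrix with possibly large Carleson norm and with no data term on the right, and a good-$\lambda$ argument at scale $r_i$ cannot tolerate a local boundary-data contribution (there is no way to bound it by $\gamma\lambda$). The correct local ingredient is Lemma \ref{lem.STSdk}, where no smallness is needed (the Carleson norms simply enter the constant) and no boundary term appears because the weight $\Psi^m x_n$ vanishes on $\pom$. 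Also, the detour through $A(\nabla u)$ and \eqref{eq.sqrAAL2}--\eqref{sqrAfinal} is not needed here: $S(\nabla u)$ in Theorem \ref{thm.S<Np} involves only $|\nabla^2u|$, and those estimates belong to the proof of Theorem \ref{t1}, not to this one. Your far-cone comparison is essentially fine (and can even be avoided, as in the paper, by comparing $S_a$ against $S_{2a}$ with a wider aperture), but as written the near-part estimate does not go through without the sawtooth localization.
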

\medskip

To prove Theorem \ref{thm.S<Np}, we first derive a $S<N$ estimate on sawtooth domains, which is done using cutoff functions.  

Let $\Psi$ be a cutoff function on $\om$ that satisfies the following properties.
\begin{enumerate}
    \item $0\le \Psi\le1$,
    \item for any $p\in(0,\infty)$, the measure given by the density
    \begin{equation}\label{eq.gradPsiCM}
        \br{\abs{\nabla\Psi(X,t)}^px_n^{p-1}+\abs{\dr_t\Psi(X,t)}^px_n^{2p-1}}dxdx_ndt
    \end{equation}
    is a Carleson measure on $\om$.
\end{enumerate}
\begin{lemma}\label{lem.STSdk}
Let $\LL=-\dr_t+L$ be a parabolic operator with matrix $A$ in the block form \eqref{block}, and let $u$ be an energy solution to $\LL u=0$ in $\om$. Let $\Psi$ be a cutoff function that satisfies the following Carleson condition: the measure $\nu$ given by the density 
\begin{equation}\label{gradPsiCM}
d\nu=\br{\abs{\nabla\Psi}^2x_n+\abs{\dr_n\Psi}+\abs{\dr_t\Psi}x_n}dxdtdx_n \quad\text{ is a Carleson measure}.
\end{equation}
Let $m\ge 3$ be an integer. Then
\begin{equation}\label{eq.STSdk<N}
    \iint \abs{\nabla^2 u}^2\Psi^mx_n\, dXdt
    \le C\br{\norm{\nu}_C+\norm{\abs{\nabla A_{\parallel}}^2x_n}_{C}}\int\abs{N(\Psi^{\frac{m}{2}-1}\nabla u)}^2dxdt.
\end{equation}
More precisely, for any $k\in\N$ with $1\le k\le n$, there holds
\begin{multline*}
    \iint \abs{\nabla(\dr_ku)}^2\Psi^mx_n\, dXdt\\
    \le C\norm{\nu}_C\int\abs{N(\Psi^{\frac{m}{2}-1}\dr_k u)}^2dxdt
    +\norm{\abs{\dr_k A_{\parallel}}^2x_n}_{C}\int\abs{N(\Psi^{\frac{m}{2}-1}\nabla_x u)}^2dxdt.
\end{multline*}
\end{lemma}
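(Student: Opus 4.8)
\textbf{Proof plan for Lemma \ref{lem.STSdk}.}
The plan is to mimic the proof of Lemma \ref{lem.Sdk}, but with the truncation coming now from the cutoff $\Psi^m$ rather than from a height cutoff $\zeta$, so that no boundary terms at $x_n=r$ appear and the error terms are controlled by the Carleson condition \eqref{gradPsiCM} on $\Psi$ rather than by the geometry of a Whitney cube. Fix $k\in\{1,\dots,n\}$ and write $w_k=\dr_k u$; then $w_k$ solves $-\dr_tw_k+\divg(A\nabla w_k)=\divg((\dr_kA)\nabla u)$ (for $k\le n-1$ this is $\divg_x((\dr_kA_\parallel)\nabla_x u)$, and for $k=n$ the right-hand side vanishes in the block-form case so that term is simply absent, which is why one gets a better estimate there). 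The starting identity is
\[
\iint A\nabla w_k\cdot\nabla w_k\,\Psi^m x_n\,dXdt
=-\iint w_k\,A\nabla w_k\cdot\nabla(\Psi^m x_n)\,dXdt
-\iint \divg(A\nabla w_k)\,w_k\Psi^m x_n\,dXdt,
\]
there being no solid boundary term since $\Psi$ is a cutoff (and $w_k\Psi^m x_n$ vanishes on $\pom$ and decays at infinity for an energy solution). One expands $\nabla(\Psi^m x_n)=m\Psi^{m-1}(\nabla\Psi)x_n+\Psi^m\nabla x_n$; by the block structure $A\nabla w_k\cdot\nabla x_n=\dr_n w_k$, and the term $-\iint w_k\dr_nw_k\Psi^m$ is handled by writing it as $-\tfrac12\iint\dr_n(w_k^2)\Psi^m=\tfrac12\iint w_k^2\dr_n(\Psi^m)=\tfrac{m}{2}\iint w_k^2\Psi^{m-1}\dr_n\Psi$, which is a Carleson error paired against $N(\Psi^{(m/2)-1}w_k)^2$ via \eqref{eq.Carl1} using that $\abs{\dr_n\Psi}dxdtdx_n$ is Carleson. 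The term with $m\Psi^{m-1}(\nabla\Psi)x_n$ is, by Cauchy--Schwarz and Young, absorbed into $\tfrac\lambda4\iint\abs{\nabla w_k}^2\Psi^mx_n$ plus $C\iint w_k^2\abs{\nabla\Psi}^2\Psi^{m-2}x_n$, again a Carleson error against $N(\Psi^{(m/2)-1}w_k)^2$ using \eqref{eq.Carl1} and the fact that $\abs{\nabla\Psi}^2x_n\,dxdtdx_n$ is Carleson.

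For the remaining term, substitute the PDE: $-\divg(A\nabla w_k)=-\dr_t w_k+\divg((\dr_kA)\nabla u)$. The $-\dr_t w_k$ piece gives $-\iint\dr_t w_k\,w_k\Psi^mx_n=-\tfrac12\iint\dr_t(w_k^2)\Psi^mx_n=\tfrac12\iint w_k^2\dr_t(\Psi^m)x_n=\tfrac{m}{2}\iint w_k^2\Psi^{m-1}\dr_t\Psi\,x_n$, controlled by \eqref{eq.Carl1} since $\abs{\dr_t\Psi}x_n\,dxdtdx_n$ is Carleson. The divergence piece $\iint\divg((\dr_kA)\nabla u)\,w_k\Psi^mx_n$ is integrated by parts (no boundary term by the block form, exactly as in Lemma \ref{lem.Sdk}) to produce $-\iint(\dr_kA)\nabla u\cdot\nabla w_k\,\Psi^mx_n-\iint(\dr_kA)\nabla u\cdot\nabla(\Psi^mx_n)\,w_k$. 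For $k\le n-1$ this is $(\dr_kA_\parallel)\nabla_xu$; apply Cauchy--Schwarz to the first of these: bound by $\tfrac\lambda4\iint\abs{\nabla w_k}^2\Psi^mx_n+C\iint\abs{\dr_kA_\parallel}^2\abs{\nabla_xu}^2\Psi^mx_n$, and the latter is $\le C\norm{\abs{\dr_kA_\parallel}^2x_n}_C\int N(\Psi^{m/2}\nabla_xu)^2\le C\norm{\abs{\dr_kA_\parallel}^2x_n}_C\int N(\Psi^{(m/2)-1}\nabla_xu)^2$ (using $\Psi\le1$) via \eqref{eq.Carl1}. The second term splits as before into a piece with $\Psi^m\nabla x_n=\Psi^m e_n$ (pure Carleson error $\abs{\dr_kA_\parallel}^2x_n$ paired with $N(\Psi^{(m/2)-1}w_k)^2$ after Cauchy--Schwarz, noting $w_k^2\Psi^m\le\Psi^{m-2}w_k^2$) and a piece with $m\Psi^{m-1}(\nabla\Psi)x_n$, to which one applies $\abs{ab}\le\abs{a}^2+\abs{b}^2$ distributing $\Psi$-powers as $\Psi^{m-1}=\Psi^{m/2-1}\cdot\Psi^{m/2}$ so that both factors are again Carleson-times-$N^2$ of the right weighted quantities. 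Finally absorb the three $\tfrac\lambda4$-terms, use ellipticity $\lambda\iint\abs{\nabla w_k}^2\Psi^mx_n\le\iint A\nabla w_k\cdot\nabla w_k\,\Psi^mx_n$, and sum over $k$ to get \eqref{eq.STSdk<N} from the per-$k$ estimate (for $k=n$ the terms involving $\dr_kA$ are absent, so only the $\norm{\nu}_C$ error survives, consistent with the claimed refined estimate; note however in the sum one still uses $\dr_nu$ contributes via $\nabla^2u=\sum_k\nabla\dr_ku$ so the bound as stated collects all $k$).

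The main obstacle I anticipate is not any single estimate but the careful bookkeeping of powers of $\Psi$: every integration by parts that lands a derivative on $\Psi^m$ lowers the exponent by one, and one must consistently arrange that whatever solid integral survives has the form (Carleson density)$\times(\Psi^{\text{something}}\nabla u$ or $\Psi^{\text{something}}w_k)^2$ with the surviving power of $\Psi$ at least $m-2$ on the maximal-function side so that it is dominated by $N(\Psi^{(m/2)-1}\cdot)^2$; the hypothesis $m\ge3$ is exactly what guarantees $\tfrac{m}{2}-1\ge\tfrac12>0$ so these weighted nontangential maximal functions make sense and the splitting $\Psi^{m-1}=\Psi^{m/2-1}\Psi^{m/2}$, $\Psi^{m-2}=(\Psi^{m/2-1})^2$ goes through. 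A secondary technical point is justifying the vanishing of all boundary terms at $\{x_n=0\}$ and at infinity for the energy solution $u$ and its derivatives, which follows as in Lemma \ref{lem.Sdk} from $u\in\dot\E(\om)$, the block form of $A$ (so $A\nabla w_k\cdot e_n=\dr_nw_k$ and $(\dr_kA)\nabla u\cdot e_n=0$), and the fact that $w_k\Psi^mx_n$ vanishes on $\pom$.
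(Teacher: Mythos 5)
Your overall scheme is exactly the paper's: test the equation for $w_k=\partial_k u$ against $w_k\Psi^m x_n$, use the block structure so that $A\nabla w_k\cdot\nabla x_n=\partial_n w_k$, substitute the PDE into the term $\iint\divg(A\nabla w_k)\,w_k\Psi^m x_n$, and convert every error via Cauchy--Schwarz and the Carleson estimate \eqref{eq.Carl1} into $\norm{\nu}_C$ or $\norm{\abs{\partial_kA_\parallel}^2x_n}_C$ times weighted nontangential maximal functions, with the same $\Psi$-power bookkeeping ($\Psi^{m-1}=\Psi^{m/2-1}\Psi^{m/2}$, $\Psi^{m-2}=(\Psi^{m/2-1})^2$). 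In that respect the proposal is on target.

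There is, however, one genuine error: your claim that for $k=n$ the forcing term vanishes in the block-form case. Differentiating $\LL u=0$ in $x_n$ gives $\LL w_n=-\divg_x\br{(\partial_nA_\parallel)\nabla_x u}$. What the block form gives you is only $\partial_k a_{nn}=0$ and $\partial_k a_{nj}=0$ for $j<n$ (so the right-hand side is a purely tangential divergence and creates no boundary term when integrated by parts), not $\partial_nA_\parallel=0$; in general $A_\parallel$ depends on $x_n$. This is precisely why the lemma's per-$k$ estimate retains the factor $\norm{\abs{\partial_kA_\parallel}^2x_n}_C$ also for $k=n$, and why elsewhere in the paper $\partial_nA_\parallel$ --- whose Carleson norm is not assumed small --- has to be carried along separately (cf.\ \eqref{eqwn} and \eqref{system33}). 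As written, your $k=n$ case asserts a stronger bound (only the $\norm{\nu}_C$ term) that is false. The repair is immediate: run your own argument for $k=n$ with the nonzero right-hand side $-\divg_x\br{(\partial_nA_\parallel)\nabla_x u}$, exactly as for $k<n$; this produces the admissible term $\norm{\abs{\partial_nA_\parallel}^2x_n}_C\int N(\Psi^{m/2-1}\nabla_x u)^2$, and summing over $k$ still yields \eqref{eq.STSdk<N}. A related cosmetic point: since $\br{(\partial_kA)\nabla u}_n\equiv 0$, the divergence you integrate by parts is $\divg_x$, so $\nabla(\Psi^m x_n)$ enters only through $m\Psi^{m-1}x_n\nabla_x\Psi$; the extra piece you introduce with $\Psi^m e_n$ is identically zero, and the bound you sketch for it would not go through if it were not (it lacks the $x_n$ weight needed to pair with the Carleson density $\abs{\partial_kA_\parallel}^2x_n$), so it is better to drop it by observing the tangential nature of the divergence, as the paper does.
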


\bp
Fix $1\le k\le n$, we write $w_k=\dr_ku$. By the divergence theorem,
\begin{multline}\label{eq.STpf1}
\iint A\nabla w_k\cdot\nabla w_k\br{\Psi^mx_n}=-\iint w_k A\nabla w_k\cdot\nabla(\Psi^mx_n) \underbrace{-\iint\divg(A\nabla w_k)(w\Psi^mx_n)}_{=:J}\\
=-\iint w_k\Psi^m\dr_nw_k-m\iint w_k\Psi^{m-1} x_nA\nabla w_k\cdot\nabla\Psi +J\\
=\frac{m}{2}\iint w_k^2\Psi^{m-1}\dr_n\Psi
-m\iint w_k\Psi^{m-1}x_nA\nabla w_k\cdot\nabla\Psi +J.
\end{multline}
For the term $J$, we use the PDE that $w_k$ satisfies and then integrate by parts to get that
\begin{multline*}
    J=-\iint(\dr_t w_k)w_k\Psi^mx_n+\iint \divg\br{(\dr_kA)\nabla u}w_k\Psi^mx_n\\
    =\frac{m}{2}\iint w_k^2\Psi^{m-1}x_n\dr_t\Psi-\iint\Psi^mx_n\br{\dr_kA_\parallel}\nabla_xu\cdot\nabla_x w_k-m\iint w_k\Psi^{m-1} x_n\br{\dr_kA_\parallel}\nabla_xu\cdot\nabla_x \Psi,
\end{multline*}
where we have used $\dr_k a_{nn}=0$ because $A$ is in block form (so $a_{nn}=1$).
By ellipticity, \eqref{eq.STpf1}, and the Cauchy-Schwarz inequality, one obtains that 
\begin{multline*}
\lambda\iint \abs{\nabla w_k}^2\Psi^mx_n\le \frac{\lambda}{2}\iint\abs{\nabla w_k}^2\Psi^mx_n
+C\iint w_k^2\Psi^{m-2}\abs{\nabla\Psi}^2x_n\\
+ C\iint\abs{\dr_kA_\parallel}^2\abs{\nabla_xu}^2\Psi^mx_n
+\frac{m}{2}\iint w_k^2\Psi^{m-1}\abs{\dr_n\Psi}
+\frac{m}{2}\iint w_k^2\Psi^{m-1}x_n\abs{\dr_t\Psi}.
\end{multline*}
Using the Carleson condition satisfied by $\Psi$ and $A$, and absorbing the first term to the left-hand side, we have 
\begin{multline*}
    \frac{\lambda}{2}\iint \abs{\nabla w_k}^2\Psi^mx_n
    \le C\norm{\nu}_C\int N\br{w_k\Psi^{\frac{m}{2}-1}}^2dxdt 
    +\norm{\abs{\dr_kA_\parallel}^2x_n}_C\int N\br{\abs{\nabla_x u}\Psi^{\frac{m}{2}}}^2 dxdt,
\end{multline*}
as desired.
\ep
\smallskip

\begin{definition}[The cutoff function $\Psi$ adapted to a sawtooth domain]
   Let $\Delta=\Delta_r(x_0,t_0)$ be a (boundary) parabolic ball of radius $r$, that is, $\Delta=\set{(x,t)\in\R^{n-1}\times\R: \abs{x-x_0}+\abs{t-t_0}^{1/2}<r}$. Let $F\subset\Delta$ be a Borel set. Define the sawthooth domain 
   \begin{equation}\label{defeq.STom}
       \om_{a,F}:=\bigcup_{(x,t)\in F}\Gamma_a(x,t),
   \end{equation}
    where $\Gamma_a(x,t):=\set{(y,y_n,s)\in\R^{n+1}_+\times\R: \abs{y-x}+\abs{s-t}^{1/2}<ay_n}$ is the parabolic cone with vertex $(x,t)$ and aperture $a$.
    Let $\eta\in C^\infty(\R_+)$ be such that $0\le\eta\le1$, $\eta=1$ on $[0,1]$, $\eta=0$ on $[2,+\infty)$, and $\abs{\eta'}\le 2$. 
    Let $\Upsilon\in C_0^\infty(\R^{n+1})$ be supported in $B(0,10^{-3})\subset\R^{n+1}$, with $0\le\Upsilon\le 1$, and $\iint_{\R^{n+1}}\Upsilon(y,y_n,t)dydy_ndt=1$. For $\ell>0$, set \[\Upsilon_\ell^a(x,x_n,t):=\frac{1}{(a\ell)^{n+1}\ell}\Upsilon\br{\frac{x}{a\ell},\frac{x_n}{\ell},\frac{t}{(a\ell)^2}}.\]
    We define the cutoff function $\Psi$ as
    \begin{equation}\label{defeq.Psi}
        \Psi(x,x_n,t):=\eta\br{\frac{ax_n}{2r}}
        \br{\1_{\om_{3a/2,F}}*\Upsilon^a_{x_n}}(x,x_n,t).
    \end{equation}
\end{definition}
Let $k\Delta$ denote $\Delta_{kr}$ for any $k>0$.
By construction, $0\le\Psi\le1$, $\Psi=1$ on $\om_{a,F}\cap\br{[0,\frac{2r}{a}]\times4\Delta}$, and $\supp\Psi\subset\om_{2a,F}\cap \br{[0,\frac{4r}{a}]\times 10\Delta}$. Moreover, we can verify that $\Psi$ satisfies the Carleson condition \eqref{eq.gradPsiCM}, with a constant depending on $n$, $a$, and $p$. 

\begin{lemma}[Good-$\lambda$ inequality]\label{lem.gdlambda}
    For any $\nu>0$ and any $\gamma\in(0,1)$, 
    \begin{multline}\label{eq.gl1}
        \left|\set{(x,t)\in\R^{n-1}\times\R:\, S_a(\nabla u)(x,t)>\nu, \, N_{2a}(\nabla u)(x,t)\le\gamma\nu}\right|\\
        \le C\gamma^2\abs{\set{(x,t)\in\R^{n-1}\times\R:\, S_{2a}(\nabla u)(x,t)>\nu/2}}.
    \end{multline}
\end{lemma}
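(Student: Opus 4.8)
\textbf{Proof proposal for Lemma \ref{lem.gdlambda}.}

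The plan is to run the standard good-$\lambda$ argument via a Whitney decomposition of the open set where $S_{2a}(\nabla u)>\nu/2$, localizing the square function to a sawtooth region and then invoking the local $S<N$ bound of Lemma \ref{lem.STSdk} on that sawtooth. First I would fix $\nu>0$, $\gamma\in(0,1)$, and set $\Base_\nu:=\set{(x,t):\,S_{2a}(\nabla u)(x,t)>\nu/2}$. Since $S_{2a}(\nabla u)$ is lower semicontinuous, $\Base_\nu$ is open, and we may write it as a union $\bigcup_j \Delta_j$ of boundary parabolic balls $\Delta_j=\Delta_{r_j}(x_j,t_j)$ with bounded overlap such that $C\Delta_j\cap\Base_\nu^c\neq\emptyset$ for a fixed dilation constant $C$ (a parabolic Whitney-type covering; if a point of $\Base_\nu$ has arbitrarily large balls around it inside $\Base_\nu$ then $S_{2a}(\nabla u)=+\infty$ a.e.\ and \eqref{eq.gl1} is trivial). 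It then suffices to show that for each $j$,
\begin{equation*}
\abs{E_\nu\cap\Delta_j}\le C\gamma^2\abs{\Delta_j},\qquad E_\nu:=\set{(x,t):\,S_a(\nabla u)(x,t)>\nu,\ N_{2a}(\nabla u)(x,t)\le\gamma\nu},
\end{equation*}
because summing over $j$ and using bounded overlap yields \eqref{eq.gl1}. If $E_\nu\cap\Delta_j=\emptyset$ there is nothing to prove; otherwise, fix a reference point $(\bar x,\bar t)\in C\Delta_j\setminus \Base_\nu$, so $S_{2a}(\nabla u)(\bar x,\bar t)\le \nu/2$.

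The core geometric step is the usual truncation: for $(x,t)\in\Delta_j$ the part of the cone $\Gamma_a(x,t)$ at height $\delta\gtrsim r_j$ is contained in $\Gamma_{2a}(\bar x,\bar t)$, so the contribution to $S_a(\nabla u)(x,t)^2$ from $\set{\delta(X,s)\ge c r_j}$ is bounded by $S_{2a}(\nabla u)(\bar x,\bar t)^2\le \nu^2/4$. Hence for $(x,t)\in E_\nu\cap\Delta_j$ the \emph{truncated} square function satisfies $S_a^{cr_j}(\nabla u)(x,t)^2>\nu^2-\nu^2/4=\tfrac34\nu^2>\tfrac14\nu^2$, where $S_a^{r}$ uses only the part of the cone with $\delta<r$. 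By Chebyshev,
\begin{equation*}
\abs{E_\nu\cap\Delta_j}\le \frac{4}{\nu^2}\int_{E_\nu\cap\Delta_j}S_a^{cr_j}(\nabla u)(x,t)^2\,dx\,dt.
\end{equation*}
Now apply Fubini to rewrite the right-hand side as $\nu^{-2}\iint_{\om_{a,F_j}}\abs{\nabla^2 u}^2\delta^{-n}\,dX\,ds$ up to constants, where $F_j:=E_\nu\cap\Delta_j$ and the sawtooth $\om_{a,F_j}$ is truncated at height $\sim r_j$; introduce the cutoff $\Psi$ adapted to $(\Delta_j,F_j)$ as in \eqref{defeq.Psi}, which equals $1$ on this truncated sawtooth and is supported in a region where $\delta\approx x_n$. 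This reduces matters to estimating $\iint \abs{\nabla^2 u}^2\Psi^m x_n\,dX\,dt$ (after absorbing the powers of $\delta\approx x_n$), which is exactly what Lemma \ref{lem.STSdk} controls: it is bounded by $C(\norm{\nu}_C+\norm{\abs{\nabla A_\parallel}^2x_n}_C)\int\abs{N(\Psi^{m/2-1}\nabla u)}^2$. Finally, on $\supp\Psi\subset\om_{2a,F_j}$ every vertical segment sits in some cone $\Gamma_{2a}(x,t)$ with $(x,t)\in F_j=E_\nu\cap\Delta_j$, where $N_{2a}(\nabla u)\le\gamma\nu$; therefore $N(\Psi^{m/2-1}\nabla u)\lesssim\gamma\nu$ pointwise on a set of measure $\lesssim\abs{10\Delta_j}\approx\abs{\Delta_j}$, giving the bound $C\gamma^2\nu^2\abs{\Delta_j}$. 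Dividing by $\nu^2$ yields $\abs{E_\nu\cap\Delta_j}\le C\gamma^2\abs{\Delta_j}$, as required.

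The main obstacle I anticipate is the bookkeeping around the truncated sawtooth and the cutoff $\Psi$: one must check that the Carleson condition \eqref{gradPsiCM} for this particular $\Psi$ holds with a constant independent of $j$ (scale invariance of the construction handles this, since $\Psi$ is built from parabolic rescalings), that the support of $\Psi$ genuinely covers the truncated part of $\Gamma_a(x,t)$ used in $S_a^{cr_j}$ for $(x,t)\in F_j$ (so the Fubini step is legitimate), and that the exponent $m$ in $\Psi^m$ can be chosen $\ge 3$ so that the weight $\Psi^{m/2-1}$ appearing inside $N(\cdot)$ is still bounded by $1$ and localizes to $\om_{2a,F_j}$. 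One also needs the elementary fact that $N_{2a}(\nabla u)\le\gamma\nu$ on $F_j$ forces $\Psi^{m/2-1}\nabla u$ to be $\lesssim\gamma\nu$ everywhere on $\supp\Psi$, which is where the precise relation between the aperture in the sawtooth ($3a/2$, then $2a$ after mollification) and the aperture $2a$ in $N_{2a}$ must be respected; this is the reason the statement pairs $S_a$ with $N_{2a}$ and $S_{2a}$. None of these is deep, but they are the points where care is needed.
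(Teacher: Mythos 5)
Your proposal is correct and follows essentially the same route as the paper's proof: a Vitali/Whitney covering of $\set{S_{2a}(\nabla u)>\nu/2}$, truncation of the cones at height $\sim r_j$ by comparison with a nearby point where $S_{2a}(\nabla u)\le\nu/2$, Chebyshev plus Fubini to pass to the truncated sawtooth, the cutoff $\Psi$ of \eqref{defeq.Psi} combined with Lemma \ref{lem.STSdk}, and the aperture bookkeeping $N_a(\Psi^{m/2-1}\nabla u)\lesssim N_{2a}(\nabla u)\le\gamma\nu$ on the enlarged ball. The only blemish is the intermediate Fubini expression with weight $\delta^{-n}$, which should be $\delta\approx x_n$ (the vertex set has measure $\sim\delta^{n+1}$), but since your subsequent reduction is to the correct quantity $\iint\abs{\nabla^2u}^2\Psi^m x_n\,dX\,dt$, nothing in the argument breaks.
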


\bp
Fix $\nu>0$ and $\gamma\in(0,1)$. 

We denote $E_1:=\set{(x,t)\in\R^{n-1}\times\R:\, S_a(\nabla u)(x,t)>\nu, \, N_a(\nabla u)(x,t)\le\gamma\nu}$ and 
$E_2:=\set{(x,t)\in\R^{n-1}\times\R:\, S_{2a}(\nabla u)(x,t)>\nu/2}$.
Clearly, 
$E_2$ is an open 
subset of ${\R}^{n}$. When this set is empty, or is all of ${\R}^{n}$, 
estimate \eqref{eq.gl1} is trivial, so we focus on the case when the set in question is 
both nonempty and proper. By Vitali's covering lemma, we can find a non-overlapping collection of parabolic balls $\set{\Delta(x_i,t_i)}_{i\in \N}$ 
such that $2\Delta(x_i,t_i)\subset E_2$, $10\Delta(x_i,t_i)\cap E_2^c\neq\emptyset$ for each $i$, and  that $E_2\subset\bigcup_{i\in \N}5\Delta(x_i,t_i)$. Denote $\Delta_i=5\Delta(x_i,t_i)$, and denote by $r_i$ the radius of $\Delta_i$.
Set $F_i:=E_1\cap\Delta_i$. Note that $E_1\subset\bigcup_{i\in\N}F_i$ as $E_1\subset E_2$. Since $2\Delta_i\cap E_2\neq\emptyset$, there exists $(x',t')\in 2\Delta_i$ such that $S_{2a}(\nabla u)(x',t')\le \nu/2$. 
Observe that for any $(x,t)\in \Delta_i$, $\Gamma_{2a}(x',t')\supset \br{\Gamma_a(x,t)\cap\set{(y,y_n,s): y_n\ge 2r_i/a}}$. Therefore, for any $(x,t)\in F_i$, $S_a^{2r_i/a}(\nabla u)(x,t)>\nu/2$, where $S_a^{2r_i/a}$ is the square function defined for parabolic cones truncated at height $2r_i/a$. It follows that 
\[\abs{F_i}\le \br{\frac{2}{\nu}}^2\int_{F_i}S_a^{2r_i/a}(\nabla u)(x,t)^2dxdt\le \frac{C}{\nu^2}\sum_{k=1}^{n}\iint_{\om_{a,F_i}\cap\set{0\le x_n\le 2r_i/a}}\abs{\nabla(\dr_ku)}^2x_ndxdt,\]
where $\om_{a,F_i}$ is the sawthooth domain defined as in \eqref{defeq.STom}. Let $\Psi_i$ be the cutoff function defined as in \eqref{defeq.Psi} for $\om_{a,F_i}$. Since $\Psi_i=1$ on $\om_{a,F_i}\cap\br{[0,\frac{2r_i}{a}]\times4\Delta_i}$, we can insert $\Psi^m$ in the above integral, where $m\ge3$ is an integer. Then we apply Lemma \ref{lem.STSdk} to get that 
\[
    \abs{F_i}\le\frac{C}{\nu^2}\sum_{k=1}^{n}\iint\abs{\nabla(\dr_ku)}^2\Psi_i^mx_ndxdt
    \le \frac{C}{\nu^2}\int N_a(\Psi_i^{\frac{m}{2}-1}\abs{\nabla u})(x,t)^2dxdt.
\]
Since $\supp\Psi_i\subset\om_{2a,F_i}\times \br{[0,\frac{4r_i}{a}]\times 10\Delta_i}$, the support of $N_a(\Psi_i^{\frac{m}{2}-1}\abs{\nabla u})$ is a subset of $15\Delta_i$. Moreover, (recalling the definition of $\om_{2a,F_i}$) for any $(x,t)\in 15\Delta_i$, there exists some $(x',t')\in F_i$, such that \[N_a(\Psi_i^{\frac{m}{2}-1}\abs{\nabla u})(x,t)\le N_{2a}(\Psi_i^{\frac{m}{2}-1}\abs{\nabla u})(x',t')\le N_{2a}(\abs{\nabla u})(x',t')\le \gamma\nu\]
by the definition of the set $E_1$. That is, we have that 
\[
\abs{F_i}\le \frac{C}{\nu^2}\int_{15\Delta_i} N_a(\Psi_i^{\frac{m}{2}-1}\abs{\nabla u})(x,t)^2dxdt\le C\gamma^2\abs{15\Delta_i}.
\]
Summing in $i\in\N$, this implies that 
\[\abs{E_1}\le C\gamma^2\sum_{i\in\N}\abs{\Delta_i}\le C\gamma^2\abs{\bigcup_{i\in\N}\Delta(x_i,t_i)}\le C\gamma^2\abs{E_2}.\]
\ep



Now that we have the good-$\lambda$ inequality, Theorem \ref{thm.S<Np} can be obtained via a standard argument.
\smallskip

\noindent{\it Proof of Theorem \ref{thm.S<Np}:} 
Let $\gamma\in(0,1)$ be determined later. For any $\nu>0$, denote $E_{\nu}:=\set{(x,t)\in\R^{n-1}\times\R:\, S_a(\nabla u)(x,t)>\nu, \, N_a(\nabla u)(x,t)\le\gamma\nu}$.
We compute
\begin{multline*}
   \norm{S_a(\nabla u)}_{L^p(\pom)}^p=p\int_0^\infty \nu^{p-1}\abs{\set{(x,t)\in\R^{n-1}\times\R: S_a(\nabla u)(x,t)>\nu}}d\nu\\
   \le p\int_0^\infty \nu^{p-1}\abs{E_\nu}d\nu +p\int_0^\infty \nu^{p-1}\abs{\set{N_{2a}(\nabla u)>\gamma\nu}}d\nu.
\end{multline*}
By Lemma \ref{lem.gdlambda} and a change of variable, one has
\begin{multline*}
    \norm{S_a(\nabla u)}_{L^p(\pom)}^p \le C\gamma^2p\int_0^\infty \nu^{p-1}\abs{\set{S_{2a}(\nabla u)>\nu/2}}d\nu +\gamma^{-p}p\int_0^\infty \nu^{p-1}\abs{\set{N_{2a}(\nabla u)>\nu}}d\nu\\
    =C\gamma^2\norm{S_{2a}(\nabla u)}_{L^p}^p+\gamma^{-p}\norm{N_{2a}(\nabla u)}_{L^p}^p \le C\gamma^2\norm{S_{a}(\nabla u)}_{L^p}^p+C\gamma^{-p}\norm{N_{a}(\nabla u)}_{L^p}^p.
\end{multline*}
Choosing $\gamma\in(0,1)$ sufficient small so that $C\gamma^2<1/2$, one can hide $C\gamma^2\norm{S_{a}(\nabla u)}_{L^p}^p$ to the left-hand side and obtain the desired estimate.

\ep

\section{$N<S+A$ estimates for block form matrix $A$}\label{SS:43}

We follow the approach of \cite{KKPT} to prove nontangential maximal function estimates, with some important modifications. 
The arguments in the following section are motivated by the work of \cite{DHM} that introduced 
stopping time arguments using an entire family of Lipschitz graphs on which the nontangential 
maximal function is large in lieu of a single graph constructed via a stopping time argument. 
This approach is necessary as we are using $L^2$ averages of solutions to define the nontangential maximal 
function and hence the knowledge of certain bounds for a solution on a single graph provides no 
information about the $L^2$ averages over interior balls.

The arguments differ from \cite{DHM} in one substantial point, namely that we do not use the \lq\lq pullback" map as it destroys the block-form structure of the matrix $A$ . Instead, we use an integration by parts argument involving the function constructed via the stopping time technique. Here the argument is more akin to \cite{DHP} which used this method for elliptic block-form operators. We also borrow a key idea from \cite{DFM} which introduced a method that avoids using maximal functions in the good-$\lambda$ inequality and hence it thus implies
$N<S+A$ in the $L^p$ sense directly for all $p>1$ (whereas an older method required a separate argument for $p\le 2$).

Finally, we have to modify the approach and adapt it to the parabolic setting. We are only aware of one place where $N<S$ estimates have been established in parabolic settings, namely the  thesis of Rivera-Noriega \cite{RNt} for the Dirichlet problem (but with some errors in that proof). \medskip

As before, we shall work under the assumption that $\Omega=\R^n_+\times\R$. We will only assume the 
block-form structure of the matrix $A$ and impose the Carleson condition on the coefficients, but distinguishing the norm in directions parallel to the boundary).  

The goal of this section is to prove the following theorem.
\begin{theorem}\label{thm.NlessS}
   Let $A$ be a block-form matrix \eqref{block} that satisfies the ellipticity condition \eqref{E:elliptic}, the Carleson condition \eqref{E:1:carl}, and the bound \eqref{E:1:bound}. 
   Let $\LL=-\dr_t+\divg(A\nabla\cdot)$.
   
   Then there exists $\delta>0$ such that if $$\left[\|\mu_\parallel\|_C+\norm{x_n\abs{\nabla_x A}}^2_{L^\infty(\om)}\right]
  \Big(1+\norm{x_n\abs{\nabla A}}^2_{L^\infty(\om)}\Big)<\delta,$$  then for any $p>1$, $a>0$, $f\in \dot L^{p}_{1,1/2}(\pom)\cap \dot H_{\dr_t-\Delta_x}^{1/4}(\pom)$,   there exists a 
constant $C=C(n,p,a,\delta, \lambda,\Lambda)>0$, such that for the energy solution $u$ to \eqref{eq-pp}, there holds
\begin{multline}\label{eq.N<SLp}
    \norm{\tilde{N}(\nabla u)}_{L^p(\pom)}\le C(1+\|\mu\|_C+\|x_n|\nabla A_\parallel|\|_{L^\infty}^2)\left[\norm{S(\nabla u)}_{L^p(\pom)} +\norm{\nabla_x f}_{L^p(\pom)}\right]\\+C \norm{A(\nabla u)}_{L^p(\pom)},
 \end{multline}   
provided we know a priori that $\|\tilde{N}(\nabla u)\|_{L^p(\partial\Omega)}<\infty$.
Here $\tilde{N}$ is the averaged version of the nontangetial maximal function. 
\end{theorem}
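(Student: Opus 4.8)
\textbf{Proof proposal for Theorem \ref{thm.NlessS}.}
The plan is to adapt the stopping-time/good-$\lambda$ technology of \cite{KKPT,DHM,DHP,DFM} to the parabolic block-form setting. We fix $p>1$ and $a>0$ and work with the a priori finite quantity $\|\tilde N(\nabla u)\|_{L^p(\pom)}$. Because the matrix is block form, $\partial_n^2 u = \partial_t u - \div_x(A_\parallel\nabla_x u)$, so it suffices to control the tangential components $\nabla_x u$ together with $\partial_n u$; for the latter we will again use the PDE and Caccioppoli-type bounds to trade $\partial_n^2 u$ for $\partial_t u$ and the square and area functions. The decomposition $\nabla_x u = \vec V + \vec\eta$ alluded to in subsection \ref{SS:43}: here $\vec V$ solves a Dirichlet problem (for which we have nontangential bounds from Appendix \ref{APA}) while $\vec\eta$ vanishes on $\pom$ and solves the inhomogeneous system \eqref{system2} with right-hand side of schematic form $\partial_i((\partial_m b_{ij})v_j)$. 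The coefficient factors here carry exactly the Carleson smallness built into the hypothesis $\left[\|\mu_\parallel\|_C+\|x_n|\nabla_x A|\|^2_{L^\infty}\right]\bigl(1+\|x_n|\nabla A|\|^2_{L^\infty}\bigr)<\delta$.

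The core of the argument is a good-$\lambda$ inequality: for an appropriate aperture comparison and any $\gamma\in(0,1)$,
\[
\bigl|\{\tilde N(\nabla u)>2\lambda,\ S(\nabla u)+A(\nabla u)+M(\nabla_x f)\le\gamma\lambda\}\bigr|\lesssim (\gamma^2+\delta)\,\bigl|\{\tilde N(\nabla u)>\lambda\}\bigr|,
\]
which upon integrating $\lambda^{p-1}\,d\lambda$, absorbing the left-hand side (using the a priori finiteness), and then removing the maximal function on $\nabla_x f$ by the $L^p$ boundedness of $M$, yields \eqref{eq.N<SLp}. To prove the good-$\lambda$ bound one performs a Whitney/Vitali decomposition of the open set $\{\tilde N(\nabla u)>\lambda\}$ into boundary balls $\Delta_i$ whose doubles meet the complement of the good set. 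On the Carleson-sawtooth region over each $\Delta_i$ one runs the stopping-time construction of \cite{DHM} producing a \emph{family} of Lipschitz graphs (rather than a single one), since we use $L^2$-averaged nontangential maximal functions. The key new step, following \cite{DHP}, is that instead of a pullback change of variables (which would destroy the block structure), one integrates by parts against the stopping-time function: writing the $L^2$-average of $\nabla u$ over an interior Whitney ball as a solid integral, one integrates by parts in $x_n$ (exploiting $a_{nn}=1$ and $A\nabla w\cdot e_n=\partial_n w$) to convert a pointwise bound into integrals of $|\nabla^2 u|^2 x_n$ and $|\partial_t u|^2 x_n$ over the sawtooth, i.e.\ into $S(\nabla u)^2$ and $A(\nabla u)^2$, plus error terms where the coefficient derivatives appear and are absorbed by $\delta$. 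The idea from \cite{DFM} of keeping track of the function $\vec h$-type test functions and avoiding maximal functions in this step is what makes the estimate work uniformly for all $p>1$.

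The main obstacles I anticipate are twofold. First, the parabolic geometry: the stopping-time graphs must be taken in the parabolic metric $d_p$, the sawtooth and Whitney decompositions must respect the parabolic scaling $x_n\sim |t-s|^{1/2}$, and the integration-by-parts identities now generate an extra term $\iint \partial_t(w^2)\cdots$ which must be handled by the half-derivative calculus (the $D_t^{1/2}$, $H_t$ machinery of Section 6) rather than simply vanishing as in the elliptic case. Second, and more seriously, closing the estimate requires that the solid integrals produced by the integration by parts genuinely reproduce $S(\nabla u)$ and $A(\nabla u)$ with only Carleson-small error, which forces us to carefully separate the parallel derivatives $\nabla_x A_\parallel$ (whose Carleson norm is small, by $\mu_\parallel$) from $\partial_n A$ (whose Carleson norm is merely finite, contributing the $(1+\|\mu\|_C+\|x_n|\nabla A_\parallel|\|_{L^\infty}^2)$ prefactor on $S(\nabla u)+\nabla_x f$ but \emph{not} multiplying $A(\nabla u)$). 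Getting this bookkeeping exactly right — so that the $\partial_n A$ terms always land on the square function side where a bounded (not small) constant is permissible, while every term that must be absorbed carries a factor of $\delta$ — is the delicate heart of the proof, and is precisely where the block-form hypothesis and the splitting of the Carleson measure into $\mu_\parallel$ and $\mu$ are indispensable.
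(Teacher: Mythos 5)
Your overall architecture matches the paper's: the splitting $\nabla_x u=\vec V+\vec\eta$ with $\vec V$ handled by the block-form Dirichlet theory of Appendix \ref{APA} and $\vec\eta$ solving the weakly coupled system \eqref{system2}; a stopping-time construction producing a whole family of graphs $\hbar_{\nu,a}$ in the spirit of \cite{DHM} (because of the $L^2$-averaged $\tilde N$); an integration by parts in $x_n$ against $\theta\hbar$ exploiting $a_{nn}=1$ instead of a pullback (as in \cite{DHP}); good-$\lambda$ inequalities for $\vec\eta$ and, separately, for $w_n=\partial_n u$ via the PDE, with the $\partial_n A_\parallel$ terms allowed a bounded (not small) constant since they land on $\tilde N(\nabla_x u)$, which has already been bounded by $S$, $A$ and $\nabla_x f$. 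The \cite{DFM}-style $\varepsilon$-weighting that makes the estimate work for all $p>1$ is also in the right place, although note that in the paper's good-$\lambda$ sets (Lemmas \ref{LGL} and \ref{lem.Gdl_wn}) the ``good'' conditions also include the $\varepsilon^{1/2}$-weighted terms $\tilde N_b(\vec\eta)$ and $\tilde N_b(V)$ (not a maximal function of $\nabla_x f$); it is these weighted nontangential terms, combined with the a priori finiteness of $\|\tilde N\|_{L^p}$, that get absorbed at the distribution-function level, rather than a $\delta$-term on the right of the good-$\lambda$ inequality.

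The one genuine gap is your treatment of the time-derivative term generated by the integration by parts. You propose to handle $\iint\partial_t\bigl((\eta_m-c_m)^2\bigr)x_n\zeta$ with the nonlocal $D_t^{1/2}$, $H_t$ calculus of Section 6, but that machinery does not localize to a sawtooth region bounded by a $t$-dependent graph, and it is not what closes the argument. The paper's mechanism is geometric and local: the interior cones are redefined as in \eqref{newG} (intersections of all boundary cones through $S(X,t)$), which removes the parabolic cusp and forces the stopping-time function to satisfy $|\partial_t\hbar_{\nu,a}|\le a^{-2}/(2\hbar_{\nu,a})$ (Lemma \ref{S3:L5}). One then integrates by parts in $t$; the resulting boundary integral over the graph of $\theta\hbar$ carries the factor $|\nu_t|\,dS\le \tfrac{a^{-2}}{2x_n}\,dx\,dt$, which cancels the weight $x_n$ and is absorbed into the left-hand side by choosing the aperture $a$ large, while the solid term is split at height $x_n\sim\varepsilon r$ into an $\varepsilon\,\tilde N$ piece and the $\mathcal K_\varepsilon$ piece, the latter controlled by a parabolic Poincar\'e inequality producing exactly the $A_b$ area-function term (this is also why $A(\nabla u)$ must appear in the good-$\lambda$ set, as you correctly anticipated). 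Without the modified cones and the resulting Lipschitz-in-$t$ bound on $\hbar$, the graph boundary term has no smallness and cannot be hidden, so as written your proposed route for this step would not close.
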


The energy solutions $u$ constructed using Lax-Milgram lemma on $\Omega$ earlier a priori belong to the space $\dot{W}^{1,2}(\mathbb R^n_+\times \R)$ and thus $\nabla u\in L^2(\mathbb R^n_+\times \R)$. 
In particular, this implies that 
\begin{equation}\label{def.w}
    w(X,t):=\left(\fiint_{B((X,t),\delta(X,t)/2)}|\nabla u(Y,s)|^2dYds\right)^{1/2}\to 0
\end{equation}
as $x_n\to\infty$.

Recall that we have defined the nontangential cones in \eqref{Gamma2.11}. As we now work on just upper half-space we modify the cones somewhat and define, for a fixed background parameter
$a>0$, the nontangential cone $\gamma_a(q,\tau)$ for a boundary point $(q,\tau)\in\partial(\R^n_+\times \R)$ as follows:
\begin{equation}\label{gamma2.11}
\gamma_a(q,\tau)=\{(X,t)=(x,x_n,t):\, x_n>a^{-1}\|(x-q,t-\tau)\|\}.
\end{equation}
There exists $a',a''$ such that $\Gamma_{a'}(q,\tau)\subset \gamma_a(q,\tau)\subset \Gamma_{a''}(q,\tau)$ where $\Gamma$ are the cones defined in  \eqref{Gamma2.11} and thus the $L^p$ norms of the square and nontangential maximal functions defined with respect to $\gamma$ are comparable to those defined
with respect to $\Gamma$.. But \eqref{gamma2.11} is better for our purposes here, as the special direction $x_n$ is separated from all other variables. We will refer to the parameter $a$ as before the aperture and 
$a^{-1}$ we shall call the slope of the nontangential cone $\gamma_a$. For the remainder of this section we assume that $N$, $S$ and $A$
are defined with respect to the cones $\gamma_a$.

For a constant $\nu>0$, define the set
\begin{equation}\label{E}
E_{\nu,a}:=\big\{x'\in\partial\Omega:\,N_{a}(w)(x')>\nu\big\}.
\end{equation}

These cones, as defined currently have vertices at the boundary, but in order to define the stopping time construction we also need to define interior cones (with vertices inside $\Omega$). In the elliptic setting these interior cones
are defined by simply shifting the boundary cone, that is for $(Q,\tau)=(q,q_n,\tau)\in\Omega$
\begin{equation}\label{oldG}
\gamma_a(q,q_n,\tau)=(0,q_n,0)+\{(X,t)=(x,x_n,t):\, x_n>a^{-1}\|(x-q,t-\tau)\|\}.
\end{equation}
This is the approach also taken in \cite{RN}. However, we prefer to modify this construction and consider \lq\lq enlarged" cones that have an additional feature which we explain below.

Firstly, let us consider the set $S(X,t)=\{(q,\tau)\in \partial(\R^n_+\times \R):\, (X,t)\in\gamma_a(q,\tau)\}$. Given the way the cones are defined, it follows that
\begin{equation}\label{def.SXt}
    S(X,t)=\{(q,\tau):\, \|(x-q,t-\tau)\|<a\,x_n\},
\end{equation}
which is an open parabolic ball centered at $(x,t)$ of \lq\lq radius" $a\,x_n$. We define the cone $\gamma_a$ at $(X,t)$ as the interior of the intersection of all boundary cones that originate in $S(X,t)$. Hence
\begin{equation}\label{newG}
\gamma_a(X,t)=\gamma_a(x,x_n,t)=\mbox{int}\bigcap \{\gamma_a(q,\tau):\, \|(x-q,t-\tau)\|<a\,x_n\}.
\end{equation}

\begin{figure}[htbp]
    \centering
\begin{tikzpicture}[scale=1.1]

    \colorlet{myred}{red!30}
    \colorlet{myblue}{cyan!15}

    \clip (-1.5, -1.5) rectangle (12.5, 10.5);

    \begin{scope}
        \clip (-10, -2) rectangle (20, 9.6); 
        
        \fill[myblue] (-10, 15)
            -- plot[domain=-10:3.5, samples=300] (\x, {2.8 * sqrt(3.5 - \x)})
            -- plot[domain=3.5:20, samples=300] (\x, {2.8 * sqrt(\x - 3.5)})
            -- (20, 15) -- cycle;

        \fill[myred] (-10, 15)
            -- plot[domain=-10:4.5, samples=300] (\x, {2.8 * sqrt(7 - \x)})   
            -- plot[domain=4.5:20, samples=300] (\x, {2.8 * sqrt(\x - 2)})    
            -- (20, 15) -- cycle;
    \end{scope}

    \draw[->, thin] (-1, 0) -- (12, 0) node[above left, inner sep=2pt] {$t$};
    \draw[->, thin] (0, -0.5) -- (0, 10.2) node[below right, inner sep=2pt] {$x_n$};

    \foreach \x in {1,2,3,4,5,6,7,8,9,10,11}
        \draw[very thin, gray] (\x, 0.05) -- (\x, -0.05);
    \foreach \y in {1,2,3,4,5,6,7,8,9,10}
        \draw[very thin, gray] (0.05, \y) -- (-0.05, \y);


    \draw[blue, thick] plot[domain=-1.5:3.5, samples=400] (\x, {2.8 * sqrt(3.5 - \x)});
    \draw[blue, thick] plot[domain=3.5:12.5, samples=400] (\x, {2.8 * sqrt(\x - 3.5)});

    \draw[very thick] plot[domain=-1.5:7, samples=500] (\x, {2.8 * sqrt(7 - \x)});
    
    \draw[very thick] plot[domain=2:12.5, samples=500] (\x, {2.8 * sqrt(\x - 2)});

    
    \draw[<->, >=latex, red, thick] (2, 0) -- (7, 0);
    \node[red, below, inner sep=5pt] at (4.7, 0) {$S(X,t)$};

    \fill[gray!80!black] (2, 0) circle (1.5pt);
    \fill[gray!80!black] (7, 0) circle (1.5pt);
    
    \fill[blue] (3.5, 0) circle (2pt);
    \node[blue, below=2pt] at (3.5, 0) {$(q,\tau)$};

    \fill[black] (4.5, {2.8*sqrt(2.5)}) circle (2.5pt);
    \node[right=3pt] at (4.5, {2.8*sqrt(2.5)}) {$(X,t)$};

    \node at (4.5, 5.4) {$\gamma_a(X,t)$};
    \node[blue] at (1.8, 4.4) {$\gamma_a(q,\tau)$}; 
\end{tikzpicture}
\caption{Novel cones in the $(t,x_n)$ plane.}
\end{figure}
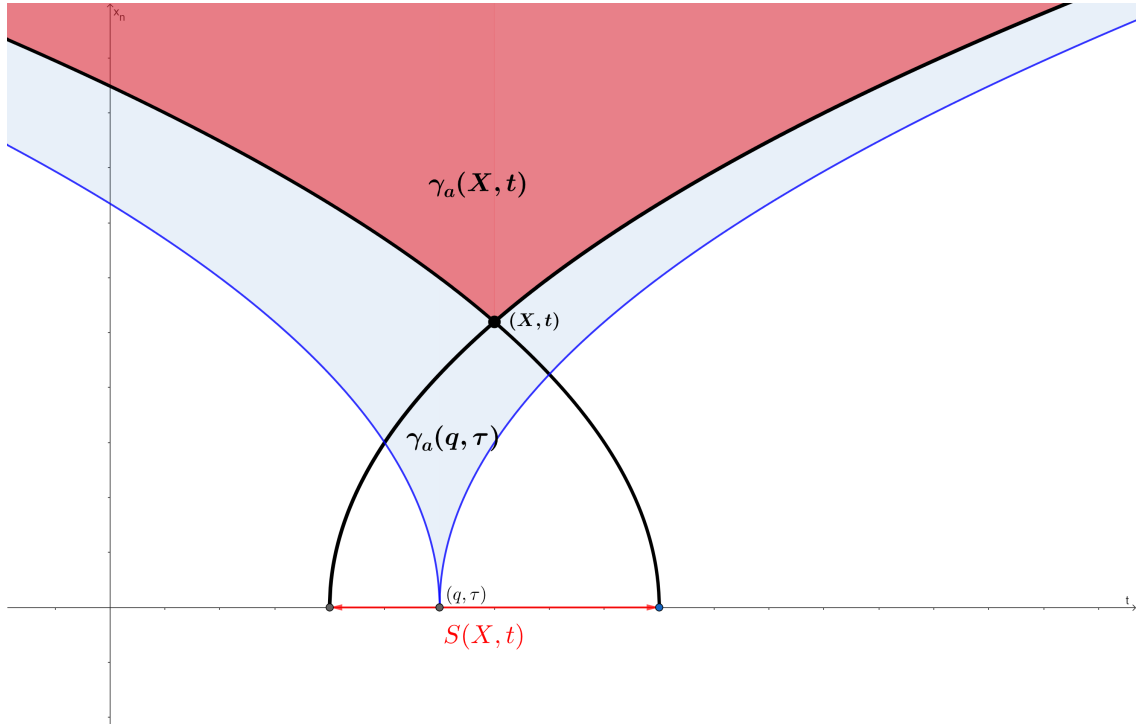


To explain this definition we compare \eqref{oldG} and \eqref{newG}. When the time variable is not present there is no difference between these definitions, and thus there is no reason to consider \eqref{newG}. This is due to the fact that in this case the boundary of the cone \eqref{oldG} is a union of straight lines of slope $a^{-1}$ originating from its vertex and intersections of such cones results in a cone of the same type. 

This is not the case with the time variable. In this variable the cone defined by \eqref{oldG} has a cusp of shape
given by the curve $a^{-1}\sqrt{|t|}$ at zero.
The definition  \eqref{newG} removes this cusp for interior cones at $x_n>0$; it is easy to see that the boundary cones 
$\gamma_a(q,\tau)$ that define $\gamma_a(X,t)$ have bounded slope of their boundary in $t$-variable at height $x_n>0$. This slope is bounded by $\frac{a^{-2}}{2x_n}$ (this is derived from slope of the function $a^{-1}\sqrt{|t|}$). 

Thus it follows that the boundary of the cone $\gamma_a(X,t)$ is given by the graph of a function that is Lipschitz 
with Lipschitz constant $a^{-1}$ in spatial variables and Lipschitz constant $\frac{a^{-2}}{2x_n}$ in the time variable.

Moreover, our new cones \eqref{newG} maintain a key property of the cones from \eqref{oldG}, namely, that
if $(Q,\tau)$ is any point (including a boundary point) and $(X,t)\in \gamma_a(Q,\tau)$ then
$$\overline{\gamma_a(X,t)}\subset \gamma_a(Q,\tau).$$

As in \cite{DHM} we now consider the map $\hbar:\partial\Omega\to\R$ given at each $(x,t)\in\partial\Omega$ defined by 
\begin{equation}\label{h}
\hbar_{\nu,a}(w)(x,t):=\inf\left\{x_n>0:\,\sup_{(Z,s)\in\gamma_{a}(x,x_n,t)}w(Z,s)<\nu\right\}
\end{equation}
At this point we observe that $\hbar_{\nu,a}w(x,t)<\infty$ for all points $(x,t)\in\partial\Omega$. 
This is due to the fact that the $L^2$ averages $w(Z,s)\to 0$ as $z_n\to\infty$, as observed earlier.

\begin{lemma}\label{S3:L5}
Let $w$ be as above. Then the following properties hold.
\vglue2mm

\noindent (i)
The function $\hbar_{\nu, a}(w)$ is Lipschitz in spatial variables with constant $a^{-1}$. It is also 
Lipschitz in time variable above a positive height and the following holds:
\begin{equation}\label{Eqqq-5}
\left|\hbar_{\nu,a}(w)(x,t)-\hbar_{\nu,a}(w)(y,s)\right|\leq a^{-1}|x-y|+\frac{a^{-2}}{2\min\{\hbar_{\nu,a}(w)(x,t),\hbar_{\nu,a}(w)(y,s)\}}|t-s|.
\end{equation}
for all $(x,t),(y,s)\in\partial\Omega$. In particular,
\begin{equation}\label{derh}
|\nabla \hbar_{\nu,a} (x,t)|\le a^{-1},\qquad |\partial_t \hbar_{\nu,a} (x,t)|\le \frac{a^{-2}}{2\hbar_{\nu,a} (x,t)}.
\end{equation}
\vglue2mm

\noindent (ii)
The function $\hbar_{\nu,a}(w)$ satisfies the following: there exists $C_n>0$ that depends only on $n$ such that
\begin{equation}\label{derh'}
    \abs{\hbar_{\nu,a}(w)(x,t)-\hbar_{\nu,a}(w)(y,s)}\le C_na^{-1}\br{\abs{x-y}+\abs{t-s}^{1/2}} 
\end{equation}
for all $(x,t),(y,s)\in\partial\Omega$.
\vglue2mm

\noindent (iii)
Given an arbitrary $(x,t)\in E_{\nu,a}$, where $E_{\nu,a}$ is defined in \eqref{E}, let $x_n:=\hbar_{\nu,a}(w)(x,t)$. Then there exists a 
point $(y,y_n,s)\in\partial\gamma_{a}(x,x_n,t)$ such that $w(y,y_n,s)=\nu$ and $\hbar_{\nu,a}(w)(y,s)=y_n$. 		
\end{lemma}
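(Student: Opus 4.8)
\textbf{Proof plan for Lemma \ref{S3:L5}.}

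The plan is to establish the three parts in order, each building on the geometry of the cones \eqref{newG} and the fact that $w(Z,s)\to 0$ as $z_n\to\infty$. For part (i), I would argue the Lipschitz estimate \eqref{Eqqq-5} directly from the definition \eqref{h} of $\hbar_{\nu,a}$. Fix $(x,t),(y,s)\in\pom$ and set $h_1=\hbar_{\nu,a}(w)(x,t)$, $h_2=\hbar_{\nu,a}(w)(y,s)$, and suppose WLOG $h_1\le h_2$. The key observation is a \emph{containment of cones}: if $y_n$ is large enough that the parabolic ``shadow'' of $(y,y_n,s)$ (the set $S(y,y_n,s)$ from \eqref{def.SXt}, a parabolic ball of radius $a\,y_n$ centered at $(y,s)$) contains the shadow of $(x,h_1,t)$, then by the definition \eqref{newG} of the interior cone as the intersection of all boundary cones originating in the shadow, we get $\gamma_a(y,y_n,s)\subset\gamma_a(x,h_1,t)$, hence $\sup_{\gamma_a(y,y_n,s)}w\le\sup_{\gamma_a(x,h_1,t)}w\le\nu$ (using right-continuity/openness to pass from the strict inequality defining $h_1$). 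This forces $h_2\le y_n$. So I need to quantify how large $y_n$ must be: the ball $B_{a y_n}(y,s)\supset B_{a h_1}(x,t)$ provided $\|(x-y,t-s)\|\le a(y_n-h_1)$, i.e. $a|x-y| + |t-s|^{1/2}\cdots$ — here one must be careful because the parabolic ``norm'' $\|(x-y,t-s)\|\sim(|x-y|^2+|t-s|)^{1/2}$ does not split linearly. The cleaner route, which gives the sharp constants in \eqref{Eqqq-5}, is to separate the spatial and temporal displacements: moving in space by $|x-y|$ requires raising the height by $a^{-1}|x-y|$ (the boundary of $\gamma_a$ has spatial slope $a^{-1}$), while moving in time by $|t-s|$ at height $\ge h_1$ requires raising the height by at most $\tfrac{a^{-2}}{2h_1}|t-s|$, since the boundary of $\gamma_a(x,x_n,t)$ in the $t$-direction at height $x_n\ge h_1$ is, as noted in the text just before the lemma, a graph with time-Lipschitz constant $\tfrac{a^{-2}}{2x_n}\le\tfrac{a^{-2}}{2h_1}$. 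Combining these two moves and using $\min\{h_1,h_2\}=h_1$ yields \eqref{Eqqq-5}; the pointwise derivative bounds \eqref{derh} follow immediately by letting $(y,s)\to(x,t)$.

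For part (ii), estimate \eqref{derh'} is a ``global'' (not-above-a-positive-height) Lipschitz-type bound in the parabolic metric, and it should follow from \eqref{Eqqq-5} by a case analysis. When $\min\{h_1,h_2\}$ is comparable to or larger than $|t-s|^{1/2}$, the time term in \eqref{Eqqq-5} is $\tfrac{a^{-2}}{2\min\{h_1,h_2\}}|t-s|\lesssim a^{-2}|t-s|^{1/2}\lesssim a^{-1}|t-s|^{1/2}$ (absorbing $a^{-1}$ vs $a^{-2}$ appropriately, or just tracking the constant $C_n$), and we are done. When $\min\{h_1,h_2\}$ is \emph{small} compared to $|t-s|^{1/2}$, say $h_1\le C|t-s|^{1/2}$, then already $|h_1-h_2|\le h_2$, and I would bound $h_2$ directly: from $(x,t)\in E_{\nu,a}$-type reasoning or simply from the definition, $h_2\le h_1 + (\text{spatial move}) + (\text{time move starting from the larger of the two heights})$ — more precisely one reverses roles and uses that from height $\sim|t-s|^{1/2}$ the time-move cost is $\tfrac{a^{-2}}{2|t-s|^{1/2}}|t-s|= \tfrac{a^{-2}}{2}|t-s|^{1/2}$. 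Either way one collects a bound of the form $C_n a^{-1}(|x-y|+|t-s|^{1/2})$. I would write this out as a short two-case argument.

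For part (iii), fix $(x,t)\in E_{\nu,a}$ and $x_n=\hbar_{\nu,a}(w)(x,t)$. By definition of the infimum in \eqref{h}, for heights slightly below $x_n$ the supremum of $w$ over the corresponding cone is $\ge\nu$, while for $x_n$ itself (using that the cone $\gamma_a(x,x_n,t)$ is open and $w$ is continuous — $w$ is a local $L^2$ average of $\nabla u$, hence continuous by interior estimates) the supremum is $\le\nu$; continuity of $w$ together with a limiting argument over an increasing sequence of heights forces $\sup_{\gamma_a(x,x_n,t)}w=\nu$ and, since the closure $\overline{\gamma_a(x,x_n,t)}$ is where the sup is attained (again by continuity and the decay $w\to0$ at infinity, so the sup is attained on a compact portion), there is a point $(y,y_n,s)\in\overline{\gamma_a(x,x_n,t)}$ with $w(y,y_n,s)=\nu$. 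That this point can be taken on $\partial\gamma_a(x,x_n,t)$ rather than in the interior: if it were interior, then $\gamma_a(y,y_n,s)$ would be a genuine sub-cone with $\overline{\gamma_a(y,y_n,s)}\subset\gamma_a(x,x_n,t)$ (the key nesting property recorded in the text), so $w<\nu$ strictly on a neighborhood contradicts nothing directly — instead I use the \emph{minimality} of $x_n$: one shows that if the maximizing point were at height $>$ the boundary, one could lower $x_n$ slightly and still have $\sup w\le\nu$, contradicting that $x_n$ is the infimum. Finally $\hbar_{\nu,a}(w)(y,s)=y_n$: the inequality $\hbar_{\nu,a}(w)(y,s)\ge y_n$ holds because $w(y,y_n,s)=\nu$ is not $<\nu$, so no height below (or at) $y_n$ can serve; the reverse $\hbar_{\nu,a}(w)(y,s)\le y_n$ holds because $(y,y_n,s)\in\partial\gamma_a(x,x_n,t)$ and the nesting property gives $\gamma_a(y,y_n,s)\subset\gamma_a(x,x_n,t)$, on which $w\le\nu$; combined with continuity this pins the value to exactly $y_n$.

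\textbf{Main obstacle.} The delicate point is the cone-containment/monotonicity argument underlying part (i): one must verify carefully, from the non-standard definition \eqref{newG} of the interior cones as intersections of boundary cones, that raising the height by the stated amounts genuinely nests one cone inside another in the parabolic geometry — in particular handling the time variable, where the cusp of the naive cone \eqref{oldG} has been excised and replaced by a height-dependent Lipschitz slope $\tfrac{a^{-2}}{2x_n}$. Getting the constant in \eqref{Eqqq-5} exactly right (with the $\min$ of the two heights in the denominator, rather than a fixed lower bound) requires being attentive to which of the two points has the smaller height and arguing the containment from that side. Everything else is a routine unwinding of definitions plus the continuity and decay-at-infinity of $w$.
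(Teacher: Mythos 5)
Your plan for (i)--(ii) is correct and rests on exactly the same two ingredients as the paper's proof: the nesting property of the new cones (if $(Y,s)\in\gamma_a(X,t)$ then $\overline{\gamma_a(Y,s)}\subset\gamma_a(X,t)$) together with the slope bounds $a^{-1}$ (spatial) and $a^{-2}/(2x_n)$ (temporal) for the boundary graphs of the cones \eqref{newG}; only the packaging differs. You run a direct two-point comparison (raise the vertex over $(y,s)$ by $a^{-1}|x-y|+\tfrac{a^{-2}}{2h_1}|t-s|$ and use containment plus the openness of the defining inequality to force $h_2$ below that level), and then deduce (ii) from (i) by a two-case argument; that case analysis does work, but to land on the stated constant $C_na^{-1}$ you should split at $\min\{h_1,h_2\}\gtrless a^{-1}|t-s|^{1/2}$ and, in the bad case, first raise the vertex to height $a^{-1}|t-s|^{1/2}$ (raising to height $|t-s|^{1/2}$, as written, produces a factor $a^{-2}$ instead of $a^{-1}$, which is only harmless when $a\ge1$). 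The paper instead observes that neither lifted point $(x,\hbar_{\nu,a}(x,t),t)$, $(y,\hbar_{\nu,a}(y,s),s)$ can lie in the other's cone, deduces the envelope representation $\hbar_{\nu,a}(w)=\inf\{\phi_{(x,t)}\}$ over the family of cone-boundary graphs \eqref{eq.hbar_constr}, and reads off (i) and (ii) simultaneously from the Lipschitz character of each graph, which is marginally cleaner and avoids the case analysis. For (iii) the paper gives no argument at all (it is cited as identical to the corresponding lemma in \cite{DHM}), so there is nothing to compare against; your sketch is the expected one, but be aware that the two steps you leave informal --- locating the maximizer of $w$ on $\partial\gamma_a(x,x_n,t)$ rather than in the interior, and upgrading $\sup_{\gamma_a(y,h,s)}w\le\nu$ for $h>y_n$ (which is all the nesting argument gives) to the strict inequality required in \eqref{h} so that $\hbar_{\nu,a}(w)(y,s)=y_n$ exactly --- are precisely the content of the cited lemma and would need to be written out carefully in a complete proof.
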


\begin{proof} (i) and (ii).
The way the function $\hbar_{\nu, a}(w)$ is defined implies the following observation.
Consider two points $(x,t)$, $(y,t)\in \partial\Omega\to\R$ and let $x_n=\hbar_{\nu,a}(w)(x,t)$ and $y_n=\hbar_{\nu,a}(w)(y,s)$ and let $(X,t)=(x,x_n,t)$, $(Y,s)=(y,y_n,s)$.

It is not possible to have $(Y,s)\in \gamma_a(X,t)$ or vice versa $(X,t)\in \gamma_a(Y,s)$. Indeed, if for example $(Y,s)\in \gamma_a(X,t)$ it would follow that 
$$\overline{\gamma_a(Y,s)}\in \gamma_a(X,t)$$
and hence $\sup_{(Z,s)\in\gamma_{a}(Y,s)}w(Z,s)<\nu$ which contradicts with the definition of 
$\hbar_{\nu,a}(w)(y,s)=y_n$. Therefore 
$(Y,s)\notin \gamma_a(X,t)$ and $(X,t)\notin \gamma_a(Y,s)$. This implies that if
we
let $\phi(x,t):\partial\Omega\to \R$ be the function whose graph is the boundary of
 $\partial\gamma_a(X,t)$ then
$$\hbar_{\nu,a}(w)(y,s)\le \phi(x,t)(y,s)\qquad\forall (x,t),\,(y,s)\in\partial\Omega,$$
and thus
\begin{equation}\label{eq.hbar_constr}
    \hbar_{\nu,a}(w)=\inf\{\phi(x,t):\, (x,t)\in\partial\Omega\}.
\end{equation}
Each function $\phi(x,t)$ being a boundary of a cone described above enjoys the Lipschitz bounds
by $a^{-1}$ in spatial and $\frac{a^{-2}}{2x_n}$ is the time variable. From this both claims (i) and (ii) follow.

We omit the proof of part (iii) as it is identical to that of given in \cite{DHM}.
\end{proof}

The next two statements appear in \cite{DHM} and apply in our settings with analogous proofs.

\begin{lemma}\label{l6} 
Assume that $w$ is defined as in \eqref{def.w} and $\Omega=\R^n_+\times\R$. 
Then for any $a>0$ there exists $0<b_0=b_0(a)$ and $\gamma_0=\gamma_0(a)>0$ such that the following holds. 
Having fixed an arbitrary $\nu>0$, $b\ge b_0$, and $\gamma\le\gamma_0$ for each point $(x,t)\in\partial\Omega$ from the set 
\begin{equation}\label{Eqqq-17}
\big\{(x,t):\,N_{a}(w)(x,t)>\nu\mbox{ and }S_{b}(\nabla u)(x,t)+ A_{b}(\nabla u)(x,t)\leq\gamma\nu\big\}
\end{equation}
there exists a boundary ball $R$ with $(x,t)\in 2R$ and such that
\begin{equation}\label{Eqqq-18}
\big|w\big(z,\hbar_{\nu,a}(w)(z,\tau),\tau\big)\big|>\nu/{2}\,\,\text{ for all }\,\,(z,\tau)\in R.
\end{equation}
\end{lemma}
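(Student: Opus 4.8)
The statement is the parabolic analogue of the key geometric lemma from \cite{DHM}, and the proof proceeds by the same dichotomy argument, but one must be careful about the parabolic scaling of the cones $\gamma_a$ defined in \eqref{newG} and the corresponding Lipschitz bounds from Lemma \ref{S3:L5}. Fix $(x,t)$ in the set \eqref{Eqqq-17}. Write $x_n:=\hbar_{\nu,a}(w)(x,t)$, so that by definition of $\hbar_{\nu,a}$ and continuity of $w$ there is a point $(P_0,\tau_0)=(y_0,y_{0,n},\tau_0)\in\partial\gamma_a(x,x_n,t)$ with $w(P_0,\tau_0)=\nu$; moreover $\hbar_{\nu,a}(w)(y_0,\tau_0)=y_{0,n}$ by part (iii) of Lemma \ref{S3:L5}. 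The plan is to choose the ball $R$ to be a boundary parabolic ball centered at $(y_0,\tau_0)$ of radius $c\,x_n$ for a small dimensional constant $c$ (to be fixed), and then to propagate the lower bound $w\gtrsim\nu$ along the graph $z\mapsto(z,\hbar_{\nu,a}(w)(z,\tau),\tau)$ for $(z,\tau)\in R$, using the smallness of $S_b(\nabla u)+A_b(\nabla u)$ at $(x,t)$. The requirement $(x,t)\in 2R$ will follow because $(x,t)$ and $(y_0,\tau_0)$ differ at scale comparable to $x_n$ once $a$ is fixed, provided $b_0$ is taken large enough relative to $a$ so that the cone $\gamma_{b_0}(x,t)$ used to measure $S,A$ genuinely contains the relevant Whitney regions.

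\textbf{Key steps.} First I would record the oscillation estimate: for two nearby points $(Z_1,s_1),(Z_2,s_2)$ in the interior at comparable heights $\delta\approx x_n$, one has $|w(Z_1,s_1)-w(Z_2,s_2)|\lesssim$ the integral of $|\nabla(\delta\,w_{\text{loc}})|$ type quantity, which upon integrating against $\delta^{-n}$ over a chain of Whitney balls joining them is controlled by $\big(S_b(\nabla u)+A_b(\nabla u)\big)$ evaluated at an appropriate boundary point, here $(x,t)$ — this is where we use that $\nabla u$ has a Caccioppoli/reverse-H\"older improvement (Lemma \ref{lem.gradL2-L1}) so that the $L^2$ average $w$ is comparable to an $L^1$ average and is H\"older continuous in the interior (Lemma \ref{L:int_Holder}). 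Concretely, for $(z,\tau)\in R$, the point $(z,\hbar_{\nu,a}(w)(z,\tau),\tau)$ lies at height $\approx x_n$ (by the Lipschitz bounds \eqref{derh}--\eqref{derh'}, since $R$ has radius $\ll x_n$) and can be joined to $(P_0,\tau_0)$ by a bounded number of Whitney balls all at height $\approx x_n$ and contained in the cone $\gamma_b(x,t)$; telescoping the H\"older oscillation estimate along this chain and bounding the resulting sum by $C\gamma\nu\le\frac{\nu}{2}$ when $\gamma\le\gamma_0(a)$ gives \eqref{Eqqq-18}. The choices are made in the order: fix $a$; choose $b_0=b_0(a)$ so that all Whitney balls within parabolic distance $\lesssim x_n$ of $(x,x_n,t)$ at heights in $[x_n/C,Cx_n]$ lie in $\gamma_{b_0}(q,\tau)$ for every $(q,\tau)$ within distance $c\,x_n$ of $(x,t)$; then choose the radius $c$ of $R$ small enough (depending on $a$, the H\"older exponent $\alpha$, and the implied constants) so that the chain has a controlled length and the oscillation is at most $\frac{\nu}{2}$; finally choose $\gamma_0=\gamma_0(a)$ accordingly.

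\textbf{Main obstacle.} The delicate point — and the reason the parabolic case is genuinely different from \cite{DHM} — is the time-direction geometry of the cones: the boundary of $\gamma_a(X,t)$ is only Lipschitz in $t$ with constant $\frac{a^{-2}}{2x_n}$, i.e. the Lipschitz constant degenerates as $x_n\to 0$. One must therefore verify that the graph $z\mapsto\hbar_{\nu,a}(w)(z,\tau)$ genuinely stays at height comparable to $x_n$ over the whole ball $R$ (so that the degeneracy is harmless at the working scale), and that the chain of Whitney balls connecting $(z,\hbar(z,\tau),\tau)$ to $(P_0,\tau_0)$ respects both the spatial and parabolic-time metrics — in particular the time increments $|\tau-\tau_0|\le (c x_n)^2$ must be small enough relative to $x_n^2$ that the points remain in a single non-tangential cone of aperture $b$. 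I expect this to be a matter of carefully tracking the parabolic distances $d_p$ and invoking \eqref{Eqqq-5} rather than \eqref{derh'} in the crucial estimates, but it is where the constants $b_0(a)$ and $\gamma_0(a)$ are really pinned down; everything else is the standard telescoping H\"older argument.
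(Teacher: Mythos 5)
Your overall strategy is the one the paper intends: the paper itself only says the proof is analogous to the elliptic argument of \cite{DHM}, with the single new ingredient being that the area function $A_b(\nabla u)$ lets one run a Poincar\'e inequality in all variables including $t$; your telescoping/chaining of space--time Poincar\'e estimates over Whitney balls inside $\gamma_b(x,t)$, controlled by $S_b(\nabla u)+A_b(\nabla u)$, and your attention to the degenerate time-Lipschitz bound \eqref{Eqqq-5} capture exactly that. (Two small points of hygiene: the oscillation of $w$ between overlapping Whitney balls should be run through the Poincar\'e inequality with $\nabla^2u$ and $\partial_t\nabla u$ — which is precisely what $S_b(\nabla u)$ and $A_b(\nabla u)$ weight — rather than through interior H\"older continuity; Lemma \ref{L:int_Holder} concerns $u$, not $\nabla u$ or its $L^2$ averages.)

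There is, however, a genuine gap in the geometric setup of $R$. You take the touching point $(y_0,y_{0,n},\tau_0)\in\partial\gamma_a(x,x_n,t)$ from Lemma \ref{S3:L5}(iii), center $R$ there with radius $c\,x_n$ for small $c$, and assert that $(x,t)\in 2R$ because ``$(x,t)$ and $(y_0,\tau_0)$ differ at scale comparable to $x_n$'', and that the graph over $R$ sits at height $\approx x_n$. Neither claim is justified: all that the construction gives is $(y_0,y_{0,n},\tau_0)\in\overline{\gamma_a(x,t)}$, i.e. $\|(x-y_0,t-\tau_0)\|\le a\,y_{0,n}$, and $y_{0,n}=\hbar_{\nu,a}(w)(y_0,\tau_0)$ need not be comparable to $x_n=\hbar_{\nu,a}(w)(x,t)$ (the large values of $w$ forcing $N_a(w)(x,t)>\nu$ may occur far up and near the lateral edge of the cone, in which case $x_n$ is tiny while $y_{0,n}$, and hence the separation $\approx a(y_{0,n}-x_n)$, is large). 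With radius $c\,x_n$, $c$ small, the inclusion $(x,t)\in 2R$ — which is one of the two conclusions of the lemma — simply fails in this scenario. The correct scale for $R$ is $a\,y_{0,n}$ (the height of the touching point): a ball centered at $(y_0,\tau_0)$ of radius comparable to $a\,y_{0,n}$ contains $(x,t)$ in a fixed dilate, and over such a ball the graph heights stay comparable to $y_{0,n}$ — but this must now be checked from \eqref{Eqqq-5} (the spatial term alone eats $a^{-1}\cdot\tfrac{a}{2}y_{0,n}=\tfrac12 y_{0,n}$, and the time term $\frac{a^{-2}}{2\min\hbar}|t-s|$ requires a short bootstrap, e.g. via $\partial_t(\hbar^2)\le a^{-2}$), and the resulting Whitney chain then has length $C(a)$ rather than a dimensional constant, which is exactly why $\gamma_0$ and $b_0$ are allowed to depend on $a$ in the statement. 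So the fix is routine once the scale is corrected, but as written your choice of radius and your order of choosing $c$ small do not yield $(x,t)\in 2R$, and the ``height $\approx x_n$'' claim on which your chain-length bound rests is unsupported.
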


A new feature here (distinct from  \cite{DHM}) is the presence of the area function \eqref{DefArea} that allows us to use Poincar\'e inequality in all variables (including $t$). (Since \cite{DHM} deals with the elliptic case, the $t$-variable was not present there.)

\begin{corollary}\label{S3:L6} 
Let $w$ be defined as in \eqref{def.w}, $\Omega=\R^n_+\times\R$ and fix $a>0$. 
Associated with these, let $b_0,\gamma_0$ be as in Lemma~\ref{l6}. Then there exists a finite 
constant $C=C(n)>0$ with the property that for any $\nu>0$, $b\ge b_0$, $\gamma\in(0,\gamma_0]$, and any point $(x,t)\in E_{\nu,a}$
such that $S_{b}(\nabla u)((x,t))+{ A_{b}(\nabla u)((x,t))}\leq\gamma\nu$, one has
\begin{equation}\label{Eqqq-23}
(M_{\hbar_{\nu,a}}w)\big(x,\hbar_{\nu,a}(x,t),t\big)\geq\,C\nu,
\end{equation}
where $M_{\hbar_{\nu,a}}$ is the Hardy-Littlewood maximal function on the graph of $\hbar_{\nu,a}$.
\end{corollary}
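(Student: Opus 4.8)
\textbf{Proof plan for Corollary~\ref{S3:L6}.} The statement asserts that at the stopping-time height $\hbar_{\nu,a}(x,t)$, the Hardy–Littlewood maximal function (taken along the Lipschitz graph of $\hbar_{\nu,a}$) of the averaged solution $w$ is bounded below by a dimensional constant times $\nu$, whenever the square and area functions are small at $(x,t)$. The plan is to combine Lemma~\ref{l6} with the elementary geometry of the graph of $\hbar_{\nu,a}$. First I would invoke Lemma~\ref{l6}: for $b\ge b_0$ and $\gamma\le\gamma_0$, since $(x,t)\in E_{\nu,a}$ and $S_b(\nabla u)(x,t)+A_b(\nabla u)(x,t)\le\gamma\nu$, we obtain a boundary parabolic ball $R$ with $(x,t)\in 2R$ such that $|w(z,\hbar_{\nu,a}(w)(z,\tau),\tau)|>\nu/2$ for all $(z,\tau)\in R$.

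Next I would transfer this lower bound on $R\subset\partial\Omega$ to a lower bound for the maximal function on the graph. Write $x_n=\hbar_{\nu,a}(x,t)$ and consider the point $P=(x,x_n,t)$ lying on the graph $\Sigma:=\{(z,\hbar_{\nu,a}(z,\tau),\tau):(z,\tau)\in\partial\Omega\}$. The function $g(z,\tau):=w(z,\hbar_{\nu,a}(z,\tau),\tau)$ is the lift of $w$ to $\Sigma$, and $M_{\hbar_{\nu,a}}w$ evaluated at $P$ is the (parabolic) Hardy–Littlewood maximal function of $g$ at the base point $(x,t)$. Since $g>\nu/2$ on all of $R$ and $(x,t)\in 2R$, averaging $g$ over the parabolic ball $2R$ (which is an admissible ball in the maximal-function supremum at $(x,t)$) gives
\begin{equation}\label{eq:avg-lb}
\fint_{2R} g\,\gtrsim\,\frac{|R|}{|2R|}\cdot\frac{\nu}{2}\,\gtrsim\,\nu,
\end{equation}
using that $|R|\approx|2R|$ with a dimensional constant and that $g$ is nonnegative everywhere (so the part of the integral over $2R\setminus R$ only helps). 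Here I am using that the underlying space $\partial\Omega=\R^{n-1}\times\R$ with parabolic quasi-distance is a space of homogeneous type, so the doubling constant is dimensional; and that the bi-Lipschitz parametrization $(z,\tau)\mapsto(z,\hbar_{\nu,a}(z,\tau),\tau)$ of $\Sigma$ (with Lipschitz bounds from Lemma~\ref{S3:L5}(ii)) comparably identifies the intrinsic maximal function on $\Sigma$ with the parabolic maximal function on $\partial\Omega$, up to constants depending only on $n$ and $a$. Taking the supremum over admissible balls then yields $(M_{\hbar_{\nu,a}}w)(x,x_n,t)\ge C\nu$, which is \eqref{Eqqq-23}.

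The main obstacle I anticipate is purely bookkeeping rather than conceptual: one must be careful that the ball $R$ produced by Lemma~\ref{l6} is indeed one of the balls over which the maximal function $M_{\hbar_{\nu,a}}$ at $P$ takes its supremum — i.e. that the maximal function is defined with respect to parabolic balls in the base variables $(z,\tau)$ on the graph, and that ``$(x,t)\in 2R$'' is precisely the containment needed to make $2R$ an admissible averaging ball centered appropriately for the \emph{uncentered} maximal operator. If $M_{\hbar_{\nu,a}}$ were the centered maximal function one would instead use a ball of radius $\approx\mathrm{rad}(R)$ centered at $(x,t)$ that contains a fixed fraction of $R$, and the doubling property again absorbs the loss. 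In either case the constant $C$ depends only on $n$ (and on $a$ through the Lipschitz character of $\Sigma$, but $a$ is a fixed background parameter), matching the statement. I would also remark that nonnegativity of $w$ is what lets us discard the contribution of $2R\setminus R$ with no sign issues, so no further lower bound on $g$ outside $R$ is needed.
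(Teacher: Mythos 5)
Your proposal is correct and follows essentially the same route as the paper, which defers this corollary to \cite{DHM} with an "analogous proof": one feeds the hypotheses into Lemma~\ref{l6} to get the surface ball $R$ with $w>\nu/2$ on $R$ and $(x,t)\in 2R$, and then a single averaging over a comparably sized ball on the graph, together with nonnegativity of $w$, doubling, and the bi-Lipschitz identification of the graph measure with $dx\,dt$, yields \eqref{Eqqq-23}. Nothing further is needed.
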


Finally, we are ready to state the key lemma that allows us to formulate a good-$\lambda$ inequality later.
From here we follow more closely the elliptic model case of \cite{DHP}. Notice, that up to this point we have not used any property of $\nabla u$ except a moderate decay $x_n\to\infty$ and the fact that $\nabla u\in L^2_{loc}(\Omega)$. This changes now as we will use the PDE system it satisfies.\vglue1mm

Consider the PDEs satisfied by each $w_m=\partial_m u$ for $m=1,2,
\dots, n-1$. Due to the block form nature of our operator $\mathcal L=-\partial_t+\mbox{div}(A\nabla \cdot)$ we have the following:
\begin{eqnarray}\label{system}
{\mathcal L}w_m&=&-\sum_{i,j=1}^{n-1}\partial_i((\partial_m a_{ij})w_j)\quad\mbox{in }\Omega,\quad m=1,2,\dots,n-1,\\
w_m\Big|_{\partial\Omega}&=&\partial_m f.\nonumber
\end{eqnarray}
Observe that only $w_1,\dots, w_{n-1}$ appears in these equations and hence \eqref{system} is a {\it weakly coupled} fully determined system of $n-1$ equations for the unknown vector valued function 
\[W:=(w_1,w_2\dots,w_{n-1})\]
with boundary datum $W\big|_{\partial\Omega}=\nabla_xf\in L^p$. We call this system {\it weakly coupled} because each $\partial_ma_{ij}$ appearing on the righthand side will be assumed to have a small Carleson measure norm, $\|\mu_\parallel\|_C$. 

Hence, let us write $w_m=v_m+\eta_m$ where each $v_m$ solves the parabolic Dirichlet problem 
$$\mathcal Lv_m=0\mbox{ in }\Omega,\qquad v_m\big|_{\partial\Omega}=\partial_mf\in L^q(\partial\Omega).$$
We denote 
\[
V:=(v_1,v_2,\dots,v_{n-1}).
\]
As $\mathcal L$ is a block form matrix we know this Dirichlet problem is solvable for all $1<q<\infty$ (c.f. Appendix \ref{APA}) and we have the following square and nontangential estimates:
\begin{equation}\label{sss1}
\|S(v_m)\|_{L^q}\approx\|\tilde{N}(v_m)\|_{L^q}\lesssim \|\partial_m f\|_{L^q},\qquad m=1,2,\dots,n-1.
\end{equation}
Thus each $\eta_m$ solves
\begin{eqnarray}\label{system2}
{\mathcal L}\eta_m&=&-\sum_{i,j=1}^{n-1}\partial_i((\partial_m a_{ij})(v_j+\eta_j))\quad\mbox{in }\Omega,\quad m=1,2,\dots,n-1,\\
\eta_m\Big|_{\partial\Omega}&=&0.\nonumber
\end{eqnarray}
Our aim is to establish  nontangential estimates for each $\eta_m$ as well, and thus also for $w_m$.

\begin{lemma}\label{S3:L8-alt1} 
Let $\Omega={\mathbb R}^n_+\times\R$ and let ${\mathcal L}$ be a block-form operator as above.
Suppose $\vec\eta=(\eta_1,\eta_2,\dots,\eta_{n-1})$ is a weak solution of \eqref{system2} in $\Omega$.  For a fixed (sufficiently large $a>0$ determined below), consider an arbitrary function $\hbar:{\mathbb R}^{n-1}\times\R\to \mathbb R$ such that it satisfies
the estimates \eqref{Eqqq-5}, \eqref{derh} and $\hbar\ge 0$. 
Then 
we have the following:\vglue1mm

For any $\varepsilon>0$ there exists $\delta>0$
such that if $\left[\|\mu_{||}\|_C+\|x_n\nabla_xA_{||}\|^2_{L^\infty}\right]\Big(1+\|x_n\nabla A\|^2_{L^\infty}\Big)<\delta$ (where $\mu_{||}$ defined by \eqref{def.mu11}) then
for all arbitrary parabolic surface balls $\Delta_r\subset{\mathbb R}^{n-1}\times\R$ of radius $r$ such that at least one point of $\Delta_r$
the inequality $\hbar(x,t)\le 2r$ holds we have the following estimate for all $m=1,2,\dots,n-1$ and an arbitrary $\vec{c}=(c_1,c_2,\dots,c_{n-1})\in\mathbb R^{n-1}$:

\begin{multline}\label{TTBBMM}
\sum_{m<n}\int_{1/6}^6\int_{\Delta_r}\big|\eta_m\big(x,\theta\hbar(x,t),t\big)-c_m\big|^2\,dx\,dt\,d\theta
\leq C(1+\varepsilon^{-1})\iint_{\cS(\Delta_r,\hbar)}\abs{\nabla\eta_m}^2x_n\, dx_ndtdx\\
+\varepsilon\br{\|\tilde{N}_a\br{(\vec\eta-\vec{c})\1_{\cS(\Delta_r,\hbar)}}\|_{L^2(\Delta_{2r})}^2
+\|\tilde{N}_a(\vec\eta\,\1_{\cS(\Delta_r,\hbar)})\|_{L^2(\Delta_{2r})}^2+\|\tilde{N}_a(V\1_{\cS(\Delta_r,\hbar)})\|_{L^2(\Delta_{2r})}^2}\\
+\frac{C}{r}\iint_{\mathcal{K_\varepsilon}}|\vec\eta-\vec{c}|^{2}\,dX\,dt,
\end{multline}
for some $C\in(0,\infty)$ that only depends on $a,\Lambda,n$ but not on $\vec\eta$, $\vec{c}$, $\varepsilon$ or $\Delta_r$. 
Here, \[\cS(\Delta_r,\hbar):=\set{(x,x_n,t):\, (x,t)\in\Delta_{2r} \text{ and } \frac{\hbar(x,t)}{6}<x_n<c_ar},\]
where $c_a$ is a constant depending only on $a$ (one can take $c_a=26(1+2a^{-1})$ for instance), and $\mathcal K_{\varepsilon}:=\cS(\Delta_r,\hbar)\cap\set{(x,x_n,t): x_n>\varepsilon r}$.

Also for $w_n=\partial_{x_n} u$ we have for an arbitrary $c\in\R$
\begin{multline}\label{TTBBMMx}
\int_{1/6}^6\int_{\Delta_r}\big|w_n\big(x,\theta\hbar(x,t),t\big)-c\big|^2\,dx\,dt\,d\theta
\leq C(1+\varepsilon^{-1})\iint_{\cS(\Delta_r,\hbar)}\abs{\nabla w_n}^2x_n\, dx_ndtdx\\
+C(\norm{\mu}_C+\|x_n|\nabla A_\parallel|\|_{L^\infty}^2)(1+\|x_n|\nabla A_\parallel|\|_{L^\infty}^2)\|\tilde N_a(W\1_{\cS(\Delta_r,\hbar)})\|_{L^2(\Delta_{2r})}^2\\+\varepsilon \|\tilde{N}_a\br{(w_n-c)\1_{\cS(\Delta_r,\hbar)}}\|_{L^2(\Delta_{2r})}^2
+\frac{C}{r}\iint_{\mathcal{K_\varepsilon}}|w_n-c|^{2}\,dX\,dt,
\end{multline}

The cones used to define the nontangential 
maximal functions in this lemma have vertices on $\partial\Omega$.

\end{lemma}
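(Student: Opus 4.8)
\textbf{Proof plan for Lemma \ref{S3:L8-alt1}.}

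The plan is to prove \eqref{TTBBMM} and \eqref{TTBBMMx} by a single unified integration-by-parts scheme, exploiting the fact that the left-hand side is an average over $\theta\in(1/6,6)$ of the $L^2$-norm of the slice of $\eta_m$ (or $w_n$) along the scaled graph $x_n=\theta\hbar(x,t)$. The first step is to write, via the fundamental theorem of calculus in the $\theta$-variable (equivalently, in $x_n$ along the graph direction), each slice quantity $|\eta_m(x,\theta\hbar(x,t),t)-c_m|^2$ as a boundary term that can be converted into a solid integral over $\cS(\Delta_r,\hbar)$. Concretely one chooses a cutoff $\chi(x,t)$ adapted to $\Delta_r$ (equal to $1$ on $\Delta_r$, supported in $\Delta_{2r}$, with $r|\nabla\chi|+r^2|\partial_t\chi|\lesssim 1$), multiplies by $\chi^2$, and integrates $\partial_{x_n}\big[(\eta_m-c_m)^2 \Phi(X,t)\big]$ over the region between the graph $x_n=\hbar(x,t)/6$ and the flat top $x_n=c_ar$, where $\Phi$ is a weight (roughly $x_n\chi^2$) chosen so that the two pieces of the boundary produce exactly the left-hand slice integral (up to the harmless constant $\theta$-average) and a top contribution controlled by $\tfrac1r\iint_{\mathcal K_\varepsilon}|\vec\eta-\vec c|^2$. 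The graph boundary term is handled using the Lipschitz bounds \eqref{derh} on $\hbar$, which guarantee the graph has bounded slope at positive height; this is where the modified cones and the estimate $|\partial_t\hbar|\le a^{-2}/(2\hbar)$ enter, ensuring the normal-derivative terms along the graph are comparable to $\partial_{x_n}$ up to $O(a^{-1})$ factors that can be absorbed for $a$ large.

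The second step is to expand the solid integral $\iint \partial_{x_n}\big[(\eta_m-c_m)^2 x_n\chi^2\big]$. Differentiating the product gives (i) a term $\iint (\eta_m-c_m)\,\partial_{x_n}\eta_m\, x_n\chi^2$, (ii) a term $\iint(\eta_m-c_m)^2\chi^2$, and (iii) cross terms with $\nabla\chi,\partial_t\chi$. Term (ii) and the cross terms, by Cauchy–Schwarz and the support properties of $\chi$, feed into the $\tfrac1r\iint_{\mathcal K_\varepsilon}$ term and into $\varepsilon$-small multiples of $\|\tilde N_a((\vec\eta-\vec c)\1_{\cS})\|_{L^2(\Delta_{2r})}^2$; here one uses the standard fact that an $L^2$ solid integral over a Whitney region with weight $x_n$ is bounded by the $L^2$ norm of $\tilde N_a$ over a slightly fattened ball. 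For term (i), one does a further integration by parts to move $\partial_{x_n}$ off $\eta_m$, invoking the PDE \eqref{system2}: $\partial_{x_n}\eta_m$ is not directly controlled, but $\divg(A\nabla\eta_m)$ is, and $a_{nn}=1$ in block form means $\partial^2_{nn}\eta_m = \partial_t\eta_m - \divg_x(A_\parallel\nabla_x\eta_m) - \sum\partial_i((\partial_m a_{ij})(v_j+\eta_j))$. Substituting and integrating by parts in the $x$ and $t$ variables produces (a) square-function-type pieces $\iint|\nabla\eta_m|^2 x_n$, the first main term on the right of \eqref{TTBBMM}, picking up the $(1+\varepsilon^{-1})$ via Young's inequality; (b) terms involving $|\nabla A_\parallel|\,|\nabla\eta|\,|\eta-c|$ and $|\nabla A_\parallel|\,|\nabla\eta|\,|v|$, which by Cauchy–Schwarz and the pointwise bound $x_n|\nabla_x A_\parallel|\le \|x_n\nabla_x A_\parallel\|_{L^\infty}$ together with the Carleson condition \eqref{eq.Carl2} for $\mu_{||}$ are bounded by $\delta$-small multiples of the square function plus $\|\tilde N_a\|_{L^2}^2$ of $\vec\eta-\vec c$, $\vec\eta$, and $V$; (c) the half-derivative-in-time terms coming from $\partial_t\eta_m$, which are peeled using $\partial_t = D_t^{1/2}H_t D_t^{1/2}$ and estimated against the square function $\iint |\nabla\eta_m|^2 x_n$ as in the proof of Theorem \ref{t1}. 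For \eqref{TTBBMMx} the structure is identical except that $w_n=\partial_{x_n}u$ solves \eqref{eqwn} with right-hand side $\divg_x((\partial_n A_\parallel)\nabla_x u)$; the coefficient $\partial_n A_\parallel$ forces the appearance of $\|\mu\|_C$ (not just $\|\mu_{||}\|_C$) and of $\|x_n|\nabla A_\parallel|\|_{L^\infty}$, which is why the bound in \eqref{TTBBMMx} carries those norms rather than only the parallel ones, and why no $\delta$-smallness is available for that term — it is simply carried along with its constant.

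The third and final step is bookkeeping: collect all error terms, choose the implicit constants so that the genuinely $\delta$-small contributions (those multiplied by $\|\mu_{||}\|_C+\|x_n\nabla_x A\|_{L^\infty}^2$ times $1+\|x_n\nabla A\|_{L^\infty}^2$) are, after the smallness hypothesis $\left[\|\mu_{||}\|_C+\|x_n\nabla_x A_{||}\|^2_{L^\infty}\right]\big(1+\|x_n\nabla A\|^2_{L^\infty}\big)<\delta$, absorbable either into $\varepsilon\|\tilde N_a\|^2$ on the right or (if they multiply the square function) into an acceptable fraction; note that the square function term itself appears with a large constant $C(1+\varepsilon^{-1})$ and is \emph{not} absorbed here — it is left on the right-hand side, to be dealt with later when this lemma is combined with $S<N$ estimates. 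The main obstacle I expect is step two's handling of the graph boundary term and the time-derivative terms simultaneously: the region $\cS(\Delta_r,\hbar)$ has a lower boundary that is a Lipschitz graph whose time-Lipschitz constant degenerates like $1/\hbar$ near where $\hbar\to 0$, so one must be careful that the integration by parts in $t$ does not produce boundary contributions on this graph that are not controlled; the resolution is that the weight $x_n$ vanishes there at the right rate (the graph sits at height $\sim\hbar$, and $|\partial_t\hbar|\,\hbar\lesssim a^{-2}$ is bounded), so the degenerate slope is always compensated by the weight, and all graph-boundary terms reduce to the already-accounted-for slice integrals and the $\tfrac1r\iint_{\mathcal K_\varepsilon}$ term. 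A secondary technical point is justifying the manipulations for a general $\hbar$ satisfying only \eqref{Eqqq-5}, \eqref{derh} rather than the specific stopping-time function; this is fine because the proof uses only those Lipschitz bounds and $\hbar\ge 0$, exactly as stated in the hypothesis.
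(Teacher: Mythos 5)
Your overall skeleton does match the paper's: write the slice integral by the fundamental theorem of calculus in $x_n$ over the region above the graph, bring in the weight $x_n$ by a further integration by parts so that $\partial^2_{nn}\eta_m$ appears, substitute from the PDE \eqref{system2} using the block structure ($a_{nn}=1$), integrate by parts tangentially, control the coefficient terms through the smallness of $\|\mu_{||}\|_C+\|x_n\nabla_xA_{||}\|^2_{L^\infty}$ against nontangential maximal functions, and handle graph boundary terms via \eqref{derh} with absorption for $a$ large. But two of your steps, as written, would not go through. The more serious one is your item (c): you propose to treat the $\partial_t\eta_m$ contribution by writing $\partial_t=D^{1/2}_tH_tD^{1/2}_t$ and estimating it against $\iint|\nabla\eta_m|^2x_n$. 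This cannot work: $\partial_t\eta_m$ carries a full time derivative (two spatial derivatives' worth), the right-hand side of \eqref{TTBBMM} contains no half-derivative or area-function term to receive it, and the domain is bounded below by a $t$-dependent graph, over which the nonlocal operator $D^{1/2}_t$ does not integrate by parts. The paper instead writes $2(\eta_m-c_m)\partial_t\eta_m=\partial_t[(\eta_m-c_m)^2]$ and integrates by parts in $t$ directly: the graph boundary term carries $|\nu_t|\,dS\le\tfrac{a^{-2}}{2x_n}\,dx\,dt$ and is absorbed into the slice integral for $a$ large (the mechanism you do sketch in your final paragraph, inconsistently with (c)), while the interior term with $\partial_t\zeta\lesssim r^{-2}$ is split at height $x_n=\varepsilon r$; via the cone/Fubini bound \eqref{eq.Neta-c} the low part gives $\varepsilon\|\tilde N_a((\vec\eta-\vec c)\1_{\cS(\Delta_r,\hbar)})\|_{L^2(\Delta_{2r})}^2$ and the high part gives $\tfrac Cr\iint_{\mathcal K_\varepsilon}|\vec\eta-\vec c|^2$. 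This is precisely where those two right-hand-side terms originate; your plan never produces them from an identified source.

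The second problem is the one-shot weighted set-up in your Step 1. If the weight is $\Phi\approx x_n\chi^2$, the bottom boundary term is $\theta\hbar\,(\eta_m-c_m)^2\chi^2$, which does not dominate the slice integral (it vanishes where $\hbar=0$), so the claimed identity fails; if instead $\Phi\approx\chi^2$ so that the boundary term is the slice, then your term (ii), the unweighted bulk integral $\iint_{\cS}(\eta_m-c_m)^2\chi^2$, appears with an $O(1)$ constant, and by the cone computation it is only bounded by $Cr\,\|\tilde N_a((\vec\eta-\vec c)\1_{\cS})\|^2$, a factor $r$ too large to be absorbed into \eqref{TTBBMM}. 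The paper avoids this by two successive integrations by parts in $x_n$: first without any weight but with a vertical cutoff $\zeta_0(x_n)$, so the only unweighted bulk term carries $\partial_{x_n}\zeta_0$, supported at heights $\gtrsim r$ and hence bounded by $\tfrac Cr\iint_{\mathcal K_\varepsilon}$; then writing $1=\partial_{x_n}x_n$ in the remaining term and integrating by parts again, which is what produces the weighted square function, the $\partial^2_{nn}\eta_m$ term and the graph boundary terms. With these two repairs (and the same bookkeeping for $w_n$, where $\partial_nA_\parallel$ forces $\|\mu\|_C$ and the $\tilde N_a(W\1_{\cS})$ term, as you correctly note), your argument becomes the paper's proof.
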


\begin{proof} Let $\Delta_r$ be as in the statement of our Lemma. and assume that $(q,\tau)$ is the center of $\Delta_r$. Let $\zeta$ be a smooth cutoff function of the form $\zeta(x,x_n,t)=\zeta_{0}(x_n)\zeta_{1}(x,t)$ where
\begin{equation}\label{Eqqq-27}
\zeta_{0}= 
\begin{cases}
1 & \text{ in } (-\infty, r_0+r], 
\\
0 & \text{ in } [r_0+2r, \infty),
\end{cases}
\qquad
\zeta_{1}= 
\begin{cases}
1 & \text{ in } \Delta_{r}(q,\tau), 
\\
0 & \text{ in } \mathbb{R}^{n}\setminus \Delta_{2r}(q,\tau)
\end{cases}
\end{equation}
and
\begin{equation}\label{Eqqq-28}
r|\partial_{x_n}\zeta_{0}|+r|\nabla_{x}\zeta_{1}|+r^2|\partial_t\zeta_{1}|\leq c
\end{equation}
for some constant $c\in(0,\infty)$ independent of $r$. Here 
$r_0=6\sup_{(x,t)\in \Delta_r}\hbar(x,t)$. Observe that our assumptions imply that
$$0\le r_0-\theta\hbar(x,t)\le  r_0\le 24(1+a^{-1}) r,\qquad \mbox{for all }(x,t)\in \Delta_{2r},$$
for $\theta\in (1/6,6)$.

We focus on \eqref{TTBBMM} first. 
Our goal is to control the $L^2$ norm of $\eta_m\big(\cdot,\theta\hbar(\cdot)\big)-c_m$.  We fix $m\in\{1,\dots,n-1\}$ and proceed to estimate
\begin{align}
&\hskip -0.20in
\int_{\Delta_{r}(q,\tau)}(\eta_m(x,\theta\hbar(x,t),t)-c_m)^2\,dx\,dt \le \mathcal I:=\int_{\Delta_{2r}(q,\tau)}(\eta_m(x,\theta\hbar(x,t),t)-c_m)^2\zeta(x,\theta\hbar(x,t),t)\,dx\,dt
\nonumber\\[4pt]
&\hskip 0.70in
=-\iint_{\mathcal S(q,\tau,r,r_0,\theta\hbar)}\partial_{x_n}\left[(\eta_m(x,x_n,t)-c_m)^2\zeta(x,x_n,t)\right]\,dx_n\,dx\,dt
\nonumber
\end{align}
where $\mathcal S(q,\tau,r,r_0,\theta\hbar)=\{(x,x_n,t):(x,t)\in \Delta_{2r}(q,\tau)\mbox{ and }\theta\hbar(x,t)<x_n<r_0+2r\}$. Hence:

\begin{align}\nonumber
&\hskip 0.10in
\mathcal I \le-2\iint_{\mathcal S(q,\tau,r,r_0,\theta\hbar)}(\eta_m-c_m)\partial_{x_n}(\eta_m-c_m)\zeta\, dx_n\,\,dt\,dx  
\\[4pt]
&\hskip 0.70in
\quad-\iint_{\mathcal S(q,\tau,r,r_0,\theta\hbar)}(\eta_m-c_m)^2(x,x_n,t)\partial_{x_n}\zeta\,dx_n\,dx\,dt
=:\mathcal{A}+V.\label{u6tg}
\end{align}
We further expand the term $\mathcal A$ as a sum of three terms obtained 
via integration by parts with respect to $x_n$ as follows:
\begin{align}\label{utAA}
\mathcal A &=-2\iint_{\mathcal S(q,\tau,r,r_0,\theta\hbar)}(\eta_m-c_m)\partial_{x_n} 
(\eta_m-c_m)(\partial_{x_n}x_n)\zeta\,d_n\,dt\,dx 
\nonumber\\[4pt]
&=2\iint_{\mathcal S(q,\tau,r,r_0,\theta\hbar)}\left|\partial_{x_n}\eta_m\right|^{2}x_n\zeta\,dx_n\,dt\,dx 
\nonumber\\[4pt]
&\quad +2\iint_{\mathcal S(q,\tau,r,r_0,\theta\hbar)}(\eta_m-c_m)\partial^2_{x_nx_n}(\eta_m-c_m)x_n\zeta\,dx_n\,dt\,dx 
\nonumber\\[4pt]
&\quad +2\iint_{\mathcal S(q,\tau,r,r_0,\theta\hbar)}(\eta_m-c_m)\partial_{x_n}\eta_m\,x_n\partial_{x_n}\zeta\,dx_n\,dt\,dx
\nonumber\\[4pt]
&\quad -2\int_{\mathcal \partial S(q,\tau,r,r_0,\theta\hbar)}(\eta_m-c_m)\partial_{x_n}\eta_m\,x_n\zeta\,\nu_n dS
\nonumber\\[4pt]
&=:I+II+III+IV.
\end{align}

Here the boundary term $IV$ is nonvanishing only on the graph of function $\theta\hbar$.

We start by analyzing the term $II$. As the $\eta_m$ solve the PDE \eqref{system2} then we have for $\eta_m-c_m$:
$${\mathcal L}(\eta_m-c_m)=-\sum_{i,j=1}^{n-1}\partial_i((\partial_m a_{ij})(v_j+\eta_j))$$
and thus using the block-form structure of our matrix:
\begin{equation}\label{S3:T8:E01-x}
\partial^2_{x_nx_n}(\eta_m-c_m)=-\sum_{i,j=1}^{n-1}\partial_i((\partial_m a_{ij})(v_j+\eta_j))-\sum_{i,j=1}^{n-1}\partial_i(a_{ij}\partial_j(\eta_m-c_m))+\partial_t(\eta_m-c_m).
\end{equation}
In turn, this permits us to write the term $II$ as
\begin{align}
II &=-2\sum_{i,j<n}\iint_{\mathcal S(q,\tau,r,r_0,\theta\hbar)}(\eta_m-c_m)\partial_{i}\left({a}_{ij}\partial_{j}\eta_m\right)x_n\zeta\,dx_n\,dt\,dx
\nonumber\\[4pt]
&\quad-2\sum_{i,j<n}\iint_{\mathcal S(q,\tau,r,r_0,\theta\hbar)}(\eta_m-c_m)\partial_{i}\left((\partial_m a_{ij})(v_j+\eta_j)\right)x_n\zeta\,dx_n\,dt\,dx \nonumber\\
&\quad+2\iint_{\mathcal S(q,\tau,r,r_0,\theta\hbar)}(\eta_m-c_m)\partial_{t}(\eta_m-c_m)x_n\zeta\,dx_n\,dt\,dx=\mathcal B+\mathcal C+II_8.
\end{align}
Here we leave analysis of the last term $II_8$ for later and continue with the first two terms which we  both integrate by parts w.r.t. $\partial_i$. This then yields
\begin{align}
\mathcal B+\mathcal C&=2\sum_{i,j<n}\iint_{\mathcal S(q,\tau,r,r_0,\theta\hbar)}a_{ij}\partial_i\eta_m\partial_{j}\eta_m\,x_n\zeta\,dx_n\,dt\,dx
\nonumber\\[4pt]
&+2\sum_{i,j<n}\iint_{\mathcal S(q,\tau,r,r_0,\theta\hbar)}a_{ij}(\eta_m-c_m)\partial_{j}(\eta_m)\,x_n\partial_i\zeta)\,dx_n\,dt\,dx
\nonumber\\[4pt]
&+2\sum_{i,j<n}\iint_{\mathcal S(q,\tau,r,r_0,\theta\hbar)}(\partial_m a_{ij})(\partial_i\eta_m)(v_j+\eta_j)x_n\zeta\,dx_n\,dt\,dx 
\nonumber\\[4pt]
&+2\sum_{i,j<n}\iint_{\mathcal S(q,r,r_0,\theta\hbar)}(\partial_m a_{ij})(\eta_m-c_m)(v_j+\eta_j)x_n(\partial_i \zeta)\,dx_n\,dt\,dx 
\nonumber\\[4pt]
&\quad-2\sum_{i>0}\int_{\partial\mathcal S(q,\tau,r,r_0,\theta\hbar)}(\mbox{boundary terms})x_n\zeta\nu_i\,dS
\nonumber\\[4pt]
&=:II_{1}+II_{2}+II_{3}+II_{4}+II_{5}.\label{TFWW}
\end{align}
The boundary integral (term $II_5$) vanishes everywhere except on the graph of the function $\theta\hbar$. Note that on the graph of $\theta\hbar$,
\[
\nu_i=\frac{-\theta \dr_i \hbar}{J_h}dxdt, \quad dS=J_h\, dxdt \quad\text{for }i=1,\dots,n-1,
\]
where $J_h:=\br{1+(\theta\nabla_x\hbar)^2+(\theta\dr_t\hbar)}^{1/2}$, and so 
$\abs{\nu_i}dS\le\theta\abs{\dr_i\hbar}dxdt\le \theta\, a^{-1}dxdt$. Hence,
\begin{align*}
&|II_5|\nonumber\\
&\le C\sum_{i,j<n}\int_{\Delta_{2r}(q,\tau)}|(\eta_m-c_m)(x,\theta\hbar(x,t),t))\nabla_x(\eta_m)(x,\theta\hbar(x,t),t)\hbar(x,t)\zeta(x,\theta\hbar(x),t) |dt\,dx\nonumber\\
&\,+ C\sum_{i,j<n}\int_{\Delta_{2r}(q,\tau)}|\partial_m a_{ij}||(\eta_m-c_m)(x,\theta\hbar(x,t),t))(\eta_j+v_j)(x,\theta\hbar(x,t),t)\hbar(x,t)\zeta(x,\theta\hbar(x,t),t) |dt\,dx\nonumber\\
&\le \frac16\int_{\Delta_{2r}(q,\tau)}(\eta_m(x,\theta\hbar(x,t),t)-c_m)^2\zeta(x,\theta\hbar(x,t),t)\,dt\,dx\nonumber\\&\quad+C'
\int_{\Delta_{2r}(q,\tau)}|\nabla_x \eta_m(x,\theta\hbar(x,t),t)|^2|\hbar(x,t)|^2\,dt\,dx\\\nonumber
&\quad +\|x_n\nabla_xA_{||}\|_{L^\infty}^2\int_{\Delta_{2r}(q,\tau)}(|\vec\eta+V|^2)(x,\theta\hbar(x,t),t)\,dt\,dx
=\frac13\mathcal I+II_6+II_7.
\end{align*}

Here we have used the Cauchy-Schwarz for the first two terms.
 We can hide the term $\frac16\mathcal I$ on the left-hand side of \eqref{u6tg}, while the second term after integrating $II_6$ in $\theta$ becomes:
\begin{align}
\int_{1/6}^6|II_6|\,d\theta&\le C\int_{1/6}^6\int_{\Delta_{2r}(q,\tau)}|\nabla \eta_m(x,\theta\hbar(x,t),t)|^2 |\hbar(x,t)|^2dt\,dxd\theta.\nonumber\\
&\le C\iint_{\cS(q,\tau,r,r_0,\frac16\hbar)}|\nabla \eta_m(x,x_n,t)|^2 x_n\,dt\,dxdx_n.\label{eq10.31}
\end{align}
The term $II_7$ can be estimated using the nontangential maximal function: for $(x,t)\in\Delta_{2r}$, we have $\abs{\vec\eta+\vec{V}}(x,\theta\hbar(x,t),t)\le N((\vec\eta+\vec{V})\,\1_{\cSS})(x,t)$, and thus, using Lemma \ref{spCarl}
\begin{multline*}
    |II_7|
    \le
\|x_n\nabla_xA_\parallel\|_{L^\infty}^2\Big(1+\|x_n\nabla A_\parallel\|^2_{L^\infty}\Big)\\
\cdot\left(\|\tilde{N}_a(\vec\eta\,\1_{\cSS})\|_{L^2(\Delta_{2r})}^2+\|\tilde{N}_a(V\1_{\cSS})\|_{L^2(\Delta_{2r})}^2\right).
\end{multline*}

It follows that this term will appear in \eqref{TTBBMM} with an $\varepsilon$, if we have $\|x_n\nabla_xA_{||}\|_{L^\infty}^2\Big(1+\|x_n\nabla A\|^2_{L^\infty}\Big)$ sufficiently small.
Observe that the boundary term $IV$ enjoys a similar estimate. Then $\nu_n dS=-\, dt\, dx$ on the graph of $\theta\hbar$ and by Cauchy-Schwarz we again obtain the term $\frac16\mathcal I$ that can be absorbed by the lefthand side
and a term analogous to $II_6$ but with $|\partial_{x_n}\eta_m|^2$ instead of $|\nabla_x\eta_m|^2$ but this does not change
anything in the estimate \eqref{eq10.31} and we obtain an identical bound for it.

Some of the remaining (solid integral) terms that are of the same type we estimate together. Firstly, we have 
\begin{equation}\label{Eqqq-29}
|I+II_1|\le C\iint_{\cSS}\abs{\nabla\eta_m}^2x_n\, dx_ndtdx.
\end{equation}
%
%
Next, since $r|\nabla\zeta|\le c$, if the derivative falls on the cutoff function $\zeta$ we have
\begin{align*}
|II_2+III| &\lesssim \iint_{\cSS}\left|\nabla \eta_m\right||\vec\eta-\vec{c}|\frac{x_n}{r}\,dx_n\,dt\,dx
\nonumber\\[4pt]
&\le \left(\iint_{\cSS}|\vec\eta-\vec{c}|^{2}\frac{x_n}{r^{2}}\,dx_n\,dt\,dx\right)^{1/2} 
\br{\iint_{\cSS}\abs{\nabla\eta_m}^2x_n\,dxdtdx_n}^{1/2}.
\end{align*}
By Fubini's theorem, and the fact that 
$\iint_{\gamma_a(x,t)\cap \cSS}y_n^{-n-1}dydsdy_n\le Cr$, the first integral on the right hand side can be controlled it by the nontangential maximal function:
\begin{multline}\label{eq.Neta-c}
    \iint_{\cSS}|\vec\eta-\vec{c}|^{2}\frac{x_n}{r^{2}}\,dx_n\,dt\,dx
\le \frac Cr\int_{\Delta_{2r}}\iint_{\gamma_a(x,t)\cap\cSS}\abs{\vec\eta-\vec c}^2y_n^{-n-1}dydsdy_n\\
\le C\int_{\Delta_{2r}}\tilde N_a\br{(\vec\eta-\vec c)\1_{\cSS}}^2dxdt.
\end{multline}
Hence,
\[\abs{II_2+III}\le C\norm{\tilde N_a\br{(\vec\eta-\vec c)\1_{\cSS}}}_{L^2(\Delta_{2r})}\br{\iint_{\cSS}\abs{\nabla\eta_m}^2x_n\,dxdtdx_n}^{1/2}.\]

The Carleson condition for $|\nabla A_\parallel|^2x_n$ and the Cauchy-Schwarz inequality imply
\begin{multline*}
   |II_3| \le 2\br{\iint_{\cSS}\abs{\dr_mA}\abs{v_j+\eta_j}^2x_n\,dx_ndxdt}^{1/2}\br{\iint_{\cSS}\abs{\nabla\eta_m}^2x_n\,dx_ndxdt}^{1/2}\\
   \le C \|\mu_{||}\|_C^{1/2}(1+\|x_n|\nabla A|\|_{L^\infty})\br{(\|\tilde {N}_a(\vec\eta\,\1_{\cSS})\|_{L^2(\Delta_{2r})}^2+\|\tilde N_a(V\1_{\cSS})\|^2_{L^2(\Delta_{2r})})^{1/2}}\\
   \cdot\br{\iint_{\cSS}\abs{\nabla\eta_m}^2x_n\,dx_ndxdt}^{1/2}.
\end{multline*}

For the term $II_4$ we use  $r|\nabla\zeta|\le c$.
It follows that  
\begin{equation}\nonumber
|II_4| \lesssim  \|x_n|\nabla_x A_\parallel\|_{L^\infty} \iint_{\cSS}|\vec\eta-\vec{c}||V+\vec\eta|\frac{x_n}{r^{2}}\,dx_n\,dt\,dx.
\end{equation}
An application of Cauchy-Schwarz inequality and \eqref{eq.Neta-c}  implies that
\begin{multline*}
    |II_4| \le C \|x_n|\nabla_x A_\parallel |\|_{L^\infty}
\norm{\tilde N_a\br{(\vec\eta-\vec c)\1_{\cSS}}}_{\Delta_{2r}}\\
\cdot
\br{\norm{\tilde N_a\br{\vec\eta\1_{\cSS}}}_{\Delta_{2r}}+\norm{\tilde N_a\br{V\1_{\cSS}}}_{\Delta_{2r}}}.
\end{multline*}
Again the constant in front of these terms can be made less than $\varepsilon$.

In a similar spirit we analyze the term $II_8$. We write it as
\begin{eqnarray}
II_8&=&
-\iint_{\mathcal S(q,\tau,r,r_0,\theta\hbar)}\partial_{t}[(\eta_m-c_m)^2]x_n\zeta\,dx_n\,dt\,dx\\\nonumber
&=&\iint_{\mathcal S(q,\tau,r,r_0,\theta\hbar)}(\eta_m-c_m)^2x_n(\partial_t\zeta)\,dx_n\,dt\,dx\\\nonumber
&+&\int_{\partial\mathcal S(q,\tau,r,r_0,\theta\hbar)}(\mbox{boundary terms})x_n\zeta\nu_t\,dS=:II_9+II_{10},
\end{eqnarray}
where the boundary terms only arise on the portion of $\partial\mathcal S(q,\tau,r,r_0,\theta\hbar)$ where
$\zeta$ is supported (which means on the graph of the function $\theta\hbar$). $\nu_t$ denotes the co-normal 
in the $t$-direction. 
Note that on the graph of $\theta\hbar$, $\nu_t=-\frac{\dr_t\hbar}{J_h}$ and $dS=J_hdxdt$, where $J_h$ is defined above the computation for $II_{5}$. By \eqref{derh}, we have that $\abs{\nu_t}dS\le \frac{a^{-2}}{2x_n}dxdt$.
It follows that 
$$|II_{10}|\lesssim \frac{1}{2a^2}\int_{\Delta(q,\tau)}|(\eta_m-c_m)^2(x,\theta\hbar(x,t),t)|\zeta\,dt\,dx\le \frac13\mathcal I,
$$
where the last estimate holds if $a$  is chosen sufficiently large. We do so now. The solid integral term $II_9$ can be bounded as follows. Since $|\partial_t\zeta|\lesssim r^{-2}$ we split the area of integration into  two parts, the first one for $x_n/r\le\varepsilon$ and its complement.
On the set where $x_n/r\le\varepsilon$  we get that the integral is bounded by
$$ \varepsilon\iint_{\cSS}|\vec\eta-\vec{c}|^{2}\frac{1}{r}\,dx_n\,dt\,dx\le\varepsilon \|\tilde{N}_a\br{(\vec\eta-\vec{c})\1_{\cSS}}\|_{L^2(\Delta_{2r})}^2
$$
using \eqref{eq.Neta-c}.
The remaining part of the integral is bounded by $\frac{C}{r}\displaystyle\iint_{\mathcal{K_\varepsilon}}|\vec\eta-\vec{c}|^2\,dX\,dt$.

Another term with a similar estimate is $V$; the fact that $\partial_{x_n}\zeta$ vanishes on the set
$(-\infty,r_0+r)\cup(r_0+2r,\infty)$ means that it is supported away from the boundary of $\partial\Omega$ and therefore,
\begin{equation}\label{Eqqq-31}
|V|\le \frac{C}{r}\iint_{\cSS\cap\set{x_n\ge r_0+r}}|\vec\eta-\vec{c}|^{2}\,dx_n\,dt\,dx
\le \frac{C}{r}\iint_{\mathcal K_{\varepsilon}}|\vec\eta-\vec{c}|^{2}\,dx_n\,dt\,dx.
\end{equation}
We put together all terms and integrate in $\theta$. Observe that $\cSS\subset \cS(\Delta_r,\hbar)$, and the above analysis ultimately yields \eqref{TTBBMM}.
Thus the claim follows.\vglue1mm

We only point out the differences in our calculation for \eqref{TTBBMMx}. The function $w_n-c$ for an arbitrary $c\in\mathbb R$ solves the PDE
\begin{equation}\label{system33}
{\mathcal L}(w_n-c)=\sum_{i,j=1}^{n-1}\partial_i((\partial_{n} a_{ij})w_j).
\end{equation}
It follows that the calculation given above can be followed step by step. The only difference is that we cannot claim any smallness of certain numbers appearing in these inequalities since the Carleson measure we are estimating is that of $|\partial_n A_\parallel|^2x_n$, which is not assumed to be small.  This is not
problematic since the terms that it modifies involve $\tilde{N}(W)$ for which we obtain the desired estimates using
\eqref{TTBBMM}.
\end{proof}

\begin{lemma}\label{LGL} Let $\mathcal L$ be an operator as in Lemma \ref{S3:L8-alt1} and fix $a>0$ sufficiently large as in Lemma \ref{S3:L8-alt1}.  
There exists $\kappa\in(0,1)$ that depends only on $n$, constants $b>a$ and $\gamma_0$ that depend on $a$, and $C>0$ that depends only on $n$ and the ellipticity constants such that the following holds.

For any $\varepsilon\in(0,1)$, if $\left[\|\mu_{||}\|_C+\norm{x_n\abs{\nabla_xA_\parallel}}^2_{L^\infty}\right]\Big(1+\|x_n|\nabla A_\parallel|\|^2_{L^\infty}\Big)$ is sufficiently small (depending on $\varepsilon$), then for any $\vec\eta$ that satisfies \eqref{system2}, for any $\gamma\in(0,\gamma_0)$, $\beta>0$, there holds
\begin{equation}    
  \abs{E_{1,\beta}}\le C\gamma^2\abs{\set{(x,t)\in\R^{n-1}\times\R: M(\tilde{N}_a(\vec\eta))(x,t)>\kappa\beta}}.\label{eq.gdLmd}
\end{equation}
where
\begin{multline}\label{def.E1}
    E_{1,\beta}:=\\
\set{(x,t)\in {\mathbb R}^{n-1}\times\R:\, \tilde{N}_a(\vec\eta)>\beta, \varepsilon^{-1/2}[S_b(\vec\eta)+{\varepsilon^{-1}A_b(\vec\eta)}]\le\gamma\beta, 
    \varepsilon^{1/2}\br{\tilde{N}_b(\vec\eta)+\tilde{N}_b(V)}\le \gamma\beta}.
\end{multline}
\end{lemma}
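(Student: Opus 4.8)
\textbf{Proof plan for Lemma~\ref{LGL} (good-$\lambda$ inequality).}
The plan is to run a Vitali-type stopping-time decomposition of the open set $O_\beta:=\{(x,t):M(\tilde N_a(\vec\eta))(x,t)>\kappa\beta\}$, and on each resulting ball estimate the portion of $E_{1,\beta}$ using the integration-by-parts identity \eqref{TTBBMM} of Lemma~\ref{S3:L8-alt1}. First I would dispose of the trivial cases: if $O_\beta$ is empty then $E_{1,\beta}=\emptyset$ (since $\tilde N_a(\vec\eta)\le M(\tilde N_a(\vec\eta))$ pointwise, so $\tilde N_a(\vec\eta)>\beta$ forces membership in $O_\beta$ once $\kappa\le1$), and if $O_\beta$ is all of $\mathbb R^{n-1}\times\R$ there is nothing to prove. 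Otherwise $O_\beta$ is a nonempty proper open subset, so by Vitali we extract a non-overlapping family $\{\Delta(x_i,t_i)\}$ of parabolic balls with $2\Delta_i\subset O_\beta$, $10\Delta_i\cap O_\beta^c\ne\emptyset$, and $O_\beta\subset\bigcup 5\Delta_i$. Writing $\Delta_i:=5\Delta(x_i,t_i)$ with radius $r_i$, it suffices to prove $|E_{1,\beta}\cap\Delta_i|\le C\gamma^2|\Delta_i|$ for each $i$ and then sum using bounded overlap of the $\Delta(x_i,t_i)$.

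Fix such a ball $\Delta_i$. From $10\Delta_i\cap O_\beta^c\ne\emptyset$ we get a point $(x^*,t^*)$ with $M(\tilde N_a(\vec\eta))(x^*,t^*)\le\kappa\beta$. The geometry of the cone $\gamma_a$ and Corollary~\ref{S3:L6} (applied to $w=\tilde N_a(\vec\eta)$-type averages, with the role of $\hbar$ played by the stopping-time height $\hbar:=\hbar_{\beta,a}$ built from $w$) gives that for every $(x,t)\in E_{1,\beta}\cap\Delta_i$ one has $\hbar(x,t)\le c\,r_i$ for a dimensional $c$, and in fact the control $S_b(\vec\eta)+A_b(\vec\eta)\le\gamma\beta$ on $E_{1,\beta}$ forces, via Lemma~\ref{l6} and Corollary~\ref{S3:L6}, that $\hbar(x,t)\le 2r_i$ holds at some point of $\Delta_i$ — this is precisely the hypothesis needed to invoke \eqref{TTBBMM}. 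I would then choose the constant vector $\vec c=\vec c_i$ to be (an average of) $\vec\eta$ at a point on the graph of $\hbar$ over $2\Delta_i$ lying outside $\Delta_i$, so that Lemma~\ref{S3:L5}(iii) and the stopping-time definition give $|\vec\eta|\sim\beta$ and $|\vec\eta-\vec c_i|\lesssim\gamma\beta$ on the relevant piece of the graph while $|\vec c_i|\lesssim\beta$. One shows $\tilde N_a(\vec\eta)(x,t)\gtrsim\beta$ and $\tilde N_a(\vec\eta)(x,t)\le|\vec\eta-\vec c_i|\text{-average}+|\vec c_i|$, so that on $E_{1,\beta}\cap\Delta_i$ the left side of \eqref{TTBBMM} (with $\vec c=\vec c_i$, and using that $\Psi=1$, i.e. the sawtooth covers the cone down to height $\hbar/6$) is bounded below by $c\,\beta^2\,|E_{1,\beta}\cap\Delta_i|$ minus a term $C\gamma^2\beta^2|\Delta_i|$.

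For the upper bound on the right side of \eqref{TTBBMM}: the solid square-function term $\iint_{\mathcal S}|\nabla\eta_m|^2x_n$ is controlled by $C(S_b(\vec\eta)(x',t'))^2$-integrals over $\Delta_i$, hence by $C\gamma^2\beta^2|\Delta_i|$ using $S_b(\vec\eta)\le\varepsilon^{1/2}\gamma\beta$ on $E_{1,\beta}$ (Fubini and the fact that $\mathcal S(\Delta_r,\hbar)\subset\bigcup_{(x',t')\in c\Delta_i}\gamma_b(x',t')$ once $b$ is large enough relative to $a$). The $\varepsilon$-terms involving $\|\tilde N_a((\vec\eta-\vec c)\1_{\mathcal S})\|_{L^2}^2$, $\|\tilde N_a(\vec\eta\1_{\mathcal S})\|_{L^2}^2$, $\|\tilde N_a(V\1_{\mathcal S})\|_{L^2}^2$ are handled by: (a) for the first, $\tilde N_a$ of a function supported in the sawtooth is controlled by $\tilde N_b(\vec\eta)+|\vec c|$, and on $10\Delta_i$ one has $\tilde N_b(\vec\eta)\le\varepsilon^{-1/2}\gamma\beta$ by the third condition in \eqref{def.E1}, giving after multiplication by $\varepsilon$ a bound $C\gamma^2\beta^2|\Delta_i|$; (b) similarly for the $V$-term using $\tilde N_b(V)\le\varepsilon^{-1/2}\gamma\beta$; (c) for $\tilde N_a(\vec\eta\1_{\mathcal S})$ the same bound as (a). Finally the tail term $\tfrac{C}{r}\iint_{\mathcal K_\varepsilon}|\vec\eta-\vec c|^2$, with $\mathcal K_\varepsilon=\mathcal S\cap\{x_n>\varepsilon r\}$, is estimated by noting that on $\mathcal K_\varepsilon$ the averaged quantity $|\vec\eta-\vec c|$ at height $\gtrsim\varepsilon r$ is dominated by $\tilde N_b(\vec\eta)(x',t')+|\vec c|\lesssim\varepsilon^{-1/2}\gamma\beta$ over the spread-out ball $c\Delta_i$, and $\tfrac1r\cdot|\mathcal K_\varepsilon|\lesssim|\Delta_i|$, so this too is $\le C\varepsilon^{-1}\gamma^2\beta^2|\Delta_i|\cdot\varepsilon = C\gamma^2\beta^2|\Delta_i|$ once the weights $\varepsilon^{\pm}$ in \eqref{def.E1} are matched. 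Crucially, the smallness of $[\|\mu_{||}\|_C+\|x_n|\nabla_xA_\parallel|\|_{L^\infty}^2](1+\|x_n|\nabla A_\parallel|\|_{L^\infty}^2)$ is used exactly to absorb those terms in \eqref{TTBBMM} that come with a bare constant multiplying $\|\tilde N_a(\vec\eta\1_{\mathcal S})\|^2$ etc. into the $\varepsilon$-regime, so that after collecting everything one obtains $c\beta^2|E_{1,\beta}\cap\Delta_i|\le C\gamma^2\beta^2|\Delta_i|$, i.e. $|E_{1,\beta}\cap\Delta_i|\le C\gamma^2|\Delta_i|$. Summing in $i$ and using $\bigcup\Delta(x_i,t_i)\subset O_\beta$ with bounded overlap yields \eqref{eq.gdLmd}.

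\textbf{Main obstacle.} The delicate point is the bookkeeping of the various cone apertures and the powers of $\varepsilon$: Lemma~\ref{S3:L8-alt1} produces nontangential maximal functions with vertices on $\partial\Omega$ of aperture $a$ over the sawtooth $\mathcal S(\Delta_r,\hbar)$, and these must be dominated — uniformly over the stopping-time geometry — by $\tilde N_b(\vec\eta)$ and $\tilde N_b(V)$ evaluated at points where the defining conditions of $E_{1,\beta}$ apply, i.e. one needs $\mathcal S(\Delta_r,\hbar)\cap\gamma_a(x,t)\ne\emptyset$ only for $(x,t)$ in a controlled dilate of $\Delta_i$ on which $\tilde N_b(\vec\eta)$ is small; choosing $b$ large enough (depending on $a$) to make this containment work, while simultaneously keeping $\gamma_0$ and $\kappa$ dimensional, is the heart of the argument. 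The second subtlety is ensuring that the hypothesis "$\hbar(x,t)\le 2r$ at some point of $\Delta_r$" of Lemma~\ref{S3:L8-alt1} genuinely holds for the stopping-time height on each Vitali ball $\Delta_i$ — this follows from the non-degeneracy of $w$ on the graph of $\hbar$ (Lemma~\ref{l6}) together with the smallness of $S_b+A_b$ on $E_{1,\beta}$, but must be stated carefully since a priori $\hbar$ could be large on part of $\Delta_i$.
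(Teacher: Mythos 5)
Your overall architecture is the same as the paper's: a Vitali decomposition of $\{M(\tilde N_a(\vec\eta))>\kappa\beta\}$, a per-ball estimate $|E_{1,\beta}\cap\Delta_i|\le C\gamma^2|\Delta_i|$ obtained by feeding a sawtooth built over the stopping-time height into \eqref{TTBBMM}, and control of the right-hand side via the $\varepsilon$-weighted conditions in \eqref{def.E1}. However, the heart of the per-ball estimate, the lower bound, has a genuine gap. You take $\vec c_i$ to be a value of $\vec\eta$ on the graph with only $|\vec c_i|\lesssim\beta$, and derive the lower bound from ``$\tilde N_a(\vec\eta)\gtrsim\beta$ and $\tilde N_a(\vec\eta)\le|\vec\eta-\vec c_i|\text{-average}+|\vec c_i|$''. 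That inequality is false in general: $\tilde N_a(\vec\eta)$ may be attained far below the graph of $\hbar$, whereas the left side of \eqref{TTBBMM} only sees heights $\theta\hbar$ with $\theta\in(1/6,6)$; and even for the correct along-the-graph quantity, subtracting a constant of size $\beta$ destroys the estimate. The paper instead argues: (i) since $2\Delta_i$ meets the complement of the maximal-function level set and $\kappa\le 2^{-n-1}$, the Whitney averages satisfy \eqref{eq.etasmall}, i.e. are $\le 2^{n+1}\kappa\beta$ at heights $>2r_i$ over $\Delta_i$ — this is what forces $\hbar\le 2r_i$ on $\Delta_i$ (your attribution of this to the smallness of $S_b+A_b$ is off; that smallness is what drives Lemma \ref{l6}), and it allows choosing $\vec c_i$ as a Whitney average at height $>2r_i$, so $|\vec c_i|\le 2^{n+1}\kappa\beta\ll\beta$; (ii) Lemma \ref{l6} and Corollary \ref{S3:L6} give that the localized Hardy--Littlewood maximal function of the Whitney averages \emph{along the graph} is $\ge C\beta$ on $F_i$, and after subtracting $|\vec c_i|$ (the dimensional choice of $\kappa$ is used again here), the $L^2$-boundedness of that maximal function together with the graph-to-solid comparison \eqref{eq.eta-c'} yields $|F_i|\lesssim\beta^{-2}$ times the $\theta$-averaged integral of $|\vec\eta-\vec c_i|^2$, which is exactly what \eqref{TTBBMM} bounds. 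None of this mechanism, nor the role of $\kappa$, appears in your plan.

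Two further steps would fail as written. First, to exploit the smallness of $S_b$, $A_b$, $\tilde N_b$ you must cover the sawtooth by cones with vertices in $F_i$ (where the conditions defining $E_{1,\beta}$ actually hold), not merely in a dilate of $\Delta_i$: if $F_i$ is sparse, the sawtooth over the unmodified $\hbar$ dips to small heights over $\Delta_i\setminus F_i$, which no $b$-cone with vertex in $F_i$ can capture, no matter how large $b$ is. The paper's fix is the modified height $\hbar_i=\sup\{\hbar,\tfrac12\dist_p(\cdot,F_i)\}$ of \eqref{def.hi}, which produces the containment property \eqref{SiProp}; this device is missing from your proposal, and ``take $b$ large'' alone does not replace it. Second, your treatment of the tail term $\tfrac{C}{r}\iint_{\mathcal K_\varepsilon}|\vec\eta-\vec c|^2$ via $\tilde N_b(\vec\eta)\le\varepsilon^{-1/2}\gamma\beta$ yields $C\varepsilon^{-1}\gamma^2\beta^2|\Delta_i|$ (plus $|\vec c_i|^2|\Delta_i|\sim\beta^2|\Delta_i|$ with your $\vec c_i$), and \eqref{TTBBMM} provides no extra factor of $\varepsilon$ to cancel the $\varepsilon^{-1}$; the paper instead applies the Poincar\'e inequality in $x$ and $t$ on $\mathcal K_\varepsilon$ (using $x_n>\varepsilon r$ there) to convert this term into square- and area-function integrals over the sawtooth — which is precisely why $A_b$ carries the additional $\varepsilon^{-1}$ weight in \eqref{def.E1}, and the weights then cancel exactly. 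A minor misattribution: the smallness of $\left[\|\mu_{||}\|_C+\|x_n|\nabla_xA_\parallel|\|_{L^\infty}^2\right]\big(1+\|x_n|\nabla A_\parallel|\|_{L^\infty}^2\big)$ is not used to absorb bare-constant $\tilde N$ terms inside the good-$\lambda$ argument (all such terms in \eqref{TTBBMM} already carry $\varepsilon$); it is simply the hypothesis required to invoke Lemma \ref{S3:L8-alt1} at the prescribed $\varepsilon$.
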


\begin{proof}
Let $b_0$ and $\gamma_0$ be as in Corollary \ref{S3:L6}.
Let $\varepsilon>0$, $\gamma\in(0,\gamma_0)$, $\beta>0$ be fixed. 
Observe that $E_{2,\beta}:=\set{(x,t)\in\R^{n-1}\times\R: M(\tilde{N}_a(\vec\eta))(x,t)>\kappa\beta}$ is an open subset of $\R^{n-1}\times\R$. We can assume that $E_{2,\beta}$ is a nonempty proper subset of $\R^{n-1}\times\R$, as otherwise the estimate \eqref{eq.gdLmd} is trivial. 
By Vitali's covering lemma, we can find a non-overlapping collection of parabolic balls $\set{\Delta(x_i,t_i)}_{i\in \N}$ 
such that $2\Delta(x_i,t_i)\subset E_{2,\beta}$, $10\Delta(x_i,t_i)\cap E_{2,\beta}^c\neq\emptyset$ for each $i$, and  that $E_{2,\beta}\subset\bigcup_{i\in \N}5\Delta(x_i,t_i)$. Denote $\Delta_i:=5\Delta(x_i,t_i)$, and denote by $r_i$ the radius of $\Delta_i$.
Set $F_i:=E_{1,\beta}\cap\Delta_i$. Note that $E_{1,\beta}\subset\bigcup_{i\in\N}F_i$ as $E_{1,\beta}\subset E_{2,\beta}$. 

Let us denote 
\[
 \vec\eta_{L^2}(x,x_n,t):=  \left(\fiint_{B((x,x_n,t),x_n/2)}\abs{\vec\eta(Y,s)}^2dYds\right)^{1/2},
\]
where $B((x,x_n,t),x_n/2)$ is a parabolic Whitney ball centered at $(x,x_n,t)$ in $\Rn_+\times\R$. Take $b\ge b_0$ and let $\hbar:=\hbar_{\beta,a}(\vec\eta_{L^2})$ be as in \eqref{h}. By Corollary \ref{S3:L6}, $(M_{\hbar}(\vec\eta_{L^2}))\big(x,\hbar(x,t),t\big)\geq\,C\beta$ for $(x,t)\in F_i$. We claim that this is also true when $M_{\hbar}(\vec\eta_{L^2})$ is replaced  with the localized maximal function $M_{\hbar, 6ar_i}(\vec\eta_{L^2})$ at scale $\sim r_i$. In fact, since $2\Delta_i\cap E_{2,\beta}^c\neq\emptyset$, we have
\begin{equation}\label{eq.MNsmall}
    M(\tilde N_a(\vec\eta))(y,s)\le\kappa\beta \quad\text{for some }(y,s)\in 2\Delta_i\cap E_{2,\beta}^c.
\end{equation}
From this one deduces that 
\begin{equation}\label{eq.etasmall}
  \vec\eta_{L^2}(x,x_n,t)
  \le 2^{n+1}\kappa\beta \quad\text{for  }(x,t)\in\Delta_i \text{ and }x_n>2r_i.
\end{equation}
In fact, suppose that \eqref{eq.etasmall} is false for some $(Z,\tau)=(z,z_n,\tau)$ with $(z,\tau)\in\Delta_i$ and $z_n>2r_i$. Then $\tilde{N}_a(\vec\eta)(x',t')>2^{n+1}\kappa\beta$ for all $(x',t')\in S(Z,\tau)$, where $S(Z,\tau)$ is defined as in \eqref{def.SXt}, which is a parabolic ball centered at $(z,\tau)$ of radius at least $2ar_i$. Therefore, since $a>1$,
\[
\kappa\beta<\br{\frac{2a}{3+2a}}^{n+1}2^{n+1}\kappa\beta<\fint_{\Delta_{(3+2a)r_i}(y,s)}\tilde{N}_a(\vec\eta)\,dx'dt'\le M(\tilde N_a(\vec\eta))(y,s),
\]
a contradiction to \eqref{eq.MNsmall}.

We choose $\kappa\le 2^{-n-1}$. In view of \eqref{eq.etasmall}, for $(x,t)\in F_i$, $\tilde N_a(\vec\eta)(x,t)=\tilde N_a^{2r_i}(\vec\eta)(x,t)$. It also guarantees that $\hbar\le 2r_i$ on $\Delta_i$. Moreover, it implies that the ``radius'' of the boundary ball $R$ constructed in Lemma \ref{l6} is bounded by $6a r_i$. Therefore, we have justified that 
\begin{equation}\label{eq.Mhrbig}
    \text{ for } (x,t)\in F_i,\ (M_{\hbar,6ar_i}(\vec\eta_{L^2}))\big(x,\hbar(x,t),t\big)\geq\,C\beta.
\end{equation}
We define 
\begin{equation}\label{def.hi}
    \hbar_i(x,t):=\sup\set{\hbar(x,t),\frac12\dist_p\br{(x,t),F_i}},
\end{equation}
and $\cS_i:=\cS(\Delta_{20r_i},\hbar_i)$, where $\cS(\Delta_{20r_i},\hbar_i)$ is defined as in Lemma \ref{S3:L8-alt1}. One can easily check that $\hbar_i$ satisfies  \eqref{Eqqq-5}, \eqref{derh}, and \eqref{derh'}, and enjoys in addition  the property 
\begin{equation}\label{SiProp}
    \text{for any }(x,t)\in40a\,\Delta_i\setminus F_i,\ \exists\, (y,s)\in F_i\text{ such that }
    \quad\cS_i\cap\gamma_a(x,t)\subset \gamma_b(y,s)
\end{equation}
for $b\ge b_0$ sufficiently large.
Property \eqref{SiProp} holds because we ensure that the graph of $h_i$ is sufficiently far away from the boundary even when it is not above $F_i$. 

We choose the vector $\vec c_i:=\fiint_{B_i^0}\vec\eta$, where $B_i^0$ is a ball with radius $\sim r_i$ that is contained in $\cS_i\cap\set{(x,x_n,t): (x,t)\in\Delta_i, x_n>2r_i}$.  By \eqref{eq.etasmall}, $\abs{\vec c_i}\le|\vec\eta_{L^2}|\le 2^{n+1}\kappa\beta$. Then by \eqref{eq.Mhrbig}, we have 
\begin{multline*}
    C^2\beta^2\abs{F_i}\le \int_{F_i}(M_{\hbar,6ar_i}(\vec\eta_{L^2}))\big(x,\hbar(x,t),t\big)^2dxdt\\
    \le \int_{F_i}(M_{\hbar,6ar_i}(\vec\eta-\vec c_i)_{L^2})\big(x,\hbar(x,t),t\big)^2dxdt+2^{2n+2}\kappa^2\beta^2\abs{F_i}.
\end{multline*}
We take $\kappa\le 2^{-n-1}$ small enough so that $2^{2n+2}\kappa^2<C^2/2$, and so we can hide the last term to the left-hand side. Then 
\begin{multline*}
    \frac{C^2}{2}\abs{F_i}\le\frac{1}{\beta^2}\int_{F_i}(M_{\hbar,6ar_i}(\vec\eta-\vec c_i)_{L^2})\big(x,\hbar(x,t),t\big)^2dxdt\\
    \le \frac{1}{\beta^2}\int_{\Delta_i}(M_{\hbar_i,6ar_i}(\vec\eta-\vec c_i)_{L^2})\big(x,\hbar_i(x,t),t\big)^2dxdt.
\end{multline*}
Notice that we changed $\hbar$ to $\hbar_i$ in the last inequality; this is valid because $\hbar_i=\hbar$ on $F_i$. By the $L^2$ boundedness of the localized maximal function, we obtain that
\begin{equation}\label{eq.eta-c}
     C'\abs{F_i}\le\frac{1}{\beta^2}\int_{10\Delta_i}(\vec\eta-\vec c_i)_{L^2}(x,\hbar_i(x,t),t)^2dxdt.
\end{equation}
We claim that 
\begin{equation}\label{eq.eta-c'}
    \int_{10\Delta_i}(\vec\eta-\vec c_i)^2_{L^2}(x,\hbar_i(x,t),t)dxdt\le C\int_{1/6}^6\int_{20\Delta_i}\abs{\vec\eta-\vec c_i}^2(x,\theta\hbar_i(x,t),t)dxdtd\theta. 
\end{equation}
This has been established (in the elliptic case) in \cite[Lemma 5.6]{DHM}. The proof in the parabolic case is analogous, as it only uses properties of the function $\hbar_i$ which are as stated in Lemma \ref{S3:L5}. 
These imply that around a boundary point $(x,t)$ there is a boundary parabolic ball of radius $\sim \hbar_i(x,t)$ such that
$\hbar_i(y,s)\sim\hbar_i(x,t)$ for all $(y,s)$ that belong to this boundary parabolic ball.
Hence it follows that by Lemma \ref{S3:L8-alt1} we get that
\begin{multline}\label{eq.Fiest}
    C'\abs{F_i}\le \frac{C}{\beta^2}(1+\varepsilon^{-1})\iint_{\cS_i}\abs{\nabla\vec\eta}^2x_n\, dx_ndtdx
    +\frac{C}{\beta^2r}\iint_{\mathcal{K_\varepsilon}}|\vec\eta-\vec{c_i}|^{2}\,dX\,dt\\
+\frac{\varepsilon}{\beta^2}\br{\|\tilde{N}_a\br{(\vec\eta-\vec{c_i})\1_{\cS_i}}\|_{L^2(40\Delta_{i})}^2
+\|\tilde N_a(\vec\eta\,\1_{\cS_i})\|_{L^2(40a\Delta_{i})}^2+\|\tilde N_a(V\1_{\cS_i})\|_{L^2(40\Delta_{i})}^2}.
\end{multline}
Our goal is to bound the right-hand side of \eqref{eq.Fiest} by $C\gamma^2\beta^2\abs{\Delta_i}$, which will imply the desired estimate \eqref{eq.gdLmd} by summing over $i$.
To this end, we write 
\[
\iint_{\cS_i}\abs{\nabla\vec\eta}^2x_n\, dx_ndtdx
\le C\int_{40\Delta_i}\iint_{\gamma_a(x,t)}\abs{\nabla\vec\eta}^2\1_{\cS_i}(y,y_n,s)\frac{dydy_nds}{y_n^n}dxdt.
\]    
When $(x,t)\in F_i$, it holds trivially that $\iint_{\gamma_a(x,t)}\abs{\nabla\vec\eta}^2\1_{\cS_i}(y,y_n,s)\frac{dydy_nds}{y_n^n}\le S_b(\vec\eta)(x,t)^2$. 
When $(x,t)\in 40\Delta_i\setminus F_i$, by \eqref{SiProp}, we can find some $(x_0,t_0)\in F_i$ so that 
$\iint_{\gamma_a(x,t)}\abs{\nabla\vec\eta}^2\1_{\cS_i}(y,y_n,s)\frac{dydy_nds}{y_n^n}\le S_b(\vec\eta)(x_0,t_0)^2$. Hence, by the definition of $E_{1,\beta}$, we conclude that 
\begin{equation}\label{trunceta}
\iint_{\gamma_a(x,t)}\abs{\nabla\vec\eta}^2\1_{\cS_i}(y,y_n,s)\frac{dydy_nds}{y_n^n}\le \varepsilon\gamma^2\beta^2 \quad\text{for all }(x,t)\in 40\Delta_i,
\end{equation}
and so 
\begin{equation}\label{eq.sqrSi}
    \iint_{\cS_i}\abs{\nabla\vec\eta}^2x_n\,dx_ndtdx\le C\varepsilon\gamma^2\beta^2\abs{\Delta_i}.
\end{equation}
The terms  $\|\tilde{N}_a(\vec\eta\,\1_{\cS_i})\|_{L^2(40\Delta_{i})}^2$ and $\|\tilde N_a(V\1_{\cS_i})\|_{L^2(40\Delta_{i})}^2$ are controlled similarly. We note that for any $(x_0,t_0)\in 40\Delta_i$, there exists $(x_0,t_0)\in F_i$ such that $\tilde N_a(\vec\eta)\1_{\cS_i}(x,t)\le \tilde N_b(\vec\eta)(x_0,t_0)$ due to \eqref{SiProp}. Therefore, by the definition of $F_i$ we get that
\begin{equation}\label{eq.NSi}
\varepsilon\|\tilde N_a(\vec\eta\,\1_{\cS_i})\|_{L^2(40\Delta_{i})}^2+\varepsilon\|\tilde N_a(V\1_{\cS_i})\|_{L^2(40\Delta_{i})}^2\le C\gamma^2\beta^2\abs{\Delta_i}.
\end{equation}
For the term $\frac{1}{r_i}\iint_{\mathcal{K_\varepsilon}}|\vec\eta-\vec{c_i}|^{2}\,dX\,dt$, we use Poincar\'e inequality in variables $x$ and $t$ simultaneously to get that
\[
\frac{1}{r_i}\iint_{\mathcal{K_\varepsilon}}|\vec\eta-\vec{c_i}|^{2}\,dX\,dt\le Cr_i\iint_{\mathcal{K_\varepsilon}}\br{\abs{\nabla\vec\eta}^2+r_i^2\abs{\partial_t\vec\eta}^2}dX\,dt\]
\[
\le C\left(\varepsilon^{-1}\iint_{\cS_i}\abs{\nabla\vec\eta}^2x_n+\varepsilon^{-3}\iint_{\cS_i}\abs{\partial_t\vec\eta}^2x_n^3\right)\,dx_ndtdx,
\]
where in the last step we recalled that $x_n>\varepsilon r_i$ in $\mathcal{K}_{\varepsilon}$. So this term is again bounded by a bound similar to \eqref{eq.sqrSi}. 

Finally,  the term $\varepsilon\|\tilde {N}_a\br{(\vec\eta-\vec{c_i})\1_{\cS_i}}\|_{L^2(40\Delta_{i})}^2$ is bounded by the sum of $\varepsilon\|\tilde N_a(\vec\eta\,\1_{\cS_i})\|_{L^2(40\Delta_{i})}^2$, which is treated in \eqref{eq.NSi}, and $C\varepsilon\abs{\vec c_i}^2\abs{\Delta_i}$,  which, since $\abs{\vec c_i}\le|\vec\eta_{L^2}|$, has the bound given in \eqref{eq.etasmall}.
\end{proof}

The following good-$\lambda$ estimate for $w_n$ can be established using \eqref{TTBBMMx} in a similar way.

\begin{lemma}\label{lem.Gdl_wn} Let $\mathcal L$ and $a>0$ as in Lemma \ref{LGL}. 
There exists $\kappa\in(0,1)$ that depends only on $n$, constants $b>a$ and $\gamma_0$ that depend on $a$, and $C>0$ that depends only on $n$ and the ellipticity constants such that the following holds.

For any $\varepsilon\in(0,1)$, for any $w_n$ that satisfies \eqref{system33}, for any $\gamma\in(0,\gamma_0)$, $\beta>0$, there holds
\begin{multline*}
   \abs{\set{\tilde{N}_a(w_n)>\beta, \varepsilon^{-1/2}[S_b(w_n)+\varepsilon^{-1}A_b(w_n)]\le\gamma\beta, 
    \varepsilon^{1/2}\tilde{N}_b(w_n)+M_0\,\tilde{N}_b(W)\le \gamma\beta}}\\
    \le  C\gamma^2\abs{\set{(x,t)\in\R^{n-1}\times\R: M(\tilde{N}_a(w_n))(x,t)>\kappa\beta}},
\end{multline*}
where $M_0=(\norm{\mu}_C+\|x_n|\nabla A_\parallel|\|_{L^\infty}^2)^{1/2}(1+\|x_n|\nabla A_\parallel|\|_{L^\infty}^2)^{1/2}$.
\end{lemma}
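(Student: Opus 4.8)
The plan is to mimic the proof of Lemma \ref{LGL} almost verbatim, substituting the scalar function $w_n$ for the vector $\vec\eta$ and using the estimate \eqref{TTBBMMx} in place of \eqref{TTBBMM}. First I would fix $\varepsilon\in(0,1)$, $\gamma\in(0,\gamma_0)$ and $\beta>0$, set $E_{2,\beta}:=\{(x,t):M(\tilde N_a(w_n))(x,t)>\kappa\beta\}$, and carry out the Vitali covering argument exactly as before to produce non-overlapping balls $\Delta(x_i,t_i)$ with $2\Delta(x_i,t_i)\subset E_{2,\beta}$, $10\Delta(x_i,t_i)\cap E_{2,\beta}^c\neq\emptyset$, $E_{2,\beta}\subset\bigcup_i 5\Delta(x_i,t_i)$; put $\Delta_i=5\Delta(x_i,t_i)$, $r_i=\mathrm{rad}(\Delta_i)$, $F_i=E_{1,\beta}^{w_n}\cap\Delta_i$ where $E_{1,\beta}^{w_n}$ denotes the set whose measure we are bounding. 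The Whitney-average function $(w_n)_{L^2}$, the stopping height $\hbar:=\hbar_{\beta,a}((w_n)_{L^2})$, the localization of the maximal function at scale $\sim r_i$ via \eqref{eq.etasmall}, the modified height $\hbar_i$ from \eqref{def.hi}, the sawtooth region $\cS_i=\cS(\Delta_{20r_i},\hbar_i)$ with property \eqref{SiProp}, and the constant $c_i:=\fiint_{B_i^0}w_n$ with $|c_i|\le 2^{n+1}\kappa\beta$ are all defined word for word as in the proof of Lemma \ref{LGL}; Corollary \ref{S3:L6} (applied to the scalar Whitney average, which is legitimate since that corollary only uses the moderate decay and local $L^2$ bound of the averaged function) gives $(M_{\hbar,6ar_i}((w_n)_{L^2}))(x,\hbar(x,t),t)\ge C\beta$ on $F_i$.

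Next I would run the same chain of reductions: hide the $|c_i|^2$ contribution into the left side using $\kappa$ small, pass from the maximal function on the graph to an $L^2$ average over the graph, then use the analogue of \eqref{eq.eta-c'} (which, as noted there, is purely a statement about the geometry of the function $\hbar_i$ in Lemma \ref{S3:L5} and hence applies verbatim to the scalar $w_n$) to replace the graph integral by $\int_{1/6}^{6}\int_{20\Delta_i}|w_n-c_i|^2(x,\theta\hbar_i(x,t),t)\,dx\,dt\,d\theta$. Now apply \eqref{TTBBMMx} with $\Delta_r=\Delta_{20r_i}$ and $c=c_i$. This produces four groups of terms on the right: the solid square-function term $\iint_{\cS_i}|\nabla w_n|^2x_n$, the Carleson-type term $C M_0^2\|\tilde N_a(W\1_{\cS_i})\|_{L^2}^2$, the term $\varepsilon\|\tilde N_a((w_n-c_i)\1_{\cS_i})\|_{L^2}^2$, and the tail term $\frac{C}{r_i}\iint_{\mathcal K_\varepsilon}|w_n-c_i|^2$. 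Each of these is estimated exactly as in the proof of Lemma \ref{LGL}: using \eqref{SiProp} and the defining inequalities of $E_{1,\beta}^{w_n}$ one gets $\iint_{\cS_i}|\nabla w_n|^2x_n\le C\varepsilon\gamma^2\beta^2|\Delta_i|$ and $\varepsilon\|\tilde N_a(w_n\1_{\cS_i})\|_{L^2}^2\le C\gamma^2\beta^2|\Delta_i|$; the key new point is that the term $M_0^2\|\tilde N_a(W\1_{\cS_i})\|_{L^2}^2$ is controlled by $\gamma^2\beta^2|\Delta_i|$ precisely because the smallness is bought by the factor $M_0\,\tilde N_b(W)\le\gamma\beta$ built into the definition of the stopping set (this is why $M_0$ is inserted there rather than $\varepsilon^{1/2}$, as in the $\vec\eta$ case). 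The tail term is handled by the Poincar\'e inequality in $x$ and $t$ as in Lemma \ref{LGL}, and the $\varepsilon\|\tilde N_a((w_n-c_i)\1_{\cS_i})\|_{L^2}^2$ term splits into the already-treated $\tilde N_a(w_n\1_{\cS_i})$ piece plus $C\varepsilon|c_i|^2|\Delta_i|$ bounded via $|c_i|\le 2^{n+1}\kappa\beta$. Summing over $i$ using $\sum_i|\Delta_i|\le C|E_{2,\beta}|$ yields \eqref{eq.gdLmd} for $w_n$.

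The main obstacle — really the only place where something genuinely new relative to Lemma \ref{LGL} happens — is bookkeeping the constant $M_0=(\|\mu\|_C+\|x_n|\nabla A_\parallel|\|_{L^\infty}^2)^{1/2}(1+\|x_n|\nabla A_\parallel|\|_{L^\infty}^2)^{1/2}$: in the $\vec\eta$ system the coefficient multiplying the $\tilde N(V+\vec\eta)$ terms in \eqref{TTBBMM} could be made $<\varepsilon$ because $\|\mu_\parallel\|_C$ is small, but here the relevant Carleson norm $\|\mu\|_C$ (coming from $|\partial_n A_\parallel|^2 x_n$) is \emph{not} small, so one cannot absorb those terms by smallness. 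The resolution, which I would make explicit, is that this is exactly why Lemma \ref{LGL} (hence the estimate it yields, entering Theorem \ref{thm.NlessS} through the $\tilde N(W)$ factor) must be proved first: the non-small coefficient $M_0$ is carried along through the good-$\lambda$ inequality and only gets absorbed later, after the $\tilde N(W)$-estimate has been established unconditionally and the standard good-$\lambda$-to-$L^p$ passage (choosing $\gamma$ small depending on $\kappa$, integrating against $\beta^{p-1}d\beta$, and using $L^p$-boundedness of the Hardy–Littlewood maximal function to dispose of the $M(\tilde N_a(w_n))$ on the right) converts \eqref{eq.gdLmd} into $\|\tilde N(w_n)\|_{L^p}\lesssim \|S(w_n)\|_{L^p}+\|A(w_n)\|_{L^p}+M_0\|\tilde N(W)\|_{L^p}$, with the last term then bounded via Lemma \ref{LGL}. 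I would remark that, apart from this constant-tracking, every estimate is identical to the corresponding one in the proof of Lemma \ref{LGL}, so the proof can be written compactly by pointing to those steps.
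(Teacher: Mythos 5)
Your proposal is correct and coincides with the paper's own (very brief) argument: the paper simply asserts that Lemma \ref{lem.Gdl_wn} "can be established using \eqref{TTBBMMx} in a similar way," i.e.\ by repeating the proof of Lemma \ref{LGL} with $w_n$ in place of $\vec\eta$, which is exactly what you do. Your explicit observation about the non-small factor $M_0$ — that it cannot be absorbed by smallness of the Carleson norm and must instead be built into the stopping set via $M_0\,\tilde N_b(W)\le\gamma\beta$ and tracked until the $\tilde N(\nabla_x u)$ estimate is invoked — is precisely the point the paper makes in the remark following \eqref{TTBBMMx} and in the final constant-tracking at the end of Section \ref{SS:43}.
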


We can now establish Theorem \ref{thm.NlessS}. Recall that we have assumed a priori  that $\|\tilde{N}(\nabla u)\|_{L^p(\partial\Omega)}<\infty$. Since $\nabla_xu =V+\vec\eta$ and $V$ already enjoys the bound \eqref{sss1},  it follows that $\|\tilde{N}(\vec\eta)\|_{L^p(\partial\Omega)}<\infty$. Consider any $\varepsilon\in (0,1)$ such that for $\left[\|\mu_{||}\|_C+\norm{x_n\abs{\nabla_xA_\parallel}}^2_{L^\infty}\right]\Big(1+\|x_n|\nabla A_\parallel|\|^2_{L^\infty}\Big)<\delta(\varepsilon)<\infty$ Lemma \ref{LGL} holds. Then
\begin{multline*}
   \norm{\tilde N_a(\vec\eta)}_{L^p(\pom)}^p=p\int_0^\infty \nu^{p-1}\abs{\set{(x,t)\in\R^{n-1}\times\R: \tilde N_a(\vec\eta)(x,t)>\nu}}d\nu\\
   \le p\int_0^\infty \nu^{p-1}\abs{E_{1,\nu}}d\nu +p\int_0^\infty \nu^{p-1}\abs{\set{\varepsilon^{-1/2}[S_b(\vec\eta)+{\varepsilon^{-1}A_b(\vec\eta)}]>\gamma\nu}}d\nu\\
+p\int_0^\infty \nu^{p-1}\abs{\set{\varepsilon^{1/2}\br{\tilde{N}_b(\vec\eta)+\tilde{N}_b(V)}>\gamma\nu}}d\nu,
\end{multline*}
where $E_{1,\nu}$ is defined as in \eqref{def.E1}.
By Lemma \ref{LGL} and change of variables,  one has
\begin{multline*}
    \norm{\tilde{N}_a(\vec\eta)}_{L^p(\pom)}^p \le C\gamma^2p\int_0^\infty \nu^{p-1}\abs{\set{M(\tilde N_{a}(\vec\eta))>\kappa\nu}}d\nu \\
    +(\gamma\varepsilon^{1/2}/2)^{-p}p\int_0^\infty \nu^{p-1}\abs{\set{S_{b}(\vec\eta)>\nu}}d\nu
    +(\gamma\varepsilon^{3/2}/2)^{-p}p\int_0^\infty \nu^{p-1}\abs{\set{A_{b}(\vec\eta)>\nu}}d\nu\\
    +(\gamma\varepsilon^{-1/2}/2)^{-p}p\int_0^\infty \nu^{p-1}\abs{\set{\tilde{N}_{b}(\vec\eta)>\nu}}d\nu
    +(\gamma\varepsilon^{-1/2}/2)^{-p}p\int_0^\infty \nu^{p-1}\abs{\set{\tilde{N}_{b}(V)>\nu}}d\nu\\
 = \kappa^{-p}\gamma^2p\int_0^\infty \nu^{p-1}\abs{\set{M(\tilde N_{a}(\vec\eta))>\nu}}d\nu\qquad\qquad\qquad\qquad\qquad\qquad\qquad\qquad\qquad\qquad\\
    +C(\gamma,\varepsilon)[\norm{S_{b}(\vec\eta)}_{L^p}^p+\norm{A_{b}(\vec\eta)}_{L^p}^p+\|\tilde{N}_{b}(V)\|_{L^p}^p]
    +(\gamma\varepsilon^{-1/2}/2)^{-p}\|\tilde{N}_{b}(\vec\eta)\|_{L^p}^p\\
=    \kappa^{-p}\gamma^2\|M(\tilde N_{a}(\vec\eta))\|_{L^p}^p+(\gamma\varepsilon^{-1/2}/2)^{-p}\|\tilde{N}_{b}(\vec\eta)\|_{L^p}^p
+C(\gamma,\varepsilon)[\norm{S_{b}(\vec\eta)}_{L^p}^p+\norm{A_{b}(\vec\eta)}_{L^p}^p+\|\tilde{N}_{b}(V)\|_{L^p}^p].
\end{multline*}
As the maximal function $M$ is $L^p$ bounded for $p>1$ the first term of the last line is further bounded by
$C(p)\kappa^{-p}\gamma^2\|\tilde N_{a}(\vec\eta)\|_{L^p}^p$. 
Choosing $\gamma\in(0,1)$ sufficient small so that $C(p)\kappa^{-p}\gamma^2<1/4$, as well as $\varepsilon\in (0,1)$ so small such that $(\gamma/2)^{-p}\varepsilon^{p/2}\|\tilde{N}_{b}(\vec\eta)\|_{L^p}^p<1/4\|\tilde{N}_{a}(\vec\eta)\|_{L^p}^p$
 one can hide $\|\tilde N_{a}(\vec\eta)\|_{L^p}^p$ on the left-hand side and obtain the estimate:
$$
    \|\tilde{N}_a(\vec\eta)\|_{L^p(\pom)}^p \le 
C(\gamma,\varepsilon)[\norm{S_{b}(\vec\eta)}_{L^p}^p+\norm{A_{b}(\vec\eta)}_{L^p}^p+\|\tilde{N}_{b}(V)\|_{L^p}^p].
$$
As $\gamma,\,\varepsilon$ are fixed now we drop the dependence of the constant on them. Writing $\nabla_x u=\vec\eta+V$ it follows that
$$
    \|\tilde{N}_a(\nabla_x u)\|_{L^p(\pom)}^p \le 
C\left[\norm{S_{b}(\nabla u)}_{L^p}^p+\norm{A_{b}(\nabla u)}_{L^p}^p+
\norm{S_{b}(V)}_{L^p}^p+\norm{A_{b}(V)}_{L^p}^p+\|\tilde{N}_{b}(V)\|_{L^p}^p\right],
$$
and hence
$$
    \|\tilde{N}(\nabla_x u)\|_{L^p(\pom)}^p \le 
C\left[\norm{S(\nabla u)}_{L^p}^p+\norm{A(\nabla u)}_{L^p}^p+
\|\nabla_x f\|_{L^p}^p\right],
$$
given \eqref{sss1}. Note that \eqref{sss1} does not contain an estimate for $\norm{A(V)}_{L^p}$
but that follows form Lemma 4.4 of \cite{DH18} where the Cacciopoli inequality for second gradient is shown. 
The final ingredient in the proof \eqref{eq.N<SLp} is the estimate for $\|\tilde{N}(\partial_{x_n} u)\|_{L^p(\pom)}$.
This relies on Lemma \ref{lem.Gdl_wn}.  Tracking the constant $M_0$, we obtain that 
$$
    \|\tilde{N}(\partial_{x_n} u)\|_{L^p(\pom)} \le 
C(\norm{\mu}^{1/2}_C+\|x_n|\nabla A_\parallel|\|_{L^\infty})(1+\|x_n|\nabla A_\parallel|\|_{L^\infty}) \|\tilde{N}(\nabla_x u)\|_{L^p(\pom)}
$$
$$+C \|{S}(\nabla u)\|_{L^p(\pom)},$$
which gives \eqref{eq.N<SLp}.\qed

\section{Localization on bounded Lipschitz cylinders}

In this section we introduce a localization technique that will allow us to establish solvability of the Regularity problem on Lipschitz cylinders of the form $\Omega=\mathcal O\times\R$ for a bounded Lipschitz domain
$\mathcal O\subset\R^n$ using previously  established solvability on $\R^n_+\times\R$. We note that this localization is not needed in the case when $\mathcal O$ is an unbounded Lipschitz domain
of the form $\{(x',x_n):\, x_n>\phi(x')\}$ for a Lipschitz function $\phi:\R^{n-1}\to\R$, as in such a case we can consider a pullback map $\rho:\R^n_+\times\R\to\mathcal O\times\R$ and transfer the parabolic PDE from $\Omega$ to
$\R^n_+\times\R$ directly without any localization. We will return to this point when we discuss the map $\rho$
is more detail where we will show how to solve the unbounded case.

In what follows, we borrow ideas from Section 5.2 of \cite{DinSa} where a similar decomposition was considered. Recalling the definition of the Lipschitz domain
$\mathcal O\subset\R^n$ from Definition \ref{DefLipDomain} we thus restrict ourselves to the case
 \( \diam(\mathcal{O}) < \infty \).     \vglue1mm

 A simple compactness argument implies that there exists $s_0<1$ such that the $\ell$-cylinders $s_0\Z_j$ still cover $\partial\mathcal O$. Thus by making the scale $r_0$ and diameter $d$ smaller if necessary (at the expense of increased $N$) we may without loss of generality assume that the union of $(1/2)\Z_j$ covers $\partial\mathcal O$.
 
 By rescaling the PDE, it may also be assumed that the $\ell$-cylinders $\Z_j$ in the definition above have diameter $d=1$. Let $N,\, C_0$ be as above and hence 
 there are $N$ such $\ell$-cylinders $(1/2)\Z_j$ needed to cover the boundary $\partial\mathcal O$.
            
It follows that there exists a partition of unity $(\varphi_j)_{j=1}^N$  such that each  $\varphi_j\in C_0^\infty(\R^n)$, is supported in
$(1/2)\Z_j$, and altogether, 
$$\sum_{j=1}^N \varphi_j(x)=1,\qquad \mbox{ for all }\qquad x\in 
\left\{
x\in\mathcal O:\,  \mathrm{dist}\left( x,{\partial\mathcal O}
\right) \leq \frac{1}{2}\right\}.$$
As each $\varphi_j$ is smooth, and there are $N$ of them, it follows that $\|\nabla \varphi_j\|_{L^\infty}\le M$ for some $M>0$ for all $j$. This completes 
the description of the  partition of the spatial component of our domain $\Omega$. \medskip

Denote by  \( \Omega_i := \Base \times (i,i+1] \)
    and let \( \Delta_i  = \partial\Base \times (i,i+1] \) for $i\in\Z$. There exists a nonnegative $C^1$ function $\psi\in C_c^1(\R)$, supp $\psi\subset [-1,2]$ such that for 
$$\psi_i(\cdot)=\psi(\cdot+i), \qquad |\psi'|\lesssim 1,\quad\int_{\R}\psi=1,\qquad \sum_{i\in \Z} \psi_i \equiv 1.$$
Clearly,  \( \| D^{1/2}_t \psi \|_{L^r(\R)} \lesssim 1 \),  for each \( 1<r<\infty \).\medskip

Consider now an arbitrary $f\in \dot{L}^p_{1,1/2}(\partial\Omega)$ for some $p>1$, and the solution to $\LL u=0$ with $u\big|_{\partial\Omega}=f$.
Set
\[ f_{\Delta_i}  := \fint_{{\Delta}_{i}} f. \]
    Finally, let  \( \Psi_k  := \sum_{i \geq k} \psi_i \) and note that \( \psi_i = \Psi_{i+1} - \Psi_{i} \) and 
    $\Psi_k\equiv 1$ on $[i+2,\infty)$.
    
For all $i\in\Z$ and $j\in\{1,2,\dots,N\}$ we denote by \( w_{ij} \) the solution of the boundary value problem
\[ \begin{cases}
    \LL w_{ij} = 0, &\text{in } \Omega, 
    \\
    w_{ij} = (f-f_{\Delta_i})\psi_i\varphi_j &\text{on } \partial\Omega,
\end{cases} \]
    If \( w  := \sum_{i,j} w_{ij} \) then  \( v  := u-w \) solves \(\LL v = 0 \)
    with boundary data 
\[ \sum_{i \in \Z} f_{\Delta_i} \psi_i = \sum_{i \in \Z} f_{\Delta_i} (\Psi_{i+1} - \Psi_{i})
    = \sum_{i \in \Z} (f_{\Delta_{i-1}} -f_{\Delta_i}) \Psi_{i}. \]
Thus we can write \( v = \sum_{i,j} v_{ij} \) where \( v_{ij} \) solves
\[ \begin{cases}
    \LL v_{ij} = 0, &\text{in } \Omega, 
    \\
    v_{ij} = (f_{\Delta_{i-1}} -f_{\Delta_i})\Psi_i\varphi_j &\text{on } \partial\Omega.
\end{cases} \]
It follows that we have decomposed the solution $u$ to $\LL u=0$ with $u\big|_{\partial\Omega}=f$ into
$$u=\sum_{i\in \Z}\sum_{j=1}^N (v_{ij}+ w_{ij}),$$
such that $w_{ij}\big|_{\partial\Omega}$ is supported only on $(\partial\Base\cap (1/2)\Z_j) \times (i-1,i+2)$ and $v_{ij}\big|_{\partial\Omega}$ is supported in $(\partial\Base\cap (1/2)\Z_j) \times (i-1,\infty)$.\vglue1mm

Let us also define $\tilde{v}_{ij}$, such that 
\( \tilde{v}_{ij} \) solves
\[ \begin{cases}
    \LL\tilde{v}_{ij} = 0, &\text{in } \Omega, 
    \\
    \tilde{v}_{ij} = (f_{\Delta_{i-1}} -f_{\Delta_i})\Psi_i (1-\Psi_{i+20})\varphi_j&\text{on } \partial\Omega.
\end{cases} \]
To motivate this, observe first that 
$v_{ij}\big|_{\partial\Omega}(P,\tau)=\tilde{v}_{ij}\big|_{\partial\Omega}(P,\tau)$ for $\tau\le i+15$ and therefore $\tilde N(\nabla v_{ij})(P,\tau)=\tilde N(\nabla \tilde{v}_{ij})(P,\tau)$ for all $\tau\le i+10$, using the assumption on the diameter of the cylinders.
However, $\tilde{v}_{ij}$ is much more similar to $w_{ij}$, as both have compactly supported boundary data and
 the half derivative of 
$\Psi_i (1-\Psi_{i+20})$ has faster decay than the half derivative of $\Psi_i$.
\vglue1mm

Let us study the properties of $N(\nabla v_{ij})$ and $N(\nabla w_{ij})$. Without loss of generality, let
$v$ be either $v_{ij}$ or $w_{ij}$ for some $i\in\Z$. It follows that $v\equiv 0$ for $t\le i-1$. Thus
$\tilde N(\nabla v)(P,\tau)\equiv 0$ for all $\tau\le i-5$.

When $i-5<\tau<i+10$ we may study $\tilde N(\nabla v)(P,\tau)$ by considering the $(R_L)_p$ boundary value problem with datum
supported in a single coordinate patch. We address that later, and this is the heart of the matter. In this time range we may use
$\tilde{v}_{ij}$ instead of $v_{ij}$, as the solutions coincide when $\tau<i+15$.

Finally, for $\tau\ge i+10$ we will use the fact that after summing in $j$, the solutions $\nabla v_i = \sum_j \nabla v_{ij}$ decay exponentially. 
This is a consequence of the following lemma from \cite{DinSa}.

\begin{lemma}[{\cite[Lemma 5.10]{DinSa}}]\label{lemma:Exponential Decay} Let $p\ge 1$ and
assume that the Dirichlet problem for the adjoint PDE $(D)_{p'}^*$ is solvable. (When $p=1$ we assume 
$(D)_{q}^*$ for some $q>1$). Let
 $v$ be a solution to $\LL v=0$ and
    suppose that $v$ is constant on \( \partial\Base \times (i+2,\infty) \).
    There exists an \( \alpha>0 \) such that for all $k>i+9$ we have that
    \begin{align*}
        \| \tilde{N}(\nabla v) \|_{L^p(\Delta_k)}
        \lesssim e^{-\alpha|i-k|} \| {\tilde N}(\nabla v) \|_{L^p(\Delta_{i+3})}.     
    \end{align*}
\end{lemma}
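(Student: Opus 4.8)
\textbf{Proof proposal for Lemma \ref{lemma:Exponential Decay}.}

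The plan is to prove the decay estimate by a Caccioppoli-energy argument combined with a spatial-slab iteration in the time variable, exploiting that $v$ is constant on the ``upper'' part of the boundary and that solvability of the adjoint Dirichlet problem $(D)^*_{p'}$ gives us $L^p$ control of the nontangential maximal function of $\nabla v$ on each time-slab in terms of the trace data. Since $v$ is constant on $\partial\Base\times(i+2,\infty)$, by replacing $v$ by $v-c$ we may assume $v=0$ there. First I would establish a single-step geometric decay: I claim there is a fixed $\theta\in(0,1)$ and a large fixed $L\in\N$ (depending only on the Lipschitz character, ellipticity, and $n$) such that for all $k$ with $k>i+9$,
\begin{equation}\label{e:singlestep}
\|\widetilde N(\nabla v)\|_{L^p(\Delta_k)}\le \theta\,\|\widetilde N(\nabla v)\|_{L^p(\Delta_{k-1}\cup\Delta_{k-2}\cup\cdots\cup\Delta_{k-L})}.
\end{equation}
Granting \eqref{e:singlestep}, iterating it $\lfloor (k-i-3)/L\rfloor$ times and summing the finite geometric tails yields $\|\widetilde N(\nabla v)\|_{L^p(\Delta_k)}\lesssim e^{-\alpha|i-k|}\|\widetilde N(\nabla v)\|_{L^p(\Delta_{i+3})}$ with $\alpha=-\tfrac1L\log\theta$; the compactness of the cylinder in the spatial variables makes the covering bookkeeping harmless. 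So everything reduces to \eqref{e:singlestep}.

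For the single-step estimate, the key is a Caccioppoli inequality in a spatial slab localized to times near $\Delta_k$: since $v$ solves $\LL v=0$ and vanishes on $\partial\Base\times(i+2,\infty)\supset\partial\Base\times(k-5,\infty)$, multiplying the equation by $v\chi^2$ with $\chi$ a cutoff equal to $1$ on $\Base\times(k-1,k+2)$ and supported in $\Base\times(k-3,k+4)$, using ellipticity and the boundary vanishing (which kills the boundary terms and, via the parabolic version of Lemma \ref{lem.bdyCaccio}, lets us pass from $\|v\|^2$ to $\|\nabla v\|^2$ on a slightly larger slab), gives
\[
\iint_{\Base\times(k-1,k+2)}|\nabla v|^2\,dX\,dt\ \lesssim\ \iint_{\Base\times(k-3,k+4)}|v|^2\,dX\,dt\ \lesssim\ \iint_{\Base\times(k-5,k+6)}|\nabla v|^2\,dX\,dt,
\]
but more importantly, the vanishing on the whole future boundary lets one run this in a form where the right side lives strictly in the \emph{past} slabs $\Base\times(k-L,k-3)$ up to an exponentially small error from the distant part of the equation, because the energy in $\Base\times(k-1,k+2)$ is fed only through the past (the equation is forward parabolic) and the boundary is closed on the future. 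This is exactly the localization of energy that drives the decay. Then I convert the resulting $L^2$-energy estimate on slabs to the $L^p$ nontangential statement using solvability of $(D)^*_{p'}$: by the duality Lemma \ref{lem.RtoD}/the hypothesis, the Dirichlet data being supported/comparable only on a bounded set of time-slabs controls $\|\widetilde N(\nabla v)\|_{L^p}$ on those slabs, and one plays off the $L^2$-smallness of the energy on $\Delta_k$ against the $L^p$ trace data on the neighboring past slabs, using that all slabs have comparable finite measure so that the $L^2\!-\!L^p$ conversion only costs fixed constants.

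The main obstacle I anticipate is making the ``energy is fed only from the past'' heuristic rigorous in a way that actually produces a factor $\theta<1$ rather than merely $\theta\le C$: the naive Caccioppoli bound loses a constant $\ge1$ at each step, which is useless. The fix is to iterate Caccioppoli across a long stack of $L$ consecutive slabs and use a telescoping/absorption argument — if the energy of $v$ on slab $m$ is $a_m$, Caccioppoli gives $a_m\le C(a_{m-1}-a_m)$ up to boundary-vanishing terms that drop out, hence $a_m\le\frac{C}{C+1}a_{m-1}$, which is the genuine geometric gain; summing this across $L$ slabs gives \eqref{e:singlestep} with $\theta=(C/(C+1))^{L}$ made as small as we like by taking $L$ large. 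The parabolic scaling subtlety — that the natural cutoff in time lives at scale $1$ (the slab width) while the spatial cutoff lives at scale $\diam(\Base)\sim1$, so $|\partial_t\chi|\lesssim1$ and $|\nabla\chi|\lesssim1$ are both fine — is what makes this telescoping clean, and the half-derivative terms in the energy form \eqref{eq-sesq} do not obstruct it since they only improve coercivity (c.f. \eqref{eq:coer}). Once \eqref{e:singlestep} is in hand the lemma follows immediately, and this matches \cite[Lemma 5.10]{DinSa} whose proof in the time-independent-geometry setting transfers verbatim up to the parabolic bookkeeping just described.
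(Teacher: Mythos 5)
Your physical mechanism — forward-in-time dissipation with zero lateral data for $t>i+2$, plus a hole-filling/telescoping in time — is the right starting point, and a corrected version of it (tail sums $S_m=\sum_{j\ge m}a_j\le C(S_{m-1}-S_m)$, or directly $\tfrac{d}{dt}\int_{\Base}(v-c)^2\le -c_0\int_{\Base}(v-c)^2$ by Poincar\'e, since $v-c$ vanishes on $\partial\Base$ for $t>i+2$) does yield exponential decay of the slab energies of $v-c$; note in passing that Caccioppoli does not literally give $a_m\le C(a_{m-1}-a_m)$, only the tail-sum version does. Be aware also that the paper contains no proof of this lemma: it is imported verbatim from \cite{DinSa}, so the question is whether your argument closes, and it does not. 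The decisive step — converting between slab $L^2$ energies and $\|\tilde N(\nabla v)\|_{L^p(\Delta_k)}$, in both directions — is asserted rather than proved, and the justification you offer is circular: solvability of $(D)^{*}_{p'}$ does not control $\|\tilde N(\nabla v)\|_{L^p}$ by the trace data (that is the Regularity problem, precisely what this paper is trying to establish, and Lemma \ref{lem.RtoD} goes the other way). What $(D)^{*}_{p'}$ actually buys here is the reverse H\"older estimate for $\tilde N(\nabla u)$ on boundary balls where the datum vanishes (Theorem \ref{thm:Reverse Holder}) and the global $L^1$ bound away from the support of the data (Theorem \ref{ZeroPart}); neither mechanism appears in your outline. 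Moreover the lemma includes $p=1$ and all $p<2$, where your ``the $L^2$--$L^p$ conversion only costs fixed constants'' remark fails in the needed direction: $\|\tilde N(\nabla v)\|_{L^p(\Delta_{k-j})}$ does not dominate the slab energy, so your single-step geometric decay claim cannot be extracted from the Caccioppoli chain as written.

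There is a second structural gap: even granting your single-step estimate, iterating it terminates on a union of slabs with indices near $i+3,\dots,i+9+L$, whereas the statement bounds everything by the single slab $\Delta_{i+3}$. Bridging this requires an additional ingredient, e.g.\ bounding the interior values of $v-c$ at times just above $i+2$ by integrating $\nabla v$ along nontangential cones with vertices in $\Delta_{i+3}$, hence by $\|\tilde N(\nabla v)\|_{L^1(\Delta_{i+3})}\lesssim\|\tilde N(\nabla v)\|_{L^p(\Delta_{i+3})}$ (the only H\"older direction available for $p\ge 1$), then propagating forward with the exponential decay of $v-c$, and only at the end converting back to $\tilde N(\nabla v)$ on $\Delta_k$ via boundary Caccioppoli/Lemma \ref{lemma:Gradient L2toL1} together with Theorem \ref{thm:Reverse Holder}. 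That is, in outline, how the cited proof in \cite{DinSa} is structured, and it is exactly where the hypothesis $(D)^{*}_{p'}$ genuinely enters; without these ingredients your proposal has a genuine gap rather than a bookkeeping deficit.
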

Notice that the assumption that $(D)_{p'}^*$ is solvable is harmless because under the conditions of the matrix $A$ in Theorem \ref{MainT}, the parabolic measure satisfies $\omega_{\LL^*}\in A_\infty(d\sigma)$ for the adjoint operator, which is a consequence of the paper \cite{DPP2} (see also \cite{DDH,DH18} for small Carleson case). The $A_\infty$ condition implies that there is some $p_0\in(1,\infty)$ such that   $(D)_{p'}^*$  is solvable for all  $p\in (0,p_0)$ 
(and thus $p'\in(p_0',\infty)$). We are going to use this fact several times.

Applying Lemma \ref{lemma:Exponential Decay} to $v_i=\sum_j v_{ij}$ and $w_i=\sum_j w_{ij}$ we have for all $k>i+9$:
\begin{equation}\label{eq60}
\int_{\Delta_k} {\tilde N}(\nabla v_i)^p \lesssim e^{-\alpha p|i-k|}\sum_{j}\| {\tilde N}(\nabla v_{ij}) \|_{L^p(\Delta_{i+3})}^p
,\quad \int_{\Delta_k} {\tilde N}(\nabla w_i)^p \lesssim e^{-\alpha p|i-k|}\sum_{j}\| {\tilde N}(\nabla w_{ij}) \|_{L^p(\Delta_{i+3})}^p.
\end{equation}\medskip

With this in place it remains to establish estimates for $\|{\tilde N}(\nabla {v}_{ij}) \|_{L^p(\Delta_{k})}=\|{\tilde N}(\nabla \tilde{v}_{ij}) \|_{L^p(\Delta_{k})}$
and $\|{\tilde N}(\nabla {w}_{ij}) \|_{L^p(\Delta_{k})}$ for $k=i-5,i-4,\dots,i+9$.\medskip

Let us study the boundary data of $\tilde{v}_{ij}$ and $w_{ij}$. We start with $w_{ij}$. We recall Proposition 2.4 of \cite{DinSa}, which is stated in 
greater generality than we consider here (for chord-arc domains). 

\begin{proposition}\label{prop:Product_Rule}
Let $\Omega=\mathcal O\times\R$, where $\mathcal O\subset\R^n$ is a $1$-sided chord-arc domain. Assume that for some $p>1$
 $f\in \dot{L}^p_{1,1/2}(\partial\Omega)$.
 There exists $h\in L^p(\partial\Omega)$ with $\|h\|_{L^p(\partial\Omega)}\lesssim \|f\|_{\dot{L}^p_{1,1/2}(\partial\Omega)}$ for which the following holds:
 
Consider any parabolic ball $B\subset \R^n\times\R$ of radius $r$ centered at the boundary $\partial\Omega$, let $\Delta=B\cap\partial\Omega$ and  $\varphi$ be a $C_0^\infty(\mathbb R^n\times\R)$ cutoff function supported on $3B$ with $\|\partial_t\varphi\|_{L^\infty}\lesssim r^{-2}$ and $\|\nabla_x\varphi\|_{L^\infty}\lesssim r^{-1}$. 
 Then we have the following estimate:
$$\|(f-\textstyle\fint_{\Delta} f)\varphi\|_{\dot{L}^p_{1,1/2}(\partial\Omega)}\lesssim \|h\|_{L^p(32\Delta)}+\|D^{1/2}_t f\|_{L^p(32\Delta)}
\lesssim \|f\|_{\dot{L}^p_{1,1/2}(\partial\Omega)}.$$
\end{proposition}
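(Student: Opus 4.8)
\textbf{Proof proposal for Proposition \ref{prop:Product_Rule}.}

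The plan is to produce the function $h$ directly from a characterization of the homogeneous parabolic Sobolev norm via differences, and then to verify the product estimate by the Leibniz rule. First I would recall that for $f\in\dot L^p_{1,1/2}(\partial\Omega)$ one has the pointwise-difference control: there is a function $h\in L^p(\partial\Omega)$, built from the maximal function of the difference quotients of $f$ in the spatial (tangential) directions together with the parabolic half-time difference quotients, such that
\[
|f(P,\tau)-f(Q,s)|\lesssim \bigl(d_p((P,\tau),(Q,s))\bigr)\,\bigl(h(P,\tau)+h(Q,s)\bigr)
\]
for all boundary points, and $\|h\|_{L^p(\partial\Omega)}\lesssim \|f\|_{\dot L^p_{1,1/2}(\partial\Omega)}$. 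This is the parabolic analogue of the standard fact that $\dot L^p_{1}$ functions admit a ``Haj\l asz gradient''; in the present setting it follows from the equivalence \eqref{eqrn} and the usual characterization of $\dot L^p_{1,1/2}$ through the operator $\D$, writing $\D f$ and passing to the maximal function of $|\D f|$ averaged over parabolic balls. I would take this $h$ (enlarged by a fixed constant multiple of the local maximal function of $|\D f|$) as the function in the statement.

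The second step is the product estimate itself. Write $g:=(f-\fint_\Delta f)\varphi$. Since $\nabla_T$ and $D_t^{1/2}$ are the two pieces of the $\dot L^p_{1,1/2}$ norm (Definition \ref{D:paraSob2}), I would estimate them separately. For the tangential gradient, $\nabla_T g=(\nabla_T f)\varphi+(f-\fint_\Delta f)\nabla_T\varphi$; the first term is bounded pointwise by $|\nabla_T f|\,\mathbf 1_{3\Delta}\le h\,\mathbf 1_{3\Delta}$, while for the second term one uses $\|\nabla_x\varphi\|_{L^\infty}\lesssim r^{-1}$ and the pointwise difference bound above with $Q$ ranging over $\Delta$ to get $|f(P,\tau)-\fint_\Delta f|\lesssim r\,\bigl(h(P,\tau)+\fint_\Delta h\bigr)$ on the support of $\varphi$; combined with $r^{-1}$ from the derivative of $\varphi$ and the fact that $\fint_\Delta h\lesssim \fint_{32\Delta}h$ this is controlled in $L^p(32\Delta)$ by $\|h\|_{L^p(32\Delta)}$. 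The half-time derivative requires a bit more care because $D_t^{1/2}$ is nonlocal: I would use the singular-integral representation
\[
D_t^{1/2}g(P,\tau)=c\int_{\R}\frac{g(P,\tau)-g(P,s)}{|\tau-s|^{3/2}}\,ds,
\]
insert $g=(f-\fint_\Delta f)\varphi$, and split the numerator as $\varphi(P,\tau)\bigl(f(P,\tau)-f(P,s)\bigr)+\bigl(f(P,s)-\fint_\Delta f\bigr)\bigl(\varphi(P,\tau)-\varphi(P,s)\bigr)$. The first piece is $\varphi(P,\tau)\,D_t^{1/2}f(P,\tau)$, controlled pointwise by $|D_t^{1/2}f|\,\mathbf 1_{3\Delta}$. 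For the second piece I would split the $s$-integral into $|\tau-s|\le r^2$ and $|\tau-s|>r^2$: on the near region use $|\varphi(P,\tau)-\varphi(P,s)|\lesssim r^{-2}|\tau-s|$ and the difference bound for $f$ to obtain an expression dominated by the local maximal function of $h$ on $32\Delta$; on the far region $\varphi(P,s)$ is supported only for $s$ within distance $\lesssim r^2$ of $\tau$ unless $\varphi(P,\tau)\neq0$, and one bounds $|f(P,s)-\fint_\Delta f|\lesssim (|\tau-s|^{1/2}+r)(h+\fint_\Delta h)$ against the kernel $|\tau-s|^{-3/2}$, whose integral over $|\tau-s|>r^2$ converges and yields the factor $r^{-1}$ needed to match $\|h\|_{L^p(32\Delta)}$ after integrating in $P$ over $32\Delta$. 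Summing the pieces and taking $L^p$ norms gives the claimed bound, and the final inequality $\|h\|_{L^p(32\Delta)}+\|D_t^{1/2}f\|_{L^p(32\Delta)}\lesssim\|f\|_{\dot L^p_{1,1/2}(\partial\Omega)}$ is immediate from $32\Delta\subset\partial\Omega$ and the definition of $h$.

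The main obstacle is the nonlocality of $D_t^{1/2}$ in the far region $|\tau-s|>r^2$: one must exploit that $\varphi$ is compactly supported in time (on a ball of radius comparable to $r$) so that the only surviving contribution is when $(P,\tau)\in\mathrm{supp}\,\varphi$, and then control the tail $\int_{|\tau-s|>r^2}|f(P,s)-\fint_\Delta f|\,|\tau-s|^{-3/2}\,ds$ using that $|f(P,s)-\fint_\Delta f|$ grows only like $|s|^{1/2}$ times a maximal function (a Haj\l asz-type bound one has to make precise on the chord-arc boundary $\partial\Omega$, where distances are parabolic). Since the kernel decays faster than $|s|^{-1}$ this tail is summable, but keeping all constants geometric and confined to the enlargement $32\Delta$ — rather than all of $\partial\Omega$ — is the delicate bookkeeping point; everything else is the routine Leibniz computation sketched above. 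I note that this proposition is essentially \cite[Proposition 2.4]{DinSa}, so in the paper one may simply invoke it, but the argument above records the mechanism for completeness.
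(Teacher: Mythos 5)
First, a contextual point: the paper does not prove this proposition at all — it is quoted verbatim from \cite{DinSa} (Proposition 2.4 there), where $h$ is built from a parabolic sharp maximal function $f^\sharp$ of $f$; your Haj\l asz-gradient construction of $h$ is the same mechanism in different clothing, and your acknowledgement that the pointwise oscillation bound on a chord-arc boundary "has to be made precise" is exactly the nontrivial ingredient (a Poincar\'e/sharp-function estimate on $\partial\mathcal O\times\R$ with the parabolic metric) that the cited proof supplies. So the overall strategy is sound and consistent with the source.

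However, your treatment of the far region $|\tau-s|>r^2$ of $D^{1/2}_t\bigl[(f-\fint_\Delta f)\varphi\bigr]$ has a genuine gap and, as written, would fail. Two concrete problems. (1) You assert that "the only surviving contribution is when $(P,\tau)\in\operatorname{supp}\varphi$"; this is false: for $(P,\tau)$ outside $\operatorname{supp}\varphi$ (but $P$ in its spatial shadow) one has $D^{1/2}_t g(P,\tau)=-c\int g(P,s)|\tau-s|^{-3/2}\,ds\neq0$, so the $L^p$ norm over $\partial\Omega\setminus 3\Delta$ must be estimated separately; it is harmless because there $|D^{1/2}_tg(P,\tau)|\lesssim r^3|\tau-t_B|^{-3/2}\bigl(\fint_{|s-t_B|\lesssim r^2}h(P,s)\,ds+\fint_\Delta h\bigr)$, whose $L^p$ norm in $(P,\tau)$ is $\lesssim\|h\|_{L^p(32\Delta)}$, but this step is simply missing. (2) For $(P,\tau)\in\operatorname{supp}\varphi$, your plan to bound $|f(P,s)-\fint_\Delta f|\lesssim(|\tau-s|^{1/2}+r)(h(P,s)+\fint_\Delta h)$ and integrate against $|\tau-s|^{-3/2}$ does not close: the effective kernel for the $\fint_\Delta h$ part is $|\tau-s|^{-1}$, whose integral over $|\tau-s|>r^2$ diverges logarithmically, and the $h(P,s)$ part drags in values of $h$ at times arbitrarily far from $32\Delta$, so the result cannot be confined to $\|h\|_{L^p(32\Delta)}$. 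The fix is not bookkeeping but a different decomposition: in the far region do \emph{not} separate the $f$-difference from the $\varphi$-difference; instead use that $g(P,s)=0$ whenever $|\tau-s|\gtrsim r^2$ and $(P,\tau)\in\operatorname{supp}\varphi$ (compact time-support of $\varphi$), so that region contributes exactly $g(P,\tau)\int_{|\tau-s|\gtrsim r^2}|\tau-s|^{-3/2}ds\lesssim r^{-1}\cdot r\,(h(P,\tau)+\fint_\Delta h)$, which is local. With that replacement (and the off-support estimate in (1)), together with the boundary sharp-function/Haj\l asz estimate you left as an assertion, the argument closes; your near-region and tangential-gradient computations are fine.
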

Applying this result to  
$(1/2)\Delta_{ij}: = ((1/4)\Z_j\cap\partial\mathcal O)\times (i,i+1)$, (and also defining
$\Delta_{ij}:=((1/2)\Z_j\cap\partial\mathcal O)\times (i-1,i+2)$),
we see that
$$
\|(f-\textstyle\fint_{(1/2)\Delta_{ij}} f)\psi_i\varphi_j\|_{\dot{L}^p_{1,1/2}(\partial\Omega)}\lesssim \|h\|_{L^p(16\Delta_{ij})}+\|D^{1/2}_t f\|_{L^p(16\Delta_{ij})},
$$
and hence 
\begin{equation}\label{bdtw}
\|(f-\textstyle\fint_{\Delta_{i}} f)\psi_i\varphi_j\|_{\dot{L}^p_{1,1/2}(\partial\Omega)}\lesssim \|h\|_{L^p(16\Delta_{ij})}+\|D^{1/2}_t f\|_{L^p(16\Delta_{ij})}+|\fint_{(1/2)\Delta_{ij}} f-\fint_{\Delta_{i}} f|,
\end{equation}
where we use the fact that $D^{1/2}_t(\psi_i\varphi_j)=\varphi_jD^{1/2}_t \psi_i$ has $L^p$ norm of size $O(1)$. A similar statement holds for boundary data of $\tilde v_{ij}$ where we get that
\begin{equation}\label{bdtv}
\|(\textstyle\fint_{\Delta_{i+1}} f-\fint_{\Delta_{i}} f)\Psi_i(1-\Psi_{i+20})\varphi_j\|_{\dot{L}^p_{1,1/2}(\partial\Omega)}
\lesssim|\fint_{\Delta_{i+1}} f-\fint_{\Delta_{i}} f|,
\end{equation}
using the fact that $D^{1/2}_t(\Psi_i(1-\Psi_{i+20})\varphi_j)=\varphi_jD^{1/2}_t(\Psi_i(1-\Psi_{i+20}))$ also has $L^p$ norm
of the size $O(1)$ for all $p>1$.

It is further shown in \cite{DinSa} that the difference of averages on the right-hand side of \eqref{bdtv} enjoy estimates in term of the sharp maximal function of $f$. Actually, the way $h$ in Proposition \ref{prop:Product_Rule} was defined in \cite{DinSa} used $f^\sharp$ with the bound $\|M(f^\sharp)\|_{L^p}\le \|h\|_{L^p}$
and hence by (5.17) of \cite{DinSa}:
\begin{equation}\label{bdtv-f}
\|(\textstyle\fint_{\Delta_{i+1}} f-\fint_{\Delta_{i}} f)\Psi_i(1-\Psi_{i+20})\varphi_j\|_{\dot{L}^p_{1,1/2}(\partial\Omega)}
\lesssim\|h\|_{L^p(16\Delta_{i})},
\end{equation}
with analogous estimate for 
\begin{equation}\label{bdtw-f}
\|(f-\textstyle\fint_{(1/2)\Delta_{i}} f)\psi_i\varphi_j\|_{\dot{L}^p_{1,1/2}(\partial\Omega)}\lesssim \|h\|_{L^p(16\Delta_{i})}+\|D^{1/2}_t f\|_{L^p(16\Delta_{ij})}.
\end{equation}

Recall now the following theorem from \cite{DinSa}.

\begin{theorem}[{\cite[Theorem 6.52]{DinSa}}]\label{ZeroPart} Assume that $\omega\in A_{\infty}(d\sigma)$, where $\omega$ is the parabolic measure of the operator $\LL^*=\partial_t+\div(A^T\nabla\cdot)$ on a Lipschitz cylinder $\Omega=\mathcal O\times \R$.
Then for any $p\in (1,\infty]$ the following holds. There exists $C(p)>0$ such that for any 
homogeneous $(1,1/2,p)$-atom associated to a parabolic ball $\Delta\subset \partial\Omega$
 the weak solution $u$ of the equation $\LL u=-\partial_tu+\div(A\nabla u)=0$ with boundary datum $f$ satisfies the estimate
 \begin{equation}\label{eqSato}
\|\tilde N(\nabla u)\|_{L^1(\partial\Omega\backslash 2\Delta)}\leq C(p).
\end{equation}
\end{theorem}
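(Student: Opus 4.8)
\textbf{Proof plan for Theorem \ref{ZeroPart} (atom estimate).}

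The plan is to prove the atom estimate \eqref{eqSato} by combining the $A_\infty$ property of the adjoint parabolic measure with the parabolic Dirichlet--Regularity duality that it entails, and then exploiting the homogeneity and support properties of a $(1,1/2,p)$-atom together with an exponential decay (Caccioppoli + iteration) argument in the direction away from the defining ball $\Delta$. Recall that $\omega_{\LL^*}\in A_\infty(d\sigma)$ implies, by the results of \cite{DPP2} (and Lemma \ref{lem.DtoR} in the converse direction), that $(D)_{p'}^{\LL^*}$ is solvable for all $q'=p'$ in some range $(p_0',\infty)$, hence $(R)_p^{\LL}$ is solvable for $p$ in a corresponding interval $(1,p_0)$, and moreover the nontangential maximal function estimate \eqref{RPpar} holds with $\|\tilde N(\nabla u)\|_{L^p(\partial\Omega)}\lesssim \|f\|_{\dot L^p_{1,1/2}(\partial\Omega)}$. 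The first step is therefore to record the global $L^p$ bound for the solution $u$ with atomic datum $f$: since an atom associated to $\Delta$ satisfies $\|f\|_{\dot L^p_{1,1/2}(\partial\Omega)}\lesssim \sigma(\Delta)^{1/p-1}$ by definition (normalized so that the $L^p$ Sobolev norm is controlled by the reciprocal measure of its support), solvability of $(R)_p^{\LL}$ gives $\|\tilde N(\nabla u)\|_{L^p(\partial\Omega)}\lesssim \sigma(\Delta)^{1/p-1}$.

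Next I would localize. On the annular region $2\Delta\setminus\Delta$ one uses H\"older's inequality directly: $\|\tilde N(\nabla u)\|_{L^1(2\Delta\setminus\Delta)}\le \sigma(2\Delta)^{1-1/p}\|\tilde N(\nabla u)\|_{L^p(\partial\Omega)}\lesssim \sigma(\Delta)^{1-1/p}\sigma(\Delta)^{1/p-1}=1$, which already gives a bounded contribution (this piece is not part of the claimed region $\partial\Omega\setminus 2\Delta$ but is needed as a base case). For the far region, write $\partial\Omega\setminus 2\Delta=\bigcup_{k\ge 1}(2^{k+1}\Delta\setminus 2^k\Delta)$ and estimate each dyadic annulus. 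Here the key input is the interior regularity of $u$ together with the vanishing-mean property of the atom: on $2^{k+1}\Delta\setminus 2^k\Delta$ the solution $u$ behaves, after subtracting a suitable constant, like a solution with ``small'' data, and one obtains a decay factor. Concretely I expect to use the boundary Caccioppoli inequality (Lemma \ref{lem.bdyCaccio}), boundary H\"older continuity (Lemma \ref{lem.bdyHolder}), and the reverse H\"older inequality for the gradient (Lemma \ref{lemma:Gradient L2toL1}) to reduce $\tilde N(\nabla u)$ on the annulus to an $L^1$-average of $u-c$ (for an appropriate constant $c$) on a slightly larger annular Carleson box, and then combine the global energy/$L^p$ bound with the cancellation of the atom to get a bound of the form $\|\tilde N(\nabla u)\|_{L^1(2^{k+1}\Delta\setminus 2^k\Delta)}\lesssim 2^{-\epsilon k}$ for some $\epsilon>0$; summing the geometric series in $k$ then yields \eqref{eqSato}. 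Exponential decay of this type for solutions vanishing on a boundary portion is exactly the mechanism behind Lemma \ref{lemma:Exponential Decay} from \cite{DinSa}, and the same comparison-principle-plus-Caccioppoli scheme applies here.

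The main obstacle is the non-local nature of the parabolic half-time derivative: a $(1,1/2,p)$-atom has cancellation and size control expressed in terms of $\nabla_T f$ and $D_t^{1/2}f$, but $D_t^{1/2}$ does not respect the spatial localization of the atom's support, so the ``boundary data is small far from $\Delta$'' heuristic must be made precise using the $L^p$-boundedness and the off-diagonal decay of $D_t^{1/2}$ (as already exploited in Proposition \ref{prop:Product_Rule} and in the localization estimates \eqref{bdtw}--\eqref{bdtw-f} above). One has to check that the atomic normalization survives multiplication by the cutoffs $\Psi_i$, $\psi_i$ used in the localization, which is precisely the content of the product-rule estimates cited from \cite{DinSa}; modulo invoking those, the argument is a fairly standard atom-plus-decay computation. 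I would close the proof by noting that the constant $C(p)$ depends only on $p$, the ellipticity constants, the Lipschitz character of $\mathcal O$, and the $A_\infty$ constants of $\omega_{\LL^*}$, all of which are fixed.
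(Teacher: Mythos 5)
You are proving a statement that this paper does not prove internally: Theorem \ref{ZeroPart} is imported verbatim from \cite{DinSa} (Theorem 6.52), so there is no in-paper proof to compare with; judged on its own, your proposal has a genuine gap at its very first step. You pass from $\omega_{\LL^*}\in A_\infty(d\sigma)$ to solvability of $(R)_p^{\LL}$ and hence to the global bound $\|\tilde N(\nabla u)\|_{L^p(\partial\Omega)}\lesssim\|f\|_{\dot L^p_{1,1/2}(\partial\Omega)}\lesssim\sigma(\Delta)^{1/p-1}$. That implication is not available: $A_\infty$ of the adjoint parabolic measure yields solvability of the adjoint Dirichlet problem in some $L^{q}$ range, but Lemma \ref{lem.DtoR} gives only a dichotomy (either $(R)^{\LL}_p$ is solvable or $(R)^{\LL}_q$ fails for every $q$), and the introduction of this paper explicitly records that deducing Regularity solvability from adjoint Dirichlet solvability, without an a priori Regularity estimate, is open. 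The hypotheses of Theorem \ref{ZeroPart} are only $A_\infty$ for $\LL^*$ on a Lipschitz cylinder --- no Carleson condition on $A$ and no assumed Regularity solvability --- so a proof must work in that generality. Moreover, within this paper the atom estimate \eqref{eqSato} is itself an ingredient, via \eqref{eqSatow}--\eqref{eqSatov} and the summation in Section 10, in establishing Regularity solvability on bounded cylinders; assuming \eqref{RPpar} in order to prove \eqref{eqSato} would therefore also be circular in the intended application.

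Even granting some global control, the decay you claim on the dyadic annuli is not substantiated: a global $L^p$ bound plus ``cancellation of the atom'' cannot produce the factor $2^{-\epsilon k}$ on $2^{k+1}\Delta\setminus 2^k\Delta$. The decay has to come from a local analysis in the far region, where $u$ vanishes on $\partial\Omega\setminus\Delta$: a pointwise bound of $u$ over Carleson regions sitting above $2^{k+1}\Delta\setminus 2^k\Delta$ in terms of the parabolic measure of the far ball $\Delta$ (or a Green's function analogue), combined with boundary H\"older continuity (Lemma \ref{lem.bdyHolder}) and doubling, which gives geometric smallness of $u$ there; only then do boundary Caccioppoli and Lemma \ref{lemma:Gradient L2toL1} convert this into decay of $\|\tilde N(\nabla u)\|_{L^1}$ over the annuli, with the adjoint Dirichlet solvability (the actual content of the $A_\infty$ hypothesis) entering through estimates of the type in Theorem \ref{thm:Reverse Holder}. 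Lemma \ref{lemma:Exponential Decay} does not apply as you invoke it: it concerns solutions whose lateral boundary data is constant for all later times and gives decay in the time direction, not decay away from a compactly supported atom. To make the argument sound you would need to replace your first step by a representation/duality argument using only the adjoint Dirichlet solvability and interior and boundary estimates for $\LL$, and then carry out the quantitative far-field decay sketched above.
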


Without going into the details of what a $(1,1/2,p)$-atom is, suffice to say that it is a localised function $f$ with support in a boundary ball $\Delta$, and that
has an appropriately defined scaling of $L^p$ norms of $\nabla f$ and $D^{1/2}_t f$. Given \eqref{bdtv-f}
and \eqref{bdtw-f}, the functions of the left-hand side are localised and while not $(1,1/2,p)$-atoms, yet are multiples of atoms. Hence we can apply Theorem \ref{ZeroPart} to both the $w_{ij}$ and the $\tilde v_{ij}$, respecting the right scaling which takes into account the size of the support of the boundary balls.\medskip


Starting with \eqref{bdtw-f},
the function $(f-\textstyle\fint_{(1/2)\Delta_{i}} f)\psi_i\varphi_j$
is supported in 
$\Delta_{ij}=((1/2)\Z_j\cap\partial\mathcal O)\times (i-1,i+2)$ and hence \eqref{eqSato} will give a bound on 
$\|\tilde N(\nabla w_{ij})\|_{L^1(\partial\Omega\backslash 2\Delta_{ij})}$, where
$2\Delta_{ij}=(\Z_j\cap\partial\mathcal O)\times (i-3,i+4)$.
Given that our atoms are defined on balls of size $O(1)$, which simplifies the matter of scaling, we therefore obtain that 
 \begin{equation}\label{eqSatow}
\|\tilde N(\nabla w_{ij})\|_{L^1(\partial\Omega\backslash 2\Delta_{ij})}\lesssim \|h\|_{L^p(16\Delta_{i})}+\|D^{1/2}_t f\|_{L^p(16\Delta_{ij})},
\end{equation}
and  similarly, for $\tilde{v}_{ij}$ whose boundary data is supported on 
$((1/2)\Z_j\cap\partial\mathcal O)\times (i-1,i+21),$
 \begin{equation}\label{eqSatov}
\|\tilde N(\nabla \tilde{v}_{ij})\|_{L^1(\partial\Omega\backslash [(\Z_j\cap\mathcal O)\cap (i-4,i+25)])}\lesssim \|h\|_{L^p(16\Delta_{i})}.
\end{equation}

It remains to consider the key estimates at the points near the support of the boundary data of $\tilde{v}_{ij}$ and $w_{ij}$, addressed in the following result.

\begin{theorem}\label{tlocal} Let $\Omega=\mathcal O\times\R$ be as above and assume that the coefficients of the matrix $A$ defining the operator $\LL=-\partial_t+\div(A\nabla\cdot)$ satisfy either (1) or (2) of Theorem \ref{MainT}.
Then there exists $p>1$ such that the following holds.

For any boundary ball $\Delta$ of size $O(1)$ such that $\Delta\subset [(1/2)\Z_j\cap\partial\mathcal O]\times (\tau-11,\tau+11)$, where $\Z_j$
is one of the $\ell$-cylinders defining the Lipschitz domain $\mathcal O$, and for any $f\in \dot{L}^p_{1,1/2}(\partial\Omega)$ supported in $\Delta$ and any solution $v$ of $\LL v=0$ in $\om$ with $v=f$ on $\pom$,  
we have that
$$\|\tilde N(\nabla v)\|_{L^p(3\Delta)}\lesssim \|f\|_{\dot{L}^p_{1,1/2}(\partial\Omega)}.$$
\end{theorem}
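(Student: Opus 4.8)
\textbf{Proof plan for Theorem \ref{tlocal}.}
The plan is to reduce the local estimate on the bounded Lipschitz cylinder to the corresponding estimate on $\R^n_+\times\R$, which has already been established (via Theorem \ref{t1} together with the block-form Regularity result of Theorem \ref{thm.BL}). The first step is a flattening of the boundary. Since $\Delta \subset [(1/2)\Z_j\cap\partial\mathcal O]\times(\tau-11,\tau+11)$ lies well inside a single $\ell$-cylinder, we have at our disposal (on $8\Z_j$) a Lipschitz graph representation $x_n=\phi_j(x')$ of $\partial\mathcal O$. Following the discussion below the map $\rho$ in Section \ref{sec.pf}, I would introduce the bi-Lipschitz map $\rho(x',x_n,t)=(\Psi_j(x',x_n),t)$ with $\Psi_j(x',x_n)=(x',c_0x_n+(\theta_{x_n}*\phi_j)(x'))$, with $c_0$ large depending only on $\|\nabla\phi_j\|_\infty$. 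This is a genuinely global map $\R^n_+\times\R\to\{(x',x_n):x_n>\phi_j(x')\}\times\R$, and the key point is that $\rho$ transforms $\LL$ into an operator $\LL_1$ on $\R^n_+\times\R$ whose coefficient matrix $A_1$ is again uniformly elliptic and satisfies the Carleson condition (1) or (2) \emph{with respect to the upper half-space}, with a constant controlled by the Lipschitz character and the original Carleson norm; this is precisely the mechanism used in Step 3, case (i), of the proof of Theorem \ref{MainT}. One subtlety is that on the unbounded graph domain the matrix $A$ is only defined near $\partial\mathcal O$ (inside $\Z_j$), so I would first extend $A$ off a neighborhood of $\partial\mathcal O\cap\Z_j$ to an elliptic, Carleson matrix on the whole graph domain—this is harmless because we only care about the behavior of $v$ near $3\Delta$, and we will localize the solution anyway.

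The second step is to pass the boundary data through $\rho$. The function $f$, supported in $\Delta$, pulls back under $\rho$ to a function $\tilde f=f\circ\rho$ on $\partial(\R^n_+\times\R)$, supported in $\rho^{-1}(\Delta)$, a boundary ball of comparable size; since $\rho$ is bi-Lipschitz in space and the identity in time, $\|\tilde f\|_{\dot L^p_{1,1/2}}\approx\|f\|_{\dot L^p_{1,1/2}}$ (the spatial gradient transforms with a bounded Jacobian, and $D^{1/2}_t$ is untouched). Write $\tilde v=v\circ\rho$, which solves $\LL_1\tilde v=0$ on $\R^n_+\times\R$ with datum $\tilde f$. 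Here one must be a little careful: the $v$ in the statement is \emph{some} solution with $v|_{\partial\Omega}=f$, not necessarily the energy solution, so before invoking $(R)_p^{\LL_1}$ I would argue that the relevant solution is the energy solution (or differs from it by something harmless)—for instance by noting that $f\in\dot L^p_{1,1/2}(\partial\Omega)$ with $p<2$ and $f$ compactly supported lies in the trace space $\dot H^{1/4}_{\partial_t-\Delta_x}(\partial\Omega)$, so the energy solution exists, and the maximum principle (Lemma \ref{L:MP}) together with boundary Hölder continuity (Lemma \ref{lem.bdyHolder}) pins down uniqueness of the bounded solution. Having done this, solvability of the $L^p$ Regularity problem for $\LL_1$ on $\R^n_+\times\R$—which we have already established, using Step 1–2 of Section \ref{sec.pf} applied to the pulled-back matrix—gives $\|\tilde N(\nabla\tilde v)\|_{L^p(\partial(\R^n_+\times\R))}\lesssim\|\tilde f\|_{\dot L^p_{1,1/2}}\approx\|f\|_{\dot L^p_{1,1/2}(\partial\Omega)}$.

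The third step is to transfer this back. Because $\rho$ is bi-Lipschitz and the identity in $t$, it maps non-tangential cones of one domain into (fatter) non-tangential cones of the other, and Whitney balls to comparable Whitney regions; since $|\nabla(v\circ\rho)|\approx|(\nabla v)\circ\rho|$ pointwise up to constants, one gets $\tilde N(\nabla v)(q,\tau)\lesssim \tilde N_{a'}(\nabla\tilde v)(\rho^{-1}(q),\tau)$ for $(q,\tau)\in 3\Delta$, for a suitable aperture $a'$; integrating over $3\Delta$ and using aperture-independence of the $L^p$ norm of $\tilde N$ yields $\|\tilde N(\nabla v)\|_{L^p(3\Delta)}\lesssim\|\tilde N_{a'}(\nabla\tilde v)\|_{L^p(\rho^{-1}(3\Delta))}\lesssim\|f\|_{\dot L^p_{1,1/2}(\partial\Omega)}$, which is the claim.

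I expect the main obstacle to be the bookkeeping around the extension and localization of $A$: one needs $\LL_1$ to be globally defined on $\R^n_+\times\R$ with a \emph{global} Carleson bound in order to quote the half-space Regularity result, yet $A$ was only hypothesized to have good properties near $\partial\mathcal O$. The cleanest route is probably to extend $\phi_j$ to a global Lipschitz function and extend $A$ inside $\Z_j$ constant-in-$x_n$ past the relevant scale and then by a fixed elliptic matrix (e.g. the identity) far away—verifying that this does not spoil the Carleson condition (1)/(2)—and then to note that because $f$ is compactly supported in $\Delta$ and we only estimate $\nabla v$ on $3\Delta$, the modification of $A$ away from a bounded neighborhood of $\Delta$ is invisible to the estimate up to an exponentially small error (which can be absorbed, or handled by the maximum principle as in the comparison arguments already used in Section \ref{sec.pf}, Step 5–6, and in Lemma \ref{lemma:Exponential Decay}). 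A secondary, more technical nuisance is making sure the trace identity $\tilde v|_{\partial(\R^n_+\times\R)}=\tilde f$ is literally correct under the pullback and that we are comparing the \emph{same} (energy) solutions on the two sides; this is routine but must be stated.
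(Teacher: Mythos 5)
Your first half — flattening with the Dahlberg–Kenig–Nečas–Stein map $\rho$, cutting off $\phi_j$ to a global Lipschitz function, extending $A$ by a partition of unity to an elliptic Carleson matrix on the graph domain, and then invoking the already-established half-space Regularity result — is exactly what the paper does. But there is a genuine gap in the second half. You propose to pull back $v$ itself, writing $\tilde v=v\circ\rho$ and applying $(R)_p^{\LL_1}$ on $\R^n_+\times\R$ to it. This cannot work as stated: $v$ lives on the bounded cylinder $\mathcal O\times\R$, whereas $\rho$ maps $\R^n_+\times\R$ onto the full graph domain $\tilde\Omega$ above $\tilde\phi$, so $v\circ\rho$ is only defined on a bounded-in-space piece of the half-space and is not a global solution of $\LL_1$; the half-space estimate applies to the energy solution $u$ of the \emph{extended} problem on $\tilde\Omega$ with the same datum $f$, which is a different function from $v$. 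You acknowledge the mismatch but dismiss it as an "exponentially small error" from modifying $A$ far away, to be absorbed or handled by the maximum principle. That is not the right mechanism: there is no small parameter here (the domains differ at a fixed unit scale away from $\Delta$), and the maximum principle controls $u-v$ pointwise, not $\tilde N(\nabla(u-v))$ in $L^p(3\Delta)$, which is what the theorem demands.

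What is actually needed — and what the paper supplies — is a quantitative comparison of $u$ and $v$ on the overlap region where the two PDEs coincide. Setting $w=u-v$, which solves $\LL w=0$ in $(4\Z_j\cap\mathcal O)\times\R$ and vanishes on the common boundary portion (in particular on $4\Delta$), the paper proves $\|\nabla w\|_{L^2(\Omega_0)}+\|D^{1/2}_t w\|_{L^2(\Omega_0)}\lesssim\|f\|_{\dot L^p_{1,1/2}}$ on a collar $\Omega_0=\tilde\Omega^4\setminus\overline{\tilde\Omega^3}$, using Theorem \ref{ZeroPart} (estimates away from the support of the datum), the reverse H\"older inequality of Theorem \ref{thm:Reverse Holder}, the exponential decay in time of Lemma \ref{lemma:Exponential Decay}, and a new lemma (Lemma \ref{lem.Dt1/2bd}) for the half time derivative; these tools rest on $\omega_{\LL^*}\in A_\infty$, which is also what fixes the exponent $p$. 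From the collar bounds one identifies $w$ with the unique energy solution in $\tilde\Omega^3$ having the resulting trace, and since $w=0$ on $4\Delta$ the reverse H\"older estimate yields $\|\tilde N(\nabla w)\|_{L^p(3\Delta)}\lesssim\|f\|_{\dot L^p_{1,1/2}}$; combining with the half-space estimate for $u$ gives the claim for $v=u-w$. None of this comparison machinery appears in your proposal, and without it the reduction to the half-space result does not close.
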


Note that an important aspect of the above Lemma is that fact that $3\Delta$ belongs to one of the coordinate patches that define the boundary $\partial\Omega$. Thus the support of both $f$ and $\tilde{N}(\nabla v)$ is localized. 

Assuming for the moment this theorem holds, we will pull these pieces together to obtain solvability of the $L^p$ Regularity problem on the bounded Lipschitz cylinder $\mathcal O\times\R$.
Here $p>1$ is the smaller of $p$ as in Theorem \ref{tlocal} and $\omega\in B_{p}(d\sigma)$ where $\omega$
is the parabolic measure of the adjoint operator $\mathcal L^*$. Consider such $p>1$.
\medskip

Firstly, thanks to Theorem \ref{tlocal}, \eqref{bdtv-f}, and \eqref{bdtw-f}, we have that
 \begin{equation}\label{eqSatowloc}
\|\tilde N(\nabla w_{ij})\|_{L^p(3\Delta_{ij})}\lesssim \|h\|_{L^p(16\Delta_{i})}+\|D^{1/2}_t f\|_{L^p(16\Delta_{ij})},
\end{equation}
and  similarly, for $\tilde{v}_{ij}$:
\begin{equation}\label{eqSatovloc}
\|\tilde N(\nabla \tilde{v}_{ij})\|_{L^p((2\Z_j\cap\mathcal O)\cap (i-10,i+50)])}\lesssim \|h\|_{L^p(16\Delta_{i})}.
\end{equation}

Next, we have the $L^1$ estimates \eqref{eqSatow}-\eqref{eqSatov} but these can be improved to $L^p$ thanks to the following reverse H\"older result 
from \cite{DinSa}.


\begin{theorem}\label{thm:Reverse Holder}
    Let \( 1 \le p < \infty \) and suppose that \( (D_{\mathcal L^*})_{p'} \) is solvable
        and suppose that \( u \) is a weak solution of $\mathcal Lu=0$ with \( u=0 \) 
        on \(  \Tilde{\Delta} := 80 \Delta \) where \( \Delta = \Delta_r(P_0,\tau_0) \), with $r\lesssim \mbox{diam}(\mathcal O)$.
    Then
    \[ \Big( \fint_{\Delta} {\tilde N}(\nabla u)^p \Big)^{1/p}
        \lesssim \fint_{\Tilde\Delta} {\tilde N}(\nabla u). \]
\end{theorem}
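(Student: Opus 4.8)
\textbf{Proof plan for Theorem \ref{thm:Reverse Holder} (Reverse H\"older inequality for $\tilde N(\nabla u)$).}

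The plan is to adapt the classical Shen-type real variable argument (as in \cite{S2}, and in the parabolic form already invoked in Lemma \ref{lem.gradL2-L1}) to the localized nontangential maximal function $\tilde N(\nabla u)$. The starting point is the observation that solvability of $(D_{\mathcal L^*})_{p'}$ is equivalent, via the duality results, to $\omega_{\mathcal L^*} \in B_{p'}(d\sigma)$, which in turn (by the self-improving property of $B_{p'}$ weights / $A_\infty$) gives a small exponent improvement: there is $\varepsilon_0>0$ so that $(D_{\mathcal L^*})_{q'}$ is solvable for all $q$ with $p-\varepsilon_0 < q < \infty$. This extra room at the exponent $p$ is what powers the real-variable self-improvement. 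First I would reduce to the following local $L^q$-to-$L^1$ reverse estimate on Whitney-type pieces: for a weak solution $u$ of $\mathcal Lu=0$ vanishing on $80\Delta$, and for any boundary ball $\Delta' = \Delta_{\rho}(P,\sigma) \subset \Delta$ with $\rho \lesssim r$,
\[
\Big(\fint_{\Delta'} \tilde N_{\loc}(\nabla u)^{q}\Big)^{1/q} \lesssim \fint_{40\Delta'} \tilde N(\nabla u) + \Big(\fint_{40\Delta'}\tilde N(\nabla u)^{q}\Big)^{1/q}\cdot(\text{small}),
\]
where $\tilde N_{\loc}$ is the nontangential maximal function computed only over the part of the cone of bounded height $\sim \rho$. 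This is the parabolic analogue of the decomposition $N = N_{\loc} + N_{\text{far}}$: the far part is harmless because on $\Delta'$ the ``tall'' portion of the cone sees only the behaviour of $u$ on a fixed larger ball, handled by interior estimates (Lemma \ref{L:int_Holder}, Lemma \ref{lem.gradL2-L1}) and absorbed into a maximal function of $\tilde N(\nabla u)$ over $40\Delta'$.

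Next I would establish the local piece. Cover the truncated sawtooth region over $\Delta'$ by parabolic Whitney cubes; on each cube $Q$ with $4Q$ still in the interior we have, by the higher-integrability (reverse H\"older) of the gradient of a solution --- precisely Lemma \ref{lem.gradL2-L1}, which gives $L^2$ control by $L^1$ averages, upgraded to $L^{2+\delta_1}$ by the standard Gehring/Giaquinta self-improvement available for parabolic divergence-form equations --- that $\big(\fint_{Q}|\nabla u|^{2+\delta_1}\big)^{1/(2+\delta_1)} \lesssim \fint_{2Q}|\nabla u|$. For cubes touching $80\Delta$ where $u$ vanishes, the boundary Caccioppoli inequality (Lemma \ref{lem.bdyCaccio}, Lemma \ref{lemma:Gradient L2toL1}) together with boundary higher integrability gives the same. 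Summing the pointwise bound $\tilde N_{\loc}(\nabla u)(x,t)^{q} \lesssim \sum_{Q \ni \text{cone}} \big(\fint_{2Q}|\nabla u|\big)^{q}\mathbf{1}$ and integrating over $\Delta'$, one arrives (after a Fubini/tent-space manipulation exactly as in the proof of (3.33)-type estimates, cf. Lemma \ref{lem.Carl}) at the stated $L^q$-to-$L^1$ estimate. Then the Shen real-variable lemma --- with the two ingredients being (i) this $L^q$ reverse estimate with a small constant on the $L^q$ self-term, valid because of the exponent room from $B_{p'}$ self-improvement, and (ii) the global $L^1$ bound on $\tilde N(\nabla u)$ that one already has from, e.g., Theorem \ref{ZeroPart}-type estimates or directly from the energy estimate --- yields $\big(\fint_\Delta \tilde N(\nabla u)^p\big)^{1/p} \lesssim \fint_{\tilde\Delta}\tilde N(\nabla u)$ with $\tilde\Delta = 80\Delta$.

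The main obstacle I expect is not the real-variable machinery, which is by now routine, but rather handling the \emph{nonlocality in time}: because $\tilde N$ is defined with $L^2$ averages over parabolic balls and the underlying operators carry the half-derivative $D_t^{1/2}$ with its nonlocal tail, the ``far part'' $N_{\text{far}}$ of the cone is not as trivially controlled as in the elliptic setting --- one must verify that the contribution from times far from $\sigma$ (within the support constraints coming from $u$ vanishing on $80\Delta$) genuinely decays and can be swept into $\fint_{40\Delta'}\tilde N(\nabla u)$; this is where the fact that $\tilde\Delta = 80\Delta$ is taken so large, and the boundary H\"older decay (Lemma \ref{lem.bdyHolder}), get used. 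Beyond that, one must be careful that the Whitney decomposition and the higher-integrability exponent $\delta_1$ are uniform in the location and scale of $\Delta'$, which holds since all constants depend only on $\lambda,\Lambda,n$.
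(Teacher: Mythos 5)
You should first be aware that the paper does not prove Theorem \ref{thm:Reverse Holder} at all: it is imported verbatim from \cite{DinSa} (the text introduces it as ``the following reverse H\"older result from \cite{DinSa}''), so there is no in-paper argument to compare yours against. Judged on its own merits, your proposal has a genuine gap: the hypothesis that $(D_{\mathcal L^*})_{p'}$ is solvable enters your plan only through the self-improvement of the $B_{p'}$ class, i.e.\ to create a little room around the exponent $p$, while the actual weak reverse H\"older estimate for $\tilde N(\nabla u)$ is claimed to follow from Gehring-type higher integrability of $\nabla u$ (an $L^{2+\delta_1}$ upgrade of Lemma \ref{lem.gradL2-L1} and its boundary analogue) summed over a Whitney decomposition of the truncated sawtooth. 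That mechanism cannot reach the stated conclusion. Gehring improvement raises the \emph{solid} integrability exponent of $\nabla u$ only slightly above $2$, whereas the theorem asserts an $L^p(\Delta)$ bound on the \emph{boundary} for the nontangential maximal function at an arbitrary exponent $p$ dictated by the dual Dirichlet problem (possibly very large when $p'$ is near $1$). If the estimate followed from interior/boundary regularity plus Whitney summation alone it would hold for every $p$ and every elliptic $A$ with no hypothesis on $\mathcal L^*$, which is false: failure of precisely this weak reverse H\"older inequality at large $p$ is the obstruction to $(R)_p$ solvability for general operators. Relatedly, Shen's real-variable lemma is being used backwards: it takes a weak reverse H\"older inequality at (essentially) the target exponent as an input and cannot bootstrap an exponent-$(2+\delta_1)$ solid gain up to exponent $p$.

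The dual Dirichlet solvability has to do real work inside the proof of the local estimate itself. The known route (Shen \cite{S2}, Kenig--Pipher \cite{KP93}, the elliptic analogue underlying \cite{DHP}, and the parabolic version in \cite{DinSa}) is by duality: estimate $\int_\Delta \tilde N(\nabla u)\,g$ for $g\in L^{p'}(\Delta)$ by representing the pairing through a solution of the adjoint (backward in time) Dirichlet or inhomogeneous problem with data manufactured from $g$ — or through the Green function/kernel function together with a comparison and boundary Harnack argument exploiting that $u$ vanishes on $80\Delta$ — then integrate by parts and invoke the nontangential and square-function bounds for the adjoint solution that $(D_{\mathcal L^*})_{p'}$ furnishes. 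Your remarks about the temporal nonlocality and the role of the large dilate $80\Delta$ are sensible, but they address a secondary difficulty; the missing duality (or kernel-function) mechanism is the essential one, and without it the proposed argument does not prove the theorem.
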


In particular, consider a cover of the set $\mathcal G=(\Delta_{i-5}\cup\Delta_{i-4}\cup\dots\Delta_{i+9})\setminus3\Delta_{ij}=(\partial\mathcal O\times (i-5,i+10])\setminus3\Delta_{ij}$.
Because $w_{ij}$ on the boundary is supported in $\Delta_{ij}$ each point in this set is at least a distance $r\sim O(1)$ away from this support. Furthermore, the area of this set is also of size $O(1)$ and therefore there will be a fixed finite number $K$ that only depends on the geometry of $\mathcal O$, but not on $w_{ij}$, such that
$\mathcal G$ can be covered by at most $K$ parabolic boundary balls $B_k$ of size $r/80$ with $80B_k\cap 2\Delta_{ij}=\emptyset$. Hence by the result above, on each $B_k$:
    \[  \int_{B_k} {\tilde N}(\nabla w_{ij})^p 
        \lesssim \left(\int_{80B_k} {\tilde N}(\nabla w_{ij})\right)^p\le \left(\int_{\partial\Omega\setminus 2\Delta_{ij}} {\tilde N}(\nabla w_{ij})\right)^p\lesssim \|h\|^p_{L^p(16\Delta_{i})}+\|D^{1/2}_t f\|^p_{L^p(16\Delta_{ij})}, \]
by \eqref{eqSatow}. 
Summing over all $k\le K$ yields
 \[  \int_{(\Delta_{i-5}\cup\Delta_{i-4}\cup\dots\Delta_{i+9})\setminus3\Delta_{ij}} {\tilde N}(\nabla w_{ij})^p 
        \lesssim \|h\|^p_{L^p(16\Delta_{i})}+\|D^{1/2}_t f\|^p_{L^p(16\Delta_{ij})}, \]
and after combining this estimate with \eqref{eqSatowloc} and summing over all $j\le N$ we get that:
\begin{equation}\label{w-loc-final}
 \int_{\Delta_{i-5}\cup\Delta_{i-4}\cup\dots\Delta_{i+9}} {\tilde N}(\nabla w_{i})^p 
        \lesssim \|h\|^p_{L^p(16\Delta_{i})}+\|D^{1/2}_t f\|^p_{L^p(16\Delta_{i})}.
\end{equation}    
Recall that $w_i=\sum_{j\le N}w_{ij}$.
By an analogous argument we get for $\tilde{v}_i=\sum_{j\le N}\tilde{v}_{ij}$ (and hence also for ${v}_{i}$ as they coincide of this portion of the domain):
\begin{equation}\label{v-loc-final}
 \int_{\Delta_{i-5}\cup\Delta_{i-4}\cup\dots\Delta_{i+9}} {\tilde N}(\nabla {v}_{i})^p=\int_{\Delta_{i-5}\cup\Delta_{i-4}\cup\dots\Delta_{i+9}} {\tilde N}(\nabla \tilde{v}_{i})^p 
        \lesssim \|h\|^p_{L^p(16\Delta_{i})}.
\end{equation}    
    
We are ready to add up the estimates to obtain a bound for the $L^p$ norm of $u$. Fix $i\in \Z$
and consider $\tilde N(\nabla u)$ on $\Delta_i$. Since $\tilde N$ is sub-additive, clearly
$$\tilde N(\nabla u)(P,\tau)\le \sum_k \left[\tilde N(\nabla v_k)+\tilde N(\nabla w_k)\right](P,\tau),$$ 
and therefore 
$$\|\tilde N(\nabla u)\|_{L^p(\Delta_i)}\le \sum_k \left[\|\tilde N(\nabla v_k)\|_{L^p(\Delta_i)}+\|\tilde N(\nabla w_k)\|_{L^p(\Delta_i)}\right]$$ 
$$\le \sum_{k=i-9}^{i+5}\left[\|\tilde N(\nabla v_k)\|_{L^p(\Delta_i)}+\|\tilde N(\nabla w_k)\|_{L^p(\Delta_i)}\right]+\sum_{k<i-9}\left[\|\tilde N(\nabla v_k)\|_{L^p(\Delta_i)}+\|\tilde N(\nabla w_k)\|_{L^p(\Delta_i)}\right]$$
$$\lesssim \sum_{k=i-9}^{i+5}\left[\|h\|_{L^p(16\Delta_{k})}+\|D^{1/2}_t f\|_{L^p(16\Delta_{k})}\right]+\sum_{k<i-9}e^{-\alpha|i-k|}\left[\|h\|_{L^p(16\Delta_{k})}+\|D^{1/2}_t f\|_{L^p(16\Delta_{k})}\right],
$$
using \eqref{w-loc-final}, \eqref{v-loc-final} and \eqref{eq60}. We raise both sides to the $p$-th power. It follows that
$$\hspace{-1.5cm}\|\tilde N[\nabla u]\|^p_{L^p(\Delta_i)}\lesssim \sum_{k=i-9}^{i+5}\left[\|h\|^p_{L^p(16\Delta_{k})}+\|D^{1/2}_t f\|^p_{L^p(16\Delta_{k})}\right]$$
$$\qquad\qquad\qquad\qquad+\left(\sum_{k<i-9}e^{-\alpha|i-k|}\left[\|h\|_{L^p(16\Delta_{k})}+\|D^{1/2}_t f\|_{L^p(16\Delta_{k})}\right]\right)^p.$$
For the last term we use H\"older's inequality for series. We split the exponential into two terms
$$e^{-\alpha|i-k|}=e^{-\alpha|i-k|/p'}e^{-\alpha|i-k|/p}$$
to obtain
$$\left(\sum_{k<i-9}e^{-\alpha|i-k|}\left[\|h\|_{L^p(16\Delta_{k})}+\|D^{1/2}_t f\|_{L^p(16\Delta_{k})}\right]\right)^p\le \left(\sum_{k<i-9} \left(e^{-\alpha|i-k|/p'}\right)^{p'}\right)^{p/p'}\times$$
$$\qquad\qquad\qquad\sum_{k<i-9}e^{-\alpha|i-k|}\left[\|h\|_{L^p(16\Delta_{k})}+\|D^{1/2}_t f\|_{L^p(16\Delta_{k})}\right]^p$$
$$\le C(p,\alpha)\sum_{k<i-9}e^{-\alpha|i-k|} \left[\|h\|^p_{L^p(16\Delta_{k})}+\|D^{1/2}_t f\|^p_{L^p(16\Delta_{k})}\right].$$

Now we sum over all indices $i\in \Z$. Since 
$\|\tilde N[\nabla u]\|^p_{L^p(\partial\Omega)}= \sum_i \|N[\nabla u]\|^p_{L^p(\Delta_i)}$ we obtain after rearranging the summation order:
\begin{equation}
\|\tilde N[\nabla u]\|^p_{L^p(\partial\Omega)}\lesssim \sum_{i\in\Z} (1+\sum_{k>9}e^{-\alpha k})\left[\|h\|^p_{L^p(16\Delta_{i})}+\|D^{1/2}_t f\|^p_{L^p(16\Delta_{i})}\right]
\end{equation}
We use the finite overlap of the balls $32\Delta_i$ which allows us to claim that
$$\sum_{i\in \Z}\|h\|^p_{L^p(32\Delta_i)}\lesssim \|h\|^p_{L^p(\partial\Omega)},$$
with a similar statement holding for $D^{1/2}_t f$.
For both $h$ and $D^{1/2}_t f$ we have bounds on their $L^p$ norms by $\|f\|_{\dot{L}^p_{1,1/2}(\partial\Omega)}$. In summary, we have that
\begin{equation}\label{twotz}
\|\tilde N[\nabla u]\|^p_{L^p(\partial\Omega)}\lesssim \|f\|^p_{\dot{L}^p_{1,1/2}(\partial\Omega)},
\end{equation}
proving solvability of the Regularity problem for this particular $p>1$. Hence, with the exception of the proof of Theorem \ref{tlocal} we are done, as this argument extends the solvability result stated in Theorem \ref{MainT} to 
bounded Lipschitz cylinders. The second part of the Theorem \ref{tlocal} claiming duality of ranges of solvability then follows from Lemma \ref{lem.DtoR}.
\qed\medskip

\subsection{Proof of Theorem \ref{tlocal}.}

In the section above we have reduced matters to solvability of Regularity problem for data localized to a parabolic ball which additionally sits inside one of the coordinate patches that describes the boundary $\partial\mathcal O$
times the time direction. In the local coordinates of this particular patch on ($8\Z$) we may assume the following:
\medskip

$\phi:\{x'\in\R^{n-1}:\,|x'|\le 8\}\to \R$ is a Lipschitz function with Lipschitz constant $\ell$ with $\phi(0)=0$
and in the coordinates $(x',x_n,t)$ the portion of domain $\Omega$ that belongs to $8\Z\times\R$ can be written
as
$$\Omega\cap (8\Z\times\R)=\{(x',x_n,t):\, ,|x'|\le 8,\, \phi(x')<x_n<8(\ell+1),\,t\in\R\},$$
with $\partial\Omega\cap( 8\Z\times\R)$ being the graph of function $\phi$ times $\R$.
Let us now choose a new function $\tilde{\phi}:\R^{n-1}\to \R$ with Lipschitz constant at most $3\ell$ having the following properties:

$$\tilde{\phi}(x')={\phi}(x')\quad\mbox{for all }|x'|\le 6, \qquad\mbox{and}\qquad \tilde{\phi}(x')=0\quad\mbox{for all }|x'|\ge 8.$$
This can be done trivially by multiplying $\phi$ by a cutoff function.\medskip

We similarly extend the coefficients of our equation. Let us write the parabolic PDE for a  solution $v$ to $\LL v=0$
on $\Omega=\mathcal O\times\R$
with boundary datum $f$ supported in the ball 
$$\Delta=[(1/2)Z_j\cap\partial\mathcal O] \times (-11,11),$$
where we have without loss of generality shifted the support of our datum to $|t|<11$. In these local coordinates the PDE has the form
$$-\partial_tv+\div(A\nabla v)=0,\qquad\mbox{where $A$ in uniformly elliptic in }[8\Z\cap\mathcal O]\times \R,$$
and satisfies the Carleson condition as in Theorem \ref{MainT}.\medskip

We would like to extend the PDE to the domain $\tilde\Omega:=\{(x',x_n,t):\,x'\in\R^{n-1},\, x_n>\tilde\phi(x'),\,t\in\R\}$ such that 
the PDE will coincide with the original PDE in the set $[4\Z\cap\mathcal O]\times \R$, and so that the new matrix $\tilde{A}$
will be uniformly elliptic and satisfy the Carleson condition of Theorem \ref{MainT} with respect to the boundary of 
the domain $\partial\tilde\Omega$.
This again is trivial, as we may consider a partition of unity $(\eta_1,\eta_2)$ subordinate to the 
cover of $\{(x',x_n):\,x'\in\R^{n-1},\, x_n>\tilde\phi(x')\}$ by the sets $6\Z\cap\mathcal O$ and $\R^n\setminus [4\Z\cap\mathcal O]$ with new matrix $\tilde{A}$ defined by
$$\eta_1(x',x_n)A(x,x_n,t)+\eta_2(x',x_n)I,\qquad\mbox{for all } \{(x',x_n,t):\,x'\in\R^{n-1},\, x_n>\tilde\phi(x'),\,t\in\R\}.$$
When $|x'|\le 4$ and $x_n<4(\ell+1)$ this preserves the property that $\tilde A=A$, while away from $|x'|\ge 8$ or $x_n>8(\ell+1)$ the matrix is simply the identity matrix and thus trivially satisfies both ellipticity and Carleson condition.

Hence it is possible to consider now the solution $u$ to the PDE
\begin{equation}\label{globalPDE}
-\partial_tu+\div(\tilde A\nabla u)=0 \qquad \mbox{on }\tilde\Omega,\quad\mbox{ with }u\big|_{\partial\tilde\Omega}=f.
\end{equation}

Observe that while the solution $v$ is only defined in our coordinates 
on the set $(8\Z\cap \mathcal O)\times\R$, the solution $u$ is defined on the whole set $\tilde\Omega$.
Let us establish first solvability of the PDE \eqref{globalPDE}.

Consider now a bijective bi-Lipschitz map $\Psi: {\mathbb R}^n_+
\to \{(x',x_n):\, x_n>\tilde\phi(x')\}$. 
The natural choice here is the map  due to Dahlberg, Kenig, Ne\v{c}as, Stein
(see for example \cite{D} or \cite{N}) defined as
\begin{equation}\label{map2}
\Psi(x',x_n)=(x',c_0x_n+(\theta_{x_n}*\tilde\phi)(x')),
\end{equation}
where $(\theta_s)_{s>0}$ is smooth compactly supported approximate
identity and $c_0$ can be chosen large enough (depending only on
$\|\nabla\tilde\phi\|_{L^\infty({\mathbb R}^{n-1})}$) so that $\Psi$ is
one to one.

Extend this map to 
\begin{equation}\label{map_rho}
    \rho(x,x_n,t)=(\Psi(x',x_n),t),
\end{equation}
a map from ${\mathbb R}^n_+\times\R$ to $\tilde{\Omega}$, the domain on which we consider \eqref{globalPDE}.

Then
$$U=u\circ\rho,\quad\text{solves the parabolic PDE}\quad \LL_1U=0\text{ on }{\mathbb
R}^n_+\times\R$$ where
$$\LL_1=-\partial_t+\text{div}(B\nabla\cdot),$$
where $B$ is a new matrix obtained from the original matrix $\tilde{A}$. 
 Also, $U=\wt f$ on $\partial\br{\Rn_+\times\R}$ where $\wt f=f\circ\rho|_{x_n=0}$.

This type of mapping has been studied extensively, in the parabolic setting in \cite{HL01}. However, our map is simpler than the one in \cite{HL01}
as it is constant in $t$. Crucially, as shown in \cite{HL01}, the new matrix $B$ as above inherits the Carleson condition that $\tilde{A}$ satisfies with perhaps a larger constant (as the Carleson norm also depends on $\|\nabla\tilde\phi\|_{L^\infty({\mathbb R}^{n-1})}$). Furthermore as $\Psi$ is $t$-independent, the above change of variables does not introduce any first order (drift) term.\medskip

It follows that the PDE for $U$ is of the type we have considered in Theorem \ref{MainT} on $\R^n_+\times\R$
for which the solvability has been established earlier.
 Note that the matrix $B$ inherits its ellipticity from $\tilde{A}$ and hence $A$, and that it also satisfies Carleson condition with norm that only depends on the Carleson condition of the original matrix $A$ and $\ell$. We can then apply Theorem \ref{MainT} for $\Rn\times\R$ to the operator $\LL_1$, which implies that there exists a uniform $p_0=p_0(A,\lambda,\Lambda,\ell, n)>1$ such that for all $1<p<p_0$
the $L^p$ Regularity problem for the operator $\LL_1$ is solvable on $\R^n_+\times\R$ and for some $C=C(A,\lambda,\Lambda,\ell, n,p)>0$, we have that
$$\|\tilde N(\nabla U)\|_{L^p(\partial(\R^n_+\times\R))}\le C\|\wt f\|_{\dot{L}^p_{1,1/2}(\partial(\R^n_+\times\R))}.$$
As $U=u\circ\rho$ where $\rho$ is bi-Lipschitz the same estimate also holds for $u$ on the domain $\tilde\Omega$. Thus
\begin{equation}\label{Rp-u}
\|\tilde N(\nabla u)\|_{L^p(\partial\tilde\Omega)}\le C\|f\|_{\dot{L}^p_{1,1/2}(\partial\tilde\Omega)}.
\end{equation}

We now want to deduce the solvability of the Regularity problem for $v$ from solvability of the related PDE for $u$. Recall that we know that on the set $(4\Z\cap \mathcal O)\times\R$ the two PDEs (for $u$ and $v$)
 we consider coincide and hence we have for $w=u-v$ the following:
$$-\partial_tw+\div(\tilde A\nabla w)=0,\quad\mbox{on}\quad [4\Z\cap\mathcal O]\times \R.$$

We want to further restrict the domain on which we consider $w$. Firstly, clearly $w\equiv 0$ for $t<-11$ as $f$ is not supported there. Secondly, as we may use Theorem \ref{ZeroPart} directly on $v$ on the domain 
$\Omega$. It follows that
\begin{equation}\label{Rp-v}
\|\tilde N(\nabla v)\|_{L^1(\partial\Omega\setminus 2\Delta)}\le C\|f\|_{\dot{L}^p_{1,1/2}(\partial\Omega)}.
\end{equation}
Hence we have all required estimates for $t>2\times 11=22$ from above as we may combine the estimate
\eqref{Rp-v} with Theorem \ref{thm:Reverse Holder} and the exponential decay (Lemma \ref{lemma:Exponential Decay}) to conclude that
\begin{equation}\label{Rp-v2}
\|\tilde N(\nabla v)\|_{L^p(\partial\Omega\cap \{t\ge 22\})}\le C\|f\|_{\dot{L}^p_{1,1/2}(\partial\Omega)}.
\end{equation}
There remains only the interval $-11<t<22$ to consider.

Consider first the set $\mathcal S=(\partial\Omega\setminus 3\Delta)\cap \{-11<\tau<22\}$. When $(x,t)\in\mathcal S$, we have $f\equiv 0$ (and even on an enlarged) portion of boundary),  and hence an argument similar to the one following Theorem \ref{thm:Reverse Holder} applies. 
$\mathcal S$ can be covered by finite collection of balls $B_k$ with $80B_k\cap 2\Delta=\emptyset$
and hence we may again use Theorem \ref{thm:Reverse Holder} and the estimate
\eqref{Rp-v} to obtain
\begin{equation}\label{Rp-v3}
\|\tilde N(\nabla v)\|_{L^p(\mathcal S)}\le C\|f\|_{\dot{L}^p_{1,1/2}(\partial\Omega)}.
\end{equation}
We also consider 
$$\tilde\Omega^s:=(s\Z\cap \mathcal O)\times \R\qquad\mbox{for } 3\le s\le 4,$$
thus for each such $s$ the domain $\tilde\Omega^s$ is an unbounded cylinder and on $\tilde\Omega^s$ the PDE given above for $w$ holds as both $u$ and $v$ solves PDEs with identical coefficients.

Considering the boundaries of $\tilde\Omega^s$ for $3\le s\le 4$ we see that
$$(x,t)\in \partial\tilde\Omega^s\cap \partial \tilde\Omega,\qquad\Longrightarrow\qquad w(x,t)=0,$$
which follows from the fact that $u=v=f$ on the $ \partial \tilde\Omega$ portion of the boundary. Given the definition of $\tilde\Omega^s$, its lateral boundary has two more components:
\begin{equation}\label{wbound}
\{(x',x_n):\, |x'|=s,\, \phi(x')<x_n<s(\ell+1)\}\times\R \quad\mbox{and}\quad \{(x',s(\ell+1)):\,|x'|<s\}\times\R.
\end{equation}
To gain understanding of the function $w$ on \eqref{wbound} we look the domain
$$\tilde\Omega_{0}=\tilde\Omega^4\setminus\overline{\tilde\Omega^3}=$$
$$[\{(x',x_n):\, 3<|x'|<4,\, \phi(x')<x_n<4(\ell+1)\}\cup \{(x',x_n):\,|x'|<4,\, 3(\ell+1)<x_n<4(\ell+1)\}]\times\R.$$

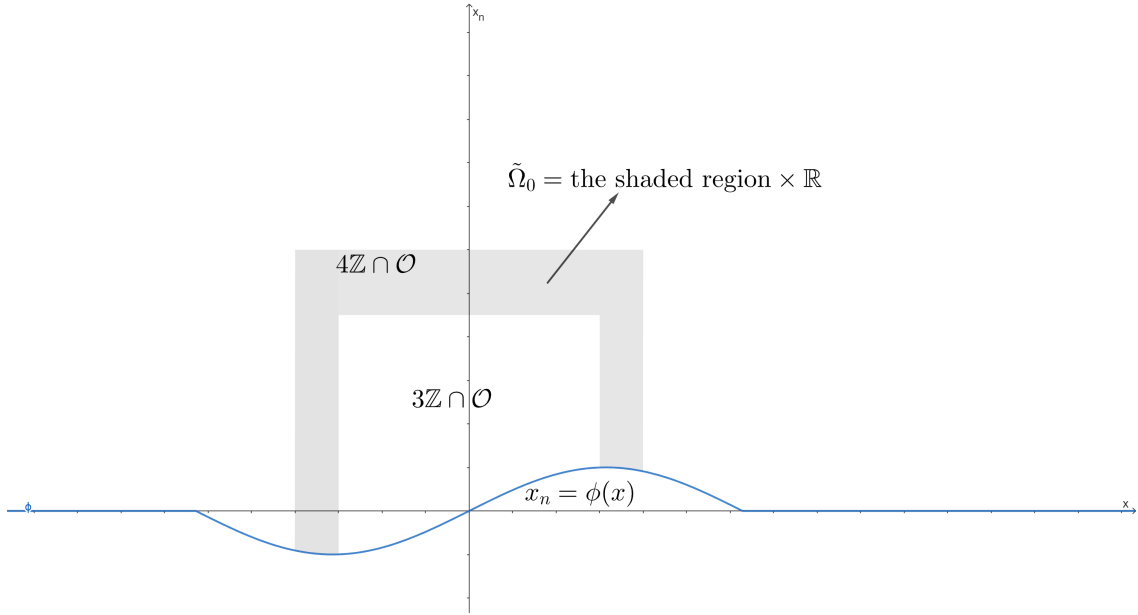
\begin{figure}[htbp]
    \centering
    \begin{tikzpicture}[x=1.5cm, y=1.5cm]
        
        \draw[->, thin] (-5, 0) -- (5, 0) node[above left] {$x$};
        \draw[->, thin] (0, -2) -- (0, 5) node[below right] {$x_n$};
        
        \foreach \x in {-4,-3,-2,-1,1,2,3,4}
            \draw[very thin, gray] (\x, 0.05) -- (\x, -0.05);
        \foreach \y in {-1,1,2,3,4}
            \draw[very thin, gray] (0.05, \y) -- (-0.05, \y);

        \begin{scope}
            \clip (-5, 5) -- (-5, 0) -- (-3.5, 0) 
                .. controls (-2.5, 0) and (-2.5, -1.5) .. (-1.5, -1) 
                .. controls (-0.5, -0.5) and (-0.5, 0) .. (0, 0)
                .. controls (0.5, 0) and (0.5, 0.8) .. (1.5, 0.8) 
                .. controls (2.5, 0.8) and (2.5, 0) .. (3.5, 0)
                -- (5, 0) -- (5, 5) -- cycle;
                
            \fill[gray!20] (-2.5, -2) rectangle (-1.5, 3.5); 
            \fill[gray!20] (1.5, -2) rectangle (2.5, 3.5);   
            \fill[gray!20] (-2.5, 2.5) rectangle (2.5, 3.5); 
        \end{scope}
        
        \draw[blue, thick] (-5, 0) -- (-3.5, 0) 
            .. controls (-2.5, 0) and (-2.5, -1.5) .. (-1.5, -1) 
            .. controls (-0.5, -0.5) and (-0.5, 0) .. (0, 0)
            .. controls (0.5, 0) and (0.5, 0.8) .. (1.5, 0.8)    
            .. controls (2.5, 0.8) and (2.5, 0) .. (3.5, 0)
            -- (5, 0);

        \node[above] at (1.5, 0.8) {$x_n = \phi(x)$};
        
        \node[blue, above] at (-4.5, 0) {\small $\phi$};
        \fill[blue] (-4.5, 0) circle (1pt);

        \node[black] at (-1.5, 3) {$4\mathbb{Z} \cap \mathcal{O}$};
        \node[black] at (-0.5, 1.5) {$3\mathbb{Z} \cap \mathcal{O}$};
        
        \draw[->, >=stealth, thin, gray!80!black] (1.5, 4.3) -- (0.8, 3.0);
        \node[above right] at (1.3, 4.3) {$\tilde{\Omega}_0 = \text{the shaded region} \times \mathbb{R}$};
        
    \end{tikzpicture}
\caption{Sketch of domains in the $(x,x_n)$ plane.}
    \label{fig:region}
\end{figure}

Our goal is to establish that 
$\nabla w\in L^2(\Omega_{0}).$

Given what we know about $u$ and $v$ we clearly have $\nabla w=\nabla u-\nabla v=0$ when $t\le -11$.
Also since \eqref{Rp-u} and \eqref{Rp-v2} holds when $t\ge 22$ we obtain by using
Lemma \ref{lemma:Gradient L2toL1} and the exponential decay (Lemma \ref{lemma:Exponential Decay})
 that 
$$\|\nabla w\|_{L^2(\Omega_{0}\cap\{t\ge 22\})}\le \|\nabla u\|_{L^2(\Omega_{0}\cap\{t\ge 22\})}+\|\nabla v\|_{L^2(\Omega_{0}\cap\{t\ge 22\})}\lesssim 
\|f\|_{\dot{L}^p_{1,1/2}(\partial\Omega)}.$$
On the component of the domain  $\Omega_0$
$\{(x',x_n):\, 3<|x'|<4,\, \phi(x')<x_n<4(\ell+1)\}\times (-11,22)$ we see that the boundary component (that coincides with $\partial\tilde\Omega$) belongs to the set $\mathcal S$ and thus we get that $\nabla u\in L^2$, by  
Lemma \ref{lemma:Gradient L2toL1} when close to the boundary, and by its interior equivalent when inside the domain. The argument for $v$ is similar - instead of \eqref{Rp-v3} we use \eqref{Rp-u}.

Finally the component of $\{(x',x_n):\,|x'|<4,\, 3(\ell+1)<|x_n|<4(\ell+1)\}\times (-11,22)$ of $\Omega_0$ is in the interior of $\tilde\Omega$ at the height of at least $O(1)\sim(\ell+1)$ away from the boundary. It follows that for any $(x',x_n,\tau)$ in this component we can find points $(q,\tau)\in\mathcal S$ such that $(x',x_n,\tau)\in\Gamma_a(q,\tau)$, provided the slope $a$ that defines nontangential cone $\Gamma_a$ is sufficiently small. As this can always be assumed (change of slope only changes constants in the estimates but not the estimates themselves),
$N(\nabla u)(q,\tau)$ controls the $L^2$ norm near the point $(x',x_n,\tau)$. In summary, we have that 
$$\|\nabla w\|_{L^2(\Omega_{0})}\le \|\nabla u\|_{L^2(\Omega_{0})}+\|\nabla v\|_{L^2(\Omega_{0})}\lesssim 
\|f\|_{\dot{L}^p_{1,1/2}(\partial\Omega)}.$$

We prove below (Lemma \ref{lem.Dt1/2bd}) that $D^{1/2}_t w\in L^2(\Omega_0)$ with $\|D^{1/2}_t w\|_{L^2(\Omega_{0})}\lesssim 
\|f\|_{\dot{L}^p_{1,1/2}(\partial\Omega)}$. It then follows that $w$ has trace in  
$\Hdot^{1/4}_{\pd_{t} - \Delta_x}(\partial\Omega_0)$ and in particular as $\partial\tilde\Omega^3\setminus \partial\tilde\Omega\subset \partial\Omega_0$ it also has trace in $\Hdot^{1/4}_{\pd_{t} - \Delta_x}(\partial\tilde\Omega^3)$ as $w=0$ on the remaining portion of its boundary $\partial\tilde\Omega^3\cap \partial\tilde\Omega$.

Recalling the section \ref{RwEs} where we have introduced energy solutions, there exists a unique energy solution  to the PDE $L\tilde{w}=0$ in $\tilde\Omega^3$ such that Tr $\tilde w=$ Tr $w$. Due to uniqueness established there $w=\tilde w$ in $\tilde\Omega^3$ and thus $w$ itself is an energy solution on this domain
enjoying the estimate
\begin{align*}
 \|w\|_{\dot \E} = \bigg(\|\nabla w\|_{\L^2(\tilde\Omega_3)}^2 + \|\HT \dhalf v\|_{\L^2(\tilde\Omega_3)}^2 \bigg)^{1/2} \lesssim \|\mbox{Tr }w\|_{\Hdot^{1/4}_{\pd_{t} - \Delta_x}(\partial\tilde\Omega^3)}
\end{align*}
$$\lesssim \|\nabla w\|_{L^2(\Omega_{0})}+ \|D^{1/2}_t w\|_{L^2(\Omega_{0})}\lesssim \|f\|_{\dot{L}^p_{1,1/2}(\partial\Omega)}.$$
In particular, as $w=0$ on $4\Delta$,
we obtain using Theorem \ref{thm:Reverse Holder} that 
\begin{equation}\label{Rp-vv}
\|\tilde N(\nabla w)\|_{L^p(3\Delta)}\le C\|f\|_{\dot{L}^p_{1,1/2}(\partial\Omega)}.
\end{equation}
Finally, as $v=u-w$ and $u$ has required estimates on $3\Delta$ from \eqref{Rp-u}, we get also that
\begin{equation}\label{Rp-vvv}
\|\tilde N(\nabla v)\|_{L^p(3\Delta)}\le C\|f\|_{\dot{L}^p_{1,1/2}(\partial\Omega)}.
\end{equation}
This combined with \eqref{Rp-v3} yields 
$\|\tilde N(\nabla v)\|_{L^p(\partial\Omega)}\le C\|f\|_{\dot{L}^p_{1,1/2}(\partial\Omega)}.
$ This proves the claim of Theorem \ref{tlocal} (modulo the following lemma):

\begin{lemma}\label{lem.Dt1/2bd} Assume that $\mathcal O_1\subset \mathcal O\subset \R^n$ are two bounded Lipschitz domains
such that the overlap of the boundaries $\partial \mathcal O_1\cap 
\partial\mathcal O$ is a set of positive measure.
Assume that $r=\mbox{diam}(\mathcal O)$ and fix some $\alpha\in (0,1)$.

Then there exists $C>0$ such that for any solution of the PDE $\mathcal Lv=0$ on $\mathcal O\times\R$ vanishing on the set 
\begin{equation}\label{BC}
\{(x,t)\in \partial\mathcal O\times\R:\, \dist(x,\partial\mathcal O_1)<\alpha r,\,t\in\R\}
\end{equation}
 the following estimate holds:
$$\|D^{1/2}_t v\|_{L^2(\mathcal O_1\times\R)}\le C\|\nabla v\|_{L^2(\mathcal O\times\R)}.$$
\end{lemma}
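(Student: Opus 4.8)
\textbf{Proof proposal for Lemma \ref{lem.Dt1/2bd}.}
The plan is to localize the half-time derivative to the subdomain $\mathcal O_1$ via a spatial cutoff and then exploit the PDE, together with the definition of the reinforced weak solution from subsection \ref{RwEs}, to control the resulting local $\Hdot^{1/2}(\R)$ norm. First I would fix a smooth spatial cutoff $\varphi$ with $\varphi\equiv 1$ on a neighbourhood of $\overline{\mathcal O_1}$ inside $\mathcal O$ but vanishing near the portion of $\partial\mathcal O$ where the overlap fails; the hypothesis that $v$ vanishes on the boundary strip \eqref{BC} guarantees that $v\varphi$ still vanishes on all of $\partial\mathcal O_1\cap\partial\mathcal O$ after the cutoff is applied, so no boundary contribution is created where $v$ is not already zero. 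Since $\|D^{1/2}_t v\|_{L^2(\mathcal O_1\times\R)}\le \|D^{1/2}_t(v\varphi)\|_{L^2(\mathcal O\times\R)}$ (as $\varphi$ is $t$-independent and $\varphi\equiv 1$ on $\mathcal O_1$), it suffices to bound $\|\dhalf(v\varphi)\|_{L^2(\mathcal O\times\R)}$ slicewise in the spatial variable.

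The core of the argument is a standard identity: for a.e.\ $X\in\mathcal O$, the function $t\mapsto v(X,t)\varphi(X)$ lies in $\Hdot^{1/2}(\R)$ and one has
\begin{equation}\label{e.planid}
\int_\R |\dhalf(v\varphi)(X,t)|^2\,dt = \langle \partial_t(v\varphi)(X,\cdot),\, \HT(v\varphi)(X,\cdot)\rangle,
\end{equation}
using the normalization $\partial_t=\dhalf\HT\dhalf$ and skew-symmetry of $\HT$, where the pairing is the $\Hdot^{-1/2}$--$\Hdot^{1/2}$ duality. Integrating \eqref{e.planid} in $X$ over $\mathcal O$, one uses that $v$ is a reinforced weak solution, i.e.\ it satisfies \eqref{2.10} with test functions built from $\varphi$; more precisely, one tests the weak formulation of $\mathcal Lv=0$ with $\HT(v\varphi^2)$ (suitably justified by density), which converts $\iint \partial_t(v\varphi^2)\,\HT(\cdot)$ into $\iint A\nabla v\cdot\nabla(\HT(v\varphi^2))$ plus the half-derivative cross terms already present in \eqref{2.10}. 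The skew-symmetry and boundedness of $\HT$ on $L^2$, together with ellipticity and the product rule $\nabla(v\varphi^2)=\varphi^2\nabla v+2v\varphi\nabla\varphi$, then bound everything by $\|\nabla v\|_{L^2(\mathcal O\times\R)}^2$ plus a term $\|v\varphi\nabla\varphi\|_{L^2}^2$. The latter lower-order term is absorbed using a local Poincaré/Caccioppoli-type inequality: on the support of $\nabla\varphi$, which sits at distance $\gtrsim r$ from the boundary where $v$ vanishes, one can estimate $\|v\|_{L^2}$ on that region by $r\|\nabla v\|_{L^2}$ on a slightly larger region (applying a Poincaré inequality in the spatial variables anchored at the vanishing part of $\partial\mathcal O$, or an interior estimate where $v$ need not vanish, chaining through a Harnack-type chain of balls). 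This is where Caccioppoli and the boundary estimates recalled in Section 3 enter.

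The main obstacle I anticipate is the rigorous justification of the slicewise identity \eqref{e.planid} and the use of $\HT(v\varphi^2)$ as a test function: the solution $v$ is only a reinforced weak solution in the local class $\H^{1/2}_{\loc}(\R;L^2_{\loc})$, so $v$ itself need not have global $\Hdot^{1/2}$-in-time regularity, and one must check that $v\varphi^2$ does lie in the admissible test space $\dot\E_0$ after the cutoff — this is exactly the payoff of the ``local half-derivative'' formalism emphasized in subsection \ref{RwEs}, since cutting off can only improve decay in $t$. A secondary technical point is that $\HT$ does not have compact support, so one needs an approximation argument (truncate $v$ in $t$, pass to the limit using the uniform bound $\|\nabla v\|_{L^2(\mathcal O\times\R)}<\infty$ on the right-hand side) to make the integration by parts in $t$ legitimate. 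Once these two points are handled, the estimate $\|D^{1/2}_t v\|_{L^2(\mathcal O_1\times\R)}\le C\|\nabla v\|_{L^2(\mathcal O\times\R)}$ follows by collecting the bounds above, with $C$ depending only on the ellipticity constants, $\alpha$, $r$, and the Lipschitz characters of $\mathcal O$ and $\mathcal O_1$.
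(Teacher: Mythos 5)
Your proposal is correct and follows essentially the same route as the paper: a spatial cutoff $\varphi$ adapted to the strip \eqref{BC} so that $v\varphi$ vanishes on $\partial\mathcal O\times\R$, the identity $\partial_t=D_t^{1/2}H_tD_t^{1/2}$ converting $\iint |D^{1/2}_t v|^2\varphi^2$ into a pairing of the PDE with $H_t v\,\varphi^2$, the $L^2(\R)$-isometry of $H_t$ plus Cauchy--Schwarz, and a Poincar\'e inequality on time slices anchored at the boundary set where $v$ vanishes to absorb the $r^{-2}\|v\|_{L^2}^2$ term. The only slight difference is that the paper works directly with $\varphi^2|D_t^{1/2}v|^2$ rather than $D_t^{1/2}(v\varphi)$, which is immaterial since $\varphi$ is $t$-independent.
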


To prove this we consider a cutoff function $\varphi\in C_0^\infty(\mathcal O)$ such that
$\varphi =1$ on the set $\{(X,t)\in\mathcal O:\, \mbox{dist}(X,\mathcal O_1)<\alpha r/2\}$
and vanishes on the set $\{(X,t)\in\mathcal O:\, \mbox{dist}(X,\mathcal O_1)\ge \alpha r\}$.
Also $|\nabla \varphi|\lesssim r^{-1}$.
Notice that this together with the condition \eqref{BC} guarantees that $v\varphi=0$ on $\partial\mathcal O\times\R$
and $v\varphi=v$ on $\mathcal O_1\times\R$.

We clearly have
\begin{equation}\label{split49}
\iint_{\mathcal O_1\times\R}|D^{1/2}_t(v)|^{2}\le
\displaystyle\iint_{\mathbb R^n\times\mathbb R}D^{1/2}_t(v)^{2}\varphi^2=
\displaystyle\iint_{\mathbb R^n\times\mathbb R}(D^{1/2}_tH_tD^{1/2}_tv)H_tv\varphi^2.
\end{equation}
Here $H_t$ is the Hilbert transform in the time variable, $H_t^2=-I$. Since $D^{1/2}_tH_tD^{1/2}_tv=\partial_t v$ we have after using the PDE v satisfies:
$$\iint_{\mathcal O_1\times\R}|D^{1/2}_t(v)|^{2}\le
\displaystyle\iint_{\mathbb R^n\times\mathbb R}\div(A\nabla v)H_tv\varphi^2
=-\displaystyle\iint_{\mathbb R^n\times\mathbb R}A\nabla v\cdot H_t(\nabla v)\varphi^2- 2\displaystyle\iint_{\mathbb R^n\times\mathbb R}A\nabla v \cdot\nabla\varphi (u\varphi).
$$
Recall that $H_t$ is an isometry on $L^2(\mathbb R)$ and hence 

$$\int_{t\in\mathbb R}|H_t(\nabla v\varphi)|^2\,dt=\int_{t\in\mathbb R}|\nabla v\varphi|^2\,dt\le \int_{t\in \R}|\nabla v|^2\varphi^2\,dt.$$
Hence  the righthand side above has a bound (by Cauchy Schwarz and taking into account the support of $\varphi$):
\begin{equation}\label{split51}
\lesssim \iint_{\mathcal O\times\R} |\nabla v|^2 +r^{-2} \iint_{\mathcal O\times\R} |v|^2.
\end{equation}
For the second term we use Sobolev inequality. On a fixed time slice $t=t_0$, since
$v$ vanishes on $A$ (which is a set of positive measure), we get:
$$r^{-2} \int_{\mathcal O} |v|^2\le C \int_{\mathcal O} |\nabla v|^2.$$
Integrating in $t$ and putting all inequalities together then yields:
$$\iint_{\mathcal O_1\times\R}|D^{1/2}_t(v)|^{2}\lesssim  \iint_{\mathcal O\times\R} |\nabla v|^2,$$
which was our claim.\medskip

To establish that $D^{1/2}_t w\in L^2(\Omega_0)$ we take $\mathcal O_1\times\R=\Omega_0$
and $\mathcal O\times\R=\tilde{\Omega}^{4+\varepsilon}\setminus \overline{\tilde{\Omega}^{3-\varepsilon}}$, i.e., $\mathcal O\times\R$ is taken to be a small enlargement of $\Omega_0$. Clearly the argument we gave on $\Omega_0$ also works on this slightly enlarged domain. Observe that  these two domains share the boundary on a piece of $\partial\tilde{\Omega}$ where $w=0$ and thus the assumption of our Lemma is satisfied. From this $D^{1/2}_t w\in L^2(\Omega_0)$ follows.
\qed

\appendix
\section{Solvability of the Dirichlet problem for a block form matrix}\label{APA}

In this section we prove solvability  of the Dirichlet problem $\mathcal Lu=0$ for a matrix $A$ in the block form. 
Here as before $\LL u=-\partial_t+\div(A\nabla\cdot)$ and $\Omega=\R^n_+\times\R$. We work with the operator $\LL$, not $\LL^*$, to maintain consistency with the rest of the paper, even though for our Regularity result we need solvability of the $L^p$ Dirichlet problem for $\LL^*$ for all $p\in (1,\infty)$. However, the simple change of variables $(x,x_n,t)\mapsto (x,x_n,-t)$ transforms the time-forward PDE into a time-backward one, so proving the result for $\LL$ will suffice.


Our objective is to establish solvability of the $L^p$ Dirichlet problem for $\LL$ for all $p$ close to $1$, as that automatically implies solvability for all $q>p$. Thus, without loss of generality, we fix some $1<p< 2$. Recall the definition of the $p$-adapted square function (introduced originally in \cite{DPP}) which is a modification of the usual square function defined earlier in \eqref{DefSquare}
\begin{equation}\label{Defp-Square}
S_{p,a}(w)(q,\tau):=\br{\iint_{\gamma_a(q,\tau)}\abs{\nabla w(X,t)}^2\abs{w(X,t)}^{p-2}x_n^{-n}dXdt}^{1/p} \quad\text{for }(q,\tau)\in\pom,
\end{equation}
where for simplicity we only work in the upper-half space and thus we use the nontagential cones $\gamma_a(q,\tau)$ as in Section \ref{SS:43}. When clear, we drop the subscript $a$ and only write $S_p(w)$.
\eqref{Defp-Square} follows the convention that if both $|w(X,t)|=|\nabla w(X,t)|=0$ the expression
$\abs{\nabla w(X,t)}^2\abs{w(X,t)}^{p-2}$ is considered to be zero.

Since $2-p>0$, $\abs{u(X,t)}^{2-p}\le N(u)^{2-p}(q,\tau)$ for all $(X,t)\in\gamma(q,\tau)$.
With  H\"older's inequality, this gives

\begin{eqnarray}\nonumber
\|S_2(u)\|_{L^p}^p&=&\int_{\pom}\left(\iint_{\gamma(q,\tau)}|\nabla u(X,t)|^2\abs{u(X,t)}^{p-2}\abs{u(X,t)}^{2-p}x_{n}^{-n}\,dX\,dt\right)^{p/2}dq\,d\tau\\\label{S2Sp}
&\le& \int_{\pom}N(u)^{\frac{4-2p}{p}}\left(\iint_{\gamma(q,\tau)}|\nabla u(X,t)|^2\abs{u(X,t)}^{p-2}x_{n}^{-n}\,dX\,dt\right)^{p/2}dq\,d\tau\\\nonumber
&\le& \varepsilon\int_{\pom}N(u)^pdq\,d\tau+C_\varepsilon \int_{\pom}S_p(u)^pdq\,d\tau
=\varepsilon\|N(u)\|_{L^p}^p+C_\varepsilon \|S_p(u)\|_{L^p}^p,
\end{eqnarray}
for all $\varepsilon>0$. Here $N(u)$ is the sup-version of the nontangential maximal function.
Observe that this inequality holds trivially with $\varepsilon=0$ when $p=2$ and thus we can consider any $1<p\le 2$. We prove an analog of Lemma \ref{lem.Sdk} by integrating by parts in the quantity $\|S_p(u)\|_{L^p}^p$.

\begin{lemma}\label{lem.Sdkp}
Let $\LL=-\dr_t+L$ be a parabolic operator with matrix $A$ in the block form \eqref{block}, and let $u$ be a bounded nonnegative energy solution to $\LL u=0$  in the infinite strip $\R^{n-1}\times(0,r)\times\R$. Then for any $r>0$, we have 
\begin{multline}\label{eq.BlckSdkp<Nr}
    \lambda\int_0^{\frac{r}2
    }\int_{\pom} \abs{\nabla u}^2u^{p-2}x_n\,dxdtdx_n+\frac{2C(p)}{r}\int_0^r\int_{\pom}|u|^p dxdtdx_n\\
    \le C(p)\int_{\pom}|u(x,r,t)|^pdxdt
    +C(p)\int_{\pom}|u(x,0,t)|^pdxdt,
\end{multline}
where $C=C(p)>0$ depending only on $p$.
\end{lemma}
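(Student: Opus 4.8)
The plan is to mimic the proof of Lemma \ref{lem.Sdk}, but with the test function $u^{p-1}\zeta^2 x_n$ in place of $(\dr_k u)\zeta^2 x_n$, exploiting the fact that $u\ge 0$ so that $u^{p-1}$ makes sense and $u^{p-2}\ge 0$. The key algebraic identities are $\nabla(u^{p-1}) = (p-1)u^{p-2}\nabla u$ and $A\nabla u\cdot\nabla(u^{p-1}\zeta^2 x_n) = (p-1)u^{p-2}\zeta^2 x_n\, A\nabla u\cdot\nabla u + u^{p-1}A\nabla u\cdot\nabla(\zeta^2 x_n)$. First I would start from a local estimate on a Whitney-type slab $(0,r)\times Q_{2r}$ with $\zeta=\zeta(x,t)$ a $t$-dependent (but $x_n$-independent) cutoff as in Lemma \ref{lem.Sdk}, and write
\[
\int_0^r\int_{Q_{2r}}A\nabla u\cdot\nabla u\, u^{p-2}(\zeta^2 x_n)
= \frac{1}{p-1}\int_0^r\int_{Q_{2r}}A\nabla u\cdot\nabla(u^{p-1})\,\zeta^2 x_n,
\]
and then integrate by parts, moving the divergence onto $u^{p-1}\zeta^2 x_n$. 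This produces: a boundary term (via the divergence theorem on the first integral $\int\divg(u^{p-1}\zeta^2 x_n A\nabla u)$), a term where the derivative hits $\nabla(\zeta^2)x_n$, a term where it hits $\zeta^2\nabla x_n$, and a solid term $-\int\divg(A\nabla u)u^{p-1}\zeta^2 x_n = -\int(\dr_t u)u^{p-1}\zeta^2 x_n$ using the PDE $\divg(A\nabla u)=\dr_t u$.

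The crucial structural point, exactly as in Lemma \ref{lem.Sdk}, is that the block form gives $A\nabla u\cdot\nabla x_n = \dr_n u$, so the $\zeta^2\nabla x_n$ term becomes $-\frac{1}{p-1}\int u^{p-1}\dr_n u\,\zeta^2 = -\frac{1}{p(p-1)}\int\dr_n(u^p)\zeta^2$, which by the fundamental theorem of calculus in $x_n$ yields precisely the boundary differences $\frac{1}{p(p-1)}\big(\int_{Q_{2r}}|u(x,0,t)|^p\zeta^2 - \int_{Q_{2r}}|u(x,r,t)|^p\zeta^2\big)$ --- this is where the $C(p)$ constants and the $L^p$ boundary norms appear. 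The solid term involving $\dr_t u$: since $\zeta$ is $t$-independent we get $-\frac{1}{p(p-1)}\int\dr_t(u^p)\zeta^2 x_n = \frac{1}{p(p-1)}\int u^p\,\dr_t(\zeta^2)x_n$, which is an error term controlled by $r^{-2}\iint_{Q_{2r}}u^p x_n$ after noting $|\dr_t\zeta|\lesssim r^{-2}$; this will be absorbed on averaging in $r$. The remaining $\nabla(\zeta^2)x_n$ term is $\frac{1}{p-1}\int u^{p-1}x_n A_\parallel\nabla_x u\cdot\nabla_x(\zeta^2)$, handled by Cauchy--Schwarz against $\varepsilon\int|\nabla_x u|^2 u^{p-2}\zeta^2 x_n + C_\varepsilon r^{-2}\int u^p x_n$ (writing $u^{p-1}=u^{p/2}\cdot u^{p/2-1}$), the first piece absorbed into the left side and the second into the error. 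Note that in contrast to Lemma \ref{lem.Sdk} there is no $(\dr_k A)$ term at all, since we are differentiating $u$ directly rather than $\dr_k u$, so the Carleson-norm term on the right-hand side of \eqref{eq.BlckSdk<Nr} simply does not appear --- consistent with the clean statement of \eqref{eq.BlckSdkp<Nr}.

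After summing against a partition of unity $\{\zeta_\ell^2\}$ subordinate to a cover of $\pom$ by cubes of sidelength $2r$ (using $\sum\dr_{x_i}\zeta_\ell^2 = \sum\dr_t\zeta_\ell^2 = 0$ to kill the $\nabla(\zeta^2)$ and $\dr_t(\zeta^2)$ error terms globally, just as in \eqref{eq.pfSdk3}), I would obtain a global estimate on $(0,r)\times\pom$ of the form: $\lambda\int_0^r\int_{\pom}|\nabla u|^2 u^{p-2}x_n \le r\int_{\pom}\dr_n(u^p)(x,r,t)\,\tfrac{C(p)}{?}dxdt + C(p)\int_{\pom}|u(x,0,t)|^p dxdt$ (schematically). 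Then, exactly following the endgame of Lemma \ref{lem.Sdk}, I would integrate this in $r$ over $[0,r_0]$, divide by $r_0$, use $(\dr_n u^p)x_n = \dr_n(u^p x_n)-u^p$ to convert $\int_0^{r_0}\int r\,\dr_n(u^p)(x,r,t)\,dr$ into $r_0\int_{\pom}|u(x,r_0,t)|^p - \int_0^{r_0}\int_{\pom}u^p$, truncate the $x_n$-integral on the left to $[0,r_0/2]$ (where $x_n - x_n^2/r_0 \ge x_n/2$), and relabel $r_0$ as $r$. The main obstacle --- really the only nontrivial point --- is making sure all the boundary terms at $x_n=r$ are assembled with the correct signs and $p$-dependent constants and that the negative $-\int_{\pom}|u(x,r,t)|^p$ contribution is not needed (it can simply be dropped, since we only want an upper bound); I would also need to be slightly careful about the degeneracy when $u$ vanishes, using the convention noted after \eqref{Defp-Square} and an approximation $u\rightsquigarrow u+\epsilon$ if a rigorous justification of the integrations by parts is desired, letting $\epsilon\to 0$ at the end by monotone/dominated convergence. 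Everything else is a routine transcription of the block-form computation already carried out for Lemma \ref{lem.Sdk}.
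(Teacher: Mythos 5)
Your proposal is correct and follows essentially the same route as the paper's proof: test against $(u+\varepsilon)^{p-1}\zeta^2 x_n$, exploit the block structure so that $A\nabla u\cdot\nabla x_n=\dr_n u$ produces the boundary $L^p$ terms, use the PDE to turn the time term into $\dr_t(\zeta^2)$ and kill the $\nabla_x(\zeta^2)$, $\dr_t(\zeta^2)$ contributions by summing the partition of unity, then integrate in $r$, divide by $r_0$, use $(\dr_n u^p)x_n=\dr_n(u^px_n)-u^p$, and truncate to $[0,r_0/2]$, with the $\varepsilon$-regularization and monotone convergence handling the degeneracy where $u$ vanishes, exactly as in the paper. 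One small caveat: do not actually discard the $-\tfrac1p\int_{\pom}u(x,r,t)^p$ term, since after averaging in $r$ it combines with the solid term coming from $\dr_n(u^px_n)-u^p$ to give the full coefficient $\tfrac{2C(p)}{r}$ of the $\int_0^r\int_{\pom}|u|^p$ term on the left-hand side, a term the paper subsequently uses to bound $\|u_k\|^p_{L^p(\Omega^k)}$.
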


The assumption that $u$ is nonnegative
will make it much easier to handle expressions like the first term of \eqref{eq.BlckSdkp<Nr}. In particular, by Harnack, if $u(X,t)>0$ then $u(Y,s)>0$ for all $s>t$, and that means that $u^{p-2}(Y,s)$ is well-defined. Furthermore, the assumption that $u\ge 0$ is jsutified since an $L^p$ function $f$ on the boundary can be split into its positive and negative parts and the corresponding solutions $u^+$, $u^-$ can be considered separately.
 We postpone the proof until we establish the following corollary. 

\begin{corollary}\label{cor.Sdkp}
Let $\LL=-\dr_t+L$ be a parabolic operator with matrix $A$ in the block form \eqref{block}, and let $u$ be a bounded nonnegative energy solution to $\LL u=0$ in $\Omega$. Then 
\begin{equation}\label{eq.BlckSdkp}
    \lambda\iint_{\Omega}
     \abs{\nabla u}^2u^{p-2}x_n\,dXdt\le C    \int_{\pom}|u(x,0,t)|^pdxdt,
\end{equation}
where $C$ depends only $p>1$.
\end{corollary}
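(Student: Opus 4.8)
\textbf{Proof proposal for Corollary \ref{cor.Sdkp}.}

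The plan is to deduce the global estimate \eqref{eq.BlckSdkp} from the strip estimate \eqref{eq.BlckSdkp<Nr} of Lemma \ref{lem.Sdkp} by letting $r\to\infty$, exactly in the spirit of how \eqref{eq.BlckSdk<N} was obtained from \eqref{eq.BlckSdk<Nr} in Lemma \ref{lem.Sdk}. First I would note that since $u$ is a bounded energy solution, $\nabla u\in L^2(\Omega)$, and in particular the averaged quantity $w(X,t)=\left(\fiint_{B((X,t),\delta(X,t)/2)}|\nabla u|^2\right)^{1/2}\to 0$ as $x_n\to\infty$; combined with interior estimates this forces $\int_{\pom}|u(x,r,t)|^p\,dx\,dt$ (or rather the appropriate averaged version of it) to tend to zero along a sequence $r_k\to\infty$. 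Actually, a cleaner route: apply \eqref{eq.BlckSdkp<Nr} on the strip $\R^{n-1}\times(0,r)\times\R$, keep only the first term on the left (which is nonnegative since $u\ge0$, so $u^{p-2}$ is well-defined and finite wherever $u>0$ by the Harnack inequality, Lemma \ref{L:Harnack}), and use that the term $\frac{2C(p)}{r}\int_0^r\int_{\pom}|u|^p$ is also nonnegative and may be discarded. This gives
\[
\lambda\int_0^{r/2}\int_{\pom}|\nabla u|^2u^{p-2}x_n\,dx\,dt\,dx_n\le C(p)\int_{\pom}|u(x,r,t)|^p\,dx\,dt+C(p)\int_{\pom}|u(x,0,t)|^p\,dx\,dt.
\]
It remains to control the first term on the right-hand side uniformly, or to show it vanishes along a sequence.

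For that, I would use the energy bound: since $u$ is the energy solution, $\iint_\Omega|\nabla u|^2<\infty$, and since $u$ is bounded, the identity $\partial_n(|u|^p)=p|u|^{p-2}u\,\partial_nu$ together with the fundamental theorem of calculus in $x_n$ shows that $\int_{\pom}|u(x,r,t)|^p\,dx\,dt$ is, up to the datum term at $x_n=0$, controlled by $\iint_{\Rn_+\times\R}|u|^{p-1}|\nabla u|\,dX\,dt$, which is finite by Cauchy--Schwarz using boundedness of $u$ and $\nabla u\in L^2$ — wait, this last integral need not be finite over the whole half-space. So instead I would argue by a limiting/averaging device: average \eqref{eq.BlckSdkp<Nr} in $r$ over an interval $(R,2R)$, exactly as in the proof of Lemma \ref{lem.Sdk} where \eqref{eq.pfSdk4} was integrated in $r$; the $\frac1R\iint_{(R,2R)\times\pom}|u|^p$ term produced this way tends to $0$ as $R\to\infty$ because $|u|^p$ is bounded and, more importantly, the averaging kills the boundary term $\int_{\pom}|u(x,r,t)|^p$. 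Concretely, $\frac1R\int_R^{2R}\int_{\pom}|u(x,r,t)|^p\,dx\,dt\,dr=\frac1R\iint_{\Rn_+\times\R,\,R<x_n<2R}|u|^p\,dX\,dt$, and this quantity tends to $0$ along a subsequence $R_k\to\infty$ (if it did not, $|u|^p$ would fail to be integrable in a way incompatible with $u$ being bounded with $L^2$ gradient and with the decay $w\to0$) — more carefully, one shows $\liminf_{R\to\infty}\frac1R\iint_{R<x_n<2R}|u|^p\,dX\,dt=0$ directly from the decay of $u$ implied by the boundary Hölder estimate (Lemma \ref{lem.bdyHolder}) plus the $A_\infty$ property of parabolic measure, or simply from $u\to0$ as $x_n\to\infty$ which follows from the maximum principle applied to $u$ against multiples of a solution decaying at infinity.

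Passing to the limit over such a subsequence $R_k$, the left side increases to $\lambda\iint_\Omega|\nabla u|^2u^{p-2}x_n\,dX\,dt$ by monotone convergence, and the right side converges to $C(p)\int_{\pom}|u(x,0,t)|^p\,dx\,dt$, yielding \eqref{eq.BlckSdkp}. \textbf{The main obstacle} I anticipate is making rigorous the claim that the tail term $\frac1R\iint_{R<x_n<2R}|u|^p$ vanishes along a subsequence, or equivalently that $\int_{\pom}|u(x,r,t)|^p\,dx\,dt\to0$ in an averaged sense; this requires combining the qualitative decay of energy solutions at $x_n\to\infty$ (which is where boundedness of $u$, finiteness of $\|\nabla u\|_{L^2(\Omega)}$, and the decay \eqref{def.w} all get used) with the observation that $|u|^p\le\|u\|_\infty^{p-2}|u|^2$ is not summable over the strip but its slab averages are, so a pigeonhole/Cesàro argument over dyadic scales is needed. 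Everything else is a direct consequence of Lemma \ref{lem.Sdkp}, the Harnack inequality ensuring $u^{p-2}$ is legitimate, and monotone convergence.
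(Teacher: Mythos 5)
There is a genuine gap, and it sits exactly where you flagged it: the treatment of the top boundary term $\int_{\pom}|u(x,r,t)|^p\,dx\,dt$. For the true energy solution $u$ this quantity is not even known to be finite — the slice $\R^{n-1}\times\{r\}\times\R$ has infinite measure, and neither boundedness of $u$ nor $\nabla u\in L^2(\Omega)$ gives $L^p$ integrability of $u$ on horizontal slices (this is precisely why your first fundamental-theorem-of-calculus attempt broke down, and the Ces\`aro version inherits the same defect: $\frac1R\iint_{R<x_n<2R}|u|^p$ could be infinite for every $R$). The remedies you sketch do not close it: pointwise decay of $u$ as $x_n\to\infty$ (even granted) says nothing about an integral over an unbounded slice; the boundary H\"older estimate concerns the bottom boundary where $u=f$, not the behavior at large $x_n$; and the $A_\infty$ property of parabolic measure is not available here — the Dirichlet half of Theorem \ref{thm.BL} is being proved for block-form matrices with merely bounded measurable $A_\parallel$, with no Carleson hypothesis, and invoking $L^q$ Dirichlet solvability at this stage would in any case be circular, since the whole point of Appendix \ref{APA} is to establish it. Note also the trap that the favorable term $\frac{2C(p)}{r}\int_0^r\int_{\pom}|u|^p$ sits on the \emph{left} of \eqref{eq.BlckSdkp<Nr}, so it yields integrability of $|u|^p$ over strips only if the right-hand side is already known finite — which is exactly what is missing.

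The paper circumvents this entirely by an approximation scheme rather than a limit $r\to\infty$ for $u$ itself: it introduces the solutions $u_k$ of \eqref{Defuk} in the strips $\R^{n-1}\times(0,k)\times\R$ with the same datum $f$ at $x_n=0$ and with $u_k\equiv 0$ at $x_n=k$, so that when Lemma \ref{lem.Sdkp} is applied with $r=k$ the top term vanishes identically and the right-hand side is just $C(p)\int_{\pom}f^p$. Uniform coercivity gives $\sup_k\|u_k\|_{\dot\E}<\infty$, uniqueness of energy solutions identifies the limit with $u$, and \eqref{WACO2} upgrades to strong $L^2$ convergence $\nabla u_k\to\nabla u$ together with locally uniform convergence $u_k\to u$; one then passes to the limit over compact sets $K\subset\{u\ne 0\}$ (where $u_k^{p-2}\to u^{p-2}$ uniformly) and takes the supremum over such $K$, using the convention on the set where $u$ and $\nabla u$ both vanish. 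If you want to salvage your route, you would have to prove, for general bounded measurable block-form coefficients, that $\int_{\pom}|u(x,r,t)|^p\,dx\,dt$ is finite and tends to $0$ along some sequence $r_k\to\infty$ — a nontrivial statement that is not a consequence of the tools you cite; the approximation by $u_k$ is the mechanism the paper uses precisely to avoid having to prove it.
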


\begin{proof} Consider a bounded $f\in \Hdot^{1/4}_{\partial_t-\Delta_x}(\partial\Omega)$. Without loss of generality we may assume that $\|f\|_{L^p(\pom)}<\infty$ as otherwise the claim is trivial. Let $u$ be the energy solution of the equation $\LL u=0$ in $\Omega$ with boundary datum $f$ and consider also the energy solutions
for $1 \leq k \in \mathbb N:$
\begin{equation}\label{Defuk}
\LL u_k=0\mbox{ on }\R^{n-1}\times(0,k)\times\R,\quad u_k(x,0,t)=f(x,t),\quad u_k(x,k,t)=0,\quad\mbox{for all }(x,t)\in\pom.
\end{equation}
For simplicity we extend $u_k$ to $\Omega$ by defining  $u_k(x,x_n,t)=0$ when $x_n>k$, which extends each $u_k$ continuously.
We now consider the limit of $u_k$, as $k \to \infty$. For each $k\ge 1$, 
the Lax-Milgram lemma (c.f. subsection \ref{RwEs}) used on the domain $\Omega^k=\R^{n-1}\times(0,k)\times\R$  gives us
\begin{equation}\label{WACO}
 \|u_k\|_{\dot \E} = \|\nabla u_k\|_{\L^2(\Omega)} + \|\HT \dhalf u_k\|_{\L^2(\Omega)} \le C \|\mbox{Tr }u_k\|_{\Hdot^{1/4}_{\pd_{t} - \Delta_x}(\partial\Omega^k)}= C\|f\|_{\Hdot^{1/4}_{\pd_{t} - \Delta_x}(\partial\Omega)},
\end{equation}
where the constant $C\in(0,\infty)$ depends on the coercivity constant of the 
sesquilinear form \eqref{eq-sesq}
 in $\Omega^k$ (which is uniform in $k$), and $\|A\|_{L^\infty}=\Lambda$. Therefore, this 
bound is uniform in $k$. This uniformity and the fact that $\text{Tr}(u_k)=f$, implies that a weak convergence argument 
yields a sub-sequence convergent to some $v$ with $ \|v\|_{\dot \E}\leq C\|f\|_{\Hdot^{1/4}_{\pd_{t} - \Delta_x}(\partial\Omega)}$ 
and $\text{Tr}(v)=f$. This sub-sequence (which we somewhat imprecisely also call $(u_k)$) is therefore strongly convergent to $v$ in $L^2_{\rm loc}({\R}^n_+\times\R)$ 
by a standard functional analysis argument.
It follows that the $L^2$ averages of $u_k$ converge locally and uniformly to 
the $L^2$ averages of $v$ in $C_{\rm loc}({\R}^n_{+}\times\R)$. Additionally, because each $u_k$ has uniform interior H\"older continuity estimates, the convergence of $u_k\to v$ also holds in $C^\alpha_{\rm loc}({\R}^n_{+}\times\R)$ for some $\alpha>0$. 

Taking the limit in $k$, it now follows that $v$ must be a weak solution to $\mathcal Lv=0$ in $\Omega$. Since $\text{Tr}(v)=f$ and $ \|v\|_{\dot \E}<\infty$, it must hold that $u=v$ since energy solutions are unique. Consider now $u_k-u$, a solution in $\Omega^k$ enjoying the bound
\begin{equation}\label{WACO2}
 \|u_k-u\|_{\dot \E}  \le C \|\mbox{Tr }(u_k-u)\|_{\Hdot^{1/4}_{\pd_{t} - \Delta_x}(\partial\Omega^k)}= C\|\mbox{Tr }u\|_{\Hdot^{1/4}_{\pd_{t} - \Delta_x}(\R^{n-1}\times\{k\}\times\R)}.
\end{equation}
The trace $\|\mbox{Tr }u\|_{\Hdot^{1/4}_{\pd_{t} - \Delta_x}(\R^{n-1}\times\{k\}\times\R)}\to 0$ as $k\to\infty$, otherwise $ \|u\|_{\dot \E}=\infty$, which is false. Thus the weak convergence $\nabla u_k\to\nabla u$ in $L^2$ upgrades to strong convergence. \vglue1mm

Let $K$ be any compact subset of $\Omega$ on which $u\ne 0$. It follows that
$u_k^{p-2}\to u^{p-2}$ in $L^\infty(K)$. Note that the fact that $p<2$ plays no role as $u\ne 0$ and therefore, for some $k\ge k_0$, the solutions $u_k$ are also bounded away from zero since $u_k\to u$ uniformly on $K$.
Together with $\nabla u_k\to\nabla u$ in $L^2$ on $K$, we have
$$ \lambda\iint_{K}
     \abs{\nabla u_k}^2u_k^{p-2}x_n\,dXdt\to  \lambda\iint_{K}
     \abs{\nabla u}^2u^{p-2}x_n\,dXdt.$$
We use \eqref{eq.BlckSdkp<Nr} for each $u_k$ with $r$ so large such that $K\subset \R^{n-1}\times (0,r/2)\times\R$. This means we need to only consider $k\ge r/2$. We obtain that

\begin{multline}
\lambda\iint_{K}
     \abs{\nabla u_k}^2u_k^{p-2}x_n\,dXdt\le 
    \lambda\int_0^{\frac{k}2
    }\int_{\pom} \abs{\nabla u_k}^2u_k^{p-2}x_n\,dxdtdx_n\\
    \le C(p)\left[\int_{\pom}|u_k(x,k,t)|^pdxdt
    +\int_{\pom}|u_k(x,0,t)|^pdxdt\right]=C(p)\int_{\pom}f^pdxdt.
\end{multline}
Taking the limit $k\to\infty$ on both sides therefore yields that
$$\lambda\iint_{K}
     \abs{\nabla u}^2u^{p-2}x_n\,dXdt\le C(p)\int_{\pom}f^pdxdt,$$
on all compact subset $K\subset\Omega$ on which $u\ne 0$. Taking the supremum over all of these sets gives 
\begin{equation}\label{eq.BlckSdkp2}
    \lambda\iint_{\Omega\setminus{\{u\ne 0\}}}
     \abs{\nabla u}^2u^{p-2}x_n\,dXdt\le C    \int_{\pom}f^pdxdt.
\end{equation}
From this, our claim follows, since we use the convention that $\abs{\nabla u}^2u^{p-2}=0$ when both $u$ and $\nabla u$ vanish thus we must have $\abs{\nabla u(X,t)}^2u(X,t)^{p-2}=0$ when $t< t_0$ for some $t_0\in\R\cup\{-\infty\}$ and $u>0$ when $t>t_0$.
\end{proof}

We are ready to prove Lemma \ref{lem.Sdkp}. Pick an arbitrary $\varepsilon>0$. Because $u$ is an energy solution in the strip $\Omega^r=\R^{n-1}\times (0,r)\times \R$ the quantity below must be finite:
$$\iint_{\Omega^r}
     \abs{\nabla u}^2(u+\varepsilon)^{p-2}x_n\,dXdt \le \varepsilon^{p-2}r\|u\|^2_{\dot \E(\Omega^r)}<\infty.$$
Hence using ellipticity we can write

$$\lambda\iint_{\Omega^r}
     \abs{\nabla u}^2(u+\varepsilon)^{p-2}x_n\,dXdt \le \frac1{p-1} \iint_{\Omega^r}
     A\nabla u\cdot \nabla [(u+\varepsilon)^{p-1}]x_n\,dXdt,$$
and similarly for the localized version of this expression. 

We recall the proof of Lemma \ref{lem.Sdk} where a similar calculation is performed. Let $Q_r$, $\zeta$ be as in that proof, then a calculation as in \eqref{eq.pfSdk1}-\eqref{eq.pfSdk2} gives 

\begin{multline}\label{eq.pfSdk1a}
    \int_0^r\int_{Q_{2r}}A\nabla u\cdot \nabla [(u+\varepsilon)^{p-1}](\zeta^2x_n)dxdtdx_n\\
    =
    \frac{r}{p}\int_{Q_{2r}}\dr_n\br{(u+\varepsilon)^p}(x,r,t)\zeta(x,t)^2dxdt
    +\frac1p\int_{Q_{2r}}(u(x,0,t)+\varepsilon)^p\zeta(x,t)^2dxdt\\
    -\frac1p\int_{Q_{2r}}(u(x,r,t)+\varepsilon)^p\zeta(x,t)^2dxdt
    -\frac1p\int_0^r\int_{Q_{2r}}x_nA_\parallel\nabla_x[(u+\varepsilon)^{p}]\cdot\nabla_x(\zeta^2)dX\,dt\\
    -\int_0^r\int_{Q_{2r}}\divg(A\nabla u)(u+\varepsilon)^{p-1}\zeta^2x_n\,dX\,dt.
\end{multline}
For the last term we use the PDE $u$ satisfies to obtain that it equals to 
$\frac1p\int_0^r\int_{Q_{2r}}(u+\varepsilon)^p\partial_t(\zeta^2)x_n\,dX\,dt$ (c.f. the calculation for $I_1$ in the proof of the proof of Lemma \ref{lem.Sdk}).
Observe that on the right-hand side of \eqref{eq.pfSdk1a} all the powers are $p>1$ and thus the limit $\varepsilon\to 0+$ exists. Taking that limit, and using monotone convergence, we conclude that
\eqref{eq.pfSdk1a} gives: 
\begin{multline}\label{eq.pfSdk1b}
    \int_0^r\int_{Q_{2r}}A\nabla u\cdot \nabla [u^{p-1}](\zeta^2x_n)dxdtdx_n\\
    =
    \frac{r}{p}\int_{Q_{2r}}\dr_n\br{u^p}(x,r,t)\zeta(x,t)^2dxdt
    +\frac1p\int_{Q_{2r}}u(x,0,t)^p\zeta(x,t)^2dxdt\\
    -\frac1p\int_{Q_{2r}}u(x,r,t)^p\zeta(x,t)^2dxdt
    -\frac1p\int_0^r\int_{Q_{2r}}x_nA_\parallel\nabla_x[u^{p}]\cdot\nabla_x(\zeta^2)dX\,dt\\
   +\frac1p\int_0^r\int_{Q_{2r}}u^p\partial_t(\zeta^2)x_n\,dX\,dt.
\end{multline}
Now as in the proof of Lemma \ref{lem.Sdk} we consider a partition of unity of the boundary of $\pom$ and sum over it. This yields
\begin{multline}\label{eq.pfSdk3a}
    \int_0^r\int_{\pom}A\nabla u\cdot\nabla[u^{p-1}]\,x_n\,dX\,dt\\  
    = \frac{r}{p}\int_{\pom}\dr_n\br{u^p}(x,r,t)dxdt
    +\frac1p\int_{\pom}u(x,0,t)^pdxdt
    -\frac1p\int_{\pom}u(x,r,t)^pdxdt.
    \end{multline}
Finally, for any fixed $r_0>0$, we integrate \eqref{eq.pfSdk3a} in $r$ variable over $[0,r_0]$ and then divide both sides by $r_0$. This yields
\begin{multline*}
    \lambda(p-1)\int_0^{r_0}\int_{\pom}\br{x_n-\frac{x_n^2}{r_0}}\abs{\nabla u}^2u^{p-2}x_n\,dXdt
    \le \frac1p\int_{\pom}u(x,r_0,t)^pdxdt
    +\frac1p\int_{\pom}u(x,0,t)^pdxdt\\
    -\frac{2}{p\,r_0}\int_0^{r_0}\int_{\pom}u(x,x_n,t)^pdxdtdx_n.
\end{multline*}
Truncating the integral on the left-hand side to $[0,\frac{r_0}{2}]$ (and replacing $r_0$ by $r$) we obtain \eqref{eq.BlckSdkp<Nr} as desired with $C(p)=2(p-1)/p$.\qed

From Lemma \ref{lem.Sdkp} together with \eqref{S2Sp} for $u_k$ defined earlier by 
\eqref{Defuk}, we see that for any $\varepsilon>0,$
\begin{equation}\label{Strunc}
\|S^{k/2}_2(u_k)\|^p_{L^p(\pom)}\le \varepsilon \|N^{k/2}(u_k)\|^p_{L^p(\pom)}+C_\varepsilon \int_{\pom}f^p\,dxdt,
\end{equation}
where the superscript $k/2$ denotes truncation of the corresponding cones at the height $k/2$ and the constant $C_\varepsilon$ is independent of $k$. However, Lemma \ref{lem.Sdkp} also gives us a second estimate that removes these truncations.  The second term of the left-hand side of \eqref{eq.BlckSdkp<Nr}
implies that $\|u_k\|^p_{L^p(\Omega^k)}\le Ck\int_{\pom}f^p\,dxdt$.
Consider now any $(q,\tau)\in \pom$ and the expression
$$\iint_{\gamma(q,\tau)\cap\{x_n\in [k/2,\infty)\}}|\nabla u_k|^2 x_n^{-n}dX\,dt.
$$
As $u_k=0$ for $x_n>k$ the term is nonzero only for $x_n\in [k/2,k]$. Using boundary Cacciopoli (as $u_k$ vanishes on the boundary $x_n=k$) it follows that
$$\iint_{\gamma_a(q,\tau)\cap\{x_n\in [k/2,\infty)\}}|\nabla u_k|^2 x_n^{-n}dX\,dt\le C
\iint_{\gamma_{\tilde{a}}(q,\tau)\cap\{x_n\in [k/3,k)\}}|u_k|^2 x_n^{-2-n}dX\,dt,
$$
for some $\tilde{a}>a$. It then follows that
\begin{multline*}
\left(\iint_{\gamma_a(q,\tau)\cap\{x_n\in [k/2,\infty)\}}|\nabla u_k|^2 x_n^{-n}dX\,dt\right)^{1/2}\le C\left(
\fiint_{\gamma_{\tilde{a}}(q,\tau)\cap\{x_n\in [k/3,k)\}}|u_k|^2 dX\,dt\right)^{1/2}\\
\le C\sup_{\gamma_{\tilde{a}}(q,\tau)\cap\{x_n\in [k/3,k)\}}|u_k|.
\end{multline*}
Hence the square function at the point $(q,\tau)$ above height $k/2$ can be estimated by the nontangential maximal function at the same point above the height $k/3$ with cones of slightly wider aperture. 
We claim that we have
\begin{equation}\label{Nest}
\int_{\pom}\left(\sup_{\gamma_{\tilde{a}}(q,\tau)\cap\{x_n\in[ k/3,k)\}}|u_k|\right)^pdq\,d\tau\le C\int_{\pom}f^p\,dxdt.
\end{equation}
Assuming this holds then, combining it with \eqref{Strunc}, yields
\begin{equation}\label{Strunc-f}
\|S_2(u_k)\|^p_{L^p(\pom)}\le \varepsilon \|N(u_k)\|^p_{L^p(\pom)}+C_\varepsilon \int_{\pom}f^p\,dxdt,
\end{equation}
for a constant independent of $k$. It remains to show \eqref{Nest}. The region $\gamma_{\tilde{a}}(q,\tau)\cap\{x_n\in [k/3,k)\}$ lays inside a slightly larger region $T_k(q,\tau):=\{(x,x_n,t):\,\|x-q,t-\tau\|<2\tilde{a}k,\,\,\,x_n\in (k/4,k) \}$,
from which, by boundary H\"older continuity (Lemma \ref{lem.bdyHolder}),
$$\left(\sup_{\gamma_{\tilde{a}}(q,\tau)\cap\{x_n\in [k/3,k)\}}|u_k|\right)^p\lesssim \fiint_{T_k(q,\tau)}|u_k|^p.
$$
We integrate this over $\pom$. The right-hand side becomes the solid integral $$Ck^{-1}\|u_k\|^p_{L^p(\Omega \cap\{x_n\in (k/4,k)\})}.$$ 
The factor $k^{-1}$ comes from the average over $T_k(q,\tau)$. That is, the average in the $x_n$ variable takes place over an interval $(k/4,k)$ of length $\sim k$ and the integration over $\pom$ removes the average from all remaining variables. Since $\|u_k\|^p_{L^p(\Omega^k)}\le Ck\int_{\pom}f^p\,dxdt$ it follows that \eqref{Nest} holds.

\medskip

Next, we establish the inequality $\|N(u_k)\|_{L^p(\pom)}\lesssim \|S_2(u_k)\|_{L^p(\pom)}$. Working with $u_k$ allows us to use the a decay that we don't have available for $u$, namely, that each $u_k\to 0$ as $x_n\to \infty$ - in fact $u_k=0$ for $x_n>k$.  This decay was one of the ingredients needed in Section \ref{SS:43}.  With the $w$ of defined in that section replaced by $u_k$, we construct the function $\hbar_{\nu,a}$ as in \eqref{h} and claim that Lemma \ref{S3:L5} holds as stated, noting the continuity of $u_k$. Observe that the nature of this stopping-time construction forces $\hbar_{\nu,a}<k$ everywhere. Next we claim Lemma \ref{l6} holds as stated for the set
\begin{equation}\label{Eqqq-17n}
\big\{(x,t):\,N_{a}(u_k)(x,t)>\nu\mbox{ and }S_{b}(u_k)(x,t)\leq\gamma\nu\big\}
\end{equation}
with $(x,t)\in 2R$ and such that
\begin{equation}\label{Eqqq-18n}
\big|u_k\big(z,\hbar_{\nu,a}(w)(z,\tau),\tau\big)\big|>\nu/{2}\,\,\text{ for all }\,\,(z,\tau)\in R.
\end{equation}
This requires a new argument as $S_{b}(u_k)$ only controls $u_k$ in spatial variables but not in the $t$ variable.
For such a point $(x,t)$ in \eqref{Eqqq-17n}, part (iii) of Lemma \ref{S3:L5} gives the existence of a point $(y,y_n,s)\in\gamma_a(x,x_n,t)$ on the graph of $\hbar$ with $u_k(y,y_n,s)=\nu$.
We consider an interior parabolic cylinder $Q_{y_n/2}(y,y_n,s)$. There exists $b=b(a)>0$ such that 
$3/2Q_{y_n/2}(y,y_n,s)\subset\gamma_b(x,t)$ and hence \eqref{Eqqq-18n} will follow if we prove that for all points inside  $Q_{y_n/2}(y,y_n,s)$ the function $u_k>\nu/2$.

Using interior H\"older inequality Lemma \ref{L:int_Holder} (or if $Q_{y_n/2}(y,y_n,s)$ intersects the hyperplane $\{x_n=k\}$ the boundary H\"older) we always have
$$\sup_{(X_1,t_1),(X_2,t_2)\in Q_{y_n/2}(y,y_n,s)}|u_k(X_1,t_1)-u_k(X_2,t_2)|\le C \inf_{c\in\R} \fiint_{3/2Q_{y_n/2}(y,y_n,s)}|u_k-c|.$$
Considering a fixed time slice
$B^{t_0}=3/2Q_{y_n/2}(y,y_n,s)\cap \{t=t_0\}$ we have by Sobolev inequality
$$\inf_{c\in\R} \fiint_{B^{t_0}}|u_k-c|\,dX\le Cy_n\,\fiint_{B^{t_0}}|\nabla u_k|\,dX,$$
which holds when $c$ is an average of $u_k$ over $B^{t_0}$. Let $\eta$ be a smooth nonnegative cutoff
function in spatial variables, supported in $B^{t_0}$ and satisfying $\iint_{B^{t_0}}\eta\,dX=1$ and $|\nabla\eta|\le y_n^{-1}\eta$. Define an average
$$v_{av}(t_0)=\iint_{B^{t_0}}u_k(X,t_0)\eta(X)\,dX.$$
With such choice of $c$, the Sobolev inequality holds, and therefore we have that
\begin{multline}\label{osces}
\sup_{(X_1,t_1),(X_2,t_2)\in Q_{y_n/2}(y,y_n,s)}|u_k(X_1,t_1)-u_k(X_2,t_2)|\\\le C\left[y_n\fint_{s-(y_n/2)^2}^{s+(y_n/2)^2}\,\fiint_{B^{t_0}}|\nabla u_k|\,dX\,dt_0+\sup_{t_0,t_0'\in (s-(y_n/2)^2,s+(y_n/2)^2)}|v_{av}(t_0)-v_{av}(t_0')|\right].
\end{multline}
The first term in the last line is bounded by $CS_b(x,t)\le C\gamma\lambda$ (by Cauchy-Schwarz) and hence we can pick $C\gamma<1/4$.

We claim that the second term enjoys a similar estimate. Calculating as in \cite{Din23}, we multiply the equation $\LL u=0$ by $\eta$ and integrate it over the region
$\mathbb R^n\times [t_0,t_0']$, extending the function $u_k\eta$ by zero outside the support of $\eta$.
Because $\eta$ is a  
function of the spatial variables only, we have
$$\iint_{\mathbb R^n\times\{t_0'\}}u_k\eta\,dY-\iint_{\mathbb R^n\times\{t_0\}}u_k\eta\,dY=
\iint_{\mathbb R^n\times[t_0,t_0']}(\partial_t u_k)\eta\,dY\,dt$$
$$=\iint_{\mathbb R^n\times[t_0,t_0']}\mbox{\rm div}(A\nabla u_k)\eta\,dY\,dt
=-\iint_{\mathbb R^n\times[t_0,t_0']}(A\nabla u_k)\nabla\eta\,dY\,dt.
$$
 The term $\nabla\eta$ in the right-hand side integral is bounded by
 $y_n^{-1}$, and the length of the interval $[t_0,t_0']$  is bounded by $y_n^2$, therefore
$$\left|\iint_{\mathbb R^n\times\{t_0'\}}u_k\eta\,dY-\iint_{\mathbb R^n_+\times\{t_0\}}u_k\eta\,dY\right|\lesssim
C\left[y_n\fint_{s-(y_n/2)^2}^{s+(y_n/2)^2}\,\fiint_{B^{t_0}}|\nabla u_k|\,dX\,dt_0\right],$$
which again has a bound by $CS_b(x,t)\le C\gamma\lambda$. It follows that for sufficiently small $\gamma>0$
we get that 
$$\sup_{(X_1,t_1),(X_2,t_2)\in Q_{y_n/2}(y,y_n,s)}|u_k(X_1,t_1)-u_k(X_2,t_2)|\le\lambda/2,$$
and hence $u_k>\lambda-\lambda/2=\lambda/2$ on $ Q_{y_n/2}(y,y_n,s)$.
Hence we have Lemma \ref{l6} for the set \eqref{Eqqq-17n}.\qed

With similar modification we therefore have Corollary \ref{S3:L6} as well. We now state a version of Lemma 
\ref{S3:L8-alt1}.

\begin{lemma}\label{S3:L8-alt1a} 
Let $\Omega={\mathbb R}^n_+\times\R$ and let ${\mathcal L}$ be a block-form operator as above with $\LL u_k =0$ as in \eqref{Defuk} extended by zero for $x_n>k$.

For a fixed (sufficiently large $a>0$), consider and an arbitrary function $\hbar:{\mathbb R}^{n-1}\times\R\to \mathbb R$ such that it satisfies
the estimates \eqref{Eqqq-5}, \eqref{derh} and $\hbar\in  [0,k)$. 
Then 
we have the following:\vglue1mm

For all arbitrary parabolic surface balls $\Delta_r\subset{\mathbb R}^{n-1}\times\R$ of radius $r$ such that at least one point of $\Delta_r$
the inequality $\hbar(x,t)\le 2r$ holds we have the following estimate for any $c\in\R$:

\begin{multline}\label{TTBBMMz}
\int_{1/6}^1\int_{\Delta_r}\big|u_k\big(x,\theta\hbar(x,t),t\big)-c\big|^2\,dx\,dt\,d\theta
\leq C\iint_{\cS(\Delta_r,\hbar)}\abs{\nabla u_k}^2x_n\, dx_ndtdx\\
+\frac{C}{r}\iint_{\mathcal{K}_1}|u_k-c|^{2}\,dX\,dt,
\end{multline}
for some $C\in(0,\infty)$ that only depends on $a,\Lambda,n$ but not on $k$, $u_k$, $c$ or $\Delta_r$. 
Here, $\cS(\Delta_r,\hbar)$ and $\mathcal{K}_1$ as are in Lemma 
\ref{S3:L8-alt1}.
The cones used to define the nontangential 
maximal functions in this lemma have vertices on $\partial\Omega$.
\end{lemma}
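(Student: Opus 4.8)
The plan is to follow the integration-by-parts strategy of Lemma \ref{S3:L8-alt1}, but in the much simpler setting where the solution $u_k$ itself (rather than components of $\nabla u$) is the object of interest, there is no right-hand side in the PDE, and we only need the $L^2$ control of the oscillation of $u_k$ on dilated copies of the graph of $\hbar$ in terms of the Dirichlet energy on the sawtooth $\cS(\Delta_r,\hbar)$ plus a solid error term away from the boundary. First I would fix the surface ball $\Delta_r$ centered at $(q,\tau)$, set $r_0 = 6\sup_{(x,t)\in\Delta_r}\hbar(x,t)$ (finite, and bounded by $12r$ by the hypothesis that $\hbar(x,t)\le 2r$ at one point together with the Lipschitz bounds \eqref{Eqqq-5}), and introduce the product cutoff $\zeta(x,x_n,t)=\zeta_0(x_n)\zeta_1(x,t)$ exactly as in \eqref{Eqqq-27}--\eqref{Eqqq-28}. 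Then for $\theta\in(1/6,1)$ I would write
\[
\int_{\Delta_r}(u_k(x,\theta\hbar(x,t),t)-c)^2\,dx\,dt \le -\iint_{\mathcal S(q,\tau,r,r_0,\theta\hbar)}\partial_{x_n}\!\left[(u_k-c)^2\zeta\right]dx_n\,dx\,dt,
\]
and expand the $\partial_{x_n}$ derivative. This produces the analogue of \eqref{u6tg}: a term $\mathcal A = -2\iint (u_k-c)\partial_{x_n}(u_k-c)\,\zeta$ and a term $V = -\iint (u_k-c)^2 \partial_{x_n}\zeta$, the latter supported away from $\partial\Omega$ (in $x_n\in(r_0+r,r_0+2r)$) and hence controlled by $\frac Cr\iint_{\mathcal K_1}|u_k-c|^2$.

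For the main term $\mathcal A$, I would insert the factor $x_n = \partial_{x_n}x_n$ and integrate by parts once more in $x_n$, mimicking \eqref{utAA}: this yields the good term $2\iint|\partial_{x_n}u_k|^2 x_n\zeta$, a term involving $(u_k-c)\partial^2_{x_nx_n}u_k\, x_n\zeta$, a term with $(u_k-c)\partial_{x_n}u_k\,x_n\partial_{x_n}\zeta$, and a boundary term $IV$ on the graph of $\theta\hbar$. In the term with $\partial^2_{x_nx_n}u_k$ I use the PDE $\LL u_k=0$ and the block-form structure, which gives $\partial^2_{x_nx_n}u_k = -\sum_{i,j<n}\partial_i(a_{ij}\partial_j u_k) + \partial_t u_k$ — note there is \emph{no} zeroth-order right-hand side here, unlike \eqref{S3:T8:E01-x}, so the analysis is strictly simpler. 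Integrating the elliptic part by parts in $\partial_i$, $i<n$, produces $\iint a_{ij}\partial_i u_k\partial_j u_k\,x_n\zeta$ (good), a term $\iint a_{ij}(u_k-c)\partial_j u_k\,x_n\partial_i\zeta$ (controlled by $\left(\iint|u_k-c|^2 x_n r^{-2}\right)^{1/2}\left(\iint|\nabla u_k|^2 x_n\right)^{1/2}$ via Cauchy--Schwarz, with the first factor bounded by the solid integral $\frac Cr\iint_{\mathcal K_1}$ after Fubini as in \eqref{eq.Neta-c}, except here we simply keep it as a solid integral over $\cS(\Delta_r,\hbar)$ weighted by $x_n/r^2$ — actually I would just absorb a small multiple of $\mathcal I$ and dump the rest into $\frac Cr\iint_{\mathcal K_1}$), and a boundary term on the graph of $\theta\hbar$. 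The boundary terms from both integrations by parts are handled using $|\nu_i|\,dS\le \theta a^{-1}dx\,dt$ on the graph of $\theta\hbar$ (Lipschitz bound \eqref{derh}): each splits by Cauchy--Schwarz into a piece $\tfrac16\mathcal I$ absorbable on the left and a piece $\le C\int_{\Delta_{2r}}|\nabla u_k(x,\theta\hbar(x,t),t)|^2|\hbar(x,t)|^2\,dx\,dt$, which after integrating in $\theta\in(1/6,1)$ becomes $\le C\iint_{\cS(\Delta_r,\hbar)}|\nabla u_k|^2 x_n$ exactly as in \eqref{eq10.31}.

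The remaining piece is the time term $\iint (u_k-c)\partial_t(u_k-c)\,x_n\zeta = -\tfrac12\iint \partial_t[(u_k-c)^2]\,x_n\zeta$, which upon integrating $\partial_t$ by parts gives $\tfrac12\iint (u_k-c)^2 x_n\partial_t\zeta$ plus a boundary term on the graph of $\theta\hbar$ with co-normal $\nu_t$; since $|\nu_t|\,dS\le \frac{a^{-2}}{2x_n}\,dx\,dt$ by \eqref{derh}, that boundary term is $\le \frac{C}{a^2}\int_{\Delta_{2r}}|u_k(x,\theta\hbar,t)-c|^2\zeta\le \tfrac16\mathcal I$ for $a$ large, absorbed on the left. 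The solid piece, since $|\partial_t\zeta|\lesssim r^{-2}$, contributes $\frac Cr\iint_{\mathcal K_1}|u_k-c|^2$ (here, with $\theta$ ranging only in $(1/6,1)$ and no need for an $\varepsilon$-splitting, I simply note $x_n\le r_0+2r\lesssim r$ throughout $\cS(\Delta_r,\hbar)$, so $r^{-2}x_n\le Cr^{-1}$, and the region of integration lies in $\mathcal K_1$ after discarding the part near the boundary, which is controlled by $\tfrac Cr\iint_{\cS(\Delta_r,\hbar)}|\nabla u_k|^2 x_n$ via Poincaré exactly as in the proof of Lemma \ref{S3:L8-alt1}). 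Collecting all surviving terms, absorbing the finitely many multiples of $\mathcal I$ on the left, integrating in $\theta\in(1/6,1)$ and using $\cS(q,\tau,r,r_0,\theta\hbar)\subset\cS(\Delta_r,\hbar)$ for $\theta\le 1$ yields \eqref{TTBBMMz}. The only subtlety — and the one mild obstacle — is bookkeeping the solid integral $\frac Cr\iint_{\mathcal K_1}|u_k-c|^2$: in Lemma \ref{S3:L8-alt1} one carefully split $\mathcal K_\varepsilon$ from a near-boundary sliver handled by the nontangential maximal function, but here, with $\varepsilon$ fixed to $1$, I keep the entire near-boundary contribution of the $\partial_t\zeta$ and $II_2/III$ terms as $x_n$-weighted gradient integrals over $\cS(\Delta_r,\hbar)$ via Poincaré in $(x,t)$ at the slice level (as in the derivation of \eqref{eq.Fiest}), which is exactly what the first term on the right of \eqref{TTBBMMz} is designed to absorb. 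No use of the area function $A_b$ or of the Carleson hypotheses is needed because $u_k$ solves the homogeneous block-form equation; this is what makes the statement hold without any smallness assumption and without the $\varepsilon$-terms present in \eqref{TTBBMM}.
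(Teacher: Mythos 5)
Your proposal follows the same route as the paper: the paper's own proof of this lemma is essentially the remark that one repeats the integration-by-parts scheme of Lemma \ref{S3:L8-alt1} with $u_k$ in place of $\eta_m$, everything being simpler because the equation for $u_k$ is homogeneous, so no Carleson, area-function or $\varepsilon$-terms arise; your term-by-term walk-through (two integrations by parts in $x_n$, use of the block structure to write $\partial^2_{nn}u_k=-\div_x(A_\parallel\nabla_x u_k)+\partial_tu_k$, boundary terms on the graph of $\theta\hbar$ controlled via $|\nu_i|\,dS\le\theta a^{-1}dx\,dt$ and $|\nu_t|\,dS\le\tfrac{a^{-2}}{2x_n}dx\,dt$ with absorption for $a$ large) is exactly that computation. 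However, you omit the one genuinely new point, which is the only substantive content of the paper's proof: $u_k$ from \eqref{Defuk} solves the PDE only in $\{0<x_n<k\}$ and is merely extended by zero above $x_n=k$. This is precisely why the $\theta$-range in \eqref{TTBBMMz} is $(1/6,1)$ rather than $(1/6,6)$ as in \eqref{TTBBMM}: since $\hbar<k$ and $\theta\le1$, the graphs $\{x_n=\theta\hbar\}$ on which the left-hand side is evaluated stay where $u_k$ is an honest solution. You use the range $(1/6,1)$ without saying why, and you invoke the equation over the full region $\mathcal S(q,\tau,r,r_0,\theta\hbar)$, which can reach above $x_n=k$ when $r_0+2r>k$; there the zero extension does not satisfy the equation across the interface, so that step needs a word (e.g.\ perform the second integration by parts on $\{x_n<k\}$ and track the interface contribution, using that $u_k$ vanishes on $\{x_n=k\}$ and $\nabla u_k\equiv0$ above it).

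A second, more technical soft spot is your handling of the cutoff-derivative terms (the analogues of $II_2$, $III$ and $II_9$, coming from $\nabla_x\zeta_1$, $\partial_{x_n}\zeta_0$ and $\partial_t\zeta_1$). In Lemma \ref{S3:L8-alt1} these were dumped into the $\varepsilon\,\|\tilde N_a((\vec\eta-\vec c)\1_{\cS})\|^2$ terms of \eqref{TTBBMM}, but those terms are absent from the right-hand side of \eqref{TTBBMMz}, and your proposed substitutes do not work as stated: you cannot ``absorb a small multiple of $\mathcal I$'' because the relevant integrand lives on $\Delta_{2r}\setminus\Delta_r$ where the cutoff in $\mathcal I$ vanishes, and a Poincar\'e inequality in $(x,t)$ at slice level is unavailable because $c$ is an \emph{arbitrary} constant in the lemma (the average choice of $c$ happens only later, in the good-$\lambda$ argument). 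What does work, and keeps the right-hand side to exactly the two terms of \eqref{TTBBMMz}, is a vertical estimate connecting each point of $\cS(\Delta_r,\hbar)\cap\{x_n\le r\}$ upward into $\mathcal K_1$: writing $|u_k(x,x_n,t)-c|\le|u_k(x,r',t)-c|+\int_{x_n}^{r'}|\partial_nu_k|\,ds$ for $r'\in(r,2r)$ and averaging in $r'$, one gets
\begin{equation*}
\frac{1}{r^2}\iint_{\cS(\Delta_r,\hbar)}|u_k-c|^2x_n\,dX\,dt\lesssim\frac1r\iint_{\mathcal K_1}|u_k-c|^2\,dX\,dt+\iint_{\cS(\Delta_r,\hbar)}|\nabla u_k|^2x_n\,dX\,dt,
\end{equation*}
the gradient piece coming from Cauchy--Schwarz with the weight $s\,ds$ (the logarithm $\log(2r/x_n)$ is integrable against $x_n\,dx_n/r^2$). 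With this observation all cutoff terms land in the two stated right-hand side terms, and the rest of your argument goes through as in the paper.
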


This is much simpler version of the previous Lemma for gradients, but there a new minor issue to be addressed since $u_k$ only satisfies the PDE for $0<x_n<k$. In fact, 
as $\theta\le 1$ and $\hbar<k$ then $\theta\hbar<k$ and hence we are integrating inside the region where our PDE holds. It follows that the proof of Lemma \ref{S3:L8-alt1a} is analogous, but simpler, since $\LL u_k=0$.
Hence the claim simplifies to just \eqref{TTBBMMz}.

Next we prove the following lemma.

\begin{lemma}\label{LGL2} Let $\mathcal L$ and $u_k$ be as in the previous lemma and fix $a>0$ sufficiently large as above.
There exists $\kappa\in(0,1)$ that depends only on $n$, constants $b>a$ and $\gamma_0$ that depend on $a$, and $C>0$ that depends only on $n$ and the ellipticity constants such that the following holds.

For any $\gamma\in(0,\gamma_0)$, $\beta>0$, there holds
\begin{multline}    
  \abs{\set{(x,t)\in {\mathbb R}^{n-1}\times\R:\, {N}_a(u_k)>\beta, S_b(u_k)\le\gamma\beta}}\\\le C\gamma^2\abs{\set{(x,t)\in\R^{n-1}\times\R: M({N}_a(u_k))(x,t)>\kappa\beta}}.\label{eq.gdLmdx}
\end{multline}
\end{lemma}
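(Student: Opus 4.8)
The plan is to establish the good-$\lambda$ inequality \eqref{eq.gdLmdx} by the now-standard stopping-time and sawtooth argument, following the scheme of Lemma \ref{LGL} but with the drastic simplification that $u_k$ itself solves the homogeneous equation (no coupled system, no parallel-gradient error terms, no smallness hypotheses on the Carleson norm) and enjoys the genuine decay $u_k=0$ for $x_n>k$. We write $E_1:=\{N_a(u_k)>\beta,\ S_b(u_k)\le\gamma\beta\}$ and $E_2:=\{M(N_a(u_k))>\kappa\beta\}$, assume $E_2$ is a nonempty proper subset of $\mathbb R^{n-1}\times\mathbb R$ (else \eqref{eq.gdLmdx} is trivial), and apply Vitali to obtain a non-overlapping family of parabolic balls $\Delta(x_i,t_i)$ with $2\Delta(x_i,t_i)\subset E_2$, $10\Delta(x_i,t_i)\cap E_2^c\ne\emptyset$, and $E_2\subset\bigcup_i 5\Delta(x_i,t_i)$. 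Set $\Delta_i:=5\Delta(x_i,t_i)$, $r_i:=\mathrm{rad}(\Delta_i)$, and $F_i:=E_1\cap\Delta_i$, so $E_1\subset\bigcup_i F_i$.

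First I would exploit $2\Delta_i\cap E_2^c\ne\emptyset$ exactly as in the proof of Lemma \ref{LGL}: pick $(y,s)\in 2\Delta_i$ with $M(N_a(u_k))(y,s)\le\kappa\beta$; then for $(x,t)\in\Delta_i$ and $x_n>2r_i$ one deduces $|u_k(x,x_n,t)|\le 2^{n+1}\kappa\beta$ (otherwise $N_a(u_k)>2^{n+1}\kappa\beta$ on a parabolic ball of radius $\gtrsim a r_i$ around $(y,s)$, contradicting the maximal-function bound), using now pointwise values of $u_k$ rather than $L^2$-averages, thanks to its continuity. Choosing $\kappa\le 2^{-n-1}$ this gives $N_a(u_k)=N_a^{2r_i}(u_k)$ on $F_i$ and also $\hbar:=\hbar_{\beta,a}(u_k)\le 2r_i$ on $\Delta_i$; it also guarantees the ball $R$ from (the modified) Lemma \ref{l6} has radius $\lesssim a r_i$. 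Then the modified Corollary \ref{S3:L6} (valid here since Lemma \ref{l6} has been verified for the set \eqref{Eqqq-17n} above) yields $(M_{\hbar,6ar_i}(u_k))(x,\hbar(x,t),t)\ge C\beta$ for $(x,t)\in F_i$. I would introduce $\hbar_i(x,t):=\max\{\hbar(x,t),\tfrac12\mathrm{dist}_p((x,t),F_i)\}$, note $\hbar_i$ still satisfies \eqref{Eqqq-5}, \eqref{derh}, \eqref{derh'} and $\hbar_i\in[0,k)$ (since $\hbar<k$ everywhere and $\hbar_i$ only increases $\hbar$ up to a spatial Lipschitz graph), set $\mathcal S_i:=\mathcal S(\Delta_{20r_i},\hbar_i)$, and record the separation property \eqref{SiProp}: for $(x,t)\in 40a\Delta_i\setminus F_i$ there is $(y,s)\in F_i$ with $\mathcal S_i\cap\gamma_a(x,t)\subset\gamma_b(y,s)$ for $b\ge b_0$ large.

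Next, choosing the constant $c_i:=\fiint_{B_i^0}u_k$ over a ball $B_i^0\subset\mathcal S_i\cap\{(x,t)\in\Delta_i,\ x_n>2r_i\}$ of radius $\sim r_i$, so $|c_i|\le 2^{n+1}\kappa\beta$ by the pointwise bound above, I would run: $C\beta^2|F_i|\le\int_{F_i}(M_{\hbar_i,6ar_i}(u_k-c_i))(x,\hbar_i(x,t),t)^2+2^{2n+2}\kappa^2\beta^2|F_i|$, absorb the last term (take $\kappa$ small), use $L^2$-boundedness of the localized maximal function on the graph, then the elliptic-case interpolation identity $\int_{10\Delta_i}|u_k-c_i|^2(x,\hbar_i(x,t),t)\,dxdt\lesssim\int_{1/6}^1\!\!\int_{20\Delta_i}|u_k-c_i|^2(x,\theta\hbar_i(x,t),t)\,dxdtd\theta$ (this is \cite[Lemma 5.6]{DHM}, whose proof only uses the Lipschitz-graph properties of $\hbar_i$ recorded in Lemma \ref{S3:L5}), and finally Lemma \ref{S3:L8-alt1a} with $\Delta_r=\Delta_{20r_i}$ and $c=c_i$. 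This bounds $|F_i|$ by $\tfrac{C}{\beta^2}\iint_{\mathcal S_i}|\nabla u_k|^2x_n + \tfrac{C}{\beta^2 r_i}\iint_{\mathcal K_1}|u_k-c_i|^2$. For the square-function term I would use, for $(x,t)\in F_i$, the trivial bound $\iint_{\gamma_a(x,t)}|\nabla u_k|^2\mathbf 1_{\mathcal S_i}y_n^{-n}\le S_b(u_k)(x,t)^2\le\gamma^2\beta^2$, and for $(x,t)\in 40\Delta_i\setminus F_i$ transfer via \eqref{SiProp} to some $(x_0,t_0)\in F_i$ with the same bound; integrating over $40\Delta_i$ gives $\iint_{\mathcal S_i}|\nabla u_k|^2x_n\lesssim\gamma^2\beta^2|\Delta_i|$. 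For the Poincaré term I would apply Poincaré simultaneously in $x$ and $t$ on $\mathcal K_1$ (where $x_n\gtrsim r_i$), converting $\tfrac{1}{r_i}\iint_{\mathcal K_1}|u_k-c_i|^2$ into $r_i\iint_{\mathcal K_1}(|\nabla u_k|^2+r_i^2|\partial_t u_k|^2)$, control the spatial part by $\iint_{\mathcal S_i}|\nabla u_k|^2x_n$ as above, and control $\iint_{\mathcal K_1}|\partial_t u_k|^2 x_n^3$ by the same mechanism — here one uses the area function $A_b(u_k)$ — bounded by $\gamma^2\beta^2$ on the relevant cones via \eqref{SiProp}; hence this term is also $\lesssim\gamma^2\beta^2|\Delta_i|$. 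Summing over $i$, $|E_1|\le\sum_i|F_i|\le C\gamma^2\sum_i|\Delta_i|\le C\gamma^2|\bigcup_i\Delta(x_i,t_i)|\le C\gamma^2|E_2|$, which is \eqref{eq.gdLmdx}; note the statement as quoted omits the area-function term, so I would either include $A_b(u_k)\le\gamma\beta$ in the defining set of $E_1$ (as in Lemma \ref{LGL}) or absorb it via the PDE $\partial_t u_k=\mathrm{div}(A\nabla u_k)$ which lets one dominate $x_n^3|\partial_t u_k|^2$ in the Poincaré term by $x_n|\nabla u_k|^2$ plus Carleson-controlled coefficient terms. The main obstacle is the verification, carried out above for the set \eqref{Eqqq-17n}, that $S_b(u_k)$ small forces the pointwise oscillation of $u_k$ over the relevant Whitney cylinders to be small \emph{including in the time direction} — this is precisely where the homogeneous equation is used, via the identity $\partial_t u_k=\mathrm{div}(A\nabla u_k)$ to control $|v_{av}(t_0)-v_{av}(t_0')|$ by a spatial-gradient average — and once that is in hand (which has been done just before this statement in the excerpt) the rest is a faithful, and lighter, replay of the proof of Lemma \ref{LGL}.
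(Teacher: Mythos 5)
Most of your outline coincides with the paper's intended argument: the Vitali decomposition, the pointwise smallness $|u_k|\le 2^{n+1}\kappa\beta$ above height $2r_i$ (working with pointwise values rather than $L^2$ averages, since here one uses $N$ and not $\tilde N$), the modified $\hbar_i$, the constant $c_i$, the localized maximal function on the graph, the comparison of the graph integral with the $\theta\hbar_i$ sections, and the application of Lemma \ref{S3:L8-alt1a} with $\theta\in[1/6,1]$ are all exactly what the paper does. The gap is in your treatment of the term $\frac{1}{r_i}\iint_{\mathcal K_1}|u_k-c_i|^2$. You propose a Poincar\'e inequality in $x$ \emph{and} $t$, which produces $r_i^3\iint_{\mathcal K_1}|\partial_t u_k|^2$, and then you offer two ways to handle it: (a) add $A_b(u_k)\le\gamma\beta$ to the defining set, or (b) use $\partial_t u_k=\divg(A\nabla u_k)$ to dominate $x_n^3|\partial_t u_k|^2$ by $x_n|\nabla u_k|^2$ plus ``Carleson-controlled coefficient terms.'' Neither works here. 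Option (a) changes the statement, and the changed statement would break the downstream use of the lemma: the good-$\lambda$ argument that yields \eqref{finalNS} must produce $\|N(u_k)\|_{L^p}\lesssim\|S_2(u_k)\|_{L^p}$ with no area-function term, because in the Appendix no bound on $\|A_b(u_k)\|_{L^p}$ is available. Option (b) is not available either: in the Appendix the matrix $A$ is only assumed bounded, measurable, elliptic and of block form — there is no Carleson condition, no bound $x_n|\nabla A|\le K$, and in fact no differentiability of $A$ at all — so neither $|\nabla A|$ nor a Caccioppoli estimate for $\nabla^2 u_k$ (which requires differentiating the equation) can be invoked, and there are no ``Carleson-controlled coefficient terms'' to speak of.

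The paper's resolution is different and avoids $\partial_t u_k$ entirely: the oscillation estimate \eqref{osces} — obtained by testing the equation against a $t$-independent spatial cutoff $\eta$ and integrating in time, which uses only boundedness and ellipticity of $A$ — controls the oscillation of $u_k$ over Whitney-scale cylinders, \emph{including in the time direction}, by averages of $|\nabla u_k|$, i.e.\ by the truncated $S_b(u_k)\le\gamma\beta$. You in fact cite this mechanism, but only to verify Lemma \ref{l6} for the set \eqref{Eqqq-17n}; the paper uses it a second time, both to pass from the graph of $\hbar_i$ to the $\theta\hbar_i$ sections (the estimate \eqref{eq.diff}, giving \eqref{eq.eta-c''} with an extra $C\gamma^2\beta^2|\Delta_i|$) and to bound the oscillation of $u_k$ over $\mathcal K_1$ (where $x_n\sim r_i$), so that $\frac{1}{r_i}\iint_{\mathcal K_1}|u_k-c_i|^2\lesssim\gamma^2\beta^2|\Delta_i|$ follows from the square-function smallness alone, with no Poincar\'e in $t$ and no area function. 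Replacing your Poincar\'e-in-time step by this second application of \eqref{osces} closes the gap and gives the lemma as stated.
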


\begin{proof} We only sketch the proof, highlighting the differences from the proof of Lemma \ref{LGL}.
Let 
\begin{equation}\label{def.E1x}
    E_{1,\beta}:=
\set{(x,t)\in {\mathbb R}^{n-1}\times\R:\, {N}_a(u_k)>\beta, S_b(u_k)\le\gamma\beta}.
\end{equation}
We proceed is in Lemma \ref{LGL} but skip defining the average $\vec\eta_{L^2}$ as we now work with $N$ instead of $\tilde N$. Hence in the proof whenever we see $\vec\eta_{L^2}$ we replace it by $u_k$. Proceeding exactly as in the proof of Lemma \ref{LGL} we arrive to \eqref{eq.eta-c'}. The argument given previously for \eqref{trunceta}. applies as well to $u_k$ and in particular it is useful for the estimate
\eqref{osces}, leading to the claim that: 

\begin{equation}\label{eq.diff}
\left|\int_{20\Delta_i}\abs{u_k-c}^2(x,\hbar_i(x,t),t)dxdt-\int_{20\Delta_i}\abs{u_k-c}^2(x,\theta\hbar_i(x,t),t)dxdt\right|\le C\gamma^2\beta^2|\Delta_i|,
\end{equation}
for $\theta\in [1/6,1]$
which allows us to replace \eqref{eq.eta-c'} by
\begin{equation}\label{eq.eta-c''}
    \int_{10\Delta_i}|u_k-c|^2(x,\hbar_i(x,t),t)dxdt\le C\int_{1/6}^1\int_{20\Delta_i}\abs{u_k-c}^2(x,\theta\hbar_i(x,t),t)dxdtd\theta+C\gamma^2\beta^2|\Delta_i|. 
\end{equation}
We then proceed analogously (with fewer terms to handle) until we reach 
\[
\frac{1}{r_i}\iint_{\mathcal{K}_1}|u_k-c|^{2}\,dX\,dt.
\]
Previously we used the area function to take care of variations in time variable. Instead, 
\eqref{osces} allows us to bound the oscillation of $u$ in terms of truncated version of $S_b(u_k)$. This concludes the argument.
\end{proof}
Under the assumption that $\|N(u_k)\|_{L^p}<\infty$ we then can conclude that, for all $1<p<\infty$, there exists $C=C(n,p,\lambda,\Lambda)>0$ such that
\begin{equation}\label{finalNS}
\|N(u_k)\|^p_{L^p(\pom)}\le C \|S_2(u_k)\|^p_{L^p(\pom)}.
\end{equation}

Let us address briefly why the apriori bound $\|N(u_k)\|_{L^p}<\infty$ is justified. 

Let $f:\pom\to \R$ be a compactly supported Lipschitz function, a class of functions that are dense in $L^p(\pom)$, $p<\infty$. Moreover, as $f$ has a compact Lipschitz extension into $\Omega$, $f$ has a well-defined trace in the space $\Hdot^{1/4}_{\partial_t-\Delta_x}(\partial\Omega)$. Splitting $f=f^+-f^-$ into positive and negative parts, each of which are Lipschitz, we can again consider separately energy solutions with data $f^+$ and $f^-$.

Let $u_k$ be a solution of \eqref{Defuk} with such nonnegative boundary datum (we call again $f$). Let $\Delta$
be ball containing the support of $f$.
As $f$ is bounded, so is $u_k$ by the maximum principle. Hence we see that 
for all $(p,\tau)\in 3\Delta$ we have $N(u_k)(p,\tau)\le K<\infty$. This ensures $L^p$ integrability of $N(u_k)$ on the set $3\Delta$.
Away from $3\Delta$, as $u_k$ is also nonnegative, the decay of $u_k$ is comparable to the decay of the corresponding Green's function (\eqref{eq.Grnf_upbd}) by the comparison principle.

Furthermore, the set $3\Delta\times (0,k)$ is bounded in the domain $\Omega^k$ and by the maximum principle $u_k$ is bounded on the boundary of this set. We pick a pole  $(Y,s)$ with time $s$ some distance before the beginning of the support of $f$ and, as $G>0$ on $\partial (3\Delta\times (0,k))$, there exists a constant $M>0$ for which
$$0<u_k(x,x_n,t)\le M G((x,x_n,t),(Y,s)),\qquad\mbox{for all }t>s, \, x_n\in (0,k),\, \mbox{ and }\, (x,t)\in \pom)\setminus 3\Delta ,$$
Here, $G$ is the Green's function of the whole space given by \eqref{eq.Grnf_upbd}. 
The Green's function $G$ has sufficient decay - exponential in the spatial variables and polynomial in $t$ of order at 
least $C(1+t)^{-1}$ - 
which makes it $t$-integrable in $L^p$ for $p>1$ on an interval $(0,\infty)$),
and thus ensures that $N(u_k)$ will be integrable in $L^p$ on sets of type 
$\R^{n-1}\times (0,k)\times (0,\infty)$.
While this may be a crude bound, it shows that \eqref{finalNS} holds for all $u_k$ with  a $C$ that only depends on $n,p,\lambda$ and $\Lambda$ but not on $k$. 
\medskip

With \eqref{finalNS} established for non-negative $u_k$ and with Lipschitz, nonnegative, compactly supported data, it will follow that for any $1<p\le 2$, there exists $C>0$ independent of $k$, such that

$$\|N(u_k)\|^p_{L^p(\pom)}\le C \|S_2(u_k)\|^p_{L^p(\pom)}\le C\varepsilon \|N(u_k)\|^p_{L^p(\pom)}+CC_\varepsilon \int_{\pom}f^p\,dxdt,$$
where we have used \eqref{Strunc-f}. Choosing $\varepsilon>0$ such that $C\varepsilon<1/2$ by finiteness of 
$\|N(u_k)\|_{L^p}<\infty$ implies that
$$\|N(u_k)\|^p_{L^p(\pom)}\le 2C \|S_2(u_k)\|^p_{L^p(\pom)}\le 2CC_\varepsilon \int_{\pom}f^p\,dxdt,$$
and hence $u_k$ enjoys both square and nontangential maximal function bounds.

Recall that we have already shown that $u_k\to u$ locally uniformly on compact subsets of $\Omega$, where $u$ solves the PDE $\LL u =0$ in $\Omega$ with datum $f$. Taking limits of notangential maximal functions truncated to these compact subsets yields the same bound for $u$, namely that
$$\|N(u)\|^p_{L^p(\pom)}\le 2CC_\varepsilon \int_{\pom}f^p\,dxdt.$$
Similar square function bounds follow by using $\nabla u_k\to\nabla u$ in $L^2(\Omega)$,
and obtaining that, in every compact $K\subset \R^n_+\times \R$,
$$\int_{K\cap\gamma(p,\tau)}|\nabla u_k|^2x_n^{-n}dX\,dt\to \int_{K\cap\gamma(p,\tau)}|\nabla u|^2x_n^{-n}dX\,dt,$$
for all boundary points $(p,\tau)$. Integrating and taking the supremum over all such $K$ yields
$$\|S_2(u)\|^p_{L^p(\pom)}\le 2C_\varepsilon \int_{\pom}f^p\,dxdt.$$

Finally, applying this to $f^+$ and $f^-$ separately gives:

$$\|N(u)\|^p_{L^p(\pom)}\le \|N(u^+)\|^p_{L^p(\pom)}+\|N(u^-)\|^p_{L^p(\pom)}$$
$$\le
2CC_\varepsilon \int_{\pom}(f^+)^p\,dxdt+2CC_\varepsilon \int_{\pom}(f^-)^p\,dxdt\le 
2CC_\varepsilon \int_{\pom}|f|^p\,dxdt,$$
with an analogous bound for $\|S_2(u)\|^p_{L^p(\pom)}$.
This shows the solvability of the $L^p$ Dirichlet problem for $\mathcal L$ on $\Omega$. The compactly supported Lipschitz functions are dense in $L^p(\pom)$ so there exists a unique continuous extension of this solution operator $f\mapsto u=u(f)$ onto all of $L^p$ that satisfies the required bound 
$\|N(u)\|^p_{L^p(\pom)}\approx \|S_2(u)\|^p_{L^p(\pom)}\lesssim\|f\|^p_{L^p(\pom)}$. This solution operator is close to the energy solutions in the following sense. If $f\in L^p(\pom)\cap \Hdot^{1/4}_{\partial_t-\Delta_x}(\partial\Omega)$, then $u$ obtained as 
$u=u(f)$ is also an energy solution with its energy norm $ \|u\|_{\dot \E}$ controlled by the $\Hdot^{1/4}_{\partial_t-\Delta_x}(\partial\Omega)$ norm of $f$ and Tr $u=f$.

\medskip

\bibliographystyle{alpha}

\end{document}